\newenvironment{customthm}[1]
  {\innercustomthm}
  {\endinnercustomthm}
\newenvironment{customcor}[1]
  {\innercustomcor}
  {\endinnercustomcor}
\newtheorem{thm}{Theorem}[section]
\newtheorem{prop}[thm]{Proposition}
\newtheorem{conj}[thm]{Conjecture}
\newtheorem{cor}[thm]{Corollary}
\newtheorem{lem}[thm]{Lemma}
\theoremstyle{definition}
\newtheorem{define}[thm]{Definition}
\theoremstyle{remark}
\newtheorem{rem}[thm]{Remark}
\newcommand{\ve}[1]{\boldsymbol{\mathbf{#1}}}
\newcommand{\R}{\mathbb{R}}
\newcommand{\Z}{\mathbb{Z}}
\newcommand{\N}{\mathbb{N}}
\renewcommand{\d}{\partial}
\renewcommand{\subset}{\subseteq}
\renewcommand{\supset}{\supseteq}
\renewcommand{\tilde}{\widetilde}
\renewcommand{\bar}{\overline}
\renewcommand{\Hat}{\widehat}
\newcommand{\iso}{\cong}
\DeclareMathOperator{\cl}{{cl}}
\DeclareMathOperator{\Crit}{{Crit}}
\DeclareMathOperator{\Diff}{{Diff}}
\DeclareMathOperator{\GL}{{GL}}
\DeclareMathOperator{\gr}{{gr}}
\DeclareMathOperator{\id}{{id}}
\DeclareMathOperator{\Int}{{int}}
\DeclareMathOperator{\im}{{im}}
\DeclareMathOperator{\Mat}{{Mat}}
\DeclareMathOperator{\Max}{{Max}}
\DeclareMathOperator{\MCG}{{MCG}}
\DeclareMathOperator{\Mor}{{Mor}}
\DeclareMathOperator{\nice}{{nice}}
\DeclareMathOperator{\Spin}{{Spin}}
\DeclareMathOperator{\Span}{{Span}}
\DeclareMathOperator{\Sym}{{Sym}}
\DeclareMathOperator{\Coor}{{Coor}}
\newcommand{\bF}{\mathbb{F}}
\newcommand{\bK}{\mathbb{K}}
\newcommand{\bL}{\mathbb{L}}
\newcommand{\bS}{\mathbb{S}}
\newcommand{\bT}{\mathbb{T}}
\newcommand{\bU}{\mathbb{U}}
\newcommand{\cA}{\mathcal{A}}
\newcommand{\cB}{\mathcal{B}}
\newcommand{\cC}{\mathcal{C}}
\newcommand{\cD}{\mathcal{D}}
\newcommand{\cF}{\mathcal{F}}
\newcommand{\cG}{\mathcal{G}}
\newcommand{\cH}{\mathcal{H}}
\newcommand{\cK}{\mathcal{K}}
\newcommand{\cL}{\mathcal{L}}
\newcommand{\cM}{\mathcal{M}}
\newcommand{\cR}{\mathcal{R}}
\newcommand{\cT}{\mathcal{T}}
\newcommand{\cV}{\mathcal{V}}
\newcommand{\cW}{\mathcal{W}}
\newcommand{\frs}{\mathfrak{s}}
\newcommand{\cCFL}{\mathcal{C\!F\!L}}
\newcommand{\cHFL}{\mathcal{H\!F\! L}}
\newcommand{\CF}{\mathit{CF}}
\newcommand{\HF}{\mathit{HF}}
\newcommand{\CFL}{\mathit{CFL}}
\newcommand{\HFL}{\mathit{HFL}}
\newcommand{\PD}{\mathit{PD}}
\newcommand{\xs}{\ve{x}}
\newcommand{\ys}{\ve{y}}
\newcommand{\zs}{\ve{z}}
\newcommand{\ws}{\ve{w}}
\newcommand{\os}{\ve{o}}
\newcommand{\as}{\ve{\alpha}}
\newcommand{\bs}{\ve{\beta}}
\newcommand{\gs}{\ve{\gamma}}
\newcommand{\taus}{\ve{\tau}}
\newcommand{\SO}{\mathit{SO}}
\renewcommand{\GL}{\mathit{GL}}
\newcommand{\from}{\leftarrow}
\newcommand{\bmP}{{\bm{P}}}
\title{Link cobordisms and functoriality in link Floer homology}
\author{Ian Zemke}
\address{Department of Mathematics\\Princeton University\\  Princeton, NJ 08544, USA}
\email{izemke@math.princeton.edu}
\thanks{This research was supported by NSF grant DMS-1703685}
\begin{document}

\begin{abstract}We construct cobordism maps on link Floer homology associated to decorated link cobordisms.  The maps are defined on a curved chain homotopy type invariant. We describe the construction, and prove invariance. We also make a comparison with the graph TQFT for Heegaard Floer homology.
\end{abstract}

\maketitle

\tableofcontents

\section{Introduction}

 Ozsv\'{a}th and Szab\'{o} constructed a powerful set of algebraic invariants of 3- and 4-manifolds, called Heegaard Floer homology \cite{OSDisks} \cite{OSTriangles}. Heegaard Floer homology fits into the framework of a $(3+1)$-dimensional TQFT. Also defined by Ozsv\'{a}th and Szab\'{o} \cite{OSKnots} and independently by Rasmussen \cite{RasmussenKnots}, there is a refinement of Heegaard Floer homology for knots embedded in 3-manifolds, called knot Floer homology. There is a generalization of the knot invariant to links, due to Ozsv\'{a}th and Szab\'{o} \cite{OSLinks}, called link Floer homology.

 To a 4-manifold $W$ with properly embedded surface $\Sigma\subset W$, the boundary $\d \Sigma$ is a link in $\d W$. In light of the TQFT structure of the Heegaard Floer 3- and 4-manifold invariants, it is natural to expect functorial maps between the link Floer homologies of the ends of a link cobordism $(W,\Sigma)$.  An important step in this direction is the work of Juh\'{a}sz \cite{JCob}, who constructed cobordism maps for decorated link cobordisms on the hat version of link Floer homology. One limitation of Juh\'{a}sz's construction is that it uses the contact gluing map due to Honda, Kazez and Mati\'{c} \cite{HKMTQFT}, which limits the construction to $\Hat{\HFL}$, and not the full link Floer complex.
 
 In this paper we provide a construction of a $(3+1)$-dimensional TQFT for a general version of the link Floer complex, and prove invariance of the construction. The link invariant we consider in this paper is a curved variation of the link Floer complexes described by Ozsv\'{a}th and Szab\'{o} \cite{OSLinks}.
 
Given a finite set $\bmP=\{p_1,\dots, p_n\}$, we define
\[
\cR^-_{\bmP}:=\bF_2[X_{p_1},\dots, X_{p_n}],
\]
the polynomial ring freely generated by the variables $X_{p_1},\dots, X_{p_n}$.

We will consider links $L$ embedded in a 3-manifold, equipped with two disjoint collections of basepoints, $\ws$ and $\zs$, which are contained in $L$. We will write $\bL=(L,\ve{w},\ve{z})$ for a link together with two collections of basepoints. We will write $\bL^\sigma$ for the multi-based link $\bL$, equipped with a map
\[
\sigma\colon \ws\cup \zs\to \bmP.
\] 
 We will construct a free and finitely generated $\cR_{\bmP}^-$-module
\[
\cCFL^-(Y,\bL^\sigma,\frs),
\]
which also depends on a choice of $\Spin^c$ structure $\frs$ on $Y$. The module $\cCFL^-(Y,\bL^\sigma,\frs)$ has a natural filtration by $\Z^{\bmP}$. We call the map $\sigma$ a \emph{coloring} of the link $\bL$.

  We note also that the construction of $\cCFL^-(Y,\bL^\sigma,\frs)$ requires some auxiliary choices (a Heegaard diagram and a choice of almost complex structure),  so a precise definition of the invariant  requires some additional formalism. See Section~\ref{sec:complexes} for a precise definition.

The module $\cCFL^-(Y,\bL^\sigma,\frs)$ comes with a distinguished endomorphism $\d$, obtained by counting pseudo-holomorphic curves, which satisfies
\[
\d^2=\omega_{\bL,\sigma} \cdot \id,
\]
for a scalar $\omega_{\bL,\sigma}\in \cR^-_{\bmP}$.

 Modules with an endomorphism squaring to the action of a scalar in the ground ring are often referred to as \emph{matrix factorizations}, and appear in the context of Khovanov-Rozansky homology \cite{KhoRoz}. We will call a module with such an endomorphism a \emph{curved chain complex}.

In this paper, a \emph{decorated link cobordism} 
\[
(W,\cF^\sigma)\colon (Y_1,\bL_1)\to (Y_2,\bL_2)
\]
 between two multi-based links consists of a pair  $(W,\cF^\sigma)$ where $W$ is a compact 4-manifold with $\d W=-Y_1\sqcup Y_2$, and $\cF^\sigma$ is a \emph{colored, oriented surface with divides}. More explicitly, $\cF^\sigma$ consists of a properly embedded,  oriented surface $\Sigma$ in $W$, with a properly embedded collection of dividing arcs $\cA\subset \Sigma$, as well as a map $\sigma\colon C(\Sigma\setminus \cA)\to \bmP$, where $C(\Sigma\setminus \cA)$ denotes the set of connected components of $\Sigma\setminus \cA$. We further assume that 
\begin{enumerate}
\item  $\d\Sigma=-L_1\sqcup L_2$.
\item $\cA$ divides $\Sigma$ into two subsurfaces, $\Sigma_{\ws}$ and $\Sigma_{\zs}$, which meet along $\cA$. Furthermore $\Sigma_{\ws}$ contains  the $\ws$-basepoints of $\bL_1$ and $\bL_2$, and $\Sigma_{\zs}$ contains  the $\zs$-basepoints.
\end{enumerate}

A pictorial description of a decorated link cobordism is shown in Figure~\ref{fig::32}.

 \begin{figure}[ht!]
\centering
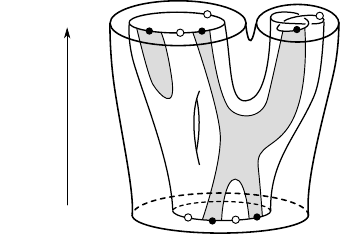
\caption{\textbf{A decorated link cobordism $(W,\cF^\sigma)$}.  The solid dots denote $\ve{w}$-basepoints and the open dots denote $\ve{z}$-basepoints. The shaded region is $\Sigma_{\ws}$, and the unshaded region is $\Sigma_{\zs}$.\label{fig::32}}
\end{figure}

\subsection{Invariants of decorated link cobordisms}

Given a decorated link cobordism 
\[
(W,\cF^\sigma)\colon (Y_1,\bL_1)\to (Y_2,\bL_2)
\]
 and a $\Spin^c$ structure $\frs\in \Spin^c(W)$, our paper centers on constructing a diffeomorphism invariant of $(W,\cF^\sigma)$, which takes the form of a cobordism map
\[
F_{W,\cF^{\sigma},\frs}\colon \cCFL^-(Y_1,\bL_1^{\sigma_1},\frs_1)\to \cCFL^-(Y_2,\bL_2^{\sigma_2},\frs_2),
\] 
where $\frs_i=\frs|_{Y_i}$ and $\sigma_i=\sigma|_{\bL_i}$.

To construct the link cobordism maps, we describe maps on the link Floer complexes which correspond to the following types of elementary cobordisms:
\begin{enumerate}
\item 4-dimensional handle cobordisms for handles attached to the complement of the link.
\item Cylindrical link cobordisms with simple dividing sets.
\item Saddle cobordisms for band surgery on a link in a 3-manifold.
\end{enumerate}
We define the cobordism maps for a general link cobordism $(W,\cF^\sigma)$ by decomposing $(W,\cF^\sigma)$ into a sequence of such elementary cobordisms, and taking the composition of the maps for each elementary piece.

Our paper centers on proving two main properties of the cobordism maps: \emph{diffeomorphism invariance} and \emph{functoriality}.

Proving diffeomorphism invariance of the cobordism maps amounts to showing that the cobordism maps are independent of the decomposition of $(W,\cF)$ into elementary link cobordisms. To this end, we state the following theorem:

\begin{customthm}{A}\label{thm:A} If  $(W,\cF^\sigma)\colon (Y_1,\bL_1)\to (Y_2,\bL_2)$ is a decorated link cobordism, equipped with a $\Spin^c$ structure $\frs\in \Spin^c(W)$, then the map  $F_{W,\cF^{\sigma},\frs}$
constructed in this paper is a $\Z^{\bmP}$-filtered, $\cR_{\bmP}^-$-equivariant chain map, which is a diffeomorphism invariant, up to $\Z^{\bm{P}}$-filtered, $\cR_{\bmP}^-$-equivariant chain homotopy.
\end{customthm}

If $\bL$ is a multi-based link in $Y$, with coloring $\sigma$, there is an identity decorated link cobordism
\[
(W_{\id},\cF_{\id}^{\sigma})\colon (Y,\bL)\to (Y,\bL),
\]
obtained by decorating $([0,1]\times Y, [0,1]\times L)$ with a vertical dividing set (see Definition~\eqref{def:identitylinkcob} for a precise definition).

Proving functoriality of the cobordism maps amounts to showing two things: that the identity link cobordism induces the identity map, and that a version of the composition law is satisfied.

We note that the fact that $(W_{\id},\cF_{\id}^{\sigma})$ induces the identity map is essentially immediate from the definition. Indeed we can give $(W_{\id},\cF_{\id}^{\sigma})$ a decomposition into elementary link cobordisms consisting of a single elementary link cobordism, namely $(W_{\id},\cF_{\id}^{\sigma})$ itself. The induced map is defined to be the identity.

As a second step towards proving functoriality, we prove the following version of the composition law:
\begin{customthm}{B}\label{thm:B}Suppose that $(W,\cF^\sigma)\colon (Y_1,\bL_1)\to (Y_2,\bL_2)$ is a decorated link cobordism which decomposes as
\[
(W,\cF^\sigma)=(W_2,\cF_2^{\sigma_2})\cup (W_1,\cF_1^{\sigma_1}),
 \] where each $(W_i,\cF_i^{\sigma_i})$ is a  decorated link cobordism, and $\sigma_i=\sigma|_{\cF_i}$. If $\frs_1$ and $\frs_2$ are $\Spin^c$ structures on $W_1$ and $W_2$, respectively, then
\[
F_{W_2,\cF_2^{\sigma_2},\frs_2}\circ F_{W_1,\cF_1^{\sigma_1},\frs_1}\simeq \sum_{\substack{\frs\in \Spin^c(W)\\ \frs|_{W_i}=\frs_i}} F_{W,\cF^{\sigma},\frs}.
\]
\end{customthm}

\subsection{Further remarks on the cobordism maps}

We now make several brief remarks about the TQFT described in this paper.

Firstly, we note that our  TQFT is somewhat more flexible than the original (3+1)-dimensional TQFT described by Ozsv\'{a}th and Szab\'{o} \cite{OSDisks} \cite{OSTriangles} with respect to connectedness of 3- and 4-manifolds. In this paper, we allow 3- and 4-manifolds to be disconnected or empty. Our requirements for 3-manifolds containing multi-based links $(Y,\bL)$ are simply that each connected component of $Y$ contains a component of $\bL$, and that each connected component of $\bL$ contains at least two basepoints. Similarly we require that each connected component of a 4-manifold contains a component of the link cobordism surface, and that each connected component of the link cobordism surface contains a dividing arc.

There are two elementary dividing sets on $[0,1]\times L\subset [0,1]\times Y$ which are particularly important to our paper. They are shown in Figure~\ref{fig::112}. The induced maps are the \emph{quasi-stabilization maps}, which we denote by $S_{w,z}^+$, $S_{w,z}^-$, $T_{w,z}^+$ and $T_{w,z}^-$. It is important to note that, up to isotopy, any dividing set on $[0,1]\times L$ can be built by stacking a sequence of such pieces on top of each other. Hence all of our cobordism invariants for decorations of cylindrical link cobordisms can be encoded into  algebraic relations involving the quasi-stabilization maps; See Section~\ref{sec:mapswhichappear} for further details.

\begin{figure}[ht!]
\centering
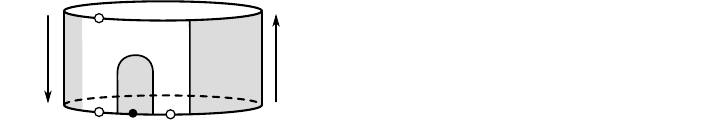
\caption{\textbf{Two simple dividing sets on $[0,1]\times L\subset [0,1]\times Y$.} In this paper, we will write $S_{w,z}^+,$ $S_{w,z}^-$, $T_{w,z}^+$ and $T_{w,z}^-$ for the induced maps, and refer to these maps as the \emph{quasi-stabilization} maps.\label{fig::112}}
\end{figure}

Finally, we remark that there is an additional relation which appears frequently in the TQFT. Suppose $(W,\cF_1^{\sigma_1})$, $(W,\cF_2^{\sigma_2})$ and $(W,\cF_3^{\sigma_3})$ are decorated link cobordisms which all share the same underlying, undecorated link cobordism $(W,\Sigma)$, and whose decorations agree outside of a disk $D^2$ contained in $\Sigma$. Suppose further that on $D^2$, the three decorations satisfy the configuration shown in Figure~\ref{fig::113}. Then
 \begin{equation}
F_{W,\cF_1^{\sigma_1},\frs}+F_{W,\cF_2^{\sigma_2},\frs}+F_{W,\cF_3^{\sigma_3},\frs}\simeq 0. \label{eq:THEbypassrelation}
 \end{equation}
Inspired by an analogous picture from contact geometry,  we  refer to  the relation in Equation~\eqref{eq:THEbypassrelation} as the \emph{bypass relation}. Since the bypass relation appears frequently, there are many ways of proving it. See, e.g., Lemma~\ref{lem:PhiPsicommutators}.

\begin{figure}[ht!]
\centering
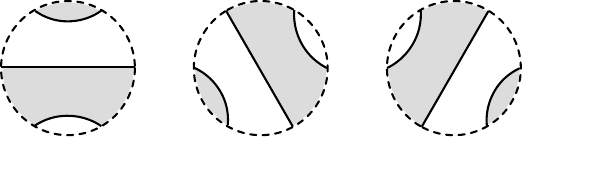
\caption{\textbf{The bypass relation.} If three dividing sets on a decorated link cobordism agree outside of a disk, and inside the disk they fit into the triad shown, then the sum of their invariants is chain homotopic to zero. Many relations between naturally occurring maps on the link Floer complexes can be interpreted in terms of this relation. \label{fig::113}}
\end{figure}

\subsection{Algebraic reduction to the graph TQFT}

As an application, we compare the link cobordism maps defined in this paper to the graph cobordism maps defined in \cite{ZemGraphTQFT} for cobordisms $(W,\Gamma)$ with embedded ribbon graphs between closed, multi-pointed 3-manifolds. For the purposes of this section, we restrict to colorings with exactly two colors, and we further assume that all the $\ws$-basepoints and regions are assigned the variable $U$, and all the $\zs$-basepoints and regions are assigned the variable $V$. We omit the coloring $\sigma$ from the notation in this section.

\begin{define}A \emph{ribbon graph cobordism} $(W,\Gamma)\colon (Y_1,\ve{w}_1)\to (Y_2,\ve{w}_2)$ between two multi-based 3-manifolds is a pair $(W,\Gamma)$ such that the following are satisfied:
\begin{enumerate}
\item $W$ is a cobordism from $Y_1$ to $Y_2$.
\item $\Gamma$ is an embedded graph in $W$ such that $\Gamma\cap \d W=\ve{w}_1\cup \ve{w}_2$ and each basepoint of $\ve{w}_1$ and $\ve{w}_2$ has valence 1 in $\Gamma$.
\item Each vertex of $\Gamma$ has valence at least 1.
\item $\Gamma$ is decorated with a ribbon structure, i.e.,  a choice of cyclic ordering of the edges adjacent to each vertex.
\end{enumerate}
\end{define}

In \cite{ZemGraphTQFT}, the author proves that there are $\Spin^c$ functorial maps associated to ribbon graph cobordisms $(W,\Gamma)$ equipped with 4-dimensional $\Spin^c$ structures.

 If $(Y,\bL)$ is a multi-based link in $Y$, then assuming that we are coloring the complexes with only two variables $U$ and $V$, as above, there are natural chain isomorphisms
\[
\cCFL^-(Y,\bL,\frs)\otimes_{\bF_2[U,V]}\bF_2[U,V]/(V-1)\iso \CF^-(Y,\ve{w},\frs)
\] and
\[
\cCFL^-(Y,\bL,\frs)\otimes_{\bF_2[U,V]}\bF_2[U,V]/(U-1)\iso\CF^-(Y,\ve{z},\frs-\PD[L]).
\]
 The map $F_{W,\cF,\frs}$  associated to the link cobordism $(W,\cF)$ thus induces two maps on $\CF^-$, for which we write
\[
F_{W,\cF,\frs}|_{U=1}\qquad \text{and} \qquad F_{W,\cF,\frs}|_{V=1}.
\]

Given a decorated link cobordism $(W,\cF)\colon (Y_1,\bL_1)\to (Y_2,\bL_2)$, we will describe a way to construct ribbon graphs 
\[
\Gamma(\Sigma_{\ve{w}})\subset \Sigma_{\ve{w}} \qquad \text{and} \qquad \Gamma(\Sigma_{\ve{z}})\subset \Sigma_{\ve{z}}\] 
which we will call \textit{ribbon 1-skeletons} of the surfaces $\Sigma_{\ve{w}}$ and $\Sigma_{\ve{z}}$, respectively (see Definition~\ref{def:ribbongraphcore} for a precise description). We prove the following:

\begin{customthm}{C}\label{thm:C} Given a decorated link cobordism $(W,\cF)$  with subsurfaces $\Sigma_{\ve{w}}$ and $\Sigma_{\ve{z}}$ and choices of ribbon 1-skeletons $\Gamma(\Sigma_{\ve{w}})$ and $\Gamma(\Sigma_{\ve{z}})$,  the reductions satisfy
\[
F_{W,\cF,\frs}|_{V=1}\simeq F_{W,\Gamma(\Sigma_{\ve{w}}),\frs}^B\qquad \text{and}\qquad F_{W,\cF,\frs}|_{U=1}\simeq F_{W,\Gamma(\Sigma_{\ve{z}}),\frs-\PD[\Sigma]}^A.
\] 
\end{customthm}

In the above theorem, the maps $F_{W,\Gamma,\frs}^A$ are the graph cobordism maps defined in \cite{ZemGraphTQFT}. The maps $F_{W,\Gamma,\frs}^B$ are minor variations, which we describe in Section~\ref{sec:graphTQFTs}. The two versions of the graph cobordism maps (the $A$-maps and the $B$-maps) satisfy the relation
\[
F_{W,\Gamma,\frs}^B\simeq F_{W,\bar{\Gamma},\frs}^A,
\] 
where $\bar{\Gamma}$ denotes the ribbon graph $\Gamma$ with cyclic orderings reversed.

In \cite{ZemGraphTQFT}, the graph cobordism maps are shown to be invariant under isotopies of the graph $\Gamma$ which fix $\Gamma\cap \d W$. Many of the algebraic relations suggest that the maps are invariant under more moves than just isotopy, such as adding trivial strands to the graph, or sliding edges across vertices (while respecting the cyclic orderings). In Definition~\ref{def:ribbonequivalent} we provide a notion of \textit{ribbon-equivalence} between ribbon graphs, which is weaker than isotopy. As corollaries to Theorem~\ref{thm:C}, we obtain  the following:

\begin{customcor}{D}\label{cor:D}If $(W,\Gamma)$ and $(W,\Gamma')$ are ribbon-equivalent ribbon graph cobordisms, then
\[
F_{W,\Gamma,\frs}^A\simeq F_{W,\Gamma',\frs}^A \qquad \text{and} \qquad F_{W,\Gamma,\frs}^B\simeq F_{W,\Gamma',\frs}^B.\] 
\end{customcor}
\begin{customcor}{E}\label{cor:E}
 The $V=1$ reduction of $F_{W,\cF,\frs}$,
\[
F_{W,\cF,\frs}|_{V=1}\colon \CF^-(Y_1,\ve{w}_1,\frs|_{Y_1})\to\CF^-(Y_2,\ve{w}_2,\frs|_{Y_2}),
\] depends only on the ribbon surface $\Sigma_{\ve{w}}$ and the $\Spin^c$ structure $\frs$. The $U=1$ reduction of $F_{W,\cF,\frs}$, 
\[
F_{W,\cF,\frs}|_{U=1}\colon \CF^-(Y_1,\ve{z}_1,\frs-\PD[L_1])\to\CF^-(Y_2,\ve{z}_2,\frs-\PD[L_2]),
\]
 depends only on the ribbon surface $\Sigma_{\ve{z}}$, and the $\Spin^c$ structure $\frs-\PD[\Sigma]$.
\end{customcor}

\begin{rem}Corollary~\ref{cor:E} can be used to give an alternate construction and proof of invariance of the graph cobordism maps from \cite{ZemGraphTQFT}; see Section~\ref{sec:alternateconstructionofgraphs}.
\end{rem}

\subsection{Comparison to other link cobordism map constructions}  There are already several notable examples of link cobordism map constructions in the literature. The most relevant to this paper is the construction by Juh\'{a}sz \cite{JCob} for the hat flavor of link Floer homology, denoted $\Hat{\HFL}$. If $\bL$ is a multi-pointed link in $Y$, the hat version of the link Floer complex, denoted $\Hat{\CFL}(Y,\bL)$, is obtained by formally setting all of the variables in $\cR_{\bmP}^-$ equal to 0, and then summing over all $\Spin^c$ structures. Taking homology, we obtain $\Hat{\HFL}(Y,\bL)$. We note that on the hat version of link Floer homology, our maps are independent of the coloring $\sigma$. Similarly, Juh\'{a}sz's definition of a decorated link cobordism is equivalent to ours, if we ignore the coloring. In light of this, we make the following conjecture:

\begin{conj}\label{conj:TQFTsagree}The maps $\Hat{F}_{W,\cF} :=\sum_{\frs\in \Spin^c(W)} \Hat{F}_{W,\cF,\frs}$ defined in this paper for the hat flavor coincide with the maps $F_{W,\cF}$ defined in \cite{JCob} using sutured Floer homology.\footnote{Since the original posting of this paper, this conjecture has been proven by Juh\'{a}sz and the author \cite{JuhaszZemkeContactHandles}.}
\end{conj}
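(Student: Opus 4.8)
The plan is to prove the conjecture by reducing, on both sides, to a comparison of the maps assigned to elementary link cobordisms, and then appealing to functoriality. First I would invoke Theorem~\ref{thm:A}, which shows that $\Hat{F}_{W,\cF}$ is a well-defined invariant of the decorated link cobordism $(W,\cF)$, together with the corresponding invariance statement for Juh\'asz's maps in \cite{JCob}; since the two notions of decorated link cobordism agree after forgetting the coloring, this reduces the problem to exhibiting a \emph{single} decomposition of $(W,\cF)$ into elementary pieces on which the two constructions visibly agree. It therefore suffices to treat separately: (i) $4$-dimensional handle attachments in the complement of the link, (ii) cylindrical cobordisms carrying the simple dividing sets of Figure~\ref{fig::112}, that is, the quasi-stabilization maps $S_{w,z}^{\pm}$ and $T_{w,z}^{\pm}$, and (iii) saddle cobordisms for band surgery, and to check in each case that the hat reduction of the map built in this paper coincides with Juh\'asz's map for the same piece.

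The conceptual heart of the argument is to re-express both theories in terms of \emph{contact handle attachment maps} in sutured Floer homology. On the side of \cite{JCob}, the only input beyond the ordinary Ozsv\'ath--Szab\'o $3$- and $4$-dimensional cobordism maps is the Honda--Kazez--Mati\'c contact gluing map of \cite{HKMTQFT}; choosing a contact handle decomposition of the relevant product sutured manifold, one writes the gluing map as a composition of elementary contact $0$-, $1$-, $2$-, and $3$-handle maps, and one must verify that the resulting composite is independent of the chosen handle decomposition. On the side of the present paper, one then identifies the quasi-stabilization maps with contact $0$- and $3$-handle maps, the saddle map with a composition of a canonical contact $1$-handle and a contact $2$-handle, and, under the identification of $\Hat{\CFL}$ of a multi-based link with the sutured Floer homology of its complement, the handle maps of (i) with Juh\'asz's sutured handle maps. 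Each identification is to be proved by a model computation: place the elementary cobordism in standard position with respect to an adapted Heegaard diagram and match the two maps on generators, using the small-triangle description for contact $1$- and $2$-handles and the nearest-point maps for $0$- and $3$-handles. The bypass relation~\eqref{eq:THEbypassrelation} and its analogue on the sutured side can be used to cut down the number of configurations that must be checked directly.

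The main obstacle I anticipate is case (ii): making the Honda--Kazez--Mati\'c gluing map concrete enough to recognize it as a quasi-stabilization. That map is defined indirectly, via partial open books and the contact invariant, whereas $S_{w,z}^{\pm}$ and $T_{w,z}^{\pm}$ are defined by counting holomorphic disks in a doubly-pointed Heegaard diagram, so the crux is to produce a Heegaard-diagrammatic formula for the contact gluing map associated to an elementary product piece and then compare it with the explicit formula for the quasi-stabilization map. A secondary, more bookkeeping-level, difficulty is that the hat flavor considered here is obtained by summing the $\Spin^c$-refined maps $\Hat{F}_{W,\cF,\frs}$ over $\frs\in\Spin^c(W)$, while Juh\'asz's construction does not record a $\Spin^c$ structure; one resolves this by checking that the contact handle maps and the link-complement handle maps are supported in the expected $\Spin^c$ structures, and that the composition law of Theorem~\ref{thm:B} is compatible with the summation, so that $\sum_{\frs}\Hat{F}_{W,\cF,\frs}$ reassembles into Juh\'asz's map. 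Once agreement on elementary pieces is established, functoriality of both TQFTs finishes the proof.
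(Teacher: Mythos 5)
The paper does not prove this statement; it is presented as Conjecture~\ref{conj:TQFTsagree}, and the surrounding text says explicitly ``We will not pursue the conjecture in this paper.'' The footnote records that it was proved only later, by Juh\'asz and the author in \cite{JuhaszZemkeContactHandles}. So there is no proof in the paper against which to compare your argument, and what you have written is, by your own framing, a plan rather than a proof: you say ``The plan is to prove \dots,'' you \emph{anticipate} an obstacle in case (ii), and you describe a sequence of model computations without carrying any of them out.

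That said, the strategy you outline is the right one, and it is essentially the route taken in \cite{JuhaszZemkeContactHandles}: realize both TQFTs on elementary pieces via contact handle attachment maps in sutured Floer homology, identify $S_{w,z}^{\pm}$ and $T_{w,z}^{\pm}$ with contact $0$- and $3$-handle maps, identify the band maps with compositions of contact $1$- and $2$-handle maps, check that the $4$-dimensional handle maps agree under the identification of $\Hat{\CFL}$ with the sutured Floer homology of the link complement, and then use functoriality (Theorems~\ref{thm:A} and~\ref{thm:B} on one side, Juh\'asz's invariance theorem on the other) to assemble. The $\Spin^c$ bookkeeping point you raise is a real issue and is handled as you suggest.

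The genuine gap is the one you flag yourself: the Honda--Kazez--Mati\'c gluing map is defined via partial open books and the contact invariant, not via an explicit formula on a doubly-pointed Heegaard diagram, so recognizing it as a quasi-stabilization is not a routine check. Making the contact gluing map explicit enough to compare against the quasi-stabilization formulas, and verifying independence of the chosen contact handle decomposition, is the substantive content of the proof; your proposal names the problem but does not solve it. A secondary omission: you need to verify, not merely assert, that the two notions of decorated link cobordism and their invariance statements are compatible on the level of transitive systems (including the basepoint/coloring conventions and the identifications used in the hat reduction). Until the model computations are actually performed, the argument remains a program, not a proof.
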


In \cite{JMComputeCobordismMaps}, Juh\'{a}sz and Marengon provide a partial computation of many elementary link cobordism maps from Juh\'{a}sz's sutured TQFT, which could be helpful in proving Conjecture~\ref{conj:TQFTsagree}. We will not pursue the conjecture in this paper. We note that the maps induced by Juh\'{a}sz's TQFT for knot concordances are studied in \cite{JMConcordance}.

There are additional partial constructions of link cobordism maps in link Floer homology. For example, Sarkar defines maps associated to surfaces in $[0,1]\times S^3$ in order to provide an alternate proof of the Milnor conjecture using grid diagrams \cite{SarkarTau}.  Ozsv\'{a}th, Stipsicz and Szab\'{o} use a grid diagrammatic construction of link cobordism maps to analyze the unoriented four-ball genus of knots and links \cite{OSSUnorientedGenus}. We note that neither Sarkar nor Ozsv\'{a}th--Stipsicz--Szab\'{o} prove invariance of their constructions. It would be interesting to know whether a construction and proof of invariance could be done entirely through grid diagrams. A set of grid movie moves for decorated surfaces was described in  \cite{GrahamMovieMoves}, though verification of the invariance of the grid homology maps under these moves has not been completed. Nevertheless, we state the following conjecture:

\begin{conj}\label{conj:griddiagrams} The maps described in terms of grid diagrams  by Sarkar \cite{SarkarTau} and Ozsv\'{a}th--Stipsicz--Szab\'{o}  \cite{OSSUnorientedGenus} are equal to the maps defined in this paper for elementary link cobordisms inside of $[0,1]\times S^3$.
\end{conj}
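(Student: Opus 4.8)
The plan is to first realize a grid diagram for a link $L\subset S^3$ as a particular Heegaard diagram for the multi-based link $\bL=(L,\ws,\zs)$, with the $\ws$-basepoints at the $O$-markings and the $\zs$-basepoints at the $X$-markings, and then to identify the combinatorial grid complexes used by Sarkar \cite{SarkarTau} and by Ozsv\'{a}th--Stipsicz--Szab\'{o} \cite{OSSUnorientedGenus} with the complex $\cCFL^-(S^3,\bL^\sigma,\frs_0)$ of Section~\ref{sec:complexes}, computed from that diagram with a nearly-symmetric almost complex structure. This is exactly the content of the combinatorial proofs of invariance of link Floer homology (Manolescu--Ozsv\'{a}th--Sarkar--Thurston, and the minus-version refinement): for such an almost complex structure the index-one moduli spaces on the grid torus are empty unless the domain is an embedded rectangle, and one recovers the rectangle-counting differential, so the two chain complexes are identified on the nose. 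Granting this dictionary, Theorem~\ref{thm:A} reduces the comparison of cobordism maps to a comparison, carried out one elementary piece of a grid movie at a time, of the map this paper attaches to the piece with the combinatorial formula used in \cite{SarkarTau} and \cite{OSSUnorientedGenus}.

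The elementary moves in a grid movie inside $[0,1]\times S^3$ fall into two groups. The first consists of moves that realize the identity link cobordism but change the grid: commutations, (de)stabilizations, cyclic permutations, and the moves that insert or delete a basepoint pair. For commutations, (de)stabilizations and cyclic permutations the relevant map here is the corresponding change-of-diagram map (in the (de)stabilization case a one-handle/three-handle composition), and one must check it agrees with the standard combinatorial commutation or (de)stabilization isomorphism; these should again follow from triangle- or destabilization-counts performed with a nearly-symmetric complex structure, so that the moduli spaces are combinatorial. A basepoint-pair insertion or deletion is precisely a quasi-stabilization, so this piece is controlled by the maps $S_{w,z}^{\pm}$ and $T_{w,z}^{\pm}$ and the relations among them recorded in Section~\ref{sec:mapswhichappear}. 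The second group consists of the genuine cobordism moves: the birth or death of a split unknot, realized on the grid by inserting or deleting a small block, and an oriented band (saddle) move, realized by the grid saddle of \cite{SarkarTau} or the band maps of \cite{OSSUnorientedGenus}. For births and deaths one verifies directly that the $0$- and $2$-handle maps of this paper for the unknot component, read off the grid Heegaard diagram, are given by the same formulas. For a saddle one compares this paper's band map with the combinatorial saddle map: the approach I would take is to present the band cobordism by a bent Heegaard diagram, stretch the neck, and identify the index-zero count with the combinatorial count of the relevant rectangles, keeping track of the powers of the colored variables $U$ and $V$ and of the induced changes in the $\Spin^c$ structure and the Alexander grading.

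The main obstacle is twofold. First, neither \cite{SarkarTau} nor \cite{OSSUnorientedGenus} proves that its grid cobordism maps are invariant, so the comparison must be made for the \emph{specific} grid presentations of the elementary cobordisms; a clean treatment really wants a proof that those grid maps are invariant under the grid movie moves of \cite{GrahamMovieMoves}, which is a substantial undertaking in its own right and, combined with the present paper, would in effect reprove invariance of the grid constructions. Second, and more concretely, the saddle comparison is the genuine analytic input: matching the band map of this paper with the combinatorial saddle map requires controlling a one-parameter degeneration of moduli spaces on a bent grid diagram, and the accompanying bookkeeping of $U$- and $V$-powers is exactly the sort of step that is delicate in analogous Heegaard Floer cobordism-map comparisons. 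By contrast, once the grid-as-Heegaard-diagram and nearly-symmetric-complex-structure dictionary is in place, the births, deaths, quasi-stabilizations, commutations, and (de)stabilizations should be comparatively routine, and the reductions $F_{W,\cF,\frs}|_{U=1}$ and $F_{W,\cF,\frs}|_{V=1}$ of Theorem~\ref{thm:C} furnish useful partial checks, since the grid-diagrammatic maps must reduce compatibly to graph-cobordism maps on $\CF^-$.
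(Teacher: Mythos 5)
This statement is labeled a \emph{conjecture} in the paper and is deliberately left unproved: the paragraph preceding it even records that ``verification of the invariance of the grid homology maps under these moves has not been completed,'' and the paper says explicitly ``We will not pursue the conjecture in this paper.'' There is therefore no argument in the paper for your proposal to be measured against, and what you have written is a research plan rather than a proof. To your credit, you identify this yourself: the two obstacles you flag in your final paragraph are exactly why the statement remains a conjecture.

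To be concrete about where the gaps are. First, neither Sarkar's nor Ozsv\'ath--Stipsicz--Szab\'o's constructions come with a proof of independence from the grid movie presenting the cobordism, so ``compare elementary piece by elementary piece'' is not well posed until one either proves invariance of the grid maps under the moves of \cite{GrahamMovieMoves}, or pins down a canonical grid presentation of each elementary cobordism and shows the present paper's map is insensitive to that normalization; you gesture at both options but supply neither. Second, identifying the band map $F_B^{\ve{o}}$ of Section~\ref{sec:bandmaps} with the combinatorial saddle count is a genuine analytic step: the band map is a holomorphic triangle count on a triple $(\Sigma,\as',\as,\bs)$ subordinate to the band, not a bigon/rectangle count on a fixed grid torus, and there is no degeneration argument in the paper (or in the literature you cite) that converts one into the other with control of the $U$- and $V$-powers and the $\Spin^c$-structure bookkeeping that Lemma~\ref{lem:changeSpincstructure} forces you to track. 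Your appeal to the reductions of Theorem~\ref{thm:C} is a sensible sanity check, but it only constrains the $U=1$ and $V=1$ specializations and cannot by itself establish the equality over $\bF_2[U,V]$. Until these two steps are filled in, the proposal is a plausible outline consistent with how the paper's machinery would be deployed, but it does not prove the conjecture.
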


There is an additional construction due to Alishahi and Eftekhary \cite{AEtangle} of Floer homology for multi-based tangles, generalizing knot Floer homology. Alishahi and Eftekhary give a construction of  cobordism maps for cobordisms between tangles, and prove invariance. When restricted to links, their construction is similar to the one in this paper, and features maps associated to saddles which are defined with the same Heegaard triples as the ones in this paper. Nonetheless, the TQFT structure is not (obviously) the same as the one in this paper, though it is likely that there is a concrete relationship.

\subsection{Further developments}

 We now mention a few basic properties of the cobordism maps which have been proven in subsequent papers. The link cobordism maps constructed in this paper satisfy some simple Maslov and Alexander grading change formulas \cite{ZemAbsoluteGradings}*{Theorem~1.4}. Furthermore, using the grading change formula, the author computes the maps for closed surfaces with simple dividing sets when $b_1=0$ and $b_2^+=0$ \cite{ZemAbsoluteGradings}*{Theorem~1.7}.  
 
The link cobordism maps also satisfy a version of conjugation invariance \cite{ZemConnectedSums}*{Theorem~1.3} similar to the version satisfied by the standard 4-manifold invariants \cite{OSTriangles}*{Theorem~3.6}. Using this, and some further properties of the link cobordism maps, the author explores the relation between the cobordism maps and the K\"{u}nneth theorem for connected sums, and in particular proves a connected sum formula for involutive knot Floer homology \cite{ZemConnectedSums}*{Theorem~1.1}.

\subsection{Conventions and notation}

\label{subsec:convention}
Throughout the paper, we will use the outward normal first orientation convention, which is standard in gauge theory. That is, if $M$ is an oriented manifold and $p\in \d M$, with an outward normal vector $N$ at $p$, we say an ordered basis $(v_1,\dots, v_n)$ of $T_p \d M$ is oriented if and only if $(N,v_1,\dots, v_n)$ is an oriented basis for $T_p M$. With this convention, the identity cobordism is $([0,1]\times Y,[0,1]\times L)$.

Unless otherwise specified, we will write ``chain homotopic'' and use the symbol $\simeq$ to mean ``chain homotopic through filtered and equivariant chain homotopies.''

Finally, whenever we list several consecutive basepoints on a link component, we will always write them ordered from right to left (we will remind the reader of this convention). For example, if we write $(w,z,w',z')$, we mean the $w$ follows $z$, which follows $w'$, which follows $z'$. We extend this notation to quasi-stabilization maps and basepoint moving maps. If we write $S_{w,z}^+$, the implication is that $w$ immediately follows $z$, whereas if we write $S_{z',w'}^+$, then the implication is that $z'$ immediately follows $w'$. 

\subsection{Organization}

In Section~\ref{subsec:category}, we describe  the categories of decorated link cobordisms and curved chain complexes. In Section~\ref{sec:complexes}, we define the link Floer complexes we consider in this paper, and prove some basic properties. In Section~\ref{sec:mapswhichappear}, we define the quasi-stabilization maps, as well as several other maps which naturally appear on the link Floer complexes, and relate the quasi-stabilization maps to some basepoint moving diffeomorphism maps on link Floer homology. In Section~\ref{sec:handleattachmentmapsawayfromL}, we describe the 4-dimensional handle maps on the link Floer complex. In Section~\ref{sec:bandmaps}, we define the cobordism maps for saddle cobordisms. In Section~\ref{sec:compoundhandles}, we describe some useful compositions of the maps from the previous sections which are associated to several simple link cobordisms, such as a birth or death link cobordism, or a 4-dimensional 1-handle containing a saddle. In Sections~\ref{sec:relationsI} and \ref{sec:furtherrelations} we prove some more technical relations about the various maps defined previously in the paper. In Sections~\ref{sec:modelcobs-parametrizedcobs} and \ref{sec:morsetheoyr-weakequivalences}, we prove the necessary versions of Morse and Cerf theory  which we will use to prove invariance of the cobordism maps. In Section~\ref{sec:construction-and-invariance}, we prove that the maps described in this paper are invariants of decorated link cobordisms. In Section~\ref{sec:compositionlaw} we prove the composition law. In Section~\ref{sec:graphTQFTs}, we show that the link cobordism maps naturally reduce to the graph cobordism maps.

\subsection{Acknowledgments} The author would like to thank Akram Alishahi, Haofei Fan, Kristen Hendricks, Jen Hom, Andr\'{a}s Juh\'{a}sz, Robert Lipshitz, Ciprian Manolescu, and Marco Marengon for helpful conversations and suggestions. The author would also like to thank Sucharit Sarkar and Andy Manion for suggesting to work in the setting of curved chain complexes. Finally, the author thanks the anonymous referee for pointing out several errors and making many helpful suggestions.

\section{The categories of decorated link cobordisms and curved chain complexes}

\label{subsec:category}

In this section we define the category of decorated link cobordisms which features in this paper.

\begin{define}\label{def:multibasedlink}A \emph{3-manifold with a multi-based link} is a pair $(Y,\bL)$ such that
\begin{enumerate}
\item  $Y$ is a closed, oriented 3-manifold,
\item  $\bL=(L,\ve{w},\ve{z})$ is a triple consisting of an oriented, embedded link $L\subset Y$, with disjoint collections of basepoints $\ve{w}$ and $\ve{z}$ on $L$, and
\item each component of $L$ has at least two basepoints, and the basepoints alternate between those in $\ve{w}$ and those in $\ve{z}$ as one traverses a component of $L$.
  \end{enumerate}
\end{define}

\begin{define} A \emph{surface with divides}  is a pair $\cF=(\Sigma, \cA)$ such that,
\begin{enumerate}
\item  $\Sigma$ is a compact, oriented surface,
\item  $\cA\subset \Sigma$ is a properly embedded 1-manifold, and
\item the components of $\Sigma\setminus \cA$ are partitioned into two sub-surfaces, $\Sigma_{\ve{w}}$ and $\Sigma_{\ve{z}}$, which meet along $\cA$.
\end{enumerate}
\end{define}

To achieve well-defined, functorial maps in our link TQFT, we need to consider colorings of the basepoints and the surfaces with divides.

\begin{define} A \emph{coloring} of a multi-based link $\bL=(L,\ve{w},\ve{z})$ is a map $\sigma\colon \ws\cup \zs\to \bmP$, where $\bmP$ is a finite set. A \emph{coloring} of a surface with divides $\cF=(\Sigma, \cA)$ is a map $\sigma\colon C(\Sigma\setminus \cA)\to \bmP$, where $\bmP$ is a finite set, and $C(\Sigma\setminus \cA)$ denotes the set of components of $\Sigma\setminus \cA$.
\end{define}

We think of $\bmP$ as the set of colors. We will write $\bL^\sigma$ for a multi-based link equipped with a coloring $\sigma\colon \ws\cup \zs\to \bmP$. Similarly we will write $\cF^\sigma$ for a surface with divides, equipped with a coloring $\sigma$. Note that the codomain $\bmP$ of $\sigma$ is part of the data of a coloring.

Modeled on Juhasz's definition of a decorated link cobordism \cite{JCob}*{Definition~4.5}, we now define a notion of morphism between 3-manifolds with multi-based links which we consider in this paper:

\begin{define}If $(Y_1,\bL_1)=(Y_1,(L_1,\ws_1,\zs_1))$ and $(Y_2,\bL_2)=(Y_2,(L_2,\ws_2,\zs_2))$ are two 3-manifolds with multi-based links, a \emph{decorated link cobordism} from $(Y_1,\bL_1)$ to $(Y_2,\bL_2)$  consists of a pair  $(W,\cF^\sigma)$ such that
\begin{enumerate}
\item $W$ is a compact 4-manifold with $\d W=-Y_1\sqcup Y_2$,
\item  $\cF=(\Sigma, \cA)$ is a surface with divides, equipped with the coloring  $\sigma$,
\item $\Sigma\subset W$ is a properly embedded, compact, oriented surface with $\Sigma\cap \d W=-L_1\sqcup L_2$,
\item each component of $L_i\setminus \cA$ contains exactly one basepoint of $\ve{w}_i\cup \ve{z}_i$, and
\item $\ws_1\cup \ws_2\subset \Sigma_{\ws}$ and $\zs_1\cup \zs_2 \subset\Sigma_{\zs}.$ 
\end{enumerate}
\end{define}

We define the identity decorated link cobordism as follows:
\begin{define}\label{def:identitylinkcob} If $Y$ is a 3-manifold containing the multi-based link $\bL=(L,\ws,\zs)$, equipped with the coloring $\sigma$, the \emph{identity decorated link cobordism} 
\[
(W_{\id}, \cF_{\id}^{\sigma})\colon (Y,\bL)\to (Y,\bL)
\]
is constructed by setting $W_{\id}:=[0,1]\times Y$ and $\cF_{\id}:=([0,1]\times L,[0,1]\times \ve{p})$, where $\ve{p}\subset L\setminus (\ws\cup \zs)$ is a collection of points consisting of exactly one point per component of $L\setminus (\ws\cup \zs)$. The coloring of $\cF_{\id}$ (which by abuse of notation we are denoting $\sigma$) is the unique coloring which restricts to $\sigma$ on $\bL$.
\end{define}

\begin{rem}To properly define a link cobordism category, one must work with cobordisms which are equipped with parametrizations of their boundary components. More precisely, a  link cobordism in the category of decorated link cobordisms consists of a 4-manifold with an embedded surface $(W,\cF)$ with boundary equal to $(-Y_1',-\bL_1')\sqcup (Y_2',\bL_2')$, together with choices of orientation preserving diffeomorphisms $\phi_i\colon (Y_i',\bL_i')\to (Y_i,\bL_i)$. Such a definition is necessary to precisely define a category of cobordisms, since otherwise the cobordism category lacks identity morphisms, for example. We will never explicitly deal with cobordisms at this level, since the notation becomes cumbersome.
\end{rem}

If $\bL=(L,\ws,\zs)$ is a multi-based link with $\ws=\{w_1,\dots, w_n\}$ and $\zs=\{z_1,\dots, z_n\}$, we define the ring
\[
\bF_2[U_{\ws},V_{\zs}]:=\bF_2[U_{w_1},\dots, U_{w_n}, V_1,\dots, V_{z_n}].
\] We define the   \emph{curvature constant} $\omega_{\bL}\in \bF_2[U_{\ws},V_{\zs}]$ via the formula
\[\omega_{\bL}:=\sum_{K\in C(L)} \left(U_{w_{K,1}}V_{z_{K,1}}+V_{z_{K,1}}U_{w_{K,2}}+U_{w_{K,2}}V_{z_{K,2}}+\cdots +V_{z_{K,n_K}}U_{w_{K,1}}
\right),\] where $C(L)$ denotes the set of components of $L$, and $w_{K,1},$ $z_{K,1},$ $w_{K,2},$ \dots , $z_{K,n_K}$ are the basepoints on a link component $K$, in the order that they appear. 

 If $\bmP=\{p_1,\dots, p_n\}$ is a set of colors, we define the rings
\[\cR^-_{\bmP}:=\bF_2[X_{p_1},\dots, X_{p_n}]\qquad \text{and} \qquad \cR^\infty_{\bmP}:=\bF_2[X_{p_1},\dots, X_{p_n}, X_{p_1}^{-1},\dots, X_{p_n}^{-1}].\]

 If $\sigma$ is a coloring of $\bL$  which has codomain $\bmP$, then $\cR^-_{\bmP}$ is a module over $\bF_2[U_{\ws}, V_{\zs}]$, and we define the colored curvature constant $\omega_{\bL,\sigma}\in \cR^-_{\bmP}$ to be the element
\[\omega_{\bL,\sigma}:= \omega_{\bL}\cdot 1_{\cR^-_{\bmP}}\]
\[=\sum_{K\in C(L)} \left(X_{\sigma(w_{K,1})}X_{\sigma(z_{K,1})}+X_{\sigma(z_{K,1})}X_{\sigma(w_{K,2})}+X_{\sigma(w_{K,2})}X_{\sigma(z_{K,2})}+\cdots +X_{\sigma(z_{K,n_K})}X_{\sigma(w_{K,1})}
\right).\]

If two colored, multi-based links are cobordant via a decorated link cobordism, then they have the same curvature constant:

\begin{lem}\label{lem:cobordantlinkshavesamecoloring}Suppose that $\cF^\sigma$ is a colored surface with divides, with boundary consisting of disjoint union of the multi-based links $-\bL_1$ and $\bL_2$. If $\sigma_i$ denotes the induced coloring of $\bL_i$, then
\[\omega_{\bL_1,\sigma_1}=\omega_{\bL_2,\sigma_2}\in \cR^-_{\bmP}.\]
\end{lem}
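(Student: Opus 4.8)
The plan is to reduce the statement to a purely combinatorial/topological claim about how the dividing arcs $\cA$ organize the basepoints on $L_1 \sqcup L_2$, and then to observe that the curvature constant of a multi-based link depends only on this local combinatorial data near the link. Recall that each component $K$ of $L_i$ is partitioned by the points $L_i \cap \cA$ into arcs, each arc containing exactly one basepoint, and consecutive arcs lie alternately in $\Sigma_{\ws}$ and $\Sigma_{\zs}$; hence consecutive basepoints along $K$ alternate between $\ws$ and $\zs$, which is the consistency needed for $\omega_{\bL_i}$ (and $\omega_{\bL_i,\sigma_i}$) to be defined. The key point is that each quadratic term $X_{\sigma(a)}X_{\sigma(b)}$ appearing in $\omega_{\bL_i,\sigma_i}$ corresponds to a point of $L_i \cap \cA$: the two basepoints $a,b$ adjacent to that intersection point lie in the two components of $\Sigma \setminus \cA$ meeting along the arc through that point, and $\sigma(a),\sigma(b)$ are the colors $\sigma$ assigns to those two components. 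So I would set up a bijection between $\partial(\text{arcs of }\cA)$, i.e. the set $\cA \cap \partial\Sigma = \cA \cap (L_1 \sqcup L_2)$, and the multiset of quadratic monomials occurring in $\omega_{\bL_1,\sigma_1} + \omega_{\bL_2,\sigma_2}$ — wait, more precisely I want to show both constants equal the same thing read off from $\cA$.

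Here is the cleaner route I would actually take. Define, for the colored surface with divides $\cF^\sigma = (\Sigma,\cA)^\sigma$, a ``boundary curvature'' element obtained as follows: for each arc component $\alpha$ of $\cA$, let $R, R'$ be the (possibly equal) components of $\Sigma \setminus \cA$ adjacent to $\alpha$ along its interior; to each endpoint of $\alpha$ lying on $\partial \Sigma$ we may assign the monomial $X_{\sigma(R)} X_{\sigma(R')}$, and we sum these monomials over all endpoints of all arc components of $\cA$ that lie on $\partial\Sigma$. Call this $\mu_{\cF^\sigma} \in \cR^-_{\bmP}$. The first step is to check that for a multi-based link $\bL$ with coloring $\sigma$, viewing the identity-type decoration or just reading off from the link directly, one has $\mu$ restricted to $\partial$ recovers $\omega_{\bL,\sigma}$; concretely, traversing a component $K$ of $L_i$, the points $K \cap \cA$ appear in cyclic order, consecutive basepoints $w_{K,1}, z_{K,1}, w_{K,2}, \dots$ sit in the consecutive regions, and the telescoping sum $X_{\sigma(w_{K,1})}X_{\sigma(z_{K,1})} + X_{\sigma(z_{K,1})}X_{\sigma(w_{K,2})} + \cdots$ is exactly the sum over points of $K \cap \cA$ of the monomial built from the colors of the two adjacent regions. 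Thus $\omega_{\bL_i,\sigma_i}$ equals the sum of those endpoint-monomials over endpoints of $\cA$ lying on $L_i$.

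The second and final step is then a counting argument: each arc component $\alpha$ of $\cA$ in $\Sigma$ is a properly embedded $1$-manifold component, so it is either a circle (no endpoints on $\partial\Sigma$, contributing nothing) or an arc with exactly two endpoints on $\partial\Sigma = -L_1 \sqcup L_2$. For such an arc $\alpha$ with adjacent regions $R, R'$, each of its two endpoints contributes the same monomial $X_{\sigma(R)}X_{\sigma(R')}$, and these two endpoints lie on $L_1 \sqcup L_2$. Over $\bF_2$, summing the contributions of the two endpoints of $\alpha$: if both endpoints lie on $L_1$ they contribute $2 X_{\sigma(R)}X_{\sigma(R')} = 0$ to $\omega_{\bL_1,\sigma_1}$ alone; if both lie on $L_2$ similarly $0$ to $\omega_{\bL_2,\sigma_2}$; if one lies on $L_1$ and one on $L_2$ they contribute the same monomial to $\omega_{\bL_1,\sigma_1}$ and to $\omega_{\bL_2,\sigma_2}$ respectively. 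Summing over all arc components $\alpha$ of $\cA$ therefore gives $\omega_{\bL_1,\sigma_1} = \omega_{\bL_2,\sigma_2}$ in $\cR^-_{\bmP}$ (both equal the sum over the ``mixed'' arcs of their region-monomial, working over $\bF_2$).

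The main obstacle I anticipate is not any deep idea but rather the bookkeeping in Step one: making precise, without a picture, that as one traverses a link component the basepoints and the intersection points with $\cA$ interleave so that the two regions adjacent to a given point $p \in K \cap \cA$ are precisely the regions containing the two basepoints flanking $p$, and that the alternation condition in Definition~\ref{def:multibasedlink} together with condition (4) in the definition of decorated link cobordism (each arc of $L_i \setminus \cA$ contains exactly one basepoint) guarantees the telescoping identification is exact, with the cyclic wrap-around term $X_{\sigma(z_{K,n_K})}X_{\sigma(w_{K,1})}$ correctly accounted for. Once that identification is in hand, the $\bF_2$-counting in Step two is immediate. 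One should also note that circle components of $\cA$, and components of $\Sigma\setminus\cA$ not touching $\partial\Sigma$, play no role, and that the argument does not even require $\Sigma$ or $W$ connected, consistent with the paper's conventions.
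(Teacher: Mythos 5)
Your argument is correct and follows essentially the same route as the paper's proof: both identify $\omega_{\bL_i,\sigma_i}$ with the sum over $L_i \cap \cA$ of the monomial $X_{\sigma(R)}X_{\sigma(R')}$ determined by the two regions adjacent to each intersection point, and both then observe that each arc component of $\cA$ contributes the same monomial at its two endpoints, so that the total sum over $\d \cA = \cA \cap (L_1 \sqcup L_2)$ vanishes mod $2$. The only presentational difference is that you split into three cases according to whether an arc's endpoints lie on $L_1$, $L_2$, or one on each; the paper skips this casework by writing $\omega_{\bL_1,\sigma} + \omega_{\bL_2,\sigma}$ directly as $\sum_{a \in C(\cA)} \sum_{p \in \d a} X_{\sigma(w(p))}X_{\sigma(z(p))}$ and noting that each inner sum is already $0$ in characteristic $2$, which gives the same conclusion a bit more compactly.
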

\begin{proof}Since no confusion can arise, let us write $\sigma$ for both $\sigma_1$ and $\sigma_2$. Define the set of points $\ve{p}_i\subset L_i\setminus (\ws_i\cup \zs_i)$ as 
\[\ve{p}_i:= L_i\cap \cA.\] For each point $p\in \ve{p}_i$, let us write $w(p)$ and $z(p)$ for the two basepoints immediately adjacent to $p$, on $L_i$. Note that
\[\omega_{\bL_i,\sigma}=\sum_{p\in \ve{p}_i} X_{\sigma(w(p))}X_{\sigma(z(p))}.\] If $a$ is an arc of $\cA$, with boundary points $p$ and $p'$, then $\sigma(w(p))=\sigma(w(p'))$ and $\sigma(z(p))=\sigma(z(p'))$, so that
\[X_{\sigma(w(p))}X_{\sigma(z(p))}=X_{\sigma(w(p'))} X_{\sigma(z(p'))}.\] Hence
\[\omega_{\bL_1,\sigma}+\omega_{\bL_2,\sigma}=\sum_{a \in C(\cA)} \sum_{p\in \d a} X_{\sigma(w(p))}X_{\sigma(z(p))}=0\in \cR^-_{\bmP},\] where $C(\cA)$ denotes the set of components of $\cA$.
\end{proof}

\subsection{The category of curved chain complexes} 

Suppose that $\cR$ is a ring of characteristic two. In this section we describe the category of curved chain complexes over $\cR$.

\begin{define} A \emph{curved chain complex over $\cR$} is an $\cR$-module $C$, together with an endomorphism $\d\colon C\to C$ such that
\[
\d^2=\omega\cdot\id_C,
\]
 for some $\omega\in \cR$. The constant $\omega$ is called the \emph{curvature constant}.
\end{define}

We now establish some terminology (which is standard in the context of genuine chain complexes):

\begin{define} We say  $F\colon (C_1,\d_1)\to (C_2,\d_2)$ is a \emph{morphism of curved chain complexes} over $\cR$ if $F:C_1\to C_2$ is an $\cR$-module homomorphism. We say a morphism $F\colon (C_1,\d_1)\to (C_2,\d_2)$ is a \emph{homomorphism of curved chain complexes} if $F$ is also a \emph{chain map}, i.e.,
\[F\circ \d_1=\d_2\circ F.\] An \emph{$\cR$-equivariant chain homotopy} between two morphisms  $F$, $G\colon (C_1,\d_1)\to (C_2,\d_2)$ is an $\cR$-module homomorphism $H\colon C\to C'$ such that $F+G=\d_2\circ H+H\circ\d_1.$
\end{define}

We note that there are not many interesting homomorphisms between curved chain complexes with different curvatures:

\begin{lem}\label{lem:notalotofcurvedmaps} Suppose that $(C_1,\d_1)$ and $(C_2,\d_2)$ are two curved chain complexes which are free over a domain $\cR$, with curvature constants $\omega_1$ and $\omega_2$, respectively, and suppose that $F\colon C_1\to C_2$ is a homomorphism of curved chain complexes over $\cR$. If $\omega_1\neq \omega_2$, then $F=0$.
\end{lem}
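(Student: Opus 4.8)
The key observation is that a homomorphism of curved chain complexes intertwines the differentials, so applying it twice relates the two curvature constants. Concretely, since $F\circ \d_1 = \d_2\circ F$, we get
\[
F\circ \d_1^2 = \d_2\circ F\circ \d_1 = \d_2^2\circ F.
\]
Now use $\d_1^2 = \omega_1\cdot \id_{C_1}$ and $\d_2^2 = \omega_2\cdot\id_{C_2}$ to rewrite both sides: the left side is $F\circ(\omega_1\id_{C_1}) = \omega_1\cdot F$ (using that $F$ is $\cR$-linear and $\omega_1\in\cR$ is central since $\cR$ is commutative), and the right side is $(\omega_2\id_{C_2})\circ F = \omega_2\cdot F$. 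Hence $(\omega_1-\omega_2)\cdot F = 0$ as a map $C_1\to C_2$.

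It remains to deduce $F=0$ from $(\omega_1 - \omega_2)F = 0$ when $\omega_1\neq\omega_2$. Here I would use that $C_2$ is free over the domain $\cR$: pick a basis, so that $C_2\iso \bigoplus_i \cR$, and for any $c\in C_1$ the element $F(c)\in C_2$ has coordinates $(r_i)_i$ with $r_i\in\cR$. The relation $(\omega_1-\omega_2)F(c)=0$ forces $(\omega_1-\omega_2)r_i = 0$ in $\cR$ for every $i$. Since $\cR$ is a domain and $\omega_1-\omega_2\neq 0$, each $r_i=0$, so $F(c)=0$. As $c$ was arbitrary, $F=0$.

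This argument is entirely formal; there is essentially no obstacle. The only point requiring a little care is the freeness (really just torsion-freeness) of $C_2$ over the domain $\cR$, which is exactly the hypothesis invoked to cancel the nonzero element $\omega_1-\omega_2$; note freeness of $C_1$ is not actually needed. I would write the proof in three short lines along the above scheme.
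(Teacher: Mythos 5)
Your proof is correct and follows essentially the same route as the paper's: intertwine the differentials, apply each twice to obtain $(\omega_1-\omega_2)F=0$, and use that $C_2$ is free over the domain $\cR$ to cancel the nonzero factor. (The paper writes $\omega_1+\omega_2$ since it works over $\bF_2$, but that is the same thing; your remark that only freeness of $C_2$ is actually used is a minor but accurate refinement.)
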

\begin{proof} Since $F$ is a homomorphism of curved chain complexes, we have $\d_2\circ F=F\circ \d_1$. Precomposing with $\d_1$, and using the fact that $F$ is a chain map, we see that
\[\d_2^2\circ F=F\circ \d_1^2,\] so that $(\omega_1+\omega_2)\cdot F=0$. Since $C_1$ and $C_2$ are free modules over the domain $\cR$, it follows that $F$ is the zero map. 
\end{proof}

\begin{rem} In light of Lemma~\ref{lem:notalotofcurvedmaps}, one should not expect a TQFT to give meaningful chain maps between curved chain complexes with different curvatures. Lemma~\ref{lem:cobordantlinkshavesamecoloring} implies that if two colored links are cobordant via a decorated link cobordism, then their curvature constants are the same. In Lemma~\ref{lem:del^2=0}, below, we identify the constant $\omega_{\bL}$ with the curvatures of the link Floer chain complexes $\cCFL^-(Y,\bL,\frs)$.
\end{rem}

Given two curved chain complexes $\cC_1=(C_1,\d_1)$ and $\cC_2=(C_2,\d_2)$ over $\cR$ (of arbitrary curvatures) we can consider the \emph{curved complex of morphisms}
\[\Mor_{\cR}(\cC_1,\cC_2),\] consisting of $\cR$-module homomorphisms from $C_1$ to $C_2$. A differential on $\Mor_{\cR}(\cC_1,\cC_2)$  can be defined by the formula
\[\d_{\Mor}(f)=\d_2\circ f+f\circ \d_1.\] 

\begin{lem}\label{lem:morcomplex}If $\cC_1$ and $\cC_2$ are two curved chain complexes over $\cR$, with curvatures $\omega_1$ and $\omega_2$, then $\Mor_{\cR}(\cC_1,\cC_2)$ is a curved chain complex with curvature $\omega_1+\omega_2$.
\end{lem}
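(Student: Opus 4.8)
The claim is that for curved chain complexes $\cC_1 = (C_1,\d_1)$ and $\cC_2 = (C_2,\d_2)$ over $\cR$ with curvatures $\omega_1$ and $\omega_2$, the morphism complex $\Mor_{\cR}(\cC_1,\cC_2)$ with differential $\d_{\Mor}(f) = \d_2\circ f + f\circ \d_1$ is a curved chain complex with curvature $\omega_1 + \omega_2$. The plan is simply to compute $\d_{\Mor}^2$ directly and show it equals $(\omega_1 + \omega_2)\cdot \id$. First I would note that $\Mor_{\cR}(\cC_1,\cC_2)$ is an $\cR$-module (homomorphisms of $\cR$-modules form an $\cR$-module, using that $\cR$ is commutative of characteristic two so there is no sign subtlety), and that $\d_{\Mor}$ is an $\cR$-module endomorphism of it, so the only thing to check is the curvature identity.

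The main computation: for $f \in \Mor_{\cR}(\cC_1,\cC_2)$,
\[
\d_{\Mor}^2(f) = \d_{\Mor}(\d_2\circ f + f\circ \d_1) = \d_2\circ(\d_2\circ f + f\circ \d_1) + (\d_2\circ f + f\circ \d_1)\circ \d_1.
\]
Expanding the four terms gives $\d_2^2\circ f + \d_2\circ f\circ \d_1 + \d_2\circ f\circ \d_1 + f\circ \d_1^2$. Since $\cR$ has characteristic two, the two middle (cross) terms cancel, leaving $\d_2^2\circ f + f\circ \d_1^2$. Now substitute $\d_i^2 = \omega_i\cdot \id_{C_i}$: this gives $\omega_2\cdot f + \omega_1\cdot f = (\omega_1 + \omega_2)\cdot f$, where I use that the $\omega_i$ are central (they are elements of the commutative ring $\cR$) so they pass through the $\cR$-linear map $f$. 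Hence $\d_{\Mor}^2 = (\omega_1 + \omega_2)\cdot \id$, which is exactly the curvature identity.

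There is essentially no obstacle here — the result is a one-line formal computation, and the only point requiring a word of care is the cancellation of the cross terms, which relies on characteristic two (stated as a standing hypothesis on $\cR$) and the fact that composition of $\cR$-linear maps is $\cR$-bilinear so that $\omega_1, \omega_2$ may be freely commuted past $f$. I would present the argument in two or three lines inside the proof environment, with the displayed expansion of $\d_{\Mor}^2(f)$ as above, and conclude.
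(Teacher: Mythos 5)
Your proposal is correct and matches the paper's proof: both simply expand $\d_{\Mor}^2(f)$, cancel the two cross terms $\d_2\circ f\circ\d_1$ using characteristic two, and substitute $\d_i^2=\omega_i\cdot\id$. The extra remarks you include (on the $\cR$-module structure of $\Mor$ and the centrality of the $\omega_i$) are harmless elaborations of the same one-line computation.
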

\begin{proof}One simply computes 
\[(\d_{\Mor}\circ \d_{\Mor})(f)=\d_2^2\circ f+\d_2\circ f\circ \d_1+\d_2\circ f\circ \d_1+f\circ \d_1^2=(\omega_1+\omega_2)\cdot f.\]
\end{proof}

An important algebraic structure on the link Floer chain complexes is the filtration. The standard definition of a filtered chain complex is suitable for our needs:

\begin{define}A $\Z^n$ filtration on a curved chain complex $(C,\d)$ over $\cR$ is a collection of subgroups $\cG_I\subset C$ ranging over $I\in \Z^n$ such that $\d (\cG_I)\subset \cG_I$ and such that if $I\le J$ (i.e., $I(i)\le J(i)$ for each $i\in \{1,\dots, n\}$) then $\cG_I\supset \cG_J$. 
\end{define}

Suppose $\cC_1=(C_1,\d_1)$ and $\cC_2=(C_2,\d_2)$ are two curved chain complexes over $\cR$, which are also both filtered over $\Z^n$. Write $\cG_I(C_i)$ for the subgroups of $C_i$ associated to their filtrations. There is an induced filtration over $\Z^n$ of the morphism complex $\Mor_{\cR}(\cC_1,\cC_2)$. If $I\in \Z^n$ is a multi-index, then a filtration on the morphism complex can be defined by the formula
\[\cG_I(\Mor_{\cR}(\cC_1,\cC_2)):=\left\{f\in \Mor_{\cR}(\cC_1,\cC_2): f(\cG_J(C_1))\subset \cG_{I+J}(C_2) \text{ for all } J\in \Z^n\right\}.\]

Note that by Lemma~\ref{lem:morcomplex}, if $\cC_1$ and $\cC_2$ are curved chain complexes with the same curvatures, then $\Mor_{\cR}(\cC_1,\cC_2)$ is a genuine chain complex. A homomorphism of curved chain complexes from $\cC_1$ to $\cC_2$ is the same as a cycle in the morphism complex $\Mor_{\cR}(\cC_1,\cC_2)$. A filtered map from $\cC_1$ to $\cC_2$ is the same as an element in $\cG_{\ve{0}}(\Mor_{\cR}(\cC_1,\cC_2))$, where $\ve{0}\in \Z^n$ denotes the zero vector. A filtered chain map, defined up to filtered, $\cR$-equivariant chain homotopies, is the same as an element in the homology group
\[H_*(\cG_{\ve{0}}(\Mor_{\cR}(\cC_1,\cC_2))).\]

\begin{rem}The link Floer complexes we consider in this paper will in general have non-zero curvature. However,  by choosing a coloring $(\sigma,\bmP)$ of the links and decorated surfaces appropriately, one can ensure that all the link Floer chain complexes  are genuine chain complexes (i.e. have curvature constant $0\in \cR$). For example, in the case that there are only two variables, one for the $\ws$-basepoints and one for the $\zs$-basepoints, the curvature constant is zero. This situation is sufficient for most of our applications. Nonetheless, many properties of the maps appearing in the link Floer TQFT are more natural to consider in the context of curved chain complexes.
\end{rem}

\subsection{Transitive systems of curved chain complexes}
We note that we cannot quite work on just the level of chain complexes, since the modules $\cCFL^-(Y,\bL^\sigma,\frs)$ require several choices of auxiliary data. To define the invariants $\cCFL^-(Y,\bL^\sigma,\frs)$ precisely, we need the following formalism:

\begin{define}\label{def:transitivesystemoverC} If $\cC$ is an category and $A$ is a set, a \emph{transitive system over $\cC$ indexed by $A$} is a map $C\colon A\to \cC$, together with distinguished morphisms 
\[\Phi_{a\to a'}\colon C(a)\to C(a')\] for $a,a'\in A$ such that
\begin{itemize}
\item $\Phi_{a\to a}=\id$;
\item $\Phi_{a'\to a''}\circ \Phi_{a\to a'}=\Phi_{a\to a''}$.
\end{itemize}
If $C_1$ and $C_2$ are two transitive systems over $\cC$, indexed by $A_1$ and $A_2$ respectively, then a \emph{morphism of transitive systems} over $\cC$ is a collection of morphisms $F_{a_1,a_2}$, indexed by pairs $(a_1,a_2)\in A_1\times A_2$, such that
\[F_{a_1,a_2}\colon C_1(a_1)\to C_2(a_2)\] and the following diagram commutes for any two pairs $(a_1,a_2),(a_1',a_2')\in A_1\times A_2$:
\[\begin{tikzcd}
C_1(a_1)\arrow{r}{F_{a_1,a_2}}\arrow{d}{\Phi_{a_1\to a_1'}}& C_2(a_2)\arrow{d}{\Phi_{a_2\to a_2'}}\\
C_1(a_1')\arrow{r}{F_{a_1',a_2'}}& C_2(a_2')
\end{tikzcd}.\]

\end{define}
Note that a morphism between transitive systems is determined by a single one of the morphisms $F_{a_1,a_2}$.

\section{The link Floer complexes and some basic properties}\label{sec:complexes}

In this section, we define the curved link Floer chain complexes $\cCFL^\infty(Y,\bL^\sigma,\frs)$ which feature in this paper.

\subsection{Heegaard diagrams for multi-based links}

We use the following standard definition of a Heegaard diagram of a multi-based link:

\begin{define}\label{def:Heegaarddiagramlink}If $\bL=(L,\ve{w},\ve{z})$ is a multi-based link in $Y^3$, an \emph{embedded Heegaard diagram} of $(Y,\bL)$ is a tuple  $(\Sigma, \ve{\alpha},\ve{\beta},\ve{w},\ve{z})$ such that the following hold:
\begin{enumerate}
\item $\Sigma$ is an oriented, embedded surface in $Y$, which divides $Y$ into two handlebodies, $U_{\as}$ and $U_{\bs}$.
\item $\Sigma$ is oriented as $\d U_{\as}=-\d U_{\bs}$.
\item $\Sigma\cap L=\ve{w}\cup \ve{z}$ and  $L$ intersects $\Sigma$ positively at the $\zs$-basepoints and negatively at the $\ws$-basepoints.
\item \label{HD:cond3}$\as$ consists of $g(\Sigma)+|\ws|-1$ pairwise disjoint, simple closed curves which bound a set of $g(\Sigma)+|\ws|-1$ pairwise disjoint compressing disks $\cD_{\as}\subset U_{\as}\setminus L$. The set $U_{\as}\setminus \cD_{\as}$ consists of $|\ws|$ 3-balls, each of which contains a single $\ws$-basepoint and a single $\zs$-basepoint. An analogous condition holds for the $\bs$ curves.
\item \label{HD:cond7}The arcs $L\cap U_{\as}$  can be isotoped within $U_{\as}\setminus \cD_{\as}$,  relative to their endpoints, into $\Sigma$. The arcs $L\cap U_{\bs}$ satisfy an analogous condition.
\end{enumerate}

\end{define}

We remark that Condition~\eqref{HD:cond7} provides a way of visualizing the link. First, one connects the $\zs$-basepoints to the $\ws$-basepoints in $\Sigma\setminus \as$, and then pushes the interior of these arcs into $U_{\as}$. Then one connects the $\ws$-basepoints to the $\zs$-basepoints in $\Sigma\setminus \bs$, and pushes the interiors of these arcs into $U_{\bs}$. Connecting up these two collections of arcs yields the link $L$ (up to isotopy).

Writing $n$ for the quantity $|\ws|=|\zs|$, there are  two embedded tori in $\Sym^{g+n-1}(\Sigma)$, namely
\[\bT_{\as}=\alpha_1\times \cdots \times \alpha_{g+n-1}\quad \text{and} \quad \bT_{\bs}=\beta_1\times \cdots \times \beta_{g+n-1}.\]

Given a Heegaard diagram $(\Sigma,\as,\bs,\ws,\zs)$, and an intersection point $\xs\in \bT_{\as}\cap \bT_{\bs}$, we define the set $\Pi_{\xs}^{\ws}$ to be the set of homotopy classes  $\phi\in \pi_2(\xs,\xs)$ which have $n_{w}(\phi)=0$ for all $w\in \ws$. There is a map
\[
H\colon \Pi_{\xs}^{\ws}\to H_2(Y;\Z),
\]
obtained by coning off a 2-chain $\phi$ using the $\as$ and $\bs$ curves. Throughout the paper, we will restrict attention to diagrams which satisfy the following admissibility condition (which is a slight adaptation of \cite{OSDisks}*{Definition~4.10}):

\begin{define} A diagram $\cH=(\Sigma,\as,\bs,\ws,\zs)$ is \emph{strongly $\frs$-admissible} if there is an intersection point $\xs\in \bT_{\as}\cap \bT_{\bs}$ such that $\frs_{\ws}(\xs)=\frs$ and for any class $\phi\in \Pi_{\xs}^{\ws}$ with
\[\langle c_1(\frs), H(\phi)\rangle=2N>0\] the class $\phi$ has a multiplicity which is at least $N$.
\end{define}

The above admissibility condition implies that for a fixed $\Spin^c$ structure, the set of homotopy classes with Maslov index $j$ which have nonnegative domains is finite  \cite{OSDisks}*{Lemma~4.14} (see also \cite{ZemGraphTQFT}*{Lemma~4.9} for an adaptation of their argument for multi-pointed diagrams).

\subsection{3-dimensional \texorpdfstring{$\Spin^c$}{Spinc} structures}

As with the Heegaard Floer invariants of closed 3-manifolds \cite{OSDisks}, we use the Turaev interpretation of $\Spin^c$ structures on closed 3-manifolds \cite{TuraevTorsionSpinc}. More precisely,  we define $\Spin^c(Y)$ to be the collection of non-vanishing vector fields on $Y$ modulo the equivalence relation of being isotopic on the complement of a collection of 3-balls. The set $\Spin^c(Y)$ is an affine space over $H^1(Y;\Z)$.

Ozsv\'{a}th and Szab\'{o} describe a way of associating a 3-dimensional $\Spin^c$ structure to an intersection point \cite{OSDisks}. If $(\Sigma, \ve{\alpha},\ve{\beta},\ve{w})$ is an $n$-pointed Heegaard diagram for $Y$, then there is a map
\[\frs_{\ve{w}}\colon \bT_\alpha\cap \bT_\beta\to \Spin^c(Y).\] If $\ve{x}\in \bT_\alpha\cap \bT_\beta$ is an intersection point, then $\frs_{\ve{w}}(\ve{x})$ is constructed as follows: One first picks a Morse function which induces the diagram $(\Sigma, \ve{\alpha},\ve{\beta})$ and considers the gradient vector field. Note that such a Morse function will have $|\ws|$ local minima, and $|\ws|$ local maxima. Each point in $\ve{x}$ determines a flowline connecting an index 1 and index 2 critical point of the Morse function. The basepoints $\ws$ each determine a flowline from an index 0 critical point to an index 3 critical point. After removing regular neighborhoods of these flowlines, we obtained a non-vanishing vector field on the complement of this neighborhood. We can extend this to a non-vanishing vector field on all of $Y$ since the flowlines we removed connected critical points with indices of opposite parity.

 The map $\frs_{\ve{w}}$ depends on the choice of basepoints, $\ve{w}$, in our Heegaard diagram. The dependence on the basepoints can be summarized as follows:

\begin{lem}\label{lem:changeSpincstructure}Suppose $(\Sigma, \ve{\alpha},\ve{\beta},\ve{w},\ve{z})$ is a diagram for the multi-based link $(Y,L,\ve{w},\ve{z})$. If $\ve{x}\in \bT_\alpha\cap \bT_\beta$, then
\[\frs_{\ve{w}}(\ve{x})-\frs_{\ve{z}}(\ve{x})=\PD[L].\]
\end{lem}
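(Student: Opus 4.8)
The plan is to compute the difference $\frs_{\ve{w}}(\ve{x})-\frs_{\ve{z}}(\ve{x})$ directly from the Turaev/vector field definition of the two $\Spin^c$ structures, tracking precisely how the construction changes when the role of the $\ws$-basepoints and $\zs$-basepoints is interchanged. First I would fix a Morse function $f$ (with gradient-like vector field $v$) inducing the Heegaard diagram $(\Sigma,\as,\bs)$, recalling that it has $n=|\ws|=|\zs|$ index-$0$ critical points and $n$ index-$3$ critical points, one of each contained in the ball of $U_{\as}\setminus\cD_{\as}$ (respectively $U_{\bs}\setminus\cD_{\bs}$) associated to a pair of adjacent basepoints. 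The intersection point $\ve{x}$ determines, as in the definition, a collection of flowlines $\gamma_{\ve{x}}$ between the index-$1$ and index-$2$ critical points, and this part of the data is common to both constructions. The only difference between $\frs_{\ve{w}}(\ve{x})$ and $\frs_{\ve{z}}(\ve{x})$ is the choice of arcs joining the index-$0$ to the index-$3$ critical points: for $\frs_{\ve{w}}$ one uses flowlines passing through the $\ws$-basepoints, and for $\frs_{\ve{z}}$ one uses flowlines passing through the $\zs$-basepoints.

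The key step is then a local computation: inside each ball $B$ of $U_{\as}\setminus\cD_{\as}$ (and likewise for $U_{\bs}$) there is one index-$0$ and one index-$3$ critical point, one $w$-basepoint and one $z$-basepoint, and the two choices of connecting arc (through $w$ versus through $z$) are two arcs with the same endpoints inside $B$. Deleting a neighborhood of such an arc and extending the vector field over the resulting ball is exactly the standard move, and the difference of the two resulting vector fields (as classes in $\Spin^c(Y)$, i.e. modulo isotopy off finitely many balls) is measured by a relative Euler class / winding-number computation along the loop obtained by concatenating the two arcs. I would show that this loop is isotopic in $Y$ (rel nothing) to the corresponding component arc of $L$ pushed to run from $w$ to $z$ — indeed, by Condition~\eqref{HD:cond7} the link $L$ is assembled exactly from such arcs — so summing over all components the total difference class is Poincar\'e dual to $[L]$. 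I would make this precise by invoking the standard fact (Ozsv\'ath–Szab\'o, and as used in \cite{ZemGraphTQFT}) that changing a single connecting flowline by an arc homologous to a $1$-cycle $c$ changes the associated $\Spin^c$ structure by $\PD[c]\in H^1(Y;\Z)$, applied to $c=[L]$.

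An alternative, possibly cleaner route is to reduce to the one-basepoint-pair case and to a known computation: pass to a diagram where $L$ has exactly two basepoints $w,z$ on each component, observe that $\frs_{\ve w}$ and $\frs_{\ve z}$ differ only in the handle-attaching data near the $0$–$3$ flowlines, and directly identify the difference cocycle with the Poincar\'e dual of the core arcs, which concatenate to $L$. Either way, the main obstacle I anticipate is bookkeeping the orientations: the definition insists that $L$ meets $\Sigma$ positively at the $\zs$-basepoints and negatively at the $\ws$-basepoints, so I must check that the arc from $w$ to $z$ (rather than $z$ to $w$), together with the outward-normal-first convention stated in Section~\ref{subsec:convention}, produces $+\PD[L]$ and not $-\PD[L]$; since we work over $\bF_2$-coefficient complexes but the $\Spin^c$ set is genuinely affine over $H^1(Y;\Z)$, this sign (i.e. the direction of the affine translation) genuinely matters and is the one place the argument is not purely formal. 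Everything else is a routine unwinding of the Turaev picture.
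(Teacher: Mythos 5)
Your approach — unwinding the Turaev vector field definition, observing that $\frs_{\ve w}(\ve x)$ and $\frs_{\ve z}(\ve x)$ differ only in the choice of flowlines joining the index-$0$ to the index-$3$ critical points, and identifying the resulting difference $1$-cycle with $L$ — is exactly the idea behind the paper's proof, which simply cites Ozsv\'ath--Szab\'o's Lemma~2.19 and observes that the two vector fields differ by Reeb surgery along $L$. However, the local model you describe in the ``key step'' is not correct. Each ball of $U_{\as}\setminus\cD_{\as}$ contains a single index-$0$ critical point (not an index-$3$ critical point as well), and the index-$3$ critical points live in the balls of $U_{\bs}\setminus\cD_{\bs}$. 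More importantly, the $w$-flowline and the $z$-flowline issuing from a given index-$0$ critical point $p^0$ do \emph{not} have the same terminal endpoint once a link component carries more than one pair of basepoints: the $w$-flowline through $w_i$ runs from $p^0_i$ (the critical point in the $\as$-ball containing $w_i,z_i$) to $p^3_{i-1}$ (in the $\bs$-ball containing $w_i$), while the $z$-flowline through $z_i$ runs from $p^0_i$ to $p^3_i$. So the ``loop obtained by concatenating the two arcs'' inside a single ball does not exist; what one actually gets is a closed $1$-cycle only after concatenating \emph{all} the $w$- and $z$-flowlines around the entire link component, alternating orientations, and it is this global concatenation that is isotopic to $L$. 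Your ``alternative, possibly cleaner route'' (reducing to two basepoints per component, where the local-loop picture does become literally correct) would fix this, but then you would additionally owe an argument that $\frs_{\ve w}$ and $\frs_{\ve z}$ are unchanged by the quasi-destabilization used to remove extra basepoint pairs. The cleanest repair of your main argument is simply to replace the per-ball claim with the global statement about the union of flowlines.
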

\begin{proof} This follows from reasoning identical to the proof of \cite{OSDisks}*{Lemma~2.19}. Alternatively, $\Spin^c(Y)$ is described as the set of homology classes of nonvanishing vector fields on $Y$. Given a closed path $\gamma\in H_1(Y;\Z)$, the action of $\gamma\in H_1(Y;\Z)$, can be described by a procedure sometimes referred to as \text{Reeb surgery} (cf. \cite{Nicolasecu}*{Section~3.2}). Given the explicit description from \cite{OSDisks} of the vector fields $\frs_{\ve{w}}$ and $\frs_{\ve{z}}$, it is easy to see that they differ by Reeb surgery on the link $L$.
\end{proof}

It follows from the previous lemma that if the total class of a link is trivial in $H_1(Y;\Z)$, then the two maps $\frs_{\ve{w}}$ and $\frs_{\zs}$ agree.  Note that we could instead have defined our link Floer complexes to be generated by intersection points $\xs$ with $\frs_{\zs}(\xs)=\frs$. For a link whose total class is non-zero, this does not change the resulting link Floer complexes. For links whose total class is not null-homologous, the resulting complexes may be different. An instructive example is $Y=S^1\times S^2$ with $K=S^1\times \{pt\}$.

\subsection{The uncolored link Floer complexes}

We now describe the transitive systems of curved chain complexes $\cCFL^\infty(Y,\bL,\frs)$ associated to a 3-manifold with a multi-based link. The construction is an adaptation of the construction of link Floer homology from \cite{OSLinks}.

Suppose that $\bL=(L,\ws,\zs)$ is a multi-based link in $Y$. Write $\bF_2[U_{\ws},V_{\zs},U_{\ws}^{-1},V_{\zs}^{-1}]$ denote for the ring obtained from $\bF_2[U_{\ws},V_{\zs}]$ by inverting the variables $U_{w_1},\dots, U_{w_n}$, $V_{z_1}, \dots, V_{z_n}$.

Suppose that $\cH=(\Sigma,\as,\bs,\ws,\zs)$ is a diagram for $(Y,\bL)$. As a module, we define
\[
\cCFL^{-}(\cH,\frs)
\] 
to be the free $\bF_2[U_{\ve{w}},V_{\ve{z}}]$-module generated by the intersection points $\ve{x}\in \bT_{\as}\cap \bT_{\bs}$ with $\frs_{\ws}(\ve{x})=\frs$. Over $\bF_2$, the complex $\cCFL^-(\cH,\frs)$ is generated by the elements
\[
U_{\ve{w}}^I V_{\ve{z}}^J\cdot \ve{x},
\]
 with $\frs_{\ve{w}}(\ve{x})=\frs$ and nonnegative multi-indices $I\in \Z^{\ws}$ and $J\in \Z^{\zs}$. The module 
\[
\cCFL^\infty(\cH,\frs)
\] 
is defined similarly over the ring $\bF_2[U_{\ws}, V_{\zs}, U_{\ws}^{-1}, V_{\zs}^{-1}]$ and is generated over $\bF_2$ by elements $U_{\ve{w}}^I V_{\ve{z}}^J$ with arbitrary multi-indices $I$ and $J$.

The modules $\cCFL^-(\cH,\frs)$ and $\cCFL^\infty(\cH,\frs)$ have natural filtrations by $\Z^{\ws}\oplus \Z^{\zs}$, which are given by powers of the variables. More explicitly, if $I\in \Z^{\ws}$ and $J\in \Z^{\zs}$, there is a subset $\cG_{I,J}\subset \cCFL^\infty(\cH,\frs)$ generated over $\bF_2$ by tuples $U_{\ws}^{I'} V_{\zs}^{J'}\cdot \xs$ with $I'\ge I$ and $J'\ge J$.

We note that Ozsv\'{a}th and Szab\'{o}'s construction of a relative homological grading from \cite{OSDisks} on the closed 3-manifold invariants yields two gradings on $\cCFL^-(Y,\bL,\frs)$. If $\phi\in \pi_2(\xs,\ys)$, we can define two gradings on $\Hat{\CFL}(Y,\bL,\frs)$ via the formulas
\begin{equation}
\gr_{\ve{w}}(\ve{x},\ve{y})=\mu(\phi)-2\sum_{w\in \ve{w}}n_{w}(\phi),\qquad \text{and} \qquad \gr_{\ve{z}}(\ve{x},\ve{y})=\mu(\phi)-2\sum_{z\in \ve{z}}n_{z}(\phi).
\label{eq:defgradings}
\end{equation}  If $c_1(\frs)$ is torsion, then $\gr_{\ve{w}}$ is independent of the disk $\phi$, using the formula for the Maslov index of a periodic class \cite{OSDisks}*{Theorem~4.9}. Analogously, using Lemma~\ref{lem:changeSpincstructure}, if $c_1(\frs-\PD[L])$ is torsion, then $\gr_{\ve{z}}$ will be independent of the disk $\phi$. We can extend these gradings to all of $\cCFL^\infty(Y,\bL,\frs)$ by declaring the $U_{\ve{w}}$ and $V_{\zs}$ variables to be $-2$ and $0$ graded (resp.) with respect to $\gr_{\ws}$. Similarly we declare the $U_{\ws}$ and $V_{\zs}$ variables to be $0$ and $-2$ graded with respect to $\gr_{\zs}$.

Given an appropriately generic path of almost complex structure $(J_s)_{s\in [0,1]}$ on $\Sym^{n+g-1}(\Sigma)$,   we define an endomorphism 
\[
\d\colon\cCFL^-(\cH,\frs)\to \cCFL^-(\cH,\frs)
\]
 by counting Maslov index 1 holomorphic strips via the formula
\[
\d(\ve{x})=\sum_{\ve{y}\in \bT_\alpha\cap \bT_\beta}\sum_{\substack{\phi\in \pi_2(\ve{x},\ve{y})\\ \mu(\phi)=1}} \# \Hat{\cM}(\phi) U_{\ve{w}}^{n_{\ve{w}}(\phi)} V_{\ve{z}}^{n_{\ve{z}}(\phi)}\cdot \ve{y},
\]
extended linearly over $\cR_{\bmP}^-$. 
Since pseudo-holomorphic disks must have nonnegative domains, the differential respects the filtration over $\Z^{\ws}\oplus \Z^{\zs}$, given by powers of the variables, defined above.

 The map $\d$ does not square to zero, in general. Instead  $(\cCFL^-(\cH,\frs), \d)$ is a curved chain complex:

\begin{lem}[\cite{ZemQuasi}*{Lemma~2.1}]\label{lem:del^2=0}The map $\d\colon \cCFL^-(\cH,\frs)\to \cCFL^-(\cH,\frs)$ satisfies
\[
\d^2(\ve{x})=\omega_{\bL}\cdot \id,
\] where
\[
\omega_{\bL}=\sum_{K\in C(L)} \left(U_{w_{K,1}}V_{z_{K,1}}+V_{z_{K,1}}U_{w_{K,2}}+U_{w_{K,2}}V_{z_{K,2}}+\cdots +V_{z_{K,n_K}}U_{w_{K,1}}\right)\cdot \ve{x}.
\]
 Here $w_{K,1},$ $z_{K,1},$ \dots, $w_{K,n_K},$  $z_{K,n_K}$ are the basepoints on the link component $K$, in the order that they appear on $K$.
\end{lem}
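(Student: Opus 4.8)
The plan is to run the standard ``ends of index-two moduli spaces'' argument, but to retain the basepoint multiplicities throughout, so that the $\as$- and $\bs$-boundary degenerations no longer cancel each other but instead add up to $\omega_{\bL}$. Fix intersection points $\xs,\ys\in\bT_{\as}\cap\bT_{\bs}$ with $\frs_{\ws}(\xs)=\frs_{\ws}(\ys)=\frs$, and a homotopy class $\phi\in\pi_2(\xs,\ys)$ with $\mu(\phi)=2$. Strong $\frs$-admissibility guarantees that only finitely many such classes have a nonnegative domain (this is the finiteness input quoted after the definition of strong admissibility), so it is legitimate to work with the Gromov-compactified moduli space $\bar{\cM}(\phi)$, a compact $1$-manifold with boundary. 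The contribution of the class $\phi$ to $\d^2(\xs)$ is
\[
\Big(\textstyle\sum_{\phi_1*\phi_2=\phi}\#\Hat{\cM}(\phi_1)\cdot\#\Hat{\cM}(\phi_2)\Big)\, U_{\ws}^{n_{\ws}(\phi)}V_{\zs}^{n_{\zs}(\phi)}\cdot\ys ,
\]
and the displayed coefficient is exactly the number, mod $2$, of two-story ends of $\bar{\cM}(\phi)$. Since $\bar{\cM}(\phi)$ is a compact $1$-manifold, its total number of boundary points is even, so this count agrees with the number of the remaining ends. By Gromov compactness (sphere bubbles being excluded by the usual dimension count in $\Sym^{g+n-1}(\Sigma)$), the remaining ends are boundary degenerations: a constant strip at $\xs=\ys$ together with a nonconstant holomorphic disk bubbling off at a boundary marked point, with boundary either on $\bT_{\as}$ or on $\bT_{\bs}$.

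In particular, a boundary-degeneration end forces $\xs=\ys$ and the strip part to be constant, so for $\xs\neq\ys$ there are no such ends and $\d^2$ has no $\ys$-component; thus $\d^2$ is diagonal. To evaluate the diagonal entry I would invoke the classification of low-index boundary degenerations in symmetric products due to Ozsv\'ath--Szab\'o (with Lipshitz's cylindrical reformulation as a convenient alternative, where these objects become honest branched covers and the count is more transparent): the positive domains with boundary on $\as$ and Maslov index $2$ are precisely the closures of the connected components of $\Sigma\setminus\as$, and for each such domain the (transversally cut out) space of boundary degenerations through a fixed point of $\bT_{\as}$ has odd cardinality; symmetrically for $\bs$. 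In a generic diagram no component of $\Sigma\setminus\as$ coincides with a component of $\Sigma\setminus\bs$, so the two families contribute to $\d^2$ independently (if necessary one first isotopes the diagram to achieve this).

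It remains to read off the combinatorics. By the axioms for an embedded Heegaard diagram of a multi-based link, $U_{\as}\setminus\cD_{\as}$ is a union of $|\ws|$ balls, so $\Sigma\setminus\as$ has $|\ws|$ components, each planar, each containing exactly one $\ws$-basepoint and one $\zs$-basepoint, and these two basepoints are joined by an arc of $L\cap U_{\as}$, hence are adjacent along $L$; writing $z(w)$ for the $\zs$-basepoint sharing an $\as$-component with $w$, the $\as$-boundary degenerations contribute $\sum_{w\in\ws}U_wV_{z(w)}\cdot\xs$. Likewise $\Sigma\setminus\bs$ has $|\zs|$ components, each with one $\ws$- and one $\zs$-basepoint joined by an arc of $L\cap U_{\bs}$, contributing $\sum_{z\in\zs}V_zU_{w(z)}\cdot\xs$. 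As one traverses a link component $K$ with basepoints $w_{K,1},z_{K,1},\dots,w_{K,n_K},z_{K,n_K}$ in order, the $\as$-matching and the $\bs$-matching pick out complementary sets of consecutive basepoint pairs around $K$, so the sum of the two contributions telescopes to
\[
\sum_{K\in C(L)}\big(U_{w_{K,1}}V_{z_{K,1}}+V_{z_{K,1}}U_{w_{K,2}}+U_{w_{K,2}}V_{z_{K,2}}+\cdots+V_{z_{K,n_K}}U_{w_{K,1}}\big)\cdot\xs=\omega_{\bL}\cdot\xs ,
\]
which is the claim. The main obstacle is the boundary-degeneration count invoked in the second step -- identifying the index-$2$ $\as$- and $\bs$-boundary degenerations with the components of $\Sigma\setminus\as$ and $\Sigma\setminus\bs$ and computing their mod-$2$ count to be $1$ through a point; everything else is the routine $\d^2$ bookkeeping carried out while keeping the $U_{\ws}$- and $V_{\zs}$-powers.
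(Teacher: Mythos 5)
The paper does not prove this lemma; it is imported by citation from \cite{ZemQuasi}*{Lemma~2.1}, so there is no internal proof to compare against. Your argument is the standard index-two ends count with boundary degenerations, which is almost certainly the content of the cited lemma, and the overall structure is correct: the constant-strip observation correctly forces any boundary-degeneration end to sit over $\xs=\ys$, the Maslov-index-$2$ positivity argument correctly isolates the domains as single components of $\Sigma\setminus\as$ or $\Sigma\setminus\bs$ (indeed $\mu([A])=\langle c_1(\frs),H([A])\rangle+2\sum_w n_w([A])=0+2=2$ for each $\as$-component $A$, so positivity forces a single component), and the basepoint bookkeeping around each link component does telescope to $\omega_{\bL}$.

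Two small remarks. First, the sentence about generic diagrams where ``no component of $\Sigma\setminus\as$ coincides with a component of $\Sigma\setminus\bs$'' is a non-issue: an $\as$-component has boundary a union of $\as$-circles and a $\bs$-component has boundary a union of $\bs$-circles, and since $\as\cap\bs=\varnothing$ the two domains can never agree unless both attaching sets are empty. The two families of degenerations are always distinct analytic objects (different Lagrangian boundary conditions) counted independently, so no perturbation of the diagram is needed. Second, the one genuinely nontrivial analytic input — that the mod-$2$ count of $\as$- (resp.\ $\bs$-) boundary degenerations of index $2$ through a fixed generator is one for each component of $\Sigma\setminus\as$ (resp.\ $\Sigma\setminus\bs$), including the planar components that arise in a multi-pointed diagram — is the step you would need to cite carefully; the Ozsv\'ath--Szab\'o result you quote is stated in the $g\geq 2$ singly-pointed setting, and in the multi-pointed planar case one should instead point to the cylindrical-model degeneration count (e.g.\ Lipshitz, or the version used in Ozsv\'ath--Szab\'o's link invariants paper). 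With that reference pinned down, the argument is complete.
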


An important property of Heegaard Floer homology is that it is independent of the choice of Heegaard diagram. For our purposes, we will need the following naturality result:

\begin{prop}Given two strongly $\frs$-admissible diagrams $\cH$ and $\cH'$ for $(Y,\bL)$, there is a $\Z^{\ws}\oplus \Z^{\zs}$-filtered, $\bF_2[U_{\ws}, V_{\zs}]$-equivariant chain homotopy equivalence
\[\Phi_{\cH\to \cH'}\colon \cCFL^-(\cH,\frs)\to \cCFL^-(\cH',\frs),\] which is well-defined up to filtered, equivariant chain homotopies.
\end{prop}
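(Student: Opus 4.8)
The plan is to follow the standard Heegaard Floer naturality scheme of Ozsv\'{a}th--Szab\'{o}, adapted to the curved, filtered setting (as in \cite{ZemGraphTQFT} and \cite{JuhaszNaturality}). First I would invoke the Reidemeister--Singer type theorem for embedded Heegaard diagrams of multi-based links: any two admissible diagrams $\cH$ and $\cH'$ for $(Y,\bL)$ are connected by a finite sequence of moves, namely isotopies of the $\as$ and $\bs$ curves (through admissible diagrams), handleslides among the $\as$ curves and among the $\bs$ curves, and index-one/two (de)stabilizations performed away from the link $L$ and the basepoints. This is where the defining conditions \eqref{HD:cond3} and \eqref{HD:cond7} on the diagrams enter: one must check that all moves can be carried out while keeping the compressing disks disjoint from $L$ and the balls between consecutive basepoints intact. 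I would then define $\Phi_{\cH\to\cH'}$ as the composition of the elementary maps associated to each move in such a sequence.

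Next I would define the elementary maps. For isotopies not crossing basepoints one uses continuation maps; for handleslides one uses the triangle-counting map associated to a suitable triple diagram, with the top $U_{\ws}^{n_{\ws}}V_{\zs}^{n_{\zs}}$ weights attached exactly as in the differential; for (de)stabilizations away from $L$ one uses the standard stabilization isomorphism. Each of these is manifestly $\bF_2[U_{\ws},V_{\zs}]$-equivariant, and each respects the $\Z^{\ws}\oplus\Z^{\zs}$ filtration because holomorphic triangles and strips have nonnegative domains, so the variable powers recorded can only increase filtration level — this is the same mechanism that makes $\d$ filtered. One must also check each elementary map is a chain map in the curved sense, i.e.\ intertwines the curved differentials; since $\omega_{\bL}$ depends only on $\bL$ and not on the diagram (Lemma~\ref{lem:del^2=0}), both sides have the same curvature, so the curved-chain-map condition $F\d = \d F$ makes sense and is verified by the usual degeneration-of-one-parameter-moduli argument. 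Then, using the admissibility hypothesis (which guarantees finiteness of the relevant moduli spaces, via \cite{OSDisks}*{Lemma~4.14} and \cite{ZemGraphTQFT}*{Lemma~4.9}), standard arguments show each elementary map is a filtered equivariant chain homotopy equivalence, and that these maps compose correctly up to filtered equivariant chain homotopy.

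The main obstacle — as always in naturality statements — is \textbf{well-definedness}: showing that the resulting $\Phi_{\cH\to\cH'}$ is, up to filtered equivariant chain homotopy, independent of the chosen sequence of moves connecting $\cH$ to $\cH'$. This requires establishing the basic commutation and associativity relations among the elementary maps: that continuation maps compose, that the triangle maps satisfy the relevant associativity (via counting rectangles in a quadruple diagram), that stabilization commutes with the other moves, and so on. Concretely one needs the analogue of the loop independence / ``$\Phi$ for a loop of diagrams is chain-homotopic to the identity'' statement. I would reduce this to the closed Heegaard Floer case: all of these relations were proven in \cite{OSDisks}, \cite{JuhaszNaturality} and \cite{ZemGraphTQFT} for the multi-pointed complexes $\CF^-$, and the only additional bookkeeping here is (i) keeping track of both families of variables $U_{\ws}$ and $V_{\zs}$ and the $\Z^{\ws}\oplus\Z^{\zs}$ filtration, and (ii) noting that since all complexes in sight have the same curvature $\omega_{\bL}$, every homotopy produced in the closed case is automatically a curved homotopy here. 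Thus no genuinely new analysis is needed beyond \cite{OSDisks}; the proof is an adaptation, and I would present it as such, citing the closed-case arguments and indicating the (routine) modifications for the link-variable weights and filtration.

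\begin{rem}
A cleaner packaging, which I would adopt, is to phrase the output as a transitive system: the diagrams $\cH$ form the index set, the maps $\Phi_{\cH\to\cH'}$ the distinguished morphisms, and one must check $\Phi_{\cH\to\cH}=\id$ and $\Phi_{\cH'\to\cH''}\circ\Phi_{\cH\to\cH'}=\Phi_{\cH\to\cH''}$ up to filtered equivariant homotopy — exactly the data of Definition~\ref{def:transitivesystemoverC}, working in the homotopy category of $\Z^{\ws}\oplus\Z^{\zs}$-filtered curved chain complexes over $\bF_2[U_{\ws},V_{\zs}]$. This is what lets one speak of $\cCFL^-(Y,\bL,\frs)$ as a well-defined object, and it is the form in which the naturality statement will actually be used in the rest of the paper.
\end{rem}
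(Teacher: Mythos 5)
Your proposal follows essentially the same route as the paper: construct the transition maps from a sequence of Heegaard moves (using the Ozsv\'ath--Szab\'o elementary maps for change of almost complex structure, isotopies, handleslides, and stabilizations), check filtration and equivariance via nonnegativity of holomorphic domains and curvature-matching from Lemma~\ref{lem:del^2=0}, and invoke the Juh\'asz--Thurston loop-independence argument for well-definedness; the transitive-system packaging you suggest in your remark is precisely what the paper adopts. The one point you pass over, and which the paper explicitly flags, is that the literature you are citing only establishes these elementary maps as \emph{quasi-isomorphisms}, not chain homotopy equivalences --- for handleslides in particular, Ozsv\'ath--Szab\'o prove the composition with the inverse handleslide induces the identity on homology, which is not the same as being homotopic to the identity. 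Since the statement asserts a chain homotopy equivalence (as it must, for the transitive system to live in the homotopy category of chain complexes rather than of graded modules), this upgrade needs a separate argument; the paper supplies it by citing \cite{HMInvolutive}*{Proposition~2.3}, which is a Whitehead-type result for free complexes over a polynomial ring. Your phrase ``standard arguments show each elementary map is a filtered equivariant chain homotopy equivalence'' asserts the conclusion without this ingredient, so you should either include that lemma or indicate why freeness of $\cCFL^-$ over $\bF_2[U_{\ws},V_{\zs}]$ lets you promote quasi-isomorphisms to homotopy equivalences. (Also, the citation key in this paper for Juh\'asz--Thurston is \cite{JTNaturality}, not the key you used.)
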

\begin{proof}In the setting of the knot and link invariants \cite{OSKnots} \cite{OSLinks}, Ozsv\'{a}th and Szab\'{o} constructed maps associated to changes of the almost complex structure, isotopies and handleslides of the $\as$ and $\bs$ curves, stabilizations of the Heegaard surface, as well as isotopies of the Heegaard surface relative to $L$. They showed that these maps were quasi-isomorphisms. If $\cH$ and $\cH'$ are arbitrary diagrams, one can construct a transition map $\Phi_{\cH\to \cH'}$ as the composition of the maps associated to a sequence of elementary Heegaard moves connecting $\cH$ and $\cH'$. Juh\'{a}sz and Thurston \cite{JTNaturality} showed that the maps $\Phi_{\cH\to \cH'}$ are independent, up to chain homotopy, on the choice of intermediate diagrams. We note that neither Ozsv\'{a}th--Szab\'{o} nor Juh\'{a}sz--Thurston explicitly showed that these transition maps are chain homotopy equivalences, as opposed to quasi-isomorphisms.  To see that the maps are chain homotopy equivalences, we refer the reader to \cite{HMInvolutive}*{Proposition~2.3}, whose proof works equally well in our setting.
\end{proof}

In light of the above naturality result, we define $\cCFL^\infty(Y,\bL,\frs)$ to be the transitive system over $\cC$  indexed by $A$ (cf. Definition~\ref{def:transitivesystemoverC}), where $A$ is the set of all strongly $\frs$-admissible diagrams for $(Y,\bL)$ and $\cC$ is the category of $\Z^{\ve{w}}\oplus\Z^{\ve{z}}$-filtered, $\bF_2[U_{\ve{w}},V_{\ve{z}}]$-equivariant curved chain complexes. We call $\cCFL^\infty(Y,\bL,\frs)$ the \emph{transitive chain homotopy type invariant}.

Finally, we note that for some of the more technical arguments of the paper, we will use Lipshitz's cylindrical reformulation of Heegaard Floer homology \cite{LipshitzCylindrical}. Instead of counting holomorphic curves from the complex disk into $\Sym^{n+g-1}(\Sigma)$, Lipshitz's version of Heegaard Floer homology counts pseudo-holomorphic curves of higher genus which map into $\Sigma\times [0,1]\times \R$. The equivalence of Lipshitz's reformulation with the Ozsv\'{a}th--Szab\'{o} construction follows (at least morally) from the \emph{tautological correspondence} between holomorphic disks mapping into $\Sym^{n+g-1}(\Sigma)$ and holomorphic curves mapping into $\Sigma\times [0,1]\times \R$, whose projection onto $[0,1]\times \R$ are $(n+g-1)$-fold branched covering maps. We refer the reader to \cite{LipshitzCylindrical} for more technical details about this approach.

\subsection{Coloring the link Floer complexes}
 Suppose that
\[\sigma\colon  \ws\cup \zs\to \bmP\] is a coloring of $\bL=(L,\ws,\zs)$. The coloring $\sigma$ induces an $\bF_2[U_{\ws}, V_{\zs}]$-module structure on $\cR_{\bmP}^-$. We define the colored complexes
\[\cCFL^-(\cH,\sigma,\frs):=\cCFL^-(\cH,\frs)\otimes_{\bF_2[U_{\ve{w}},V_{\ve{z}}]} \cR_{\bmP}^-,\] and
\[\cCFL^\infty(\cH,\sigma,\frs):=\cCFL^\infty(\cH,\frs)\otimes_{\bF_2[U_{\ve{w}},U_{\ws}^{-1},V_{\ve{z}}, V_{\zs}^{-1}]} \cR_{\bmP}^\infty.\]

The complexes $\cCFL^\infty(\cH,\sigma,\frs)$ fit together to form a transitive system over $\cC$, indexed by $A$, where $\cC$ is the category of $\Z^{\bmP}$-filtered, curved chain complexes over $\cR_{\bmP}^\infty$, and $A$ is the set of strongly $\frs$-admissible Heegaard diagrams for $(Y,\bL)$. Note that the filtration on $\cCFL^\infty(\cH,\sigma,\frs)$ is given by filtering by powers of the variables in $\cR_{\bmP}^\infty$.

We  note that including colorings is essential for functoriality. Without colorings, we do not have a fixed category to work in. Also, most of the maps we define for decorations on the surface are not chain maps until we color the complexes.

Although we will primarily work with curved chain complexes, we will occasionally need to work on the level of homology. If $\bL^\sigma$ is a colored link with $\omega_{\bL,\sigma}=0\in \bF_2[U_{\bmP}]$ (or equivalently $\d^2=0$ on $\cCFL^\circ(Y,\bL^\sigma,\frs)$), we define
\[\cHFL^\circ(Y,\bL^\sigma,\frs):=H_*(\cCFL^\circ(Y,\bL^\sigma,\frs)).\]

\subsection{Distinguished elements of \texorpdfstring{$\cHFL^-$}{HFL} for multi-based unlinks in \texorpdfstring{$(S^1\times S^2)^{\# n}$}{\# g (S1 x S2)}}
\label{sec:computemodulesunknots}

Suppose $Y$ is a 3-manifold with  link $\bL=(L,\ws,\zs),$ and $\sigma$ is a coloring of $\bL$ such that $\sigma(\ws)\cap \sigma(\zs)=\varnothing$. Furthermore, suppose that the total class of $L$ in $H_1(Y;\Z)$ is null-homologous, and that $\frs$ is a torsion $\Spin^c$ structure on $Y$. In this situation, the relative gradings $\gr_{\ws}$ and $\gr_{\zs}$ described in Equation~\eqref{eq:defgradings} are both well-defined on the  complex $\cCFL^-(Y,\bL^\sigma,\frs)$. If $\ve{o}\in \{\ve{w},\ve{z}\}$ we will let $\Max_{\gr_{\ve{o}}}(\cHFL^-(Y, \bL^\sigma, \frs))$ denote the subset of $\cHFL^-(Y,\bL^\sigma,\frs)$ of maximal $\gr_{\ve{o}}$ grading.

\begin{lem}\label{lem:topdegreeunlink1}Suppose $\bU$ is an unlink in $(S^1\times S^2)^{\# n}$, and each component has exactly two basepoints.  There  are isomorphisms
\[\Max_{\gr_{\ws}}\left(\cHFL^-((S^1\times S^2)^{\# n},\bU, \frs_0)\right)\iso \bF_2[V_{\ve{z}}],\] and
\[\Max_{\gr_{\zs}}\left(\cHFL^-((S^1\times S^2)^{\# n},\bU, \frs_0)\right)\iso \bF_2[U_{\ve{w}}].\] Furthermore, 
\[\Max_{\gr_{\ws}}\left(\cHFL^-((S^1\times S^2)^{\# n},\bU, \frs_0)\right)\cap \Max_{\gr_{\zs}}\left(\cHFL^-((S^1\times S^2)^{\# n},\bU, \frs_0)\right)\] is a 1-dimensional vector space, spanned by an element $\Theta$, which generates $\Max_{\gr_{\ws}}\left(\cHFL^-\right)$ as an $\bF_2[V_{\zs}]$-module, and generates $\Max_{\gr_{\zs}}\left(\cHFL^-\right)$ as an $\bF_2[U_{\ws}]$-module.
\end{lem}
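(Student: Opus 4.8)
The plan is to compute $\cHFL^-$ of an unlink in $(S^1\times S^2)^{\#n}$ directly from a simple Heegaard diagram, and then identify the top-graded pieces. First I would take the standard genus-$n$ Heegaard diagram for $(S^1\times S^2)^{\#n}$ obtained by connect-summing $n$ copies of the genus-one diagram for $S^1\times S^2$, together with one extra small pair of curves $(\alpha_0,\beta_0)$ for each unknot component to carry the two basepoints $w,z$; in fact the cleanest model is to put each component of $\bU$ in its own $S^1\times S^2$ summand plus possibly several basepoint-free summands, or to observe that the whole computation splits as a tensor product over summands using a connected-sum/K\"unneth argument for $\cCFL^-$. In the chosen diagram, for the torsion $\Spin^c$ structure $\frs_0$ there are exactly $2^n$ intersection points (two choices $a_i, b_i$ on each $S^1\times S^2$ handle), and with a suitably wound/admissible diagram the differential is identically zero, so $\cCFL^-(\cH,\frs_0)$ is free of rank $2^n$ over $\bF_2[U_{\ve w},V_{\ve z}]$ and $\cHFL^-$ is just this module. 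Since $\sigma(\ve w)\cap \sigma(\ve z)=\varnothing$, the curvature constant $\omega_{\bU,\sigma}$ vanishes (each component contributes $U_w V_z + V_z U_w = 0$ over $\bF_2$), so $\d^2=0$ holds after coloring and $\cHFL^-$ is well defined; by Lemma~\ref{lem:del^2=0} this is consistent.

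Next I would pin down the gradings. On each $S^1\times S^2$ summand the two generators differ in $\gr_{\ve w}$ (equivalently $\gr_{\ve z}$) by a fixed amount coming from the periodic domain; choosing the generator $\Theta_i^{\top}$ of maximal $\gr_{\ve w}$ on the $i$th summand and noting that the same generator is the one of maximal $\gr_{\ve z}$ (the two maximal-grading generators coincide on each $S^1\times S^2$ summand — this is the usual statement that $\HF^-(S^1\times S^2)$ has a single top-graded generator in each grading, with $U$ lowering $\gr_{\ve w}$ by $2$ and $V$ fixing it, and symmetrically). The variables act as follows: $U_{\ve w}$ drops $\gr_{\ve w}$ by $2$ and preserves $\gr_{\ve z}$, while $V_{\ve z}$ preserves $\gr_{\ve w}$ and drops $\gr_{\ve z}$ by $2$. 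Hence the submodule of maximal $\gr_{\ve w}$ grading is spanned over $\bF_2$ by $V_{\ve z}^{J}\cdot \Theta$ where $\Theta=\prod_i \Theta_i^\top$ is the unique top-generator and $J\ge 0$ ranges over all multi-indices, giving $\Max_{\gr_{\ve w}}(\cHFL^-)\cong \bF_2[V_{\ve z}]$; symmetrically $\Max_{\gr_{\ve z}}(\cHFL^-)\cong \bF_2[U_{\ve w}]$. The intersection is spanned by $\Theta$ alone, since any nonzero $V_{\ve z}^J\Theta$ with $J\neq 0$ has strictly smaller $\gr_{\ve z}$, and similarly for $U_{\ve w}$; this gives the claimed $1$-dimensionality and the module-generation statements.

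The main obstacle I expect is organizing the multi-summand / multi-basepoint bookkeeping cleanly: I need a Heegaard diagram where (a) strong $\frs_0$-admissibility holds, (b) the differential genuinely vanishes (this requires a careful winding argument so that no periodic domain of Maslov index $1$ contributes, which is standard for $(S^1\times S^2)^{\#n}$ but needs to be stated), and (c) the gradings $\gr_{\ve w}, \gr_{\ve z}$ are correctly normalized across all the basepoint pairs and all the summands. A convenient way to sidestep (c) is to first prove the case $n=1$ with one component by hand, then invoke a connected-sum formula for $\cCFL^-$ (or simply observe that the diagram is a disjoint/connected-sum of pieces whose $\cCFL^-$ complexes tensor together) to reduce the general case to the one-summand, one-component case tensored with copies of $\cCFL^-$ of the unknot in $S^3$, whose top-graded part is likewise a single generator. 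Either way the computation is elementary once the diagram is fixed; the only real care needed is the admissibility and vanishing-differential argument.
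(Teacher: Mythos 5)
Your claim that the differential of $\cCFL^-$ is identically zero on a suitable diagram is incorrect, and this is the key gap. Consider the simplest nontrivial case: a 2-component unlink in $S^3$, each component carrying one $w$ and one $z$ basepoint. The diagram has genus-$0$ Heegaard surface with one pair $(\alpha_0,\beta_0)$ of curves in a lens configuration, so $S^2\setminus(\alpha_0\cup\beta_0)$ consists of four bigons. Placing $w_1,z_1$ in one of the ``outer'' bigons and $w_2,z_2$ in the other, the two ``inner'' bigons go from the top generator $a_0$ to $b_0$ with no basepoint multiplicity, so $\d a_0=0$. But the two outer bigons are Maslov index $1$ classes in $\pi_2(b_0,a_0)$, and they carry $n_{w_1}=n_{z_1}=1$ and $n_{w_2}=n_{z_2}=1$ respectively, so
\[
\d b_0 = (U_{w_1}V_{z_1}+U_{w_2}V_{z_2})\cdot a_0 \ne 0.
\]
The same phenomenon occurs whenever there is more than one component or more than one summand. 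Your remark about winding and admissibility does not help here: the nonvanishing contributions come from honest small bigons on the diagram, not from adding periodic domains, and admissibility cannot kill them. (Also, with $k$ link components the rank of $\cCFL^-$ is $2^{n+k-1}$, not $2^n$, though this is a cosmetic slip.)

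The paper's proof never asserts $\d=0$. Instead it establishes exactly the two weaker facts that are actually needed: (a) $\d\Theta=0$, obtained by noting that the relative $\gr_{\ws}$-grading forces any Maslov index $1$ class $\phi\in\pi_2(\Theta,\ys)$ to have $n_{\ws}(\phi)=0$, and these classes cancel in pairs on the chosen diagram; (b) no element $V_{\zs}^J\Theta$ (resp.\ $U_{\ws}^J\Theta$) is a boundary, because the same grading inequality applied to $\phi\in\pi_2(\ys,\Theta)$ forces $\sum_w n_w(\phi)\ge 1$, so the image of $\d$ lands entirely in the subset of $U_{\ws}$-filtration at least $1$ (resp.\ $V_{\zs}$-filtration at least $1$). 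This filtration argument sidesteps the need to identify the differential completely, and in particular it is compatible with the nonzero differential computed above. Your conclusion is correct, but the route through ``$\d=0$'' does not go through as stated; you would need to replace it with the cycle-and-filtration argument, or else only keep the $\Theta$-component of the computation.
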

\begin{proof} We can pick a diagram $\cH=(\Sigma,\as,\bs,\ws,\zs)$ for $((S^1\times S^2)^{\# n}, \bU)$ where the $\bs$ curves are all small Hamiltonian translates of the $\as$ curves, and the basepoints in $\ws$ and $\zs$ come in pairs, which are immediately adjacent to each other on the Heegaard diagram, and not contained in any of the  bigons bounded by a pair of  $\as$ and $\bs$ curves.

On the diagram $\cH$, one has $\gr_{\ws}(\xs,\ys)=\gr_{\zs}(\xs,\ys)$ for any pair of intersection points $\xs,\ys\in \bT_{\as}\cap \bT_{\bs}$, since the $\ws$ and $\zs$-basepoints come in adjacent pairs, so the formulas in Equation~\eqref{eq:defgradings} coincide. Furthermore, there is a canonical top degree intersection point $\Theta\in \bT_{\as}\cap \bT_{\bs}$.

Furthermore, it is easy to see that
\[\Max_{\gr_{\ws}}\left(\cCFL^-(\cH, \frs_0)\right)\iso \bF_2[V_{\ve{z}}],\] and
\[\Max_{\gr_{\zs}}\left(\cCFL^-(\cH, \frs_0)\right)\iso \bF_2[U_{\ve{w}}].\] Furthermore, 
\[\Max_{\gr_{\ws}}\left(\cCFL^-(\cH ,\frs_0)\right)\cap \Max_{\gr_{\zs}}(\cCFL^-(\cH, \frs_0))=\Span_{\bF_2} \langle\Theta\rangle.\]

To show the lemma statement, it is thus sufficient to show that $\d (\Theta)=0$ and that no non-trivial sum of terms of the form $V_{\ve{z}}^J\cdot \Theta$ is a cycle, and that no non-trivial sum of terms of the form $U_{\ws}^J \cdot \Theta$ is a cycle.

To see that $ \Theta$ is a cycle, one simply analyzes the proof of \cite{OSDisks}*{Lemma~9.1}. Using the relative Maslov grading, one sees that if $\phi\in \pi_2(\Theta,\ve{y})$ is a Maslov index 1 disk, then
\[2\sum_{w\in \ve{w}} n_{w}(\phi)\le 0,\] hence any such disk admitting holomorphic representatives cannot cross any of the $\ve{w}$-basepoints. It is easy to see that on the diagram $\cH$, the only disks in any $\pi_2(\Theta,\ys)$ which do not cross over any $\ve{w}$-basepoints come in pairs, and hence cancel when counted by the differential. Thus $\d (\Theta)=0$.

To see that no non-trivial sum of terms of the form $ V_{\ve{z}}^J\cdot \Theta$ is a boundary, we note that any disk $\phi\in \pi_2(\ve{y},\Theta)$ satisfies
\[\mu(\phi)-2\sum_{w\in \ve{w}} n_{w}(\phi)\le 0,\] by the relative homological grading formula, and hence
\[2\sum_{w\in \ve{w}} n_w (\phi)\ge 1,\] implying that if $\phi$ has a holomorphic representative which is counted by the differential, it must have nonzero multiplicities on the $\ve{w}$-basepoints. In particular $\phi$ must increase the $U_{\ve{w}}$-filtration, and hence all cycles live in the subset of $\cCFL^-$ of $U_{\ve{w}}$-filtration at least 1, and in particular nothing which is a sum of elements of the form $V_{\ve{z}}^J\cdot \Theta$ is a boundary. 

An identical argument shows that no non-trivial sum of elements of the form $U_{\ws}^J\cdot \Theta$ is a boundary.
\end{proof}

We now consider the case that all components   of the unlink $\bU\subset (S^1\times S^2)^{\# g}$ except for one have exactly two basepoints, and that the remaining component of $\bU$ has exactly four basepoints. These will appear when we define maps associated to saddle cobordisms.

\begin{lem}\label{lem:topdegreeunlink2}Suppose that $\bU$ is an unlink in $(S^1\times S^2)^{\# n}$ such that all components of $\bU$ except for one have exactly two basepoints, and one component of $\bU$ has exactly four basepoints. Let $U$ denote the component of $\bU$ with four basepoints, and let  $w$, $w',$ $z$ and $z'$ denote the 4-basepoints. Define the set of colors $\bmP_0:=\ve{w}\cup (\ve{z}/(z\sim z'))$ where $\ve{z}/(z\sim z')$ denotes the set of $\zs$-basepoints modulo the relation that $z\sim z'$, and define the coloring $\sigma_0\colon \ws\cup \zs\to \bmP_0$ to be the natural map. Then there an isomorphism
\[\Max_{\gr_{\ws}}\left(\cHFL^-((S^1\times S^2)^{\# n}, \bU^{\sigma_0},\frs_0)\right)\iso \bF_2[V_{\ve{z}}]/(V_z-V_{z'}).\] In particular there is a well-defined generator $\Theta^{\ws}\in \Max_{\gr_{\ws}}(\cHFL^-((S^1\times S^2)^{\# n}, \bU^{\sigma_0},\frs_0))$.

Similarly if $\bmP_0':=(\ve{w}/(w \sim w'))\cup \ve{z}$ and $\sigma_0':\ws\cup \zs\to \bmP_0'$ is the natural map, then 
\[\Max_{\gr_{\zs}}\left(\cHFL^-((S^1\times S^2)^{\# n}, \bU^{\sigma_0'},\frs_0)\right)\iso \bF_2[U_{\ve{w}}]/(U_w-U_{w'}).\] In particular there is a well-defined generator $\Theta^{\zs}\in  \Max_{\gr_{\zs}}\left(\cHFL^-((S^1\times S^2)^{\# n}, \bU^{\sigma_0'},\frs_0)\right)$.
\end{lem}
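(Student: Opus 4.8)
The plan is to mimic the proof of Lemma~\ref{lem:topdegreeunlink1} as closely as possible, using the fact that $\bU^{\sigma_0}$ (resp.\ $\bU^{\sigma_0'}$) is a two-color coloring obtained by collapsing the $\zs$-variables (resp.\ $\ws$-variables) on the four-basepoint component, so the relevant Floer complex is the uncolored one from the previous lemma, base-changed along $\bF_2[U_{\ws},V_{\zs}]\to \cR^-_{\bmP_0}$. Concretely, I would first pick a convenient Heegaard diagram $\cH=(\Sigma,\as,\bs,\ws,\zs)$ for $((S^1\times S^2)^{\#n},\bU)$ in which the $\bs$ curves are small Hamiltonian translates of the $\as$ curves, the $\ws$- and $\zs$-basepoints sit in immediately adjacent pairs (including the four basepoints $w,z,w',z'$ on $U$, arranged say in the cyclic order $(w,z,w',z')$), and none of the basepoints lies in any of the small bigons cut out by $\as\cap\bs$. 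As in Lemma~\ref{lem:topdegreeunlink1}, on such a diagram one has $\gr_{\ws}(\xs,\ys)=\gr_{\zs}(\xs,\ys)$ for all intersection points, there is a canonical top-degree generator $\Theta\in\bT_{\as}\cap\bT_{\bs}$, and the same counting-of-disks argument (analyzing the proof of \cite{OSDisks}*{Lemma~9.1} via the relative Maslov grading) shows $\d(\Theta)=0$ and that no nontrivial combination $V_{\zs}^J\cdot\Theta$ is a boundary in $\cCFL^-(\cH,\frs_0)$.

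Next I would compute $\Max_{\gr_{\ws}}$ directly on the chain level. On this diagram the maximal-$\gr_{\ws}$ subspace of $\cCFL^-(\cH,\frs_0)$ is the free $\bF_2[V_{\zs}]$-module on $\Theta$ (variables lowering $\gr_{\ws}$ are excluded), exactly as before, and the differential restricted there is zero by the boundary argument above. Tensoring with $\cR^-_{\bmP_0}$ over $\bF_2[U_{\ws},V_{\zs}]$ is exact here because $\Max_{\gr_{\ws}}(\cCFL^-(\cH,\frs_0))$ is a free $\bF_2[V_{\zs}]$-module with zero differential, so
\[
\Max_{\gr_{\ws}}\bigl(\cCFL^-(\cH,\sigma_0,\frs_0)\bigr)\iso \bF_2[V_{\zs}]\otimes_{\bF_2[V_{\zs}]}\bigl(\bF_2[V_{\zs}]/(V_z-V_{z'})\bigr)\iso \bF_2[V_{\zs}]/(V_z-V_{z'}),
\]
and since the differential on this subcomplex vanishes, the same identification passes to homology, giving the first claimed isomorphism; the generator $\Theta^{\ws}$ is the image of $\Theta$. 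The argument for $\Max_{\gr_{\zs}}$ with the coloring $\sigma_0'$ is identical after swapping the roles of $\ws$ and $\zs$ (and of $U$ and $V$), using that on $\cH$ the two gradings and the two sets of basepoint-adjacency hypotheses are symmetric.

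The one point requiring a little care — and the likely main obstacle — is justifying that restricting to $\Max_{\gr_{\ws}}$ commutes with the base change and with passing to homology, i.e.\ that $\Max_{\gr_{\ws}}(\cHFL^-(\cH,\sigma_0,\frs_0))$ really is the homology of $\Max_{\gr_{\ws}}(\cCFL^-(\cH,\sigma_0,\frs_0))$ rather than merely containing it. This is exactly the same subtlety handled in Lemma~\ref{lem:topdegreeunlink1}: one shows that every cycle representing a class of maximal $\gr_{\ws}$ grading can be taken to lie in the top-graded subspace (any disk into a strictly lower filtration level would have to increase some $U_w$-power, by the relative grading formula, hence cannot produce a maximal-grading cycle as a boundary or force a maximal-grading chain to fail to be closed), so the maximal-grading subcomplex computes the maximal-grading summand of homology. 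On $\cH$ this is immediate because the whole top-graded subcomplex has vanishing differential. Once this is in place, the freeness/exactness bookkeeping for the tensor product is routine, and the existence of the well-defined generators $\Theta^{\ws}$, $\Theta^{\zs}$ follows since each of the rank-one modules $\bF_2[V_{\zs}]/(V_z-V_{z'})$ and $\bF_2[U_{\ws}]/(U_w-U_{w'})$ has a canonical generator, pinned down by naturality of the construction under the Heegaard-diagram transition maps.
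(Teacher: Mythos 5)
Your proposal contains a gap that I think is fatal as written. You claim that on a suitable Heegaard diagram the top-degree generator $\Theta$ satisfies $\d(\Theta)=0$ already on the \emph{uncolored} complex $\cCFL^-(\cH,\frs_0)$, and that the rest of the argument is base change along $\bF_2[U_{\ws},V_{\zs}]\to \cR^-_{\bmP_0}$. But such a diagram cannot exist. Indeed, by Lemma~\ref{lem:del^2=0} the uncolored differential satisfies $\d^2=\omega_{\bL}\cdot\id$, and the four-basepoint component $U$ contributes the nonzero term $U_wV_z+V_zU_{w'}+U_{w'}V_{z'}+V_{z'}U_w$ to $\omega_{\bL}$. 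Since $\cCFL^-(\cH,\frs_0)$ is a free $\bF_2[U_{\ws},V_{\zs}]$-module, $\d(\Theta)=0$ would force $\omega_{\bL}\cdot\Theta=0$ and hence $\omega_{\bL}=0$, a contradiction. The closely related issue is that your proposed diagram, in which the four basepoints on $U$ ``sit in immediately adjacent pairs'' and avoid the small $\as\cap\bs$ bigons, is not topologically available: a four-basepoint unknot component contributes one $\alpha$-curve and one $\beta$-curve to the diagram, meeting (minimally) in two points and cutting the sphere into four bigons, one for each basepoint. You are conflating the diagram for Lemma~\ref{lem:topdegreeunlink1} (two-basepoint components, no extra curves on those spheres) with the one needed here.

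The point the paper's proof actually exploits is that on the uncolored complex the top $\gr_{\ws}$-generator is \emph{not} a cycle: one computes $\d(\Theta^{\ws})=(V_z+V_{z'})\cdot\Theta^{\zs}$ from the two bigons on the sphere for $U$, and the coloring $\sigma_0$ (identifying $V_z$ and $V_{z'}$) is precisely what kills this term and makes $\Theta^{\ws}$ a cycle in the colored complex. Consequently the identification $\Max_{\gr_{\ws}}(\cHFL^-(\cH,\sigma_0,\frs_0))\iso \Max_{\gr_{\ws}}(\cCFL^-(\cH,\frs_0))/(V_z-V_{z'})$ is not a formal tensor-exactness statement over a complex with zero differential; it relies on the explicit computation of $\d(\Theta^{\ws})$ and the observation that the coloring annihilates it. Your bookkeeping about top-grading cycles not being boundaries, and about $\Max_{\gr_{\ws}}$ on homology, is fine once one has the correct $\d(\Theta^{\ws})$, but the premise $\d(\Theta)=0$ on the uncolored complex has to be replaced by this computation.
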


\begin{proof} We will prove the claim about $\Max_{\gr_{\ws}}(\cHFL^-)$, since the result about $\Max_{\gr_{\zs}}(\cHFL^-)$ follows symmetrically. The proof is similar to the proof of Lemma~\ref{lem:topdegreeunlink1}. We will construct a convenient diagram $\cH$ for $((S^1\times S^2)^{\# n}, \bU)$. We start with a diagram $\cH_0=(S^2,\alpha_0,\beta_0,w,w',z,z')$ for $(S^3,U,w,w',z,z')$, obtained taking the Heegaard surface $S^2\subset S^3$, and picking an $\alpha_0$ and $\beta_0$ curve which intersect in two points.

 There are four components of $S^2\setminus (\alpha_0\cup \beta_0)$, all of which are bigons. Each bigon contains one of the basepoints $w$, $w',$ $z$ and $z'$. We can form a diagram $\cH$ for $((S^1\times S^2)^{\# n}, \bU)$ by attaching a genus $n$ surface $\Sigma_n$ to $S^2$ in the bigon on $S^2$ which contains the basepoint $w$. On $\Sigma_n$, we can pick additional attaching curves $\as$ and $\bs$ which are small Hamiltonian translates of each other, intersecting in pairs of points. We can place the additional basepoints on $\Sigma_n$ so that they come in pairs of $\ws$ and $\zs$-basepoints, which are immediately adjacent to each other. An example is shown in Figure~\ref{fig::26}.

There is an intersection point $\Theta^{\ws}$ of maximal $\gr_{\ws}$ grading on $\cH$. The same argument that we used in the proof of Lemma~\ref{lem:topdegreeunlink1} shows that sums of elements of the form $\Theta^{\ws}\cdot V_{\ve{z}}^J$ generate the subset of $\cCFL^-(\cH,\frs)$ of top $\gr_{\ve{w}}$ grading. Also as before, we note that none of these elements are boundaries. 

However, it is important to note that on the uncolored module $\cCFL^-(\cH,\frs_0)$, the element $\Theta^{\ws}$ is not a cycle. Instead $\d (\Theta^{\ws})=(V_z+V_{z'})\cdot \Theta^{\zs}$, as shown in Figure~\ref{fig::26}. On the other hand, the coloring $\sigma_0$ identifies $V_z$ and $V_{z'}$, so $\Theta^{\ws}$ becomes a cycle in the colored chain complex. Hence \[\Max_{\gr_{\ws}}\left(\cHFL^-(\cH,\sigma_0,\frs_0)\right)\iso \Max_{\gr_{\ws}}\left(\cCFL^-(\cH, \sigma_0,\frs)\right)\iso \Max_{\gr_{\ws}}\left(\cCFL^-(\cH,\frs)\right)/(V_{z}-V_{z'}),\] which is clearly isomorphic to $\bF_2[V_{\zs}]/(V_z-V_{z'})$. 
\end{proof}

\begin{figure}[ht!]
\centering
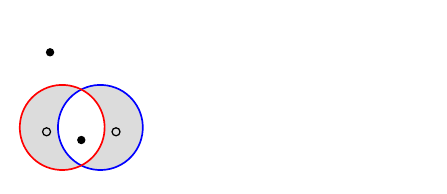
\caption{A diagram $\cH$ for $((S^1\times S^2)^{\# n}, \bU)$ in Lemma~\ref{lem:topdegreeunlink2}. The intersection point $\Theta^{\ws}$ is labeled with solid dots.  The two shaded bigons are disks contributing to the relation $\d(\Theta^{\ws})=(V_{z}+V_{z'})\cdot\Theta^{\zs}$.\label{fig::26}}
\end{figure}

\section{Quasi-stabilization, basepoint actions, and basepoint moving maps} \label{sec:mapswhichappear}

In this section, we describe some important maps which appear in the graph TQFT.

\subsection{Quasi-stabilization maps for adding and removing basepoints}
\label{sec:quasi-stabilization}
In this section, we describe the \emph{quasi-stabilization} operation, initially described in \cite{MOIntSurg}, which is a procedure for adding or removing an adjacent pair of basepoints from a link component. Some basic properties of the quasi-stabilization operation (such as the fact that the quasi-stabilization maps are chain maps, and induce natural chain maps between link Floer complexes) are proven in \cite{ZemQuasi}. 

Suppose that $w$ and $z$ are new basepoints for a link $\bL=(L,\ws,\zs)$, contained in a single component of $L\setminus (\ws\cup \zs)$, such that $w$ immediately follows $z$ with respect to the links orientation. Suppose that $\sigma\colon \ws\cup \zs\to \bmP$ is a coloring and that $\sigma'\colon \ws\cup \zs\cup \{w,z\}\to \bmP$ is a coloring which extends $\sigma$. Suppose also that $z$ is given the same color as the other $\zs$-basepoint adjacent to $w$. We will describe filtered equivariant chain maps
\[S_{w,z}^+\colon \cCFL^-(Y,\bL^\sigma,\frs)\to \cCFL^-(Y,(\bL_{w,z}^+)^{\sigma'},\frs)\] and 
\[S_{w,z}^-\colon \cCFL^-(Y,(\bL_{w,z}^+)^{\sigma'},\frs)\to \cCFL^-(Y,\bL^\sigma,\frs),\] which are well-defined on the transitive chain homotopy type invariants. Here $\bL_{w,z}^+$ denotes the multi-based link $(L,\ws\cup \{w\}, \zs\cup \{z\})$. If instead $z$ follows $w$, there are maps $S_{z,w}^+$ and $S_{z,w}^-$, which are defined analogously. We will call these the \emph{type-$S$} quasi-stabilization maps.

\begin{rem}  Our notation for the quasi-stabilization follows the right-to-left convention described in Section~\ref{subsec:convention}. If we write $S_{w,z}^\circ$, the implication is that $w$ immediately follows $z$. If we write $S_{z',w'}^\circ$, the implication is that $z'$ immediately follows $w'$.
\end{rem}

The construction of the type-$S$ quasi-stabilization map is not symmetric between the $\ws$ and the $\zs$-basepoints. Instead, by reversing the roles of $\ws$ and $\zs$-basepoints in the construction, we will construct \emph{type-$T$} quasi-stabilization maps $T_{w,z}^+$ and $T_{w,z}^-$ as well. If  $\sigma\colon \ws\cup \zs\to \bmP$ is a coloring which is extended by $\sigma'\colon \ws\cup \zs\cup \{w,z\}\to\bmP$  such that $w$ is given the same color as the other $\ws$ basepoint adjacent to $z$, we will describe filtered, equivariant chain maps
\[T_{w,z}^+\colon \cCFL^-(Y,\bL^\sigma,\frs)\to \cCFL^-(Y,(\bL_{w,z}^+)^{\sigma'},\frs)\] and 
\[T_{w,z}^-\colon \cCFL^-(Y,(\bL_{w,z}^+)^{\sigma'},\frs)\to \cCFL^-(Y,\bL^\sigma,\frs).\]

We now provide the description of both the type-$S$ and type-$T$ quasi-stabilization maps on the level of intersection points and Heegaard diagrams.

Suppose we are given a Heegaard diagram $\cH=(\Sigma, \ve{\alpha},\ve{\beta},\ve{w},\ve{z})$ for $(Y,\bL)$, and $w$ and $z$ are new basepoints on $\bL$, which are contained in a single component of $L\setminus (\ws\cup \zs)$. Furthermore, suppose that   $w$ comes after $z$. Write $w'$ and $z'$ for the two basepoints of $\bL$ which are adjacent to $w$ and $z$. Since $w$ appears immediately after $z$ on $L$, if follows from Definition~\ref{def:Heegaarddiagramlink} that the component of $L\setminus (\ws\cup \zs)$ which contains $w$ and $z$ is contained in the $U_{\as}$ handlebody of $Y\setminus \Sigma$.

  Let $A$ denote the component of $\Sigma\setminus \ve{\alpha}$ containing $w'$ and $z'$. We pick a point $p\in A\setminus (\ve{\alpha}\cup \ve{\beta}\cup \ve{w}\cup \ve{z})$ and a choice of new $\alpha_s$ curve in $A$ passing through $p$ which cuts $A$ into two pieces, one of which contains $w'$ and one of which contains $z'$. Note that $\alpha_s$ can in general intersect many different $\bs$ curves.

Let $\cH_0=(D^2,\alpha_0,\beta_0,w,z)$ be a piece of a Heegaard diagram, where $D^2$ is a disk, $\alpha_0$ is an arc with both endpoints on $\d D^2$, which cuts $D^2$ into two pieces, one of which contains $w$ and one of which contains $z$. Furthermore $\beta_0$ is a closed curve which intersects $\alpha_0$ twice and bounds a disk containing the basepoints $w$ and $z$. We then paste $\cH_0$ onto $\cH$ at $p$, so that $\alpha_0$ is lined up with $\alpha_s$. This operation is called the \emph{special connected sum} in \cite{MOIntSurg}. An example of a quasi-stabilization is shown in Figure~\ref{fig::31}.

\begin{figure}[ht!]
\centering
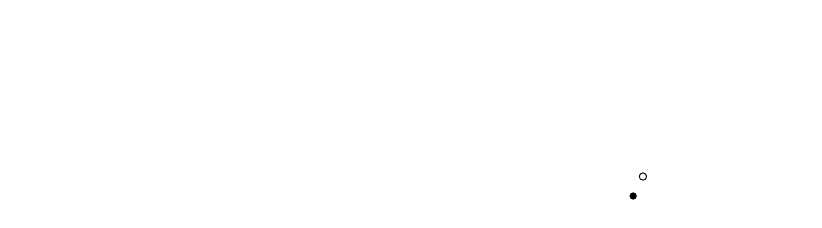
\caption{\textbf{Quasi-stabilizing a diagram.} On the left, the region in the unstabilized diagram $A\subset \Sigma\setminus \as$ which contains $w'$ and $z'$ is shown. On the right, a quasi-stabilization is shown which adds the two basepoints $w$ and $z$ between $w'$ and $z'$.\label{fig::31}}
\end{figure}

Note that $\alpha_s\cap \beta_0$ consists of two points, which are distinguished by the gradings $\gr_{\ws}$ and $\gr_{\zs}$. We will write $\theta^{\ve{w}}$ for the higher $\gr_{\ws}$-graded intersection point and $\xi^{\ws}$ for the lower. Somewhat redundantly, we will write $\theta^{\ve{z}}$ for the higher $\gr_{\zs}$-graded intersection point, and $\xi^{\zs}$ for the lower. By inspection, we have that
\[\theta^{\ve{w}}=\xi^{\zs} \qquad \text{and} \qquad \theta^{\zs}=\xi^{\ws}.\]

The type-$S$ quasi-stabilization maps are defined by the formulas
\[
S_{w,z}^+(\ve{x})=\ve{x}\otimes \theta^{\ws}
\] and
\[
S_{w,z}^-(\ve{x}\otimes \theta^{\ws})=0\qquad \text{and} \qquad S_{w,z}^-(\ve{x}\otimes \xi^{\ve{w}})=\ve{x},
\] extended $\cR_{\bmP}^-$-equivariantly. Similarly, we define the type-$T$ quasi-stabilization maps via the formulas
\[
T_{w,z}^+(\ve{x})=\ve{x}\otimes \theta^{\ve{z}}
\] 
and
\[
T_{w,z}^-(\ve{x}\otimes \theta^{\ve{z}})=0\qquad \text{and} \qquad T_{w,z}^-(\ve{x}\otimes \xi^{\ve{z}})=\ve{x},
\]
extended equivariantly over $\cR_{\bmP}^-$.

If $w$ and $z$ are two new adjacent basepoints, but $z$ follows $w$, then a slight adaptation of the above construction yields maps $S_{z,w}^+,$ $S_{z,w}^-,$ $T_{z,w}^+$ and $T_{z,w}^-$. In this situation, if we let $w'$ and $z'$ denote the two basepoints of $\bL$ which are adjacent to $w$ and $z$, then $w'$ and $z'$ are contained in a single component $B\subset \Sigma\setminus \bs$. The quasi-stabilization maps are then defined in this case by picking a point $p\in B\setminus (\as\cup \bs\cup \{w',z'\})$ and a simple closed curve $\beta_s\subset B\setminus \bs$ which intersects $p$ and separates $w'$ from $z'$. The quasi-stabilization construction is then performed by inserting a new $\alpha_0$ curve intersecting $\beta_s$ twice, and bounding the new basepoints $w$ and $z$. The same formulas as before are used to define the maps $S_{z,w}^+,$ $S_{z,w}^-,$ $T_{z,w}^+$ and $T_{z,w}^-$.

\begin{rem}The  quasi-stabilization maps are only defined when $w$ and $z$ are not the only basepoints on their link component, since link Floer homology is only defined when all components of a link have at least two basepoints.
\end{rem}

The first step towards analyzing the quasi-stabilization maps is to understand when they are chain maps. In \cite{ZemQuasi}, the following is proven using a neck-stretching argument:

\begin{prop}[\cite{ZemQuasi}*{Proposition~5.3}]\label{prop:quasi-stabilizeddifferential}Suppose $\cH$ is a diagram for $(Y,\bL)$, and let $\cH^+$ be a diagram for $(Y,\bL_{w,z}^+)$, obtained by quasi-stabilizing $\cH$ by adding $w$ and $z$ between the basepoints $w'$ and $z'$ on $\bL$. As modules, there is an isomorphism
\[
\cCFL^-(\cH^+,\frs)\iso \cCFL^-(\cH,\frs)\otimes_{\bF_2} \langle \theta^{\ws}, \xi^{\ws}\rangle\otimes_{\bF_2} \bF_2[U_w,V_z],
\] where $\langle \theta^{\ws},\xi^{\ws}\rangle$ denotes the 2 dimensional vector space over $\bF_2$ spanned by $\theta^{\ws}$ and $\xi^{\ws}$. Furthermore, for sufficiently stretched almost complex structures, there is an identification of differentials
\[\d_{\cH^+}=\begin{pmatrix}\d_{\cH}& U_w+U_{w'}\\
V_{z}+V_{z'}& \d_{\cH}
\end{pmatrix}.\] The above matrix is in terms of the generators $\theta^{\ve{w}}$ and $\xi^{\ws}$, with $\theta^{\ve{w}}$ corresponding to the first row and column, and $\xi^{\ve{w}}$ corresponding to the second row and column.
\end{prop}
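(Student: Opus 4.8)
The plan is to follow the neck-stretching strategy behind \cite{ZemQuasi}*{Proposition~5.3}. First I would pin down the generators and the module structure. The new curve $\beta_0$ is disjoint from every $\beta$-curve of $\cH$ and meets, among the $\alpha$-curves of $\cH^+=\cH\cup\cH_0$, only $\alpha_s$, in the two points $\theta^{\ws},\xi^{\ws}$. Hence in any tuple representing a point of $\bT_{\as'}\cap\bT_{\bs'}$ the curve $\beta_0$ must be matched with $\alpha_s$, so the remaining $g+n-1$ $\alpha$-curves (i.e.\ $\as$) are matched with the remaining $g+n-1$ $\beta$-curves (i.e.\ $\bs$). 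This gives a bijection $\bT_{\as'}\cap\bT_{\bs'}\cong(\bT_{\as}\cap\bT_{\bs})\times\{\theta^{\ws},\xi^{\ws}\}$, and using the explicit vector-field description of $\frs_{\ws}$ one checks $\frs_{\ws}(\xs\otimes\theta^{\ws})=\frs_{\ws}(\xs\otimes\xi^{\ws})=\frs_{\ws}(\xs)$, since inserting the pair $w,z$ only adds a cancelling index-$0$/index-$3$ pair of flowlines localized in the new region. As $\cCFL^-$ is freely generated over the polynomial ring by its intersection points, this yields the asserted isomorphism $\cCFL^-(\cH^+,\frs)\cong\cCFL^-(\cH,\frs)\otimes_{\bF_2}\langle\theta^{\ws},\xi^{\ws}\rangle\otimes_{\bF_2}\bF_2[U_w,V_z]$. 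I would also note that $\cH^+$ can be kept strongly $\frs$-admissible and that the planar region $D^2$ introduces no new periodic domains.

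Next I would compute $\d_{\cH^+}$ by stretching the almost complex structure along the connected-sum circle $c$ along which $\cH_0$ is glued to $\cH$ (most cleanly in Lipshitz's cylindrical formulation). Standard compactness shows that, for necks of sufficient length, every index-$1$ strip counted by $\d_{\cH^+}$ degenerates into a matched broken configuration: a strip on the $\cH$-side together with a strip in the completion of $\cH_0$, compatible along $c$. On the $\cH_0$-side the only nonnegative domains of small index are the constant strip and the two bigons, one crossing $w$ and one crossing $z$; an index count then forces exactly the following contributions. Strips supported away from the new region contribute $\d_{\cH}$ on each of the two summands — these are the $\cH$-side strips matched with a constant on the $\cH_0$-side, and the gluing theorem identifies their mod-$2$ count, for long necks, with the count in $\cH$. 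A constant on the $\cH$-side matched with the bigon across $w$ (resp.\ $z$) contributes $U_w$ from $\xi^{\ws}$ to $\theta^{\ws}$ (resp.\ $V_z$ from $\theta^{\ws}$ to $\xi^{\ws}$), the directions being fixed by the grading constraint together with $\theta^{\ws}=\xi^{\zs}$, $\xi^{\ws}=\theta^{\zs}$. Finally, the two further index-$1$ bigons of $\cH^+$ obtained by sweeping the small bigons across $\alpha_s$ — whose domains are supported in $A\cup D^2$, do not change $\xs$, and cross $w'$ (resp.\ $z'$) exactly once — survive the stretching with mod-$2$ count $1$ by a Riemann-mapping/gluing argument, contributing $U_{w'}$ from $\xi^{\ws}$ to $\theta^{\ws}$ and $V_{z'}$ from $\theta^{\ws}$ to $\xi^{\ws}$. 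Assembling these gives exactly the matrix $\begin{pmatrix}\d_{\cH}&U_w+U_{w'}\\ V_z+V_{z'}&\d_{\cH}\end{pmatrix}$.

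The main obstacle is the holomorphic-curve bookkeeping in the stretched limit. One must rule out degenerations in which the curve crosses $c$ more than the minimal number of times, and rule out boundary-degeneration or sphere-bubbling phenomena; one must confirm that admissibility survives the stabilization so that only finitely many domains can appear; and, most delicately, one must run the gluing argument in both directions — verifying that each of the four local bigons has a unique holomorphic representative for sufficiently long necks, and that the $\cH$-side count is genuinely unaffected by the stretching — so that all of the relevant coefficients come out to be exactly $1$ and no spurious terms are produced. Once this analytic input is in place, the rest is combinatorial domain-counting. In the present paper this statement is simply invoked from \cite{ZemQuasi}*{Proposition~5.3}, so only the citation is needed here.
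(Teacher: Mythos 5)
The paper does not prove this statement --- it is imported as a black box from \cite{ZemQuasi}*{Proposition~5.3}, and you correctly note at the end that ``only the citation is needed here,'' so there is no argument in the paper against which to compare your reconstruction. Your sketch does follow the expected neck-stretching template, and the module-level part (the forced pairing of $\beta_0$ with $\alpha_s$ because $\beta_0$ meets no other $\alpha$-curve, and preservation of $\frs_{\ws}$) is clean. One caveat about the differential computation: the classes responsible for the $U_{w'}$ and $V_{z'}$ entries are not in general \emph{bigons} of $\cH^+$. Since $\alpha_s$ may cross many $\bs$-curves inside the region $A$ of $\Sigma\setminus\as$, the class from $\xi^{\ws}$ to $\theta^{\ws}$ with $n_{w'}=1$ and all other $\ws\cup\zs$ multiplicities zero generically has corners beyond the two on $\alpha_s\cap\beta_0$; its mod-$2$ count is therefore not obtained by a Riemann-mapping argument for a single explicit bigon but is part of the neck-stretching analysis itself, which is one reason the proposition is only asserted for sufficiently stretched almost complex structures. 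This does not change your plan, only the level of care needed at that step. For the present paper the citation is indeed all that is required.
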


\begin{cor}Suppose that $\bL=(L,\ws,\zs)$ is a multi-based link in $Y$, and $w$ and $z$ are two new basepoints added between $w'\in \ws$ and $z'\in \zs$. If $\sigma'\colon \ws\cup \zs\cup \{w,z\}\to \bmP$ is a coloring, then
\begin{enumerate}
\item $S_{w,z}^+$ and $S_{w,z}^-$ are chain maps if and only if $\sigma'(z)=\sigma'(z')$;
\item $T_{w,z}^+$ and $T_{w,z}^-$ are chain maps if and only if $\sigma'(w)=\sigma'(w')$.
\end{enumerate}
\end{cor}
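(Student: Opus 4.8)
The plan is to work directly with the explicit matrix description of the quasi-stabilized differential from Proposition~\ref{prop:quasi-stabilizeddifferential}, pass it through the coloring tensor product, and then check the chain map condition $\d \circ S_{w,z}^\pm = S_{w,z}^\pm \circ \d$ term by term against the generators $\theta^{\ws}$ and $\xi^{\ws}$.

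First I would set up notation: on the colored complex $\cCFL^-(\cH^+, \sigma', \frs)$, the differential is obtained from the matrix
\[
\d_{\cH^+}=\begin{pmatrix}\d_{\cH}& U_w+U_{w'}\\ V_{z}+V_{z'}& \d_{\cH}\end{pmatrix}
\]
by applying the ring map $\bF_2[U_{\ws\cup\{w\}},V_{\zs\cup\{z\}}]\to \cR^-_{\bmP}$ induced by $\sigma'$, so each variable $U_{w''}$ becomes $X_{\sigma'(w'')}$ and each $V_{z''}$ becomes $X_{\sigma'(z'')}$. In particular the off-diagonal entry $U_w+U_{w'}$ becomes $X_{\sigma'(w)}+X_{\sigma'(w')}$, and $V_z+V_{z'}$ becomes $X_{\sigma'(z)}+X_{\sigma'(z')}$. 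Then I compute: since $S_{w,z}^+(\xs)=\xs\otimes\theta^{\ws}$ and $\theta^{\ws}$ is the first basis vector, we get $\d_{\cH^+}(S_{w,z}^+(\xs)) = (\d_{\cH}\xs)\otimes\theta^{\ws} + ((X_{\sigma'(z)}+X_{\sigma'(z')})\cdot\xs)\otimes\xi^{\ws}$, whereas $S_{w,z}^+(\d_{\cH}\xs) = (\d_{\cH}\xs)\otimes\theta^{\ws}$. These agree (as maps, equivariantly, for all $\xs$, using that $\d_{\cH^+}$ is already known to be a curved differential so no further subtlety arises) if and only if the coefficient $X_{\sigma'(z)}+X_{\sigma'(z')}$ is zero in $\cR^-_{\bmP}=\bF_2[X_{p_1},\dots,X_{p_n}]$, which — since $\cR^-_{\bmP}$ is a polynomial ring and the $X_p$ are distinct indeterminates — holds exactly when $\sigma'(z)=\sigma'(z')$. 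For $S_{w,z}^-$ I would run the symmetric computation: $S_{w,z}^-(\d_{\cH^+}(\xs\otimes\theta^{\ws})) = S_{w,z}^-\big((\d_{\cH}\xs)\otimes\theta^{\ws} + ((X_{\sigma'(z)}+X_{\sigma'(z')})\xs)\otimes\xi^{\ws}\big) = (X_{\sigma'(z)}+X_{\sigma'(z')})\cdot\xs$ while $\d_{\cH}(S_{w,z}^-(\xs\otimes\theta^{\ws}))=\d_{\cH}(0)=0$, and on $\xs\otimes\xi^{\ws}$ both sides give $\d_{\cH}\xs$; so again the condition is precisely $\sigma'(z)=\sigma'(z')$. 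The type-$T$ case is identical after swapping the roles of $\theta^{\ws},\xi^{\ws}$ with $\theta^{\zs},\xi^{\zs}$ (equivalently $\xi^{\ws},\theta^{\ws}$ via the identifications $\theta^{\ws}=\xi^{\zs}$, $\theta^{\zs}=\xi^{\ws}$), so the relevant off-diagonal coefficient becomes $X_{\sigma'(w)}+X_{\sigma'(w')}$ and the chain map condition becomes $\sigma'(w)=\sigma'(w')$.

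I should also address the hypothesis already imposed in the construction, namely that in defining $S^\pm_{w,z}$ one requires $z$ to have the same color as the adjacent $\zs$-basepoint $z'$ (and dually for $T$): here $\sigma'$ is allowed to be an arbitrary extension, so the corollary really is saying that the chain map property forces exactly this coloring constraint and no more. I would note that the ``only if'' direction uses freeness of $\cCFL^-(\cH,\frs)$ over the domain $\bF_2[U_{\ws},V_{\zs}]$ (hence over $\cR^-_{\bmP}$ after base change, when $\cR^-_{\bmP}$ is a domain) to conclude that a nonzero scalar coefficient cannot annihilate the module, so that $X_{\sigma'(z)}+X_{\sigma'(z')}\ne 0$ genuinely obstructs being a chain map — this is the same mechanism as in Lemma~\ref{lem:notalotofcurvedmaps}.

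The main obstacle, such as it is, is bookkeeping rather than conceptual: one must be careful that the identification in Proposition~\ref{prop:quasi-stabilizeddifferential} is only valid ``for sufficiently stretched almost complex structures,'' so the computation proves the statement for a particular convenient diagram and complex structure, and invariance (the fact that the quasi-stabilization maps are well-defined on the transitive chain homotopy type, cited from \cite{ZemQuasi}) is what lets us transfer the conclusion to the abstract invariant; I would remark on this but not belabor it. No genuinely hard analysis is needed — everything reduces to the $2\times 2$ matrix identity above.
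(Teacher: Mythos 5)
Your proposal is correct and is essentially the paper's own argument: the corollary is stated immediately after Proposition~\ref{prop:quasi-stabilizeddifferential} precisely because it follows by reading off the chain-map condition from the $2\times 2$ matrix for $\d_{\cH^+}$, which is exactly what you do. The explicit computation of $\d_{\cH^+}\circ S_{w,z}^\pm - S_{w,z}^\pm\circ\d_{\cH}$ landing in the $\xi^{\ws}$ (resp.\ $\theta^{\ws}$) summand with coefficient $V_z+V_{z'}$ (resp.\ $U_w+U_{w'}$) is right, and your appeal to freeness of $\cCFL^-$ over the domain $\cR^-_{\bmP}$ for the ``only if'' direction is the correct way to upgrade ``coefficient nonzero'' to ``map nonzero.'' Your remark that the computation is performed at the level of a single diagram with a sufficiently stretched almost complex structure, after which naturality transfers the conclusion to the transitive system, is also the right caveat — though since the chain-map property is a pointwise condition that either holds or fails on a representative complex, this is a light touch.
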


We have the following naturality result, concerning the quasi-stabilization maps:

\begin{thm}[\cite{ZemQuasi}*{Theorem~A}]\label{thm:quasisarenatural} If $\cH_1$ and $\cH_2$ are two diagrams for $(Y,\bL)$ and $\cH_1^+$ and $\cH_2^+$ are two diagrams for $(Y,\bL_{w,z}^+)$, obtained by quasi-stabilizing $\cH$ with the basepoints $w$ and $z$, then the following diagram commutes up to chain homotopy:
\[
\begin{tikzcd}
\cCFL^-(\cH_1,\sigma,\frs)\arrow{d}{S_{w,z}^+}\arrow{r}{\Phi_{\cH_1\to \cH_2}} &\cCFL^-(\cH_2,\sigma,\frs)\arrow{d}{S_{w,z}^+}\\
\cCFL^-(\cH_1^+,\sigma',\frs)\arrow{r}{\Phi_{\cH_1^+\to \cH_2^+}}& \cCFL^-(\cH_2^+,\sigma',\frs)
\end{tikzcd}
\]
 Here $\sigma$ and $\sigma'$ are colorings of $\bL$ and $\bL_{w,z}^+$, respectively, such that 
 $\sigma=\sigma'|_{\ws\cup \zs}$ and $z$ is given the same coloring as the other $\zs$ basepoint adjacent to $z$. Analogous statements hold for the maps $S_{w,z}^-,$ $T_{w,z}^+$, and $T_{w,z}^-$.
\end{thm}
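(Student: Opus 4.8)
The plan is to reduce the commutativity to a local, move-by-move check and then run a neck-stretching argument, paralleling the proof that the quasi-stabilization maps are chain maps (Proposition~\ref{prop:quasi-stabilizeddifferential} and the discussion around it).

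First I would recall that, by definition, the transition map $\Phi_{\cH_1\to\cH_2}$ is a composition of maps associated to a chain of elementary Heegaard moves: changes of the path of almost complex structure, isotopies and handleslides of the $\as$ (resp.\ $\bs$) curves, (de)stabilizations of the Heegaard surface, and isotopies of $\Sigma$ in $Y$ rel $L$. Since $S^+_{w,z}$ -- and likewise $S^-_{w,z}$, $T^\pm_{w,z}$ -- is, after the module identification of Proposition~\ref{prop:quasi-stabilizeddifferential}, simply the insertion map $\xs\mapsto \xs\otimes\theta^{\ws}$ (or its partner projection), it suffices to prove the square commutes up to filtered equivariant chain homotopy when $\Phi_{\cH_1\to\cH_2}$ is a single elementary move. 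I would also need that the quasi-stabilized diagram $\cH^+$ is independent -- up to such homotopies, compatibly with $S^\pm,T^\pm$ -- of the auxiliary choices in its construction: the connect-sum point $p$, the separating curve $\alpha_s$ (or $\beta_s$), and the model piece $\cH_0$. But this independence is itself an instance of the move-by-move statement, applied now to isotopies and handleslides of $\alpha_s$ (any two admissible choices of $\alpha_s$ inside the region $A$ are related by an isotopy rel $\{w',z'\}$ together with handleslides).

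Next, for a fixed elementary move I would exploit locality: the move has support in a proper subsurface, so I can choose the quasi-stabilization data inside a small ball disjoint from that support, realizing both $\cH_1^+$ and $\cH_2^+$ as the special connected sum of $\cH_i$ with the fixed piece $\cH_0$, with the move and the connect-sum region not interacting. By Proposition~\ref{prop:quasi-stabilizeddifferential}, after sufficient neck stretching $\cCFL^-(\cH_i^+,\frs)$ is, as a $\Z^{\ws}\oplus\Z^{\zs}$-filtered module with differential, $\cCFL^-(\cH_i,\frs)\otimes_{\bF_2}\langle\theta^{\ws},\xi^{\ws}\rangle\otimes_{\bF_2}\bF_2[U_w,V_z]$ with the displayed $2\times 2$ differential. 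The crux is then to show that $\Phi_{\cH_1^+\to\cH_2^+}$ -- which for a handleslide or isotopy counts holomorphic triangles in the triple obtained by stabilizing the move's triple by $\cH_0$, and for a change of complex structure counts strips -- agrees up to homotopy with $\Phi_{\cH_1\to\cH_2}\otimes\id$. Stretching the connect-sum neck degenerates each triangle (or strip) into a pair: a curve in the main triple together with a rigid curve in $\cH_0$ with all vertices at $\theta^{\ws}$ (or $\xi^{\ws}$); counting the $\cH_0$-contributions, just as in the chain-map proof and using the coloring hypothesis $\sigma(z)=\sigma(z')$ to make the error terms cancel, yields $\Phi_{\cH_1\to\cH_2}\otimes\id$ plus a chain homotopy coming from the one-parameter ends. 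For (de)stabilization and isotopy of $\Sigma$ away from the ball, the induced maps are already literally of the form $(\text{transition map})\otimes\id$. The cases of $S^-_{w,z}$, $T^+_{w,z}$, $T^-_{w,z}$ follow by the same argument after interchanging the roles of $\theta^{\ws}$ and $\xi^{\ws}$, or of $\ws$ and $\zs$, and replacing the hypothesis $\sigma(z)=\sigma(z')$ by $\sigma(w)=\sigma(w')$.

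The main obstacle is precisely this neck-stretching/gluing step: controlling how the moduli spaces of triangles and strips defining the transition maps degenerate across the neck, and checking that the remaining curves in $\cH_0$ are rigid and contribute the expected $\bF_2$-counts, compatibly with the $U_w,V_z$-equivariance and the $\Z^{\ws}\oplus\Z^{\zs}$-filtration. This is the same technical core already used for Proposition~\ref{prop:quasi-stabilizeddifferential}, so I would lean heavily on that machinery; by comparison, the reduction to elementary moves and the independence of auxiliary choices are routine bookkeeping.
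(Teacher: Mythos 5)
There is a genuine gap. Your reduction to elementary moves is fine in outline, but you substantially underestimate the difficulty of two of those moves, and in particular you treat the independence of the auxiliary choice of $\alpha_s$ (and commuting with $\as$-curve handleslides/isotopies) as ``routine bookkeeping.'' It is not, and the paper explicitly flags this. The quasi-stabilization is not local to a ball around $p$: the new curve $\alpha_s$ runs across the entire region $A\subset\Sigma\setminus\as$ and may intersect many $\bs$-curves, so an $\as$-curve move (or a change of $\alpha_s$) cannot be made disjoint from the stabilization data. For a $\bs$-curve move the relevant triple is $(\Sigma,\as\cup\{\alpha_s\},\bs\cup\{\beta_0\},\bs'\cup\{\beta_0'\})$, with only one new curve added to the $\as$ role; this is exactly the situation covered by Proposition~\ref{prop:singlealphaquasistabtriangle}, and your neck-stretching description works. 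For an $\as$-curve move, however, the triple is $(\Sigma,\as'\cup\{\alpha_s'\},\as\cup\{\alpha_s\},\bs\cup\{\beta_0\})$: a ``double'' stabilization in which a new $\alpha_s$ and its Hamiltonian translate $\alpha_s'$ are added to two of the three roles simultaneously. The degeneration analysis for this triple is not a corollary of the argument behind Proposition~\ref{prop:quasi-stabilizeddifferential}, and it requires its own holomorphic triangle count with a non-trivial hypothesis on the underlying unstabilized triple (this is \cite{ZemQuasi}*{Theorem~6.5}). Your appeal to ``any two admissible choices of $\alpha_s$ inside $A$ are related by isotopies and handleslides, so independence is an instance of the move-by-move statement'' is circular: that statement is exactly the hard $\as$-move case you are trying to establish. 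As it stands, your argument establishes commutation with $\bs$-moves, stabilizations, and ambient isotopies of $\Sigma$, but does not justify commutation with $\as$-moves or independence of the choice of $\alpha_s$, which is the technical heart of the theorem.

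A minor secondary point: even for $\bs$-curve moves, the quasi-stabilized transition map need not be literally $\Phi_{\cH_1\to\cH_2}\otimes\id$; in analogy with the change-of-complex-structure computation in Lemma~\ref{lem:mapsforI+agree}, one should expect an upper-triangular form $\begin{pmatrix}\Phi&\ast\\ 0&\Phi\end{pmatrix}$ in the $(\theta^{\ws},\xi^{\ws})$ basis. That form is still sufficient for all four maps $S^\pm_{w,z},T^\pm_{w,z}$ to commute, but the claim should be stated at that level of precision rather than asserting the tensor-product form.
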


We note that in \cite{ZemQuasi}, only the maps $S_{w,z}^+$ and $S_{w,z}^-$ were considered. Furthermore, they were considered on a version of the link Floer complex denoted $\CFL^\circ_{UV}$, obtained by setting all of the $V_{\zs}$ variables on a link component $K$ equal to a single variable $V_K$. Nonetheless, the proof of \cite{ZemQuasi}*{Theorem~A} adapts with only minor notational changes to prove Theorem~\ref{thm:quasisarenatural}.

\subsection{The basepoint actions }
\label{subsec:relations1}

Two  maps which naturally appear in the link Floer TQFT are the maps $\Phi_w$ and $\Psi_z$. We define these via the formulas
\[\Phi_w(\ve{x})=U_w^{-1} \sum_{\ve{y}\in \bT_\alpha\cap \bT_\beta} \sum_{\substack{\phi\in \pi_2(\ve{x},\ve{y})\\ \mu(\phi)=1}} n_w(\phi)\# \Hat{\cM}(\phi)U_{\ve{w}}^{n_{\ve{w}}(\phi)}V_{\ve{z}}^{n_{\ve{z}}(\phi)}\cdot \ve{y},\] and
\[\Psi_z(\ve{x})=V_z^{-1} \sum_{\ve{y}\in \bT_\alpha\cap \bT_\beta} \sum_{\substack{\phi\in \pi_2(\ve{x},\ve{y})\\ \mu(\phi)=1}} n_z(\phi)\# \Hat{\cM}(\phi)U_{\ve{w}}^{n_{\ve{w}}(\phi)}V_{\ve{z}}^{n_{\ve{z}}(\phi)}\cdot \ve{y}.\]
Note that on the uncolored chain complexes, these can be thought of as ``formal derivatives'' of the differential with respect to the variables $U_w$ or $V_z$. Since the maps $d/d U_w$ and $d/ d V_z$ are not $\bF_2[U_{\ws},V_{\zs}]$-equivariant, the interpretation of $\Phi_w$ and $\Psi_z$ as derivatives of the differential may disappear once we tensor the complexes with $\cR_{\bmP}^-$ to form the colored complexes.

It is not hard to see that these maps commute with the change of diagrams maps, up to chain homotopy, and induce well-defined maps on the transitive chain homotopy type invariants. A proof of this fact is written down in \cite{ZemQuasi}*{Lemma~3.2}. The maps $\Phi_w$ and $\Psi_z$ will turn out to be the cobordism maps of  $([0,1]\times Y ,[0,1]\times  L)$, with dividing sets shown in Figure~\ref{fig::116}. 

\begin{figure}[ht!]
\centering
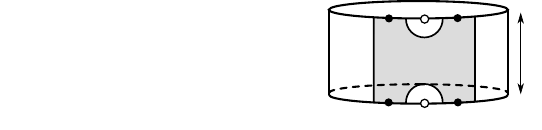
\caption{\textbf{Decorated link cobordisms for the maps $\Phi_w$ and $\Psi_z$.} The underlying link cobordism is $([0,1]\times L, [0,1]\times Y)$.\label{fig::116}}
\end{figure}

 We now summarize several important properties of the maps $\Phi_w$ and $\Psi_z$. By differentiating the expression $\d^2=\omega_{\bL}\cdot \id$ from Lemma~\ref{lem:del^2=0} once, with respect to a variable in $\bF_2[U_{\ws},V_{\zs}]$, we obtain the following:

\begin{lem}[\cite{ZemQuasi}*{Lemma~3.1}]\label{lem:PhiPsichainhomotopies}If $z$ is adjacent to $w$ and $w'$ then
\[\d\Psi_z+\Psi_z \d=U_w+U_{w'}.\] If $w$ is adjacent to $z$ and $z'$ then we have that
\[\d \Phi_w+\Phi_w\d=V_z+V_{z'}.\]
\end{lem}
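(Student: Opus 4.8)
The plan is to prove both identities first on the \emph{uncolored} complex $\cCFL^-(\cH,\frs)$, regarded as a free module over the polynomial ring $R=\bF_2[U_{\ws},V_{\zs}]$, where they are a one-line consequence of formally differentiating the curvature relation $\d^2=\omega_{\bL}\cdot\id$ of Lemma~\ref{lem:del^2=0}, and then to deduce the colored versions by tensoring with $\cR^-_{\bmP}$.

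First I would fix the generating set $\bT_\alpha\cap\bT_\beta$ and view $\d$ as a matrix with entries in $R$. Comparing the defining formulas, $\Psi_z$ is exactly the endomorphism obtained by applying the formal partial derivative $\partial/\partial V_z$ to each entry of this matrix: since $\partial(V_z^{n})/\partial V_z=n\,V_z^{n-1}$, the factor $n_z(\phi)$ together with the prefactor $V_z^{-1}$ in the definition of $\Psi_z$ reproduces $n_z(\phi)\,V_z^{n_z(\phi)-1}$, and in particular all terms with $n_z(\phi)$ even (as an integer reduced mod $2$) drop out, so $\Psi_z$ genuinely preserves the ``minus'' complex. Symmetrically $\Phi_w=\partial\d/\partial U_w$.

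Next I would differentiate the relation of Lemma~\ref{lem:del^2=0} with respect to $V_z$. Because $R$ is commutative, the Leibniz rule holds entrywise for matrix products over $R$, so the left-hand side becomes $(\partial\d/\partial V_z)\circ\d+\d\circ(\partial\d/\partial V_z)=\Psi_z\d+\d\Psi_z$. For the right-hand side, $\omega_{\bL}\in R$ is an honest constant, and inspecting its defining cyclic sum, the variable $V_z$ appears in precisely the two monomials $U_w V_z$ and $V_z U_{w'}$, where $w,w'$ are the two $\ws$-basepoints adjacent to $z$ on its link component; hence $\partial\omega_{\bL}/\partial V_z=U_w+U_{w'}$. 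This yields $\d\Psi_z+\Psi_z\d=U_w+U_{w'}$, and differentiating instead with respect to $U_w$ gives $\d\Phi_w+\Phi_w\d=V_z+V_{z'}$ in the same way, with $z,z'$ the $\zs$-basepoints adjacent to $w$. Since these are identities of $R$-equivariant maps on $\cCFL^-(\cH,\frs)$, the base change $-\otimes_R\cR^-_{\bmP}$ transports them to the colored complexes $\cCFL^-(Y,\bL^\sigma,\frs)$, which is the stated conclusion.

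The computation itself is routine; the only place to be careful is the identification of $\Psi_z$ and $\Phi_w$ with the formal $\partial/\partial V_z$ and $\partial/\partial U_w$ of $\d$ — in particular that reducing the multiplicities $n_z(\phi),n_w(\phi)$ mod $2$ is exactly what the derivative does, and that the resulting maps stay in the ``minus'' complex — together with the use of Lemma~\ref{lem:del^2=0} to guarantee that $\omega_{\bL}$ is a genuine scalar, so that its partial derivatives are again scalars rather than diagram-dependent operators.
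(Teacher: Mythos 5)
Your proof is correct and matches the paper's own approach: the paper explicitly introduces the lemma as obtained ``by differentiating the expression $\d^2=\omega_{\bL}\cdot \id$ from Lemma~\ref{lem:del^2=0} once,'' which is exactly the formal-derivative argument you give, including the identification $\Psi_z=\partial\d/\partial V_z$, $\Phi_w=\partial\d/\partial U_w$ entrywise and the computation $\partial\omega_{\bL}/\partial V_z=U_w+U_{w'}$. Passing to the colored complexes by base change $-\otimes_R\cR^-_{\bmP}$ at the end is the right way to handle the coloring.
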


In particular, the maps $\Phi_w$ and $\Psi_z$ are not chain maps on the uncolored complexes. In fact, $\Psi_z$ is a chain map if and only if the variables $U_w$ and $U_{w'}$ are identified, and $\Phi_w$ is a chain map if and only if the variables $V_z$ and $V_{z'}$ are identified. Notice that this corresponds exactly to the condition that the coloring of the basepoints is compatible with a coloring of the decorated surfaces from Figure~\ref{fig::116}.

The following is easily proven by differentiating the relations in Lemma~\ref{lem:PhiPsichainhomotopies} with respect to one of the $U_{\ws}$ or $V_{\zs}$ variables:

\begin{lem}[\cite{ZemQuasi}*{Lemmas~9.1 and 9.2}]\label{lem:PhiPsicommutators}If $w,$ $w',$ $z$ and $z'$ are basepoints on $\bL$, we have
\[
\Phi_w\Phi_{w'}+\Phi_{w'}\Phi_w\simeq 0,
\] and
\[
\Psi_{z}\Psi_{z'}+\Psi_{z'}\Psi_z\simeq 0.
\] We also have
\[
\Phi_w \Psi_z+\Psi_z\Phi_w+ N(w,z)\cdot \id\simeq 0,
\] 
where $N(w,z)\in \bF_2$ is the number of components of $L\setminus (\ve{w}\cup \ve{z})$ which have both $w$ and $z$ as boundary.
\end{lem}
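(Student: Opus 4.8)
The plan is to deduce all three relations by formally differentiating the identity $\d^2=\omega_{\bL}\cdot\id$ of Lemma~\ref{lem:del^2=0} a second time with respect to the $U_{\ws}$ and $V_{\zs}$ variables, in exactly the same way that the relations of Lemma~\ref{lem:PhiPsichainhomotopies} were obtained by differentiating it once. Recall that on the uncolored complex $\cCFL^-(\cH,\frs)$ over $\bF_2[U_{\ws},V_{\zs}]$ we may regard $\d$ as a polynomial in the variables with operator coefficients, and that $\Phi_w=\tfrac{d}{dU_w}\d$ and $\Psi_z=\tfrac{d}{dV_z}\d$ under the formal $\bF_2$-linear derivations $\tfrac{d}{dU_w}$, $\tfrac{d}{dV_z}$; the $U_w^{-1}$, $V_z^{-1}$ in the definitions of $\Phi_w$, $\Psi_z$ merely record that no negative powers actually occur, since a class $\phi$ with $n_w(\phi)=0$ contributes nothing to $\Phi_w$. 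These derivations satisfy the Leibniz rule with respect to composition of operators, so the argument takes place at the level of exact operator identities on each $\cCFL^-(\cH,\frs)$; these descend to the colored complexes after $\otimes_{\bF_2[U_{\ws},V_{\zs}]}\cR^-_{\bmP}$, and, since $\Phi_w,\Psi_z$ commute with the transition maps up to homotopy \cite{ZemQuasi}*{Lemma~3.2}, to the transitive chain homotopy type invariant up to filtered $\cR^-_{\bmP}$-equivariant homotopy.

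Concretely, for the first relation I would differentiate the identity $\d\Phi_w+\Phi_w\d=(V_z+V_{z'})\cdot\id$ of Lemma~\ref{lem:PhiPsichainhomotopies} (here $z,z'$ are the two $\zs$-basepoints adjacent to $w$) with respect to $U_{w'}$. Writing $\Phi_{w,w'}:=\tfrac{d}{dU_w}\tfrac{d}{dU_{w'}}\d$ and using the Leibniz rule together with $\tfrac{d}{dU_{w'}}\d=\Phi_{w'}$, the left side becomes $\Phi_w\Phi_{w'}+\Phi_{w'}\Phi_w+\d\Phi_{w,w'}+\Phi_{w,w'}\d$, while the right side is $\tfrac{d}{dU_{w'}}\bigl((V_z+V_{z'})\cdot\id\bigr)=0$ since it involves only the $V$-variables; hence $\Phi_w\Phi_{w'}+\Phi_{w'}\Phi_w=\d\Phi_{w,w'}+\Phi_{w,w'}\d$ is nullhomotopic (and when $w=w'$ the left side already vanishes in characteristic two). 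The second relation is obtained symmetrically by differentiating $\d\Psi_z+\Psi_z\d=(U_a+U_b)\cdot\id$, with $a,b$ the $\ws$-neighbors of $z$, with respect to $V_{z'}$, the homotopy being $\Psi_{z,z'}:=\tfrac{d}{dV_z}\tfrac{d}{dV_{z'}}\d$. For the mixed relation I would differentiate $\d\Psi_z+\Psi_z\d=(U_a+U_b)\cdot\id$ with respect to $U_w$: the left side becomes $\Phi_w\Psi_z+\Psi_z\Phi_w+\d\Psi_{z,w}+\Psi_{z,w}\d$ with $\Psi_{z,w}:=\tfrac{d}{dU_w}\tfrac{d}{dV_z}\d$, and the right side becomes $(\delta_{w,a}+\delta_{w,b})\cdot\id\in\bF_2\cdot\id$.

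It then remains to identify the constant $\delta_{w,a}+\delta_{w,b}$ with $N(w,z)$. Since the basepoints of $\bL$ alternate, an arc of $L\setminus(\ws\cup\zs)$ has one $\ws$- and one $\zs$-endpoint, so it has both $w$ and $z$ on its boundary precisely when $w$ and $z$ are consecutive on $L$; thus $N(w,z)$ equals the number of sides of $z$ on which its neighbor is $w$, i.e.\ the number of the two $\ws$-neighbors $a,b$ of $z$ that equal $w$ (this is $0$, $1$, or, when $w$ and $z$ are the only basepoints on their component so that $a=b=w$, the value $2\equiv 0$). Hence $\delta_{w,a}+\delta_{w,b}=N(w,z)\bmod 2$ and $\Phi_w\Psi_z+\Psi_z\Phi_w+N(w,z)\cdot\id=\d\Psi_{z,w}+\Psi_{z,w}\d\simeq 0$. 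The only points needing genuine care are bookkeeping ones: that the second-order operators $\Phi_{w,w'},\Psi_{z,z'},\Psi_{z,w}$ are honest $\Z^{\bmP}$-filtered, $\cR^-_{\bmP}$-equivariant endomorphisms (the a priori negative powers of variables cancel, as only disks crossing the relevant basepoints contribute), and the combinatorial identification of the $\bF_2$-constant just described; there is no holomorphic curve count beyond Lemma~\ref{lem:del^2=0}. I expect this bookkeeping, rather than any conceptual difficulty, to be the main obstacle. (Alternatively one could repeat the broken-flowline analysis of $\mu=2$ moduli spaces with the weight $n_w(\phi)n_{w'}(\phi)$ or $n_w(\phi)n_z(\phi)$ inserted, but differentiating the established $\d^2$ relation is cleaner.)
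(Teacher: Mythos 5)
Your proposal is correct and coincides with the paper's intended proof: the text immediately preceding Lemma~\ref{lem:PhiPsicommutators} states that it ``is easily proven by differentiating the relations in Lemma~\ref{lem:PhiPsichainhomotopies} with respect to one of the $U_{\ws}$ or $V_{\zs}$ variables,'' which is exactly what you do, and your bookkeeping (the Leibniz rule for matrix operators, the vanishing of $\Phi_{w,w}$ in characteristic two, the positivity of exponents ensuring the second-order homotopies are filtered, and the identification of $\delta_{w,a}+\delta_{w,b}$ with $N(w,z)$ mod $2$) is accurate.
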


Interpretations of the relations from Lemma~\ref{lem:PhiPsichainhomotopies} in terms of dividing sets on cylindrical link cobordisms are shown in Figure~\ref{fig::87}.

\begin{figure}[ht!]
\centering
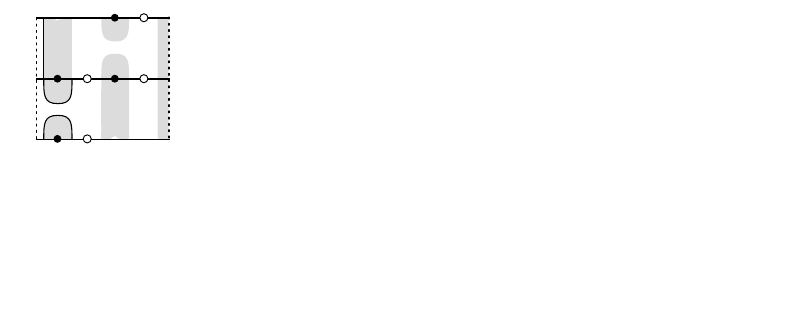
\caption{\textbf{Interpretations of the relations from Lemma~\ref{lem:PhiPsicommutators} in terms of dividing sets on $[0,1]\times L$.} The top left is $\Phi_{w}\Phi_{w'}+\Phi_{w'}\Phi_w\simeq 0$. The top right is $\Psi_{z}\Psi_{z'}+\Psi_{z'}\Psi_{z}\simeq 0$. The bottom is $\Phi_w \Psi_z+\Psi_z\Phi_w+ \id\simeq 0$ when $N(w,z)=1$. The top two relations represent equivalent dividing sets. The three dividing sets in the bottom relation form a bypass triple. \label{fig::87}}
\end{figure}

\begin{rem}it is often convenient to think of the maps $\Phi_w$ as being $(d/d U_w)(\d)$. Note that there is a distinction between $\Phi_w=(d/d U_w)(\d)$ and $(d/d U_w) \circ \d$, i.e. between applying $d/d U_w$ to the entries of a matrix representing $\d$, and composing the map $\d$ with the map $d/d U_w$. The difference between the two operations is reflected by the Leibniz rule for differentiating products. Hence on the uncolored complex $\cCFL^-(\cH,\frs)$, the map $\Phi_w$ satisfies
\[
\Phi_w=\frac{d}{d U_w} \circ \d+\d\circ \frac{d}{d U_w}.
\] 
Similarly on the uncolored complex, we have
\[
\Psi_z=\frac{d}{d V_z}\circ \d+\d\circ \frac{d}{ d V_z}.
\]
The chain homotopies $d/d U_w$ and $d/d V_z$ do in fact preserve the subcomplex $\cCFL^-(\cH,\frs)$, though they are neither $\bF_2[U_{\ws},V_{\zs}]$-equivariant nor $\Z^{\ws}\oplus \Z^{\zs}$-filtered. Hence they will not in general induce well-defined maps after we color the complexes by tensoring with $\cR_{\bmP}^-$. On the colored complexes, the maps $\Phi_w$ and $\Psi_z$ are often not chain homotopic to zero, even non-equivariantly. Compare \cite{ZemGraphTQFT}*{Corollary~14.11} for the analogous algebraic result for the closed 3-manifold invariants.
\end{rem}

The maps $\Phi_w$ and $\Psi_z$ are homotopy differentials (cf. \cite{ZemQuasi}*{Lemma~9.7}):

\begin{lem}\label{lem:Phi^2Psi^2simeq0}The endomorphisms $\Phi_w$ and $\Psi_z$ satisfy 
\[\Phi_w^2\simeq 0 \qquad \text{and}\qquad \Psi_z^2\simeq 0,\] through filtered, equivariant chain homotopies.
\end{lem}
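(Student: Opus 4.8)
The statement to prove is that $\Phi_w^2 \simeq 0$ and $\Psi_z^2 \simeq 0$ through filtered, equivariant chain homotopies on the colored complexes $\cCFL^-(\cH,\sigma,\frs)$. The natural strategy is to work first on the uncolored complex $\cCFL^-(\cH,\frs)$, where $\Phi_w$ has the clean interpretation as the formal derivative $(d/dU_w)(\d)$, and then check that the chain homotopy we produce is in fact $\bF_2[U_{\ws},V_{\zs}]$-equivariant and $\Z^{\ws}\oplus\Z^{\zs}$-filtered, so that it descends after tensoring with $\cR_{\bmP}^-$. By the remark preceding the lemma, on the uncolored complex $\Phi_w = \frac{d}{dU_w}\circ \d + \d \circ \frac{d}{dU_w}$, i.e.\ $\Phi_w$ is the graded commutator of the differential with the derivation $D_w := d/dU_w$ (working over $\bF_2$, so signs are irrelevant).

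\textbf{Key steps.} First I would record that $D_w$ is an $\bF_2$-linear endomorphism of $\cCFL^-(\cH,\frs)$ which is a \emph{derivation} with respect to the $\bF_2[U_{\ws},V_{\zs}]$-module structure: $D_w(U_w^k \cdot \xs) = k\, U_w^{k-1}\cdot \xs$. Next, using $\Phi_w = [\d, D_w]$ (anticommutator, but characteristic two), compute
\[
\Phi_w^2 = (\d D_w + D_w \d)(\d D_w + D_w \d) = \d D_w^2 \d + D_w \d^2 D_w + \d D_w \d D_w + D_w \d D_w \d.
\]
Wait — more carefully: expanding, $\Phi_w^2 = \d D_w \d D_w + \d D_w D_w \d + D_w \d \d D_w + D_w \d D_w \d$. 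Since $\d^2 = \omega_{\bL}\cdot\id$ with $\omega_{\bL} \in \bF_2[U_{\ws},V_{\zs}]$, the middle term $D_w\,\d^2\,D_w = D_w(\omega_{\bL}\cdot D_w)$; because $D_w$ is a derivation, $D_w(\omega_{\bL} \cdot v) = (D_w\omega_{\bL})\cdot v + \omega_{\bL}\cdot D_w v$, so $D_w \d^2 D_w = (D_w\omega_{\bL})\cdot D_w + \omega_{\bL}\cdot D_w^2$. Collecting, I expect to land on an identity of the form
\[
\Phi_w^2 = \d\,(D_w^2)\,\d + \omega_{\bL}\cdot D_w^2 + (D_w\omega_{\bL})\cdot D_w + \bigl(\text{terms that organize into } \d H' + H' \d\bigr),
\]
and the point is that $D_w^2$ itself is, up to a scalar, the correct homotopy. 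Concretely, since $D_w^2(U_w^k \cdot \xs) = k(k-1) U_w^{k-2}\cdot \xs$ and $k(k-1) \equiv 0 \pmod 2$ is \emph{false} in general — so one cannot simply say $D_w^2 = 0$ — the cleaner route is: set $H := \frac12 D_w^2$ formally, i.e.\ observe that the operator $E_w$ defined by $E_w(U_w^k\cdot\xs) = \binom{k}{2} U_w^{k-2}\cdot\xs$ satisfies $[\d, E_w] = $ (a correction involving $\Phi_w$ and lower-order derivations), leading after rearrangement to $\Phi_w^2 = [\d, G_w]$ for an explicit $\bF_2$-linear $G_w$ built from $E_w$ and $D_w$. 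I would then verify directly that $G_w$, while not equivariant or filtered itself, has the property that its conjugate/commutator expression is both: this is exactly the phenomenon already flagged in the paper for $\Phi_w$ versus $d/dU_w$, and the chain homotopy witnessing $\Phi_w^2\simeq 0$ on the colored complex should be obtained as $\Phi_w \circ (\text{something})$ or via differentiating the relation $\Phi_w \simeq [\d, D_w]$ once more — mirroring how Lemma~\ref{lem:PhiPsicommutators} is proven by differentiating Lemma~\ref{lem:PhiPsichainhomotopies}.

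\textbf{Cleaner alternative.} Rather than manipulate $E_w$, I would more likely cite the structure already in place: differentiate the curvature identity $\d^2 = \omega_{\bL}\cdot\id$ \emph{twice} with respect to $U_w$. One differentiation gives $\d\Phi_w + \Phi_w\d = (d/dU_w)\omega_{\bL}$ (a multiple of $V$-variables, matching Lemma~\ref{lem:PhiPsichainhomotopies}). Differentiating again, the left side produces $\d\,\Phi_w^{(1)} + 2\,\Phi_w^2 + \Phi_w^{(1)}\d$ in characteristic zero, but over $\bF_2$ the factor of $2$ kills the $\Phi_w^2$ term — so instead I should track the \emph{second divided derivative}. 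This is precisely the content of \cite{ZemQuasi}*{Lemma~9.7} which the paper cites; so the honest plan is: reduce to the uncolored setting, identify $\Phi_w^2$ with the commutator $[\d, H_w]$ where $H_w$ is the ``second divided power'' endomorphism $H_w(U_w^k V^J \xs) = \binom{k}{2}U_w^{k-2}V^J\xs$, check the commutator identity by a direct count of holomorphic disks (multiplicities $n_w(\phi)$ appear quadratically, and $\binom{n_w(\phi)}{2}$ is the combinatorial bookkeeping), and finally observe that although $H_w$ is not equivariant, the relation $\Phi_w^2 = \d H_w + H_w \d$ is preserved under tensoring with $\cR_{\bmP}^-$ because both $\Phi_w$ and $\Phi_w^2$ are equivariant and the defect of $H_w$ lies in the kernel of the relevant comparison. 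The symmetric argument with $V_z$, $D_z = d/dV_z$ gives $\Psi_z^2 \simeq 0$.

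\textbf{Main obstacle.} The genuine difficulty is the last point: producing a chain homotopy that survives the passage to the colored complex. The operator $H_w$ (or $d/dU_w$, or its divided powers) is manifestly \emph{not} $\bF_2[U_{\ws},V_{\zs}]$-equivariant, so the naive homotopy does not tensor up. The resolution — and the crux of the proof — is to show that $\Phi_w^2$, which \emph{is} equivariant and filtered, is null-homotopic through an \emph{equivariant filtered} homotopy; this typically requires either (i) exhibiting a second, equivariant operator whose commutator with $\d$ equals $\Phi_w^2$ (plausibly $\Phi_w$ composed with a basepoint action, exploiting $\Phi_w\Phi_{w'} + \Phi_{w'}\Phi_w \simeq 0$ from Lemma~\ref{lem:PhiPsicommutators} in the degenerate case $w = w'$), or (ii) a neck-stretching / model-computation argument as in \cite{ZemQuasi} showing the relevant disk counts vanish. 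I would pursue route (i) first, since $\Phi_w^2 \simeq \frac12(\Phi_w\Phi_w + \Phi_w\Phi_w)$ morally falls out of the $w = w'$ specialization of the anticommutator relation, with the homotopy there being equivariant by construction.
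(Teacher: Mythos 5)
Your general instinct---differentiate the curvature identity $\d^2 = \omega_{\bL}\cdot\mathrm{id}$ twice with respect to $U_w$ and use a divided-power coefficient to absorb the characteristic-two obstruction---is exactly right, and your worry about equivariance of the homotopy is the correct thing to worry about. But you misidentify the operator $H$, and as a result the worry you raise never actually resolves.

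The operator you propose is the divided-power operator acting on \emph{elements}, $H_w(U_w^k\xs)=\binom{k}{2}U_w^{k-2}\xs$. This is indeed not $\bF_2[U_{\ws},V_{\zs}]$-equivariant, and your two proposed workarounds both fail: route (i) specializes $\Phi_w\Phi_{w'}+\Phi_{w'}\Phi_w\simeq 0$ to $w=w'$, but over $\bF_2$ this just reads $2\Phi_w^2=0$, which is vacuous; route (ii) appeals to a neck-stretching argument that the paper does not use and that would require substantial new work. The paper's $H$ is a different object: it is the \emph{second divided $U_w$-derivative of the differential $\d$ itself}. Writing $\d=\sum_{k\ge 0}\d_k U_w^k$ with $\d_k$ matrices over $\bF_2[U_{\ws},V_{\zs}]$ having no $U_w$ dependence, one sets $H:=\sum_{k\ge 2}\tfrac{k(k-1)}{2}\d_k U_w^{k-2}$. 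Since the coefficients $\tfrac{k(k-1)}{2}$ vanish for $k\le 1$, $H$ is an honest matrix over $\bF_2[U_{\ws},V_{\zs}]$ with nonnegative variable powers, hence automatically $\bF_2[U_{\ws},V_{\zs}]$-equivariant and filtration-preserving, and so it tensors up to $\cR^-_{\bmP}$ without any further argument. The computation is then elementary: using $\Phi_w=\sum_k k\d_k U_w^{k-1}$ and the identity $mn+\binom{m}{2}+\binom{n}{2}=\binom{m+n}{2}$, one finds $\Phi_w^2+\d H+H\d=\sum_{k}\binom{k}{2}U_w^{k-2}\bigl(\sum_{m+n=k}\d_m\d_n\bigr)$, and the inner sum vanishes for $k\ge 2$ because $\omega_{\bL}$ has $U_w$-degree at most $1$.

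So the gap is concrete: you never produce the homotopy, because you are reaching for the divided power of the multiplication operator rather than the divided derivative of $\d$. Once you write $\d$ as a power series in $U_w$ with matrix coefficients and take the second divided derivative of that series, the equivariance and filteredness of $H$ are automatic and the main obstacle you flag disappears. Your ``cleaner alternative'' paragraph is closest to the truth (differentiating $\d^2=\omega_{\bL}$ twice, tracking the second divided derivative), but you then immediately hand the role of $H$ back to the non-equivariant element-level operator, which sends you down the wrong path.
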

\begin{proof}The two relations are an algebraic consequence of our expression for the curvature constant $\omega_{\bL}\in \bF_2[U_{\ws},V_{\zs}]$ from Lemma~\ref{lem:del^2=0}. We will show that $\Phi_w^2\simeq 0$; the proof that $\Psi_z^2\simeq 0$ is analogous. Write $\d=\sum_{k=0}^\infty \d_k U_w^k$, where $\d_i$ is thought of as an $n\times n$ matrix (where $n$ is the number of intersection points representing $\frs$) with entries in $\bF_2[U_{\ws}, V_{\zs}]$, which  do not involve $U_w$. Using this notation, we have
\[\Phi_w=\sum_{k=0}^\infty k \d_k U_w^{k-1}\qquad \text{and}\qquad  \d^2=\sum_{k\ge 0} \sum_{n+m=k} \d_n\d_m U_w^{n+m}.\] Applying  Lemma~\ref{lem:del^2=0} for any $k\ge 2$, we have that 
\begin{equation}\sum_{m+n=k}\d_n\d_m =0.\label{eq:kge2termsPhi^2=0}\end{equation} Define
\[H:=\sum_{k=0}^\infty \frac{k(k-1)}{2} \d_k U_w^{k-2}.\] One now simply computes that
\[\Phi_w^2+(\d H+H\d)=\sum_{m,n\ge 0} \frac{(m+n)(m+n-1)}{2} \d_n\d_m U_w^{n+m-2}=\sum_{k=0}^\infty \frac{k(k-1)}{2}U_w^{k-2}\sum_{n+m=k} \d_n\d_m ,\] which vanishes, because the $k=0$ and $k=1$ terms vanish trivially and the terms for $k\ge 2$ vanish by Equation \eqref{eq:kge2termsPhi^2=0}.
\end{proof}

The dividing sets on $[0,1]\times L$ corresponding to $\Phi_w^2$ and $\Psi_z^2$ both contain a null-homotopic dividing curve, bounding a disk. They are shown in Figure~\ref{fig::88}.

\begin{figure}[ht!]
\centering
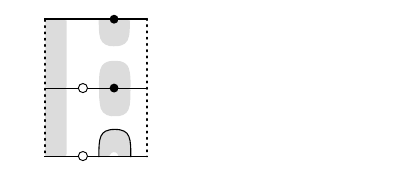
\caption{\textbf{The dividing sets corresponding to $\Phi_w^2$ (left) and $\Psi_z^2$ (right) on $[0,1]\times L$.} Both $\Phi_w^2$ and $\Phi_z^2$ are chain homotopic to zero. The corresponding dividing sets both have a null-homotopic closed curve, bounding a disk. \label{fig::88}}
\end{figure}

\subsection{Relations between the basepoint actions and the quasi-stabilization maps}
\label{subsec:relations2}

In this section, we prove some relations between the quasi-stabilization maps and the maps $\Phi_w$ and $\Psi_z$.

As a first relation, we show that under favorable assumptions on the coloring $\bL$, the maps $S^{\circ}_{w,z}$ and $T^{\circ}_{w,z}$ can be related to each other using the maps $\Phi_w$ and $\Psi_z$:

\begin{lem}\label{lem:olddefsforTw,z} Suppose that $\bL=(L,\ve{w},\ve{z})$ is a multi-based link and $w$ and $z$ are new basepoints, in a single component of $L\setminus (\ve{w}\cup \ve{z})$, and write $w'\in \ws$ and $z'\in \zs$ for the two basepoints adjacent to $w$ and $z$. If $w$ and $w'$ are given the same color, and $z$ and $z'$ are given the same color, then
\[
T_{w,z}^{+}\simeq \Psi_z  S_{w,z}^+\qquad \text{ and }\qquad T_{w,z}^-\simeq S_{w,z}^-\Psi_z.
\]
 Similarly
\[
S_{w,z}^+\simeq \Phi_w  T_{w,z}^+\qquad \text{ and }\qquad S_{w,z}^-\simeq T_{w,z}^-\Phi_w.
\]
\end{lem}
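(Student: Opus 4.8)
The plan is to verify all four identities literally (not merely up to homotopy) on one conveniently chosen quasi-stabilized Heegaard diagram, and then to promote this to the asserted chain homotopies on the transitive chain homotopy type invariants using the known naturality of the maps involved.

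Concretely, I would fix a strongly $\frs$-admissible diagram $\cH$ for $(Y,\bL)$ and let $\cH^+$ be a diagram for $(Y,\bL_{w,z}^+)$ obtained by quasi-stabilizing $\cH$ to insert $w$ and $z$ between $w'$ and $z'$, computed with an almost complex structure stretched enough that Proposition~\ref{prop:quasi-stabilizeddifferential} applies. Then $\cCFL^-(\cH^+,\sigma',\frs)\iso \cCFL^-(\cH,\sigma,\frs)\otimes_{\bF_2}\langle \theta^{\ws},\xi^{\ws}\rangle$, with differential
\[
\d_{\cH^+}=\begin{pmatrix}\d_{\cH}& U_w+U_{w'}\\ V_{z}+V_{z'}& \d_{\cH}\end{pmatrix}
\]
in the basis $(\theta^{\ws},\xi^{\ws})$. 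Since $w,w'$ have the same color and $z,z'$ have the same color, all of $S^{\pm}_{w,z}$, $T^{\pm}_{w,z}$, $\Phi_w$, $\Psi_z$ are chain maps, so every composition in the statement is a chain map. Next I would pin down $\Phi_w$ and $\Psi_z$ on this diagram: on the stretched $\cH^+$ the only Maslov index one disks crossing $z$ are the small bigons responsible for the $V_z$-summand of the off-diagonal entry of the matrix above, and the only ones crossing $w$ are those responsible for the $U_w$-summand; hence, $\cR_{\bmP}^-$-equivariantly,
\[
\Psi_z(\xs\otimes \theta^{\ws})=\xs\otimes \xi^{\ws},\qquad \Psi_z(\xs\otimes \xi^{\ws})=0,
\]
\[
\Phi_w(\xs\otimes \xi^{\ws})=\xs\otimes \theta^{\ws},\qquad \Phi_w(\xs\otimes \theta^{\ws})=0.
\]
Combining this with $S_{w,z}^+(\xs)=\xs\otimes\theta^{\ws}$, $T_{w,z}^+(\xs)=\xs\otimes\theta^{\zs}$, $S_{w,z}^-(\xs\otimes\theta^{\ws})=0$, $S_{w,z}^-(\xs\otimes\xi^{\ws})=\xs$, $T_{w,z}^-(\xs\otimes\theta^{\zs})=0$, $T_{w,z}^-(\xs\otimes\xi^{\zs})=\xs$, together with the identifications $\theta^{\ws}=\xi^{\zs}$ and $\theta^{\zs}=\xi^{\ws}$, a direct check on the two generators $\theta^{\ws}$ and $\xi^{\ws}$ gives
\[
T_{w,z}^+=\Psi_z S_{w,z}^+,\quad T_{w,z}^-=S_{w,z}^-\Psi_z,\quad S_{w,z}^+=\Phi_w T_{w,z}^+,\quad S_{w,z}^-=T_{w,z}^-\Phi_w
\]
as maps between the complexes of $\cH$ and $\cH^+$.

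Finally I would invoke naturality. By Theorem~\ref{thm:quasisarenatural} the maps $S^{\pm}_{w,z}$ and $T^{\pm}_{w,z}$ commute with the transition maps $\Phi_{\cH\to\cH'}$ up to chain homotopy, and by \cite{ZemQuasi}*{Lemma~3.2} so do $\Phi_w$ and $\Psi_z$; hence each side of each identity is a well-defined morphism of transitive systems, up to filtered equivariant chain homotopy. Two such morphisms that agree on a single diagram agree up to homotopy in general, so the equalities just established on $\cH^+$ yield the claimed chain homotopies. The part requiring care is the identification of $\Phi_w$ and $\Psi_z$ on $\cH^+$: the block form of $\d_{\cH^+}$ holds only after sufficient neck stretching, and one must resist reading $\Phi_w$, $\Psi_z$ off the \emph{colored} differential as formal $U_w$- and $V_z$-derivatives, since there $U_w+U_{w'}$ and $V_z+V_{z'}$ both vanish, whereas $\Phi_w$ and $\Psi_z$ are genuinely defined by the holomorphic disk counts and remain nonzero. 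Everything else is bookkeeping with the two stabilization generators.
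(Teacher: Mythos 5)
Your proposal is correct and follows essentially the same route as the paper: identify $\Phi_w$ and $\Psi_z$ on the quasi-stabilized diagram via the off-diagonal entries of Proposition~\ref{prop:quasi-stabilizeddifferential}, then verify each formula directly on the generators $\theta^{\ws},\xi^{\ws}$ using the identifications $\theta^{\ws}=\xi^{\zs}$, $\theta^{\zs}=\xi^{\ws}$. Your added remarks on naturality and the warning against reading $\Phi_w,\Psi_z$ off the colored differential as formal derivatives are both accurate and sensible elaborations of what the paper leaves implicit.
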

\begin{proof} We consider the formula for the quasi-stabilized differential in Proposition~\ref{prop:quasi-stabilizeddifferential}. Using that result, any disk which is counted by $\Phi_w$ has domain equal to the bigon going over $w$ in the quasi-stabilized region, and similarly any disk counted by $\Psi_z$ has domain equal to the bigon going over $z$ in the quasi-stabilized region. Hence, using the matrix notation from Proposition~\ref{prop:quasi-stabilizeddifferential}, we have that
\[\Phi_w=\begin{pmatrix}0& 
\id\\
0& 0
\end{pmatrix} \qquad \text{and} \qquad \Psi_z=\begin{pmatrix}0& 0\\
\id & 0
\end{pmatrix}.\] In terms of generators, the map $\Psi_z$ satisfies $\Psi_z(\ve{x}\times \xi^{\ws})=0$ and $\Psi_z(\ve{x}\times \theta^{\ws})=\ve{x}\times \xi^{\ws}$, and $\Phi_w$ satisfies a similar relation. Hence
\[(\Psi_zS_{w,z}^+)(\ve{x})=\Psi_z(\ve{x}\times \theta^{\ws})=\ve{x}\times \xi^{\ws}=\ve{x}\times \theta^{\zs}=T_{w,z}^+(\ve{x}).\] The other relations follow similarly.
\end{proof}

The dividing set interpretations of the relations from Lemma~\ref{lem:olddefsforTw,z} are shown in Figure~\ref{fig::89}.

\begin{figure}[ht!]
\centering
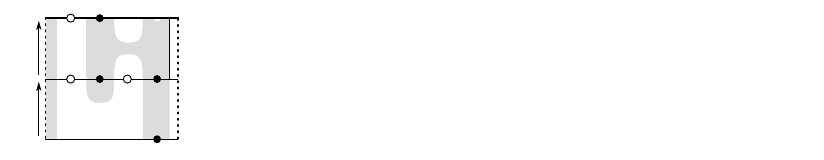
\caption{\textbf{The dividing sets on $[0,1]\times L\subset [0,1]\times Y$ corresponding to the relations from Lemma~\ref{lem:olddefsforTw,z}.}  \label{fig::89}}
\end{figure}

As one might expect given the interpretation of the maps in terms of dividing set, the endomorphisms $\Phi_w$ and $\Psi_z$ can each be defined as a compositions of a quasi-destabilization followed by a quasi-stabilization:

\begin{lem}[\cite{ZemQuasi}*{Lemma~9.3}]\label{lem:S^+S^-=Phi} Suppose that $(z,w,z')$ is a triple of consecutive and distinct basepoints on a component of $\bL$, ordered right to left, and that $\sigma$ is a coloring of $\bL$ with $\sigma(z)=\sigma(z')$. Then
\[\Phi_w\simeq S_{w,z}^+S_{w,z}^-\simeq S_{z',w}^+S_{z',w}^-,\] as endomorphisms of $\cCFL^-(\cH,\sigma,\frs)$.
\end{lem}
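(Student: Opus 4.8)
The plan is to reduce both identities to an explicit computation on a \emph{quasi-stabilized} Heegaard diagram equipped with a sufficiently stretched almost complex structure, where Proposition~\ref{prop:quasi-stabilizeddifferential} describes the differential completely, and then to propagate the resulting equality to arbitrary diagrams by naturality.

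\textbf{Setting up the model.} Let $\bL_0$ denote $\bL$ with the basepoints $w$ and $z$ deleted; since $z'$ (and its $\ws$-neighbors) remain on the relevant component, $\bL_0$ is still a multi-based link. Fix a strongly $\frs$-admissible diagram $\cH_0$ for $(Y,\bL_0)$, let $\sigma_0$ be the restriction of $\sigma$, and let $\cH$ be the diagram for $(Y,\bL)$ obtained by quasi-stabilizing $\cH_0$ so as to re-insert $w$ and $z$ in the position prescribed by the triple $(z,w,z')$. By naturality of the quasi-stabilization maps (Theorem~\ref{thm:quasisarenatural}), the fact that $\Phi_w$ descends to the transitive chain homotopy type invariant (\cite{ZemQuasi}*{Lemma~3.2}), and invariance of $\cCFL^-$, it suffices to establish the relation on $\cH$ for one choice of (sufficiently stretched) almost complex structure. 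With such a choice, Proposition~\ref{prop:quasi-stabilizeddifferential} identifies $\cCFL^-(\cH,\sigma,\frs)$, as an $\cR_{\bmP}^-$-module, with $\cCFL^-(\cH_0,\sigma_0,\frs)\otimes_{\bF_2}\langle\theta^{\ws},\xi^{\ws}\rangle$, and writes the differential in the form $\left(\begin{smallmatrix}\d_{\cH_0}& *\\ X_{\sigma(z)}+X_{\sigma(z')}& \d_{\cH_0}\end{smallmatrix}\right)$ with respect to $(\theta^{\ws},\xi^{\ws})$, where $*$ is a sum of two $X$-variables coming from $w$ and its $\ws$-neighbor. By the hypothesis $\sigma(z)=\sigma(z')$ the lower-left entry vanishes, and this same hypothesis is precisely what makes $S_{w,z}^{\pm}$ and $\Phi_w$ honest chain maps on the colored complex (the corollary to Proposition~\ref{prop:quasi-stabilizeddifferential}, together with Lemma~\ref{lem:PhiPsichainhomotopies}).

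\textbf{Reading off both sides.} A class $\phi$ with $n_w(\phi)>0$ must, for a sufficiently stretched almost complex structure, be supported on the small bigon through $w$ in the quasi-stabilization region; feeding this into the definition of $\Phi_w$ (equivalently, differentiating the matrix above with respect to $U_w$) gives $\Phi_w(\ve{x}\otimes\xi^{\ws})=\ve{x}\otimes\theta^{\ws}$ and $\Phi_w(\ve{x}\otimes\theta^{\ws})=0$ --- exactly the computation already carried out inside the proof of Lemma~\ref{lem:olddefsforTw,z}. On the other hand, directly from the definitions of the quasi-stabilization maps, $S_{w,z}^-(\ve{x}\otimes\theta^{\ws})=0$, $S_{w,z}^-(\ve{x}\otimes\xi^{\ws})=\ve{x}$ and $S_{w,z}^+(\ve{x})=\ve{x}\otimes\theta^{\ws}$, so $S_{w,z}^+S_{w,z}^-$ agrees with $\Phi_w$ on the nose as an endomorphism of $\cCFL^-(\cH,\sigma,\frs)$. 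Since both are chain maps, this equality descends to the transitive invariant, where it becomes the asserted $\Phi_w\simeq S_{w,z}^+S_{w,z}^-$. The second relation $\Phi_w\simeq S_{z',w}^+S_{z',w}^-$ follows from the identical argument applied to the model diagram obtained by quasi-stabilizing $\cH_0$ on the \emph{other} side of $w$, re-inserting the pair $(z',w)$; again the hypothesis $\sigma(z)=\sigma(z')$ is exactly what is needed for $S_{z',w}^{\pm}$ to be chain maps.

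\textbf{Main obstacle.} I expect the only genuinely delicate analytic input to be the identification of the quasi-stabilized differential in Proposition~\ref{prop:quasi-stabilizeddifferential}, which is the neck-stretching result imported from \cite{ZemQuasi} and which I treat as a black box; beyond that the proof is bookkeeping, the step requiring the most care being the naturality argument that upgrades an on-the-nose equality on one diagram to a filtered equivariant chain homotopy on the transitive invariant, and the consistent use of the right-to-left ordering convention when tracking which basepoints flank $w$.
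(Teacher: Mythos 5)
Your proof is correct and follows essentially the same route as the paper's: both identify $\Phi_w$ on a stretched quasi-stabilized diagram via Proposition~\ref{prop:quasi-stabilizeddifferential} as $\ve{x}\otimes\theta^{\ws}\mapsto 0$, $\ve{x}\otimes\xi^{\ws}\mapsto\ve{x}\otimes\theta^{\ws}$, and observe this coincides with $S_{w,z}^+S_{w,z}^-$ directly from the defining formulas, with the second relation handled analogously. Your write-up merely makes explicit the naturality/transitive-system bookkeeping and the role of $\sigma(z)=\sigma(z')$ in making the maps chain maps, which the paper leaves implicit.
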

\begin{proof}By analyzing the quasi-stabilized differential appearing in Proposition~\ref{prop:quasi-stabilizeddifferential}, we see that $\Phi_w$ satisfies $\Phi_w(\ve{x}\times \theta^{\ws})=0$ and $\Phi_w(\ve{x}\times \xi^{\ws})=\ve{x}\times \theta^{\ws}$. On the other hand, this is the same as the composition $S_{w,z}^+ S_{w,z}^-$, by definition. The formula $\Phi_w\simeq S_{z',w}^+ S_{z',w}^-$ is proven similarly.
\end{proof}

Symmetrically, we have the following:

\begin{lem}\label{lem:T^+T^-=Psi} Suppose that $(w,z,w')$ is a triple of consecutive and distinct basepoints on a link component of $\bL$, ordered right to left, and suppose that $\sigma$ is a coloring of $\bL$ such that $\sigma(w)=\sigma(w')$. Then
\[\Psi_z\simeq T_{w,z}^+  T_{w,z}^-\simeq T_{z,w'}^+T_{z,w'}^-\] as endomorphisms of $\cCFL^-(\cH,\sigma,\frs)$.
\end{lem}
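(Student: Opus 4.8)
The plan is to argue exactly as in the proof of Lemma~\ref{lem:S^+S^-=Phi}, interchanging the roles of the $\ws$- and $\zs$-basepoints. First I would reduce to a convenient model: let $\cH$ be a diagram for $(Y,\bL)$ obtained by quasi-stabilizing a diagram $\cH_0$ for the link $(L,\ws\setminus\{w\},\zs\setminus\{z\})$ at the pair $w,z$, equipped with a sufficiently stretched almost complex structure so that Proposition~\ref{prop:quasi-stabilizeddifferential} applies. Since the quasi-stabilization maps are natural up to filtered equivariant chain homotopy (Theorem~\ref{thm:quasisarenatural}) and $\Psi_z$ is well-defined up to such homotopy on the transitive chain homotopy type invariant (\cite{ZemQuasi}*{Lemma~3.2}), it is enough to prove the identity on the nose on $\cH$.

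On this diagram I would compute the action of $\Psi_z$ using the quasi-stabilized differential from Proposition~\ref{prop:quasi-stabilizeddifferential}: exactly as in the proof of Lemma~\ref{lem:olddefsforTw,z}, for a sufficiently stretched complex structure the only disk contributing to $\Psi_z$ is the bigon over $z$ in the stabilized region, so that $\Psi_z(\ve{x}\times\theta^{\ws})=\ve{x}\times\xi^{\ws}$ and $\Psi_z(\ve{x}\times\xi^{\ws})=0$; using the identifications $\theta^{\ws}=\xi^{\zs}$ and $\theta^{\zs}=\xi^{\ws}$, this says $\Psi_z(\ve{x}\times\xi^{\zs})=\ve{x}\times\theta^{\zs}$ and $\Psi_z(\ve{x}\times\theta^{\zs})=0$. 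On the other hand, the definitions $T_{w,z}^+(\ve{x})=\ve{x}\otimes\theta^{\zs}$, $T_{w,z}^-(\ve{x}\otimes\theta^{\zs})=0$, and $T_{w,z}^-(\ve{x}\otimes\xi^{\zs})=\ve{x}$ give $T_{w,z}^+T_{w,z}^-(\ve{x}\times\xi^{\zs})=\ve{x}\times\theta^{\zs}$ and $T_{w,z}^+T_{w,z}^-(\ve{x}\times\theta^{\zs})=0$, which is exactly the action of $\Psi_z$ just computed. Hence $\Psi_z=T_{w,z}^+T_{w,z}^-$ on $\cH$, and therefore $\Psi_z\simeq T_{w,z}^+T_{w,z}^-$ in general. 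The relation $\Psi_z\simeq T_{z,w'}^+T_{z,w'}^-$ follows in the same way, quasi-stabilizing instead at the pair $w',z$ (with $z$ following $w'$); here the hypothesis $\sigma(w)=\sigma(w')$ is precisely the condition, via the corollary to Proposition~\ref{prop:quasi-stabilizeddifferential}, ensuring that the maps $T_{w,z}^\pm$ and $T_{z,w'}^\pm$ are chain maps, so that the statement is meaningful.

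I do not expect a genuine obstacle here: the argument is a formal mirror of Lemma~\ref{lem:S^+S^-=Phi} together with the matrix computation in Lemma~\ref{lem:olddefsforTw,z}. The only care needed is bookkeeping — tracking the identifications $\theta^{\ws}=\xi^{\zs}$ and $\theta^{\zs}=\xi^{\ws}$ correctly, checking that the stated coloring hypothesis is exactly the one making the $T$-maps chain maps, and noting that the identity holds strictly on the stretched model diagram before being promoted to a chain-homotopy statement by naturality.
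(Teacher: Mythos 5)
Your proof is correct and takes essentially the same route as the paper's (terse) proof, which simply cites the symmetric argument from Lemma~\ref{lem:S^+S^-=Phi}: both work from the quasi-stabilized differential of Proposition~\ref{prop:quasi-stabilizeddifferential}, read off $\Psi_z$ as the matrix $\begin{psmallmatrix}0&0\\ \id&0\end{psmallmatrix}$ in the $(\theta^{\ws},\xi^{\ws})$ basis, and match it against the definitions of $T^\pm_{w,z}$ via the identifications $\theta^{\ws}=\xi^{\zs}$, $\xi^{\ws}=\theta^{\zs}$. The bookkeeping (which basepoint must share a color with which, and which pair gets quasi-destabilized for each of the two equalities) is handled correctly.
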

\begin{proof}The proof is similar to the proof of Lemma~\ref{lem:S^+S^-=Phi}.
\end{proof}

The dividing set interpretation of the relations in Lemmas~\ref{lem:S^+S^-=Phi}~and~\ref{lem:T^+T^-=Psi} is illustrated in Figure~\ref{fig::90}.

\begin{figure}[ht!]
\centering
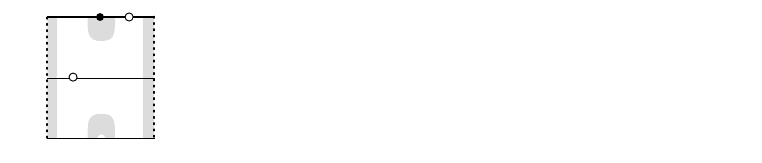
\caption{\textbf{The dividing sets on $L\times [0,1]\subset Y\times [0,1]$ corresponding to the relations from Lemmas~\ref{lem:S^+S^-=Phi}~and~\ref{lem:T^+T^-=Psi}.}  \label{fig::90}}
\end{figure}

Additionally we have the following:

\begin{lem}\label{lem:addtrivialstrandII}The quasi-stabilization maps satisfy
\[S_{w,z}^-  S_{w,z}^+\simeq 0, \qquad T_{w,z}^-   T_{w,z}^+\simeq 0,\qquad T_{w,z}^-  S_{w,z}^+\simeq \id,\qquad S_{w,z}^-  T_{w,z}^+\simeq \id,\] and
\[S_{w,z}^+   T_{w,z}^-+T_{w,z}^+  S_{w,z}^-+\id\simeq 0.\]
\end{lem}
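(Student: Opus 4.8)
The plan is to exploit the explicit matrix description of the quasi-stabilized complex from Proposition~\ref{prop:quasi-stabilizeddifferential}, exactly as in the proofs of Lemmas~\ref{lem:S^+S^-=Phi} and \ref{lem:T^+T^-=Psi}. Recall that after quasi-stabilizing by $w,z$, the complex becomes $\cCFL^-(\cH,\frs)\otimes_{\bF_2}\langle\theta^{\ws},\xi^{\ws}\rangle\otimes_{\bF_2}\bF_2[U_w,V_z]$, with $\theta^{\ws}=\xi^{\zs}$ and $\theta^{\zs}=\xi^{\ws}$. In terms of the ordered basis $(\theta^{\ws},\xi^{\ws})$, the stabilization and destabilization maps are the column/row inclusions: $S_{w,z}^+(\xs)=\xs\otimes\theta^{\ws}$, $S_{w,z}^-(\xs\otimes\theta^{\ws})=0$, $S_{w,z}^-(\xs\otimes\xi^{\ws})=\xs$; while $T_{w,z}^+(\xs)=\xs\otimes\theta^{\zs}=\xs\otimes\xi^{\ws}$, $T_{w,z}^-(\xs\otimes\theta^{\zs})=0$, i.e. $T_{w,z}^-(\xs\otimes\xi^{\ws})=0$ and $T_{w,z}^-(\xs\otimes\theta^{\ws})=\xs$ (since $\theta^{\ws}=\xi^{\zs}$). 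With these formulas, all five identities are immediate \emph{equalities} of maps on the nose, provided the two diagrams are chosen to agree off the quasi-stabilization region and the almost complex structure is sufficiently stretched so that Proposition~\ref{prop:quasi-stabilizeddifferential} applies; the passage from equalities on a particular diagram to $\simeq$ on the transitive chain homotopy type invariant uses Theorem~\ref{thm:quasisarenatural}, the naturality of the quasi-stabilization maps.

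The key steps, in order, are: first, fix a diagram $\cH$ for $(Y,\bL)$ and its quasi-stabilization $\cH^+$, and invoke Proposition~\ref{prop:quasi-stabilizeddifferential} to obtain the matrix presentation and the generator identifications; second, write down the four maps $S_{w,z}^\pm$, $T_{w,z}^\pm$ as $2\times 2$ ``matrices'' over the identity of $\cCFL^-(\cH,\frs)$ — explicitly $S^+_{w,z}=\left(\begin{smallmatrix}\id\\0\end{smallmatrix}\right)$, $S^-_{w,z}=\left(\begin{smallmatrix}0&\id\end{smallmatrix}\right)$, $T^+_{w,z}=\left(\begin{smallmatrix}0\\\id\end{smallmatrix}\right)$, $T^-_{w,z}=\left(\begin{smallmatrix}\id&0\end{smallmatrix}\right)$; third, multiply: $S^-S^+=0$, $T^-T^+=0$, $T^-S^+=\id$, $S^-T^+=\id$, and $S^+T^-+T^+S^-=\left(\begin{smallmatrix}0&0\\0&\id\end{smallmatrix}\right)+\left(\begin{smallmatrix}\id&0\\0&0\end{smallmatrix}\right)=\id$, so that $S^+T^-+T^+S^-+\id=0$ over $\bF_2$; fourth, note that each composition appearing is a composition of quasi-stabilization maps whose source and target are $\cCFL^-$ of the \emph{same} multi-based link (for $S^-S^+$, $T^-T^+$, $S^+T^-+T^+S^-$ it is $\cCFL^-(Y,\bL^\sigma,\frs)$, and for $T^-S^+$, $S^-T^+$ it is $\cCFL^-(Y,(\bL^+_{w,z})^{\sigma'},\frs)$), and apply Theorem~\ref{thm:quasisarenatural} to transport the on-diagram equalities to chain-homotopy identities on the transitive invariants. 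One should also check the coloring hypotheses line up: $S^\pm_{w,z}$ require $\sigma'(z)=\sigma'(z')$ and $T^\pm_{w,z}$ require $\sigma'(w)=\sigma'(w')$ to be chain maps, so the statement is implicitly taken under a coloring $\sigma'$ extending $\sigma$ with both conditions, which is exactly the setting in which $\bL^+_{w,z}$ carries the needed divisor decoration; I would state this at the outset.

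The main obstacle — really the only subtlety — is the bookkeeping of generator identifications $\theta^{\ws}=\xi^{\zs}$, $\theta^{\zs}=\xi^{\ws}$, since $T^\pm_{w,z}$ are defined using $\theta^{\zs},\xi^{\zs}$ rather than $\theta^{\ws},\xi^{\ws}$; once one commits to the $(\theta^{\ws},\xi^{\ws})$ basis and translates $T^\pm_{w,z}$ accordingly, everything is a $2\times 2$ matrix computation over $\bF_2$. A secondary, routine point is that Proposition~\ref{prop:quasi-stabilizeddifferential} only gives the matrix form for ``sufficiently stretched'' almost complex structures, so strictly the equalities hold on a specific diagram with a specific $J$; since the maps are well-defined up to filtered equivariant chain homotopy on the transitive invariant (Theorem~\ref{thm:quasisarenatural}), this is enough. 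I do not expect to need the maps $\Phi_w$, $\Psi_z$ here, though one could alternatively deduce, e.g., $S^-_{w,z}S^+_{w,z}\simeq 0$ from $\Phi_w\simeq S^+_{w,z}S^-_{w,z}$ together with $\Phi_w^2\simeq 0$ and $S^-S^+$ being a ``reverse'' composition — but the direct matrix computation is shorter and cleaner, so that is the route I would take.
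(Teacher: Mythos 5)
Your proposal is correct and takes essentially the same route as the paper, which simply asserts that all five identities "follow immediately from the formulas used to define the quasi-stabilization maps"; your matrix presentation in the $(\theta^{\ws},\xi^{\ws})$ basis, together with the translations $\theta^{\zs}=\xi^{\ws}$ and $\xi^{\zs}=\theta^{\ws}$, is exactly what that one-sentence proof is alluding to. Your extra remarks about the coloring hypotheses and the passage to transitive chain homotopy types via Theorem~\ref{thm:quasisarenatural} are sound housekeeping but not strictly necessary, since the quasi-stabilization maps are already defined on generators (independently of the differential) and were shown to be well-defined on transitive systems.
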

\begin{proof}All of these relations follow immediately from the formulas used to define the quasi-stabilization maps.\end{proof}

The dividing set interpretation of the relations from Lemma~\ref{lem:addtrivialstrandII} are shown in Figure~\ref{fig::91}.

\begin{figure}[ht!]
\centering
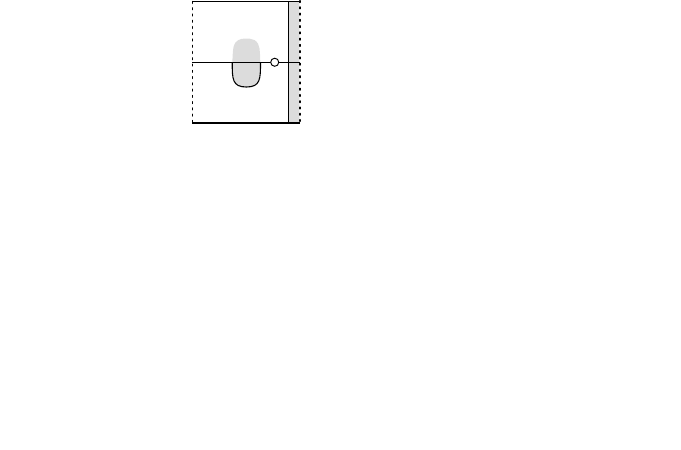
\caption{\textbf{The dividing sets on $[0,1]\times L\subset [0,1]\times Y$ corresponding to the relations from Lemma~\ref{lem:addtrivialstrandII}.}  \label{fig::91}}
\end{figure}

We now consider the question of commuting the maps $\Phi_w$ and $\Psi_z$ with the quasi-stabilization maps. Versions of most of these relations can be found in \cite{ZemQuasi}*{Section~9}.

Suppose that $A$ is a subarc of $L$, with ends on two $\ws$-basepoints. We do not exclude the case that $A$ consists of an entire link component of $L$, or that $A$ consists of a single basepoint in $\ws$ (though in this latter case we think of $A$ as being the empty arc). We define
\[\Psi_A:=\sum_{z\in A\cap \zs} \Psi_z.\] Using Lemma~\ref{lem:PhiPsichainhomotopies}, we see that $\Psi_A$ is a chain map if and only if the endpoints of $A$ share the same color.

\begin{lem}\label{lem:repacakgedSw,zPsicommutator}Suppose that $(w,z)$ is a pair of new, adjacent basepoints on $\bL$, and $A$ is a subarc of $L$, with endpoints on two $\ws$-basepoints which are given the same color. Then the following relations are satisfied:
\[\Psi_A   S_{w,z}^{\circ}+S_{w,z}^{\circ}  \Psi_A\simeq 0\qquad \text{and} \qquad \Psi_A   T_{w,z}^{\circ}+T_{w,z}^{\circ} \Psi_A\simeq 0,\] for $\circ\in \{+,-\}$.
\end{lem}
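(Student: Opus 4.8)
The plan is to verify all four relations at once by passing to a quasi-stabilized Heegaard diagram, where both the differential and the quasi-stabilization maps have the explicit form of Proposition~\ref{prop:quasi-stabilizeddifferential}. Fix a diagram $\cH$ for $(Y,\bL)$ and let $\cH^+$ be the quasi-stabilization adding $w$ and $z$ between the adjacent basepoints $w'\in\ws$ and $z'\in\zs$. For a sufficiently stretched almost complex structure, Proposition~\ref{prop:quasi-stabilizeddifferential} identifies
\[
\cCFL^-(\cH^+,\frs)\iso \cCFL^-(\cH,\frs)\otimes_{\bF_2}\langle\theta^{\ws},\xi^{\ws}\rangle\otimes_{\bF_2}\bF_2[U_w,V_z],\qquad \d_{\cH^+}=\begin{pmatrix}\d_{\cH}& U_w+U_{w'}\\ V_z+V_{z'}& \d_{\cH}\end{pmatrix},
\]
with $\theta^{\ws}$ corresponding to the first row and column. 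In this model $S_{w,z}^+(\xs)=\xs\otimes\theta^{\ws}$ and $T_{w,z}^+(\xs)=\xs\otimes\xi^{\ws}$, while $S_{w,z}^-$ kills $\xs\otimes\theta^{\ws}$ and sends $\xs\otimes\xi^{\ws}\mapsto\xs$, and $T_{w,z}^-$ kills $\xs\otimes\xi^{\ws}$ and sends $\xs\otimes\theta^{\ws}\mapsto\xs$. In particular each of these four maps acts as the identity on the $\cCFL^-(\cH,\frs)$ tensor factor, tensored with a fixed map of the remaining factor $\langle\theta^{\ws},\xi^{\ws}\rangle\otimes\bF_2[U_w,V_z]$. (Throughout we use the standard coloring conventions that make all the maps in the statement chain maps.)

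The heart of the argument is the claim that, under this identification, the endomorphism $\Psi_A$ of $\cCFL^-(\cH^+,\frs)$ equals $\Psi_A^{\cH}\otimes\id$, where $\Psi_A^{\cH}:=\sum_{z''\in A\cap\zs}\Psi_{z''}$ is the corresponding operator on $\cCFL^-(\cH,\frs)$. I would prove this by inspecting the summands $\Psi_{z''}$ one at a time. For $z''\neq z'$, the variable $V_{z''}$ appears in $\d_{\cH^+}$ only inside the diagonal blocks $\d_{\cH}$, so $\Psi_{z''}$ acts as $\Psi_{z''}^{\cH}\otimes\id$. For $z''=z'$, the variable $V_{z'}$ additionally occurs in the $(2,1)$ entry $V_z+V_{z'}$, which produces the extra contribution $E$ defined by $\xs\otimes\theta^{\ws}\mapsto\xs\otimes\xi^{\ws}$ and $\xs\otimes\xi^{\ws}\mapsto 0$; thus $\Psi_{z'}^{\cH^+}=\Psi_{z'}^{\cH}\otimes\id+E$. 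But differentiating the same entry $V_z+V_{z'}$ in $V_z$ shows that $E$ is exactly $\Psi_z$ on $\cH^+$; and since the endpoints of $A$ are old $\ws$-basepoints, $z'\in A$ forces $A$ to contain the entire arc of $L$ between $w'$ and $z'$, hence also the new $\zs$-basepoint $z$. Therefore $\Psi_z=E$ is itself a summand of $\Psi_A$ computed on $\cH^+$, the two copies of $E$ cancel over $\bF_2$, and $\Psi_A=\Psi_A^{\cH}\otimes\id$. If instead $z'\notin A$ then also $z\notin A$, no correction terms arise, and the conclusion is immediate.

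Granting the claim, the relations follow formally. Since $\Psi_A$ acts as $\Psi_A^{\cH}$ on the $\cCFL^-(\cH,\frs)$ factor and trivially on the complementary factor, while each of $S_{w,z}^{\pm}$ and $T_{w,z}^{\pm}$ acts as the identity on the $\cCFL^-(\cH,\frs)$ factor, $\Psi_A$ commutes on the nose with each of these maps on $\cH^+$: for example $\Psi_A(S_{w,z}^+\xs)=(\Psi_A^{\cH}\xs)\otimes\theta^{\ws}=S_{w,z}^+(\Psi_A^{\cH}\xs)$, and the three remaining cases are identical. Hence $\Psi_A S_{w,z}^\circ+S_{w,z}^\circ\Psi_A$ and $\Psi_A T_{w,z}^\circ+T_{w,z}^\circ\Psi_A$ vanish identically on the diagram $\cH^+$, and therefore are chain homotopic to zero as maps of transitive chain homotopy type invariants. (Alternatively, once the type-$S$ relations are known, the type-$T$ relations follow from $T_{w,z}^+\simeq\Psi_z S_{w,z}^+$ and $T_{w,z}^-\simeq S_{w,z}^-\Psi_z$ of Lemma~\ref{lem:olddefsforTw,z} together with the commutator $\Psi_A\Psi_z+\Psi_z\Psi_A\simeq 0$ coming from Lemma~\ref{lem:PhiPsicommutators}.) The only real obstacle is the combinatorial bookkeeping of the middle paragraph---tracking how the summand $\Psi_{z'}$, and the possibly-new summand $\Psi_z$, interact with the quasi-stabilized region---so as to establish $\Psi_A=\Psi_A^{\cH}\otimes\id$; once that identity is in place, everything else is routine.
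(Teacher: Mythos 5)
Your proof is correct and is at heart the same computation as the paper's, just repackaged. The paper's proof writes the ``almost chain map'' relation
\[
\d_{\cH^{+}} S_{w,z}^{+}+ S_{w,z}^{+}\d_{\cH}+(V_z+V_{z'})\cdot F=0,
\]
then applies the derivative operator $D_A=\sum_{s\in A\cap(\zs\cup\{z\})}\tfrac{d}{dV_s}$ and the Leibniz rule, noting $D_A(V_z+V_{z'})=0$ because $z$ and $z'$ are either both in $A$ or both not (the same observation that drives your cancellation of the two copies of $E$). Your version instead applies $D_A$ directly to the matrix of $\d_{\cH^+}$ from Proposition~\ref{prop:quasi-stabilizeddifferential}: the off-diagonal $(2,1)$ entry $V_z+V_{z'}$ is annihilated by $D_A$ for exactly that reason, which is precisely the content of your identity $\Psi_A^{\cH^+}=\Psi_A^{\cH}\otimes\id$. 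The modest payoff of your formulation is that all four commutation relations (for $S^{\pm}$ and $T^{\pm}$) fall out of a single identity at once, since each quasi-stabilization map is the identity on the $\cCFL^-(\cH,\frs)$ tensor factor; the paper instead treats $S^+$ and invokes ``similar reasoning'' for the other three. Your check that $z\in A\Longleftrightarrow z'\in A$ (because the endpoints of $A$ are old $\ws$-basepoints, so $A$ cannot terminate at the new $w$) is correct and is exactly what the paper needs as well. One minor remark: you should confirm, as the paper does, that the argument is carried out on the uncolored (or suitably colored) complexes first and then tensored down to a coloring for which $\Psi_A$ is a chain map---your parenthetical acknowledges this, and it is indeed only a bookkeeping step.
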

\begin{proof} We focus on the claim for $S_{w,z}^+$. Let $\bmP=\ve{w}\cup \ve{z}\cup \{w,z\}$, and let $\sigma\colon \ve{w}\cup \ve{z}\to \bmP$ denote the natural inclusion, and let $\sigma'\colon \ve{w}\cup \ve{z}\cup \{w, z\}\to \bmP$ denote the identity. Let $\cH$ denote a diagram for $(L,\ve{w},\ve{z})$, and let $\cH^+$ denote a quasi-stabilization of $\cH$ constructed by adding the basepoints $w$ and $z$. We consider the  complexes $(\cCFL^-(\cH, \sigma, \frs),\d_{\cH})$ and $(\cCFL^-(\cH^+,\sigma',\frs),\d_{\cH^+})$. Note that $\Psi_A$ is not a chain map on these complexes. We will establish the stated relation on the uncolored complexes, and it will persist to the colored complexes for a coloring which makes $\Psi_A$ a chain map.

Using the computation of the quasi-stabilized differential from Proposition~\ref{prop:quasi-stabilizeddifferential} as well as the formulas for the map $S_{w,z}^+$, we can write
\begin{equation}\d_{\cH^{+}}  S_{w,z}^{+}+ S_{w,z}^{+}  \d_{\cH}+(V_z+V_{z'})\cdot F=0,\label{eq:Swz,walmostchainmap}\end{equation} where $F$ is the map $F(\ve{x})=\ve{x}\times \theta^-_{\ws}$, extended linearly over $\cR_{\bmP}^-$,  and $z'$ denotes the $\zs$-basepoint adjacent to $w$. We view Equation~\eqref{eq:Swz,walmostchainmap} as a matrix equation involving matrices over the intersection points with coefficients in $\cR_{\bmP}^-$. Define
\[D_A:=\sum_{s\in A\cap (\zs\cup \{z\})} \frac{d}{d V_s}.\] Since $\tfrac{d}{d V_z} (\d_{\cH})=0$, we note that $D_A( \d_{\cH})$ and $D_A(\d_{\cH^+})$ can each be identified with the matrices for $\Psi_A$ on the complexes $\cCFL^-(\cH,\sigma,\frs)$ and $\cCFL^-(\cH,\sigma',\frs)$, respectively. We also note that on the quasi-stabilized link, it is either the case that both $z$ and $z'$ are in $A$, or neither are in $A$, since $w$ is not an endpoint of $A$. Hence $D_A(V_z+V_{z'})=0$. Using the Leibniz rule and the fact that $D_A(F)=0$, we see that
\[D_A\left((V_z+V_{z'})\cdot F\right)=0.\] Furthermore, we note that $D_A(S_{w,z}^+)=0$, by inspection of the formula for $S_{w,z}^+$. Hence, applying $D_A$ to Equation~\eqref{eq:Swz,walmostchainmap} and using the Leibniz rule shows that 
\[\Psi_A  S_{w,z}^{+}+S_{w,z}^{+}  \Psi_A= 0.\] The statements involving $S_{w,z}^-$, $T_{w,z}^{+}$ and $T_{w,z}^-$ follow from similar lines of reasoning.
\end{proof}

The analogous statement for the maps $\Phi_w$ also holds. If $A$ is an arc on $L$ connecting two $\zs$-basepoints, we  define
\[\Phi_A:=\sum_{w\in A\cap \ws} \Phi_w.\] The map $\Phi_A$ is a chain map if and only if the two $\zs$-basepoints on the ends of $A$ share the same color. Analogously to Lemma~\ref{lem:repacakgedSw,zPsicommutator}, we have the following:

\begin{lem}\label{lem:PhiPsiandS^pmcommutator}Suppose that $A$ is an arc on $L$ between two $\zs$-basepoints of $(L,\ve{w},\ve{z})$, and $(w,z)$ is a pair of new adjacent basepoints on $L$. Then
\[S_{w,z}^{\circ}   \Phi_A+\Phi_A   S_{w,z}^{\circ}\simeq 0 \qquad \text{and} \qquad T_{w,z}^{\circ}  \Phi_A+\Phi_A  T_{w,z}^{\circ}\simeq 0,\] for $\circ\in \{+,-\}$.
\end{lem}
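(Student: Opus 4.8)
The plan is to follow the proof of Lemma~\ref{lem:repacakgedSw,zPsicommutator}, with the roles of the $\ws$- and $\zs$-basepoints (hence of the $U$- and $V$-variables, and of the maps $\Phi$ and $\Psi$) interchanged. I would fix a diagram $\cH$ for $(L,\ws,\zs)$ and a quasi-stabilization $\cH^+$ of it obtained by adding the pair $w,z$, work first on the uncolored complexes $\cCFL^-(\cH,\frs)$ and $\cCFL^-(\cH^+,\frs)$ — where $\Phi_A$ is merely a ``formal derivative of the differential'' and need not be a chain map — and then observe that the resulting identities persist to any coloring identifying the variables at the two $\zs$-endpoints of $A$, under which $\Phi_A$ becomes a chain map, giving the stated homotopies on the transitive invariants.

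The computational input is Proposition~\ref{prop:quasi-stabilizeddifferential}, which identifies $\cCFL^-(\cH^+,\frs)\iso\cCFL^-(\cH,\frs)\otimes\langle\theta^{\ws},\xi^{\ws}\rangle\otimes\bF_2[U_w,V_z]$ with the differential $\d_{\cH^+}$ computed there. Combining this with the defining formulas for $S_{w,z}^\pm$ and $T_{w,z}^\pm$ (and the identities $\theta^{\ws}=\xi^{\zs}$, $\theta^{\zs}=\xi^{\ws}$), a direct computation yields ``almost chain map'' identities; for instance
\[\d_{\cH^+}S_{w,z}^{+}+S_{w,z}^{+}\d_{\cH}=(V_z+V_{z'})\cdot F \qquad\text{and}\qquad \d_{\cH^+}T_{w,z}^{+}+T_{w,z}^{+}\d_{\cH}=(U_w+U_{w'})\cdot G,\]
where $F$ and $G$ have constant (variable-free) matrices, and similarly for $S_{w,z}^-$ and $T_{w,z}^-$ — always with a $V_z+V_{z'}$ prefactor in the type-$S$ case and a $U_w+U_{w'}$ prefactor in the type-$T$ case.

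Next I would introduce the operator $D_A:=\sum_{w''\in A\cap\ws}\frac{d}{dU_{w''}}$, acting entrywise on matrices of maps. By the remark identifying $\Phi_{w''}$ with the formal derivative $(d/dU_{w''})(\d)$ on the uncolored complexes, one has $D_A(\d_\cH)=\Phi_A$ and $D_A(\d_{\cH^+})=\Phi_A$ on the respective complexes, while $D_A$ annihilates $S_{w,z}^\pm$, $T_{w,z}^\pm$, $F$ and $G$, since these carry no variables. Applying $D_A$ to each of the four almost-chain-map identities and using the Leibniz rule collapses the left-hand side to $\Phi_A\circ(\text{stabilization map})+(\text{stabilization map})\circ\Phi_A$, so the lemma reduces to checking that $D_A$ kills the right-hand side. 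For the type-$S$ identities this is automatic, since $D_A$ is a $U$-derivative and the prefactor $V_z+V_{z'}$ involves only $V$-variables. For the type-$T$ identities, $D_A\big((U_w+U_{w'})\cdot G\big)=D_A(U_w+U_{w'})\cdot G$, so one must verify $D_A(U_w+U_{w'})=0$, i.e.\ that $w$ and $w'$ are either both in $A$ or both outside $A$.

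This last point is the crux, and it is exactly where the hypothesis $\d A\subset\zs$ is used: since $w$ and $w'$ are $\ws$-basepoints, neither can be an endpoint of $A$, so membership of either of them in $A$ is equivalent to the whole component of $L\setminus(\ws\cup\zs)$ carrying the quasi-stabilization lying in $A$. Hence $w\in A\iff w'\in A$, so $D_A(U_w+U_{w'})=0$ over $\bF_2$. (This mirrors the step in Lemma~\ref{lem:repacakgedSw,zPsicommutator} where one needs $z\in A\iff z'\in A$ because $\d A\subset\ws$.) One concludes $S_{w,z}^{\circ}\Phi_A+\Phi_A S_{w,z}^{\circ}=0$ and $T_{w,z}^{\circ}\Phi_A+\Phi_A T_{w,z}^{\circ}=0$ on the uncolored complexes for $\circ\in\{+,-\}$, and passing to a compatible coloring gives the asserted homotopies. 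As an alternative for the type-$S$ relations, one could instead deduce them from the type-$T$ relations using Lemma~\ref{lem:olddefsforTw,z} together with the commutator relations of Lemmas~\ref{lem:PhiPsicommutators} and~\ref{lem:Phi^2Psi^2simeq0}.
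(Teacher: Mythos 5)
Your proof is correct and follows exactly the route the paper indicates: the published proof of this lemma just says it ``follows the same strategy as the proof of Lemma~\ref{lem:repacakgedSw,zPsicommutator},'' and your write-up carries out that strategy, correctly identifying the crux as the verification that $w\in A\iff w'\in A$ (because $\d A\subset\zs$), which kills the $(U_w+U_{w'})$ prefactor in the type-$T$ almost-chain-map relation. One small precision to fix, mirroring the paper's definition $D_A:=\sum_{s\in A\cap(\zs\cup\{z\})}\tfrac{d}{dV_s}$: your operator should be $D_A:=\sum_{w''\in A\cap(\ws\cup\{w\})}\tfrac{d}{dU_{w''}}$ rather than the sum over $A\cap\ws$, both so that $D_A(\d_{\cH^+})$ actually equals $\Phi_A$ on the stabilized complex and so that $D_A(U_w+U_{w'})=1+1=0$ when $w,w'\in A$, which is exactly what the observation $w\in A\iff w'\in A$ is meant to deliver over $\bF_2$.
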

The proof of Lemma~\ref{lem:PhiPsiandS^pmcommutator} follows the same strategy as the proof of Lemma~\ref{lem:repacakgedSw,zPsicommutator}. We now highlight several useful special cases of the previous lemmas:

\begin{lem}\label{lem:Spm-Psicommutatornotadjacent} Suppose that $(L,\ws,\zs)$ is a multi-based link in $Y$ and  $(w,z)$ is an adjacent pair of new basepoints on $L$. The following hold:
\begin{enumerate}
\item\label{rel:1:lem:Spm-Psicommutatornotadjacent} If $w'\in \ws$ is a basepoint (in particular $w'\neq w$) then 
\[
S_{w,z}^\circ   \Phi_{w'}+\Phi_{w'}  S_{w,z}^\circ\simeq 0.
\]
\item\label{rel:2:lem:Spm-Psicommutatornotadjacent} If $z'$ is not adjacent to $w$, then
\[
S_{w,z}^{\circ}   \Psi_{z'}+\Psi_{z'}  S_{w,z}^\circ\simeq 0.
\]

\item \label{rel:3:lem:Spm-Psicommutatornotadjacent} If $z'\in \zs$ is adjacent to $w$ (and $z'\neq z$), then
\[
S_{w,z}^+  \Psi_{z'}\simeq (\Psi_z+\Psi_{z'})  S_{w,z}^+\qquad \text{and} \qquad S_{w,z}^- (\Psi_z+\Psi_{z'})\simeq \Psi_{z'}  S_{w,z}^-.
\]
\end{enumerate}
\end{lem}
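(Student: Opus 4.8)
The plan is to derive all three relations as special cases of the packaged commutator relations in Lemmas~\ref{lem:repacakgedSw,zPsicommutator} and~\ref{lem:PhiPsiandS^pmcommutator}, taking the arc $A$ in those statements to be the minimal subarc of $L$ surrounding the single relevant basepoint $w'$ or $z'$. The entire content is then bookkeeping: one must track which basepoints the chosen arc contains after the quasi-stabilization that inserts the pair $(w,z)$, since the term $\Phi_A$ (resp.\ $\Psi_A$) denotes the sum of basepoint actions over the basepoints lying on $A$ \emph{in the link on which it acts}, and the two relevant links differ by $\{w,z\}$.

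For part~(2), let $A$ be the minimal subarc of $L$ whose two endpoints are $\ws$-basepoints of $\bL$ and whose interior contains $z'$, so that $A\cap\zs=\{z'\}$. The hypothesis that $z'$ is not adjacent to $w$ says exactly that the quasi-stabilization inserting $(w,z)$ is performed away from $A$; hence $A$ acquires no new basepoint, $\Psi_A=\Psi_{z'}$ on both $\cCFL^-(Y,\bL^\sigma,\frs)$ and $\cCFL^-(Y,(\bL_{w,z}^+)^{\sigma'},\frs)$, and Lemma~\ref{lem:repacakgedSw,zPsicommutator} yields precisely $S_{w,z}^\circ\Psi_{z'}+\Psi_{z'}S_{w,z}^\circ\simeq 0$ for $\circ\in\{+,-\}$. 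Symmetrically, for part~(1) take $A$ to be the minimal subarc with $\zs$-basepoint endpoints and $w'$ in its interior. When the new pair is inserted away from $A$ --- equivalently, when $w'$ is not the $\ws$-basepoint of $\bL$ adjacent to the insertion --- one has $\Phi_A=\Phi_{w'}$ on both complexes and Lemma~\ref{lem:PhiPsiandS^pmcommutator} gives $S_{w,z}^\circ\Phi_{w'}+\Phi_{w'}S_{w,z}^\circ\simeq 0$.

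It remains to treat part~(1) in the boundary case where $w'$ \emph{is} the $\ws$-basepoint of $\bL$ adjacent to the inserted pair. Then the arc $A$ around $w'$ also contains the new basepoint $w$ on the quasi-stabilized link, so Lemma~\ref{lem:PhiPsiandS^pmcommutator} produces only $S_{w,z}^\circ\Phi_A+\Phi_AS_{w,z}^\circ\simeq 0$, where $\Phi_A$ means $\Phi_{w'}$ on the complex without the basepoint $w$ and $\Phi_{w'}+\Phi_w$ on the one with it. Expanding (using characteristic $2$) this reads $S_{w,z}^+\Phi_{w'}+\Phi_{w'}S_{w,z}^++\Phi_wS_{w,z}^+\simeq 0$, and analogously for $S_{w,z}^-$. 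So I would finish by checking the vanishing $\Phi_wS_{w,z}^+\simeq 0$ and $S_{w,z}^-\Phi_w\simeq 0$: this is immediate from Lemma~\ref{lem:olddefsforTw,z} (which gives $S_{w,z}^+\simeq\Phi_wT_{w,z}^+$ and $S_{w,z}^-\simeq T_{w,z}^-\Phi_w$) together with $\Phi_w^2\simeq 0$ from Lemma~\ref{lem:Phi^2Psi^2simeq0} --- or alternatively from $\Phi_w\simeq S_{w,z}^+S_{w,z}^-$ (Lemma~\ref{lem:S^+S^-=Phi}) and $S_{w,z}^-S_{w,z}^+\simeq 0$ (Lemma~\ref{lem:addtrivialstrandII}). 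Adding these in cancels the $\Phi_w$ terms and leaves the asserted commutation.

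Finally, part~(3) is the case where $z'\in\zs$ \emph{is} adjacent to $w$, i.e.\ the quasi-stabilization is performed on an arc incident to $z'$. Here I take $A$ to be the minimal subarc of $L$ bounded by the two $\ws$-basepoints of $\bL$ adjacent to $z'$; on the quasi-stabilized link this same arc now contains the new $\zs$-basepoint $z$, so $\Psi_A=\Psi_{z'}$ on $\cCFL^-(Y,\bL^\sigma,\frs)$ but $\Psi_A=\Psi_z+\Psi_{z'}$ on $\cCFL^-(Y,(\bL_{w,z}^+)^{\sigma'},\frs)$. Feeding this into Lemma~\ref{lem:repacakgedSw,zPsicommutator} gives $(\Psi_z+\Psi_{z'})S_{w,z}^++S_{w,z}^+\Psi_{z'}\simeq 0$ and $S_{w,z}^-(\Psi_z+\Psi_{z'})+\Psi_{z'}S_{w,z}^-\simeq 0$, which over $\bF_2$ are exactly the two asserted relations. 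There is no serious obstacle in any of this: the only step beyond pure bookkeeping is the vanishing $\Phi_wS_{w,z}^+\simeq 0\simeq S_{w,z}^-\Phi_w$ in part~(1), and that is itself an immediate consequence of the structural lemmas proved earlier in the section.
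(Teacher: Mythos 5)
Your proposal is correct and follows essentially the same strategy as the paper's proof: all three parts are deduced from Lemmas~\ref{lem:repacakgedSw,zPsicommutator} and~\ref{lem:PhiPsiandS^pmcommutator} by choosing the minimal subarc $A$ around the relevant basepoint and tracking which of $\{w,z\}$ the arc acquires after quasi-stabilization, with the only additional ingredient in part~(1) being the vanishings $\Phi_wS_{w,z}^+\simeq 0$ and $S_{w,z}^-\Phi_w\simeq 0$, which you justify exactly as the paper does (via $\Phi_w\simeq S_{w,z}^+S_{w,z}^-$ and $S_{w,z}^-S_{w,z}^+\simeq 0$), while also supplying an equivalent alternative via $S_{w,z}^+\simeq\Phi_wT_{w,z}^+$ and $\Phi_w^2\simeq 0$. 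Your characterization of the boundary case in part~(1) --- $w'$ being the $\ws$-basepoint of $\bL$ adjacent to the inserted pair --- is equivalent to the paper's phrasing that $z$ is adjacent to $w'$, so the bookkeeping is the same.
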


\begin{proof} We will prove all the stated relations by applying Lemmas~\ref{lem:repacakgedSw,zPsicommutator} and~\ref{lem:PhiPsiandS^pmcommutator} for  appropriately chosen subarcs $A$ of $L$ . For example, to prove the relation in part~\eqref{rel:2:lem:Spm-Psicommutatornotadjacent}, we let $A$ be an arc between the two $\ws$-basepoints adjacent to $z'$, so that $\Psi_A=\Psi_{z'}$ on both the stabilized and unstabilized complexes. Similarly for the relation in part~\eqref{rel:3:lem:Spm-Psicommutatornotadjacent}, we let $A$ be the arc on $L$ between the two basepoints in $\ws$ (on the unstabilized link) which are adjacent to $z'$.

The relations in part~\eqref{rel:1:lem:Spm-Psicommutatornotadjacent} require a slight additional argument. If $z$ is not adjacent to $w'$, then picking $A$ to be arc between the two basepoints in $\zs$ which are adjacent to $w'$, we see that $\Phi_A=\Phi_{w'}$ on both the stabilized and unstabilized complexes. If $z$ is adjacent to $w'$, then we instead have $\Phi_A=\Phi_{w'}$ on the unstabilized complex, and $\Phi_A=\Phi_{w'}+\Phi_{w}$ on the stabilized complex. Hence Lemma~\ref{lem:PhiPsiandS^pmcommutator} implies 
\[
S_{w,z}^+ \Phi_{w'}\simeq (\Phi_{w'}+\Phi_w) S_{w,z}^+\qquad  \text{and}\qquad  S_{w,z}^-(\Phi_{w'}+\Phi_w)\simeq \Phi_{w'} S_{w,z}^-.
\] Part~\eqref{rel:1:lem:Spm-Psicommutatornotadjacent} follows from the above two equations once we note that $\Phi_{w}S_{w,z}^+\simeq 0$ and $S_{w,z}^-\Phi_w\simeq 0$, since $\Phi_{w}\simeq S_{w,z}^+S_{w,z}^-$ and $S_{w,z}^-S_{w,z}^+\simeq 0$, by Lemmas~\ref{lem:T^+T^-=Psi}~and~\ref{lem:addtrivialstrandII}, respectively.
\end{proof}

Interpretations of some of the relations from Lemma~\ref{lem:Spm-Psicommutatornotadjacent} in terms of dividing sets are shown in Figure~\ref{fig::94}.

\begin{figure}[ht!]
\centering
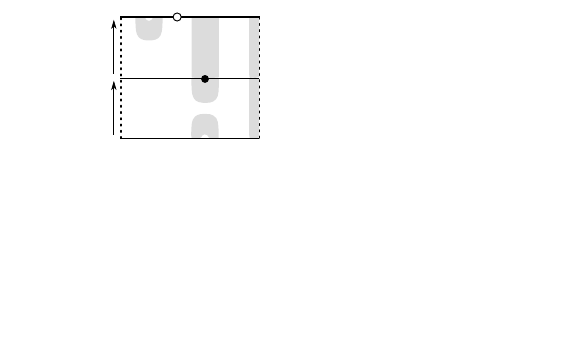
\caption{\textbf{The dividing sets on $[0,1]\times L\subset [0,1]\times  Y$ corresponding to the relations from Lemma~\ref{lem:Spm-Psicommutatornotadjacent}.} The top relation corresponds to diffeomorphic sets of dividing arcs, while the bottom three dividing sets form a bypass triple.  \label{fig::94}}
\end{figure}

A convenient reformulation of some of the previous relations is the following:

\begin{lem}\label{lem:oldaddtrivialstrand}Suppose that $(z',w,z,w')$ are four consecutive basepoints appearing on $\bL$ and suppose that $\sigma$ is a coloring of $\bL$ so that $\sigma(w)=\sigma(w')$ and $\sigma(z)=\sigma(z')$. Then
\[S_{w,z}^- \Psi_z S_{w,z}^+\simeq \id \simeq S_{w,z}^- \Psi_{z'} S_{w,z}^+\qquad \text{and}\qquad T_{w,z}^- \Phi_w T^+_{w,z}\simeq \id\simeq T_{w,z}^- \Phi_{w'} T_{w,z}^+.\]
\end{lem}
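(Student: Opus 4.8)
The plan is to derive all four relations formally from the algebra of the maps $S^\pm_{w,z}$, $T^\pm_{w,z}$, $\Phi_\bullet$, $\Psi_\bullet$ assembled above; no new holomorphic-curve input is needed. I would start with $S_{w,z}^-\Psi_z S_{w,z}^+$. The hypotheses $\sigma(w)=\sigma(w')$ and $\sigma(z)=\sigma(z')$ are exactly those of Lemma~\ref{lem:olddefsforTw,z}, which gives $S_{w,z}^-\Psi_z\simeq T_{w,z}^-$; composing on the right with $S_{w,z}^+$ and invoking $T_{w,z}^-S_{w,z}^+\simeq\id$ from Lemma~\ref{lem:addtrivialstrandII} yields $S_{w,z}^-\Psi_z S_{w,z}^+\simeq\id$ at once. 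Symmetrically (interchanging the roles of the $\ws$- and $\zs$-basepoints, under which $S^\pm_{w,z}\leftrightarrow T^\pm_{w,z}$ and $\Psi_\bullet\leftrightarrow\Phi_\bullet$), Lemma~\ref{lem:olddefsforTw,z} gives $T_{w,z}^-\Phi_w\simeq S_{w,z}^-$, so $T_{w,z}^-\Phi_w T_{w,z}^+\simeq S_{w,z}^-T_{w,z}^+\simeq\id$, again by Lemma~\ref{lem:addtrivialstrandII}.

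For the relations involving $\Psi_{z'}$ and $\Phi_{w'}$ I would use the characteristic-two splitting $\Psi_{z'}=(\Psi_z+\Psi_{z'})+\Psi_z$. The $\Psi_z$-term reduces to the case just treated, contributing $\simeq\id$. For the $(\Psi_z+\Psi_{z'})$-term I would observe that on $\bL_{w,z}^+$ the basepoint $z'$ is adjacent to $w$ (this is where the specific configuration $(z',w,z,w')$ enters) and $z'\neq z$, so Lemma~\ref{lem:Spm-Psicommutatornotadjacent}(3) applies and gives $S_{w,z}^-(\Psi_z+\Psi_{z'})\simeq\Psi_{z'}S_{w,z}^-$; hence
\[
S_{w,z}^-(\Psi_z+\Psi_{z'})S_{w,z}^+\simeq\Psi_{z'}\bigl(S_{w,z}^-S_{w,z}^+\bigr)\simeq 0,
\]
using $S_{w,z}^-S_{w,z}^+\simeq 0$ (Lemma~\ref{lem:addtrivialstrandII}). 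Adding the two contributions gives $S_{w,z}^-\Psi_{z'}S_{w,z}^+\simeq\id$. The relation $T_{w,z}^-\Phi_{w'}T_{w,z}^+\simeq\id$ follows by the same bookkeeping with the roles of $\ws$ and $\zs$ swapped; concretely, the needed commutator $T_{w,z}^-(\Phi_w+\Phi_{w'})\simeq\Phi_{w'}T_{w,z}^-$ comes from Lemma~\ref{lem:PhiPsiandS^pmcommutator} applied to the arc $A\subset L$ joining the two $\zs$-basepoints of $\bL$ that flank $w'$ (this arc runs over both $w$ and $w'$ once $w,z$ are inserted), together with $T_{w,z}^-T_{w,z}^+\simeq0$.

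Since the argument is essentially formal, the only thing that needs genuine care — and hence the main obstacle — is the bookkeeping: verifying that the configuration $(z',w,z,w')$ really does put $z'$ adjacent to $w$ on $\bL_{w,z}^+$, so that Lemma~\ref{lem:Spm-Psicommutatornotadjacent}(3) (resp.\ the correct arc for Lemma~\ref{lem:PhiPsiandS^pmcommutator}) is the applicable statement rather than parts (1)--(2), checking that the coloring hypotheses are precisely what make $S^\pm_{w,z}$, $T^\pm_{w,z}$ and the relevant $T\Phi$/$S\Psi$-identities available, and keeping straight on which of the (de)stabilized complexes each copy of $\Psi$ or $\Phi$ is acting.
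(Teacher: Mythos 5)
Your proof is correct and follows essentially the same route as the paper: both derive $S_{w,z}^-\Psi_z S_{w,z}^+\simeq\id$ from Lemma~\ref{lem:olddefsforTw,z} together with the cancellation $T_{w,z}^-S_{w,z}^+\simeq\id$ (you use $S_{w,z}^-\Psi_z\simeq T_{w,z}^-$, the paper uses $\Psi_zS_{w,z}^+\simeq T_{w,z}^+$, a cosmetic difference), and both obtain $S_{w,z}^-\Psi_{z'}S_{w,z}^+\simeq S_{w,z}^-\Psi_z S_{w,z}^+$ from the commutator in Lemma~\ref{lem:Spm-Psicommutatornotadjacent}(3) combined with $S_{w,z}^-S_{w,z}^+\simeq0$. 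The only difference is that you carry out the $\ws\leftrightarrow\zs$-symmetric half explicitly (invoking Lemma~\ref{lem:PhiPsiandS^pmcommutator} for the analogue of the $(3)$-type commutator, with the correct arc and bookkeeping), whereas the paper declares it symmetric and stops.
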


\begin{proof} We will focus on the relations $S_{w,z}^- \Psi_z S_{w,z}^+\simeq S_{w,z}^- \Psi_{z'} S_{w,z}^+\simeq  \id$, since the other relations follow from a symmetrical argument replacing the $\ws$ and $\zs$-basepoints.

From Lemma~\ref{lem:addtrivialstrandII} we know that $S_{w,z}^-T_{w,z}^+\simeq \id$. Lemma~\ref{lem:olddefsforTw,z} implies that $T_{w,z}^+\simeq \Psi_z S_{w,z}^+$. Combining these two relations yields  $S_{w,z}^- \Psi_z S_{w,z}^+\simeq \id$.

Lemma~\ref{lem:Spm-Psicommutatornotadjacent} implies that 
\[S_{w,z}^-\Psi_z S_{w,z}^+\simeq S_{w,z}^-\Psi_{z'} S_{w,z}^++S_{w,z}^-S_{w,z}^+\Psi_{z'},\] however $S_{w,z}^-S_{w,z}^+\simeq 0$ by Lemma~\ref{lem:addtrivialstrandII}, and hence we obtain \[S_{w,z}^-\Psi_z S_{w,z}^+\simeq S_{w,z}^-\Psi_{z'} S_{w,z}^+,\] completing the proof.
\end{proof}

Dividing set interpretations of the relations from Lemma~\ref{lem:oldaddtrivialstrand} are shown in Figure~\ref{fig::95}.

\begin{figure}[ht!]
\centering
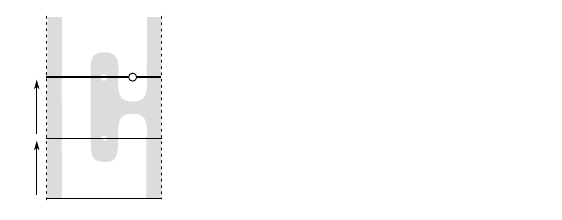
\caption{\textbf{The dividing sets on $[0,1]\times L\subset [0,1]\times Y$ corresponding to some of the relations from Lemma~\ref{lem:oldaddtrivialstrand}.} \label{fig::95}}
\end{figure}

In \cite{ZemQuasi}, we showed the following commutation result about quasi-stabilization maps:

\begin{prop}[\cite{ZemQuasi}*{Theorem~8.1}]\label{thm:quasistabcommute}If $(w,z)$ and $(w',z')$ are two disjoint pairs of basepoints, with $w$ following $z$ and $w'$ following $z'$, then
\[S_{w,z}^{\circ} S_{w',z'}^{\circ'}\simeq S_{w',z'}^{\circ'} S_{w,z}^{\circ},\] for any choice of $\circ,\circ'\in \{+,-\}$. Commutation still holds if both of the pairs of basepoints $(w,z)$ and $(w',z')$ appear with the opposite ordering (i.e. if $z$ follows $w$ and $z'$ follows $w'$, and we use the maps $S_{z,w}^\circ$ and $S_{z',w'}^{\circ'}$, instead).
\end{prop}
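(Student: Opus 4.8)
The plan is to reduce the commutation relation to a bookkeeping identity on a single, carefully chosen Heegaard diagram, and then appeal to naturality of the quasi-stabilization maps to transport that identity to the transitive chain homotopy type invariants. First I would fix compatible colorings of all of the multi-based links that appear (the original link, the two singly quasi-stabilized ones, and the doubly quasi-stabilized one) so that each of $S_{w,z}^{\pm}$ and $S_{w',z'}^{\pm}$ is a filtered, equivariant chain map; such colorings exist by the corollary following Proposition~\ref{prop:quasi-stabilizeddifferential}, the only constraint being that each new $\zs$-basepoint be colored like the $\zs$-basepoint of $\bL$ adjacent to it. Next, since $(w,z)$ and $(w',z')$ are disjoint pairs, the two new pairs of basepoints are inserted either into distinct components of $L\setminus(\ws\cup\zs)$, or into disjoint sub-arcs of a single such component; because $w$ follows $z$ and $w'$ follows $z'$, both special connected sums are performed on the $U_{\as}$ side of a diagram $\cH$ for $(Y,\bL)$, and one may arrange the two pieces of stabilization data --- the points $p,p'$, the curves $\alpha_s,\alpha_s'$, and the two pasted disks --- to lie in disjoint subsets of $\Sigma$. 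Write $\cH^{++}$ for the result of both quasi-stabilizations, and stretch the necks at both special connected sums.

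Because the two stabilization regions are disjoint, $\cH^{++}$ is independent of the order in which the special connected sums are carried out; in particular it is simultaneously a $(w',z')$-quasi-stabilization of $\cH^{+}_{(w,z)}$ and a $(w,z)$-quasi-stabilization of $\cH^{+}_{(w',z')}$. Applying Proposition~\ref{prop:quasi-stabilizeddifferential} in each region, and using that both necks are stretched, the module $\cCFL^-(\cH^{++},\frs)$ (colorings suppressed from the notation) acquires the product decomposition
\[
\cCFL^-(\cH^{++},\frs)\ \cong\ \cCFL^-(\cH,\frs)\otimes_{\bF_2}\langle\theta_1,\xi_1\rangle\otimes_{\bF_2}\langle\theta_2,\xi_2\rangle\otimes_{\bF_2}\bF_2[U_w,V_z,U_{w'},V_{z'}],
\]
where $\theta_1,\xi_1$ (resp.\ $\theta_2,\xi_2$) are the higher and lower $\gr_{\ws}$-graded intersection points in the $(w,z)$-region (resp.\ the $(w',z')$-region), and this decomposition is manifestly symmetric in the two pasted pieces. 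With respect to it the relevant maps are given by the explicit formulas of Section~\ref{sec:quasi-stabilization}: $S_{w,z}^{+}$ and $S_{w',z'}^{+}$ tensor on $\theta_1$ and $\theta_2$ respectively, while $S_{w,z}^{-}$ and $S_{w',z'}^{-}$ annihilate the corresponding $\theta$-summand and map the corresponding $\xi$-summand back by the identity. A routine check of these formulas then shows that the two composites coincide as chain maps on $\cH^{++}$: for $\circ=\circ'=+$ both send $\xs\mapsto\xs\otimes\theta_1\otimes\theta_2$; for $\circ=+,\circ'=-$ (and symmetrically $\circ=-,\circ'=+$) both annihilate the $\theta$-summand being destabilized and send the corresponding $\xi$-summand to $\xs$ tensored with the surviving $\theta$; and for $\circ=\circ'=-$ both send $\xs\otimes\xi_1\otimes\xi_2\mapsto\xs$ and annihilate all other summands. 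Finally, by Theorem~\ref{thm:quasisarenatural} the quasi-stabilization maps commute with the change-of-diagram maps up to filtered equivariant chain homotopy and are well-defined on the transitive chain homotopy type invariants, so this chain-level equality on $\cH^{++}$ promotes to $S_{w,z}^{\circ}S_{w',z'}^{\circ'}\simeq S_{w',z'}^{\circ'}S_{w,z}^{\circ}$ in general. The case in which both pairs appear with the opposite ordering is handled identically, with the roles of the $\as$- and $\bs$-curves, and of the $U_{\as}$- and $U_{\bs}$-handlebodies, interchanged throughout.

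The step I expect to require the most care is purely geometric: producing a single Heegaard diagram, and a single almost complex structure, that simultaneously realizes both quasi-stabilizations with disjoint, sufficiently neck-stretched special connected sum regions. When $(w,z)$ and $(w',z')$ lie on a common arc of $L\setminus(\ws\cup\zs)$ one must check that, after performing one of the stabilizations, the region of the Heegaard surface available for the second one still contains a disjoint special connected sum site, and one must verify that stretching both necks at once still yields the product form of the differential in Proposition~\ref{prop:quasi-stabilizeddifferential}, i.e.\ a two-parameter version of the neck-stretching argument of \cite{ZemQuasi}. Once this locality statement is in place, the remainder is the bookkeeping above.
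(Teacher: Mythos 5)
Your overall strategy --- pass to a doubly quasi-stabilized diagram $\cH^{++}$, read off formulas, and invoke Theorem~\ref{thm:quasisarenatural} to promote to the transitive system --- is the correct skeleton, and it matches the paper's (the paper points to \cite{ZemQuasi}*{Section~8} and sketches the template in the proof of Proposition~\ref{prop:TScommute}). However, there is a genuine gap in how you handle the almost complex structures, and it sits exactly where the real content of the proof lives.

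Your proposal asserts the existence of ``a single almost complex structure that simultaneously realizes both quasi-stabilizations,'' and then reads the commutation off the formulas. But that is precisely what the paper warns against: ``Although the formulas for the two quasi-stabilization maps look like they commute, it is important to note that we place very specific requirements on the almost complex structures.'' The map $S^{\circ'}_{w',z'}$ from $\cCFL^-(\cH^+_{(w,z)})$ to $\cCFL^-(\cH^{++})$ requires the $(w',z')$-neck to be long \emph{relative to} the $(w,z)$-neck already present in $\cH^+_{(w,z)}$, while the other composition requires the opposite relation. So the two composites naturally land in complexes $\cCFL^-_{J_s(T,T')}(\cH^{++})$ with $T\ll T'$ and $T'\ll T$ respectively, and to compare them one must insert a change-of-almost-complex-structure map $\Phi_{J_s(\ve{T}_1)\to J_s(\ve{T}_2)}$ and compute its form. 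This is not the same as exhibiting a single ACS; it is a Gromov-compactness/Maslov-index argument showing that $\Phi_{J_s(\ve{T}_1)\to J_s(\ve{T}_2)}$ preserves the $\theta$-subspace and is upper triangular on the $(\theta,\xi)$ decompositions.

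The second, related, omission concerns the case where the two connected-sum points $p$ and $p'$ lie in the same component of $\Sigma\setminus\as$ (which happens when the pairs $(w,z)$ and $(w',z')$ are adjacent on $L$). There the Maslov index identity changes shape --- two of the boundary-degeneration multiplicity terms drop out --- and the change-of-ACS map is genuinely not the identity on intersection points: it carries an off-diagonal term of the form $\xi\times\xi\mapsto \xi\times\xi+C\cdot\theta\times\theta$ (compare Equation~\eqref{eq:changeofacstructurecomp} in the proof of Proposition~\ref{prop:TScommute}). Your ``two-parameter product form of the differential, uniform in both neck lengths'' claim is therefore not quite what is true; what saves the day is that this triangular correction is killed by $S^-$ and is invisible to $S^+$. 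Your bookkeeping still goes through, but only after one has actually established the form of $\Phi_{J_s(\ve{T}_1)\to J_s(\ve{T}_2)}$, which is the step your proposal defers. So you have correctly located where the difficulty is, but by reframing it as ``find one good ACS'' rather than ``analyze the change-of-ACS map'' you have under-stated the argument in a way that would mislead a reader into thinking the commutation is formal, which it is not.
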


The proof of Proposition~\ref{thm:quasistabcommute} can be found in \cite{ZemQuasi}*{Section~8}. We will sketch most of the necessary details in the proof of a similar result, Proposition~\ref{prop:TScommute}, below. The proof in \cite{ZemQuasi} carries over without major change if we replace both $S$-quasi-stabilizations by $T$-quasi-stabilizations:

\begin{prop}\label{lem:z-quasistabcommute}If $(w,z)$ and $(w',z')$ are disjoint pairs of basepoints, with $w$ following $z$ and $w'$ following $z'$, then
\[
T_{w,z}^{\circ} T_{w',z'}^{\circ'}\simeq T_{w',z'}^{\circ'} T_{w,z}^{\circ},
\]
 for any choice of $\circ,\circ'\in \{+,-\}$. Commutation still holds if both of the pairs of basepoints $(w,z)$ and $(w',z')$ appears with the opposite ordering (i.e. if $z$ follows $w$ and $z'$ follows $w'$, and we use the maps $T_{z,w}^\circ$ and $T_{z',w'}^{\circ'}$, instead).
\end{prop}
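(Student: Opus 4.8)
The plan is to reduce Proposition~\ref{lem:z-quasistabcommute} to the already-established Proposition~\ref{thm:quasistabcommute} by exploiting the symmetry between the type-$S$ and type-$T$ quasi-stabilization maps. Recall from Lemma~\ref{lem:olddefsforTw,z} that, after choosing a coloring for which all the relevant maps are chain maps, one has $T_{w,z}^+\simeq \Psi_z S_{w,z}^+$ and $T_{w,z}^-\simeq S_{w,z}^-\Psi_z$, and similarly $S_{w,z}^+\simeq \Phi_w T_{w,z}^+$ and $S_{w,z}^-\simeq T_{w,z}^-\Phi_w$. So the first step is to argue, exactly as in \cite{ZemQuasi}*{Section~8}, that it suffices to prove the relation on the uncolored complexes (where $\Phi_w,\Psi_z,\Phi_{w'},\Psi_{z'}$ make sense as honest maps, not just homotopy-chain-maps) and then observe that the homotopy persists to any coloring for which the four relevant $T$-quasi-stabilizations are chain maps; this is the same device used in the proof of Lemma~\ref{lem:repacakgedSw,zPsicommutator}.

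The second step is to choose a common Heegaard diagram that is quasi-stabilized with respect to all four basepoints $w,z,w',z'$ at once, with the two quasi-stabilization regions placed far apart on the surface, just as in the proof of Proposition~\ref{thm:quasistabcommute}. On such a diagram, for a sufficiently stretched almost complex structure, Proposition~\ref{prop:quasi-stabilizeddifferential} (applied twice) gives an explicit $4\times 4$ block form for the differential, and the maps $\Phi_w$, $\Psi_z$, $\Phi_{w'}$, $\Psi_{z'}$, $S_{w,z}^\circ$, $S_{w',z'}^{\circ'}$ all have explicit matrix descriptions on the generators $\ve{x}\otimes\theta^{\ve{w}}\otimes\theta^{\ve{w}'}$, $\ve{x}\otimes\xi^{\ve{w}}\otimes\theta^{\ve{w}'}$, etc. One can then either (a) directly compute that $T_{w,z}^\circ T_{w',z'}^{\circ'}$ and $T_{w',z'}^{\circ'}T_{w,z}^\circ$ agree on the level of chains — the formulas for $T^\pm$ on intersection points are literally $\ve{x}\mapsto \ve{x}\otimes\theta^{\ve{z}}$ and $\ve{x}\otimes\xi^{\ve{z}}\mapsto \ve{x}$, which are disjoint-support operations on the two stabilization regions — or (b) substitute the identities $T_{w,z}^+\simeq \Psi_z S_{w,z}^+$, $T_{w,z}^-\simeq S_{w,z}^-\Psi_z$ (and the primed analogues) and reduce to Proposition~\ref{thm:quasistabcommute} together with the commutation of $\Psi_z$ with $S_{w',z'}^{\circ'}$ and of $\Psi_{z'}$ with $S_{w,z}^{\circ}$, which follows from Lemma~\ref{lem:Spm-Psicommutatornotadjacent}~part~\eqref{rel:2:lem:Spm-Psicommutatornotadjacent} since $z$ is not adjacent to $w'$ and $z'$ is not adjacent to $w$ (the pairs are disjoint). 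Approach (b) has the advantage of not re-running the entire neck-stretching argument of \cite{ZemQuasi}, so I would present that as the main line and remark that (a) gives an alternative.

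The third step handles the parenthetical variants: when both pairs appear with the opposite ordering, one uses the maps $T_{z,w}^\circ$, $T_{z',w'}^{\circ'}$ built by inserting a $\beta_0$-curve rather than an $\alpha_0$-curve, and the $S$/$T$-interchange identities of Lemma~\ref{lem:olddefsforTw,z} hold verbatim in that setting, so the same reduction goes through; alternatively one simply invokes the orientation-reversal symmetry exchanging the two handlebodies. I would close by noting that the mixed case where one pair has each ordering is not needed here but is handled by the companion statement Proposition~\ref{prop:TScommute}.

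I expect the main obstacle to be bookkeeping rather than a genuine difficulty: one must make sure that the colorings can simultaneously be chosen so that all four $T$-quasi-stabilizations are chain maps (i.e. $w$ and $w'$ each receive the same color as the adjacent $\ve{w}$-basepoint across which they are being inserted), and that in the reduction via Lemma~\ref{lem:olddefsforTw,z} the auxiliary maps $\Phi$, $\Psi$ that appear are themselves chain maps for that coloring; since the two inserted pairs are disjoint this is a matter of unwinding the right-to-left basepoint-ordering conventions of Section~\ref{subsec:convention}, but it is the step most prone to sign-of-adjacency errors. The actual commutation, once everything is set up on the stretched diagram, is formal.
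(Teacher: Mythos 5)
Your preferred approach~(b) is genuinely different from what the paper does: the paper simply observes that the argument of \cite{ZemQuasi}*{Section~8} establishing Proposition~\ref{thm:quasistabcommute} (the neck-stretching analysis of the change-of-almost-complex-structure map on a doubly quasi-stabilized diagram) goes through verbatim with both $S$'s replaced by $T$'s, whereas you propose an algebraic reduction via $T^{\pm}_{w,z}\simeq \Psi_z S^{\pm}_{w,z}$, $T^{\pm}_{w',z'}\simeq \Psi_{z'}S^{\pm}_{w',z'}$ together with Proposition~\ref{thm:quasistabcommute} and commutation of $\Psi$'s with the $S$'s. Unfortunately, as written, route~(b) has a real gap. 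The word ``disjoint'' in the statement means only $\{w,z\}\cap\{w',z'\}=\varnothing$; it does \emph{not} forbid inserting both pairs into the same $\alpha$-subarc of $L\setminus(\ws\cup\zs)$. In that case, once both pairs are present, exactly one of the configurations ``$z'$ adjacent to $w$'' or ``$z$ adjacent to $w'$'' must occur (the four new basepoints interleave as $w_0,z,w,z',w',z_0$ or $w_0,z',w',z,w,z_0$ in traversal order along the subarc), so your appeal to Lemma~\ref{lem:Spm-Psicommutatornotadjacent}~part~\eqref{rel:2:lem:Spm-Psicommutatornotadjacent} ``since the pairs are disjoint'' is false in exactly this situation. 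One can patch this with part~\eqref{rel:3:lem:Spm-Psicommutatornotadjacent} of the same lemma together with $\Psi_z^2\simeq 0$ from Lemma~\ref{lem:Phi^2Psi^2simeq0} (the cross term $\Psi_z\Psi_z S^+_{w,z}S^+_{w',z'}$ dies), but that is not what you wrote, and it needs to be done carefully for each choice of signs $\circ,\circ'$.

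There is also a coloring subtlety you flag but do not resolve. For $T^{\circ}_{w,z}$ and $T^{\circ'}_{w',z'}$ to be chain maps one only needs identifications among $\ws$-variables ($\sigma(w)$ equals the color of the adjacent $\ws$-basepoint, and likewise for $w'$), but the intermediate maps $S^{\pm}$, $\Psi_z$, $\Psi_{z'}$ in your rewriting require additional identifications among $\zs$-variables. Establishing the homotopy over a \emph{more} restrictive coloring does not automatically give it over the minimal one, since quotient rings go the wrong way. The ``work on uncolored complexes, then tensor'' device used in Lemma~\ref{lem:repacakgedSw,zPsicommutator} produces \emph{exact matrix identities} over $\bF_2[U_{\ws},V_{\zs}]$ before passing to a colored complex; you would need to verify that every homotopy invoked in your chain of simplifications is realized at that matrix level, which is more than alluding to the device. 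Your alternative~(a) is essentially the paper's route (plus the $\alpha\leftrightarrow\beta$ symmetry you mention at the end), but note that the formulas for $T^{\pm}$ ``agreeing on the level of chains'' is not the content of the argument: the two compositions are computed with differently stretched almost complex structures, and the substance of Proposition~\ref{thm:quasistabcommute}'s proof is the neck-stretching analysis showing that the comparison map $\Phi_{J_s(\ve{T}_1)\to J_s(\ve{T}_2)}$ behaves as required for both orderings of neck lengths.
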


Propositions~\ref{thm:quasistabcommute} and~\ref{lem:z-quasistabcommute} should be expected from the interpretation of the maps in terms of dividing sets. The dividing sets for two type-$S$ quasi-stabilizations can always be moved past each other. Similarly, the dividing sets for two type-$T$ quasi-stabilizations can always be moved past each other. Examples are shown in Figure~\ref{fig::92}.

\begin{figure}[ht!]
\centering
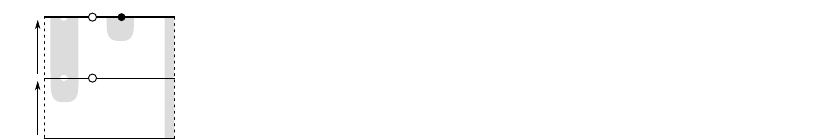
\caption{\textbf{The relations from Propositions~\ref{thm:quasistabcommute}~and~\ref{lem:z-quasistabcommute} interpreted in terms of dividing sets on $[0,1]\times L \subset [0,1]\times Y.$} Two type-$S$ quasi-stabilizations can always be commuted past each other, and similarly to type-$T$ quasi-stabilizations can always be commuted past each other.  \label{fig::92}}
\end{figure}

The previous two results do not tell us whether $T_{w,z}^{\circ}$ and $S_{w',z'}^{\circ'}$ commute.  In fact, we will later see that they do not always commute (see Lemma~\ref{lem:bypasstriplefromquasistabilization}). If one considers the interpretation of the maps in terms of dividing sets, it is easy to see that the dividing sets for $S_{w,z}^{\circ}$ and $T_{w',z'}^{\circ'}$ will commute if $w$ is not adjacent to $z'$. Indeed this graphical interpretation is reflected by the holomorphic geometry:

\begin{prop}\label{prop:TScommute} Suppose $(w,z)$ are $(w',z')$ are disjoint pairs of adjacent basepoints, with $w$ following $z$ and $w'$ following $z'$. If $w'$ is not adjacent to $z$, then
\[T_{w,z}^{\circ}  S_{w',z'}^{\circ'}\simeq S_{w',z'}^{\circ'} T_{w,z}^{\circ},\] for any choice of $\circ,\circ'\in \{+,-\}$. The same result holds if $z$ follows $w$ and $z'$ follows $w'$, with  $T_{w,z}^{\circ}$ changed to $T_{z,w}^{\circ}$ and $S_{w',z'}^{\circ'}$ changed to $S_{z',w'}^{\circ'}$, as long as $z$ and $w'$ are not adjacent.
\end{prop}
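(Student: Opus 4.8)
The plan is to reduce the statement to a single model computation on a doubly quasi-stabilized Heegaard diagram, following the neck-stretching philosophy of \cite{ZemQuasi}*{Sections~8} (the proof of Proposition~\ref{thm:quasistabcommute}). First I would fix a diagram $\cH=(\Sigma,\as,\bs,\ws,\zs)$ for $(Y,\bL)$ and quasi-stabilize it twice: once by the pair $(w,z)$ to form $\cH^S$ (inserting a new $\alpha_s$-curve separating the $\ws$-basepoint $w'$ from the $\zs$-basepoint adjacent to $w$), and once by the pair $(w',z')$ to form $\cH^T$ (inserting the analogous curve for the $T$-stabilization). Because $w'$ is not adjacent to $z$, the two special connected sum regions can be chosen to be disjoint small disks on $\Sigma$, attached in different components of $\Sigma\setminus\as$ (resp. $\setminus\bs$); this disjointness is exactly where the adjacency hypothesis is used. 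By Proposition~\ref{prop:quasi-stabilizeddifferential}, applied in each region, for a sufficiently stretched almost complex structure along both connecting necks simultaneously the differential $\d_{\cH^{ST}}$ on the twice-stabilized diagram takes the block form
\[
\d_{\cH^{ST}}=\begin{pmatrix}\d_{\cH}\otimes \id\otimes\id & \text{(stabilization terms)}\end{pmatrix},
\]
i.e. a tensor product of the base differential with the two $2\times 2$ quasi-stabilization matrices, with no ``mixed'' holomorphic curves connecting the two stabilization regions. The key analytic input is that a broken/degenerate curve in the fully stretched limit cannot straddle both necks, precisely because the regions are disjoint and lie in different planar pieces of the diagram; this is the same degeneration argument as in \cite{ZemQuasi}, and I would cite it rather than redo it.

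Next I would observe that, with respect to this product structure, $S_{w',z'}^{\circ'}$ acts only on the $(w',z')$-tensor factor (by $\ve{x}\otimes\theta^{\ws}$, projection onto $\xi^{\ws}$, etc.) and $T_{w,z}^{\circ}$ acts only on the $(w,z)$-tensor factor, both extended $\cR^-_{\bmP}$-equivariantly over the rest. Maps acting on distinct tensor factors of $\cCFL^-(\cH,\frs)\otimes\langle\theta^{\ws},\xi^{\ws}\rangle\otimes\langle\theta^{\zs},\xi^{\zs}\rangle\otimes\bF_2[U_{w},V_z,U_{w'},V_{z'}]$ commute on the nose, so on the fully stretched diagrams one gets $T_{w,z}^{\circ}\circ S_{w',z'}^{\circ'}=S_{w',z'}^{\circ'}\circ T_{w,z}^{\circ}$ strictly. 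Finally I would invoke the naturality statements Theorem~\ref{thm:quasisarenatural} (for both the $S$- and $T$-quasi-stabilizations) together with the fact that the transition maps $\Phi_{\cH\to\cH'}$ are filtered equivariant chain homotopy equivalences, to transport this equality of maps on the stretched diagrams to the chain homotopy statement on the transitive chain homotopy type invariants; the naturality squares intertwine the two orderings of stabilizations up to chain homotopy, yielding $T_{w,z}^{\circ}S_{w',z'}^{\circ'}\simeq S_{w',z'}^{\circ'}T_{w,z}^{\circ}$ as claimed. The opposite-ordering case (where $z$ follows $w$, so one uses $T_{z,w}^{\circ}$ and $S_{z',w'}^{\circ'}$, and the hypothesis becomes that $z$ and $w'$ are not adjacent) is identical after swapping the roles of $\as$ and $\bs$ in the construction, since the $T$-stabilization for that ordering inserts a $\beta_s$-curve rather than an $\alpha_s$-curve; I would simply remark that the argument carries over verbatim.

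The main obstacle I anticipate is the simultaneous neck-stretching step: one must verify that stretching along two disjoint necks at once still forces the product form of the differential, i.e. that no holomorphic curve in the limit has positive multiplicity in both stabilization regions in a way that couples the two tensor factors. In \cite{ZemQuasi} this is handled for a single neck via an energy/area and multiplicity argument combined with the classification of low-index curves in the stabilization region (the bigons over $w$ and $z$); here I would need the two-neck version, which is a routine but slightly delicate extension—one stretches sequentially, applying Proposition~\ref{prop:quasi-stabilizeddifferential} first to one neck to split off one factor, then to the other neck on the resulting diagram. The disjointness guaranteed by the non-adjacency hypothesis is what makes the two applications independent; without it the $\alpha_s$ and $\beta_s$ curves (or the two special connected sum regions) would be forced to interact, and indeed Proposition~\ref{thm:quasistabcommute} and Lemma~\ref{lem:bypasstriplefromquasistabilization} show commutation genuinely fails in that case.
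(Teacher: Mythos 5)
Your proposal handles only the ``easy'' case and misses the one that actually requires care. You claim that the hypothesis ``$w'$ is not adjacent to $z$'' lets you place the two stabilization points $p$ and $p'$ in \emph{different} components of $\Sigma\setminus\as$, and thereby obtain a clean tensor-product decomposition of the stabilized differential. That inference is incorrect: the hypothesis only excludes one of the two possible adjacencies between the pairs. The pair $(w,z)$ can still be adjacent to $(w',z')$ via $w$ being adjacent to $z'$, i.e. the four new basepoints can appear consecutively as $(w',z',w,z)$ read right to left. In that case $p$ and $p'$ necessarily lie in the \emph{same} component of $\Sigma\setminus\as$, the Maslov index computation (Equation~\eqref{eq:Maslovindexpp'samecomp} in the paper) does not force Maslov index $0$ curves to be constant, and the change-of-almost-complex-structure map $\Phi_{J_s(\ve{T}_1)\to J_s(\ve{T}_2)}$ acquires a potential off-diagonal term of the form $C\cdot\xs\times\theta^{\zs}\times(\theta^{\ws})'$ coming from a pair of bigons in the stabilization regions glued to an $\as\cup\{\alpha_s,\alpha_s'\}$-boundary degeneration with domain the middle region $A_2$. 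Your proof has no mechanism to handle this term; the product structure you invoke simply does not hold here.

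The paper's proof (Proposition~\ref{prop:TScommute}) treats the two cases separately: when $p$ and $p'$ are in different components the degeneration argument is essentially as you describe, but in the same-component case it explicitly works out the matrix of $\Phi_{J_s(\ve{T}_1)\to J_s(\ve{T}_2)}$ (Equation~\eqref{eq:changeofacstructurecomp}) including the unknown constant $C\in\bF_2$, and then checks by hand that for each of the four sign choices $(\circ,\circ')$ the relevant composition of quasi-stabilization maps with the change-of-$J$ map is insensitive to $C$. To repair your argument you would need to add exactly this case analysis: after stretching both necks, classify which Maslov index $0$ classes survive (constant classes, plus in the adjacent case the bigon/boundary-degeneration class), and then verify that the $S$ and $T$ maps still commute in the presence of the extra term. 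It is also worth noting that the disjointness of the two small disks on $\Sigma$ is never actually in question; what matters is whether they lie in the same component of $\Sigma\setminus\as$, and your non-adjacency hypothesis does not control that in the way you assert.
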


\begin{proof}We will only consider the case when $w$ follows $z$ and $w'$ follows $z'$. The argument follows from a small modification of the argument used to show that the type-$S$ quasi-stabilization maps commute with each other \cite{ZemQuasi}*{Section~8}. 

We start with a diagram $\cH=(\Sigma, \ve{\alpha},\ve{\beta},\ve{w},\ve{z})$ for the unstabilized link $\bL=(L,\ve{w},\ve{z})$.  We form the doubly quasi-stabilized diagram $\cH^{++}$ by inserting the two subdiagrams shown in Figure~\ref{fig::93} at two points $p$ and $p'$ in $\Sigma\setminus \as$.

\begin{figure}[ht!]
\centering
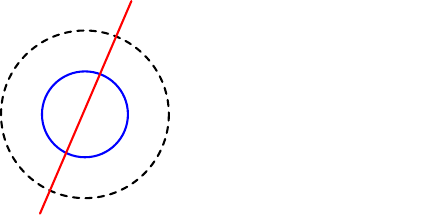
\caption{\textbf{We obtain $\cH^{++}$ (shown) by quasi-stabilizing $\cH$ at $p,p'\in \Sigma\setminus \as$.} Multiplicities $m_1,$ $m_2$, $n_1$, $n_2$, $m_1',$ $m_2'$, $n_1'$ and $n_2'$ are shown.\label{fig::93}}
\end{figure}

 The diagrams $\cH$, $\cH^{++}$, and the two intermediate, singly quasi-stabilized diagrams can be used to compute all of the quasi-stabilization maps. Although the formulas for the two quasi-stabilization maps look like they commute, it is important to  note that we place very specific requirements on the almost complex structures which can be used to compute the quasi-stabilization maps. To compute $S_{w',z'}^{\circ'}$, the almost complex structure must be stretched on the curve $c'$ shown in Figure~\ref{fig::93}. To compute $T_{w,z}^{\circ}$, one must stretch along the curve $c$. To compute the commutator of the two maps, we must compute a change of almost complex structure map associated to changing the relative lengths of the two necks. If $J_s$ is an almost complex structure on $\Sigma\times [0,1]\times \R$ which is split in a cylindrical neighborhood of $p$ and $p'$, and $\ve{T}=(T,T')$ is a pair of positive real numbers, we will write $J_s(\ve{T})$ for an almost complex structure which has had necks of length $T$ and $T'$ inserted along the curves $c$ and $c'$, respectively.
  
  By the definition of the quasi-stabilization maps, an almost complex structure $J_s(T,T')$ can be used to compute $S_{w',z'}^{+}$ if the change of almost complex structure map $\Phi_{J_s(T,T_1')\to J_s(T,T_2')}$ preserves the element $\ve{x}\times (\theta^{\ws})'$ whenever $T_1',T_2'\ge T'$. Similarly $J_s(T,T')$ can be used to compute $S_{w',z'}^-$ if the map $\Phi_{J_s(T,T'_1)\to J_s(T,T'_2)}$ sends $\ve{x}\times (\xi^{\ws})'$ to an element of the form $\ve{x}\times (\xi^{\ws})'+ \sum_{\ys}c_{\xs,\ys} \cdot \ys\times (\theta^{\ws})'$, for some collection of $c_{\xs,\ys}\in \cR_{\bmP}^-$, whenever $T_1',T_2'\ge T'$.  Similar criteria apply for the maps $T_{w,z}^+$ and $T_{w,z}^-$.
 
Proving commutation of the maps $S_{w',z'}^{\circ'}$ and $T_{w,z}^{\circ}$ thus amounts to analyzing  the change of almost complex structure map $\Phi_{J_s(\ve{T}_1)\to J_s(\ve{T}_2)}$. The map $\Phi_{J_s(\ve{T}_1)\to J_s(\ve{T}_2)}$ can be computed by counting index 0 holomorphic strips in $\Sigma\times [0,1]\times \R$ for a non-cylindrical almost complex structure which agrees with $J_s(\ve{T}_1)$ on $\Sigma\times [0,1]\times (-\infty,-1]$ and agrees with $J_s(\ve{T}_2)$ on $\Sigma\times [0,1]\times [1,\infty)$. We will use a neck-stretching argument.

We let $\{\ve{T}_{1}^i\}_{i\in \N}$ and $\{\ve{T}_{2}^i\}_{i\in \N}$ denote two sequences of pairs of neck lengths, all whose components approach $+\infty$. Let $\tilde{J}^i$ be a non-cylindrical almost complex structure interpolating $J_s(\ve{T}_1^i)$ and $J_s(\ve{T}_2^i)$. We can assume that the almost complex manifold $(\Sigma\times [0,1]\times \R, \tilde{J}^i)$ contains the almost complex submanifold $(\Sigma\setminus N_i(\{p,p'\})\times [0,1]\times \R, J_s)$ where $N_i(\{p,p'\})$ is a nested sequence of regular neighborhoods of the set $\{p,p'\}$ whose intersection over $i\in \N$ is $\{p,p'\}$. Also $J_s$ is a fixed cylindrical almost complex structure on $\Sigma\times [0,1]\times \R$.

Given a sequence of $\tilde{J}^i$-holomorphic disks $u_i$, representing a Maslov index 0 homology class $\phi$ on $\cH^{++}$, we can extract a broken collection of limiting holomorphic curves on $(\Sigma\times [0,1]\times \R, J_s)$. As in the proof of \cite{MOIntSurg}*{Proposition~6.2}, we can arrange the limiting curves on $(\Sigma\times [0,1]\times \R,J_s)$ into a broken holomorphic disk $U_0$ on $(\Sigma,\as,\bs)$, which has no boundary components mapping to $\alpha_s$ or $\alpha_s'$, as well as a collection of boundary degenerations $\cA$, which has boundary on $\as\cup \alpha_s\cup \alpha_s'$. We will write $\phi_0$ for the total homology class of $U_0$.

There are two cases to consider:
\begin{enumerate}
\item\label{case:pp'diffcomponent} $p$ and $p'$ are in different components, $A_p$ and $A_{p'}$,  of $\Sigma\setminus \as$.
\item\label{case:pp'samecomponent} $p$ and $p'$ are in the same component, $A_{p,p'}$, of $\Sigma\setminus \as$.
\end{enumerate}

Consider Case \eqref{case:pp'diffcomponent} first. This occurs exactly when $(w,z)$ and $(w',z')$ are not adjacent on $\bL$. In this case, following the proof of \cite{ZemQuasi}*{Lemma~8.3}, the Maslov index $\mu(\phi)$ satisfies the formula
\[\mu(\phi)=\mu(\phi_0)+n_1(\phi)+n_2(\phi)+n_1'(\phi)+n_2'(\phi)\]
\[+m_1(\cA)+m_2(\cA)+m_1'(\cA)+m_2'(\cA)+2\sum_{\substack{\cD\in C(\Sigma\setminus \as)\\ \cD\neq A_p,A_{p'}}} n_{\cD}(\cA).\] 
  Since $\phi_0$ has the broken holomorphic representative $U_0$, we know that $\mu(\phi_0)\ge 0$, by transversality. Since all of the other summands are nonnegative and $\mu(\phi)=0$ by assumption, we know that all terms must be zero. It is easy to see that this implies that $\phi$ is a constant homology class. On the other hand, a constant homology class always has a unique $\tilde{J}^i$-holomorphic representatives. It follows that if $\ve{T}_1$ and $\ve{T}_2$ are two pairs of neck lengths, all of whose components are sufficiently large, then $\Phi_{J_s(\ve{T}_1)\to J_s(\ve{T}_2)}$ is the identity map, on the level of intersection points. In particular, it follows that $S_{w',z'}^{\circ'}$ and $T_{w,z}^{\circ}$ commute if $(w,z)$ and $(w',z')$ are not adjacent.

Consider now Case \eqref{case:pp'samecomponent}, which is slightly more subtle. This case occurs when the pairs $(w,z)$ and $(w',z')$ are adjacent on $\bL$. By hypotheses on the non-adjacency of $z$ and $w'$,  the four basepoints must appear with ordering $(w',z',w,z)$, read right to left.  

In this case, the curves $\alpha_s$ and $\alpha_s'$ divide $A_{p,p'}\subset \Sigma\setminus \as$ into three connected components, $A_1$, $A_2$ and $A_3$. Let us write $A_1$ for the component that contains $z$, $A_2$ for the component that contains $w$ and $z'$, and $A_3$ for the component that contains $w'$. In the proof of \cite{ZemQuasi}*{Lemma~8.3} the Maslov index of $\phi$ is computed as
\begin{equation}
\mu(\phi)=\mu(\phi_0)+n_1(\phi)+n_2(\phi)+n_1'(\phi)+n_2'(\phi)+m_1(\cA)+m_2'(\cA)+2\sum_{\substack{\cD\in C(\Sigma\setminus \as)\\ \cD\neq A_{p,p'}}} n_{\cD}(\cA).\label{eq:Maslovindexpp'samecomp}
\end{equation} As in Case~\eqref{case:pp'diffcomponent}, all the summands in the above equation must vanish. Unlike in Case~\eqref{case:pp'diffcomponent}, this does not force the disk $\phi$ to be constant. Instead, there remains the possibility that all of the summands in Equation~\eqref{eq:Maslovindexpp'samecomp} are zero, but that we have
\[
m_2(\phi)=m_2(\cA)=m_1'(\cA)=m_1'(\phi)=1.
\] The homology class can be described as a bigon in each of the quasi-stabilization regions glued to a boundary degeneration with domain $A_2$. This is easily seen to be a class in $\pi_2(\ve{x}\times \xi^{\zs}\times (\xi^{\ws})', \ve{x}\times \theta^{\zs}\times (\theta^{\ws})')$. The only other classes which can contribute are the constant classes, which are always counted.  Hence it follows that if we write $F=\Phi_{J_s(\ve{T}_1)\to J_s(\ve{T}_2)}$ then
\begin{equation}
\begin{split}
F(\ve{x}\times \xi^{\ve{z}}\times (\theta^{\ws})')&=\ve{x}\times \xi^{\ve{z}}\times (\theta^{\ve{w}})',\\
 F(\ve{x}\times \xi^{\ve{z}}\times (\xi^{\ve{w}})')&=\ve{x}\times \xi^{\ve{z}}\times (\xi^{\ws})'+C\cdot \ve{x}\times \theta^{\ve{z}}\times(\theta^{\ve{w}})',\\
F(\ve{x}\times \theta^{\ve{z}}\times (\theta^{\ve{w}})')&=\ve{x}\times \theta^{\ve{z}}\times(\theta^{\ve{w}})', \\
F(\ve{x}\times\theta^{\ve{z}}\times(\xi^{\ws})')&=\ve{x}\times\theta^{\ve{z}}\times(\xi^{\ve{w}})',
\end{split}
\label{eq:changeofacstructurecomp}
\end{equation}
for some $C\in \bF_2$ (which is not independent of $\ve{T}_1$ and $\ve{T}_2$). Note that the map $F=\Phi_{J_s(\ve{T}_1)\to J_s(\ve{T}_2)}$ is $\cR_{\bmP}^-$-equivariant, so the above formulas extend equivariantly over $\cR_{\bmP}^-$.

We now explain why Equation~\eqref{eq:changeofacstructurecomp} implies that $S_{w',z'}^{\circ'}$ and $T_{w,z}^{\circ}$ commute (in the case that the basepoints are ordered $(w',z',w,z)$, read right to left).

To see that the relation $S_{w',z'}^+ T_{w,z}^+ \simeq T_{w,z}^+  S_{w',z'}^+$ follows from Equation~\eqref{eq:changeofacstructurecomp}, we pick neck lengths $0\ll T_1\ll T_1'$, so that $J_s(T_1,T_1')$ can compute the composition $S_{w',z'}^+ T_{w,z}^+$ and we pick neck lengths $0\ll T_2'\ll T_2$, so that $J_s(T_2,T_2')$ can compute the composition $T_{w,z}^+   S_{w',z'}^+$. Using the description of $F$ in Equation~\eqref{eq:changeofacstructurecomp}, we have
\[
(F  S_{w',z'}^+  T_{w,z}^+)(\ve{x})=\ve{x}\times \theta^{\zs}\times (\theta^{\ws})'
\] 
while also
\[
(T_{w,z}^+  S_{w',z'}^+)(\ve{x})=\ve{x}\times \theta^{\zs}\times (\theta^{\ws})',
\] 
implying that $S_{w',z'}^+  T_{w,z}^+\simeq T_{w,z}^+  S_{w',z'}^+$ on the level of transitive systems of chain complexes.

We now show that the relation $S_{w',z'}^+   T_{w,z}^-\simeq T_{w,z}^-   S_{w',z'}^+$ follows from Equation~\eqref{eq:changeofacstructurecomp}.  We note that to compute $S_{w',z'}^+   T_{w,z}^-$ we do not need to compute any change of almost complex structures. However to compute $T_{w,z}^-   S_{w',z'}^+$  we must insert a change of almost complex structure map between $T_{w,z}^-$ and $S_{w',z'}^+$ on $\cH^{++}$. We pick neck lengths $T_1,$ $T_1',$ $T_2$ and $T_2'$ with the same relative sizes as above and we compute that
\[
(S_{w',z'}^+  T_{w,z}^-)(\ve{x}\times \xi^{\zs})=\ve{x}\times (\theta^{\ws})' \quad \text{and}\quad (S_{w',z'}^+  T_{w,z}^-)(\ve{x}\times (\theta^{\zs})')=0,
\]
 while
\[
(T_{w,z}^-  F   S_{w',z'}^+)(\ve{x}\times \xi^{\zs})=\ve{x}\times (\theta^{\ws})'\quad 
\text{and}\quad (T_{w,z}^-  F   S_{w',z'}^+)(\ve{x}\times \theta^{\zs})=0,
\]
 so we see that $T_{w,z}^-  S_{w',z'}^+=S_{w',z'}^+  T_{w,z}^+$.

One proves the relations $T_{w,z}^+  S_{w',z'}^-\simeq S_{w',z'}^-  T_{w,z}^+$ and $T_{w,z}^-  S_{w',z'}^-\simeq S_{w',z'}^-  T_{w,z}^-$ in a similar manner, though we leave these last two computations to the reader.  
\end{proof}

The strategy used in the previous proof fails to show, for example, that $S_{w',z'}^+$ and $T_{w,z}^-$ commute if $(w,z,w',z')$ are four basepoints on a link (ordered right to left). Using the interpretation of the quasi-stabilization maps in terms of the dividing sets, we note that we should not expect these maps to commute, as the dividing sets for the two compositions are different. The left two dividing sets of Figure~\ref{fig::76} represent the compositions $T^-_{w,z}  S_{w',z'}^+$ and $S_{w',z'}^+  T_{w,z}^-$, and are not isotopic. Instead, the dividing sets satisfy a bypass triple with a third dividing set. This is reflected algebraically by the following lemma, which we will not use elsewhere in the paper.

\begin{figure}[ht!]
\centering
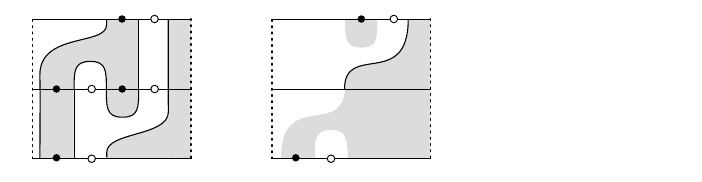
\caption{\textbf{The dividing set  interpretation of the relations from Lemma~\ref{lem:bypasstriplefromquasistabilization}.} This is an example when two quasi-stabilization maps fail to commute. The three dividing sets form a bypass triple. \label{fig::76}}
\end{figure}

\begin{lem}\label{lem:bypasstriplefromquasistabilization}Suppose that $\bL=(L,\ws,\zs)$ is a multi-based link in $Y$ and $(w,z,w' ,z')$ is a quadruple of new basepoints, which are consecutive on $\bL$ and not contained in $\ws\cup \zs$. Then
\[
T_{w,z}^- S_{w',z'}^++S_{w',z'}^+ T_{w,z}^-\simeq T_{w',z'}^+S_{w,z}^-,
\] as maps from $\cCFL^-(Y,(\bL_{w,z}^+)^{\sigma},\frs)$ to  $\cCFL^-(Y,(\bL_{w',z'}^+)^{\sigma'},\frs)$, where $\bL_{w,z}^+$ and $\bL_{w',z'}^+$ are obtained by adding $(w,z)$ or $(w',z')$ to $\bL$, respectively, and $\sigma$ and $\sigma'$ are colorings such that $\sigma|_{\bL}=\sigma'|_{\bL}$ and all of the quasi-stabilization maps are defined.
\end{lem}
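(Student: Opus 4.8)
The plan is to work entirely on the doubly quasi-stabilized diagram $\cH^{++}$ of Figure~\ref{fig::93}, exactly as in the proof of Proposition~\ref{prop:TScommute}, but now in the configuration where the four basepoints appear in the order $(w,z,w',z')$ read right to left — that is, $z$ and $w'$ \emph{are} adjacent, which is precisely the case excluded from Proposition~\ref{prop:TScommute}. So first I would set up $\cH^{++}$ by quasi-stabilizing $\cH$ at two points $p,p'$ in the same component $A_{p,p'}\subset\Sigma\setminus\as$, with the curves $\alpha_s,\alpha_s'$ dividing $A_{p,p'}$ into three regions $A_1,A_2,A_3$, but now labeled so that $A_1$ contains $w$, $A_2$ contains $z$ and $w'$, and $A_3$ contains $z'$. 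The maps $S_{w',z'}^+$, $S_{w,z}^-$, $T_{w,z}^-$, $T_{w',z'}^+$ are all read off the formulas from Section~\ref{sec:quasi-stabilization} on this diagram, and the only subtlety is the change-of-almost-complex-structure map $F=\Phi_{J_s(\ve T_1)\to J_s(\ve T_2)}$ that must be inserted when one reverses the relative neck lengths along the curves $c,c'$.

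The key computation is the analogue of Equation~\eqref{eq:Maslovindexpp'samecomp}. Following \cite{ZemQuasi}*{Lemma~8.3} verbatim, an index-$0$ class $\phi$ on $\cH^{++}$ that survives the neck-stretching limit decomposes as a broken disk $\phi_0$ on $(\Sigma,\as,\bs)$ with no boundary on $\alpha_s\cup\alpha_s'$, plus a collection of boundary degenerations $\cA$; the Maslov index splits as $\mu(\phi)=\mu(\phi_0)+n_1(\phi)+n_2(\phi)+n_1'(\phi)+n_2'(\phi)+m_1(\cA)+m_2'(\cA)+2\sum_{\cD\neq A_{p,p'}}n_\cD(\cA)$. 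With $\mu(\phi)=0$ all summands vanish, but in the present ordering there is now an \emph{extra} possibility beyond the constant class and beyond the class found in Proposition~\ref{prop:TScommute}: a class whose domain is a bigon over $z$ in the first stabilization region glued to a bigon over $w'$ in the second region glued to a boundary degeneration supported on $A_2$, which lies in $\pi_2(\ve x\times\theta^{\zs}\times(\xi^{\ws})',\,\ve x\times\xi^{\zs}\times(\theta^{\ws})')$ and comes with a variable weight $V_z$ (or $V_{z'}$) from the $\zs$-basepoint the bigons traverse. So $F$ now has the form $F(\ve x\times\theta^{\zs}\times(\xi^{\ws})')=\ve x\times\theta^{\zs}\times(\xi^{\ws})'+C\cdot V\cdot \ve x\times\xi^{\zs}\times(\theta^{\ws})'$ together with the trivially-determined values on the other three generators. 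Plugging this $F$ into $T_{w,z}^-\, S_{w',z'}^+$, and $S_{w',z'}^+\, T_{w,z}^-$ (which needs no $F$), and comparing against $T_{w',z'}^+\, S_{w,z}^-$ using the explicit generator-level formulas for the quasi-stabilization maps, yields the stated identity $T_{w,z}^- S_{w',z'}^+ + S_{w',z'}^+ T_{w,z}^- \simeq T_{w',z'}^+ S_{w,z}^-$; the term with coefficient $C$ is exactly what produces the right-hand side.

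I would then note that the chain-homotopy statement upgrades to the transitive-system level by the naturality of quasi-stabilization maps (Theorem~\ref{thm:quasisarenatural}) together with naturality of change-of-complex-structure maps, and that the coloring hypothesis ($\sigma|_\bL=\sigma'|_\bL$, all quasi-stabilizations defined) is exactly what makes each of the four maps a genuine chain map so that the relation is an identity of morphisms in the relevant filtered equivariant category. Finally I would remark that the opposite-ordering case (with $S_{z',w'}^+$, $T_{z,w}^-$, etc.) follows by the symmetric argument, reversing the roles of the $\as$ and $\bs$ curves and of the $\ws$ and $\zs$ basepoints.

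The main obstacle I expect is the bookkeeping in the Maslov-index computation: one must carefully track which boundary-degeneration class in $A_2$ actually admits holomorphic representatives and correctly identify the intersection points it connects and the monomial weight it carries, since in this adjacent ordering the ``leftover'' class is not the symmetric one appearing in Proposition~\ref{prop:TScommute} and the sign/weight is what distinguishes the bypass relation from plain commutativity. Once that class is pinned down, verifying the three-term relation on generators is routine matrix algebra with the $2\times 2$ (or $4$-generator) blocks.
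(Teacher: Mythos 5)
Your approach is genuinely different from the paper's, and it is worth emphasizing how different. The paper does \emph{not} attempt a second neck-stretching argument. It begins with the already-established algebraic relation $T^+_{w,z}S_{w,z}^- + S_{w,z}^+T_{w,z}^- + \id \simeq 0$ from Lemma~\ref{lem:addtrivialstrandII}, multiplies both sides on the left by $T_{w,z}^-S_{w',z'}^+$, and then reduces term by term using Propositions~\ref{thm:quasistabcommute} and~\ref{lem:z-quasistabcommute}, Lemma~\ref{lem:olddefsforTw,z}, Lemma~\ref{lem:addtrivialstrandII}, and Lemma~\ref{lem:oldaddtrivialstrand}. This is deliberate: the paragraph immediately preceding the lemma states that the strategy of Proposition~\ref{prop:TScommute} \emph{fails} to settle this basepoint configuration, which is exactly why the author retreats to algebra. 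The algebraic route buys you a proof in which every step is already known; your geometric route would, in principle, give a self-contained picture of the bypass relation at the level of holomorphic curves, but only if the final count can actually be evaluated.

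That is where the essential gap lies. In Proposition~\ref{prop:TScommute}, the undetermined coefficient $C\in\bF_2$ in the change-of-almost-complex-structure map $F$ (which the paper stresses is \emph{not} independent of $\ve T_1,\ve T_2$) is harmless because the commutativity conclusion holds regardless of its value. Here the opposite is true: a direct generator-level check with $F=\id$ gives $T_{w,z}^-S_{w',z'}^+ + S_{w',z'}^+T_{w,z}^- = 0$ on $\ve x\otimes \xi^{\ws}$, while $T_{w',z'}^+S_{w,z}^-(\ve x\otimes\xi^{\ws})=\ve x\otimes\xi^{\ws'}\neq 0$. So the bypass relation holds if and only if the extra term in $F$ is actually present, i.e.\ if and only if the count $C$ equals $1$. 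The neck-stretching degeneration argument identifies a single candidate homology class and establishes $C\in\bF_2$, but it gives no way to evaluate $C$; you would need a separate argument (a direct moduli count, or a constraint extracted from the chain-map property of $F$ against the quasi-stabilized differential of Proposition~\ref{prop:quasi-stabilizeddifferential}) to pin $C$ down. As written, your proof asserts the outcome of the matrix algebra without doing it, and the outcome is exactly where the undetermined count enters.

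There are also two secondary errors in the class identification. You place the leftover class in $\pi_2(\ve x\times\theta^{\zs}\times(\xi^{\ws})',\,\ve x\times\xi^{\zs}\times(\theta^{\ws})')$, i.e.\ from $\xi^{\ws}\otimes\xi^{\ws'}$ to $\theta^{\ws}\otimes\theta^{\ws'}$. But $S_{w',z'}^+$ always produces $(\theta^{\ws})'$, never $(\xi^{\ws})'$, so in the composition $T_{w,z}^-\,F\,S_{w',z'}^+$ a class with initial $(w',z')$-coordinate $(\xi^{\ws})'$ is never reached and cannot contribute. A grading-preserving correction instead connects $\xi^{\ws}\otimes(\theta^{\ws})'$ to $\theta^{\ws}\otimes(\xi^{\ws})'$ (both gradings $\gr_{\ws}$ and $\gr_{\zs}$ agree on these two generators). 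In particular, using Equation~\eqref{eq:defgradings}, an index-zero class between generators of equal $\gr_{\ws}$ and $\gr_{\zs}$ has $\sum n_w(\phi)=\sum n_z(\phi)=0$, so it carries no variable weight at all; your claimed $V_z$ or $V_{z'}$ factor cannot be there. Even once these are fixed, the fundamental difficulty in the previous paragraph — an undetermined $C$ that must equal $1$ — remains, and that is why the algebraic proof is the natural one.
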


\begin{proof}We start with the relation
\[
T^+_{w,z}S_{w,z}^-+S_{w,z}^+T_{w,z}^-+ \id\simeq 0,
\]
 from Lemma~\ref{lem:addtrivialstrandII} and then multiply on the left by $T_{w,z}^-S_{w',z'}^+$ to get
\[
T_{w,z}^-S_{w',z'}^+T^+_{w,z}S_{w,z}^-+T_{w,z}^-S_{w',z'}^+S_{w,z}^+T_{w,z}^-+ T_{w,z}^-S_{w',z'}^+\simeq 0.
\]
 We now can manipulate the terms to see that
\begin{align*}0&\simeq T_{w,z}^-S_{w',z'}^+T^+_{w,z}S_{w,z}^-+T_{w,z}^-S_{w',z'}^+S_{w,z}^+T_{w,z}^-+ T_{w,z}^-S_{w',z'}^+\\
&\simeq  T_{w,z}^-S_{w',z'}^+T^+_{w,z}S_{w,z}^-+T_{w,z}^-S_{w,z}^+S_{w',z'}^+T_{w,z}^-+ T_{w,z}^-S_{w',z'}^+
&&\text{(Proposition~\ref{thm:quasistabcommute})}\\
&\simeq  T_{w,z}^-S_{w',z'}^+T^+_{w,z}S_{w,z}^-+S_{w',z'}^+T_{w,z}^-+ T_{w,z}^-S_{w',z'}^+
&&\text{(Lemma~\ref{lem:addtrivialstrandII})}\\
&\simeq  T_{w,z}^-\Phi_{w'}T_{w',z'}^+T^+_{w,z}S_{w,z}^-+S_{w',z'}^+T_{w,z}^-+ T_{w,z}^-S_{w',z'}^+
&&\text{(Lemma~\ref{lem:olddefsforTw,z})}\\
&\simeq T_{w,z}^-\Phi_{w'}T^+_{w,z}T_{w',z'}^+S_{w,z}^-+S_{w',z'}^+T_{w,z}^-+ T_{w,z}^-S_{w',z'}^+
&&\text{(Proposition~\ref{lem:z-quasistabcommute})}\\
&\simeq T_{w',z'}^+S_{w,z}^-+S_{w',z'}^+T_{w,z}^-+ T_{w,z}^-S_{w',z'}^+
&&(\text{Lemma~\ref{lem:oldaddtrivialstrand}}),
\end{align*}
completing the proof.
\end{proof}

\subsection{Quasi-stabilization and basepoint moving maps}
\label{sec:quasi-stab-and-identifications}

In this section, we describe some useful formulas involving the quasi-stabilization maps and the diffeomorphism maps on link Floer homology induced by moving basepoints. The relations in this section generalize the relations for moving basepoints from \cite{ZemQuasi}.

We recall that we constructed quasi-stabilization maps for adding a new pair of adjacent basepoints $(w,z)$. The exact construction differed depending on the relative ordering of $w$ and $z$. If $w$ followed $z$, we write $S_{w,z}^{\circ}$ and $T_{w,z}^{\circ}$. If instead $z$ follows $w$, then we write  $S_{z,w}^{\circ}$ and $T_{z,w}^\circ$. Analyzing our proposed interpretation in terms of dividing sets, one should expect the two constructions to be related by a diffeomorphism map. In this section, we prove a precise relation.

Suppose that $\bL=(L,\ve{w},\ve{z}_0\cup \{z'\})$ is a multi-based link. Suppose that $(w,z)$ is a new pair of adjacent basepoints, which are contained in a single component of $L\setminus (\ve{w}\cup \ve{z}_0\cup \{z'\})$, such that $z'$ is adjacent to $w$ and immediately follows $w$ with respect to the links orientation. Let $\tau^{z'\to z}$ be a diffeomorphism
\begin{equation}
\tau^{z'\to z}\colon (Y,L,\ve{w},\ve{z}_0\cup \{z'\})\to (Y,L,\ve{w}, \zs_0\cup \{z\})\label{eq:rhodefinitionStype}
\end{equation}  such that $\tau^{z'\to z}(z')=z$, and $\tau^{z'\to z}$ is the identity outside of a neighborhood of the arc between $z'$ and $z$. Define the diffeomorphism $\tau^{z'\from z}$ to be the inverse of $\tau^{z'\to z}$.

\begin{rem} Our notation for the basepoint moving maps follows the right-to-left convention described in Section~\ref{subsec:convention}. Hence the notation $\tau^{z'\to z}$ and $\tau^{z'\from z}$ encodes the fact that $z'$ immediately follows $z$ with respect to the link's orientation.
\end{rem}

\begin{lem}\label{lem:mapsforI+agree} There are chain homotopies 
\[
S_{w,z}^+\simeq S_{z',w}^+\tau_*^{z'\to z} \qquad \text{and}\qquad S_{w,z}^-\simeq \tau_*^{z'\from z} S_{z',w}^-.
\]
\end{lem}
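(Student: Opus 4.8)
The plan is to set up both sides on a single doubly-quasi-stabilized Heegaard diagram and then identify the two compositions on the level of intersection points, up to a change-of-almost-complex-structure map. Concretely, start with a diagram $\cH=(\Sigma,\as,\bs,\ws,\zs_0\cup\{z'\})$ for $(Y,L,\ws,\zs_0\cup\{z'\})$, chosen so that $z'$ lies in a small disk on the Heegaard surface adjacent to an $\alpha$-curve. To compute $S_{z',w}^+$ one inserts a quasi-stabilization region adding the pair $(z',w)$ (recall $z'$ follows $w$), while to compute $S_{w,z}^+$ one inserts a quasi-stabilization region adding $(w,z)$ with $w$ following $z$. The diffeomorphism $\tau^{z'\to z}$ is supported near the arc between $z'$ and $z$, so on the Heegaard diagram it can be realized as a diffeomorphism that slides the $z'$-basepoint along the Heegaard surface, past the relevant $\alpha$-curve, to the $z$-position; pulling the first quasi-stabilization region back by this diffeomorphism produces precisely the quasi-stabilization data for $S_{w,z}^+$ (possibly after a further isotopy/change of complex structure).

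The key steps, in order, are: (1) fix $\cH$ and the quasi-stabilization subdiagrams so that the map $\tau^{z'\to z}$ acts on $\cH$ by an explicit Heegaard move carrying the $S_{z',w}^+$-stabilized diagram to (an isotopic copy of) the $S_{w,z}^+$-stabilized diagram; (2) recall from the construction in Section~\ref{sec:quasi-stabilization} that $S_{w,z}^+(\xs)=\xs\otimes\theta^{\ws}$ and similarly $S_{z',w}^+(\xs)=\xs\otimes\theta^{\ws}$ in the respective quasi-stabilized regions, so that the diffeomorphism map $\tau_*^{z'\to z}$, which on the destabilized complex is the identity (or the canonical change-of-diagram map), intertwines the two inclusions; (3) invoke the naturality of the quasi-stabilization maps, Theorem~\ref{thm:quasisarenatural}, together with the functoriality of the change-of-diagram maps, to upgrade the diagram-level equality to a chain homotopy $S_{w,z}^+\simeq S_{z',w}^+\tau_*^{z'\to z}$ on the transitive chain homotopy type invariants; (4) obtain the second relation $S_{w,z}^-\simeq \tau_*^{z'\from z}S_{z',w}^-$ by a symmetric argument, or alternatively by taking ``adjoints'': the destabilization maps $S^-$ are defined by the projection formulas $S_{w,z}^-(\xs\otimes\theta^{\ws})=0$, $S_{w,z}^-(\xs\otimes\xi^{\ws})=\xs$, which are carried to one another under the same diffeomorphism, and $\tau^{z'\from z}=(\tau^{z'\to z})^{-1}$ so the diffeomorphism map inverts.

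The main obstacle I expect is step (1): verifying that the diffeomorphism $\tau^{z'\to z}$ really does carry the $(z',w)$-quasi-stabilization data to the $(w,z)$-quasi-stabilization data in a way compatible with the almost complex structures required to compute the maps. The two constructions differ not just in where the basepoints sit but in which handlebody the relevant new attaching curve lives in, and the neck-stretching conditions needed to compute $S^+_{z',w}$ versus $S^+_{w,z}$ must be matched; this is exactly the kind of careful bookkeeping (of $\alpha$-curves, basepoints, and stretched necks) that appears in the proof of Proposition~\ref{prop:TScommute} and in \cite{ZemQuasi}*{Section~8}. Once one checks that the supported diffeomorphism can be chosen to respect a split almost complex structure near the quasi-stabilization region, the diagram-level identification of the maps on intersection points is essentially immediate from the defining formulas, and the passage to chain homotopy is routine given Theorem~\ref{thm:quasisarenatural}.
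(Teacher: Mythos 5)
Your overall approach — realize both quasi-stabilizations on a single diagram and compare the two required almost complex structures via neck-stretching — matches the paper's, and you correctly identify the main obstacle (the two quasi-stabilization constructions place the small curve in opposite handlebodies, so the a.c.s.\ stretching conditions for $S_{w,z}^+$ and $S_{z',w}^+$ are genuinely different). But there is a real gap in how you propose to close it.

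The crucial content of the paper's proof is not that the ``supported diffeomorphism can be chosen to respect a split almost complex structure,'' nor does the paper appeal to Theorem~\ref{thm:quasisarenatural}. Rather, the paper inserts both $\alpha_0$ and $\beta_0$ into one quasi-stabilization region, defines $J_\alpha$ (stretched along $c$ and $c_\alpha$, computing $S_{z',w}^+$) and $J_\beta$ (stretched along $c$ and $c_\beta$, computing $S_{w,z}^+$), and then proves by a neck-stretching degeneration and the Maslov index formula
\[
\mu(\phi)=\mu(\phi_\Sigma)+\gr_{\ws}(x,y)+2n_w(\phi_0)
\]
that $\Phi_{J_\alpha\to J_\beta}$ is \emph{lower triangular with identity diagonal} in the basis $\{\theta^{\ws},\xi^{\ws}\}$; it also establishes the same triangular form for $\Phi_{J_\alpha\to J_\alpha'}$ (and $\Phi_{J_\beta\to J_\beta'}$), which is needed to know that the stretched structures are actually admissible for computing the quasi-stabilization maps in the first place. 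This triangular structure is what makes the final commutative square work, and it is indispensable for the $S^-$ case: $S_{w,z}^-$ and $S_{z',w}^-$ kill the $\theta^{\ws}$-component, so the off-diagonal ``error term'' of $\Phi_{J_\alpha\to J_\beta}$, which lives in the $\theta^{\ws}$-component, dies in the composition precisely \emph{because} the matrix is triangular of that specific form. Your proposal asserts that once the a.c.s.\ bookkeeping is done the identification on intersection points is ``essentially immediate''; it is not, and without proving the triangular form of the transition maps the argument for $S^-$ does not close.

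A secondary issue: Theorem~\ref{thm:quasisarenatural} is naturality of a \emph{fixed} quasi-stabilization map under change of the unstabilized diagram. It does not compare $S_{w,z}^+$ against $S_{z',w}^+$, which are two a priori different constructions (one introducing a new $\alpha_s$, the other a new $\beta_s$). Invoking it to ``upgrade to the transitive chain homotopy type'' substitutes a citation for the actual analytic step. The passage from the diagram-level computation to the invariant is handled in the paper by displaying the commutative square for fixed diagrams and a.c.s.\ choices, after the two Subclaims and the computation $\tau_*^{z'\to z}(\ve{x})=\ve{x}$ (which itself requires the small argument with the isotopy $\tau_t$ and the pushed-forward non-cylindrical structure $\tilde J$); there is no appeal to Theorem~\ref{thm:quasisarenatural} to be made. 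Also, a cosmetic but potentially confusing point: the diagram is singly quasi-stabilized, not ``doubly''; the two curves $\alpha_0,\beta_0$ live in one stabilization region, and what changes between the two interpretations is which curve is viewed as the new small one (equivalently, which neck is stretched).
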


\begin{proof} We will focus on the relation $S_{w,z}^+\simeq S_{z',w}^+\tau_*^{z'\to z}$, since the other relation follows from a similar argument.

We consider the diagram shown in Figure~\ref{fig::56}, which can be used to compute both $S_{w,z}^+$ and $S_{z',w}^+$, though not \emph{a priori} with the same almost complex structure. To compute the map $S_{w,z}^+$, we need to stretch along the circle $c_{\beta}$, encircling $\beta_0$, whereas  to compute $S_{z',w}^+$ we need to stretch along the curve $c_{\alpha}$, encircling the curve $\alpha_0$. The argument will proceed by analyzing a change of almost complex structure map between an almost complex structure stretched along $c$ and $c_\alpha$ and an almost complex structure stretched along $c$ and $c_\beta$.

\begin{figure}[ht!]
\centering
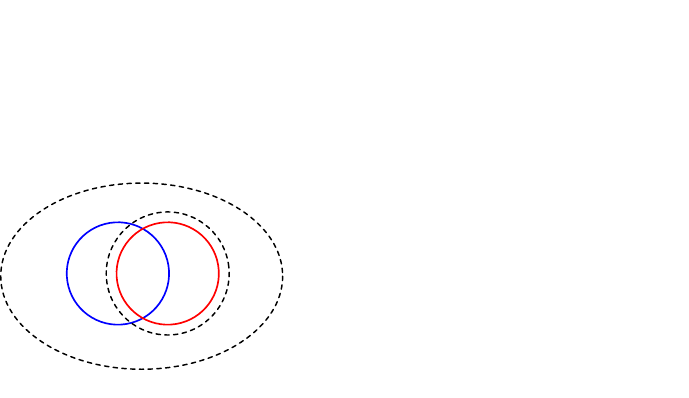
\caption{\textbf{The diagram we consider in Lemma~\ref{lem:mapsforI+agree}.}
On the bottom row, we indicate the curves $c$, $c_{\alpha}$ and $c_{\beta}$ along which the almost complex structures $J_\alpha$ and $J_\beta$ are stretched.\label{fig::56}}
\end{figure}

We pick a  cylindrical almost complex structure $J_s$ on $\Sigma\times [0,1]\times \R$ which is split in a neighborhood of $z'$ on $\Sigma$. We construct the cylindrical almost complex structure $J_{\alpha}$ on $\Sigma\times [0,1]\times \R$ by cutting out a neighborhood of $z'$, and gluing in an almost complex structure on $D^2\times [0,1]\times \R$, for a disk $D^2$ containing the diagram shown in Figure~\ref{fig::56}. Furthermore, we assume that $J_{\alpha}$ has long necks inserted along $c$ and $c_{\alpha}$. Similarly we construct an almost complex structure $J_\beta$ which is stretched along $c$ and $c_{\beta}$.

We make the following two subclaims:

\begin{enumerate}[leftmargin=22mm, ref= \arabic*, label=\textrm{Subclaim} (\arabic*):]
\item \label{subclaim1} \emph{If $J_{\alpha}$ and $J_{\alpha}'$ are two almost complex structures stretched along $c$ and $c_{\alpha}$, then
\[\Phi_{J_\alpha\to J_{\alpha}'}=\begin{pmatrix} \id&*\\
0& \id\end{pmatrix},\] as long as all neck lengths are sufficiently long (but irrespective of the relative lengths). The matrix is written in terms of the notation from Proposition~\ref{prop:quasi-stabilizeddifferential}, with $\theta^{\ws}$ as the  first row and column and $\xi^{\ws}$ as the second. A similar result holds for $J_{\beta}$.}
\item\label{subclaim2} \emph{If $J_\alpha$ is sufficiently stretched along $c$ and $c_{\alpha}$ and $J_{\beta}$ is sufficiently stretched along $c$ and $c_{\beta}$ then
\[\Phi_{J_{\alpha}\to J_{\beta}}=\begin{pmatrix} \id&*\\
0& \id\end{pmatrix}.\]}
\end{enumerate}

We note that a more detailed analysis would allow one to strengthen Subclaim~\eqref{subclaim1}, by proving that the $*$ component of $\Phi_{J_{\alpha}\to J_{\alpha}'}$ can be taken to be $0$, however we will not have need for this result. The $*$ component in $\Phi_{J_{\alpha}\to J_{\beta}}$ cannot be taken to be 0 in general, however.

The proofs of both Subclaims~\eqref{subclaim1} and \eqref{subclaim2} follow the same line of reasoning, so we focus on Subclaim~\eqref{subclaim2}. The argument is similar to the proof of Proposition~\ref{prop:TScommute}. Suppose we take sequences of almost complex structures $\{J_{\alpha,i}\}_{i\in \N}$ and $\{J_{\beta,i}\}_{i\in \N}$, such that the neck lengths of $J_{\alpha,i}$ along $c$ and $c_{\alpha}$ approach $+\infty$, and the neck lengths of $J_{\beta,i}$ along $c$ and $c_{\beta}$ approach $+\infty$ as well. We can pick a sequence  non-cylindrical almost complex structures $\tilde{J}_i$, interpolating $J_{\alpha,i}$ and $J_{\beta,i}$, such that the $(\Sigma\times [0,1]\times \R, \tilde{J}_i)$ contains the almost complex submanifold
\[((\Sigma\setminus N_i)\times [0,1]\times \R, J_s),\] for a nested sequence of open neighborhoods $N_i\subset \Sigma$ such that $\bigcap_{i\in \N} N_i=\{p\}$, for some point $p\in \Sigma$. Also, importantly, we can do this for a fixed \emph{cylindrical} almost complex structure $J_s$ on $\Sigma\times [0,1]\times \R$. We will compute the map $\Phi_{J_{\alpha,i}\to J_{\beta,i}}$ by counting index 0 holomorphic disks with the almost complex structure $\tilde{J}_i$.

Given a class $\phi=\phi_\Sigma\# \phi_0\in \pi_2(\xs\times x,\ys\times y)$ with $\phi_{\Sigma}\in \pi_2(\xs,\ys)$ a class on $(\Sigma,\as,\bs)$ and $\phi_0\in \pi_2(x,y)$ a class on $(S^2,\alpha_0,\beta_0)$, the Maslov index of $\phi$ is easily computed to be
\begin{equation}\mu(\phi)=\mu(\phi_\Sigma)+\gr_{\ws}(x,y)+2n_{w}(\phi_0),\label{eq:MaslovindexStypestab}\end{equation} where $\gr_{\ws}(x,y)$ is the drop in $\gr_{\ws}$ grading from $x$ to $y$.

Given a sequence of $\tilde{J}_i$-holomorphic curves $u_i$ representing a class $\phi=\phi_\Sigma\# \phi_0$, we can extract a limit to a broken curve $U_\Sigma$ representing $\phi_\Sigma$ (note that technically a limiting collection of curves would also contain curves on $(S^2,\alpha_0,\beta_0)$ though such curves are not important for our present argument). In particular, the class $\phi_\Sigma$ would have a broken holomorphic representative, and hence $\mu(\phi_{\Sigma})\ge 0$. Hence,  Equation \eqref{eq:MaslovindexStypestab} implies that if $\gr_{\ws}(x,y)>0$, then $\mu(\phi_{\Sigma})\le -1$, so there are no holomorphic representatives. This implies that the lower left entry of the matrix for $\Phi_{J_\alpha\to J_{\beta}}$ is 0. Similarly if $\gr_{\ws}(x,y)=0$, then $\mu(\phi_{\Sigma})\le 0$. By transversality and the existence of a broken holomorphic representative of $\phi_{\Sigma}$, we conclude that $\mu(\phi_{\Sigma})=0$, so $\phi_{\Sigma}$ is the constant class. Furthermore, $n_w(\phi_0)=0$. It is easy to see that this implies that $\phi$ must be a constant class. Conversely, the constant homology classes always have representatives for $\tilde{J}_i$, which are counted by $\Phi_{J_\alpha\to J_{\beta}}$. Hence the diagonal entries of the change of almost complex structure map are identified with the identity map.

The argument establishing the form of $\Phi_{J_\alpha\to J_{\alpha}'}$ is essentially the same.

Next, we consider the diffeomorphism map $\tau_*^{z'\to z}$, on the unstabilized complexes. Let $J_s'$ denote an almost complex structure obtained by stretching $J_s$ along $c$ (note that the stretching is not important for analyzing the diffeomorphism map $\tau_*^{z'\to z}$, but will be the almost complex structure we will later use). We claim that on the unstabilized diagram, the diffeomorphism map 
\[\tau_*^{z'\to z}\colon  \cCFL^-_{J_s'}(\Sigma,\as,\bs,\ws,\zs_0\cup \{z'\})\to \cCFL^-_{J_s'}(\Sigma,\as,\bs,\ws,\zs_0\cup \{z\})\] takes the form
\begin{equation}\tau_*^{z'\to z}(\ve{x})=\ve{x},\label{eq:rhodiffeomorphismmap}\end{equation} extended equivariantly over the ring $\bF_2[U_{\ws}, V_{\zs}]$. The diffeomorphism map $\tau_*^{z'\to z}$ is the map induced by naturality, and hence is the composition of a tautological map from $\cCFL^-_{J_s'}(\Sigma,\as,\bs,\ws,\zs_0\cup \{z'\})$ to $\cCFL^-_{\tau_*^{z'\to z}J_s'}(\Sigma,\as,\bs,\ws,\zs_0\cup \{z\})$ (which by definition takes the form in Equation~\eqref{eq:rhodiffeomorphismmap}), and the change of almost complex structure map $\Phi_{\tau_*^{z'\to z} J_s'\to J_s'}$. To establish this, let us write $\tau_t$ for an isotopy of $\Sigma$ (defined over all $t\in \R$) with $\tau_t=\id$ for $t\le 0$ and $\tau_t=\tau^{z'\to z}$ for $t\ge 1$, which is supported in a neighborhood of a small path from $z'$ to $z$.  We define a self-diffeomorphism of $\Sigma\times [0,1]\times \R$ by the formula $P(x,s,t)=(\tau_{-t}(x),s,t)$, and we consider the non-cylindrical almost complex structure $\tilde{J}:=P_* J_s'$, which interpolates $\tau_*^{z'\to z}J_s'$ for $t\in (-\infty, 0]$ and $ J_s'$ for $t\ge 1$. Furthermore, since $P$ fixes the cylinders $(\as\cup \bs)\times\{0,1\}\times \R$, there is a bijection between the index 0 holomorphic disks counted by the change of almost complex structure map $\Phi_{\tau_*^{z'\to z} J_s'\to J_s'}$ and the set of index 0 $J_s'$-holomorphic disks. However $J_s'$ is a cylindrical almost  complex structure, so by transversality, the set of index 0 $J_s'$-holomorphic disks consists of only the constant disks. Hence $\Phi_{\tau_*^{z'\to z} J_s'\to J_s'}$ also only counts the constant homology classes, and is thus the identity map on intersection points.

Finally, proving the lemma statement is just a matter of putting the pieces together. Subclaim~\eqref{subclaim1} shows that $J_\alpha$ and $J_{\beta}$ can be used to compute the quasi-stabilization maps since additional stretching on $c_\alpha$ and $c_{\beta}$ preserves the images of the maps $S_{w,z}^+$ and $S_{z',w}^+$, respectively. On the other hand, using Subclaim~\eqref{subclaim2}, Equation~\eqref{eq:rhodiffeomorphismmap}, and the definitions of the quasi-stabilization maps, we directly compute that the following diagram commutes:
\[\begin{tikzcd}\cCFL^-_{J'_s}(\Sigma,\as,\bs,\ws,\zs_0\cup \{z'\})\arrow{d}{\tau_*^{z'\to z}}\arrow{r}{S_{w,z}^+} &\cCFL^-_{J_\alpha}(\Sigma,\as\cup \{\alpha_0\},\bs\cup \{\beta_0\},\ws\cup \{w\}, \zs_0\cup \{z,z'\})\arrow{d}{\Phi_{J_{\alpha}\to J_{\beta}}}\\
\cCFL^-_{J_s'}(\Sigma,\as,\bs,\ws,\zs_0\cup \{z\})\arrow{r}{S_{z',w}^+}& \cCFL^-_{J_{\beta}}(\Sigma,\as\cup \{\alpha_0\},\bs\cup \{\beta_0\},\ws\cup \{w\},\zs_0\cup \{z,z'\})
\end{tikzcd}\] completing the proof.
\end{proof}

The interpretation of Lemma~\ref{lem:mapsforI+agree} in terms of dividing sets is shown in Figure~\ref{fig::97}.

\begin{figure}[ht!]
\centering
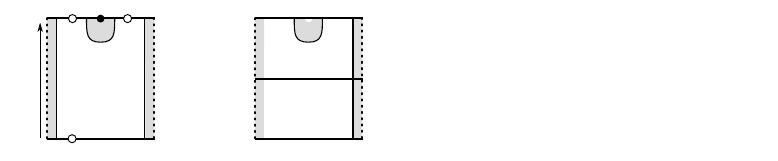
\caption{\textbf{The dividing set interpretation of the relations from  Lemma~\ref{lem:mapsforI+agree}.}
The dashed arrows in $\Sigma_{\zs}$ indicate which basepoints are identified by the diffeomorphism and quasi-stabilization maps. Note that the dashed arrows are \emph{not} part of the decoration of the surfaces appearing in our TQFT.\label{fig::97}}
\end{figure}

We now state the analog of Lemma~\ref{lem:mapsforI+agree} for the type-$T$ quasi-stabilization maps. Suppose that $\bL=(L,\ve{w}_0\cup \{w'\},\ve{z})$ is a multi-based link, and that $(w,z)$ is a pair of new basepoints in a component of $L\setminus (\ws_0\cup \{w'\}\cup\zs)$,  $z$ is adjacent to $w'$, and the three basepoints are ordered $(w,z,w')$, read right to left. 
  There is a  diffeomorphism
\[
\tau^{w\from  w'}\colon (Y,L,\ve{w}_0\cup \{w'\}, \ve{z})\to (Y,L,\ve{w}_0\cup \{w\}, \ve{z}),
\]
 well-defined up to isotopy, which satisfies
\[
\tau^{w\from w'}(w')=w
\] 
and  which is the identity outside of a neighborhood of the arc from $w$ to $w'$. Define the diffeomorphism $\tau^{w\to  w'}$ to be  the inverse of $\tau^{w\from w'}$.
Analogously to Lemma~\ref{lem:mapsforI+agree}, we have the following:

\begin{lem}\label{lem:mapsforII+agree} There are chain  homotopies
\[T_{w,z}^+\simeq T_{z,w'}^+\tau_*^{ w\from w'} \qquad \text{and} \qquad T_{w,z}^-\simeq \tau^{w\to w'}_*T_{z,w'}^-.\]
\end{lem}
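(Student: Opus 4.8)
The plan is to carry over verbatim the proof strategy of Lemma~\ref{lem:mapsforI+agree}, interchanging the roles of the $\ws$- and $\zs$-basepoints. Recall that in the type-$T$ construction, when $z$ follows $w$ and we add the pair $(w,z)$ between $\zs$-basepoints $w'$ and $z$, the new $\alpha_s$ curve separating $w'$ from $z$ is replaced by a new $\beta_s$ curve; more precisely, the type-$T$ quasi-stabilization reverses the roles of $\as$ and $\bs$ relative to the type-$S$ one (this is exactly the asymmetry noted in Section~\ref{sec:quasi-stabilization}). Since the statement to be proved, $T_{w,z}^+\simeq T_{z,w'}^+\tau_*^{w\from w'}$ and $T_{w,z}^-\simeq \tau_*^{w\to w'}T_{z,w'}^-$, concerns moving a $\ws$-basepoint across a $\zs$-basepoint, it is the $\ws\leftrightarrow\zs$ mirror of Lemma~\ref{lem:mapsforI+agree}, and every geometric input used there has a mirror available: Proposition~\ref{prop:quasi-stabilizeddifferential} is symmetric in the two types of basepoint, and the gradings $\gr_{\ws}$, $\gr_{\zs}$ play interchangeable roles in the neck-stretching and index computations.

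First I would set up the analogue of Figure~\ref{fig::56}: a diagram containing a single $\beta_s$-type stabilizing curve together with a small $\alpha_0$ curve bounding the new basepoints $w$ and $z$, built so that it simultaneously computes $T_{w,z}^+$ and $T_{z,w'}^+$, but with different almost complex structures — one stretched along a curve $c_\beta$ encircling $\beta_0$ (to compute one of the two maps) and one stretched along a curve $c_\alpha$ encircling $\alpha_0$ (to compute the other), all with a fixed long neck along the separating curve $c$. I would then state and prove the two subclaims, exactly as in Lemma~\ref{lem:mapsforI+agree}: (1) further stretching along $c_\alpha$ (resp.\ $c_\beta$) gives a change of almost complex structure map of the form $\begin{pmatrix}\id & *\\ 0 & \id\end{pmatrix}$ in the $\theta^{\zs},\xi^{\zs}$ basis, hence the respective stretched almost complex structures do compute $T_{w,z}^+$ and $T_{z,w'}^+$; and (2) the interpolating change of almost complex structure map $\Phi_{J_\alpha\to J_\beta}$ is upper triangular of the same form. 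Both proofs run the same neck-degeneration argument: extract a broken curve on $(\Sigma,\as,\bs)$, use transversality ($\mu(\phi_\Sigma)\ge 0$) together with the Maslov index formula $\mu(\phi)=\mu(\phi_\Sigma)+\gr_{\zs}(x,y)+2n_z(\phi_0)$ (the mirror of Equation~\eqref{eq:MaslovindexStypestab}, now controlled by the $\zs$-grading since the bigon going over $z$ is the one that costs index in the type-$T$ setting) to force $\phi$ constant whenever $\gr_{\zs}(x,y)\ge 0$, and conclude the diagonal entries are the identity.

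Next I would handle the diffeomorphism map $\tau_*^{w\from w'}$ on the unstabilized complexes, showing as in Equation~\eqref{eq:rhodiffeomorphismmap} that it acts as $\ve{x}\mapsto\ve{x}$: factor it through a tautological map followed by a change of almost complex structure map $\Phi_{\tau_* J_s'\to J_s'}$, and use the standard trick of conjugating $J_s'$ by the total-space diffeomorphism $P(x,s,t)=(\tau_{-t}(x),s,t)$ to identify the index-$0$ disks counted by that change of almost complex structure map with the index-$0$ disks for the cylindrical $J_s'$, which by transversality are only the constant ones. Finally I would assemble the commutative square with the two quasi-stabilization maps on the horizontal arrows, $\tau_*^{w\from w'}$ on the left vertical arrow, and $\Phi_{J_\alpha\to J_\beta}$ on the right, and read off the claimed chain homotopies; the relation for $T_{w,z}^-$ follows symmetrically (or by a near-identical argument with the roles of source and target reversed). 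I expect essentially no new obstacle here — the only thing to check carefully is that the asymmetry between the type-$S$ and type-$T$ constructions is matched correctly, i.e.\ that the stabilizing curve for the type-$T$ map really is a $\bs$-curve so that the index bookkeeping is governed by $\gr_{\zs}$ rather than $\gr_{\ws}$; once that bookkeeping is confirmed, the rest is a verbatim transcription of the proof of Lemma~\ref{lem:mapsforI+agree}, which is why I would simply say "the proof is the mirror of that of Lemma~\ref{lem:mapsforI+agree}, exchanging the roles of the $\ws$- and $\zs$-basepoints" and fill in only the changed Maslov-index formula and diagram.
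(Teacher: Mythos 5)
Your overall strategy — carry over the proof of Lemma~\ref{lem:mapsforI+agree} with the roles of $\ws$ and $\zs$ exchanged — is exactly the paper's approach; the paper's proof consists of precisely that one sentence, and your expansion (the two subclaims via neck-stretching, the identification of $\tau_*$ on the unstabilized complex, the commutative square) matches it. You also correctly identify the substantive change in the Maslov index bookkeeping: since the $T$-maps single out $\theta^{\zs}$ rather than $\theta^{\ws}$, the analogue of Equation~\eqref{eq:MaslovindexStypestab} has $\gr_{\zs}$ and $n_z$ in place of $\gr_{\ws}$ and $n_w$, the central basepoint in the stabilized region now being $z$.

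However, your claim that ``the type-$T$ quasi-stabilization reverses the roles of $\as$ and $\bs$ relative to the type-$S$ one'' is a misreading, and the derived description of the local diagram (a $\beta_s$ stabilizing curve with an $\alpha_0$ bounding $w,z$) has the labels backwards. Whether the stabilizing curve is $\alpha_s$ or $\beta_s$ is determined only by the relative ordering of the new pair $(w,z)$, not by the $S/T$ designation; the $S$- and $T$-maps live on the \emph{same} quasi-stabilized diagram and differ only in which intersection point of $\alpha_s\cap\beta_0$ (or $\beta_s\cap\alpha_0$) they single out. In Lemma~\ref{lem:mapsforII+agree}, $w$ still follows $z$, so $T_{w,z}^+$ uses an $\alpha_s$ curve and a small $\beta_0$ bounding $w,z$ — exactly as $S_{w,z}^+$ does — while $T_{z,w'}^+$ uses $\beta_s$ and an $\alpha_0$ bounding $z$ and $w'$. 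The curve configuration in the analogue of Figure~\ref{fig::56} is therefore structurally identical to that in Lemma~\ref{lem:mapsforI+agree}; what changes is the distinguished basepoint in the stabilized region ($w'$ in place of $z'$), and hence the grading and basepoint multiplicity that control the index. With that correction, the rest of your sketch goes through as written.
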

\begin{proof} The proof follows by switching the roles of the $\ws$ and $\zs$-basepoints in the proof of Lemma~\ref{lem:mapsforI+agree} . 
\end{proof}

The interpretation of Lemma~\ref{lem:mapsforII+agree} in terms of surfaces with divides is shown in Figure~\ref{fig::100}.

\begin{figure}[ht!]
\centering
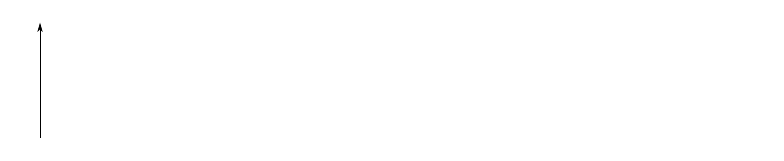
\caption{\textbf{The interpretation of the relations from Lemma~\ref{lem:mapsforII+agree} in terms of dividing sets.} The dashed arrows show how the basepoints are identified using the maps, but are not part of the data of the TQFT.\label{fig::100}}
\end{figure}

We now extend Lemmas~\ref{lem:mapsforI+agree}~and~\ref{lem:mapsforII+agree} by combining them with some previous results:

\begin{lem}\label{lem:movingbasepointsstep1}\label{lem:movingbasepointswithoutcolors} Suppose that $(L,\ws,\zs_0\cup \{z\})$ is a multi-based link, and $(z',w)$ are two new basepoints in a single component of $L\setminus (\ws\cup \zs_0\cup \{z\})$, such that $(z',w,z)$ form a consecutive triple of basepoints (ordered right to left). The map induced by the diffeomorphism $\tau^{ z'\from z}\colon (Y,L,\ws,\zs_0\cup \{z\})\to (Y,L,\ws,\zs_0\cup \{z'\})$ satisfies
\[
\tau_*^{z'\from z}\simeq T_{w,z}^-S_{z',w}^+\simeq S_{w,z}^-T_{z',w}^+.
\]
\end{lem}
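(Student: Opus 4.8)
The plan is to prove the two chain homotopies $\tau_*^{z'\from z}\simeq T_{w,z}^- S_{z',w}^+$ and $\tau_*^{z'\from z}\simeq S_{w,z}^- T_{z',w}^+$ by combining the identifications of Lemmas~\ref{lem:mapsforI+agree} and~\ref{lem:mapsforII+agree} with the relations among quasi-stabilization maps proven earlier in Section~\ref{subsec:relations2}. First I would observe that since $(z',w,z)$ is a consecutive triple, the maps $T_{z',w}^+$ and $S_{z',w}^+$ are the quasi-stabilization maps adding the pair $(z',w)$ with $z'$ following $w$ (so in the notation of Lemma~\ref{lem:mapsforI+agree} we would identify this pair of new basepoints with the pair called $(w,z)$ there, after relabeling, and $z$ plays the role of the basepoint adjacent to the new $\ws$-basepoint $w$).

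For the first homotopy, I would start from Lemma~\ref{lem:addtrivialstrandII}, which gives $T_{w,z}^- S_{w,z}^+\simeq \id$ on $\cCFL^-(Y,\bL^\sigma,\frs)$. Then I would insert the relation $S_{w,z}^+\simeq S_{z',w}^+ \tau_*^{z'\to z}$ from Lemma~\ref{lem:mapsforI+agree}; but this goes in the wrong direction, so instead I would argue as follows. Apply $\tau_*^{z'\from z}$ and use that $\tau_*^{z'\to z}$ and $\tau_*^{z'\from z}$ are inverse diffeomorphism maps, so $\tau_*^{z'\from z}\tau_*^{z'\to z}\simeq \id$. Concretely, Lemma~\ref{lem:mapsforI+agree} gives $S_{w,z}^-\simeq \tau_*^{z'\from z}S_{z',w}^-$, hence $S_{z',w}^-\simeq \tau_*^{z'\to z}S_{w,z}^-$ (applying $\tau_*^{z'\to z}$ and using invertibility). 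Then, using Lemma~\ref{lem:addtrivialstrandII} in the form $S_{w,z}^- T_{w,z}^+\simeq \id$ together with Lemma~\ref{lem:olddefsforTw,z} ($T_{w,z}^+\simeq \Psi_z S_{w,z}^+$), I can rewrite things so that a quasi-destabilization followed by a quasi-stabilization becomes a basepoint move. The cleanest route, though, is probably to compute directly on intersection points: all four maps $T_{w,z}^-$, $S_{z',w}^+$, $\tau_*^{z'\from z}$, etc., have explicit formulas (from the definitions of the quasi-stabilization maps and from Equation~\eqref{eq:rhodiffeomorphismmap} in the proof of Lemma~\ref{lem:mapsforI+agree}), so on a suitable doubly-quasi-stabilized diagram one checks that $T_{w,z}^- S_{z',w}^+(\xs)=\xs$ after applying the relevant change of almost complex structure map, which matches the formula for $\tau_*^{z'\from z}$.

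For the second homotopy $\tau_*^{z'\from z}\simeq S_{w,z}^- T_{z',w}^+$, I would run the symmetric argument, this time using Lemma~\ref{lem:mapsforII+agree} (the type-$T$ analog of Lemma~\ref{lem:mapsforI+agree}) together with the relation $S_{w,z}^- T_{w,z}^+\simeq \id$ from Lemma~\ref{lem:addtrivialstrandII}. Alternatively, once the first homotopy is established, the equality $T_{w,z}^- S_{z',w}^+\simeq S_{w,z}^- T_{z',w}^+$ can be deduced purely algebraically from the relations in Lemmas~\ref{lem:olddefsforTw,z},~\ref{lem:addtrivialstrandII}, and~\ref{lem:oldaddtrivialstrand}: for instance, using $T_{z',w}^+\simeq \Psi_z S_{z',w}^+$ (Lemma~\ref{lem:olddefsforTw,z}, with appropriate relabeling — here one must check the adjacency and coloring hypotheses are met, which they are since $z$ is adjacent to $w$ and we may take all new basepoints to share colors with their neighbors) and the commutation relations of Lemma~\ref{lem:Spm-Psicommutatornotadjacent}, together with $S_{w,z}^- S_{z',w}^+\simeq$ (something computable), one massages one composition into the other.

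The main obstacle will be bookkeeping: keeping straight which basepoint is adjacent to which, which ordering convention (right-to-left) is in force, and verifying at each invocation of Lemmas~\ref{lem:mapsforI+agree},~\ref{lem:mapsforII+agree},~\ref{lem:olddefsforTw,z},~\ref{lem:addtrivialstrandII} that the adjacency and coloring hypotheses are satisfied. There is a genuine subtlety, analogous to the one in the proof of Proposition~\ref{prop:TScommute}: the maps $T_{w,z}^-$ and $S_{z',w}^+$ require almost complex structures stretched along different curves, so composing them literally requires a change of almost complex structure map, and one must check (as in Lemma~\ref{lem:mapsforI+agree}, Subclaims~(1) and~(2)) that this change of almost complex structure map is upper-triangular with identity diagonal and therefore does not disturb the computation. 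I expect the cleanest writeup to first reduce, via Lemmas~\ref{lem:mapsforI+agree} and~\ref{lem:mapsforII+agree}, to relations purely among maps defined with a single coherent almost complex structure, and then finish with the formal manipulations using the established quasi-stabilization relations.
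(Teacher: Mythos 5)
The overall strategy is correct and close to the paper's: rearrange Lemma~\ref{lem:mapsforI+agree} to get $S_{w,z}^+\tau_*^{z'\from z}\simeq S_{z',w}^+$, compose on the left with $T_{w,z}^-$, and invoke $T_{w,z}^-S_{w,z}^+\simeq\id$ from Lemma~\ref{lem:addtrivialstrandII}. You get there, but the detour through "wrong direction / compute on intersection points" is unnecessary — the rearranged identity is exactly what is needed, so no holomorphic computation is required.

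For the second chain homotopy there are two concrete problems. First, the "symmetric argument using Lemma~\ref{lem:mapsforII+agree}" does not straightforwardly apply: that lemma relates the $T$ quasi-stabilizations to the diffeomorphism $\tau^{w\from w'}$ which moves a $\ve{w}$-basepoint, not to $\tau^{z'\from z}$. There is no statement in the paper that directly provides a $T$-analogue of Lemma~\ref{lem:mapsforI+agree} for $\ve{z}$-basepoint moves, so this route would require proving such a lemma first. Second, your proposed identity $T_{z',w}^+\simeq \Psi_z S_{z',w}^+$ has the wrong $\Psi$-subscript. In Lemma~\ref{lem:olddefsforTw,z}, the $\Psi$ that appears is $\Psi$ of the \emph{new} $\ve{z}$-basepoint being added; applied to the pair $(z',w)$ where $z'$ is the new $\ve{z}$-basepoint, the correct form is $T_{z',w}^+\simeq \Psi_{z'}S_{z',w}^+$. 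This distinction matters: the paper's argument replaces $T_{w,z}^-$ by $S_{w,z}^-\Psi_z$ (here $z$ is the new basepoint of the pair $(w,z)$), then uses Lemma~\ref{lem:Spm-Psicommutatornotadjacent}(3) in the form $S_{w,z}^-\Psi_z\simeq \Psi_{z'}S_{w,z}^- + S_{w,z}^-\Psi_{z'}$, then collapses $\Psi_{z'}S_{z',w}^+\simeq T_{z',w}^+$, and finally disposes of the error term by the chain $\Psi_{z'}S_{w,z}^-S_{z',w}^+\simeq \Psi_{z'}S_{w,z}^-S_{w,z}^+\tau_*^{z'\from z}\simeq 0$ (using Lemma~\ref{lem:mapsforI+agree} and $S_{w,z}^-S_{w,z}^+\simeq 0$). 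If you keep the correct subscripts, your algebraic approach does essentially reproduce this; with the subscript as written, the manipulation will not close up.
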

\begin{proof} We start with the relation
\begin{equation}
S_{w,z}^+\tau_*^{z'\from z}\simeq S_{z',w}^+,\label{eq:basepointmoving1}
\end{equation} which is obtained by rearranging Lemma~\ref{lem:mapsforI+agree}. We compose Equation~\eqref{eq:basepointmoving1} with $T_{w,z}^-$ on the left to get
\[
T_{w,z}^- S_{w,z}^+ \tau_*^{z'\from z}\simeq T_{w,z}^- S_{z',w}^+.
\]
 Noting that $T_{w,z}^-S_{w,z}^+\simeq \id$ by Lemma~\ref{lem:addtrivialstrandII}, we obtain the relation $\tau_*^{z'\from z}\simeq T_{w,z}^- S_{z',w}^+$.
 
 To obtain the relation $T_{w,z}^-S_{z',w}^+\simeq S_{w,z}^-T_{z',w}^+$, we perform the following manipulation:
 \begin{align*} T_{w,z}^-S_{z',w}^+&\simeq S_{w,z}^-\Psi_z S_{z',w}^+&& \text{(Lemma~\ref{lem:olddefsforTw,z})}\\
 &\simeq S_{w,z}^-\Psi_{z'}S_{z',w}^++\Psi_{z'} S_{w,z}^-S_{z',w}^+&& \text{(Lemma~\ref{lem:Spm-Psicommutatornotadjacent})}\\
 &\simeq S_{w,z}^-T_{z',w}^++\Psi_{z'}S_{w,z}^-S_{z',w}^+ && \text{(Lemma~\ref{lem:olddefsforTw,z})}.
 \end{align*}
 Note that if we can show that $\Psi_{z'}S_{w,z}^-S_{z',w}^+\simeq 0$ we will be done. To establish this, we note that $\Psi_{z'}S_{w,z}^-S_{z',w}^+\simeq \Psi_{z'} S_{w,z}^- S_{w,z}^+\tau_*^{z'\from z}$ by Lemma~\ref{lem:mapsforI+agree}, and $S_{w,z}^-S_{w,z}^+\simeq 0$ by Lemma~\ref{lem:addtrivialstrandII}.
\end{proof}

The relations from Lemma~\ref{lem:movingbasepointsstep1} are illustrated in Figure~\ref{fig::101}.

\begin{figure}[ht!]
\centering
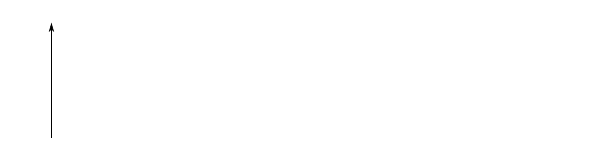
\caption{\textbf{The dividing set interpretation of the relations from Lemma~\ref{lem:movingbasepointsstep1}.}\label{fig::101}}
\end{figure}

Analogously to Lemma~\ref{lem:movingbasepointsstep1}, we have the following result about moving a single $\ws$ basepoint:

\begin{lem}\label{lem:movingbasepointsstep2} Suppose that $(L,\ws_0\cup \{w\},\zs)$ is a multi-based link, and $(z,w')$ are a new pair of basepoints, which are contained in a single component of $L\setminus (\ws_0\cup \{w\}\cup \zs)$ such that $w$ immediately follows $z$.  The map associated to the diffeomorphism $\tau^{w\to w'}\colon (Y,L,\ws_0\cup \{w\},\zs)\to (Y,L,\ws_0\cup \{w'\}, \zs)$ satisfies the relations
\[\tau^{w\to w'}_*\simeq S_{w,z}^-T_{z,w'}^+\simeq T_{w,z}^-S_{z,w'}^+.\]
\end{lem}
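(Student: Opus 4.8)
The plan is to mimic the proof of Lemma~\ref{lem:movingbasepointsstep1}, with the roles of the $\ws$- and $\zs$-basepoints interchanged throughout. First I would invoke Lemma~\ref{lem:mapsforII+agree}, which gives $T_{w,z}^+\simeq T_{z,w'}^+\tau_*^{w\from w'}$, or equivalently after rearranging, $T_{w,z}^+\tau_*^{w\to w'}\simeq T_{z,w'}^+$. Composing this relation on the left with $S_{w,z}^-$ yields $S_{w,z}^- T_{w,z}^+\tau_*^{w\to w'}\simeq S_{w,z}^- T_{z,w'}^+$. Since $S_{w,z}^- T_{w,z}^+\simeq \id$ by Lemma~\ref{lem:addtrivialstrandII}, this immediately gives the first asserted chain homotopy
\[
\tau_*^{w\to w'}\simeq S_{w,z}^- T_{z,w'}^+.
\]

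Next I would establish $S_{w,z}^- T_{z,w'}^+\simeq T_{w,z}^- S_{z,w'}^+$. The idea is to rewrite $S_{w,z}^-$ using the identity $S_{w,z}^-\simeq T_{w,z}^-\Phi_w$ from Lemma~\ref{lem:olddefsforTw,z} (this is the $\ws$-analog of the step $T_{w,z}^-\simeq S_{w,z}^-\Psi_z$ used before), so that
\[
S_{w,z}^- T_{z,w'}^+\simeq T_{w,z}^-\Phi_w T_{z,w'}^+.
\]
Then I would apply the appropriate case of Lemma~\ref{lem:PhiPsiandS^pmcommutator} (or directly Lemma~\ref{lem:Spm-Psicommutatornotadjacent}, with $w$ and $z$ swapped, i.e.\ the $T$-quasi-stabilization commuting with $\Phi_{w'}$) to move $\Phi_w$ past $T_{z,w'}^+$: since $w$ is adjacent to $z$ on the original link but $z$ lies between $w$ and $w'$, the commutator picks up a $\Phi_{w'}$ correction term, giving something of the shape
\[
\Phi_w T_{z,w'}^+\simeq T_{z,w'}^+\Phi_{w'}+T_{z,w'}^+ T_{z,w'}^-\cdot(\text{something}),
\]
and then I would use $T_{w,z}^-\Phi_{w'} T_{z,w'}^+$ together with Lemma~\ref{lem:olddefsforTw,z} again (now in the form $T_{w,z}^-\Phi_{w'}\simeq$ a $\Psi$-type composition, or more directly recognizing $\Phi_{w'} T_{z,w'}^+\simeq T_{w,z}^+$-type identities) to land on $T_{w,z}^- S_{z,w'}^+$. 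The leftover term will be shown to be null-homotopic because it contains a factor $T_{w,z}^- T_{w,z}^+\simeq 0$ or $T_{z,w'}^- T_{z,w'}^+\simeq 0$ from Lemma~\ref{lem:addtrivialstrandII}, exactly paralleling the vanishing of $\Psi_{z'} S_{w,z}^- S_{z',w}^+$ in the proof of Lemma~\ref{lem:movingbasepointsstep1}.

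The main obstacle is getting the bookkeeping of adjacencies right: one must track carefully which basepoints are adjacent to which after each quasi-stabilization, since the commutator relations in Lemmas~\ref{lem:Spm-Psicommutatornotadjacent} and~\ref{lem:PhiPsiandS^pmcommutator} produce different correction terms depending on adjacency, and the ordering $(z,w')$ versus the inserted $w$ matters. A clean way to organize this is to run the whole argument on the uncolored complexes first (where the algebraic identities among $\Phi$, $\Psi$, $S^{\pm}$, $T^{\pm}$ are matrix identities that hold on the nose after sufficient neck-stretching, as in Proposition~\ref{prop:quasi-stabilizeddifferential}), and then note that all terms are equivariant and filtered with respect to any coloring making the relevant maps chain maps, so the homotopies persist to the colored setting. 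Alternatively, since the statement is the $\ws/\zs$-mirror of Lemma~\ref{lem:movingbasepointsstep1}, one can simply invoke the symmetry of the entire construction under interchanging $\ws$ and $\zs$ (which exchanges $S$ and $T$, $\Phi_w$ and $\Psi_z$, and $\tau^{z'\to z}$ and $\tau^{w\to w'}$) and cite Lemma~\ref{lem:movingbasepointsstep1}; I would present the short proof as ``the proof follows that of Lemma~\ref{lem:movingbasepointsstep1} after interchanging the roles of the $\ws$- and $\zs$-basepoints, using Lemma~\ref{lem:mapsforII+agree} in place of Lemma~\ref{lem:mapsforI+agree},'' while spelling out the two-line first identity and indicating the one substitution needed in the second.
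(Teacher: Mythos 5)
Your proposal is correct and matches the paper's approach: the paper's entire proof is the one line ``One simply switches the roles of $\ws$ and $\zs$ in the proof of Lemma~\ref{lem:movingbasepointsstep1},'' which is exactly the symmetry argument you land on in your final paragraph, and your derivation of the first identity $\tau_*^{w\to w'}\simeq S_{w,z}^-T_{z,w'}^+$ via Lemma~\ref{lem:mapsforII+agree} and $S_{w,z}^- T_{w,z}^+\simeq\id$ is the correct mirror. A minor nit: the commutator you write as $\Phi_w T_{z,w'}^+\simeq T_{z,w'}^+\Phi_{w'}+\cdots$ is not quite the right mirror step --- the relevant identity is $T_{w,z}^-\Phi_w\simeq T_{w,z}^-\Phi_{w'}+\Phi_{w'}T_{w,z}^-$ (the $T$-analog of part~(3) of Lemma~\ref{lem:Spm-Psicommutatornotadjacent}), after which $\Phi_{w'}T_{z,w'}^+\simeq S_{z,w'}^+$ and the leftover $\Phi_{w'}T_{w,z}^-T_{z,w'}^+$ vanishes via Lemma~\ref{lem:mapsforII+agree} and $T_{w,z}^-T_{w,z}^+\simeq 0$; but since you ultimately appeal to the global $\ws/\zs$ symmetry this does not affect the argument.
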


\begin{proof} One simply switches the roles of $\ws$ and $\zs$ in the proof of Lemma~\ref{lem:movingbasepointsstep1}.
\end{proof}

We now give some further expressions for the diffeomorphism maps induced by moving basepoints, which will be useful throughout the paper. Suppose that $\bL=(L,\ve{w}_0\cup \{w\},\ve{z}_0\cup \{z\})$ is a multi-based link, such that $w$ and $z$ are adjacent, and $w$ follows $z$. Suppose that $(w',z')$ are two new basepoints, contained in the component of $L\setminus (\ws_0\cup \zs_0\cup \{w,z\})$ which immediately follows $w$.  Write $\tau^{(w',z')\from (w,z) }$ for a diffeomorphism
\begin{equation}
\tau^{(w',z')\from (w,z)}\colon  (Y,L,\ws_0\cup \{w\},\zs\cup \{z\})\to (Y,L,\ws_0 \cup \{w'\}, \zs_0\cup \{z'\}),\label{eq:tauwzdefinition}
\end{equation}
which moves $(w,z)$ to $(w',z')$, but is fixed outside of a subinterval of $L$ containing $(w',z',w,z)$.

\begin{lem}\label{lem:newversionbasepointmovingmaps}Suppose that $\bL=(L,\ve{w}_0\cup \{w\},\ve{z}_0\cup \{z\})$ and $(w',z')$ are as above, with $(w',z',w,z)$ forming a tuple of consecutive basepoints (ordered right to left). Writing $\tau^{(w',z')\from (w,z) }$ for the diffeomorphism from Equation~\eqref{eq:tauwzdefinition}, we have
\[
\tau^{(w',z')\from (w,z)}_*\simeq S_{w,z}^-T_{w',z'}^+ \simeq S_{w,z}^-\Psi_{z'} S_{w',z'}^+\simeq T_{w,z}^-\Phi_w T_{w',z'}^+.
\]
If instead the basepoints are ordered $(w,z,w',z')$ (read right to left), and $\tau^{(w,z)\to (w',z')}$ is the diffeomorphism constructed by moving $(w,z)$ to $(w',z')$, analogous to the previous situation, then
\[
\tau_*^{(w,z)\to (w',z')}\simeq T_{w,z}^- S_{w',z'}^+\simeq S_{w,z}^- \Psi_z S_{w',z'}^+\simeq T_{w,z}^- \Phi_{w'} T_{w',z'}^+.
\]
\end{lem}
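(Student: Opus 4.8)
The plan is to import the single-basepoint-moving relations of Lemmas~\ref{lem:movingbasepointsstep1} and~\ref{lem:movingbasepointsstep2} and run the quasi-stabilization algebra of Section~\ref{subsec:relations2}; no new holomorphic geometry is needed. I will carry out the first case, with $(w',z',w,z)$ ordered right to left (so that along the orientation the basepoints occur in the order $z,w,z',w'$), and the second case is obtained by the same argument with the two factors of the decomposition run in the opposite order (equivalently, by interchanging the roles of the $\ws$- and $\zs$-basepoints).

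First I would dispose of the last two equivalences, which are purely algebraic. Since all the quasi-stabilization maps in the statement are chain maps, the relevant coloring hypotheses of Lemma~\ref{lem:olddefsforTw,z} hold, so $T_{w',z'}^+\simeq \Psi_{z'}S_{w',z'}^+$ gives $S_{w,z}^-T_{w',z'}^+\simeq S_{w,z}^-\Psi_{z'}S_{w',z'}^+$; and rewriting $S_{w,z}^-\simeq T_{w,z}^-\Phi_w$ (again Lemma~\ref{lem:olddefsforTw,z}) and then applying $\Psi_{z'}S_{w',z'}^+\simeq T_{w',z'}^+$ once more yields $S_{w,z}^-\Psi_{z'}S_{w',z'}^+\simeq T_{w,z}^-\Phi_w\Psi_{z'}S_{w',z'}^+\simeq T_{w,z}^-\Phi_wT_{w',z'}^+$. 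Thus the entire content is the first equivalence $\tau^{(w',z')\from(w,z)}_*\simeq S_{w,z}^-T_{w',z'}^+$.

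For that equivalence I would factor $\tau^{(w',z')\from(w,z)}$ as $\tau_2\circ\tau_1$, where $\tau_1$ slides the $\zs$-basepoint forward across the $\ws$-basepoint $w$ (carrying $z$ to the position of $z'$, with $w$ fixed) and $\tau_2$ then slides the $\ws$-basepoint forward across the $\zs$-basepoint now sitting at the position of $z'$ (carrying $w$ to the position of $w'$), passing through the intermediate link $(Y,L,\ws_0\cup\{w\},\zs_0\cup\{z'\})$. The map $\tau_{1*}$ is computed directly by Lemma~\ref{lem:movingbasepointsstep1}, whose triple $(z',w,z)$ is precisely the present one, giving $\tau_{1*}\simeq S_{w,z}^-T_{z',w}^+$; and $\tau_{2*}$ is computed by (the orientation-reflected analogue of) Lemma~\ref{lem:movingbasepointsstep2}, which expresses it as a word in quasi-stabilization maps for an auxiliary $\zs$-basepoint adjacent to $w$. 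Composing the two words and simplifying — using the commutation Propositions~\ref{thm:quasistabcommute}, \ref{lem:z-quasistabcommute} and~\ref{prop:TScommute} to push the auxiliary basepoint past the non-adjacent basepoints $z$ and $z'$ (so that the adjacency hypothesis of Proposition~\ref{prop:TScommute} is satisfied), together with the cancellation relations of Lemmas~\ref{lem:addtrivialstrandII}, \ref{lem:olddefsforTw,z} and~\ref{lem:oldaddtrivialstrand} — collapses the composite to $S_{w,z}^-T_{w',z'}^+$. This is the same kind of diagram chase that appears in the proofs of Lemmas~\ref{lem:movingbasepointsstep1} and~\ref{lem:bypasstriplefromquasistabilization}.

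The main obstacle is the last step: there is no analytic input left to supply, but one must be careful with the right-to-left ordering convention, with which crossings actually occur in the chosen factorization, and with the stretching/adjacency conditions under which the commutation propositions may be invoked; in particular one must verify that the auxiliary basepoint introduced by Lemma~\ref{lem:movingbasepointsstep2} can be commuted clear of the pairs $(w,z)$ and $(w',z')$ before it is cancelled against its partner via Lemma~\ref{lem:addtrivialstrandII}. Once the composite word is produced, the rewriting is routine. Finally, the second displayed relation follows verbatim after interchanging the roles of the $\ws$- and $\zs$-basepoints (equivalently, by reversing the direction of the decomposition $\tau_2\circ\tau_1$), with Lemmas~\ref{lem:mapsforI+agree} and~\ref{lem:mapsforII+agree} available if one instead prefers to deduce it from the first relation by composing with the relevant basepoint-swap diffeomorphisms.
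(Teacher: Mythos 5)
Your proposed factorization of $\tau^{(w',z')\from(w,z)}$ as $\tau_2\circ\tau_1$, with $\tau_1$ carrying $z$ forward to the position $z'$ while fixing the basepoint $w$, does not exist as a diffeomorphism of the pair $(Y,L)$. Since the positions occur along the orientation in the order $z,w,z',w'$, the points $z$ and $z'$ lie in different components of $L\setminus\{w\}$, so no orientation-preserving diffeomorphism of $L$ can fix $w$ and send $z$ to $z'$ — it would have to reverse the cyclic order. This is exactly why Lemma~\ref{lem:movingbasepointsstep1} is stated with the middle basepoint $w$ of the triple $(z',w,z)$ being a \emph{new} (not yet present) basepoint: that hypothesis is what makes the diffeomorphism $\tau^{z'\from z}$ well-defined. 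Invoking that lemma to compute $\tau_{1*}$ in a situation where $w$ is still a basepoint of the domain link is therefore not a valid application. Your proposed $\tau_2$ has the same defect: it would need to carry $w$ across the fixed basepoint $z'$.

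The paper's decomposition $\tau^{(w',z')\from(w,z)}=\tau^{z'\from z}\circ\tau^{w'\from w}$ is not a cosmetic reordering of yours; the order is forced. One first moves $w$ forward to $w'$, sliding across the position $z'$ which is not yet a basepoint (this is a legitimate $\tau^{w'\from w}$), and only then moves $z$ forward to $z'$, sliding across the now-vacated position $w$ (so Lemma~\ref{lem:movingbasepointsstep1} applies on the intermediate link $(Y,L,\ws_0\cup\{w'\},\zs_0\cup\{z\})$). The first factor is then absorbed using Lemma~\ref{lem:mapsforII+agree}, $T^+_{z',w}\tau^{w'\from w}_*\simeq T^+_{w',z'}$, giving $S^-_{w,z}T^+_{w',z'}$ in two clean steps, without any of the commutation machinery you invoke. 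Your treatment of the remaining two equivalences via Lemma~\ref{lem:olddefsforTw,z} is correct and essentially identical to the paper's, but the first and central equivalence needs the decomposition redone in the other order. (Correspondingly, in the case $(w,z,w',z')$ the order of the two elementary moves flips, as you anticipated; but since your first-case order was already reversed, your description of the second case is also off by the same swap.)
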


\begin{proof}Suppose first that the basepoints are ordered $(w',z',w,z)$, read right to left. First note that we can decompose the diffeomorphism $\tau^{(w',z')\from (w,z)}$ as the composition $\tau^{z'\from z}\circ \tau^{w'\from w}$. We compute
\begin{align*}\tau^{(w',z')\from (w,z)}_*& \simeq \tau^{ z'\from z}_*\tau^{ w'\from w}_*&& \\
& \simeq S^-_{w,z} T^+_{z',w} \tau^{ w'\from w}_*&& \text{(Lemma~\ref{lem:movingbasepointsstep1})}\\
&\simeq S^-_{w,z}T^+_{w',z'}&& \text{(Lemma~\ref{lem:mapsforII+agree})}.
\end{align*}

Note that on the intermediate link Floer complex with all four basepoints $(w,z,w',z')$, we give $w$ and $w'$ the same color, and $z$ and $z'$ the same color. Applying Lemma~\ref{lem:olddefsforTw,z}, we see that
\[
S_{w,z}^-T_{w',z'}^+\simeq S_{w,z}^- \Psi_{z'} S_{w',z'}^+\qquad \text{and} \qquad S_{w,z}^- T_{w',z'}^+\simeq T_{w,z}^- \Phi_w T_{w',z'}^+,
\] completing the proof in the case that the basepoints are ordered $(w',z',w,z)$, read right to left.

The proof of the relations in the case that the basepoints are ordered $(w,z,w',z')$, read right to left, is a simple modification. 
\end{proof}

The dividing set interpretation of some of the relations from Lemma~\ref{lem:newversionbasepointmovingmaps} is shown in Figure~\ref{fig::102}.

\begin{figure}[ht!]
\centering
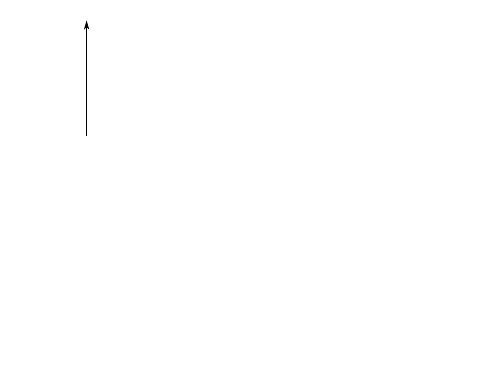
\caption{\textbf{The dividing set interpretation of the relations from Lemma~\ref{lem:newversionbasepointmovingmaps}.}\label{fig::102}}
\end{figure}

\subsection{Quasi-stabilization and Heegaard triples}

In this section, we state a useful result about quasi-stabilization and triangle maps, from \cite{MOIntSurg}.

Given a Heegaard triple $(\Sigma,\as,\bs,\gs)$, there is a 4-manifold $X_{\as,\bs,\gs}$, described by Ozsv\'{a}th and Szab\'{o} \cite{OSTriangles}*{Section~2.2}. Let $\Delta$ denote a 2-cell, viewed as a triangle with boundary edges labeled $e_{\as}$, $e_{\bs}$ and $e_{\gs}$. If $\taus\in \{\as,\bs,\gs\}$, write $U_{\taus}$ for the 3-dimensional handlebody which has boundary $-\Sigma$ and has $\taus$ as a collection of compressing disks. The 4-manifold $X_{\as,\bs,\gs}$ is defined as
\[
X_{\as,\bs,\gs}:=\left((\Delta\times \Sigma)\sqcup (e_{\as}\times U_{\as})\sqcup (e_{\bs} \times U_{\bs})\sqcup (e_{\gs}\times U_{\gs})\right) /{\sim},
\]
where $\sim$ denotes gluing $e_{\taus}\times U_{\taus}$ to $\Delta\times \Sigma$ in the natural way.

Given a Heegaard triple $(\Sigma, \ve{\alpha},\ve{\beta},\ve{\gamma},\ve{w},\ve{z})$, as well as a curve $\alpha_s\subset \Sigma\setminus \as$ and a distinguished point $p\in \alpha_s\setminus (\bs\cup \gs)$, we can construct a quasi-stabilized Heegaard triple $\cT^+_{\alpha_s,p}$, as shown in Figure~\ref{fig::16quasi}.

 Note that the 4-manifolds $X_{\as,\bs,\gs}$ and $X_{\as\cup \{\alpha_s\}, \bs\cup \{\beta_0\}, \gs\cup \{\gamma_0\}}$ are canonically diffeomorphic, since both are constructed by gluing $\Sigma\times \Delta$ together with the handlebodies $U_{\as}\times e_{\as},$ $U_{\bs}\times e_{\bs}$ and $U_{\gs}\times e_{\gs}$, and the Heegaard surface and the 3-dimensional handlebodies $U_{\as},$ $U_{\bs}$ and $U_{\gs}$  are unchanged when we quasi-stabilize. Furthermore, it is straightforward to see that if $\psi\in \pi_2(\xs,\ys,\zs)$ is a homology class of triangles on the triple $(\Sigma,\as,\bs,\gs)$ and $\psi_0$ is a triangle on the subdiagram $(S^2,\alpha_s,\beta_0,\gamma_0)$ which has the same multiplicity on both sides of $\alpha_s$ at the connected sum point, then $\frs_{\ws}(\psi\# \psi_0)=\frs_{\ws}(\psi)$ (note however that not all homology classes of triangles on $\cT^+_{p,\alpha_s}$ can be written as $\psi\#\psi_0$ for such triangles $\psi$ and $\psi_0$).

If $J_s$ is an almost complex structure on $\Sigma\times[0,1]\times \R$ which is split in a neighborhood of $p$, we will write $J_s(T)$  for an almost complex structure obtained from $J_s$ by inserting a sufficiently large neck along the curve $c$, shown in Figure~\ref{fig::16quasi}.

We will write (abusing notation slightly)
\[
\alpha_s\cap \beta_0=\{\theta^{\ws},\theta^{\zs}\},\qquad \alpha_s\cap \gamma_0=\{\theta^{\ws}, \theta^{\zs}\}\qquad \text{and} \qquad \beta_0\cap \gamma_0=\{\theta^+,\theta^-\},
\]
In the above expressions, if $\os\in \{\ws,\zs\}$, then we write $\theta^{\os}$ for the top $\gr_{\os}$-graded intersection point when the designation as top and bottom degree intersection point depends on the choice of grading. We write $\theta^+$ for the top graded intersection point (when the designation is independent of the choice of grading).

   \begin{figure}[ht!]
\centering
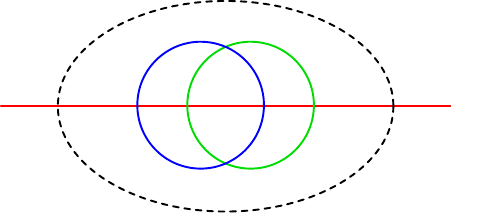
\caption{\textbf{The quasi-stabilized triple $\cT_{\alpha_s,p}^+$ from Proposition~\ref{prop:singlealphaquasistabtriangle}}}.\label{fig::16quasi}
\end{figure}

We state the following:

\begin{prop}[\cite{MOIntSurg}*{Proposition~5.2}]\label{prop:singlealphaquasistabtriangle} Suppose that $\cT=(\Sigma,\ve{\alpha},\ve{\beta},\ve{\gamma},\ve{w},\ve{z})$ is a strongly $\frs$-admissible triple and suppose that $\alpha_s$ is a new $\ve{\alpha}$-curve, passing through the point $p\in \Sigma$. Let $\cT_{\alpha_s,p}^+$ denote the quasi-stabilized Heegaard triple, as in Figure~\ref{fig::16quasi}. For sufficiently large $T$, we have
\[F_{\cT_{\alpha_s,p}^+, J_s(T),\frs}(\ve{x}\times \theta^{\ws}, \ve{y}\times \theta^+)=F_{\cT,J_s,\frs}(\ve{x},\ve{y})\otimes \theta^{\ws} \qquad \text{and}\]
\[F_{\cT_{\alpha_s,p}^+, J_s(T),\frs}(\ve{x}\times \theta^{\zs}, \ve{y}\times \theta^+)=F_{\cT,J_s,\frs}(\ve{x},\ve{y})\otimes \theta^{\zs}.\] 
\end{prop}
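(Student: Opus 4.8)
The plan is to prove this exactly as in \cite{MOIntSurg}*{Proposition~5.2}, by a neck-stretching computation of the same flavor as the proof of Proposition~\ref{prop:TScommute}. First I would set up the geometry: realize $\cT_{\alpha_s,p}^+$ as the special connected sum of $\cT=(\Sigma,\as,\bs,\gs,\ws,\zs)$ with the small triple $(S^2,\alpha_s,\beta_0,\gamma_0)$ at the point $p\in\alpha_s$, with the connected-sum neck $c$ shown in Figure~\ref{fig::16quasi}. A generator of $\bT_{\as\cup\{\alpha_s\}}\cap\bT_{\bs\cup\{\beta_0\}}$ has the form $\xs\times a$ with $\xs\in\bT_\as\cap\bT_\bs$ and $a\in\alpha_s\cap\beta_0=\{\theta^{\ws},\theta^{\zs}\}$, and similarly on the other two sides. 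So the proposition will follow once I understand, for large neck length $T$, which Maslov index $0$ homology classes of triangles $\Psi\in\pi_2(\xs\times\theta^{\ws},\ys\times\theta^+,-)$ on $\cT_{\alpha_s,p}^+$ carry $J_s(T)$-holomorphic representatives, and how those representatives are counted.

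The heart of the argument is a Maslov-index accounting. For every homology class $\Psi$ on $\cT_{\alpha_s,p}^+$ I would decompose $\Psi$ near $p$ as a class $\psi_\Sigma$ supported on $(\Sigma,\as,\bs,\gs)$ together with contributions in the small $S^2$ piece, and establish (adapting \cite{ZemQuasi}*{Lemma~8.3} from strips to triangles) an identity of the form
\[
\mu(\Psi)=\mu(\psi_\Sigma)+(\text{a sum of nonnegative terms}),
\]
where the correction terms record the multiplicities of $\Psi$ at the two regions of the small piece adjacent to $\alpha_s$ (one containing $w$, one containing $z$), the multiplicities at the remaining components of $\Sigma\setminus(\as\cup\alpha_s)$, and the multiplicities of any $\alpha_s$-boundary degeneration. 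Taking a sequence $T_i\to\infty$ and extracting a limit of $J_s(T_i)$-holomorphic representatives of $\Psi$, the limit breaks into a broken holomorphic triangle on $(\Sigma,\as,\bs,\gs)$ representing $\psi_\Sigma$, curves on the $S^2$ side, and possibly $\alpha_s$-boundary degenerations; in particular $\psi_\Sigma$ has a broken holomorphic representative, so $\mu(\psi_\Sigma)\ge 0$ by transversality on $\cT$. Since we only count $\mu(\Psi)=0$ classes, every term in the displayed formula must vanish: $\psi_\Sigma$ is an honest index-$0$ holomorphic triangle, there are no boundary degenerations, and $\Psi=\psi_\Sigma\#\psi_0$ where $\psi_0$ is a triangle on $(S^2,\alpha_s,\beta_0,\gamma_0)$ with equal multiplicities on the two sides of $\alpha_s$ at $p$. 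Using the relative $\gr_{\ws}$ grading, $\psi_0$ must be the small index-$0$ triangle with vertices $\theta^{\ws}\in\alpha_s\cap\beta_0$, $\theta^+\in\beta_0\cap\gamma_0$ and $\theta^{\ws}\in\alpha_s\cap\gamma_0$ whose domain avoids $w$ and $z$; this is the unique such class and has a single holomorphic representative. In particular the output of $\Psi$ is $\zs\times\theta^{\ws}$ with $\zs\in\bT_\as\cap\bT_\gs$, and the same analysis with $\gr_{\zs}$ in place of $\gr_{\ws}$ gives the $\theta^{\zs}$ version, using the coincidences $\theta^{\ws}=\xi^{\zs}$ and $\theta^{\zs}=\xi^{\ws}$.

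Finally I would run the standard gluing argument for sufficiently large $T$ (as in the construction of the holomorphic triangle maps \cite{OSTriangles} and in \cite{MOIntSurg}): each index-$0$ holomorphic triangle $\psi$ on $\cT$ from $\xs$ to $\ys$ to $\zs$ glues with the unique small triangle $\psi_0$ to a single $J_s(T)$-holomorphic representative of $\psi\#\psi_0$, and conversely every counted representative of such a $\Psi$ arises this way, so
\[
F_{\cT_{\alpha_s,p}^+,J_s(T),\frs}(\xs\times\theta^{\ws},\ys\times\theta^+)=F_{\cT,J_s,\frs}(\xs,\ys)\otimes\theta^{\ws},
\]
and likewise for $\theta^{\zs}$. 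The step I expect to be the main obstacle is the degeneration analysis underlying the Maslov-index identity: one must carefully rule out $\alpha_s$-boundary degenerations and broken strips that would carry index away from $\psi_\Sigma$, and correctly pin down which of $\theta^{\ws},\theta^{\zs}$ occurs in the output (the source of the asymmetry in the statement). Both points are exactly what is handled in \cite{MOIntSurg}, and the bookkeeping must be made uniform in $T$ so that a single large $T$ suffices.
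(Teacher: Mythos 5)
The paper does not prove this proposition at all: it is stated with a citation to \cite{MOIntSurg}*{Proposition~5.2} and no proof is given, since it is being imported as a known result. Your proposal is therefore not being compared against a proof in this paper but against the cited one, and your sketch is a faithful reconstruction of the standard neck-stretching argument used there (and mirrored elsewhere in this paper, e.g., in the proof of Proposition~\ref{prop:TScommute}): degenerate along the connect-sum circle $c$, decompose the Maslov index of a class $\Psi$ on $\cT_{\alpha_s,p}^+$ as $\mu(\psi_\Sigma)$ plus nonnegative local contributions, extract a broken limit to force $\mu(\psi_\Sigma)=0$ and the local piece to be the constant/small triangle, identify the small triangle via the $\gr_{\ws}$ (resp.\ $\gr_{\zs}$) grading, and glue for large $T$. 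You have correctly identified the two delicate points --- ruling out $\alpha_s$-boundary degenerations and determining which of $\theta^{\ws},\theta^{\zs}$ appears in the output --- as the content of the cited argument. Nothing to correct, beyond noting that you are re-deriving the reference rather than reproducing an in-paper proof.
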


\begin{rem}Proposition~\ref{prop:singlealphaquasistabtriangle} was used \cite{ZemQuasi} in order to prove that the quasi-stabilization maps are natural. It implies that the quasi-stabilization maps $S_{w,z}^{\circ}$ and $T_{w,z}^\circ$ commute with handleslides and isotopies of the $\bs$ curves on the unstabilized diagram. However Proposition~\ref{prop:singlealphaquasistabtriangle} is not sufficient to show that the quasi-stabilization maps commute with moves of the $\as$ curves, or are independent of the choice of $\alpha_s$ curve. To fully show naturality of the quasi-stabillization maps, one needs an additional triangle count \cite{ZemQuasi}*{Theorem~6.5}, which has a non-trivial requirement on the unstabilized Heegaard triple $(\Sigma,\as,\bs,\gs)$.
\end{rem}

\section{Maps for 4-dimensional handles}
\label{sec:handleattachmentmapsawayfromL}
In this section we define maps for 4-dimensional handles attached in $Y\setminus L$. The maps we describe in this section are modifications of the maps from \cite{OSTriangles}, though there are some notable differences.

\subsection{Framed spheres and handle attachment cobordisms}
\label{subsec:modelhandlecobsawayfromL}

\begin{define}A \emph{framed $k$-sphere} in $Y\setminus L$ is a map 
\[
\bS\colon S^k\times D^{3-k}\to Y\setminus L
\] which is an embedding.
\end{define}

Given a framed $k$-sphere $\bS$ in $Y\setminus L$, we define the \emph{surgery} of $\bS$ to be the 3-manifold
\[
Y(\bS):=(Y\setminus \Int(\im (\bS)))\cup (D^{k+1}\times S^{3-k-1}).
\]
Note that the link $L\subset Y$ induces a link $L\subset Y(\bS)$. We define the \emph{trace} of $\bS$ to be the 4-manifold
\[W(Y,\bS):=([0,1]\times Y)\cup H_{k+1}\] where 
\[H_{k+1}:=D^{k+1}\times D^{3-k}\] is attached to $\{1\}\times \im (\bS)\subset \{1\}\times Y$. We define the \emph{trace link cobordism} of $\bS$ to be the undecorated link cobordism
\[\cW(Y,L,\bS):=(W(Y,\bS),[0,1]\times L)\colon (Y,L)\to (Y(\bS), L).
\]

If $\bS$ is instead a collection of framed  spheres in $Y\setminus L$, with pairwise disjoint images, we can still form the surgered manifold $Y(\bS)$ and the trace link cobordism $\cW(Y,L,\bS)$, by performing the surgeries or handle attachments simultaneously.

Note that although $S^{-1}\times D^{4}$ is the empty set, it is useful to distinguish between a $(-1)$-dimensional sphere $\bS^{-1}$, and the  framed link with no components $\bS_\varnothing$. We define the trace of $\bS_{\varnothing}$ to be $[0,1]\times Y$, while we define the trace of a $(-1)$-dimensional sphere $\bS^{-1}$ to be $([0,1]\times Y)\sqcup D^4$.

\subsection{0-handle and 4-handle maps}
\label{sec:0/4-handlemaps}

We now describe handle attachment maps for 0-handles and 4-handles, which we think of as the maps for framed $(-1)$- and $3$-dimensional spheres in $Y\setminus L$. In terms of decorated link cobordisms, the 0-handle map will be the cobordism map for a decorated link cobordism $(W,\cF)$ with $\cF=(\Sigma,\cA)$, where $W$ has a Morse function $f$ such that there is a point $p\in \Int (W)$ which is the unique critical point of $f$, $f|_{\Sigma}$ and $f|_{\cA}$, and has index 0 for all three. The 4-handle map will correspond to a similar cobordism, but turned upside.

Suppose that $(Y,\bL)$ is a  3-manifold containing a multi-based link, and let $(S^3,\bU)$ denote a copy of $S^3$ containing a doubly based unknot. Write $\bL=(L,\ws,\zs)$ and $\bU=(U,w,z)$, and let $(\Sigma,\as,\bs,\ws,\zs)$ be a diagram for $(Y,\bL)$. Let $(S^2,w,z)$ denote a diagram for $(S^3,\bU)$ which has no $\as$- or $\bs$-curves. Suppose that $\sigma\colon \ws\cup \zs\to \bmP$ is a coloring, and $\sigma'\colon \ws\cup \zs\cup \{w,z\}\to \bmP$ is a coloring which extends $\sigma$.

We now describe the 0-handle map
\[
F_{Y,\bL,\bS^{-1},\hat{\frs}}\colon \cCFL^-(Y,\bL^\sigma,\frs)\to  \cCFL^-(Y\sqcup S^3, (\bL\sqcup \bU)^{\sigma'},\frs\sqcup \frs_0).
\]
 Here $\hat{\frs}$ is the unique $\Spin^c$ structure on $[0,1]\times Y\cup D^4$ which extends $\frs$.

 Noting that the set of intersection points on the diagrams $(\Sigma,\as,\bs)$ and $(\Sigma\sqcup S^2,\as,\bs)$ are equal, we define the map
\[
F_{Y,\bL,\bS^{-1},\hat{\frs}}\colon \cCFL^-(\Sigma,\as,\bs,\ws,\zs,\sigma,\frs\sqcup \frs_0)\to \cCFL^-(\Sigma\sqcup S^2,\as,\bs,\ws,\zs,\sigma',\frs\sqcup \frs_0)
\] via the  formula
\begin{equation}
F_{Y,\bL,\bS^{-1},\hat{\frs}}(\ve{x})=\ve{x}.
\label{eq:0-handlemap}
\end{equation} 
The map $F_{Y,\bL,\bS^{-1},\hat{\frs}}$  is a chain map and commutes with the change of diagrams maps for $Y$.

Dually, a framed 3-sphere in a 3-manifold consists of a distinguished component of $Y$ which is identified with $S^3$. If this copy of $S^3$ contains a doubly based unknot $\bU$, we can define the 4-handle map
\[
F_{Y,\bL,S^3,\hat{\frs}}\colon \cCFL^-(Y\sqcup S^3,(\bL\sqcup \bU)^{\sigma'},\frs\sqcup \frs_0)\to \cCFL^-(Y, \bL^{\sigma},\frs)
\]
using the inverse of the formula for the 0-handle in Equation~\eqref{eq:0-handlemap}.

\subsection{1-handle and 3-handle maps} 
\label{sec:1--handlemaps}

We now describe the 1-handle and 3-handle maps. The constructions we present are similar but not identical to the 1-handle and 3-handle maps defined in \cite{OSTriangles}. See also the construction of 1-handle and 3-handle maps from \cite{JCob}. If $(Y,\bL)$ is a 3-manifold with a multi-based link, a 1-handle is the trace cobordism of a framed 0-sphere in $Y$. Similarly a 3-handle is the trace cobordism of a framed 2-sphere.

 Suppose $\bS^i$ is a framed $i$-sphere in $Y\setminus L$, for $i\in \{0,2\}$. Given a $\Spin^c$ structure $\frs$ on $Y$, there is a unique $\Spin^c$ structure $\hat{\frs}$ on the trace $W(Y,\bS^i)$ which extends $\frs$. We let $\frs'$ denote the restriction of $\hat{\frs}$ to $Y(\bS^i)$. In this section, we describe 1- and 3-handle maps
\[F_{Y,\bL,\bS^i,\hat{\frs}}\colon  \cCFL^-(Y,\bL^\sigma,\frs)\to \cCFL^-(Y(\bS^i),\bL^\sigma,\frs').\] 

Let us describe the 1-handle map first. We start with a diagram $\cH=(\Sigma,\as,\bs,\ws,\zs)$ such that the image of $\bS^0$ intersects $\Sigma$ in two disks $D_1$ and $D_2$ which are in the complement of $\as\cup \bs\cup \ws\cup \zs$. We can form a diagram $\cH'=(\Sigma',\as',\bs',\ws,\zs)$ for $(Y(\bS^0),\bL)$ by defining
\[
\Sigma':=(\Sigma\setminus (D_1\cup D_2))\cup A,
\]
where $A$ is an annulus, and defining
\[
\as':=\as\cup \{\alpha_0\}\qquad \text{and} \qquad \bs':=\bs\cup \{\beta_0\},
\] where $\alpha_0$ and $\beta_0$ are two homologically essential curves in $A$ which intersect in exactly two points. We will write $\alpha_0\cap \beta_0=\{\theta^+,\theta^-\}.$ The two points $\theta^+$ and $\theta^-$ are distinguished by the Maslov grading (note that both $\gr_{\ws}$ and $\gr_{\zs}$ induce the same decomposition of $\alpha_0\cap \beta_0$ into higher and lower intersection points). The 1-handle map
\[
F_{Y,\bL,\bS^0,\hat{\frs}}\colon \cCFL^-(\cH, \sigma,\frs)\to \cCFL^-(\cH',\sigma,\frs')
\] is defined via the formula
\[F_{Y,\bL,\bS^0,\hat{\frs}}(\ve{x})=\ve{x}\times \theta^+,\] extended equivariantly over $\cR_{\bmP}^-$.

In the opposite direction, the 3-handle map for a framed 2-sphere $\bS^2$ is defined by picking a Heegaard diagram $\cH'=(\Sigma',\as\cup \{\alpha_0\},\bs\cup \{\beta_0\},\ws,\zs)$ such that $\Sigma'$ intersects $\bS^2$ in an annulus $A$, which contains both the curves  $\alpha_0$ and $\beta_0$, which are homologically essential in $A$, and intersect in two points, $\theta^+$ and $\theta^-$. Furthermore $A$ intersects no curves in $\as$ or $\bs$. We let $\Sigma$ denote the surface obtained by removing $A$ from $\Sigma'$ and filling in the two boundary components with disks. Write $\cH=(\Sigma,\as,\bs,\ws,\zs)$. If $\hat{\frs}\in \Spin^c(W(Y,\bS^2))$, write $\frs$ and $\frs'$ for the restrictions of $\hat{\frs}$ to $Y$ and $Y(\bS^2)$, respectively. The 3-handle map
\[
F_{Y,\bL,\bS^2,\hat{\frs}}\colon \cCFL^-(\cH',\sigma,\frs)\to \cCFL^-(\cH,\sigma,\frs')
\]
is defined via the formula
\[
F_{Y,\bL,\bS^2,\hat{\frs}}(\ve{x}\times \theta^+)=0\qquad \text{and} \qquad F_{Y,\bL,\bS^2,\hat{\frs}}(\ve{x}\times \theta^-)=\ve{x},
\]
extended equivariantly over $\cR_{\bmP}^-$.

\begin{prop}\label{prop:1/3handlemapwelldefined}The 1-handle and 3-handle maps are chain maps for almost complex structures which have been sufficiently stretched on the two boundary components of the annulus $A$. Furthermore, the 1- and 3-handle maps are well-defined in the sense that they commute with the change of diagrams maps, up to filtered, equivariant chain homotopy.
\end{prop}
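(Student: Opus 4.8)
The plan is to split Proposition~\ref{prop:1/3handlemapwelldefined} into two parts, treating the 1-handle map in detail and then deducing the 3-handle statement by a duality/adjoint argument. For the 1-handle map, the first step is to verify that $F_{Y,\bL,\bS^0,\hat{\frs}}$ is a chain map once the almost complex structure has been stretched on the two circles bounding the annulus $A$. Here I would invoke a neck-stretching argument in the style of Proposition~\ref{prop:quasi-stabilizeddifferential}: after sufficient stretching, a holomorphic disk $\phi$ on $\cH'$ in a class with $\mu(\phi)=1$ either stays away from the annulus $A$ (so it is a disk on $\cH$ crossing neither new curve and not changing the $\theta^\pm$ factor), or it is forced to consist of such a disk together with a trivial (constant) strip in $A$ with the $\theta^+$ factor unchanged; there are no index-one disks that move the $\theta^+$ factor to $\theta^-$, since the only candidate bigon in $A$ from $\theta^+$ to $\theta^-$ has Maslov index that is too large once one adds the index contribution from the rest of the disk, and there are no disks from $\theta^-$ to $\theta^+$ with nonnegative domain on the annulus for appropriately chosen $\alpha_0,\beta_0$. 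This gives $\d_{\cH'}(\ve{x}\times \theta^+)=(\d_{\cH}\ve{x})\times \theta^+=F_{Y,\bL,\bS^0,\hat\frs}(\d_{\cH}\ve{x})$, so $F_{Y,\bL,\bS^0,\hat{\frs}}$ is a chain map; it is manifestly $\cR^-_{\bmP}$-equivariant and filtered, since it introduces no new variables.

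The second step is naturality: one must show $F_{Y,\bL,\bS^0,\hat\frs}$ commutes, up to filtered equivariant chain homotopy, with the transition maps $\Phi_{\cH_1\to \cH_2}$. For this I would reduce, via the Juh\'asz--Thurston formalism already used in the excerpt, to checking commutation with each elementary Heegaard move performed on the \emph{unstabilized} diagram $\cH$ (so that the move is disjoint from the annulus $A$): a change of almost complex structure, an isotopy or handleslide of an $\as$ or $\bs$ curve, a stabilization of $\Sigma$, and an isotopy of $\Sigma$ relative to $L$. For isotopies and handleslides the commutation follows from a quasi-stabilization-style triangle count essentially identical to Proposition~\ref{prop:singlealphaquasistabtriangle}: the relevant Heegaard triple factors as a connected sum of the unstabilized triple with the small triple $(A,\alpha_0,\beta_0,\gamma_0)$ in the annulus, and for a sufficiently long neck the triangle map sends $\ve{x}\times \theta^+\mapsto F_\cT(\ve{x})\times \theta^+$, which is exactly the composition in the other order. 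Commutation with changes of almost complex structure and with stabilization of $\Sigma$ away from $A$ is similar and in fact simpler. One also needs that the construction is independent of the choices internal to the 1-handle (the disks $D_1,D_2$, the curves $\alpha_0,\beta_0$); this is handled by the same triangle-count technique together with the observation that any two such choices are related by an isotopy of the annulus.

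The third step is the 3-handle map. Rather than redo the analysis, I would observe that $F_{Y,\bL,\bS^2,\hat\frs}$ is, on the chain level, adjoint/dual to a 1-handle map for the turned-around cobordism: the annulus picture for $\bS^2$ is the same as for $\bS^0$ with the roles of $\theta^+$ and $\theta^-$ interchanged and the diagram played in reverse. Concretely, the same neck-stretched count that showed $\d_{\cH'}(\ve{x}\times\theta^+)=(\d_\cH\ve{x})\times\theta^+$ also shows $\d_{\cH'}(\ve{x}\times \theta^-)=(\d_\cH\ve{x})\times\theta^- + (\d\ve{x})\times\theta^+$-type terms are the only possible corrections, and a degree count rules these out in the relevant grading, giving that $F_{Y,\bL,\bS^2,\hat\frs}$ as defined is a chain map. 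Naturality for the 3-handle map then follows from naturality of the 1-handle map by the same reduction to moves performed away from $A$, using that the transition maps are chain homotopy equivalences and that $F_{Y,\bL,\bS^2,\hat\frs}$ is a left inverse of $F_{Y,\bL,\bS^0,\hat\frs}$ up to homotopy.

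I expect the main obstacle to be the same technical point that is the crux of the analogous quasi-stabilization results: controlling the neck-stretching degeneration precisely enough to conclude that, for sufficiently long necks, no ``unexpected'' index-one disk (respectively index-zero triangle) can cross the annulus $A$ in a way that mixes the $\theta^+$ and $\theta^-$ factors. This is exactly the kind of Maslov-index bookkeeping carried out in \cite{MOIntSurg}*{Proposition~6.2} and \cite{ZemQuasi}*{Lemma~8.3}, and the cleanest route is to cite those arguments and indicate the (minor) modifications needed, rather than to reproduce the compactness and gluing analysis in full. The filtered and equivariant refinements are then automatic because the handle maps are defined by formulas that do not involve the polynomial variables.
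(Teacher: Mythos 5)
Your overall strategy (neck-stretch to identify the stabilized differential, then reduce naturality to commutation with each elementary Heegaard move via a model triangle count) is the right one and matches the paper's, which cites \cite{ZemGraphTQFT}*{Section~8} and states the needed triangle count as Proposition~\ref{prop:1-handletrianglecount}. However, there are several places where the details go wrong.

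First, you identify the wrong model triangle count. You say the relevant count is ``essentially identical to Proposition~\ref{prop:singlealphaquasistabtriangle},'' but that result is for the quasi-stabilization geometry: a sphere spliced in at a point, with a single arc $\alpha_s$ passing through. The 1-/3-handle geometry is a tube (an annulus $A$) spliced in at \emph{two} points, with \emph{three} homologically essential closed curves $\alpha_0,\beta_0,\gamma_0$ in $A$. This gives the different triangle count of Proposition~\ref{prop:1-handletrianglecount} (from \cite{ZemGraphTQFT}*{Theorem~8.8}), whose formulas for the inputs $\theta^+\otimes\theta^-$ and $\theta^-\otimes\theta^+$ carry correction terms $F_0\otimes\theta^+$, $G_0\otimes\theta^+$ that do not occur in the quasi-stabilization picture. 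One needs precisely the structure of that formula — in particular that all corrections land in the $\theta^+$-summand — to see that the 1-handle map (which maps into $\theta^+$) and the 3-handle map (which kills $\theta^+$) both commute with the triangle maps. Invoking the quasi-stabilization count would not give you this.

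Second, your chain-map argument for $F_{Y,\bL,\bS^0,\hat\frs}$ asserts both ``no index-one disks that move the $\theta^+$ factor to $\theta^-$'' and ``there are no disks from $\theta^-$ to $\theta^+$ with nonnegative domain.'' The first is what makes the 1-handle map a chain map, and it is correct after sufficient stretching. The second is false in general: the stabilized differential has the form
\[
\d_{\cH'}=\begin{pmatrix}\d_\cH& *\\ 0& \d_\cH\end{pmatrix},
\]
with an off-diagonal term $*$ from $\theta^-$ to $\theta^+$ that need not vanish (compare Lemma~\ref{lem::differentialdiskstabilization} and the subsequent remark). Your later claim that for the 3-handle ``a degree count rules these out'' is likewise wrong; what saves the 3-handle map is not that $*$ vanishes but that $F_{Y,\bL,\bS^2,\hat\frs}$ sends everything in the $\theta^+$-summand to zero, so the $*$-term never contributes.

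Third, the statement that $F_{Y,\bL,\bS^2,\hat\frs}$ is ``a left inverse of $F_{Y,\bL,\bS^0,\hat\frs}$ up to homotopy'' is incorrect: on intersection points the composite sends $\ve{x}\mapsto \ve{x}\times\theta^+\mapsto 0$. There is no duality or adjointness identity being used; the two handle maps are treated in parallel by the same count. The ``turn the cobordism around'' intuition does not shortcut anything here, and the paper handles both cases uniformly through Proposition~\ref{prop:1-handletrianglecount}. If you want to reason by duality you would need to pass through conjugation/orientation-reversal on the Heegaard Floer complexes, which is a genuinely different (and more involved) argument than the one in the paper.

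So: right architecture, correct high-level references for the neck-stretching technology, but the specific triangle count is misidentified and two of the supporting ``index/degree'' assertions are false as stated. These need to be replaced by the correct use of Proposition~\ref{prop:1-handletrianglecount} and by the observation that the off-diagonal term $*$ is allowed to be nonzero yet harmless.
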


Proposition~\ref{prop:1/3handlemapwelldefined} can be proven by adapting the proofs from \cite{ZemGraphTQFT}*{Section~8},  which carry over to the setting of link Floer homology without major change. We will state an important holomorphic triangle map computation which is proven in \cite{ZemGraphTQFT} and used to prove well-definedness of the 1-handle and 3-handle maps. If $\cT=(\Sigma,\as,\bs,\gs,\ws,\zs)$ is
Heegaard triple with two points $p_1,p_2\in \Sigma\setminus (\as\cup \bs\cup \gs\cup \ws\cup \zs)$ we construct a new Heegaard triple $\cT^+=(\Sigma',\as\cup \{\alpha_0\}, \bs\cup \{\beta_0\}, \gs\cup \{\gamma_0\}, \ws,\zs)$, as follows. We remove neighborhoods of the two points $p_1$ and $p_2$, and connect the resulting boundary components with an annulus $A$ to form the surface $\Sigma'$. In the  annulus $A$, we add three homologically essential curves $\alpha_0,$ $\beta_0$ and $\gamma_0$. We assume that the intersection of subpair of $\{\alpha_0,\beta_0,\gamma_0\}$ consists of exactly two points. We will write (abusing notation slightly)
\[
\xi_0\cap \zeta_0=\{\theta^+,\theta^-\}
\]
whenever $\xi_0$ and $\zeta_0$ are distinct elements of $\{\alpha_0,\beta_0,\gamma_0\}$. The diagram $\cT^+$ is shown in Figure~\ref{fig::114}.

\begin{figure}[ht!]
\centering
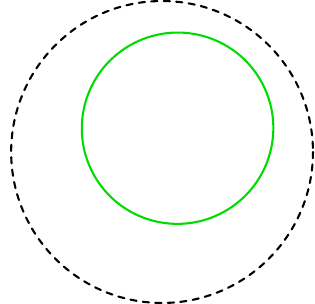
\caption{\textbf{The Heegaard triple $\cT^+$ used to show well-definedness of the 1- and 3-handle maps.} The almost complex structure is stretched along the two dashed curves.\label{fig::114}}
\end{figure}

It is a straightforward exercise using several Mayer-Vietoris exact sequences to show that there is a canonical isomorphism
\[
\Spin^c(X_{\as,\bs,\gs})\iso \Spin^c(X_{\as\cup \{\alpha_0\},\bs\cup \{\beta_0\}, \gs\cup \{\gamma_0\}}).
\]
If $\frs\in \Spin^c(X_{\as,\bs,\gs})$, we will write $\hat{\frs}$ for the corresponding element of $\Spin^c(X_{\as\cup \{\alpha_0\},\bs\cup \{\beta_0\}, \gs\cup \{\gamma_0\}})$.

Suppose $J$ is an almost complex structure on $\Sigma\times \Delta$, which is split on a neighborhood of $\{p_1,p_2\}\times\Delta$. If $\ve{T}=(T_1,T_2)$ is a pair of neck lengths,  write $J(\ve{T})$ for an almost complex structure on $\Sigma'\times \Delta$ constructed from $J$ with necks of length $T_1$ and $T_2$ inserted along the boundaries of the annulus $A$.

We state the following triangle count:
 \begin{prop}[\cite{ZemGraphTQFT}*{Theorem~8.8}] \label{prop:1-handletrianglecount} Suppose that $\cT=(\Sigma,\as,\bs,\gs,\ws,\zs)$ is a Heegaard triple with two chosen points $p_1,p_2\in \Sigma\setminus (\as\cup \bs\cup\gs\cup \ws\cup \zs)$. Let $\cT^+=(\Sigma',\as\cup \{\alpha_0\}, \bs\cup \{\beta_0\}, \gs\cup \{\gamma_0\}, \ws,\zs)$ denote the triple described above. If $J$ is an almost complex structure on $\Sigma\times \Delta$, and $J(\ve{T})$ is the almost complex structure on $\Sigma'\times \Delta$ for a pair $\ve{T}=(T_1,T_2)$ of neck lengths, then whenever both components of $\ve{T}$ are sufficiently large, we have that
 \begin{align*}F_{\cT^+, \hat{\frs}, J(\ve{T})}(\xs\times \theta^+, \ys\times \theta^+)&=F_{\cT,\frs,J}(\xs,\ys)\otimes \theta^+\\
 F_{\cT^+, \hat{\frs}, J(\ve{T})}(\xs\times \theta^+, \ys\times \theta^-)&=F_{\cT,\frs,J}(\xs,\ys)\otimes \theta^- +F_0(\xs,\ys)\otimes \theta^+\\
  F_{\cT^+, \hat{\frs}, J(\ve{T})}(\xs\times \theta^-, \ys\times \theta^+)&=F_{\cT,\frs,J}(\xs,\ys)\otimes \theta^- +G_0(\xs,\ys)\otimes \theta^+,
 \end{align*}
 where $F_0$ and $G_0$ are two maps $\cCFL^-(\as,\bs)\otimes \cCFL^-(\bs,\gs)\to \cCFL^-(\as,\gs)$ (which are not independent of $J$ and $\ve{T}$).
 \end{prop}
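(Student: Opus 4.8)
The proof proposal is a neck-stretching argument, entirely parallel to the proof of Proposition~\ref{prop:TScommute} and to the proof of Proposition~\ref{prop:singlealphaquasistabtriangle} from \cite{MOIntSurg}. First I would set up notation for the change of almost complex structure maps: fix a cylindrical almost complex structure $J$ on $\Sigma\times \Delta$ which is split near $\{p_1,p_2\}\times \Delta$, and for a pair $\ve{T}=(T_1,T_2)$ of neck lengths, let $J(\ve{T})$ be the almost complex structure on $\Sigma'\times \Delta$ obtained by inserting necks of length $T_1$ and $T_2$ along the two boundary circles of the annulus $A$. Since the statement concerns sufficiently large $\ve{T}$, it suffices to take a sequence $\ve{T}^i$ with both components tending to $+\infty$, together with a sequence of $J(\ve{T}^i)$-holomorphic triangles $u_i$ representing a fixed homology class $\psi$ on $\cT^+$ with $\mu(\psi)=0$, and to extract a limiting broken holomorphic configuration on $(\Sigma\times \Delta, J)$ together with boundary degenerations supported in $A$.

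The key computational step is the Maslov index formula. Following the proof of \cite{ZemGraphTQFT}*{Theorem~8.8} (or, for the analogous triple-with-an-annulus situation, the local index computations in \cite{MOIntSurg}*{Proposition~6.2} and \cite{ZemQuasi}*{Lemma~8.3}), one writes $\psi=\psi_0\#\psi_A$ where $\psi_0$ is a class on $(\Sigma,\as,\bs,\gs)$ with a broken holomorphic representative, hence $\mu(\psi_0)\ge 0$ by transversality, and $\psi_A$ is a class of triangles on the annular subdiagram $(A,\alpha_0,\beta_0,\gamma_0)$. The Maslov index of $\psi$ decomposes as $\mu(\psi_0)$ plus a sum of nonnegative local contributions coming from the multiplicities of $\psi$ and of the boundary degeneration near $p_1$ and $p_2$, plus twice the total multiplicity of the boundary degeneration away from the connected sum regions. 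Because $\mu(\psi)=0$ and every summand is nonnegative, each term must vanish. From this I would read off that either $\psi$ is a small class in $\pi_2(\xs\times\theta^{\circ}, \ys\times\theta^{\circ}, \bullet\times\theta^{\circ})$ which restricts to the expected triangle count on $\cT$, or $\psi_0$ is constant and $\psi_A$ is one of finitely many small triangle classes in the annulus. The three lines in the statement then correspond to the three choices of incoming pair from $\{\theta^+\times\theta^+,\ \theta^+\times\theta^-,\ \theta^-\times\theta^+\}$; in the first case only $\psi_A=$ (constant) survives and the output is $F_{\cT,\frs,J}(\xs,\ys)\otimes\theta^+$, while in the latter two cases there is an additional small annular triangle hitting $\theta^+$, producing the auxiliary maps $F_0$ and $G_0$, which depend on $J$ and $\ve{T}$ through the neck-stretching and the count of these annular classes. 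One also checks the $\Spin^c$ bookkeeping: under the canonical identification $\Spin^c(X_{\as,\bs,\gs})\iso\Spin^c(X_{\as\cup\{\alpha_0\},\bs\cup\{\beta_0\},\gs\cup\{\gamma_0\}})$, a class $\psi_0\#\psi_A$ as above has $\frs_{\ws}(\psi_0\#\psi_A)=\frs_{\ws}(\psi_0)$, so the counts organize by the correct $\Spin^c$ structures.

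The main obstacle, as in \cite{MOIntSurg} and \cite{ZemGraphTQFT}, is the compactness and gluing analysis needed to justify the degeneration: one must ensure that as $\ve{T}^i\to\infty$ the holomorphic triangles on $\cT^+$ converge to a broken configuration consisting of a broken triangle on $(\Sigma\times\Delta, J)$ plus annular pieces with matching asymptotics, with no bubbling or level-collapse that would invalidate the index count, and conversely that the small classes identified above genuinely glue back to give $\tilde J^i$-holomorphic triangles counted with the right multiplicity. I would handle this exactly as in \cite{ZemGraphTQFT}*{Section~8}, citing the relevant compactness and gluing results there rather than reproving them, since the annular subdiagram here is identical to the one appearing in the proof of Proposition~\ref{prop:1/3handlemapwelldefined}. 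The remaining points — that the small annular triangle classes are the only nonconstant contributions and that they hit $\theta^+$ in the $(\theta^+,\theta^-)$ and $(\theta^-,\theta^+)$ cases — are a direct inspection of Figure~\ref{fig::114}, and require no new idea beyond what is in the cited theorem.
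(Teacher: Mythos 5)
The paper does not prove this proposition --- it is stated as a quotation of \cite{ZemGraphTQFT}*{Theorem~8.8} --- so there is no in-paper proof to compare your sketch against. At the level of overall strategy, your neck-stretching outline is consistent with the cited argument and with the analogous degeneration arguments elsewhere in this paper (Lemma~\ref{lem:1-3-handlemapscommute}, Proposition~\ref{prop:TScommute}).

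However, the way you account for $F_0$ and $G_0$ does not hold up as written. You argue that $\mu(\psi_0\#\psi_A)$ decomposes as $\mu(\psi_0)$ plus nonnegative local contributions, and that $\mu(\psi)=0$ together with $\mu(\psi_0)\ge 0$ forces every summand to vanish, leaving only a small annular triangle. If that dichotomy applied uniformly, the $\theta^+$-component of the outputs in the second and third lines would be forced to vanish as well: a $\theta^+$ output paired with an input $(\theta^+,\theta^-)$ or $(\theta^-,\theta^+)$ requires a strictly positive grading contribution from the annular region, which is incompatible with having every nonnegative summand equal to zero. So ``small annular triangles'' cannot be the source of $F_0$ or $G_0$. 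The intended reading of the statement is that the index/degeneration argument pins down only the $\theta^-$-component of the outputs in the second and third lines (and, in the first line, both the $\theta^+$-component and the vanishing of the $\theta^-$-component), while the $\theta^+$-components of the second and third lines are whatever is left over, unconstrained by the degeneration and therefore allowed to depend on $J$ and $\ve{T}$. Compare the proof of Lemma~\ref{lem:1-3-handlemapscommute}, where Equation~\eqref{eq:changeofcsstructure1-handle} identifies the change-of-almost-complex-structure map only on the top-graded generator $\ve{x}\times\theta^+\times(\theta')^+$ and leaves the lower-triangular piece uncontrolled. Your sketch should be reorganized along those lines: use the index decomposition to identify exactly the components that are named, and absorb the remainder into $F_0,G_0$ without trying to realize them as counts of annular triangles. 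Finally, when you cite \cite{MOIntSurg}*{Proposition~6.2} and \cite{ZemQuasi}*{Lemma~8.3} as the source of the local index formula, note that those treat the quasi-stabilization situation with a single new $\alpha$-curve passing through a connect-sum region, not the 1-handle annulus with three homologically essential curves; the index bookkeeping here (and the role of the two independent neck lengths $T_1,T_2$) follows \cite{ZemGraphTQFT}*{Section~8} and does not transcribe directly from those lemmas.
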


We now show that the 1-handle and 3-handle maps commute with each other. We note that if $\bS$ and $\bS'$ are disjoint framed spheres in $Y$, then there are diffeomorphisms
\[
W(Y(\bS),\bS')\circ W(Y,\bS)\iso W(Y,\bS\cup \bS')\iso W(Y(\bS), \bS')\circ W(Y,\bS),
\]
which are well defined up to isotopy (see Remark~\ref{rem:canonicaldiffeomorphism}).

\begin{lem}\label{lem:1-3-handlemapscommute} Suppose that $\bS$ and $\bS'$ are two  framed spheres, which are pairwise disjoint, and which  are each either 0-dimensional or 2-dimensional. If $\frs\in \Spin^c(W(Y,\bS,\bS'))$, then
\[F_{Y(\bS),\bL,\bS',\frs|_{W(Y(\bS),\bS')}}F_{Y,\bL,\bS,\frs|_{W(Y,\bS)}}\simeq F_{Y(\bS'),\bL,\bS,\frs|_{W(Y(\bS'),\bS)}}F_{Y,\bL,\bS',\frs|_{W(Y,\bS')}}.\]
\end{lem}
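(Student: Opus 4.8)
The strategy is the standard one for commuting geometric handle maps: realize both compositions on a single Heegaard diagram adapted to the two disjoint framed spheres, and then reduce the commutativity to the behavior of holomorphic polygons on a fixed, split piece of the diagram. Since $\bS$ and $\bS'$ are disjoint, we may choose a Heegaard diagram $\cH=(\Sigma,\as,\bs,\ws,\zs)$ for $(Y,\bL)$ together with two disjoint distinguished pieces: for each $i$-sphere of dimension $0$ we pick two disks in $\Sigma\setminus(\as\cup\bs\cup\ws\cup\zs)$ to be replaced by an annulus carrying curves $\alpha_0,\beta_0$, and for each $2$-sphere we pick such an annulus already present in $\Sigma$; the two modifications happen in disjoint regions of $\Sigma$. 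Let $\cH'$ be the result of performing both modifications simultaneously (so $\cH'$ is a diagram for $Y(\bS\cup\bS')=Y(\bS)(\bS')=Y(\bS')(\bS)$). Then, by definition, one composition $F_{Y(\bS),\bL,\bS',\ast}\circ F_{Y,\bL,\bS,\ast}$ is computed on $\cH\to\cH_{\bS}\to\cH'$ and the other on $\cH\to\cH_{\bS'}\to\cH'$, where $\cH_{\bS}$, $\cH_{\bS'}$ are the singly-modified diagrams. On the level of intersection points both compositions send $\xs$ to $\xs\times\theta_{\bS}\times\theta_{\bS'}$ (with $\theta^+$ in the $1$-handle factors and the prescribed $\theta^\pm$ formulas in the $3$-handle factors), so the content is purely that these geometric maps are chain maps computed with mutually compatible almost complex structures.

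First I would record the diffeomorphism identifications $W(Y(\bS),\bS')\circ W(Y,\bS)\cong W(Y,\bS\cup\bS')\cong W(Y(\bS'),\bS)\circ W(Y,\bS')$ (the reference to Remark~\ref{rem:canonicaldiffeomorphism}), which in particular matches up the $\Spin^c$ structures appearing on the two sides, so the statement is well-posed. Next I would invoke Proposition~\ref{prop:1/3handlemapwelldefined}: each handle map is a chain map once the almost complex structure is sufficiently stretched along the relevant annulus boundary circles, and is independent of further stretching up to filtered equivariant chain homotopy. Because the two annular regions are disjoint on $\Sigma$, one can choose a single almost complex structure $J$ on $\Sigma'\times[0,1]\times\R$ that is split near both regions and stretched to length $\ve{T}=(T_1,\dots)$ along all of the boundary circles simultaneously; for $\ve{T}$ large this $J$ simultaneously computes both $F_{Y,\bL,\bS,\ast}$ followed by $F_{Y(\bS),\bL,\bS',\ast}$ and $F_{Y,\bL,\bS',\ast}$ followed by $F_{Y(\bS'),\bL,\bS,\ast}$, up to the change-of-almost-complex-structure maps relating the necks' relative lengths. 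The key input controlling those change-of-almost-complex-structure maps is the neck-stretching/triangle count of Proposition~\ref{prop:1-handletrianglecount} applied in each annular region independently: disks (or triangles, in the naturality argument) that cross one stabilizing annulus degenerate into a product of a disk on the unmodified part and a small bigon in the annulus, with Maslov index additive, so the analysis in the two disjoint regions does not interfere.

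The main obstacle — and the only place requiring real care — is precisely this independence of the two disjoint neck-stretchings: one must argue that when both necks are long, a broken holomorphic curve on $\cH'$ splits into a curve on the doubly-unmodified diagram $\cH$ together with boundary degenerations/bigons localized in \emph{each} annulus separately, with no ``mixed'' configuration contributing. This is exactly the type of argument carried out in the proof of Proposition~\ref{prop:TScommute} (Case~\eqref{case:pp'diffcomponent}, where $p$ and $p'$ lie in different components of $\Sigma\setminus\as$) and in \cite{ZemGraphTQFT}*{Section~8}: the Maslov index of a class $\phi$ on $\cH'$ decomposes as $\mu(\phi_0)$ plus a sum of nonnegative multiplicity terms coming from the two annuli, forcing all the annular contributions to be trivial bigons or boundary degenerations of the expected form, and one concludes that the composite change-of-almost-complex-structure map is, on intersection points, the identity (in the $1$-handle slots) and respects the triangular $\theta^\pm$ structure (in the $3$-handle slots). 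Putting these pieces together: both compositions equal the geometric map $\xs\mapsto\xs\times\theta_{\bS}\times\theta_{\bS'}$ computed with the same $J$, hence agree up to filtered equivariant chain homotopy, which is the assertion. I would also note the remaining routine checks — that the constructed map is $\cR^-_{\bmP}$-equivariant and $\Z^{\bmP}$-filtered (immediate from the formulas, which are equivariant and send generators to generators) and that the relevant diagrams can be taken strongly $\frs$-admissible (standard, as in the references) — and dispatch them in a sentence.
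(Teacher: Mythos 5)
Your proposal is essentially the same argument as the paper's: both compositions are realized on a single Heegaard diagram $\cH'$ carrying two disjoint annular (or stabilized) regions, and the heart of the matter is that the change-of-almost-complex-structure map $\Phi_{J_s(\ve{T}_1)\to J_s(\ve{T}_2)}$ between stretched almost complex structures with different relative neck lengths is the identity on the relevant generators $\xs\times\theta^+\times(\theta')^+$ (and upper-triangular in the $3$-handle slots), proved by a neck-stretching and Maslov-index-localization argument exactly as in the proof of Proposition~\ref{prop:TScommute}. That matches the paper's proof, which establishes Equation~\eqref{eq:changeofcsstructure1-handle} via a limiting curve decomposition and the index formula in Equation~\eqref{eq:Maslovindexformula}.

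One small inaccuracy worth flagging: you cite Proposition~\ref{prop:1-handletrianglecount} as "the key input controlling those change-of-almost-complex-structure maps," but that proposition concerns holomorphic \emph{triangles} and is used to prove naturality/well-definedness of the $1$- and $3$-handle maps (commutativity with change-of-diagrams maps), not to commute two handle maps with each other. Commuting the handle maps is a question about change-of-almost-complex-structure maps, hence a holomorphic \emph{disk} count; the paper's argument never invokes Proposition~\ref{prop:1-handletrianglecount} here. You do also cite the correct template (Proposition~\ref{prop:TScommute}, Case~\eqref{case:pp'diffcomponent}), so your argument goes through, but the appeal to the triangle count is a red herring and should be dropped.
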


\begin{proof}The formulas for the maps look like they commute, though one needs to pay attention to the almost complex structures used to compute the two compositions, since it is not \emph{a priori} obvious that a single almost complex structure can be chosen to compute both. Our argument will be similar to the proof of Lemma~\ref{prop:TScommute}. We will show the result in the case that $\bS$ and $\bS'$ are both framed 0-spheres, since this is the simplest case notationally. The claim when one or both of $\bS$ and $\bS'$ is a 2-sphere follows from the small adaptation of the argument we present.

Let $\cH=(\Sigma,\as,\bs,\ws,\zs)$ be a diagram for $Y$, such that $\Sigma$ intersects each component of the images of $\bS$ and $\bS'$ along a single disk, which is disjoint from $\as\cup \bs\cup \ws\cup \zs$. Let $\cH^{++}=(\Sigma'',\as\cup \{\alpha_0,\alpha_0'\}, \bs\cup \{\beta_0,\beta_0'\}, \ws,\zs)$ denote the diagram for $Y(\bS,\bS')$ obtained by surgering the diagram $\cH$. There are two distinguished annuli, $A$ and $A'$, as well as four curves $c_1$, $c_2$, $c_1',$ and $c_2'$ (the boundaries of $A$ and $A'$) along which we will stretch the almost complex structure. The subregions $A$ and $A'$ of the diagram $\cH^{++}$ are shown in Figure~\ref{fig::107}.

\begin{figure}[ht!]
\centering
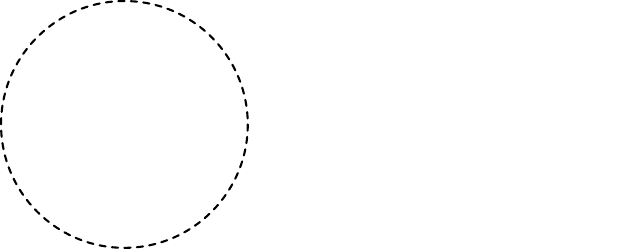
\caption{\textbf{The subregions $A$ and $A'$ of the diagram $\cH^{++}$.} The almost complex structures are stretched along the dashed curves labeled $c_1,$ $c_2$, $c_1
'$ and $c_2'$.\label{fig::107}}
\end{figure}

The maps for surgery on $\bS$ require the almost complex structure be stretched along $c_1$ and $c_2$, while the maps for surgery on $\bS'$ require the almost complex structure be stretched along $c_1'$ and $c_2'$.

Suppose that $\ve{T}=(T_1,T_2,T_1',T_2')$ is a 4-tuple of neck lengths. Let $J_s$ be a cylindrical almost complex structure on $\Sigma\times [0,1]\times \R$ and let $J_s(\ve{T})$ be an almost complex structure on $\Sigma''\times [0,1]\times \R$  which has had necks of length $T_1,$ $T_2,$ $T_1'$ and $T_2'$ inserted along $c_1,$ $c_2$, $c_1'$ and $c_2'$, respectively. We will show that if $\ve{T}_1$ and $\ve{T}_2$ are two 4-tuples of neck lengths and all eight components are sufficiently large, then
\begin{equation}
\Phi_{J_s(\ve{T}_1)\to J_s(\ve{T}_2)}(\ve{x}\times \theta^+\times (\theta')^+))=\ve{x}\times \theta^+\times (\theta')^+.\label{eq:changeofcsstructure1-handle}
\end{equation} Importantly, we will prove that Equation~\eqref{eq:changeofcsstructure1-handle} holds without any assumption about the relative sizes of the 8 neck lengths, just that all are sufficiently large.

The map $\Phi_{J_s(\ve{T}_1)\to J_s(\ve{T}_2)}$ counts holomorphic disks of index 0 with an almost complex structure on $\Sigma''\times [0,1]\times \R$ which interpolates $J_s(\ve{T}_1)$ and $J_s(\ve{T}_2)$. 

To prove Equation~\eqref{eq:changeofcsstructure1-handle}, we take two sequences $\ve{T}_1^i$ and $\ve{T}_2^i$ of 4-tuples of neck lengths, all of whose components approach $+\infty$, as well as sequence of dynamic almost complex structures $\Hat{J}^i$, interpolating $J_s(\ve{T}_1^i)$ and $J_s(\ve{T}^i_2)$.  We assume that $(\Sigma''\times [0,1]\times \R,\Hat{J}^i)$ contains the almost complex submanifold $((\Sigma\setminus N_i)\times [0,1]\times \R, J_s)$, where $N_i$ consists of the disjoint union of four disks in $\Sigma$. Furthermore, we can arrange it so that $N_{i+1}\subset N_i$ and $\bigcap_{i\in \N} N_i$ consists of four points in $\Sigma$.

Suppose that $u_i$ is a sequence of $\Hat{J}^i$-holomorphic curves, which represent a Maslov index 0 class 
\[
\phi\# \phi_0\# \phi_0'\in \pi_2(\ve{x}\times \theta^+\times (\theta')^+, \ve{y}\times y\times y'),
\] where $y\in \alpha_0\cap \beta_0$ and $y'\in \alpha_0'\cap \beta_0'$. A straightforward Maslov index computation using the formula from \cite{LipshitzCylindrical}*{Corollary~4.3}  shows that
\begin{equation}
\mu(\phi\# \phi_0\# \phi_0')=\mu(\phi)+\gr(\theta^+,y)+\gr((\theta')^+,y').
\label{eq:Maslovindexformula}
\end{equation}

As in Lemma~\ref{prop:TScommute},  from the sequence $u_i$ of $\Hat{J}^i$-holomorphic representatives  of $\phi\#\phi_0\#\phi_0'$, after taking a subsequence, we can extract a limiting curve $u$ on $\Sigma\times [0,1]\times \R$ which represents the homology class $\phi$,  so $\mu(\phi)\ge 0$ by transversality. In particular, the expression on the right hand side of Equation~\eqref{eq:Maslovindexformula} is a sum of nonnegative terms, so each must be zero if $\phi\# \phi_0\#\phi_0'$ has Maslov index 0. Hence
\[\mu(\phi)=0,\qquad y=\theta^+\qquad \text{and} \qquad y'=(\theta')^+.\] By transversality, since $u$ has Maslov index 0, it must represent the constant homology class. It is straightforward to see that this implies that $\phi\# \phi_0\# \phi_0'$ is itself the constant homology class. On the other hand, the constant homology class always has a unique $\Hat{J}^i$-holomorphic representative. Equation~\eqref{eq:changeofcsstructure1-handle} follows, and hence the 1-handle maps for $\bS$ and $\bS'$ commute.
\end{proof}

\subsection{2-handle maps}

Suppose $\bL=(L,\ws,\zs)$ is a multi-based link in $Y$ and $\bS^1$ is a framed 1-dimensional link in $Y\setminus L$. We now describe maps $F_{Y,\bL,\bS^1,\frs}$ associated to a framed link $\bS^1$ and $\Spin^c$ structure $\frs\in \Spin^c(W(Y,\bS^1))$.

Let $\bL_\alpha$ denote the closure of the components of $L\setminus (\ws\cup \zs)$ which are oriented to start in $\ws$ and end in $\zs$. Let $\bL_\beta$ denote the closure of the components of $L\setminus (\zs\cup \ws)$ which start in $\zs$ and end in $\ws$.

\begin{define}An \emph{$\alpha$-bouquet}, $\cB^\alpha$, for a framed link $\bS^1$ in $Y\setminus L$ is a collection of arcs which connect each of the components of $\bS^1$ to one of the components of $\bL_\alpha$. We assume that there is exactly one arc per component of $\bS^1$, and each arc has one endpoint on $\bS^1$ and one endpoint in $\bL_\alpha$. A \emph{$\beta$-bouquet}, $\cB^\beta$, for $\bS^1$ is defined analogously, but with endpoints on $\bL_\beta$ instead.
\end{define}

Given a bouquet $\cB$ for a framed link $\bS^1\subset Y\setminus L$, we  construct the sutured manifold $Y(\cB)$ obtained by removing a regular neighborhood of $L\cup \cB$, and adding sutures corresponding to the basepoints $\ve{w}$ and $\ve{z}$.

Adapting \cite{OSTriangles}*{Definition~4.2} and \cite{JCob}*{Definition~6.3}, we make the following definition:

\begin{define}\label{def:subordinatebetabouquet}Suppose $\bL=(L,\ws,\zs)$ is a multi-based link in $Y$ and $\bS^1$ is a framed 1-dimensional link in $Y\setminus L$. We say that a triple $(\Sigma,\alpha_1,\dots, \alpha_n,\beta_1,\dots, \beta_n,\beta_1',\dots, \beta'_n,\ve{w},\ve{z})$ is \emph{subordinate} to a $\beta$-bouquet $\cB^\beta$ for $\bS^1$ if the following are satisfied: 
\begin{enumerate}
\item After removing neighborhoods of the $\ws$ and $\zs$-basepoints, the diagram 
\[
(\Sigma, \alpha_1,\dots, \alpha_n, \beta_{k+1},\dots, \beta_n, \ve{w},\ve{z})
\] becomes a sutured diagram for $Y(\cB^\beta)$.
\item  The curves $\beta_{1},\dots, \beta_k$ are each a meridian of a different component of $\bS^1$. In particular 
\[
(\Sigma, \alpha_1,\dots, \alpha_n, \beta_{1},\dots, \beta_n, \ve{w},\ve{z})
\] is a diagram for $(Y,\bL)$.
\item The curves $\beta'_{k+1},\dots ,\beta'_{n}$ are each small Hamiltonian translates of the curves $\beta_{k+1},\dots, \beta_n$ respectively, with $|\beta_i\cap \beta'_j|=2\delta_{ij}$.
\item For $i=1,\dots, k$, the curve $\beta_i'$ is induced by the framing of the corresponding link component that $\beta_i$ is a meridian of.
\end{enumerate}
\end{define}

If $(\Sigma, \as,\bs,\bs',\ws,\zs)$ is subordinate to a $\beta$-bouquet, then $(\Sigma, \bs,\bs',\ve{w},\ve{z})$ represents an unlink in $(S^1\times S^2)^{\#(g(\Sigma)-|\bS^1|)}$ with $|\ws|$ components, each of which has exactly two basepoints. As such, regardless of the coloring, $\cHFL^-(\Sigma, \bs,\bs',\ve{w},\ve{z},\sigma,\frs_0)$ admits a distinguished element $\Theta_{\bs,\bs'}^+$ by Lemma~\ref{lem:topdegreeunlink1}.

The 4-manifold $X_{\as,\bs,\bs'}$ becomes diffeomorphic to $W(Y,\bS^1)$ once we fill in $Y_{\bs,\bs'}\subset X_{\as,\bs,\bs'}$ with 3- and 4-handles (see \cite{OSTriangles}*{Proposition 4.3}). As such, given $\frs\in \Spin^c(W(Y,\bS^1))$, we can restrict $\frs$ to $\Spin^c(X_{\as,\bs,\bs'})$. With this in mind, given an $\frs\in \Spin^c(W(Y,\bS^1))$, we define the $\beta$-subordinate 2-handle map
\[F_{Y,\bL,\bS^1,\frs}^\beta\colon\cCFL^-(Y,\bL^\sigma,\frs|_{Y})\to \cCFL^-(Y(\bS^1),\bL^\sigma,\frs|_{Y(\bS^1)})\] via the formula
\begin{equation}
F_{Y,\bL,\bS^1,\frs}^\beta(\ve{x}):= F_{\as,\bs,\bs',\frs}(\ve{x}\otimes\Theta_{\bs,\bs'}^+)=\sum_{\ys\in \bT_{\as}\cap \bT_{\bs'}}\sum_{\substack{\psi\in \pi_2(\ve{x},\Theta_{\bs,\bs'}^+,\ve{y})\\ \frs_{\ve{w}}(\psi)=\frs\\ \mu(\psi)=0}} \# \cM(\psi)U_{\ve{w}}^{n_{\ve{w}}(\phi)} V_{\ve{z}}^{n_{\ve{z}}(\phi)} \cdot \ve{y}.\label{eq:2-handlemap}
\end{equation}

\begin{prop}\label{prop:2-handlemapwelldefined}The map $F_{Y,\bL,\bS^1,\frs}^\beta$ described in Equation~\eqref{eq:2-handlemap} is independent of $\beta$-bouquet $\cB$ and the Heegaard triple subordinate to $\cB$, on the level of transitive systems of chain complexes.
\end{prop}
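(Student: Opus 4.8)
\textbf{Proof sketch of Proposition~\ref{prop:2-handlemapwelldefined}.}
The plan is to follow the standard Ozsv\'{a}th--Szab\'{o} invariance argument for 2-handle maps \cite{OSTriangles}*{Section~4}, adapting it to the link Floer setting. There are two quantities to be checked: independence of the choice of Heegaard triple subordinate to a fixed $\beta$-bouquet $\cB^\beta$, and independence of the choice of bouquet.

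First I would handle invariance of the triple. Two triples subordinate to the same bouquet are related by a sequence of elementary moves: isotopies and handleslides among the $\bs$ curves (and the induced moves on $\bs'$), isotopies and handleslides among the $\as$ curves, stabilizations of the Heegaard surface, and isotopies of the surface relative to $L$. For each such move, one uses the standard associativity of the holomorphic triangle counts together with the naturality of the transition maps $\Phi_{\cH\to\cH'}$: the key point is that a handleslide or isotopy among the $\bs$ curves is implemented by a further triangle map with a diagram $(\bs,\bs'',\bs')$ or $(\bs'',\bs,\bs')$ in which one of the triangle maps sends the top generator $\Theta^+_{\bs,\bs'}$ to $\Theta^+_{\bs'',\bs'}$ (using Lemma~\ref{lem:topdegreeunlink1}, which produces the distinguished top-degree generator of $\cHFL^-$ of the relevant unlink in $(S^1\times S^2)^{\#(g-|\bS^1|)}$ regardless of the coloring). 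One then invokes the associativity law for triangle counts, $F_{\as,\bs',\bs''}\circ(F_{\as,\bs,\bs'}\otimes\id)\simeq F_{\as,\bs,\bs''}\circ(\id\otimes F_{\bs,\bs',\bs''})$, to commute the 2-handle map past the transition map. The fact that $\Theta^+_{\bs,\bs'}$ is not a cycle on the uncolored complex but becomes one after applying the coloring (Lemma~\ref{lem:topdegreeunlink1} again) means this step must be carried out on the colored complexes $\cCFL^-(-,\sigma,-)$; I would remark that all maps in sight are $\Z^{\bmP}$-filtered and $\cR^-_{\bmP}$-equivariant so that the chain homotopies persist in the right category. Admissibility (strong $\frs$-admissibility for all diagrams and triples involved) is arranged by a standard winding argument, exactly as in \cite{OSDisks}*{Section~4}; this guarantees the relevant moduli spaces are finite.

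Next I would handle the bouquet. Two $\beta$-bouquets for the same framed link $\bS^1$ differ, up to isotopy, by a sequence of ``handle slides of the arcs,'' i.e.\ sliding one arc of the bouquet over another or over a component of $L$. On the level of diagrams, changing the bouquet in this way can be realized by a handleslide among the $\bs$ curves (sliding a meridian $\beta_i$ across one of the ``interior'' curves $\beta_j$, $j>k$), which is already covered by the triple-invariance step, or by an isotopy of $\Sigma$ rel $L$. So bouquet invariance reduces to triple invariance after this translation. I would also note the compatibility of $\frs$ with the canonical isomorphism $\Spin^c(W(Y,\bS^1))\iso\Spin^c(X_{\as,\bs,\bs'})$ obtained by filling in $Y_{\bs,\bs'}$ with 3- and 4-handles (\cite{OSTriangles}*{Proposition~4.3}), which ensures the $\Spin^c$ bookkeeping is consistent across all the diagrams.

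The main obstacle is the coloring/curvature issue: on the uncolored complex $\cCFL^-(\cH,\frs)$ the differential does not square to zero and $\Theta^+_{\bs,\bs'}$ need not be a cycle, so the usual ``$\Theta^+$ is a cycle, hence the triangle map is a chain map'' argument has to be run on the colored complexes and one must be careful that the triangle-count associativity relations (which a priori are statements about uncolored objects, following from degenerations of moduli of quadrilaterals) descend to $\cCFL^-(-,\sigma,\frs)$ after tensoring with $\cR^-_{\bmP}$. Once one checks that tensoring with $\cR^-_{\bmP}$ over $\bF_2[U_{\ws},V_{\zs}]$ is compatible with all the triangle and transition maps --- which it is, since every map in the construction is already defined $\bF_2[U_{\ws},V_{\zs}]$-equivariantly --- the rest is a bookkeeping exercise mirroring \cite{OSTriangles}. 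I would cite \cite{ZemGraphTQFT}*{Section~8} for the close analog in the closed multi-pointed setting, since the arguments there carry over with only notational changes.
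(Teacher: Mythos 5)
Your overall strategy matches the paper's: the paper proves this by adapting \cite{OSTriangles}*{Theorem~4.4} (and refers to \cite{JCob}*{Theorem~6.9}), with the triple invariance coming from handleslide/stabilization arguments via associativity and the bouquet invariance reducing to diagram moves, exactly as you describe. However, the ``main obstacle'' you identify is misdiagnosed. You assert that $\Theta^+_{\bs,\bs'}$ fails to be a cycle on the uncolored complex and only becomes one after coloring, citing Lemma~\ref{lem:topdegreeunlink1}. This is not what that lemma says: the triple $(\Sigma,\bs,\bs',\ws,\zs)$ represents an unlink in $(S^1\times S^2)^{\#(g-|\bS^1|)}$ with exactly two basepoints on every component, and the proof of Lemma~\ref{lem:topdegreeunlink1} shows directly that $\partial\Theta=0$ on the \emph{uncolored} complex (disks not crossing $\ws$-basepoints cancel in pairs). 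The situation you have in mind — a top-degree generator that is not a cycle until two variables are identified — is the one from Lemma~\ref{lem:topdegreeunlink2}, which arises for the \emph{band} maps where one unlink component carries four basepoints; it is not relevant here.

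The genuine subtlety, which the paper points out explicitly, is different and more mild: $\Theta^+_{\bs,\bs'}$ is a well-defined homology class but not a canonically chosen cycle, and since one is building a map on the chain level rather than on homology, one must check that replacing $\Theta^+_{\bs,\bs'}$ by $\Theta^+_{\bs,\bs'}+\partial\eta$ only changes $F^{\beta}_{Y,\bL,\bS^1,\frs}$ by a filtered, $\cR^-_{\bmP}$-equivariant chain homotopy. This follows from the associativity relations for the triangle maps, but it is a point about non-uniqueness of the representative, not about the representative failing to be closed. Your proposal would benefit from replacing the ``$\Theta^+$ is not a cycle'' discussion with this observation; the rest of the bookkeeping (equivariance under $\bF_2[U_{\ws},V_{\zs}]$, persistence under tensoring with $\cR^-_{\bmP}$, admissibility) is fine.
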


Proposition~\ref{prop:2-handlemapwelldefined} can be proven by adapting the proof of \cite{OSTriangles}*{Theorem~4.4} (see also \cite{JCob}*{Theorem~6.9}), which carries over to this setting without major change.

 We note that one slight difference to the argument from \cite{OSTriangles} is that we are working on the level of chain complexes, instead of the level of homology. The element $\Theta_{\bs,\bs'}^+\in \cHFL^-(\Sigma,\bs,\bs',\ws,\zs,\sigma,\frs_0)$ is only a well-defined homology class, not a well-defined cycle in the chain complex $\cCFL^-(\Sigma,\bs,\bs',\ws,\zs,\sigma,\frs_0)$. Nonetheless,  adding a boundary in $\cCFL^-(\Sigma,\bs,\bs',\ws,\zs,\sigma,\frs_0)$ to $\Theta_{\bs,\bs'}^+$ only changes the 2-handle map in Equation~\eqref{eq:2-handlemap} by a filtered, equivariant chain homotopy, by the associativity relations for the holomorphic triangle maps.

By adapting Definition~\ref{def:subordinatebetabouquet}, one defines what it means for a triple  $(\Sigma, \as',\as,\bs,\ve{w},\ve{z})$ to be subordinate to an $\alpha$-bouquet $\cB^\alpha$. For such a triple, the homology group $\cHFL^-(\Sigma, \as',\as,\ve{w},\ve{z},\sigma,\frs_0)$ will have a distinguished element $\Theta_{\as',\as}^+$  and we define the $\alpha$-subordinate 2-handle map via the formula
\[
F_{Y,\bL,\bS^1,\frs}^\alpha(\ve{x}):=F_{\as',\as,\bs,\frs}(\Theta_{\as',\as}^+\otimes \ve{x}).
\]
 As with the case of the $\alpha$-subordinate 2-handle maps, the argument from \cite{OSTriangles}*{Theorem~4.4} goes through without major modification to show that $F_{Y,\bL,\bS^1,\frs}^\alpha$ is independent of the choice of $\alpha$-bouquet and triple subordinate to it.

\begin{rem}Both the $\alpha$- and $\beta$-subordinate 2-handle maps count triangles such that $\frs_{\ws}=\frs$. This is an inherent asymmetry of the 2-handle maps between the $\ws$ and $\zs$-basepoints. This asymmetry leads to the Alexander grading change formula in \cite{ZemAbsoluteGradings}.\end{rem}

We now wish to show that $F_{Y,\bL,\bS^1,\frs}^\alpha$ and $F_{Y,\bL,\bS^1,\frs}^\beta$ are chain homotopic. The proof given by Ozsv\'{a}th and Szab\'{o} in \cite{OSTriangles}*{Lemma 5.2} for the standard cobordism maps carries over without major change. We state it as the lemma and outline their argument briefly:

\begin{lem}\label{lem:alphasurgerymaps=betasurgerymaps}If $\bS$ is a framed 1-dimensional link in $Y\setminus L$ and $\frs\in \Spin^c(W(Y,\bS))$, then the maps $F_{Y,\bL,\bS,\frs}^\alpha$ and $F_{Y,\bL,\bS,\frs}^\beta$ are chain homotopic on the level of transitive systems of chain complexes.
\end{lem}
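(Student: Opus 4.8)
The plan is to follow the strategy of Ozsv\'{a}th--Szab\'{o} \cite{OSTriangles}*{Lemma~5.2}, which amounts to expressing both $F^\alpha_{Y,\bL,\bS,\frs}$ and $F^\beta_{Y,\bL,\bS,\frs}$ as the triangle map coming from a single Heegaard quadruple and then comparing the two ways of composing triangle maps. Concretely, given a $\beta$-bouquet $\cB^\beta$ and an $\alpha$-bouquet $\cB^\alpha$ for $\bS$, one chooses a Heegaard quadruple $(\Sigma,\as',\as,\bs,\bs',\ws,\zs)$ with the property that $(\Sigma,\as,\bs,\bs')$ is subordinate to $\cB^\beta$, that $(\Sigma,\as',\as,\bs)$ is subordinate to $\cB^\alpha$, and that $\as'$ (resp.\ $\bs'$) is a collection of small Hamiltonian translates of $\as$ (resp.\ $\bs$). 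The diagram $(\Sigma,\as',\as)$ then represents an unlink in a connected sum of $S^1\times S^2$'s with the $\ws$-basepoints coming in adjacent pairs, so by Lemma~\ref{lem:topdegreeunlink1} it carries a distinguished top-degree class $\Theta^+_{\as',\as}$, and similarly $(\Sigma,\bs,\bs')$ carries $\Theta^+_{\bs,\bs'}$.

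The key step is the associativity relation for the holomorphic quadrilateral count: for the quadruple above, the two composite triangle maps agree up to chain homotopy, i.e.
\[
F_{\as',\as,\bs',\frs}\big(\Theta^+_{\as',\as}\otimes F_{\as,\bs,\bs',\frs_0}(\xs\otimes\Theta^+_{\bs,\bs'})\big)\simeq F_{\as',\bs,\bs',\frs}\big(F_{\as',\as,\bs,\frs}(\Theta^+_{\as',\as}\otimes\xs)\otimes\Theta^+_{\bs,\bs'}\big),
\]
with the correction term being the quadrilateral map applied to $\Theta^+_{\as',\as}\otimes\xs\otimes\Theta^+_{\bs,\bs'}$, which is a chain homotopy in the filtered, equivariant sense. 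The left-hand side, after post-composing with the canonical equivalence induced by the nearest-point map $\as'\simeq \as$ (which is the triangle map for the small translate, sending $\Theta^+_{\as',\as}\otimes(-)$ to the identity up to homotopy, as in \cite{OSTriangles}*{Proposition~9.8}), is chain homotopic to $F^\beta_{Y,\bL,\bS,\frs}$; the right-hand side, after post-composing with the corresponding equivalence $\bs'\simeq\bs$, is chain homotopic to $F^\alpha_{Y,\bL,\bS,\frs}$. One also needs that the quadruple can be chosen so that the relevant $\Spin^c$ structures match up: the counts defining all of these maps use $\frs_{\ws}$, and a Mayer--Vietoris argument on $\Spin^c(X_{\as',\as,\bs,\bs'})$ identifies the restrictions so that the single $\frs\in\Spin^c(W(Y,\bS))$ induces the correct structures on both triangle subdiagrams.

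There are two points that require care in our setting beyond what is in \cite{OSTriangles}. First, everything must be tracked at the level of the curved, $\Z^{\bmP}$-filtered, $\cR^-_{\bmP}$-equivariant chain complexes rather than on homology: the classes $\Theta^+_{\as',\as}$ and $\Theta^+_{\bs,\bs'}$ are only well-defined homology classes, not canonical cycles, but by the associativity relations for triangle maps (exactly as noted in the discussion following Proposition~\ref{prop:2-handlemapwelldefined}) changing the cycle representative alters each map only by a filtered equivariant chain homotopy, so the conclusion is insensitive to these choices. Second, one should check that the triangle and quadrilateral counts respect the filtration by powers of the variables and are $\cR^-_{\bmP}$-equivariant; this is immediate since holomorphic polygons have nonnegative domains and the maps are defined by weighting with $U_{\ws}^{n_{\ws}}V_{\zs}^{n_{\zs}}$, so no new curvature issues arise (all diagrams involved carry the same curvature constant $\omega_{\bL}$). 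The main obstacle is purely bookkeeping: arranging a single Heegaard quadruple that is simultaneously subordinate to a chosen $\alpha$-bouquet and a chosen $\beta$-bouquet and verifying the $\Spin^c$-matching, together with assembling the chain of homotopies from the quadrilateral associativity relation and the two nearest-point equivalences. Once the quadruple is set up correctly, the rest is a formal consequence of associativity and Lemma~\ref{lem:topdegreeunlink1}.
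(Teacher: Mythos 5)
Your proposal contains a genuine gap that makes the argument as written unworkable. The Heegaard quadruple you describe is internally inconsistent: you ask that $(\Sigma,\as',\as,\bs)$ be subordinate to an $\alpha$-bouquet $\cB^\alpha$ for $\bS$ \emph{and} that $\as'$ consist of small Hamiltonian translates of $\as$. These are incompatible whenever $\bS$ is non-empty, since subordination to a bouquet requires some of the $\as'$-curves to be framing curves of the components of $\bS$, not pushoffs of meridians. The subsequent step that converts the left-hand side of your associativity relation to $F^\beta_{Y,\bL,\bS,\frs}$ by appealing to a ``nearest-point map'' identification $\as'\simeq\as$ therefore cannot apply: the outer triangle map $F_{\as',\as,\bs',\frs}(\Theta^+_{\as',\as}\otimes -)$ is not the tautological continuation map for a small translate, but rather a 2-handle map in its own right, so it is not the identity up to homotopy and does not simply disappear. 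An analogous problem arises on the $\bs'$ side.

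What the paper does differently is to introduce $\bS'$, a copy of $\bS$ pushed off in the direction of the framing, and set up the quadruple so that $(\Sigma,\as',\as,\bs)$ is subordinate to an $\alpha$-bouquet for $\bS'$ while $(\Sigma,\as,\bs,\bs')$ is subordinate to a $\beta$-bouquet for $\bS$. This is the step your argument is missing, and it is load-bearing: because $\bS'$ is a framed pushoff, its image in $Y(\bS)$ becomes a collection of $0$-framed unknots (and symmetrically for $\bS$ in $Y(\bS')$). Associativity then gives
\[
F^\alpha_{Y(\bS),\bL, \bS',\frs_0}\circ F^\beta_{Y,\bL,\bS,\frs}\simeq F^\beta_{Y(\bS'),\bL, \bS,\frs_0}\circ F^\alpha_{Y,\bL,\bS',\frs},
\]
and the outer maps, being 2-handle maps for $0$-framed unknots, are canceled by post-composing with 3-handle maps. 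That cancellation, not a nearest-point identification, is the mechanism by which the spurious second factor is removed. Your secondary points---that the $\Theta$-classes are only well-defined on homology so one works up to filtered equivariant chain homotopy, and that the filtration and $\cR^-_{\bmP}$-equivariance carry through because polygon domains are nonnegative---are correct and consistent with the paper, but they do not rescue the main step. To repair the argument you would need to (a) introduce the framed pushoff $\bS'$ and build the quadruple around it, and (b) replace the nearest-point cancellation with the 3-handle cancellation of the $0$-framed unknot 2-handle maps, finally invoking framed-isotopy invariance to identify $F^\alpha_{Y,\bL,\bS',\frs}$ with $F^\alpha_{Y,\bL,\bS,\frs}$.
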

\begin{proof}[Proof sketch] Let $\bS'$ be an isotopic copy of $\bS$, in the direction of the framing of $\bS$. One finds a Heegaard quadruple $(\Sigma, \as',\as,\bs,\bs',\ve{w},\ve{z})$ such that $(\Sigma, \as',\as,\bs,\ve{w},\ve{z})$ is triple subordinate to an $\alpha$-bouquet of $\bS'$, and $(\Sigma, \as,\bs,\bs',\ve{w},\ve{z})$ is a triple subordinate to a $\beta$-bouquet of $\bS$. After  surgering $Y$ along $\bS$, the framed link $\bS'\subset Y(\bS)$ becomes a collection of 0-framed unknots. Similarly after surgering $Y$ on $\bS'$, the link $\bS\subset Y(\bS')$ also becomes a collection of 0-framed unknots. The associativity relations shows that, 
\[F^\alpha_{Y(\bS),\bL, \bS',\frs_0} F^\beta_{Y,\bL,\bS,\frs}\simeq F_{Y(\bS'),\bL, \bS,\frs_0}^\beta F_{Y,\bL,\bS',\frs}^\alpha.\] Here we are writing $\frs$ for the original $\Spin^c$ structure on $W(Y,\bS)$ and the corresponding $\Spin^c$ structure on $W(Y,\bS')$. Similarly we are writing $\frs_0$ for the $\Spin^c$ structures on $W(Y(\bS),\bS')$ and $W(Y(\bS'),\bS)$ which extends over the 3-handles which cancel $\bS'$ or $\bS$ (resp.).  The second map in each of the compositions can be canceled by a composition of 3-handle maps, implying that $F_{Y,\bL,\bS,\frs}^\beta$ and $F_{Y,\bL,\bS',\frs}^\alpha$ are related by post-composition by the diffeomorphism map associated to cancellations of 4-dimensional handles.
\end{proof}

Since the triangle maps are graded over 4-dimensional $\Spin^c$ structures, and there is not a canonical way to compose $\Spin^c$ structures, they instead satisfy a $\Spin^c$ composition law. We have the following:

\begin{prop}\label{lem:compositionlawforlinks}If $\bS$ and $\bS'$ are two disjoint framed 1-dimensional links in $Y\setminus L$, then
\[F_{Y(\bS),\bL,\bS',\frs_2} F_{Y,\bL,\bS,\frs_1}\simeq\sum_{\substack{\frs\in \Spin^c(W(Y,\bS\cup\bS'))\\ \frs|_{W(Y,\bS)}=\frs_1, \frs|_{W(Y(\bS), \bS')}=\frs_2}} F_{Y,\bL,\bS\cup \bS',\frs}.\]
\end{prop}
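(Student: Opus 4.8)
The plan is to reduce the $\Spin^c$ composition law for 2-handles to the analogous statement for the standard Heegaard Floer triangle maps, which is \cite{OSTriangles}*{Theorem~3.9}, by choosing Heegaard data adapted to both surgeries simultaneously. First I would choose a Heegaard multi-diagram $(\Sigma,\as,\bs,\bs',\bs'',\ws,\zs)$ with the property that $(\Sigma,\as,\bs,\bs',\ws,\zs)$ is subordinate to a $\beta$-bouquet for $\bS$, that $(\Sigma,\as,\bs',\bs'',\ws,\zs)$ is subordinate to a $\beta$-bouquet for the image $\bS'\subset Y(\bS)$ (where the $\bs$ curves corresponding to meridians of $\bS$ have been handleslid off, so $\bs'$ plays the role of the ``$\as$-side'' for the second surgery), and that the $\bs,\bs',\bs''$ curves pairwise differ by small Hamiltonian translates in the appropriate positions. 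Concretely this is the same setup Ozsv\'ath and Szab\'o use, adapted to the multi-based, colored setting; the existence of such a diagram follows by iterating the construction of diagrams subordinate to a bouquet, exactly as in \cite{OSTriangles}.

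Next I would invoke the associativity relation for the holomorphic triangle (and quadrilateral) counts on the quadruple $(\Sigma,\as,\bs,\bs',\bs'')$, which in our setting takes the form
\[
F_{\as,\bs',\bs'',\frs''}\!\big(F_{\as,\bs,\bs',\frs'}(\ve{x}\otimes \Theta^+_{\bs,\bs'})\otimes \Theta^+_{\bs',\bs''}\big)\simeq F_{\as,\bs,\bs'',\frs' * \frs''}\!\big(\ve{x}\otimes F_{\bs,\bs',\bs''}(\Theta^+_{\bs,\bs'}\otimes \Theta^+_{\bs',\bs''})\big),
\]
summed appropriately over $\Spin^c$ structures, together with the computation (via Lemma~\ref{lem:topdegreeunlink1}, since the $\bs,\bs',\bs''$ curves describe an unlink in a connected sum of $S^1\times S^2$'s) that $F_{\bs,\bs',\bs''}(\Theta^+_{\bs,\bs'}\otimes \Theta^+_{\bs',\bs''})\simeq \Theta^+_{\bs,\bs''}$ up to lower order terms that do not affect the triangle map by the associativity/filtration argument already invoked after Proposition~\ref{prop:2-handlemapwelldefined}. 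The left-hand side, after identifying $(\Sigma,\as,\bs',\bs'',\ws,\zs)$ with data subordinate to the second bouquet, computes $F^\beta_{Y(\bS),\bL,\bS',\frs_2}\circ F^\beta_{Y,\bL,\bS,\frs_1}$; the right-hand side, by the same identification of $X_{\as,\bs,\bs''}$ with $W(Y,\bS\cup\bS')$ after filling in $3$- and $4$-handles, computes $\sum F^\beta_{Y,\bL,\bS\cup\bS',\frs}$ where the sum is over $\frs$ restricting to $\frs_1$ on $W(Y,\bS)$ and $\frs_2$ on $W(Y(\bS),\bS')$. Here one uses that the map $\frs\mapsto (\frs|_{W(Y,\bS)},\frs|_{W(Y(\bS),\bS')})$ has image exactly the pairs that agree on the intermediate $Y(\bS)$, and that the $\Spin^c$-labels track correctly under the diffeomorphism $X_{\as,\bs,\bs''}\cup(\text{handles})\cong W(Y,\bS\cup\bS')$; this is a Mayer--Vietoris bookkeeping exercise identical to the one in \cite{OSTriangles}. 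Finally I would note that $F^\alpha$ and $F^\beta$ agree up to chain homotopy by Lemma~\ref{lem:alphasurgerymaps=betasurgerymaps}, so the statement for $F_{Y,\bL,\bS,\frs}$ follows regardless of which subordinate map we use.

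The main obstacle I anticipate is purely organizational rather than conceptual: making sure the colored complexes and the $\Z^{\bmP}$-filtrations behave well throughout, i.e., that the $\Theta^+$ elements can be chosen with the right coloring so that all triangle and quadrilateral maps in sight are genuine filtered equivariant chain maps (cf. the discussion following Proposition~\ref{prop:2-handlemapwelldefined}), and that the associativity relation for quadrilaterals holds at the chain level up to filtered equivariant chain homotopy in this curved-complex setting. Since the curvature constant $\omega_{\bL,\sigma}$ is the same for all the complexes involved (they all carry the same multi-based link $\bL$ with the same coloring), Lemma~\ref{lem:morcomplex} guarantees the morphism complexes are genuine chain complexes and no curvature obstruction arises; the quadrilateral count degenerations then produce the needed chain homotopy exactly as in the uncurved case. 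I would also remark that the cases where $\bS$ or $\bS'$ has a component that is a framed $1$-sphere of a special type requiring the $3$- and $4$-handle reductions of Lemma~\ref{lem:alphasurgerymaps=betasurgerymaps} are handled by the same composition-of-handle-maps argument, using Lemma~\ref{lem:1-3-handlemapscommute} to commute the auxiliary handle maps past the $2$-handle maps.
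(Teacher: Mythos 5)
Your proof is correct and takes essentially the same approach as the paper: the paper's own proof is a one-line citation of the associativity of the holomorphic triangle maps, and your proposal fills in the standard Heegaard quadruple setup, the $\Theta^+$-absorption computation, and the $\Spin^c$-bookkeeping that the paper leaves implicit.
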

\begin{proof}This follows from the associativity of the triangle maps \cite{OSDisks}*{Theorem~8.16}.
\end{proof}

\section{Maps for band surgery} 
\label{sec:bandmaps}
In this section, we construct maps associated to performing band surgery on a link in a 3-manifold. A \emph{band} $B$ for a link $L$ is an embedded copy of $[-1,1]\times [-1,1]$ in $Y$ such that $B\cap L=\{-1,1\}\times [-1,1]$. In terms of link cobordisms, a band corresponds to a saddle cobordism with underlying 4-manifold $[0,1]\times Y$. The surgered link $L(B)$ is defined as the union 
\[
L(B):= (L\setminus B)\cup ([-1,1]\times \{-1,1\}).
\]

  If $L$ is an oriented link, an \emph{oriented band} $B$ is a band such that boundary orientation on $\d B$ is the opposite of the orientation of $L$ on $B\cap L$.  If $L$ is an oriented link and $B$ is an oriented band for $L$, then $L(B)$ has a natural orientation induced by $L$. We will assume all bands are oriented.

In order to define the maps, we need to assume that the ends of the bands are disjoint from the basepoints, and satisfy one of two configurations with respect to the basepoints. We will refer to bands which have one of these two favorable configurations  as \emph{$\alpha$-bands} or \emph{$\beta$-bands} (see Definition~\ref{def:alpha/betaband}). If $B$ has either of these configurations, then we can define
\[
\bL(B):=(L(B),\ws,\zs),
\]
which is a valid multi-based link in the sense of Definition~\ref{def:multibasedlink}.

 If $B$ is either an $\alpha$-band or a $\beta$-band, we will define two maps
\[
F_{B}^{\ws}, F_{B}^{\zs}\colon  \cCFL^-(Y,\bL^{\sigma}, \frs)\to \cCFL^-(Y,\bL^{\sigma}(B),\frs).
\] The maps $F_{B}^{\ws}$ and $F_{B}^{\zs}$ correspond to decorated link cobordisms of the form $([0,1]\times Y,\cF)$  where $\cF$ is one of the saddle cobordisms shown in Figure~\ref{fig::103}.
 
  As one might expect, there are some requirements on the coloring $\sigma$ which ensure that the band maps are well-defined chain maps. Analogously to the quasi-stabilization maps, the requirement on the coloring corresponds nicely with the dividing set on the decorated link cobordism we associate to the band maps.

\begin{figure}[ht!]
\centering
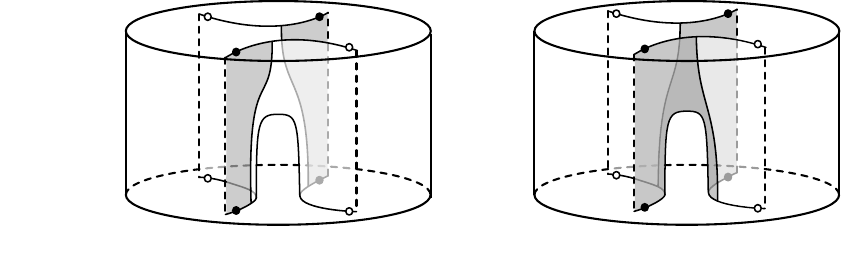
\caption{\textbf{Decorated link cobordisms corresponding to $F_{B}^{\zs}$ and $F_{B}^{\ws}$.} The underlying 4-manifold is $[0,1]\times Y$.\label{fig::103}}
\end{figure}

We note that $F_B^{\ws}$ and $F_{B}^{\zs}$ represent genuinely different decorated link cobordisms. On the other hand, there is a simple relation between the maps for $\alpha$-bands and the maps for the $\beta$-bands, which we prove in Section~\ref{sec:bandmapsandbasepointmovingmaps}.

Finally, we remark that the maps in this section use the same Heegaard triples as the maps from \cite{AEtangle}.

\subsection{Heegaard triples and bands}

In order to define maps for saddles, we need to restrict to bands which satisfy one of the following conditions:
 
\begin{define}\label{def:alpha/betaband}We say that a band $B^\alpha$ of an oriented multi-based link $\bL=(L,\ve{w},\ve{z})$ is an \emph{$\alpha$-band} if the ends of $B^\alpha$ occur in components of $L\setminus (\ve{w}\cup \ve{z})$ which go from $\ve{w}$-basepoints to $\ve{z}$-basepoints. We say that a band $B^\beta$ is a \emph{$\beta$-band} if the ends of $B^\beta$ occur in components of $L\setminus (\ve{w}\cup \ve{z})$ going from $\ve{z}$-basepoints to $\ve{w}$-basepoints. For a band $B$ of either type, we always assume that the ends of $B$ lie in distinct components of $L\setminus (\ve{w}\cup \ve{z})$.
\end{define}

If $B^\alpha$ is an $\alpha$-band for $\bL$, then we can turn $Y\setminus (N(L\cup B^\alpha))$ into a sutured manifold by adding a meridional suture for each basepoint of $\ws\cup \zs$. Write $L_0$ for the two components of $L\setminus (\ws\cup\zs)$ which contain an end of $B$. We define the subset $H\subset N(L\cup B^\alpha)$ to be points in $N(L\cup B^\alpha)$ which live over the subset $L_0\cup B^\alpha$. In particular $\d H\cap N(L\cup B^\alpha)$ is a 4-punctured sphere, and each puncture of $S$ corresponds to one of the basepoints adjacent to an end of $B^\alpha$.

\begin{define}\label{def:triplesubbetaband}If $B^\alpha$ is an $\alpha$-band for $\bL=(L,\ws,\zs)$, we say that the Heegaard triple 
\[
(\Sigma, \alpha_1',\dots, \alpha_n',\alpha_1,\dots, \alpha_n,\beta_1,\dots, \beta_n,\ve{w},\ve{z})
\] is \emph{subordinate} to $B^\alpha$  if the following hold:
\begin{enumerate}
\item After removing neighborhoods of the $\ws$ and $\zs$-basepoints from the diagram \[
(\Sigma, \alpha_1,\dots, \alpha_{n-1},\beta_1,\dots, \beta_n,\ve{w}, \ve{z})
\]
 we obtain a sutured Heegaard diagram for the sutured manifold $Y\setminus N(L\cup B^\alpha)$ (with meridional sutures induced by the basepoints).
\item $\alpha'_1,\dots, \alpha'_{n-1}$ are small Hamiltonian isotopies of the curves $\alpha_1,\dots, \alpha_{n-1}$ with $|\alpha_i'\cap \alpha_j|=2\delta_{ij},$ for $i,j\in \{1,\dots, n-1\}$, where $\delta_{ij}$ denotes the Kronecker delta.
\item Let $L_0$, $S$ and $H$ be as defined in the previous paragraph. Let $A\subset S$ be the closed curve which is specified up to isotopy by the property that $A$ bounds a disk in $H$ which separates the two components of $L_0$, and let $A'\subset S$ denote the closed curve which is specified up to isotopy by the property that $A'$ bounds a disk in $H$ which separates the two components of $L_0(B)$. Then $\alpha_n$ is a curve which is obtained by projecting $A$ onto $\Sigma\setminus (\alpha_1\cup \cdots\cup \alpha_{n-1})$ and $\alpha_n'$ is the curve which is obtained by projecting $A'$ onto $\Sigma\setminus(\alpha_1'\cup \cdots \cup \alpha_{n-1}')$.
\end{enumerate}
\end{define}

   A schematic of Definition~\ref{def:triplesubbetaband} is shown in Figure~\ref{fig::117}.

\begin{figure}[ht!]
\centering
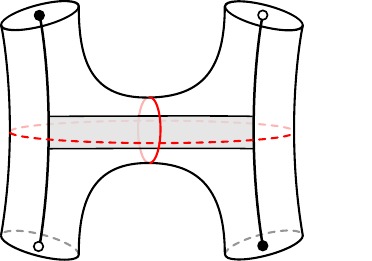
\caption{\textbf{The region $H$ and the curves $A$ and $A'$ in $\d (Y\setminus N(L_0\cup B^\alpha))$ which induce $\alpha_n$ and $\alpha_n'$.} The curve $A$ is a solid red circle, and $A'$ is a dashed red circle. The band $B^\alpha$ is shown shaded in gray. The curve $\alpha_n$ is obtained from $A$, and $\alpha_n'$ is obtained from $A'$. \label{fig::117}}
\end{figure}

Note that if $(\Sigma, \alpha_1',\dots, \alpha_n',\alpha_1,\dots, \alpha_n,\beta_1,\dots, \beta_n,\ve{w},\ve{z})$ is subordinate to $B^\alpha$, then the diagram $(\Sigma,\alpha_1,\dots, \alpha_n,\beta_1,\dots, \beta_n,\ws,\zs)$ is a diagram for $(Y,\bL)$ and $(\Sigma, \alpha_1',\dots, \alpha_n',\beta_1,\dots, \beta_n,\ws,\zs)$ is a diagram for $(Y,\bL(B^\alpha))$.

It is well known that any two sutured Heegaard diagrams for a sutured manifold can be connected by a simple set of Heegaard moves  \cite{JTNaturality}*{Proposition 2.37}. Applying this fact to the sutured manifold $Y\setminus N(L\cup B^\alpha)$, we obtain the following:

\begin{lem}\label{lem:movesbetweenalphabandtriples}Given an $\alpha$-band $B^\alpha$, any two triples subordinate to $B^\alpha$ can be connected by a sequence of the following moves
\begin{enumerate}
\item Isotopies $\Sigma_t$ of the Heegaard surface  inside of $Y$ such that $\Sigma_t\cap (L\cup B)=\ws\cup \zs$ for all $t$.
\item Isotopies or handleslides amongst the $\ve{\beta}$-curves.
\item Isotopies or handleslides amongst the $\alpha_1,\dots, \alpha_{n-1}$ curves followed by the corresponding handleslide or isotopy of the corresponding curve in $\alpha_1',\dots, \alpha_{n-1}'$.
\item Isotopies or handleslides of the curve $\alpha_n$ across other $\ve{\alpha}$-curves.
\item Isotopies or handleslides of the curve $\alpha_n'$ across other $\ve{\alpha}'$-curves.
\item  Stabilizations of the Heegaard triple, i.e., taking the internal connected sum of $\Sigma$ in $Y$ with a torus $\bT^2\subset Y$ such that there is a 3-ball $B^3\subset Y$ containing $\bT^2$ with $B^3\cap (\Sigma\cup L\cup B)=\varnothing$. On $\bT^2$ we put three curves, $\alpha_0'$, $\alpha_0$ and $\beta_0$, with $|\alpha_0\cap \beta_0|=1$ and $\alpha_0'$ a small Hamiltonian isotopy of $\alpha_0$ with $|\alpha_0'\cap \alpha_0|=2$.  
\end{enumerate}
\end{lem}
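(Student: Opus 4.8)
\textbf{Proof strategy for Lemma~\ref{lem:movesbetweenalphabandtriples}.} The plan is to reduce the statement to the known classification of sutured Heegaard moves, applied to the sutured manifold $Y\setminus N(L\cup B^\alpha)$, and then to carefully track what happens to the two ``extra'' curves $\alpha_n$, $\alpha_n'$ and to the Hamiltonian pairs $(\alpha_i,\alpha_i')$ under each type of move. First I would recall from \cite{JTNaturality}*{Proposition~2.37} that any two admissible sutured Heegaard diagrams for a fixed sutured manifold $M$ are connected by a finite sequence of isotopies, handleslides, and (de)stabilizations of the sutured diagram. Given two triples $\cT_0$ and $\cT_1$ subordinate to $B^\alpha$, condition~(1) of Definition~\ref{def:triplesubbetaband} says that their ``core'' diagrams $(\Sigma, \alpha_1,\dots,\alpha_{n-1},\bs,\ws,\zs)$ become sutured Heegaard diagrams for $M := Y\setminus N(L\cup B^\alpha)$ after deleting neighborhoods of the basepoints. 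So there is a sequence of elementary sutured moves connecting these core diagrams; the task is to promote each such elementary move to one of the six moves in the statement, by explaining how it interacts with $\alpha_n$, $\alpha_n'$, and the translates $\alpha_i'$.

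The key step is the move-by-move analysis. A sutured isotopy of the Heegaard surface inside $M$ is realized by an isotopy $\Sigma_t$ of the surface in $Y$ that is constant near $L\cup B$, which is move~(1); under such an isotopy $\alpha_n$ and $\alpha_n'$ are simply carried along, since they are determined up to isotopy by $A$ and $A'$ in $\partial(Y\setminus N(L_0\cup B^\alpha))$, which are themselves intrinsic to the band data and hence unmoved. A handleslide or isotopy among the $\beta$-curves is move~(2) and does not touch any $\alpha$-curve. A handleslide or isotopy among $\alpha_1,\dots,\alpha_{n-1}$ must be performed \emph{simultaneously} on the corresponding translate among $\alpha_1',\dots,\alpha_{n-1}'$ in order to preserve condition~(2) of Definition~\ref{def:triplesubbetaband}, i.e.\ the property $|\alpha_i'\cap\alpha_j|=2\delta_{ij}$; this is exactly move~(3). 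The point to verify here is that such a simultaneous handleslide among the unprimed curves, followed by the matching handleslide of the primed curve, does not disturb the condition that $\alpha_n$ (resp.\ $\alpha_n'$) is the projection of $A$ (resp.\ $A'$) onto the complement of $\alpha_1\cup\cdots\cup\alpha_{n-1}$ (resp.\ the primed curves) --- which follows because a handleslide only changes these projections by further isotopy/handleslide of $\alpha_n$ across the $\alpha$-curves, which is move~(4), or of $\alpha_n'$, which is move~(5). Finally, a stabilization of the sutured diagram lifts to move~(6): the genus-$1$ connected summand sits in a ball $B^3$ disjoint from $\Sigma\cup L\cup B$, and we equip the new torus with $\alpha_0,\beta_0$ meeting once together with the Hamiltonian translate $\alpha_0'$, which does not affect $\alpha_n$ or $\alpha_n'$ at all since the new curves are not among $\alpha_1,\dots,\alpha_{n-1}$ and the projections defining $\alpha_n,\alpha_n'$ are unchanged (one may take $B^3$ also disjoint from representatives of $\alpha_n$ and $\alpha_n'$).

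The remaining subtlety --- and the step I expect to be the main obstacle --- is that a sequence of elementary sutured moves connecting the two core diagrams may a priori pass through intermediate diagrams that are \emph{not} of the restricted form (i.e.\ do not admit a curve playing the role of $\alpha_n$ with the right separating property, or whose $\alpha$-translates are not in general position), so one has to argue that the sequence can be ``rigidified'' so that every intermediate diagram is genuinely subordinate to $B^\alpha$. I would handle this by always carrying the two auxiliary curves $\alpha_n$ and $\alpha_n'$ along as honest curves on the surface throughout the sequence: at each stage they are defined as projections of the fixed curves $A$, $A'$ (which live in the boundary of the complement of $L_0\cup B^\alpha$ and are therefore never moved), so the only ambiguity is the choice of isotopy representative of the projection, and any two choices differ by a composition of moves~(4) and~(5). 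Interspersing these ``correction'' moves between the core moves gives a sequence in which every diagram is subordinate to $B^\alpha$ and every step is one of~(1)--(6). One must also observe that one can perform the whole sequence through \emph{admissible} diagrams, which follows from the standard winding/finger-move argument (as in \cite{OSDisks}) adapted to Heegaard triples, exactly as in the proof of independence of the $2$-handle maps (Proposition~\ref{prop:2-handlemapwelldefined}); since we are only asserting the existence of a connecting sequence of moves, not counting holomorphic curves, admissibility can simply be arranged after the fact by a generic isotopy supported away from $L\cup B$.
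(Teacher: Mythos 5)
Your approach is the same as the paper's: the paper states the lemma immediately after a one-line appeal to Juh\'asz--Thurston (\cite{JTNaturality}*{Proposition 2.37}) applied to the sutured manifold $Y\setminus N(L\cup B^\alpha)$, and provides no further argument. Your move-by-move analysis of how each sutured Heegaard move lifts to one of (1)--(6), together with the observation that $\alpha_n$ and $\alpha_n'$ are determined up to isotopy/handleslides across the other $\alpha$- and $\alpha'$-curves by the intrinsic curves $A$ and $A'$, and that this ambiguity is exactly compensated by moves (4) and (5), is a correct and rather more careful fleshing-out of the sketch the paper leaves implicit; in particular your ``rigidification'' paragraph correctly addresses the one subtlety (keeping intermediate diagrams within the class of subordinate triples) that the paper does not explicitly discuss. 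One small imprecision: move~(1) does not require the isotopy to be constant near $L\cup B$, only that $\Sigma_t\cap(L\cup B)=\ws\cup\zs$ throughout, but this does not affect the argument.
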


Definition~\ref{def:triplesubbetaband} can be easily modified to provide a definition of a Heegaard triple $(\Sigma,\as,\bs,\bs')$ being subordinate to a $\beta$-band. Similarly, Lemma~\ref{lem:movesbetweenalphabandtriples} can be easily modified to give a set of moves between any two Heegaard triples which are subordinate to a $\beta$-band.

If $(\Sigma,\as',\as,\bs,\ws,\zs)$ is a Heegaard triple subordinate to a band, we will define the band maps by counting holomorphic triangles. To do this, it is important to understand the chain complex $\cCFL^-(\Sigma,\as',\as,\ws,\zs)$. To this end, we prove the following:

\begin{lem}\label{lem:bandtriplehas4pointedunknot}If $(\Sigma, \ve{\alpha}',\as,\bs,\ve{w},\ve{z})$ is a Heegaard triple subordinate to an $\alpha$-band $B^\alpha\subset Y$, then $(\Sigma, \ve{\alpha}',\as,\ve{w},\ve{z})$ is a diagram for an unlink in (a disjoint union of copies of) $(S^1\times S^2)^{\# g(\Sigma)}$, and every component of the unlink has exactly two basepoints, except one component, which has four.
\end{lem}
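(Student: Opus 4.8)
The plan is to understand the diagram $(\Sigma,\ve{\alpha}',\as,\ve{w},\ve{z})$ by analyzing what geometric object it describes, using the definition of a subordinate triple (Definition~\ref{def:triplesubbetaband}). First I would recall the structure: the curves $\alpha_1',\dots,\alpha_{n-1}'$ are small Hamiltonian translates of $\alpha_1,\dots,\alpha_{n-1}$ with $|\alpha_i'\cap\alpha_j|=2\delta_{ij}$, while $\alpha_n$ is the projection of the curve $A$ (separating the two components of $L_0$ inside the band region $H$) and $\alpha_n'$ is the projection of $A'$ (separating the two components of $L_0(B)$). The pair $(\alpha_i,\alpha_i')$ for $i<n$ bounds an annular region containing exactly two basepoints, so it behaves exactly like the diagrams analyzed in Lemma~\ref{lem:topdegreeunlink1}: each such pair contributes a 2-sphere-crossed-circle summand to the ambient 3-manifold and a two-basepoint component to the link. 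The key point is therefore to identify the 3-manifold and link contributed by the remaining pair $(\alpha_n,\alpha_n')$ together with the local picture in the band region.

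Next I would analyze $\alpha_n$ and $\alpha_n'$ in the four-punctured sphere $S=\partial H\cap N(L\cup B^\alpha)$. Both $A$ and $A'$ are separating curves on the four-punctured sphere $S$, splitting the four punctures (which correspond to the four basepoints adjacent to the ends of $B^\alpha$) into two pairs. For an $\alpha$-band, the ends of $B^\alpha$ lie in arcs of $L\setminus(\ws\cup\zs)$ running from $\ws$ to $\zs$; one checks that $A$ pairs up the punctures one way and $A'$ pairs them up the complementary way, so that $A$ and $A'$ intersect in exactly two points on $S$. Hence $\alpha_n$ and $\alpha_n'$ are two simple closed curves on $\Sigma$ meeting in two points, and the corresponding handlebodies differ by a 2-sphere boundary connect-sum factor. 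Concretely, the pair $(\alpha_n,\alpha_n',\ws_0,\zs_0)$ — where $\ws_0,\zs_0$ are the four basepoints $w,w',z,z'$ near the band — describes an unknot component of the link, inside an $S^1\times S^2$ (or $S^3$, after accounting for cancellation) factor, carrying all four of these basepoints. Following the visualization procedure after Definition~\ref{def:Heegaarddiagramlink}, one traces the arcs in the $\ve{\alpha}'$-handlebody and the $\ve{\alpha}$-handlebody and checks they close up to a single unknotted circle through $w,w',z,z'$.

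Assembling the pieces: the diagram $(\Sigma,\ve{\alpha}',\as,\ve{w},\ve{z})$ splits (after the standard destabilization/connect-sum analysis) as the disjoint-connect-sum of $n-1$ standard two-basepoint unknot diagrams in $S^1\times S^2$ factors and one four-basepoint unknot diagram, all summed into a $\#^{g(\Sigma)}(S^1\times S^2)$; if the Heegaard surface or link is disconnected, the same argument runs componentwise and yields a disjoint union of such pieces. This is exactly the claimed statement. The main obstacle I expect is the bookkeeping in the middle step — verifying that $A$ and $A'$ really do intersect in precisely two points on the four-punctured sphere and that the resulting curve pair describes an \emph{unknot} with \emph{four} basepoints rather than, say, a two-component piece. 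This is essentially the combinatorics of how the $\alpha$-band attaches relative to the $\ws/\zs$ alternation, and it is the place where the hypothesis ``$\alpha$-band'' (as opposed to $\beta$-band) is used; once that local model is pinned down, the rest is a routine application of Lemma~\ref{lem:topdegreeunlink1}-style reasoning.
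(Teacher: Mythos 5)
Your proposal takes essentially the same route as the paper's proof: surger the 2-spheres coming from the Hamiltonian-translate pairs $(\alpha_i',\alpha_i)$, $i<n$, to reduce to genus-zero components, observe that all but one of those components carry a single $w,z$ pair and hence give two-basepoint unknots, and then verify that the remaining component containing $\alpha_n,\alpha_n'$ is a four-basepoint unknot. The elaboration you give for the final step---analyzing the curves $A$ and $A'$ directly on the four-punctured sphere $S\subset\partial N(L_0\cup B^\alpha)$ to conclude $|\alpha_n\cap\alpha_n'|=2$---is a useful unpacking of what the paper dismisses with ``a model diagram shows.''

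One bookkeeping point is misstated and is worth flagging. You write that each pair $(\alpha_i,\alpha_i')$, $i<n$, ``bounds an annular region containing exactly two basepoints'' and hence ``contributes a 2-sphere-crossed-circle summand \ldots and a two-basepoint component to the link.'' This is not right: the region between small Hamiltonian translates is a pair of bigons containing no basepoints, and the curve pairs do not correspond bijectively to link components. Some of the $(\alpha_i',\alpha_i)$ pairs give separating 2-spheres rather than $S^1\times S^2$ factors, and the number of two-basepoint link components is $|L_{\as',\as}|-1$ (one fewer than the total number of components of $\bL_{\as',\as}$), which is not $n-1$ in general. The surgery yields $|L_{\as',\as}|$ genus-zero pieces; the basepoint pairs live on the $|L_{\as',\as}|-1$ pieces that contain no curves, and the $(S^1\times S^2)$-content totals $g(\Sigma)$. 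This does not break the argument, but it is a gap in the reduction step that you should repair before writing it up.
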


\begin{proof}Let $(Y_{\as',\as},\bL_{\as',\as})$ denote the 3-manifold and link induced by the diagram $(\Sigma,\as',\as,\ws,\zs)$. Write $L_{\as',\as}$ for the underlying link of $\bL_{\as',\as}$.  Note that for $i\in \{1,\dots, n-1\}$ the pair $(\alpha_i',\alpha_i)$ determines an embedded 2-sphere $S_i$ in $Y_{\as',\as}$ which does not intersect $L_{\as',\as}$. By surgering out $S_i$, we can reduce to the case that $\Sigma$ consists of $|L_{\as',\as}|$ genus 0 components. There are $|L_{\as',\as}|-1$ of these components which have no $\ve{\alpha}'$ or $\ve{\as}$ curves, but have a single pair of $\ve{w}$ and $\zs$-basepoints, thus determining an unknotted component of $\bL_{\as',\as}$ in $Y_{\as',\as}$ with exactly two basepoints. There will be one component of the resulting Heegaard diagram which contains the curves $\alpha_n'$ and $\alpha_n$. A model diagram shows that for this link, $\alpha_n'$ and $\alpha_n$ can be taken to intersect twice on a genus zero Heegaard surface such that each domain is a bigon containing a single basepoint, which is a diagram for an unknot with four basepoints.
\end{proof}

\subsection{Type-$\mathbf{z}$ band maps}

We now construct the band maps for $\ws$ bands, for both $\alpha$ bands and $\beta$ bands. Suppose first that $B^\alpha$ is an $\alpha$-band and write $w_1,$ $z_1$, $w_2$ and $z_2$ for the basepoints adjacent to the ends of $B^\alpha$. Let $\sigma\colon \ws\cup \zs\to \bmP$ be a coloring. Assuming 
\[
\sigma(z_1)=\sigma(z_2),
\] we will define a map
\[
F_{B^\alpha}^{\ve{z}}\colon \cCFL^-(Y,\bL^\sigma,\frs)\to \cCFL^-(Y,\bL(B^\alpha)^\sigma,\frs).
\] Notice that the restriction on the coloring $\sigma$ is exactly that the coloring on $\bL$ is induced by a coloring on the decorated link cobordism from Figure~\ref{fig::103}.

 We take a Heegaard triple $(\Sigma,\as',\as,\bs,\ws,\zs)$ which is subordinate to $B^\alpha$. Define the coloring $\sigma_0\colon \ws\cup \zs\to (\ws\cup \zs)/(z_1\sim z_2)$. By Lemmas~\ref{lem:bandtriplehas4pointedunknot}~and~\ref{lem:topdegreeunlink2}, there is a distinguished element 
 \[
 \Theta^{\ws}_0\in \cHFL^-(\Sigma,\as',\as,\ws,\zs,\sigma_0,\frs_0),
 \] which is determined by the property that $\Theta^{\ws}_0$ generates
 \[
 \Max_{\gr_{\ws}}\left(\cHFL^-(\Sigma,\as',\as,\ws,\zs,\sigma_0, \frs_0)\right)\iso \bF_2[V_{\zs}]/(V_{z_1}+V_{z_2})
 \]
 as an $\bF_2[V_{\zs}]$-module. We  define $\Theta^{\ws}\in \cHFL^-(\Sigma,\as',\as,\ws,\zs,\sigma,\frs_0)$ to be the image of $\Theta^{\ws}_0$ under the natural map
 \begin{equation}\cHFL^-(\Sigma,\as',\as,\ws,\zs,\sigma_0,\frs_0)\to \cHFL^-(\Sigma,\as',\as,\ws,\zs,\sigma,\frs_0).\label{eq:naturalmapfromstrictercoloring}\end{equation} Note that the existence of the map from Equation~\eqref{eq:naturalmapfromstrictercoloring} uses the assumption that $\sigma(z_1)=\sigma(z_2)$.

The 4-manifold $X_{\as',\as,\bs}$ is diffeomorphic to $[0,1]\times Y$ with a neighborhood of a 1-complex removed. Hence the $\Spin^c$ structure $\frs\in \Spin^c(Y)$ uniquely determines a $\Spin^c$ structure on $X_{\as',\as,\bs}$, which restricts to the torsion $\Spin^c$ structure $\frs_0\in \Spin^c(Y_{\as',\as})$. We also write $\frs$ for the 4-dimensional $\Spin^c$ structure determined by $\frs\in \Spin^c(Y)$.

We  now define the type-$\zs$ band map for $B^\alpha$ to be
\[
F_{B^\alpha}^{\zs}(-):=F_{\as',\as,\bs,\frs}(\Theta^{\ws}\otimes -)\colon \cCFL^-(\Sigma,\as,\bs,\ws,\zs,\sigma,\frs)\to \cCFL^-(\Sigma,\as',\bs,\ws,\zs,\sigma,\frs),
\]
by counting holomorphic triangles representing index 0 homology classes $\psi$ with $\frs_{\ws}(\psi)=\frs$.

Note that $\Theta^{\ws}$ is only a well-defined homology class, not a well-defined cycle in the chain complex $\cCFL^-(\Sigma,\as',\as,\ws,\zs,\sigma,\frs_0)$. Nonetheless, adding a boundary $\d \eta \in \cCFL^-(\Sigma,\as',\as)$ to $\Theta^{\ws}$ has the effect of changing $F_{B^\alpha}^{\zs}$ by a filtered, equivariant chain homotopy, using the associativity relations for the triangle maps.

The definition of the map for a $\beta$-band is analogous. If $B^\beta$ is a $\beta$-band, we pick a triple $(\Sigma,\as,\bs,\bs',\ws,\zs)$ subordinate to $B^\beta$, and consider the triangle map $F_{\as,\bs,\bs',\frs}$. As with the case of $\alpha$-bands, there is a distinguished element $\Theta^{\ws}\in \cHFL^-(\Sigma,\bs,\bs,\ws,\zs,\sigma,\frs)$, as long as $\sigma(z_1)=\sigma(z_2)$. The $\zs$-band map for $B^\beta$ is then defined by the formula
\[F_{B^{\beta}}^{\zs}(-):=F_{\as,\bs,\bs',\frs}(-\otimes \Theta^{\ws}).\]

 We now claim that these induce well-defined maps on the level of transitive chain homotopy types:

\begin{lem}The map $F_{B}^{\ve{z}}$ is  well-defined up to equivariant, filtered chain homotopy.
\end{lem}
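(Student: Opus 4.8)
The plan is to argue that $F_B^{\zs}$ is unchanged, up to filtered equivariant chain homotopy, under all the moves relating two Heegaard triples subordinate to $B$, together with the change-of-diagrams maps for the two ends. I will treat the $\alpha$-band case in detail; the $\beta$-band case is completely analogous after switching the roles of the $\as$ and $\bs$ curves. By Lemma~\ref{lem:movesbetweenalphabandtriples}, any two triples subordinate to $B^\alpha$ are connected by a finite sequence of (1) isotopies of the Heegaard surface, (2) isotopies and handleslides among the $\bs$-curves, (3) paired isotopies and handleslides among $\alpha_1,\dots,\alpha_{n-1}$ and $\alpha_1',\dots,\alpha_{n-1}'$, (4) isotopies and handleslides of $\alpha_n$ over other $\as$-curves, (5) the mirror move for $\alpha_n'$, and (6) stabilizations. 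So it suffices to show invariance under each of these moves, and to show that the resulting transition maps intertwine the band maps with the change-of-diagrams maps $\Phi_{\cH\to\cH'}$ on the incoming complex $\cCFL^-(Y,\bL^\sigma,\frs)$ and on the outgoing complex $\cCFL^-(Y,\bL(B^\alpha)^\sigma,\frs)$.

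The key mechanism is the associativity of the holomorphic triangle counting maps \cite{OSDisks}*{Theorem~8.16}, exactly as in the proof of invariance of the 2-handle maps \cite{OSTriangles}*{Theorem~4.4}. Concretely, for a move among the $\bs$-curves (type 2) one introduces a fourth collection $\bs'$ obtained by the move, so that $F_{\bs\to\bs'}$ is computed by a triangle count against a top-degree generator $\Theta_{\bs,\bs'}^+$; associativity of $F_{\as',\as,\bs,\bs'}$ on the quadruple gives $\Phi_{\cH\to\cH'}^{\text{out}}\circ F_{B^\alpha}^{\zs}\simeq F_{B^\alpha}^{\zs,\prime}\circ \Phi_{\cH\to\cH'}^{\text{in}}$ after identifying $F_{\as',\as,\bs}(\Theta^{\ws}\otimes-)$ composed with handleslide, since $F_{\as,\bs,\bs'}(\Theta^{\ws}_{\bs,\bs'}\otimes-)$ realizes $\Phi_{\cH\to\cH'}$ on both ends (the $\bs$-move affects the diagrams for $(Y,\bL)$ and $(Y,\bL(B^\alpha))$ simultaneously). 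For the paired $\alpha_i/\alpha_i'$ move (type 3) one similarly introduces $\as''$, with $\Theta^{\ws}$ transported by the triangle map $F_{\as'',\as',\as}(\Theta_{\as'',\as'}^+\otimes\Theta^{\ws}_{\as',\as})\simeq \Theta^{\ws}_{\as'',\as}$; this uses Lemma~\ref{lem:topdegreeunlink2} to see that both $\Theta^{\ws}$'s are the distinguished generators of the top $\gr_{\ws}$ graded part and that the triangle map preserves them, together with the fact that the handleslide on $\alpha_1,\dots,\alpha_{n-1}$ only affects the $(S^1\times S^2)^{\#g}$-factor on which the $\Theta^{\ws}$ lives. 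Moves of type 4 on $\alpha_n$ affect only the incoming diagram for $(Y,\bL)$, and associativity gives $F_{B^\alpha}^{\zs}\circ\Phi^{\text{in}}_{\alpha_n\to\alpha_n^{\text{new}}}\simeq F_{B^\alpha}^{\zs,\prime}$; moves of type 5 dually affect only the outgoing diagram. Finally, for stabilizations (type 6) one invokes Proposition~\ref{prop:singlealphaquasistabtriangle}: stabilizing by a genus-one piece carrying $\alpha_0',\alpha_0,\beta_0$ multiplies the triangle count by the identity on the new generator, and the stabilized $\Theta^{\ws}$ is $\Theta^{\ws}\otimes\theta^{\ws}$ (in the notation there), so $F_{B^\alpha}^{\zs}$ is unchanged; isotopies of the Heegaard surface (type 1) induce tautological identifications.

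Throughout, the role of the coloring hypothesis $\sigma(z_1)=\sigma(z_2)$ is exactly what makes $\Theta^{\ws}$ well-defined as a homology class in $\cHFL^-(\Sigma,\as',\as,\ws,\zs,\sigma,\frs_0)$ via the natural map of Equation~\eqref{eq:naturalmapfromstrictercoloring}, and hence makes $F_{B^\alpha}^{\zs}$ an $\cR_{\bmP}^-$-equivariant, $\Z^{\bmP}$-filtered chain map with a well-defined chain homotopy type; the ambiguity in the cycle representative of $\Theta^{\ws}$ only changes the triangle map by a filtered equivariant chain homotopy, again by associativity, as already noted in the construction. The main obstacle I anticipate is bookkeeping rather than conceptual: one must check carefully that in each of the moves the relevant $\Theta$-element really is the distinguished top-degree generator and that the triangle map sends one distinguished generator to the other (not merely to it plus lower-order terms that survive in the colored homology), which is where Lemma~\ref{lem:topdegreeunlink2} and the grading arguments of Lemma~\ref{lem:topdegreeunlink1} do the work, and one must verify $\Spin^c$-compatibility, i.e.\ that the unique $\Spin^c$ structure on $X_{\as',\as,\bs}$ restricting to $\frs$ on $Y$ and $\frs_0$ on $Y_{\as',\as}$ is matched correctly across each move, which follows from the same Mayer--Vietoris computations used in Section~\ref{sec:handleattachmentmapsawayfromL}. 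The $\beta$-band case is obtained by the symmetrical argument, using triples subordinate to $B^\beta$ and the corresponding version of Lemma~\ref{lem:movesbetweenalphabandtriples}.
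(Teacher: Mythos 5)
Your approach is essentially identical to the paper's: reduce to the moves of Lemma~\ref{lem:movesbetweenalphabandtriples} and handle each one by the associativity/neck-stretching arguments used for 2-handle invariance in \cite{OSTriangles}*{Proposition~4.6}, which is exactly what the paper does (in much less detail). One small correction: for the stabilization move (type 6), the right tool is the standard genus-stabilization argument from the 2-handle invariance proof, not Proposition~\ref{prop:singlealphaquasistabtriangle} — that proposition concerns quasi-stabilization (adding a pair of basepoints), whereas the stabilization in Lemma~\ref{lem:movesbetweenalphabandtriples} adds genus via a connected sum with a torus carrying $\alpha_0',\alpha_0,\beta_0$; your description of what actually happens is correct, but the citation is to the wrong model computation.
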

\begin{proof}We just need to check independence from each of the moves appearing Lemma~\ref{lem:movesbetweenalphabandtriples}. Each of these moves is considered in the maps associated to surgery on framed 1-dimensional links embedded in 3-manifolds, and can be addressed using the same argument as~\cite{OSTriangles}*{Proposition~4.6}.
\end{proof}

We also make the following remark:

\begin{rem}\label{rem:canwind}Since the element $\Theta^{\ws}$ is well-defined on the level of homology, it is not necessary to work with Heegaard triples which satisfy Definition~\ref{def:triplesubbetaband} exactly as  it is stated. Rather it is sufficient to work with triples which are related to the ones described in Definition~\ref{def:triplesubbetaband} by a sequence of handleslides and isotopies of the attaching curves.
\end{rem}

\subsection{Type-$\mathbf{w}$ band maps}

If $B$ is a band (either $\alpha$- or $\beta$-), the $\ws$-type band maps are defined similarly the $\zs$-type band maps in the previous section.

Suppose first $B^\alpha$ is an $\alpha$-band for the link $\bL$ in $Y$, and $w_1,$ $w_2$, $z_1$ and $z_2$ are the basepoints adjacent to the ends of $B^\alpha$. Suppose $\sigma$ is a coloring such that
\[\sigma(w_1)=\sigma(w_2).\] Under the above assumption, we will describe the type-$\ws$ band map, $F_{B^\alpha}^{\ws}$. 

As with the type-$\zs$ map, this restriction on the coloring $\sigma$ is equivalent to the requirement that the coloring on $\bL$ is induced by a coloring on the decorated link cobordism for $F_{B^{\alpha}}^{\zs}$ from Figure~\ref{fig::103}. Let $(\Sigma,\as',\as,\bs,\ws,\zs)$ be a triple subordinate to $B^\alpha$. We define a coloring of $\bL$ via the formula $\sigma_0'\colon \ws\cup \zs\to (\ws\cup \zs)/(w_1\sim w_2)$. There is a distinguished element $\Theta^{\zs}_0\in \cHFL^-(\Sigma,\as',\as,\ws,\zs,\sigma_0',\frs_0)$ which generates
\[\Max_{\gr_{\zs}}\left(\cHFL^-(\Sigma,\as',\as,\ws,\zs,\sigma_0',\frs_0)\right)\] as an $\bF_2[U_{\ws}]$-module. Since $\sigma(w_1)=\sigma(w_2)$, we can define the element $\Theta^{\zs}\in \cHFL^-(\Sigma,\as',\as,\ws,\zs,\sigma,\frs_0)$ as the image of $\Theta^{\zs}_0$.

The type-$\ws$ band map for $B^\alpha$ is defined by the formula
\[
F_{B^\alpha}^{\ws}:=F_{\as',\as,\bs,\frs}(\Theta^{\zs}\otimes -)\colon \cCFL^-(\Sigma,\as,\bs,\ws,\zs,\sigma,\frs)\to \cCFL^-(\Sigma,\as',\bs,\ws,\zs,\sigma,\frs)
.\]

The type-$\ws$ band maps for $\beta$-bands are defined analogously, using triples $(\Sigma,\as,\bs,\bs',\ws,\zs)$ subordinate to $\beta$-bands.

The same argument used for the type-$\ve{z}$ band maps can be used to show that the type-$\ve{w}$ band maps yield well-defined maps on the level of transitive chain homotopy types.

\section{Two compound handle attachment maps}
\label{sec:compoundhandles}
In this section, we describe two compound handle maps, which we define as compositions of the  maps from Sections~\ref{sec:handleattachmentmapsawayfromL}~and~\ref{sec:bandmaps}. The first type of compound map are the birth and death cobordism maps, which will be useful later when we prove several facts about the band maps (notably Proposition~\ref{prop:alphabandmapsarebetabandmaps}). The next map we define is the compound 1-handle/band map, for a framed 0-sphere along a link $L$, which will be useful later when we prove invariance of the cobordism maps.

\subsection{Birth/death cobordism maps}
\label{subsec:diskstabilization}

Suppose that $\bL=(L,\ve{w},\ve{z})$ is a multi-based link in $Y^3$ and $\bU=(U,w,z)$ is an doubly based unknot in $Y$, and $D$ is a choice of embedded disk in $Y\setminus L$ which has boundary $U$. Write $\bL\cup \bU$ for the link $(L\cup U, \ve{w}\cup \{w\},\ve{z}\cup \{z\})$. Suppose $\sigma$ and $\sigma'$ are colorings of $\bL$ and $\bL\cup \bU$ (resp.) such that $\sigma'$ restricts to $\sigma$. We  define maps
\[\cB^{+}_{\bU,D}\colon \cCFL^{\circ}(Y,\bL^{\sigma},\frs)\to \cCFL^{\circ}(Y,(\bL\cup \bU)^{\sigma'},\frs)\] and
\[\cD^{-}_{\bU,D}\colon \cCFL^{\circ}(Y,(\bL\cup \bU)^{\sigma'},\frs)\to \cCFL^{\circ}(Y,\bL^\sigma,\frs)\] which are well-defined on the level of transitive chain homotopy types. We will describe the maps on the level of intersection points, and state the effect of adding a doubly based unknot on the chain complex $\cCFL^-$. We will not explicitly prove invariance of the maps $\cB^+_{\bU,D}$ and $\cD^-_{\bU,D}$, since our definition of the map $\cB_{\bU,D}^+$ can be alternatively written as a composition of a 0-handle map (which adds the unknot $\bU$ inside of a disjoint copy of $S^3$) as well as a 1-handle map. Similarly $\cD_{\bU,D}^-$ can be written as a composition of a 3-handle map and a 4-handle map. Invariance of the maps $\cB_{\bU,D}^{+}$ and $\cD_{\bU,D}^-$ hence follows from invariance of the 0-, 1-, 3- and 4-handle maps,  which we proved earlier.

We pick a regular neighborhood $N(D)$ of $D$ which does not intersect $L$  and consider only diagrams $\cH=(\Sigma, \ve{\alpha},\ve{\beta},\ve{w},\ve{z})$ such that 
\begin{enumerate}
\item $N(D)\cap \Sigma$ is a disk;
\item $N(D)\cap (\ve{\alpha}\cup\ve{\beta}\cup \ve{w}\cup \ve{z})=\varnothing$.
\end{enumerate}
In $N(D)\cap \Sigma$, we insert the subdiagram shown in  Figure~\ref{fig::6} into $\Sigma$, to form the diagram
\[
\Hat{\cH}:=(\Sigma,\as\cup \{\alpha_0\}, \bs\cup \{\beta_0\}, \ws\cup \{w\}, \zs\cup \{z\}).
\]
  The two points in $\alpha_0\cap \beta_0$ are distinguished by the relative grading, and the designation of top and bottom generators is the same for $\gr_{\ve{w}}$ and $\gr_{\ve{z}}$.

\begin{figure}[ht!]
\centering
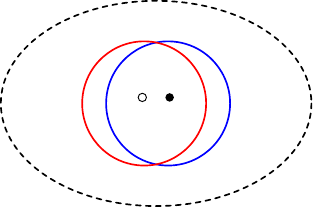
\caption{\textbf{The Heegaard diagram after a birth cobordism, adding a doubly based unknot.} This is the subdiagram which is added to $N(D)\cap \Sigma$ to form the diagram $\Hat{\cH}$.\label{fig::6}}
\end{figure}

  Let $J_s$ be an almost complex structure on $\Sigma\times [0,1]\times \R$ which is split on $(\Sigma\cap N(D))\times [0,1]\times \R$. And let $J_s(T)$ be an almost complex structure obtained by inserting a neck of length $T$ along the circle $\d (\Sigma\cap N(D))$, which is shown as a dashed circle in Figure~\ref{fig::6}. We have the following:
\begin{lem}\label{lem::differentialdiskstabilization}Suppose that $\cH=(\Sigma, \ve{\alpha},\ve{\beta},\ve{w},\ve{z})$ is a Heegaard diagram for $(Y,\bL)$, such that $\Sigma\cap N(D)$ is a disk which is disjoint from $\as\cup \bs\cup \ws\cup \zs$.  For sufficiently large $T$, 
there is an identification of
\[\d_{\Hat{\cH}, J_s(T)}=\begin{pmatrix}\d_\cH & *\\
0& \d_\cH
\end{pmatrix},\] with the matrix decomposition induced by writing $\theta^+$ as the first row and column and $\theta^-$ as the second. 
\end{lem}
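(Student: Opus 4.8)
The statement is a direct analogue of the quasi-stabilization differential computation of Proposition~\ref{prop:quasi-stabilizeddifferential} and of the 1-handle diagram analysis, so the plan is to follow the same neck-stretching strategy. First I would fix an almost complex structure $J_s$ on $\Sigma\times [0,1]\times \R$ which is split on $(\Sigma\cap N(D))\times [0,1]\times \R$, and consider the family $J_s(T)$ obtained by inserting a neck of length $T$ along the dashed circle $c=\d(\Sigma\cap N(D))$ shown in Figure~\ref{fig::6}. The module identification $\cCFL^-(\Hat{\cH},\frs)\iso \cCFL^-(\cH,\frs)\otimes_{\bF_2}\langle\theta^+,\theta^-\rangle$ is immediate, since the intersection points of $\Hat{\cH}$ are exactly pairs $\xs\times y$ with $\xs\in \bT_{\as}\cap \bT_{\bs}$ and $y\in\{\theta^+,\theta^-\}$, and the $\Spin^c$ structures match because the diagram $(S^2,\alpha_0,\beta_0)$ added in $N(D)$ represents a doubly based unknot in $S^3$, which contributes only the torsion $\Spin^c$ structure $\frs_0$ (here I would invoke the fact, used already in Section~\ref{sec:compoundhandles}, that adding a doubly based unknot along an embedded disk can be realized as a $0$-handle followed by a $1$-handle, or simply argue directly with the formula $\frs_{\ws}(\xs\times y)=\frs_{\ws}(\xs)$).

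The heart of the argument is the matrix computation. I would take a sequence $T_i\to\infty$ and a sequence of $J_s(T_i)$-holomorphic Maslov index $1$ strips $u_i$ in a class $\phi\#\phi_0\in\pi_2(\xs\times y,\ys\times y')$, where $\phi\in\pi_2(\xs,\ys)$ on $(\Sigma,\as,\bs)$ and $\phi_0\in\pi_2(y,y')$ on $(S^2,\alpha_0,\beta_0)$. Using the Maslov index additivity formula from \cite{LipshitzCylindrical}*{Corollary~4.3} in the neck-stretched limit, one gets $\mu(\phi\#\phi_0)=\mu(\phi)+\gr(y,y')+2n_w(\phi_0)$ (where $\gr(y,y')$ is the grading drop from $y$ to $y'$, computed with either $\gr_{\ws}$ or $\gr_{\zs}$, since the two agree here), exactly as in the proof of Lemma~\ref{lem:mapsforI+agree}. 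Passing to the limit, the $\Sigma$-component $\phi$ acquires a broken holomorphic representative, so $\mu(\phi)\ge 0$ by transversality; since all summands are nonnegative and the total Maslov index is $1$, the only possibilities are: (a) $\mu(\phi)=1$, $\gr(y,y')=0$, $n_w(\phi_0)=0$, forcing $y=y'$ and $\phi_0$ constant, which gives the diagonal entries $\d_{\cH}$; or (b) $\mu(\phi)=0$, $\gr(y,y')=1$, $n_w(\phi_0)=0$, which means $y=\theta^-$, $y'=\theta^+$, $\phi$ a broken curve of index $0$, contributing only to the $(\theta^+,\theta^-)$ off-diagonal entry (the lower-left in the stated convention is zero because a grading drop of $+1$ in the other direction is impossible). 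Conversely, for sufficiently large $T$ the constant $\Sigma$-classes always contribute, and a gluing argument shows every index $1$ curve on $\cH$ glues uniquely to a trivial curve in the neck region, so the diagonal entries are exactly $\d_{\cH}$.

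The main obstacle, as in the cited proofs, is the compactness/gluing bookkeeping in the neck-stretching limit: one must rule out the appearance of boundary degenerations or sphere bubbles in the neck region that would change the count, and one must check that the limiting broken curve genuinely represents a well-defined class $\phi$ on $(\Sigma,\as,\bs)$ for which transversality applies. This is handled by exactly the argument in the proof of \cite{ZemQuasi}*{Proposition~5.3} and Lemma~\ref{lem:mapsforI+agree} above: the curve $c$ is disjoint from all attaching curves, so the boundary of any limiting curve in the degenerate region lies on $\alpha_0\cup\beta_0$ and can be analyzed on the model diagram $(S^2,\alpha_0,\beta_0,w,z)$, where the only nonconstant low-index classes are the two bigons (one over $w$, one over $z$), and only the $z$-bigon has $n_w=0$. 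I would therefore not reprove these analytic facts but cite them, noting that the present situation is formally identical (indeed simpler, since the subdiagram here has no $\alpha_s$ curve meeting other $\bs$ curves). The $*$ entry is left unspecified, exactly as in Proposition~\ref{prop:quasi-stabilizeddifferential}; one could pin it down with a more careful count, but that is not needed for the applications.
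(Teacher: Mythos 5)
The paper's actual proof is a single sentence: it cites \cite{OSLinks}*{Proposition~6.5} for the relevant holomorphic disk counts. You instead re-derive the count via the neck-stretching template used elsewhere in the paper, which is a legitimate alternate route, but your case analysis has a structural flaw. You write that ``all summands are nonnegative'' in the identity $\mu(\phi\#\phi_0)=\mu(\phi)+\gr(y,y')+2n_w(\phi_0)$, but the grading drop $\gr(y,y')$ takes values in $\{-1,0,1\}$, and the case $\gr(y,y')=-1$ (a class from $\theta^-$ to $\theta^+$) is not ruled out; in fact it is exactly that case — with $\mu(\phi)=0$, $n_w(\phi_0)=1$ — that produces the $*$ entry. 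Relatedly, your case~(b) is mislabelled: if $\gr(y,y')$ denotes the grading drop from $y$ to $y'$, then $\gr(y,y')=1$ forces $y=\theta^+$ and $y'=\theta^-$ (since $\theta^+$ is the top generator for both $\gr_{\ws}$ and $\gr_{\zs}$), so that class populates the \emph{lower-left} entry, not the upper-right as you claim.

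This points to a genuine gap that cannot be patched by a sign fix: once the case analysis is done correctly, the index argument alone leaves both off-diagonal entries in play. There are $\mu=1$ classes from $\xs\times\theta^+$ to $\ys\times\theta^-$ with $\phi$ constant and $n_w(\phi_0)=n_z(\phi_0)=0$ (the empty bigons on the model diagram $(S^2,\alpha_0,\beta_0,w,z)$), and nothing in the index or grading calculus forbids them — precisely because, unlike the quasi-stabilization setting of Proposition~\ref{prop:quasi-stabilizeddifferential}, here $\gr_{\ws}$ and $\gr_{\zs}$ designate the same top intersection point, so the two gradings cannot be played off against each other to force the entry to vanish. Whether these empty bigons contribute mod~2 is an actual holomorphic disk count (they cancel in pairs on the model diagram), and that count is exactly what \cite{OSLinks}*{Proposition~6.5} supplies. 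A smaller issue: the displayed index formula $\mu(\phi\#\phi_0)=\mu(\phi)+\gr(y,y')+2n_w(\phi_0)$ is the specialization with zero multiplicity at the connect-sum region; in the present setting the connect sum occurs at a point disjoint from all basepoints, so the general formula carries a $-2m$ correction and you should say a word about why the relevant limit classes have $m=0$. The overall plan (stretch along $c=\d(N(D)\cap\Sigma)$, decompose the broken limit, compute indices) is reasonable, but as written the argument does not establish the vanishing of the lower-left entry, which is the non-trivial content of the lemma and the reason the paper cites \cite{OSLinks}.
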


The relevant counts of holomorphic disks in the above lemma are established in \cite{OSLinks}*{Proposition~6.5}.

\begin{rem} The entry $*$ in the expression for the differential can be identified by analyzing the proof of \cite{OSLinks}*{Proposition~6.5}. Write $A$ for the component of $\Sigma\setminus \as$ which contains $N(D)\cap \Sigma$ and write $B$ for the component of $\Sigma\setminus \ve{\beta}$ which contains $N(D)\cap  \Sigma$. Let $w_A$ and $z_A$ (resp. $w_B$ and $z_B$) denote the $\ws$ and $\zs$-basepoints in $A$ (resp. $B$). If the connected sum point in the stabilized region is chosen sufficiently close to the $\alpha_0$ curve, we can identify the component labeled  $*$ with the action of  $U_wV_z+U_{w_A}V_{z_A}$. For a connected sum point sufficient close to the $\beta_0$ curve, we can identify $*$ with the action of $U_wV_z+U_{w_B}V_{z_B}$.
\end{rem}
 
  We define the birth map
\[
\cB_{\bU,D}^+\colon \cCFL_{J_s}^\circ(\cH,\sigma,\frs)\to \cCFL_{J_s(T)}^\circ(\Hat{\cH}, \sigma',\frs),
\]
 and the death map $\cD_{\bU,D}^-$, defined in the opposite direction, via the formulas 
\[
\cB_{\bU,D}^+(\ve{x})= \ve{x}\times \theta^+,
\]
 \[
 \cD_{\bU,D}^-(\ve{x}\times \theta^-)= \ve{x}\qquad \text{and} \qquad \cD_{\bU,D}^-(\ve{x}\times \theta^+)=0.
 \]
  For large enough $T$, the maps $\cB_{\bU,D}^+$ and $\cD_{\bU,D}^-$ are chain maps by Lemma~\ref{lem::differentialdiskstabilization}. The map $\cB_{\bU,D}^+$ is a composition of a 0-handle and a 1-handle map. Similarly, the map $\cD_{\bU,D}^-$ is a composition of a 3-handle and 4-handle map. Hence $\cB_{\bU,D}^+$ and $\cD_{\bU,D}^-$ are well-defined on the level of transitive chain homotopy types.

\subsection{Surgeries and traces of framed 0-spheres along $L$}
\label{sec:modelcompounthandlebandcobordisms}

It will be convenient for our proof of invariance to consider framed 0-spheres $\bS^0$ which are centered at a pair of points contained in the link $L$. In this section, we describe traces and surgeries of such framed 0-spheres. In the following section, we will define cobordism maps associated to  framed 0-spheres along $L$. We make the following definition:

\begin{define}\label{def:framedzerospherealongL} Suppose $L$ is an oriented link in $Y$. We say that a smooth embedding $\bS^0\colon S^0\times D^3\hookrightarrow Y$ is a \emph{framed 0-sphere along $L$} if the following hold:
\begin{enumerate}
\item\label{cond:framed0sphereL1} If we write $D^3$ as $\{(y,w,z): y^2+w^2+z^2\le 1\}$, then $(\bS^0)^{-1}(L)=S^0\times \{(y,0,0): -1\le y\le 1\}$.
\item\label{cond:framed0sphereL2} Each of $\bS^0(S^0\times \{(1,0,0)\})$ and $\bS^0(S^0\times \{(-1,0,0)\})$ contains an incoming point of $L$, and an outgoing point of $L$, according to the orientation of $L$.
\end{enumerate}

\end{define}

We now describe the surgeries and traces of a framed 0-sphere which occurs along $L$, extending the notation from Section~\ref{subsec:modelhandlecobsawayfromL}. If $\bS^0$ is a framed 0-sphere along $L\subset Y$, we define a link  $L(\bS^0)\subset Y(\bS^0)$ via the equation
\[L(\bS^0):=(L\setminus \Int \im (\bS^0))\cup ([-1,1]\times \{\pm 1,0,0\}).\] Condition~\eqref{cond:framed0sphereL1} of Definition~\ref{def:framedzerospherealongL} implies that the link $L(\bS^0)$ is a closed 1-manifold in $Y(\bS^0)$, while Condition~\eqref{cond:framed0sphereL2} implies that an orientation of $L$ uniquely determines an orientation on $L(\bS^0)$. 

We now define the trace of $\bS^0$, as well as the induced trace link cobordism. It is convenient to define it slightly differently than in Section~\ref{subsec:modelhandlecobsawayfromL}.  We define the 4-manifold
\[W(Y,\bS^0):=([0,1]\times Y)\cup (D^1\times D^3)\cup ([1,2]\times Y(\bS^0)).\] 

There is an oriented surface $\Sigma(L,\bS^0)\subset W(Y,\bS^0)$ defined as
\[\Sigma(L,\bS^0):=([0,1]\times L)\cup B\cup ([1,2]\times L(\bS^0))\] where $B\subset [-1,1]\times D^3$ is the subset 
 $B:=[-1,1]\times \{(y,w,z): -1\le y\le 1, z=w=0\}$.  Note that $B$ is canonically identified with $[-1,1]\times [-1,1]$. 
 
 We now define the \emph{trace link cobordism} of $\bS^0$ to be
\[\cW(Y,L,\bS^0):=(W(Y,\bS^0), \Sigma(L,\bS^0))\colon (Y,L)\to (Y(\bS^0), L(\bS^0)).\]

 \subsection{Construction of the maps for framed 0-spheres along $L$}\label{subsec:constructioncompound1-handlemaps} 

Analogously to the situation of the band maps, to define maps for framed 0-spheres along $L$, we will need to restrict to framed 0-spheres which satisfy several additional requirements:

\begin{define} Suppose $\bS^0$ is a framed 0-sphere along $L$. We say that $\bS^0$ is of \emph{$\alpha$-type} if the two components of $\im(\bS^0)$ are contained in distinct $\alpha$-subarcs of $L\setminus (\ws\cup \zs)$ (i.e. they are contained in two distinct components of $U_{\as}\cap (L\setminus (\ws\cup \zs))$). We say that $\bS^0$ is of \emph{$\beta$-type} if the two components of $\im(\bS^0)$ are contained in two distinct $\beta$-subarcs of $L\setminus (\ws\cup \zs)$.
\end{define}

Under the above assumptions,  $\bL(\bS^0):=(L(\bS^0),\ws,\zs)$ is a valid multi-based link. If $\frs\in \Spin^c(Y)$, then $\frs$ admits a unique extension  to $W(Y,\bS^0)$, which we denote by $\hat{\frs}$. We let $\frs'$ denote the restriction of $\hat{\frs}$ to $Y(\bS^0)$. A coloring $\sigma$ of $\bL$,
 induces a coloring of $\bL(\bS^0)$, for which we also write $\sigma$. For a framed $0$-sphere which is of either $\alpha$-type or $\beta$-type, we will define a type-$\ve{z}$ compound 1-handle/band map
 \[
 F^{\ve{z}}_{Y,\bL, \bS^0, \hat{\frs}}\colon \cCFL^-(Y,\bL^\sigma,\frs)\to \cCFL^-(Y(\bS^0),(\bL(\bS^0))^{\sigma},\frs'),
 \] 
 as well as a type-$\ve{w}$ compound handle/band map
 \[
 F^{\ve{w}}_{Y,\bL, \bS^0, \hat{\frs}}\colon \cCFL^-(Y,(\bL)^{\sigma},\frs)\to \cCFL^-(Y(\bS^0),(\bL(\bS^0))^{\sigma},\frs').
 \]

For the type-$\ve{o}$ maps (where $\ve{o}\in \{\ve{w},\ve{z}\}$), we require that the two $\ve{o}$-basepoints adjacent to the components of $\bS^0$ have the same color. 

 Given a framed 0-sphere $\bS^0$ along $L$, which is of $\alpha$-type or $\beta$-type, we first pick a vector $\theta\in S^1=\{(0,w,z): w^2+z^2=1\}$. Such a vector determines a half disk $H_i$ in each $\{p_i\}\times D^3$ (the set of points with spherical coordinates which have angle $\theta$), and an arc $a$ on $S^2$ (the boundary of the half disk). There is a natural choice of rectangle $R_\theta:=[-1,1]\times a$ contained in the 1-handle region $[-1,1]\times S^2\subset Y(\bS^0)$. The half disks $H_i\subset \im (\bS^0)$, as well as the rectangle $R_\theta\subset [-1,1]\times S^2$ are shown in Figure~\ref{fig::53}.

  \begin{figure}[ht!]
\centering
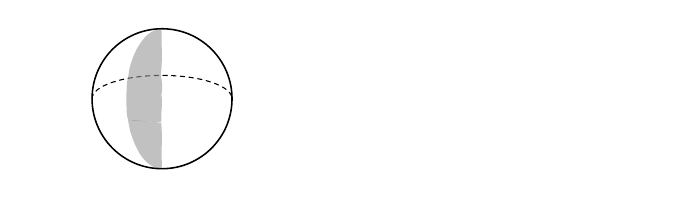
\caption{\textbf{Auxiliary data to construct the compound 1-handle/band map.} On the left is the vector $\theta\in S^1$, as well as a half disk $H_i$, the link $\bL$ and the balls $\{p_i\}\times D^3$. On the right is the 1-handle region $[-1,1]\times S^2\subset Y(\bS^0)$, and the rectangle $R_\theta$  with boundary on $\{-1,1\}\times S^2\cup [-1,1]\times \{(\pm 1,0,0)\}.$\label{fig::53}}
\end{figure}
 
Extend the half disks $H_i$ to slightly larger half disks $\bar{H}_1$ and $\bar{H}_2$ which extend just past the balls $\{p_i\}\times D^3$. The half disks $\bar{H}_1$ and $\bar{H}_2$ determine a diffeomorphism, $\psi^\theta$ of $Y$ which is is supported in a neighborhood of $\im (\bS^0)$ and pushes $L$ across the half disks $\bar{H}_i$  to a link $L_\theta$, which is disjoint from $\im(\bS^0)$.  The extensions $\bar{H}_i$ of the half disks $H_i$ depend on a contractible set of choices.

  Inside of $Y(\bS^0)$, there is a natural band
  \[
  B_\theta=\left((\bar{H}_1\cup\bar{H}_2)\setminus \im (\bS^0)\right)\cup R_\theta
  \] 
  from $\bL_\theta$ to $\bL(\bS^0)$.

  For $\ve{o}\in \{\ve{w},\ve{z}\}$, we define the type-$\ve{o}$ compound 1-handle/band map as the composition
 \[
 F_{Y,\bL,\bS^0,\hat{\frs}}^{\ve{o}}:=F_{B_\theta}^{\ve{o}} F_{Y,\bL_\theta,\bS^0,\hat{\frs}}\psi_*^\theta,
 \] 
 where $F_{Y,\bL_\theta,\bS^0,\hat{\frs}}$ denotes the 1-handle map from Section~\ref{sec:1--handlemaps} and $F_{B_\theta}^{\ve{o}}$ denotes the band map from Section~\ref{sec:bandmaps}.

 \begin{lem}Suppose $\bS^0$ is a framed zero sphere along $L$ which is of $\alpha$-type or $\beta$-type and $\ve{o}\in \{\ws,\zs\}$. The compound 1-handle/band map $F^{\ve{o}}_{Y,\bL,\bS^0,\hat{\frs}}$  is independent of the choice of $\theta\in S^1$, as well as the extensions $\bar{H}_i$.
 \end{lem}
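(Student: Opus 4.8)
The statement asserts that the compound $1$-handle/band map $F^{\ve{o}}_{Y,\bL,\bS^0,\hat{\frs}}=F_{B_\theta}^{\ve{o}}\circ F_{Y,\bL_\theta,\bS^0,\hat{\frs}}\circ \psi_*^\theta$ is independent of the vector $\theta\in S^1$ and of the choice of extensions $\bar H_i$ of the half-disks. The plan is to reduce everything to results already established: invariance of the $1$-handle maps (Proposition~\ref{prop:1/3handlemapwelldefined}), well-definedness of the band maps up to filtered equivariant chain homotopy, and naturality of the diffeomorphism maps. First I would handle the dependence on the extensions $\bar H_i$: since these extensions depend on a contractible set of choices, any two choices are isotopic through admissible extensions, and the resulting diffeomorphisms $\psi^\theta$ differ by an isotopy supported near $\im(\bS^0)$, hence induce the same map on the transitive chain homotopy type invariant by naturality; moreover this isotopy can be chosen to carry the band $B_\theta$ to itself, so $F_{B_\theta}^{\ve{o}}$ is unaffected. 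This dispenses with the $\bar H_i$-dependence entirely.

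For the dependence on $\theta$, the key point is that $S^1$ is connected, so it suffices to show that the composition is locally constant in $\theta$, i.e., that varying $\theta$ within a small arc does not change the map up to filtered equivariant chain homotopy. Fix $\theta$ and $\theta'$ close together. First I would compare $\psi_*^\theta$ and $\psi_*^{\theta'}$: the diffeomorphisms $\psi^\theta$ and $\psi^{\theta'}$ both push $L$ off of $\im(\bS^0)$, and for $\theta$ and $\theta'$ in a small arc they differ by a diffeomorphism which is isotopic to the identity (the two push-off disks are isotopic rel endpoints through disks avoiding $\im(\bS^0)$, at least after a further small isotopy). Hence $\psi_*^{\theta'}\simeq \Psi\circ \psi_*^\theta$ for a diffeomorphism $\Psi$ of $(Y,\bL_\theta)$, and I would track how $\Psi$ interacts with the remaining two maps. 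The link $\bL_\theta$ and $\bL_{\theta'}$ differ by moving the basepoints adjacent to $\im(\bS^0)$ (the push-offs carry the same basepoints along slightly different arcs), so $\Psi$ is a basepoint-moving diffeomorphism, and the relations of Section~\ref{sec:quasi-stab-and-identifications} — in particular Lemmas~\ref{lem:movingbasepointsstep1}, \ref{lem:movingbasepointsstep2}, and~\ref{lem:newversionbasepointmovingmaps} — express $\Psi_*$ in terms of quasi-stabilization and basepoint-action maps on the unsurgered link. Then I would use that the $1$-handle map $F_{Y,\bL_\theta,\bS^0,\hat{\frs}}$ is constructed on a Heegaard diagram away from $\im(\bS^0)$, so it commutes with these maps (which are supported near the basepoints being moved), up to chain homotopy; the analogous statement for the band map $F_{B_\theta}^{\ve{o}}$ then absorbs the leftover terms, since $B_{\theta'}$ is obtained from $B_\theta$ by the corresponding small isotopy and the band maps are invariant under such moves by the argument of Proposition~\ref{prop:2-handlemapwelldefined} (independence from the Heegaard triple subordinate to the band, combined with Remark~\ref{rem:canwind}).

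The main obstacle I expect is the bookkeeping at the step where $\theta$ crosses a basepoint — that is, when the push-off disks $\bar H_i$ sweep across one of the $\ve{w}$ or $\ve{z}$ basepoints adjacent to $\im(\bS^0)$. There the diffeomorphism $\Psi$ is genuinely a nontrivial basepoint-moving map rather than an isotopy, and one must verify that the combination of the change it induces on $F_{Y,\bL_\theta,\bS^0,\hat{\frs}}$ and the compensating change in the band region $B_\theta$ precisely cancel. This is where I would invoke Lemma~\ref{lem:newversionbasepointmovingmaps} to rewrite $\Psi_*$ as $S^-_{w,z}T^+_{w',z'}\simeq S^-_{w,z}\Psi_{z'}S^+_{w',z'}\simeq T^-_{w,z}\Phi_w T^+_{w',z'}$ and then check, diagram by diagram, that conjugating the $1$-handle map by this expression and then composing with $F_{B_\theta}^{\ve{o}}$ equals $F_{B_{\theta'}}^{\ve{o}}\circ F_{Y,\bL_{\theta'},\bS^0,\hat{\frs}}$; the hypothesis that the two $\ve{o}$-basepoints adjacent to the components of $\bS^0$ receive the same color is exactly what makes all the relevant maps chain maps and keeps everything on the level of filtered equivariant chain homotopy types. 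Since only finitely many such basepoint-crossings occur as $\theta$ traverses $S^1$, chaining together the local invariance statements completes the proof.
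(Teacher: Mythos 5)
Your treatment of the $\bar H_i$-dependence is the same as the paper's, and your opening move for the $\theta$-dependence — compare $\theta$ and $\theta'$ by a diffeomorphism $\Psi$ carrying $\bL_\theta$ to $\bL_{\theta'}$ and supported near $\im(\bS^0)$, then track how it interacts with the $1$-handle and band maps — is also the one the paper uses. But two things go wrong in the execution. First, the ``basepoint-crossing'' obstacle you anticipate cannot occur: by the definition of an $\alpha$-type or $\beta$-type framed $0$-sphere along $L$, the two components of $\im(\bS^0)$ sit in the interiors of distinct components of $L\setminus(\ws\cup\zs)$, so a tubular neighborhood of $\im(\bS^0)$ is disjoint from every basepoint, and the extended half-disks $\bar H_i$, the diffeomorphisms $\psi^\theta$, $\psi^{\theta'}$, and the rotation $\Psi$ can all be taken supported there — for \emph{every} pair $\theta,\theta'\in S^1$, not just nearby ones. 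Thus $\Psi$ always fixes $\ws\cup\zs$ and is isotopic to the identity rel the basepoints; it is never a genuine basepoint-moving map of the kind governed by Lemmas~\ref{lem:movingbasepointsstep1}, \ref{lem:movingbasepointsstep2}, \ref{lem:newversionbasepointmovingmaps}, so invoking those is not applicable, and there is no need to cut $S^1$ into pieces and argue by local constancy. (Relatedly, your remark that the $1$-handle map is ``constructed on a Heegaard diagram away from $\im(\bS^0)$'' is incorrect — the Heegaard diagram for that map meets each component of $\im(\bS^0)$ in a disk.)

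The more serious gap is that you never produce the fact that actually closes the argument. Because $\Psi$ fixes $\im(\bS^0)$ pointwise, it descends to a self-diffeomorphism $\Psi^{\bS^0}$ of $(Y(\bS^0),L(\bS^0))$. The paper's computation conjugates each of the three factors of $F^{\ve o}_{Y,\bL,\bS^0,\hat{\frs}}$ by $\Psi$ or $\Psi^{\bS^0}$: $\Psi^{\bS^0}_*$ intertwines $F_{B_\theta}^{\ve o}$ with $F_{B_{\theta'}}^{\ve o}$ and intertwines the two $1$-handle maps, and $\Psi_*\psi^\theta_*\simeq\psi^{\theta'}_*$. After these replacements one is left with an overall factor of $(\Psi^{\bS^0})^{-1}_*$, and the decisive observation is that $\Psi^{\bS^0}_*\simeq\id$ on $\cCFL^-(Y(\bS^0),(\bL(\bS^0))^\sigma,\frs')$ — because on the surgery region $\Psi^{\bS^0}$ is a rotation of $S^2$ about the axis through $(\pm1,0,0)$, which can be turned off through rotations of smaller angle fixing the poles and hence fixing $L(\bS^0)$. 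Your plan to ``absorb the leftover terms'' into the band map via independence of the Heegaard triple (Remark~\ref{rem:canwind} and Proposition~\ref{prop:2-handlemapwelldefined}) does not supply this cancellation, and without it the chain of chain homotopies does not terminate at $F_{B_{\theta'}}^{\ve o} F_{Y,\bL_{\theta'},\bS^0,\hat{\frs}} \psi^{\theta'}_*$.
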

 
 \begin{proof} Fix $\ve{o}\in \{\ve{w},\ve{z}\}$. The extended half disks $\bar{H}_i$ depend on a contractible set of choices (a vector field pointing out of the boundary of $\bS^0$, and a Riemannian metric), and hence it is easy to see that the maps do not depend on the extensions $\bar{H}_i$.
  
We now consider dependence on $\theta\in S^1$.  Let $\theta$ and $\theta'$ be two elements in $S^1$, and let $\psi^{\theta}$ and $\psi^{\theta'}$ be the two associated diffeomorphisms, and let $B_{\theta}$ and $B_{\theta'}$ be the two bands. We pick a self-diffeomorphism $\Psi$ of $Y$ which rotates $\bL_\theta$ to $\bL_{\theta'}$, which is supported in a neighborhood of $\bS^0$, but is constant on $\bS^0$. Note that since $\Psi$ is constant on $\bS^0$, there is an induced self-diffeomorphism $\Psi^{\bS^0}$ of $Y(\bS^0)$.

 Note that $\Psi^{\bS^0}$ sends the band $B_\theta$ to a band which is isotopic relative its boundary to $B_{\theta'}$. As a consequence, we conclude that
 \begin{equation}
 F_{B_{\theta'}}^{\ve{o}}\Psi^{\bS^0}_*\simeq \Psi^{\bS^0}_*F_{B_{\theta}}^{\ve{o}}\,,
 \label{eq:compband3}
 \end{equation} by diffeomorphism invariance of the band maps.

Furthermore, since $\Psi$ fixes $\bS^0$, we have
 \begin{equation}
 \Psi^{\bS^0}_* F_{Y,\bL_{\theta},\bS^0,\hat{\frs}}\simeq F_{Y,\bL_{\theta'},\bS^0,\hat{\frs}} \Psi_*.\label{eq:compband1}
  \end{equation}
   Note too that $\Psi\circ \psi^\theta$ and $ \psi^{\theta'}$ are isotopic relative to $\bL$, so we conclude that
 \begin{equation}
 \Psi_* \psi^\theta_*\simeq \psi^{\theta'}_*.
 \label{eq:compband2}
 \end{equation} 
 Finally, we note that $\Psi^{\bS^0}$ is isotopic to $\id_{Y(\bS^0)}$ relative to $\bL(\bS^0)$, so
 \begin{equation}
 \Psi^{\bS^0}_*\simeq \id
 \label{eq:compband4}
 \end{equation}
 on $\cCFL^-(Y(\bS^0), (\bL(\bS^0))^\sigma, \frs')$. Hence
 
 \begin{align*}F_{B_\theta}^{\ve{o}} F_{Y,\bL_\theta,\bS^0, \hat{\frs}} \psi^\theta_*&\simeq (\Psi^{\bS^0})^{-1}_* F_{B_{\theta'}}^{\ve{o}}\Psi^{\bS^0}_*F_{Y,\bL_\theta,\bS^0, \hat{\frs}} \psi^\theta_*&&\text{(Equation \eqref{eq:compband3})}\\
 &\simeq (\Psi^{\bS^0})^{-1}_* F_{B_{\theta'}}^{\ve{o}} F_{Y,\bL_{\theta'},\bS^0, \hat{\frs}}\Psi_* \psi_*^\theta&& \text{(Equation \eqref{eq:compband1})}\\
 &\simeq(\Psi^{\bS^0})^{-1}_* F_{B_{\theta'}}^{\ve{o}} F_{Y,\bL_{\theta'},\bS^0, \hat{\frs}} \psi_*^{\theta'}&&\text{(Equation \eqref{eq:compband2})}\\
 &\simeq F_{B_{\theta'}}^{\ve{o}} F_{Y,\bL_{\theta'},\bS^0, \hat{\frs}} \psi_*^{\theta'},&&\text{(Equation \eqref{eq:compband4})}
 \end{align*}
 completing the proof.
 \end{proof}

\section{Basic relations involving the handle maps, band maps and quasi-stabilization maps}
\label{sec:relationsI}

In this section, we prove some  basic relations involving the handle attachment maps, the band maps, and the quasi-stabilization maps.

\subsection{Basic relations between handle attachments, quasi-stabilizations and basepoint actions}

We first show that the 1-handle and 3-handle maps always commute with the quasi-stabilization maps:

\begin{lem}\label{lem:1-handlesquasistabcommute}Suppose $\bL$ is a multi-based link in $Y$, and $\bL_{w,z}^+$ is the multi-based link obtained by adding two new, adjacent basepoints $(w,z)$ to $\bL$. If $\bS$ is a framed 0-sphere or 2-sphere in $Y\setminus L$, then
\[F_{Y,\bL^+_{w,z}, \bS, \hat{\frs}}S_{w,z}^+\simeq S_{w,z}^+ F_{Y,\bL, \bS,\hat{\frs}} \qquad \text{and} \qquad F_{Y,\bL^+_{w,z}, \bS, \hat{\frs}}T_{w,z}^+\simeq T_{w,z}^+ F_{Y,\bL, \bS,\hat{\frs}}.\]
The maps $S_{w,z}^-$ and $T_{w,z}^-$ satisfy the analogous relations with $F_{Y,\bL^+_{w,z},\bS,\hat{\frs}}$ and $F_{Y,\bL,\bS,\hat{\frs}}$. 
\end{lem}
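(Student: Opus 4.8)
The strategy is to choose a Heegaard diagram that simultaneously realizes both operations in a geometrically ``split'' fashion, so that the two maps are given by formulas that visibly commute, once we check that a single almost complex structure can compute everything. Concretely, I would start with a diagram $\cH=(\Sigma,\as,\bs,\ws,\zs)$ for $(Y,\bL)$, arrange that the framed sphere $\bS$ meets $\Sigma$ in the disk(s) $D_1, D_2$ (if $\bS$ is a $0$-sphere) or in an annulus $A$ disjoint from the attaching curves and basepoints (if $\bS$ is a $2$-sphere), \emph{and also} that the region $A_0\subset\Sigma\setminus\as$ (or $\Sigma\setminus\bs$, depending on the ordering of $w,z$) used for the quasi-stabilization is disjoint from the support of the handle attachment. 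With these choices, the handle map $F_{Y,\bL,\bS,\hat{\frs}}$ is $\ve{x}\mapsto \ve{x}\times\theta^+$ (or the dual formula for the $3$-handle), while the quasi-stabilization map is $\ve{x}\mapsto\ve{x}\times\theta^{\ws}$ (or $\ve{x}\times\theta^{\zs}$), and these formulas manifestly commute on the level of intersection points, both producing $\ve{x}\times\theta^{\ws}\times\theta^+$ up to reordering the factors. So on suitable diagrams the relation holds on the nose.

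The only real issue, as in the proofs of Proposition~\ref{prop:TScommute} and Lemma~\ref{lem:1-3-handlemapscommute}, is that the quasi-stabilization maps $S^{\circ}_{w,z}, T^{\circ}_{w,z}$ and the handle maps $F_{Y,\bL,\bS,\hat{\frs}}$ each require the almost complex structure to be stretched along a specific curve (the curve $c_\beta$ or $c_\alpha$ encircling $\beta_0$ or $\alpha_0$ for the quasi-stabilization, and the curves bounding the annulus $A$ or the boundaries of the connect-sum disks for the handle map). To compute the composition in both orders one must insert a change-of-almost-complex-structure map $\Phi_{J_s(\ve{T}_1)\to J_s(\ve{T}_2)}$ between two multi-neck almost complex structures. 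I would carry out the now-familiar neck-stretching argument: take sequences of neck lengths all tending to $+\infty$, extract from a sequence of $\tilde{J}^i$-holomorphic index-$0$ disks a broken limit consisting of a holomorphic disk on the unstabilized, un-surgered diagram $(\Sigma,\as,\bs)$ together with boundary degenerations, use a Maslov index formula (of the type in Equation~\eqref{eq:Maslovindexpp'samecomp} or Equation~\eqref{eq:Maslovindexformula}) that writes $\mu(\phi)$ as a sum of $\mu(\phi_0)\ge 0$ plus nonnegative multiplicities, and conclude that an index-$0$ class must be constant. Hence $\Phi_{J_s(\ve{T}_1)\to J_s(\ve{T}_2)}$ is the identity on intersection points for all sufficiently long necks, \emph{irrespective of the relative lengths} — this is the key point that lets a single almost complex structure compute both compositions.

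For the $S^-_{w,z}, T^-_{w,z}$ and $3$-handle versions, the same neck-stretching input gives $\Phi$ upper-triangular of the form $\begin{pmatrix}\id & *\\ 0 & \id\end{pmatrix}$ in the $(\theta,\xi)$ basis (exactly as in Subclaim~(1) of Lemma~\ref{lem:mapsforI+agree}), which is enough: post- or pre-composing the destabilization formula $S^-_{w,z}(\ve{x}\times\theta^{\ws})=0$, $S^-_{w,z}(\ve{x}\times\xi^{\ws})=\ve{x}$ with such a $\Phi$ changes nothing on transitive systems, because the off-diagonal term feeds into the kernel. Then I would assemble: Proposition~\ref{prop:1/3handlemapwelldefined} and Theorem~\ref{thm:quasisarenatural} guarantee that both maps are well-defined on transitive chain homotopy types, so it suffices to verify commutativity for the one convenient diagram constructed above; the neck-stretching lemma shows the two orders of composition are computed by the same data up to a change-of-complex-structure map that is the identity (resp.\ unipotent and harmless); and the explicit formulas then give the four desired relations. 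The main obstacle is purely the almost-complex-structure bookkeeping in the neck-stretching step — identifying the correct curves to stretch along and verifying the Maslov index formula forces all nonconstant contributions to vanish — but this is a direct adaptation of the arguments already given for Proposition~\ref{prop:TScommute} and Lemma~\ref{lem:1-3-handlemapscommute}, with no new difficulty since the supports of the handle attachment and the quasi-stabilization are disjoint.
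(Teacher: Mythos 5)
Your proposal matches the paper's proof in strategy and in all the essential technical steps: reduce to a diagram where both operations are supported in disjoint regions, observe that the formulas commute on intersection points, and then justify that a single almost complex structure can compute both compositions by the same neck-stretching argument used for Proposition~\ref{prop:TScommute} and Lemma~\ref{lem:1-3-handlemapscommute}, with the Maslov index formula forcing any index-$0$ limit class to be constant regardless of the relative sizes of the various neck lengths. The only very minor discrepancy is a matter of bookkeeping: for the $3$-handle case, the paper's Equation~\eqref{eq:changeofcxstructure1-handlequasi2} records an upper-triangular change-of-complex-structure map in the $(\theta^+,\theta^-)$ basis of the \emph{$1$-handle} coordinate (the off-diagonal piece is absorbed into the kernel of the $3$-handle formula), whereas your phrasing suggests the triangularity is in the $(\theta,\xi)$ basis of the \emph{quasi-stabilization} coordinate as in Lemma~\ref{lem:mapsforI+agree}; the paper actually obtains the genuine identity on all of $\ve{x}\times\theta^+\times(\cdot)$ via Equation~\eqref{eq:changeofcxstructure1-handlequasi}, and reserves the upper-triangular statement for the $\theta^-$ row needed by the $3$-handle. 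This is a cosmetic point; the conclusion and the reasoning that the off-diagonal term is harmless are correct either way.
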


\begin{proof}The proof is similar to the proof that the 1-handle maps commute with each other (Lemma~\ref{lem:1-3-handlemapscommute}) and the proof that quasi-stabilization maps commute with each other (Proposition~\ref{prop:TScommute}). As in those situations, the formulas for the two maps look like they commute, but we need to verify that a single almost complex structure can be chosen  to compute both compositions.

We start with the case that $\bS=\bS^0$ is a framed 0-sphere. Furthermore, we first show the relation involving the positive quasi-stabilization maps $S_{w,z}^+$ and $T_{w,z}^+$. We want to show the following:
\begin{equation}
F_{Y,\bL_{w,z}^+,\bS^0,\hat{\frs}} S_{w,z}^+\simeq S_{w,z}^+F_{Y,\bL,\bS^0,\hat{\frs}}\qquad \text{and}  \qquad F_{Y,\bL_{w,z}^+,\bS^0,\hat{\frs}} T_{w,z}^+\simeq T_{w,z}^+F_{Y,\bL,\bS^0,\hat{\frs}}.
\label{eq:1handlequasicommute}
\end{equation}

  Let $(\Sigma, \ve{\alpha},\ve{\beta},\ve{w},\ve{z})$ be a diagram for $(Y,\bL)$ such that $\Sigma$ intersects $\bS^0$ in two disks which are disjoint from $\as\cup \bs\cup \ws\cup \zs$. Let $\alpha_0$ and $\beta_0$ denote the new curves in the annulus region of the 1-handle, and $\alpha_s'$ and $\beta_0'$ denote the two new curves from quasi-stabilization. These are shown in Figure~\ref{fig::28}.
 
\begin{figure}[ht!]
\centering
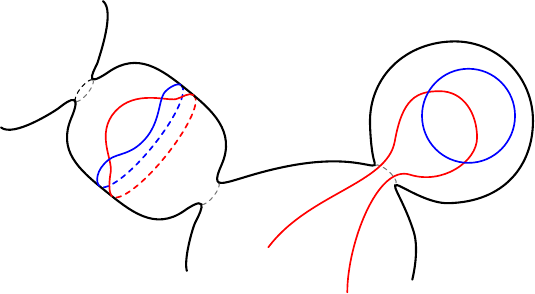
\caption{\textbf{The diagram after attaching a 1-handle and quasi-stabilizing.} This is considered in Lemma~\ref{lem:1-handlesquasistabcommute}. We stretch the almost complex structure along the three dashed curves. Multiplicities $m_1,$ $m_2,$ $n_1,$ and $n_2$ are also labeled.\label{fig::28}}
\end{figure} 
 
Suppose that $J_s$ is a cylindrical almost complex structure on $\Sigma\times [0,1]\times \R$, which is split on  $(\im (\bS^0)\cap \Sigma) \cup N(p)\times [0,1]\times \R$ where $p\in \Sigma$ is the point at which we perform the quasi-stabilization operation. Let $\ve{T}=(T_1,T_2,T')$ denote a triple of positive real numbers. Let $\Sigma'$ denote the Heegaard surface we obtain after attaching a 1-handle along $\bS^0$ and quasi-stabilizing at $p$. We view $\Sigma'$ as being obtained by removing $\im (\bS^0)$ as well as a neighborhood of $p$, and then gluing in an annulus and a disk to the resulting boundary components. We construct an almost complex structure $J_s(\ve{T})$ on $\Sigma'\times [0,1]\times \R$, which agrees with $J_s$ on $\Sigma\setminus (\im (\bS)\cup N(p))\times [0,1]\times \R$, and has necks of length $T_1,$ $T_2$ and $T'$ inserted along the connected sum tubes.

 To show Equation~\eqref{eq:1handlequasicommute}, we need to show that if $\ve{T}_1$ and $\ve{T}_2$ are two 3-tuples of neck lengths, with all components sufficiently large (but with no assumption on the relative sizes of the neck lengths), then
\begin{equation}
\Phi_{J_s(\ve{T}_1)\to J_s(\ve{T}_2)}(\ve{x}\times \theta^+\times \theta^{\os})=\ve{x}\times \theta^+\times \theta^{\os},\label{eq:changeofcxstructure1-handlequasi}
\end{equation} for $\os\in \{\ws,\zs\}$. Here $\theta^+\in \alpha_0\cap \beta_0$ denotes the top degree intersection point and $\theta^{\os}\in \alpha_s'\cap  \beta_0'$ denotes the top $\gr_{\os}$-graded intersection point.

To establish Equation~\eqref{eq:changeofcxstructure1-handlequasi}, we take two sequences $\{\ve{T}_1^i\}_{i\in \N}$ and $\{\ve{T}_2^i\}_{i\in \N}$ of 3-tuples, all of whose components approach $+\infty$ as $i\to \infty$. We also construct a sequence of non-cylindrical almost complex structures $\{\tilde{J}^i\}_{i\in \N}$ which agree with $J_s(\ve{T}^i)$ on $\Sigma\times [0,1]\times (-\infty,-1]$ and agree with $J_s(\ve{T}_2^i)$ on $\Sigma\times [0,1]\times [1,\infty)$. As in Proposition~\ref{prop:TScommute} and Lemma~\ref{lem:1-3-handlemapscommute}, we assume that $(\Sigma'\times [0,1]\times \R, \tilde{J}^i)$ contains the almost complex submanifold $(\Sigma\setminus N_i\times [0,1]\times \R,J_s)$ where  $N_i\subset \Sigma$ denotes a nested sequence of subsets, each consisting of three disks, each contained in a component of $N(p)\cup \im(\bS^0)\cap \Sigma$, such that $\bigcap_{i\in \N} N_i$ consists of three points.

Suppose we are given a sequence $\tilde{J}^i$-holomorphic disks $u_i$, which represent a fixed class $\phi\in \pi_2(\ve{x}\times \theta\times \theta^{\os}, \ve{y}\times y\times y')$, where $\theta\in \{\theta^+,\theta^-\}$,  $y\in \alpha_0\cap \beta_0$ and $y'\in \alpha_s'\cap \beta_0'$. We can extract a broken limit curve consisting of a broken holomorphic disk $U_\Sigma$ on $(\Sigma,\as,\bs)$, an $\as\cup \{\alpha_s'\}$-boundary degeneration $\cA$, and a broken holomorphic disk $U_0$ on $(S^2,\alpha_0',\beta_0')$, where $\alpha_0'$ denotes the restriction of $\alpha_s'$ to the quasi-stabilized region. Write $\phi_\Sigma$ and $\phi_0$ for the total homology classes of $U_\Sigma$ and $U_0$. The index computation from \cite{ZemQuasi}*{Lemma~5.3} (compare the proofs of Proposition~\ref{prop:TScommute} and Lemma~\ref{lem:1-3-handlemapscommute}, above) adapts to our present situation to yield that
\begin{equation}
\mu(\phi)=\mu(\phi_\Sigma)+\gr(\theta,y)+n_1(\phi_0)+n_2(\phi_0)+m_1(\cA)+m_2(\cA)+2\sum_{\substack{\cD\in C(\Sigma\setminus \as)\\ \cD\neq A_p}} n_{\cD}(\cA).\label{eq:Maslovindex1-handlequasi}
\end{equation}
In the above expression, we are writing $A_p$ for the component of $\Sigma\setminus \as$ which contains the point $p$.

To establish Equation~\eqref{eq:changeofcxstructure1-handlequasi}, we need to consider only classes $\phi$ which are in $\pi_2(\xs\times \theta^+, \theta^{\os}\times \ys\times y\times y')$. Since $\phi_{\Sigma}$ admits a broken holomorphic representative, by transversality we know that $\mu(\phi_\Sigma)\ge 0$. The rest of the terms in Equation~\eqref{eq:Maslovindex1-handlequasi} are clearly nonnegative, and hence we conclude all must be zero if $\mu(\phi)=0$. It is straightforward to see that this implies that $\phi$ is a constant class. On the other hand, the constant homology classes always have a unique $\tilde{J}^i$-holomorphic representative. Equation~\eqref{eq:changeofcxstructure1-handlequasi} follows, and hence the relation from Equation~\eqref{eq:1handlequasicommute} also follows.

Note that the commutation of the 1-handle map with the minus quasi-stabilization maps also follows immediately from Equation~\eqref{eq:changeofcxstructure1-handlequasi}. Hence
\begin{equation} F_{Y,\bL,\bS^0,\hat{\frs}}  S_{w,z}^-\simeq S_{w,z}^- F_{Y,\bL_{w,z}^+,\bS^0,\hat{\frs}} \qquad \text{and} \qquad  T_{w,z}^- F_{Y,\bL_{w,z}^+,\bS^0,\hat{\frs}}\simeq F_{Y,\bL,\bS^0,\hat{\frs}} T_{w,z}^-.
\label{eq:1handlequasicommuteminus}
\end{equation}

We now consider the case that $\bS$ is a framed 2-sphere, and we wish to commute either a positive or negative quasi-stabilization with the 3-handle map, i.e.
\begin{equation}
 F_{Y,\bL,\bS^2,\hat{\frs}}  S_{w,z}^+\simeq S_{w,z}^+ F_{Y,\bL_{w,z}^+,\bS^2,\hat{\frs}} \qquad \text{and} \qquad  T_{w,z}^+ F_{Y,\bL_{w,z}^+,\bS^0,\hat{\frs}}\simeq F_{Y,\bL,\bS^2,\hat{\frs}} T_{w,z}^+,\label{eq:3handlequasicommute}
\end{equation}
and similarly for the negative quasi-stabilization maps. To show Equation~\eqref{eq:3handlequasicommute}, we now show that whenever $\ve{T}_1$ and $\ve{T}_2$ have sufficiently large components, we have the equality
\begin{equation}
\Phi_{J_s(\ve{T}_1)\to J_s(\ve{T}_2)}(\ve{x}\times \theta^-\times \theta^{\os})=\ve{x}\times \theta^-\times \theta^{\os}+\sum_{\ve{o}\in \{\ws,\zs\}} \sum_{\ys\in \bT_{\as}\cap \bT_{\bs}} c_{\xs,\ys}^{\os}\cdot\ve{y}\times \theta^+\times \theta^{\os},\label{eq:changeofcxstructure1-handlequasi2}
\end{equation}
where $\os\in \{\ws,\zs\}$ and $c_{\xs,\ys}^{\os}$ are elements of $\cR_{\bmP}^-$ which are not necessarily independent of $\ve{T}_1$ and $\ve{T}_2$. To establish this fact, we note that we simply adapt the above procedure when $\bS$ was a framed 0-sphere. In our present case, we need to show that if $\phi\in \pi_2(\xs\times \theta^-\times\theta^{\os}, \ys\times \theta^-\times y')$ is a Maslov index zero class which admits holomorphic representatives for $\tilde{J}^i$ when $i$ is arbitrary large, then $\phi$ is in fact the constant holomorphic class. This follows by analyzing the index formula from Equation~\eqref{eq:Maslovindex1-handlequasi}. As before, if $\phi\in \pi_2(\xs\times \theta^-\times \theta^{\os}, \ys\times \theta^-\times y')$ admits $\tilde{J}^i$-holomorphic representatives for arbitrarily large $i$, then all the terms in Equation~\eqref{eq:Maslovindex1-handlequasi} must be zero, and this clearly implies that $\phi$ is a constant class. Since the constant classes are always counted by $\Phi_{J_s(\ve{T}_1\to J_s(\ve{T}_2)}$, Equation~\eqref{eq:changeofcxstructure1-handlequasi2} follows.

 Combining Equations~\eqref{eq:changeofcxstructure1-handlequasi} and  \eqref{eq:changeofcxstructure1-handlequasi2} implies Equation~\eqref{eq:3handlequasicommute}, completing the proof.
\end{proof}

We now show that the 2-handle maps commute with the quasi-stabilization maps:

\begin{lem}\label{lem:2-handlemapsquasistabcommute}Suppose $\bL$ is a multi-based link in $Y$, and let $\bL^+_{w,z}$ be the link obtained by adding the adjacent pair of basepoints $(w,z)$ to $\bL$. If $\bS^1\subset Y\setminus L$ is a framed 1-dimensional link, and $\frs\in \Spin^c(W(Y,\bS))$, then
\[S_{w,z}^{+} F_{Y,\bL, \bS^1,\frs}\simeq F_{Y,\bL^+_{w,z},\bS^1,\frs} S_{w,z}^{+} \qquad \text{and}\qquad T_{w,z}^{+} F_{Y,\bL, \bS^1,\frs}\simeq F_{Y,\bL_{w,z}^+,\bS^1,\frs} T_{w,z}^{+}.\] The analogous relation holds with the maps $S_{w,z}^-$ and $T_{w,z}^-$.
\end{lem}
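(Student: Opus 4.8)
The statement to prove is Lemma~\ref{lem:2-handlemapsquasistabcommute}: that the $2$-handle map $F_{Y,\bL,\bS^1,\frs}$ commutes (up to filtered, equivariant chain homotopy) with each of the four quasi-stabilization maps $S_{w,z}^{\pm}$, $T_{w,z}^{\pm}$. My plan is to reduce this to the holomorphic triangle count of Proposition~\ref{prop:singlealphaquasistabtriangle}, exactly as in the proof that quasi-stabilization maps commute with handleslides sketched in the remark following that proposition. The idea is that the $2$-handle map is a triangle map $F_{\as',\as,\bs,\frs}(\Theta^+_{\cdot}\otimes -)$ for a triple $(\Sigma,\as',\as,\bs,\ws,\zs)$ subordinate to a bouquet, and the quasi-stabilization is carried out by inserting a small subdiagram at a point $p$ in $\Sigma\setminus\as$ (or $\Sigma\setminus\bs$), adding one new curve to each of $\as'$, $\as$, $\bs$ (respectively: a new $\alpha_s'$, a new $\alpha_s$, a new $\beta_0$), together with the two new basepoints $w,z$. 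Since the quasi-stabilization curve is only added to the $\as$-side (for a $\beta$-bouquet; the $\as'$-side as well, but not $\bs$), we are in precisely the situation governed by Proposition~\ref{prop:singlealphaquasistabtriangle}.

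First I would choose a triple $\cT=(\Sigma,\as',\as,\bs,\ws,\zs)$ subordinate to a $\beta$-bouquet for $\bS^1$ (the $\alpha$-bouquet case is symmetric, using $\alpha$-subordinate triples and the equality $F^\alpha\simeq F^\beta$ from Lemma~\ref{lem:alphasurgerymaps=betasurgerymaps} to reconcile the two); pick a point $p$ in the region of $\Sigma\setminus(\as\cup\bs\cup\ws\cup\zs)$ where the stabilization occurs, lying on a new curve $\alpha_s\subset\Sigma\setminus\as$, and also extend by small Hamiltonian translates to get the parallel curve $\alpha_s'\subset\Sigma\setminus\as'$. This produces a quasi-stabilized triple $\cT^+_{\alpha_s,p}$ (and its $\as'$-translate) to which Proposition~\ref{prop:singlealphaquasistabtriangle} applies: for sufficiently stretched almost complex structure $J_s(T)$,
\[
F_{\cT^+_{\alpha_s,p},J_s(T),\frs}(\xs\times\theta^{\os},\ys\times\theta^+)=F_{\cT,J_s,\frs}(\xs,\ys)\otimes\theta^{\os},
\]
for $\os\in\{\ws,\zs\}$. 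Next I would check that under the quasi-stabilization, the distinguished element $\Theta^+_{\as',\as}$ (or $\Theta^{\ws}_0$, $\Theta^{\zs}_0$) of the top-graded part of $\cHFL^-$ of the unlink in $\#(S^1\times S^2)$ goes to the corresponding distinguished element $\Theta^+_{\as'\cup\{\alpha_s'\},\as\cup\{\alpha_s\}}$ of the quasi-stabilized unlink, using Lemmas~\ref{lem:topdegreeunlink1} and~\ref{lem:topdegreeunlink2}: the quasi-stabilized diagram for $(\as',\as)$ represents the same unlink with one extra two-basepoint component (or an extra pair of basepoints on one component, for the four-basepoint case), and the top $\gr_{\os}$ generator is $\Theta^+\times\theta^{\os}$. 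Feeding this into the triangle-count identity, and matching $S_{w,z}^+(\xs)=\xs\otimes\theta^{\ws}$, $T_{w,z}^+(\xs)=\xs\otimes\theta^{\zs}$ on one side, yields $S_{w,z}^+F_{Y,\bL,\bS^1,\frs}\simeq F_{Y,\bL^+_{w,z},\bS^1,\frs}S_{w,z}^+$ and likewise for $T_{w,z}^+$ directly. For the $S^-_{w,z}$ and $T^-_{w,z}$ relations I would either dualize the computation, or deduce them formally: compose the $+$-relation with $S^-$ on the appropriate side and use $S^-_{w,z}S^+_{w,z}\simeq 0$, $T^-_{w,z}S^+_{w,z}\simeq\id$, and the analogous identities from Lemma~\ref{lem:addtrivialstrandII}, together with the fact (from the $+$-relation and Proposition~\ref{prop:quasi-stabilizeddifferential}) that $F_{Y,\bL^+_{w,z},\bS^1,\frs}$ is "block-triangular" with respect to the $\theta/\xi$ splitting and has $F_{Y,\bL,\bS^1,\frs}$ on the diagonal. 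Finally I would note that these are statements on the level of transitive chain homotopy types: one must check the relation is independent of the auxiliary choices (diagram, bouquet, complex structure, representative of $\Theta$), which follows from the naturality of both the $2$-handle maps (Proposition~\ref{prop:2-handlemapwelldefined}) and the quasi-stabilization maps (Theorem~\ref{thm:quasisarenatural}), since both sides are then determined by a single component morphism.

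The main obstacle I anticipate is the compatibility of almost complex structures: the $2$-handle triangle count requires a generic $J$ on $\Sigma\times\Delta$, while Proposition~\ref{prop:singlealphaquasistabtriangle} requires $J$ to be split and highly stretched near $p$, and one must simultaneously be able to compute the quasi-stabilization maps $S^{\circ}_{w,z}$, $T^{\circ}_{w,z}$ with the \emph{same} stretched structure — this is exactly the kind of neck-length bookkeeping that appears in the proofs of Proposition~\ref{prop:TScommute} and Lemma~\ref{lem:1-3-handlemapscommute}, where one argues via a change-of-almost-complex-structure map and a Gromov-compactness/index argument that the relevant transition map is the identity (or block-triangular) once all necks are long. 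I expect the cleanest route is to invoke Proposition~\ref{prop:singlealphaquasistabtriangle} essentially verbatim (it is stated precisely for a single new $\as$-curve passing through $p$, which is our situation), so that the only genuinely new work is the identification of the $\Theta$-elements under quasi-stabilization and the formal deduction of the $S^-$, $T^-$ cases; the rest is a matter of assembling existing results. I would remark in the proof that the argument is a direct adaptation of the sketch in the remark following Proposition~\ref{prop:singlealphaquasistabtriangle}, so a full repetition of the neck-stretching analysis can be omitted.
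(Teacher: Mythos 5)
Your overall approach---quasi-stabilize the Heegaard triple used to define the $2$-handle map, and invoke the triangle count of Proposition~\ref{prop:singlealphaquasistabtriangle}---is exactly the paper's proof, and it works. But the details of your setup are confused in a way that, if carried through literally, would prevent a direct appeal to that proposition. The $\beta$-bouquet triple is $(\Sigma,\as,\bs,\bs',\ws,\zs)$, not $(\Sigma,\as',\as,\bs,\ws,\zs)$ (the latter is the $\alpha$-bouquet notation); the point matters, because in the $\beta$-bouquet case the \emph{shared} attaching family between domain $\cCFL^-(\Sigma,\as,\bs)$ and codomain $\cCFL^-(\Sigma,\as,\bs')$ is $\as$, and quasi-stabilization for $S_{w,z}^\circ$, $T_{w,z}^\circ$ (with $w$ following $z$) inserts a single separating $\alpha_s$ in $\Sigma\setminus\as$ together with two small bounding curves $\beta_0\subset\bs$ and $\gamma_0\subset\bs'$ in the stabilization region. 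This is precisely the hypothesis of Proposition~\ref{prop:singlealphaquasistabtriangle}, which is stated for one new $\alpha_s$ curve and small $\beta_0,\gamma_0$. Your description of adding two new $\alpha$-curves $\alpha_s$ and $\alpha_s'$ (with nothing added to $\bs$) does not match this hypothesis; it would describe a different stabilization (the $z$-follows-$w$ version, for an $\alpha$-bouquet triple), which is not what the lemma asks for and is not what the cited proposition covers without a further $\alpha\leftrightarrow\beta$ symmetry argument. Relatedly, the distinguished cycle fed into the second slot of the triangle map is $\Theta^+_{\bs,\bs'}$ from Lemma~\ref{lem:topdegreeunlink1} (top-graded generator of a fully $2$-pointed unlink); the elements $\Theta^{\ws}_0$, $\Theta^{\zs}_0$ from Lemma~\ref{lem:topdegreeunlink2} that you cite arise for band maps, not $2$-handle maps, and are not relevant here. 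After quasi-stabilization the top generator becomes $\Theta^+_{\bs,\bs'}\otimes\theta^+$, which is exactly the form $\ve{y}\times\theta^+$ required by the proposition.

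Two further remarks. First, the deduction of the $S^-_{w,z}$ and $T^-_{w,z}$ cases is simpler than you suggest: Proposition~\ref{prop:singlealphaquasistabtriangle} gives the triangle map on both $\xs\times\theta^{\ws}$ and $\xs\times\theta^{\zs}=\xs\times\xi^{\ws}$, so the quasi-stabilized triangle map (with second input $\Theta^+\times\theta^+$) is actually diagonal, equal to $F_{\cT}$ on both summands, and all four commutation identities read off directly; no appeal to Lemma~\ref{lem:addtrivialstrandII} or a block-triangularity argument is needed. Second, your worry about compatible almost complex structures is already absorbed into the statement of Proposition~\ref{prop:singlealphaquasistabtriangle}: the same stretched $J_s(T)$ computes both the quasi-stabilization maps and the stabilized triangle map, so no additional change-of-almost-complex-structure analysis of the sort appearing in Proposition~\ref{prop:TScommute} or Lemma~\ref{lem:1-3-handlemapscommute} is required for this lemma.
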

\begin{proof}We pick a Heegaard triple $(\Sigma,\as,\bs,\bs',\ws,\zs)$ which is subordinate to a $\beta$-bouquet for $\bS^1$ in $(Y,\bL)$. A Heegaard triple subordinate to a $\beta$-bouquet for $\bS^1$ in $(Y,\bL_{w,z}^+)$ can be constructed by quasi-stabilizing the triple $(\Sigma,\as,\bs,\bs',\ws,\zs)$ as in Figure~\ref{fig::16quasi}. The result then follows from the triangle counts from Proposition~\ref{prop:singlealphaquasistabtriangle}.
\end{proof}

Finally, it is convenient to have the following fact stated:

\begin{lem}\label{lem:PhiPsicommutewithhandles} Suppose that $Y$ is a 3-manifold containing the multi-based link $\bL=(L,\ws,\zs)$ and $\bS\subset Y\setminus L$ is a framed 0-sphere, 2-sphere or 1-dimensional link. If $w\in \ws$ and $z\in \zs$, then
\[F_{Y, \bL, \bS,\frs} \Phi_w\simeq \Phi_w F_{Y,\bL,\bS,\frs}\qquad \text{and} \qquad F_{Y,\bL,\bS,\frs}\Psi_z\simeq \Psi_z F_{Y,\bL,\bS,\frs}.\]
\end{lem}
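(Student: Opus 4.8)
\textbf{Proof approach for Lemma~\ref{lem:PhiPsicommutewithhandles}.} The plan is to handle each type of handle separately, in each case exploiting the fact that the maps $\Phi_w$ and $\Psi_z$ are ``formal derivatives'' of the differential (on the uncolored complexes), so they are detected by the same holomorphic curve counts that define the handle maps. For the $0$-handle and $4$-handle maps (Section~\ref{sec:0/4-handlemaps}), the statement is immediate: these maps are the identity on intersection points, and the holomorphic disk counts entering $\Phi_w$ and $\Psi_z$ on the two diagrams $(\Sigma,\as,\bs)$ and $(\Sigma\sqcup S^2,\as,\bs)$ coincide, since the $S^2$ summand carries no attaching curves and hence no nonconstant disks. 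So $F_{Y,\bL,\bS^{-1},\frs}$ commutes with $\Phi_w$ and $\Psi_z$ on the nose.

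For the $1$-handle and $3$-handle maps, I would use the same neck-stretching setup as in Lemma~\ref{lem:1-handlesquasistabcommute} and Lemma~\ref{lem:1-3-handlemapscommute}. Work with a diagram $\cH$ for which $\Sigma$ meets $\bS$ along disks (or an annulus) disjoint from all curves and basepoints, form the surgered diagram $\cH'$ by gluing in an annulus carrying $\alpha_0,\beta_0$ with $\alpha_0\cap\beta_0=\{\theta^+,\theta^-\}$, and stretch the almost complex structure along the boundary of the annulus. Proposition~\ref{prop:1-handletrianglecount} (or rather its degenerate-triangle analogue, which is really a disk count; equivalently, a direct Maslov index argument as in the proof of Lemma~\ref{lem:1-handlesquasistabcommute}) shows that for sufficiently stretched $J$, every index~$1$ disk on $\cH'$ from $\xs\times\theta^+$ either is supported away from the annulus region — so it is a disk on $\cH$ tensored with the constant disk at $\theta^+$ — or it picks up extra boundary multiplicity on $\alpha_0,\beta_0$ and thus has $\theta^-$ as its target, contributing to the $*$ entry of the matrix differential $\left(\begin{smallmatrix}\d_\cH&*\\0&\d_\cH\end{smallmatrix}\right)$. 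Differentiating with respect to $U_w$ (resp.\ $V_z$) — note that these variables are carried by $\cH$, not by the annulus — shows $\Phi_w^{\cH'}=\left(\begin{smallmatrix}\Phi_w^{\cH}&*'\\0&\Phi_w^{\cH}\end{smallmatrix}\right)$, and one checks $\Phi_w^{\cH'}(\xs\times\theta^+)=\Phi_w^{\cH}(\xs)\times\theta^+$, which is exactly the statement that $\Phi_w$ commutes with $F_{Y,\bL,\bS^0,\hat\frs}(\xs)=\xs\times\theta^+$; for the $3$-handle map one argues symmetrically using $\theta^-$. The relation persists to the colored complexes because both $\Phi_w$, $\Psi_z$ and the handle maps are defined on and well-defined up to the colored transitive systems.

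For the $2$-handle maps $F^\beta_{Y,\bL,\bS^1,\frs}(\xs)=F_{\as,\bs,\bs',\frs}(\xs\otimes\Theta^+_{\bs,\bs'})$, I would differentiate the associativity/triangle-count data with respect to $U_w$ (resp.\ $V_z$). Concretely, $\Phi_w$ on the target is, on the uncolored level, $\tfrac{d}{dU_w}\circ\d+\d\circ\tfrac{d}{dU_w}$, and one has an analogous expression for the triangle map; applying $\tfrac{d}{dU_w}$ to the holomorphic triangle count defining $F_{\as,\bs,\bs',\frs}$ via the Leibniz rule, together with the facts that $\tfrac{d}{dU_w}$ annihilates $\Theta^+_{\bs,\bs'}$ (the generator $\Theta^+$ can be chosen with no $U$-dependence, and in any case $\Phi_w$ vanishes on the $(\bs,\bs')$ complex up to homotopy by the argument of Lemma~\ref{lem:PhiPsicommutators} since the relevant link component in $(S^1\times S^2)^{\#k}$ has matched basepoints) and that $\Phi_w$ commutes with the transition maps, yields $F^\beta_{Y,\bL,\bS^1,\frs}\circ\Phi_w\simeq\Phi_w\circ F^\beta_{Y,\bL,\bS^1,\frs}$. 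By Lemma~\ref{lem:alphasurgerymaps=betasurgerymaps} the same holds for the $\alpha$-subordinate map. I expect the main obstacle to be the $2$-handle case: one must be careful that $\Theta^+_{\bs,\bs'}$ is only a well-defined homology class, so the derivative-of-triangle-count bookkeeping has to be done modulo the filtered equivariant chain homotopies coming from adding a boundary to $\Theta^+$, and one must verify that $\Phi_w$ (which is not a chain map on the uncolored complex) interacts correctly with this ambiguity — this is where invoking that $\Phi_w\simeq 0$ on the intermediate $(S^1\times S^2)^{\#k}$-unlink complex, as in Lemma~\ref{lem:PhiPsicommutators}, does the real work.
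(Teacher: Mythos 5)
The paper's proof is a one-line uniform argument that works for all handle types at once: the handle map $F=F_{Y,\bL,\bS,\frs}$ is a chain map on the \emph{uncolored} complexes, so $\d_2 F + F\d_1 = 0$; differentiating this identity with respect to $U_w$ (entrywise, over $\bF_2[U_{\ws},V_{\zs}]$) and applying the Leibniz rule gives
\[
\Phi_w F + F\Phi_w = \d_2\,\tfrac{dF}{dU_w} + \tfrac{dF}{dU_w}\,\d_1,
\]
which is exactly $\Phi_w F + F\Phi_w \simeq 0$, with $\tfrac{dF}{dU_w}$ the (filtered, equivariant) homotopy. The same works for $\Psi_z$ by replacing $U_w$ with $V_z$. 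There is no need to distinguish $0$-, $1$-, $2$-, $3$-, $4$-handles or to do any neck-stretching, since the only input is that $F$ is a chain map.

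Your proposal reaches the same conclusion but does considerably more work. For $1$- and $3$-handles you redo a neck-stretching/Maslov-index computation to show $\Phi_w^{\cH'}(\xs\times\theta^+)=\Phi_w^{\cH}(\xs)\times\theta^+$; this is correct, but once you know $\d_{\cH'}(\xs\times\theta^+)=\d_{\cH}(\xs)\times\theta^+$ (i.e.\ $F$ is a chain map for stretched $J$), differentiating the chain-map identity gives the commutation up to homotopy immediately, which is all that is claimed. For the $2$-handle case you do essentially the paper's argument, but you differentiate the full triangle map $F_{\as,\bs,\bs'}$ rather than the composed chain map $F(\xs)=F_{\as,\bs,\bs'}(\xs\otimes\Theta^+)$, and this forces you to control the term $F_{\as,\bs,\bs'}(\xs\otimes\Phi_w^{\bs\bs'}(\Theta^+))$. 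The justification you give for this — that $\Phi_w\simeq 0$ on the $(\bs,\bs')$ complex ``by the argument of Lemma~\ref{lem:PhiPsicommutators}'' — is not what that lemma says: Lemma~\ref{lem:PhiPsicommutators} gives commutator relations between the $\Phi$'s and $\Psi$'s, not that $\Phi_w$ is nullhomotopic. The claim you actually need (that $\Phi_{w}(\Theta^+)=0$ on the uncolored $(\bs,\bs')$ complex) does hold, but the clean reason is a $\gr_{\ws}$-grading argument: $\Phi_w$ raises $\gr_{\ws}$ by $1$, and $\Theta^+$ already sits in the top $\gr_{\ws}$-degree, so the image is forced to vanish. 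Better still, differentiating $\d_{\as\bs'}F + F\d_{\as\bs}=0$ (as the paper does) bypasses this point entirely, because $\Theta^+$ is already absorbed into the definition of $F$. So: your argument is correct in spirit, but the cited lemma does not support the claim as written, and the case analysis and neck-stretching are unnecessary.
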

\begin{proof}The map $F_{Y,\bL,\bS,\frs}$ is a chain map on the uncolored complexes. Hence if we differentiate the expression
\[\d F_{Y,\bL,\bS,\frs}+ F_{Y,\bL,\bS,\frs} \d=0\] with respect to $U_w$, and apply the Leibniz rule, we obtain the relation
\[\Phi_wF_{Y,\bL,\bS,\frs}+F_{Y,\bL,\bS,\frs}\Phi_w\simeq 0.\] A similar argument works for $\Psi_z$.
\end{proof}

\begin{lem}\label{lem:bandmapsandsurgerymapscommute}Suppose that $B$ is a band for the link $\bL$ in $Y$, and $\bS^1$ is a framed 1-dimensional link in $Y$, which is disjoint from $L\cup B$ and $\frs\in \Spin^c(W(Y,\bS))$. If $\ve{o}\in \{\ve{w},\ve{z}\}$, then
\[F_{Y,\bL(B),\bS^1,\frs}F_B^{\ve{o}}\simeq F_B^{\ve{o}} F_{Y,\bL,\bS^1,\frs}.\]
\end{lem}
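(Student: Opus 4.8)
The plan is to realize both compositions by counting holomorphic triangles on a single Heegaard quadruple, so that the associativity of the triangle maps gives the desired chain homotopy. I would first fix a band $B$ for $\bL$ (say an $\alpha$-band; the $\beta$-band case is symmetric after exchanging the roles of $\as$ and $\bs$). Choose a $\beta$-bouquet $\cB$ for $\bS^1$ whose arcs are disjoint from $L\cup B$, which is possible since $\bS^1$ is disjoint from $L\cup B$. The goal is to produce a Heegaard quadruple $(\Sigma,\as',\as,\bs,\bs',\ws,\zs)$ with the property that $(\Sigma,\as',\as,\bs,\ws,\zs)$ is subordinate to $B$ and $(\Sigma,\as,\bs,\bs',\ws,\zs)$ is subordinate to $\cB$, simultaneously. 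Concretely, one starts from a diagram $(\Sigma,\as,\bs,\ws,\zs)$ for $(Y,\bL)$ which is adapted both to $B$ (so that the curve $\alpha_n$ coming from the band appears, together with the Hamiltonian translates $\alpha_1',\dots,\alpha_{n-1}'$ and the curve $\alpha_n'$) and to the bouquet $\cB$ (so that some of the $\bs$-curves are meridians of the components of $\bS^1$, and $\bs'$ consists of the appropriate Hamiltonian translates together with the framing curves). Since the band lives away from $L\cup B \supset$ the support of the bouquet, there is no obstruction to arranging the diagram so that these two collections of curves occupy disjoint regions of $\Sigma$; I would prove this by an isotopy/stabilization argument analogous to the ones used for Lemma~\ref{lem:movesbetweenalphabandtriples} and Definition~\ref{def:subordinatebetabouquet}.

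Once the quadruple is in place, the two compositions are computed as follows. The composition $F_{Y,\bL(B),\bS^1,\frs}\circ F_B^{\ve{o}}$ first applies $F_{\as',\as,\bs,\frs}(\Theta^{\ve{o}}\otimes-)$ and then $F_{\as',\bs,\bs',\frs}(-\otimes \Theta^+_{\bs,\bs'})$; the composition $F_B^{\ve{o}}\circ F_{Y,\bL,\bS^1,\frs}$ first applies $F_{\as,\bs,\bs',\frs}(-\otimes \Theta^+_{\bs,\bs'})$ and then $F_{\as',\as,\bs',\frs}(\Theta^{\ve{o}}\otimes-)$. Here $\Theta^{\ve{o}}\in \cHFL^-(\Sigma,\as',\as,\ws,\zs,\sigma,\frs_0)$ is the top-graded generator from Lemmas~\ref{lem:bandtriplehas4pointedunknot} and~\ref{lem:topdegreeunlink2}, and $\Theta^+_{\bs,\bs'}\in\cHFL^-(\Sigma,\bs,\bs',\ws,\zs,\sigma,\frs_0)$ is the top generator from Lemma~\ref{lem:topdegreeunlink1}; because the band is disjoint from the bouquet region, the same $\Theta^{\ve{o}}$ works in $(\as',\as,\bs)$ and $(\as',\as,\bs')$, and the same $\Theta^+_{\bs,\bs'}$ works in $(\as,\bs,\bs')$ and $(\as',\bs,\bs')$, and the $\Spin^c$ label $\frs_{\ws}=\frs$ is preserved under the relevant connected-sum/splicing operations. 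The associativity of the holomorphic triangle maps \cite{OSDisks}*{Theorem~8.16} then gives, up to filtered equivariant chain homotopy,
\[
F_{\as',\bs,\bs',\frs}\big(F_{\as',\as,\bs,\frs}(\Theta^{\ve{o}}\otimes \ve{x})\otimes \Theta^+_{\bs,\bs'}\big)\simeq F_{\as',\as,\bs',\frs}\big(\Theta^{\ve{o}}\otimes F_{\as,\bs,\bs',\frs}(\ve{x}\otimes \Theta^+_{\bs,\bs'})\big),
\]
where on both sides one uses that $F_{\as',\as,\bs',\frs}(\Theta^{\ve{o}}\otimes(-))$ and its $\bs\leftrightarrow\bs'$ partner agree up to homotopy (the $\Theta$'s are cycles up to boundaries, and changing a cycle by a boundary changes the triangle map by a homotopy, as noted in the construction of the band maps and the 2-handle maps). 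This is exactly the asserted relation.

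The main obstacle I anticipate is the bookkeeping in producing the single quadruple that is simultaneously subordinate to $B$ and to $\cB$, and in checking that the distinguished top-degree generators $\Theta^{\ve{o}}$ and $\Theta^+_{\bs,\bs'}$ are genuinely the correct ones in all four triples appearing (rather than merely top-degree generators of some other unlink). This requires keeping careful track of how the band curves $\alpha_n,\alpha_n'$ and the bouquet curves interact with the $\ws,\zs$-basepoints, and verifying that the coloring hypotheses (that the two $\ve{o}$-basepoints adjacent to the band agree in color) are precisely what makes $\Theta^{\ve{o}}$ a cycle in the colored complex, so that both compositions are honest chain maps. A secondary, more routine point is the $\Spin^c$-matching: one must check that the 4-manifolds $X_{\as',\as,\bs}$, $X_{\as,\bs,\bs'}$, $X_{\as',\bs,\bs'}$ and $X_{\as',\as,\bs'}$ glue up as expected and that the $\Spin^c$ labels $\frs$ correspond correctly under the Mayer--Vietoris identifications, exactly as in the proofs of Propositions~\ref{prop:2-handlemapwelldefined} and~\ref{lem:compositionlawforlinks}. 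Finally, once the $\alpha$-band case is done, the $\beta$-band case follows by the same argument with the roles of the $\as$- and $\bs$-curves interchanged (using a triple subordinate to $B^\beta$ of the form $(\Sigma,\as,\bs,\bs')$), and the statement for $\beta$-bouquet vs.\ $\alpha$-bouquet surgery maps follows from Lemma~\ref{lem:alphasurgerymaps=betasurgerymaps}.
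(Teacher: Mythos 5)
Your proposal follows the same strategy as the paper: build one Heegaard quadruple $(\Sigma,\as',\as,\bs,\bs',\ws,\zs)$ realizing the band triple and the $\beta$-bouquet triple simultaneously, apply associativity of the holomorphic triangle maps to the two distinguished elements $\Theta^{\ve{o}}_{\as',\as}$ and $\Theta^{+}_{\bs,\bs'}$, and finish with Lemma~\ref{lem:alphasurgerymaps=betasurgerymaps} (and, for the $\beta$-band case, Propositions~\ref{prop:alphabandmapsarebetabandmaps} and~\ref{prop:alphabandmapsarebetabandmapstypew}, which the paper invokes in place of your symmetry argument). One small wording slip: the arcs of a $\beta$-bouquet must have an endpoint on $L$, so they cannot literally be disjoint from $L\cup B$; what you want is that the bouquet is disjoint from $B$ and that its interior is disjoint from $L$.
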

\begin{proof} Using Proposition~\ref{prop:alphabandmapsarebetabandmaps} or \ref{prop:alphabandmapsarebetabandmapstypew}, we may assume that $B$ is an $\alpha$-band.  We can pick a Heegaard quadruple $(\Sigma, \ve{\alpha}',\ve{\alpha},\ve{\beta},\ve{\beta}',\ve{w},\ve{z})$ such that the following holds:
\begin{enumerate}
\item $(\Sigma, \ve{\alpha},\ve{\beta},\ve{w},\ve{z})$ is Heegaard diagram for $(Y,\bL)$.
\item $(\Sigma, \ve{\alpha}',\ve{\alpha},\ve{\beta},\ve{w},\ve{z})$ is a triple subordinate to the band $B\subset Y$.
\item $(\Sigma,\ve{\alpha},\ve{\beta},\ve{\beta}',\ve{w},\ve{z})$ is a triple subordinate to a $\beta$-bouquet for $\bS^1$ in $(Y,\bL)$.
\item $(\Sigma,\ve{\alpha}',\ve{\alpha},\ve{\beta}',\ve{w},\ve{z})$ is a triple subordinate to the band $B$ in $Y(\bS^1)$.
\item $(\Sigma, \ve{\alpha}',\ve{\beta},\ve{\beta}',\ve{w},\ve{z})$ is a triple subordinate to a $\beta$-bouquet for $\bS^1$ in $(Y,\bL(B))$.
\end{enumerate}

Let $\Theta_{\as',\as}^{\ve{o}}\in \cHFL^-(\Sigma, \ve{\alpha}',\ve{\alpha},\sigma)$ denote the distinguished elements, for $\os\in \{\ws,\zs\}$, from Lemma~\ref{lem:topdegreeunlink2}. Similarly let $\Theta_{\beta\beta'}^+$ be the distinguished element of $\cHFL^-(\Sigma, \ve{\beta},\ve{\beta}',\sigma)$ from Lemma~\ref{lem:topdegreeunlink1}. The associativity relations yield the relation
\[F_{\as',\as,\bs',\frs|_{Y(\bS)}}(\Theta_{\as',\as}^{\ve{o}}, F_{\as,\bs,\bs',\frs}(-, \Theta_{\bs,\bs'}^+))\simeq F_{\as',\bs,\bs',\frs}(F_{\as',\as,\bs,\frs|_{Y}}(\Theta_{\as,'\as}^{\ve{o}},-), \Theta_{\bs,\bs'}^+),\] showing that
\[F_{Y,\bL(B),\bS^1,\frs}^\beta F_B^{\ve{o}}\simeq F_B^{\ve{o}} F_{Y,\bL,\bS^1,\frs}^\beta.\] Noting that $F_{Y,\bL,\bS_1,\frs}\simeq F_{Y,\bL,\bS^1,\frs}^\alpha\simeq F_{Y,\bL,\bS^1,\frs}^\beta$ by Lemma~\ref{lem:alphasurgerymaps=betasurgerymaps}, the proof is complete.
\end{proof}

\subsection{Quasi-stabilizations and critical point cancellations}
\label{sec:quasisandbirthdeath}
In this section, we show that the quasi-stabilization maps, birth and death cobordism maps, and band maps satisfy a set of relations which we can interpret as showing invariance of the link cobordism maps under index 1/2 critical point cancellations of the surface a the link cobordism. Notice that if we ignore the dividing set, a birth cobordism adds a disk to the surface of a link cobordism. The birth cobordism can be canceled by attaching a band, leaving a link cobordism which is diffeomorphic to the identity link cobordism, as an undecorated link cobordism. However as a decorated link cobordism, the resulting link cobordism will not be the identity. Instead, we are left with the dividing set corresponding to a quasi-stabilization map. The manipulation is shown in Figure~\ref{fig::111}.

\begin{figure}[ht!]
\centering
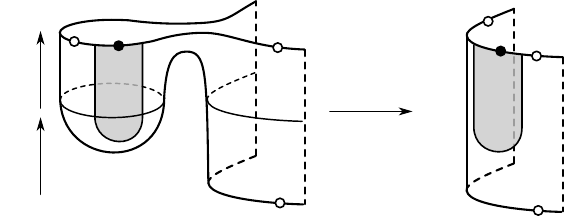
\caption{\textbf{A birth cobordism followed by a saddle is diffeomorphic to a quasi-stabilization.} This is reflected by the relation in Proposition~\ref{prop:bandsanddiskstab}.\label{fig::111}}
\end{figure}

Suppose that $\bL=(L,\ws,\zs)$ is a multi-based link in $Y$ and $\bU=(U,w,z)$ is an unknot with two basepoints, spanned by a 2-disk $D\subset Y$ with $D\cap L=\varnothing$. Suppose that we attach a band $B$ to $L\cup U$, which has one foot on $U$ and one foot on $L$, and is disjoint from $\ws\cup \zs\cup \{w,z\}$. The surface $B\cup D$, yields an isotopically unique diffeomorphism 
\[\phi\colon (Y,\bL)\to (Y,(\bL\cup \bU)(B)),\] which is the identity outside of a neighborhood of $B\cup D$. Let $\bar{B}$ denote the dual band, attached to $(Y,\bL\cup \bU)(B)$, yielding the link $\bL\cup \bU$. We prove the following:

\begin{prop}\label{prop:bandsanddiskstab}Suppose that $(Y,\bL)$ is a multi-based link and $\bU=(U,w,z)$ be a doubly based unknot with a Seifert disk $D$ which is disjoint from $\bL$, and let $B$ and $\bar{B}$ be the bands described in the previous paragraph. If $B$ is an $\alpha$-band then we have
\[F_{B}^{\ve{z}} \cB_{\bU,D}^+\simeq S_{w,z}^+\phi_*\qquad\text{and} \qquad F_B^{\ve{w}} \cB_{\bU,D}^+\simeq T_{w,z}^+ \phi_*.\] Similarly,
\[ \cD_{\bU,D}^-F_{\bar{B}}^{\ve{z}}\simeq S_{w,z}^-\phi_*\qquad\text{and} \qquad \cD_{\bU,D}^-F_{\bar{B}}^{\ve{z}}\simeq T_{w,z}^-\phi_*.\]

If $B$ is a $\beta$-band, we have
\[F_{B}^{\ve{z}} \cB_{\bU,D}^+\simeq S_{z,w}^+ \phi_*\qquad\text{and} \qquad F_B^{\ve{w}} \cB_{\bU,D}^+\simeq T_{z,w}^+ \phi_*,\] and
\[ \cD_{\bU,D}^-F_{\bar{B}}^{\ve{w}}\simeq T_{z,w}^-\phi_*\qquad\text{and} \qquad \cD_{\bU,D}^-F_{\bar{B}}^{\ve{w}}\simeq T_{z,w}^-\phi_*.\]
\end{prop}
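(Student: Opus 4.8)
\textbf{Proof strategy for Proposition~\ref{prop:bandsanddiskstab}.} The plan is to compute everything on a single, explicitly chosen Heegaard diagram, so that the composition $F_B^{\ve{z}}\circ \cB_{\bU,D}^+$ becomes a holomorphic triangle count on a diagram that is (up to isotopies and handleslides) a quasi-stabilized diagram, at which point Proposition~\ref{prop:singlealphaquasistabtriangle} identifies the triangle count with the quasi-stabilization map. First I would set up the diagram: start with a diagram $\cH = (\Sigma, \as, \bs, \ws, \zs)$ for $(Y,\bL)$ in which the Seifert disk $D$ meets $\Sigma$ in a disk disjoint from $\as\cup\bs\cup\ws\cup\zs$, insert the birth subdiagram from Figure~\ref{fig::6} to obtain $\Hat{\cH}$ for $(Y,\bL\cup \bU)$ (so that $\cB_{\bU,D}^+(\xs) = \xs\times \theta^+$ by definition, with a sufficiently stretched almost complex structure along $\d(N(D)\cap\Sigma)$ by Lemma~\ref{lem::differentialdiskstabilization}), and then choose a Heegaard triple $(\Sigma', \as'', \as', \bs', \ws,\zs)$ subordinate to the $\alpha$-band $B$ attached to $\bL\cup\bU$ which restricts to $\Hat{\cH}$ on the $(\as',\bs')$ side. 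The point is that because $U$ bounds a disk disjoint from $L$, the curve $\alpha_n$ (the "$A$" curve of Definition~\ref{def:triplesubbetaband}) can be taken to be exactly the $\beta_0$-dual meridian appearing in the birth region, and the surgered diagram $(\Sigma', \as'', \bs', \ws, \zs)$ is, after the surgeries $S_i$ from Lemma~\ref{lem:bandtriplehas4pointedunknot}, visibly a quasi-stabilization of the original diagram for $(Y,\bL)$ by the pair $(w,z)$.

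Next I would run the triangle count. The band map $F_B^{\ve z}$ is $F_{\as'',\as',\bs',\frs}(\Theta^{\ws}\otimes -)$ where $\Theta^{\ws}$ is the distinguished generator of $\Max_{\gr_{\ws}}\cHFL^-(\Sigma',\as'',\as',\ws,\zs,\sigma_0,\frs_0)$ from Lemma~\ref{lem:topdegreeunlink2}. Composing with $\cB_{\bU,D}^+$, I want to show
\[
F_{\as'',\as',\bs',\frs}(\Theta^{\ws}\otimes (\xs\times\theta^+)) \simeq (S_{w,z}^+\phi_*)(\xs).
\]
Here one uses the product structure of the diagram: the triple splits (up to a neck-stretching/degeneration argument of the sort used in Proposition~\ref{prop:singlealphaquasistabtriangle}) into the original triple for $(Y,\bL)$ tensored with the three-curve picture in the quasi-stabilization/birth region. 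On the small piece, the unique index-$0$ triangle with the correct $\gr_{\ws}$-inputs feeds $\theta^+$ and (the small-region part of) $\Theta^{\ws}$ to the higher graded generator $\theta^{\ws}$ of $\alpha_s'\cap\beta_0'$; on the large piece one gets the identity triangle $F_{\cT,J_s,\frs}(\xs, -)$, which on the nose is $\xs$ once we account for the fact that the $\as'$-to-$\as''$ change is a small Hamiltonian translate contributing the top generator. This yields $\xs\times\theta^{\ws}$, i.e. $S_{w,z}^+$ applied to $\xs$, on the quasi-stabilized diagram identified with $\cH$. The diffeomorphism $\phi$ enters because the identification of the surgered diagram with a quasi-stabilization of $\cH$ is only canonical up to the diffeomorphism $B\cup D$ induces; tracking this through the naturality maps gives the $\phi_*$ factor. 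The type-$\ve w$ statement is identical after exchanging the roles of $\ws$ and $\zs$ (using $\Theta^{\zs}$ and Lemma~\ref{lem:topdegreeunlink2}), and the $\cD^-$ statements follow by the same computation read upside down, using that $\cD_{\bU,D}^-$ is a $3$-handle followed by a $4$-handle map and the dual triangle count. The $\beta$-band case is obtained by the same argument with the roles of the $\as$ and $\bs$ curves exchanged, which swaps the ordering of the new basepoints and hence produces $S_{z,w}^\pm$, $T_{z,w}^\pm$.

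The main obstacle I anticipate is the neck-stretching/degeneration bookkeeping needed to justify that the relevant triangle count on the subordinate band triple genuinely splits as (small region) $\times$ (original triple), analogous to Proposition~\ref{prop:singlealphaquasistabtriangle} but now with a $\beta_0$-curve present from the birth region and an $\alpha_n$-curve from the band, simultaneously. One must verify that after the handle-cancelling $S_i$-surgeries and the stretching along $\d(N(D)\cap\Sigma)$, no unexpected index-$0$ triangles appear that would contribute lower-order terms (the "$*$" entries are harmless since they vanish after passing to the colored complex with $\sigma(z_1)=\sigma(z_2)$). A secondary, purely organizational point is to be careful about which coloring hypotheses are needed at each step: the composition $F_B^{\ve z}\circ\cB_{\bU,D}^+$ requires $\sigma(z)$ to agree with the $\zs$-color adjacent to the band foot, which is precisely the hypothesis making $S_{w,z}^+$ a chain map, so the two sides live in the same category and the homotopy makes sense. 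Once the splitting is in place, everything reduces to the model computations already carried out for the quasi-stabilization and birth maps, and the diffeomorphism-invariance statements (Theorem~\ref{thm:quasisarenatural}, invariance of $\cB^\pm$, invariance of $F_B^{\ve o}$) upgrade the chain-level identity to the level of transitive chain homotopy types.
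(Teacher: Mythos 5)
Your global strategy matches the paper's: compute both $\cB_{\bU,D}^+$ and $F_B^{\ve z}$ on a single Heegaard triple built by inserting a birth region near $D\cap\Sigma$, observe that the surgered diagram is a quasi-stabilization of the starting diagram, and track the diffeomorphism $\phi$ that identifies $(Y,\bL)$ with $(Y,(\bL\cup\bU)(B))$. However, there are two concrete gaps.

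First, the degeneration lemma you cite, Proposition~\ref{prop:singlealphaquasistabtriangle}, applies to a quasi-stabilized triple where a single new $\alpha_s$ curve is shared between $\as$ and $\as'$ and intersects two new parallel curves $\beta_0,\gamma_0$. That is not the configuration here. In the birth/band region of Figure~\ref{fig::11} one adds $\alpha_0$ to $\as$, a genuinely different curve $\alpha_0'$ to $\as'$ (the meridian and its framed parallel), and a single new $\beta_0$ to $\bs$. The paper handles this with a bespoke model count, Lemma~\ref{lem:trianglemapforalphabandmap1} (with Lemma~\ref{lem:trianglemapforalphabandmap2} for the dual statement), whose proof is adapted from the 1-handle triangle count of \cite{ZemGraphTQFT}*{Theorem~6.17}, not from the quasi-stabilization triangle count of \cite{MOIntSurg}; in particular the bookkeeping that produces the $V$ factors (Equations~\eqref{eq:relationsofmultiplicities1} and \eqref{eq:relationsofmultiplicities2}) is specific to this configuration. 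You flag that Proposition~\ref{prop:singlealphaquasistabtriangle} is not directly applicable but do not supply the replacement argument; as written, the key triangle count is left unproven.

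Second, even granting the triangle count, the identity $F^{\ve z}_{B}\cB_{\bU,D}^+(\ys)=\Phi_{\as\to \as'}^{\bs}(\ys)\otimes \theta^{\ws}$ is obtained for an almost complex structure stretched along $\partial(N(D)\cap\Sigma)$. The quasi-stabilization map $S_{w,z}^+$ is only defined once one has stretched along a different, smaller curve that encircles $\alpha_0'$ and meets $\beta_0$ twice. One must therefore insert a change-of-almost-complex-structure map $\Phi_{J\to J'}$ and verify that it fixes $\ve{y}\times\theta^{\ws}$; the paper does this using the subclaims in the proof of Lemma~\ref{lem:mapsforI+agree}. Your proposal does not address this mismatch of almost complex structures, and simply reading off $\xs\times\theta^{\ws}$ from the degenerated triangle count is not literally the same as applying $S_{w,z}^+$ until that comparison is justified.
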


Proposition~\ref{prop:bandsanddiskstab} will follow from two counts of holomorphic triangles. If $\cT=(\Sigma,\ve{\alpha}', \ve{\alpha},\ve{\beta},\ve{w},\ve{z})$ is a Heegaard triple with a distinguished basepoint $z'\in \zs$,  we will stabilize $\cT$ near $z'$ to form the Heegaard triple
\[\cT^+:=(\Sigma,\as'\cup \{\alpha_0'\},\as\cup \{\alpha_0\}, \bs\cup \{\beta_0\},\ws\cup \{w\}, \zs\cup \{z\}),\]
 shown in Figure~\ref{fig::11}. We note that $X_{\as',\as,\bs}$ and $X_{\as'\cup \{\alpha_0'\}, \as\cup \{\alpha_0\},\bs\cup \{\beta_0\}}$ are canonically diffeomorphic. Furthermore, if $\psi$ is a homology class of triangles on $\cT$ and $\psi_0$ is a homology class of triangles on the diagram $(S^2,\alpha_0',\alpha_0,\beta_0)$ and $\psi$ and $\psi_0$ have the same multiplicity in the connected sum region, then it is straightforward to see that
\[
\frs_{\ws\cup \{w\}}(\psi\# \psi_0)=\frs_{\ws}(\psi),
\]
after identifying $X_{\as',\as,\bs}$ and $X_{\as'\cup \{\alpha_0'\}, \as\cup \{\alpha_0\},\bs\cup \{\beta_0\}}$.

 We will write (abusing notation slightly)  
\[\alpha_0'\cap \alpha_0=\{\theta^{\ws}, \xi^{\ws}\}, \qquad \alpha_0\cap \beta_0=\{\theta^+,\theta^-\}\qquad \text{and} \qquad \alpha_0'\cap \beta_0=\{\theta^{\ws},\xi^{\ws}\}.\] For $\alpha_0'\cap \alpha_0$ and $\alpha'_0\cap \beta_0$,  we write $\theta^{\ws}$ for the higher degree intersection point with respect to $\gr_{\ws}$, and  $\xi^{\ws}$ for the lower degree intersection point. For $\alpha_0\cap \beta$,  we write $\theta^+$ and $\theta^-$ for the higher and lower degree intersection points, since the designation is the same for $\gr_{\ws}$ and $\gr_{\zs}$.

\begin{figure}[ht!]
\centering
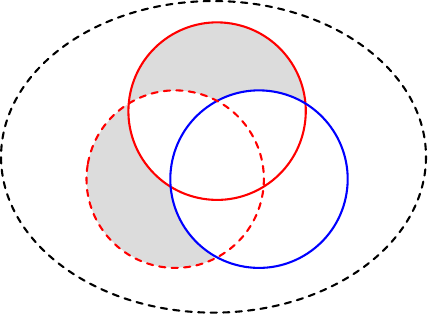
\caption{\textbf{The stabilized region of the Heegaard triple $\cT^+$ for merging a doubly based unknot with $\bL$.} This triple is considered in Lemma~\ref{lem:trianglemapforalphabandmap1}. The triple $\cT^+$ can be used to compute the $\alpha$-band map for a band which merges an unknot with two basepoints onto the link component containing $z'$. The two shaded regions are each examples of homology classes of triangles, which could be counted. The letter $m$ denotes the multiplicity in the connected sum region.\label{fig::11}}
\end{figure}

We recall that for the $\zs$-band map to be defined, the variables $V_z$ and $V_{z'}$ must be identified. In the following lemma, we identify both with a single variable, $V$.

\begin{lem}\label{lem:trianglemapforalphabandmap1}Suppose $\cT=(\Sigma,\ve{\alpha}', \ve{\alpha},\ve{\beta},\ve{w},\ve{z})$ is a Heegaard triple with a distinguished point $z'\in \ve{z}$, and write $\cT^+=(\Sigma,\as'\cup \{\alpha_0'\},\as\cup \{\alpha_0\}, \bs\cup \{\beta_0\},\ws\cup \{w\}, \zs\cup \{z\})$ for the Heegaard triple which has been stabilized at $z'$, shown in Figure~\ref{fig::11}. For sufficiently stretched almost complex structure $J_s(T)$, we have the relations
\[
F_{\cT^+,\frs, J_s(T)}(\ve{x}\times \theta^{\ws}, \ys\times \theta^+)=F_{\cT,\frs,J_s}(\ve{x},\ve{y})\otimes \theta^{\ws}
\]
and
\[
F_{\cT^+,\frs, J_s(T)}(\ve{x}\times \theta^{\ws}, \ys\times \theta^-)=V\cdot F_{\cT,\frs,J_s}(\ve{x},\ve{y})\otimes \xi^{\ws}+G(\ve{x},\ve{y})\otimes \theta^{\ws},
\]
for some map
\[G\colon  \cCFL^-(\Sigma,\as',\as)\otimes \cCFL^-(\Sigma,\as,\bs)\to \cCFL^-(\Sigma,\as',\bs),\] which is not necessarily independent of $J_s$ or $T$.
\end{lem}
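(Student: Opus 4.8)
The plan is to follow the same strategy used to prove the quasi-stabilization triangle counts from \cite{MOIntSurg}*{Proposition~5.2} (our Proposition~\ref{prop:singlealphaquasistabtriangle}) and \cite{ZemGraphTQFT}*{Theorem~8.8} (our Proposition~\ref{prop:1-handletrianglecount}): a neck-stretching degeneration along the connected-sum circle, together with a careful Maslov index bookkeeping. First I would fix a cylindrical almost complex structure $J_s$ on $\Sigma\times[0,1]\times\R$ which is split near $z'$, and let $J_s(T)$ denote the result of inserting a neck of length $T$ along the circle $c$ bounding the stabilization region shown in Figure~\ref{fig::11}. For a class of triangles $\psi$ on $\cT^+$ representing a fixed $\Spin^c$ structure, I would take a sequence of neck lengths $T_i\to\infty$ and a sequence of $J_s(T_i)$-holomorphic representatives, extract a limiting broken configuration consisting of a broken triangle $\psi_\Sigma$ on $(\Sigma,\as',\as,\bs)$, a collection of boundary degenerations, and a broken triangle $\psi_0$ on the small sphere diagram $(S^2,\alpha_0',\alpha_0,\beta_0)$. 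The key algebraic input is the index additivity formula: one gets $\mu(\psi)=\mu(\psi_\Sigma)+\mu(\psi_0)+(\text{nonnegative boundary-degeneration terms})$, together with the $\Spin^c$ matching $\frs_{\ws\cup\{w\}}(\psi_\Sigma\#\psi_0)=\frs_{\ws}(\psi_\Sigma)$ which was noted in the paragraph preceding the lemma.

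Next I would enumerate the relevant low-index triangle classes on the small diagram $(S^2,\alpha_0',\alpha_0,\beta_0)$ with the prescribed inputs. For input $(\theta^{\ws},\theta^+)$ there is a unique small triangle with output $\theta^{\ws}$ and multiplicity $0$ in the connected-sum region (the unshaded small triangle), contributing nothing to the variables; this yields the first displayed equation $F_{\cT^+,\frs,J_s(T)}(\xs\times\theta^{\ws},\ys\times\theta^+)=F_{\cT,\frs,J_s}(\xs,\ys)\otimes\theta^{\ws}$, exactly as in Proposition~\ref{prop:singlealphaquasistabtriangle}. For input $(\theta^{\ws},\theta^-)$ the picture is the one shown shaded in Figure~\ref{fig::11}: there is a small triangle with output $\xi^{\ws}$ passing once over the basepoint $z$ (hence carrying a factor $V_z=V$), which pairs with the $\psi_\Sigma$-count to give the leading term $V\cdot F_{\cT,\frs,J_s}(\xs,\ys)\otimes\xi^{\ws}$, plus possibly additional contributions whose $\psi_0$-component has output $\theta^{\ws}$ (and in general nonconstant $\psi_\Sigma$), which get absorbed into the error term $G(\xs,\ys)\otimes\theta^{\ws}$. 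One has to check that the boundary-degeneration terms in the index formula force any non-small contribution to have $\psi_0$-output in $\{\theta^{\ws},\xi^{\ws}\}$ and appropriate multiplicities, and that gluing a small triangle to a broken triangle $\psi_\Sigma$ of index $0$ gives back an honest holomorphic representative for large $T$ — this is the standard gluing argument from \cite{MOIntSurg} and \cite{ZemGraphTQFT}, which I would invoke rather than reprove. I would also need the analogue of the ``allowed'' almost complex structures discussion: $J_s(T)$ for large $T$ is a valid choice to compute the band map because additional stretching preserves the element $\Theta^{\ws}$ up to the relevant identifications (using Lemmas~\ref{lem:bandtriplehas4pointedunknot} and \ref{lem:topdegreeunlink2}).

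The main obstacle I expect is the index computation in the presence of the boundary degenerations: since we are working with a Heegaard \emph{triple} rather than a strip, and since the new $\alpha$-curves $\alpha_0'$ and $\alpha_0$ both border the stabilization region, one must verify that the analogue of the Maslov index formula from \cite{ZemQuasi}*{Lemma~5.3} (and the triangle version implicit in \cite{MOIntSurg}*{Proposition~5.2}) holds here, keeping track of which components of $\Sigma\setminus(\as'\cup\as)$ can carry boundary-degeneration multiplicity and with what index cost. Once that formula is in hand, the argument that all error terms vanish except those captured by $G$ is routine: nonnegativity of every summand plus the hypothesis $\mu(\psi)=0$ (resp. the index needed for the second formula) forces $\psi_\Sigma$ to be constant in the leading-term case, so the leading behavior is governed entirely by the small-diagram count, which I compute explicitly from Figure~\ref{fig::11}. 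The remaining bookkeeping — that the $V_z$-power is exactly $1$ for the $\xi^{\ws}$-output small triangle and that the connected-sum-region multiplicity $m$ must equal the $\psi_\Sigma$-multiplicity so the $\Spin^c$ matching holds — is the kind of picture-chasing already carried out in \cite{ZemQuasi}, and I would reference it accordingly.
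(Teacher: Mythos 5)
Your high-level strategy (neck-stretching along $c$ plus Maslov bookkeeping plus a gluing/equivalence-of-counts input) matches the paper's, but the model you plan to follow is the wrong one, and the "main obstacle" you flag does not actually arise. In the stabilization of Figure~\ref{fig::11}, the three new curves $\alpha_0'$, $\alpha_0$, $\beta_0$ are all \emph{closed} curves contained entirely in the stabilized disk; none of them continues out of the stabilization region the way the quasi-stabilization curve $\alpha_s$ does in Figure~\ref{fig::31}. Consequently, the degenerate limit has no boundary degenerations along $\alpha_0'$ or $\alpha_0$, and the index additivity is simply
\[
\mu(\psi\#\psi_0)=\mu(\psi)+2n_w(\psi_0)+\gr_{\ws}(y,z),
\]
with no boundary-degeneration correction terms. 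The argument is therefore modeled not on the quasi-stabilization computations of \cite{ZemQuasi}*{Lemma~5.3} and \cite{MOIntSurg}*{Proposition~5.2} (which you cite and worry about), but on the stabilization/1-handle triangle analysis of \cite{ZemGraphTQFT}*{Section~6} — specifically the equivalence-of-counts result \cite{ZemGraphTQFT}*{Theorem~6.17}, itself based on \cite{LipshitzCylindrical}*{Proposition~A.2}. This identifies $\#\cM(\psi)$ with $\sum_{\psi_0}\#\cM(\psi\#\psi_0)$ summed over small triangle classes with $n_w(\psi_0)=0$, both for output $(\theta^+,\theta^{\ws})$ and for output $(\theta^-,\xi^{\ws})$. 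Your plan to ``keep track of which components of $\Sigma\setminus(\as'\cup\as)$ can carry boundary-degeneration multiplicity'' would be chasing a complication that simply is not present.

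The second gap is that you only gesture at the $V$-power bookkeeping, attributing it to ``picture-chasing already carried out in \cite{ZemQuasi}.'' That reference is again for the wrong model. What actually pins down the $V$-power in the leading terms is a pair of explicit multiplicity identities on the small diagram: for any $\psi_0\in\pi_2(\theta^{\ws},\theta^+,\theta^{\ws})$ one has
\[
m(\psi_0)+n_w(\psi_0)=n_z(\psi_0)+n_{z'}(\psi_0),
\]
and for any $\psi_0\in\pi_2(\theta^{\ws},\theta^-,\xi^{\ws})$ one has
\[
m(\psi_0)+n_w(\psi_0)+1=n_z(\psi_0)+n_{z'}(\psi_0),
\]
where $m$ is the connected-sum-region multiplicity. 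Each is established by verifying it on a single explicit small triangle and then checking it is preserved under splicing in periodic classes — an argument you should carry out rather than outsource. Combining these with the equivalence-of-counts identity, with $n_w(\psi_0)=0$ forced by the index, and with the fact that $m(\psi_0)$ must match $n_{z'}(\psi)$ for the connected sum to glue, shows the total $V$-weight of $\psi\#\psi_0$ is $V^{n_{z'}(\psi)}$ in the first case and $V^{n_{z'}(\psi)+1}$ in the second, which is precisely what the two displayed formulas assert. Your analysis of the $\gr_{\ws}(y,z)=1$ case (forcing the $\xi^{\ws}$-coefficient of $F_{\cT^+}(\cdot,\ys\times\theta^+)$ to vanish) is fine, and the absorption of the $\theta^{\ws}$-output contributions from $(\theta^-,\theta^{\ws})$ into the error term $G$ is also correct; these parts of your proposal survive unchanged.
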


\begin{proof} If we ignore the $\zs$-basepoints, the triangle counts we wish to show are in fact just the ones used to show the well-definedness of the 1-handle map under isotopies and handles outside of the 1-handle region. Hence to prove the theorem, we will adapt the triangle counts used to show the well-definedness of the 1-handle maps from \cite{ZemGraphTQFT}*{Section~6}.

The proof is similar to the proof of \cite{ZemGraphTQFT}*{Theorem~6.17}. For a homology class of triangles $\psi\# \psi_0\in \pi_2(\ve{x}\times \theta^{\ws},\ve{y}\times y,\ve{z}\times z)$, it is straightforward to compute that
\begin{equation}
\mu(\psi\#\psi_0)=\mu(\psi)+2n_{w}(\psi_0)+\gr_{\ve{w}}(y,z), \label{eq:maslovindexneckstretch}
\end{equation} where $\gr_{\ve{w}}(y,z)$ denotes the drop in grading from $y$ to $z$, with the convention that both $\theta^+\in\alpha_0\cap \beta_0$ and $\theta^{\ws}\in \alpha'_0\cap \beta_0$ have $\gr_{\ws}$-grading 0. 

We note $n_w(\psi_0)\ge 0$. Also, if we stretch the neck sufficiently, we can assume that if $\psi\# \psi_0$ has a holomorphic representative, then both $\psi$ and $\psi_0$ have a broken holomorphic representative. In particular, $\mu(\psi)\ge 0$ by transversality considerations.

We consider first the case that $\gr_{\ws}(y,z)=1$, corresponding to when $y=\theta^+$ and $z=\xi^{\ws}$. In this case the expression in Equation~\eqref{eq:maslovindexneckstretch} must be at least 1, for sufficiently large $T$. Hence there are no solutions when $\mu(\psi\# \psi_0)=0$. Hence 
\begin{equation}
\langle F_{\cT^+, \frs,J_s(T)}(\ve{x}\times \theta^{\ws}, \ve{y}\times \theta^+),\xi^{\ws}\rangle=0,\label{eq:coefficienttheta^w}
\end{equation} 
where we are writing $\langle \ve{\tau}, \xi^{\ws}\rangle$ for the $\xi^{\ws}$-component of $\ve{\tau}$.

Next we consider the case that $\gr_{\ws}(y,z)=0$. In this case, we have that the pair $(y,z)$ is $(\theta^+,\theta^{\ws})$ or $(\theta^-,\xi^{\ws})$. In this case,  using Equation~\eqref{eq:maslovindexneckstretch}, we see that 
\[\mu(\psi)=0 \qquad \text{and} \qquad n_w(\psi_0)=0.\] According to \cite{ZemGraphTQFT}*{Theorem~6.17}, if $\psi$ is a homology class of triangles on $\cT$ with Maslov index 0, then 
\begin{equation}
\# \cM(\psi)\equiv \sum_{\substack{\psi_0\in \pi_2(\theta^{\ws}, \theta^+, \theta^{\ws})\\
n_w(\psi_0)=0}} \#\cM(\psi\# \psi_0) \pmod{2},\label{eq:equivalenceofcounts1}
\end{equation}
and
\begin{equation}
\# \cM(\psi)\equiv \sum_{\substack{\psi_0\in \pi_2(\theta^{\ws}, \theta^-, \xi^{\ws})\\
n_w(\psi_0)=0}} \#\cM(\psi\# \psi_0) \pmod{2}.\label{eq:equivalenceofcounts2}
\end{equation}
We note \cite{ZemGraphTQFT}*{Theorem~6.17} is based an analytic result of Lipshitz \cite{LipshitzCylindrical}*{Proposition~A.2}.

The next step is to relate the total multiplicity of a triangle $\psi\# \psi_0$ over $z$ and $z'$ to the multiplicity of $\psi$ over $z'$. We claim that if $\psi_0\in \pi_2(\theta^{\ws}, \theta^+,\theta^{\ws})$ is a homology class of triangles, then
\begin{equation}
m(\psi_0)+n_w(\psi_0)=n_{z}(\psi_0)+n_{z'}(\psi_0),\label{eq:relationsofmultiplicities1}
\end{equation} where $m(\psi_0)$ denotes the multiplicity in the connected sum region. To see this, we note that it holds for the small triangle shown on the top of Figure~\ref{fig::11}. The formula is preserved when we splice in a class on $(S^2,\alpha_0',\alpha_0,\beta_0)$ in any of the $\pi_2(t,t)$, and it is easy to see that on this diagram, any two homology classes with the same endpoints can be related by a sum of such classes.

Combining Equations~\eqref{eq:equivalenceofcounts1}~and~\eqref{eq:relationsofmultiplicities1}, we obtain
\begin{equation}
\langle F_{\cT^+,\frs,J_s(T)}(\ve{x}\times \theta^{\ws}, \ve{y}\times \theta^+),\theta^{\ws}\rangle=F_{\cT,\frs,J_s}(\ve{x},\ve{y})\label{eq:coefficienttheta^w2}
\end{equation} 

Similarly, if $\psi_0\in \pi_2(\theta^{\ws}, \theta^-,\theta^{\zs})$ is a homology class of triangles, then
\begin{equation}
m(\psi_0)+n_w(\psi_0)+1=n_{z}(\psi_0)+n_{z'}(\psi_0).\label{eq:relationsofmultiplicities2}
\end{equation} As before, this can be verified by checking it for a single class (for example the class shown on the bottom  of Figure~\ref{fig::11}), and then checking that the formula respects splicing in a class of disks in any $\pi_2(t,t)$. Combining Equations~\eqref{eq:equivalenceofcounts2}~and~\eqref{eq:relationsofmultiplicities2} we obtain the equality:
\begin{equation}
\langle F_{\cT^+,\frs,J_s(T)}(\ve{x}\times \theta^{\ws}, \ve{y}\times \theta^-),\xi^{\ws}\rangle=V\cdot F_{\cT,\frs,J_s}(\ve{x},\ve{y}).\label{eq:coefficientxi^w3}
\end{equation}

Combining Equations~\eqref{eq:coefficienttheta^w},~\eqref{eq:coefficienttheta^w2}, and \eqref{eq:coefficientxi^w3}, we obtain the lemma statement.

\end{proof}

\begin{figure}[ht!]
\centering
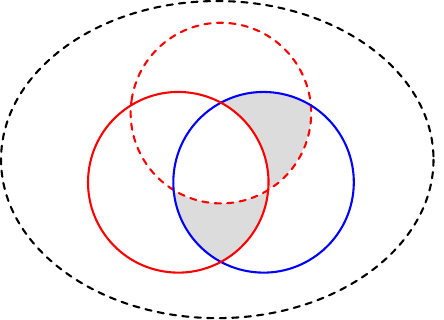
\caption{\textbf{The stabilized region of the Heegaard triple $\Hat{\cT}^+$ for splitting a doubly based unknot off of $\bL$.} This triple is considered in Lemma~\ref{lem:trianglemapforalphabandmap2}. The triple $\Hat{\cT}^+$ can be used to compute the $\alpha$-band map for a band which splits an unknot with two basepoints, $w$ and $z$, off of $\bL$. The two shaded regions are each examples of homology classes of triangles, which could be counted. The letter $m$ denotes the multiplicity in the connected sum region.\label{fig::37}}
\end{figure}

Adapting the proof of Lemma~\ref{lem:trianglemapforalphabandmap1} slightly, we can compute the effect of splitting off a doubly based unknot from a link $\bL$.

\begin{lem}\label{lem:trianglemapforalphabandmap2} Suppose $\cT=(\Sigma,\ve{\alpha}', \ve{\alpha},\ve{\beta},\ve{w},\ve{z})$ is a Heegaard triple with a distinguished point $z'\in \ve{z}$, and write $\Hat{\cT}^+=(\Sigma,\as'\cup \{\alpha_0'\},\as\cup \{\alpha_0\}, \bs\cup \{\beta_0\},\ws\cup \{w\}, \zs\cup \{z\})$ for the Heegaard triple which has been stabilized at $z'$, as shown in Figure~\ref{fig::37}. Then for sufficiently stretched almost complex structure $J_s(T)$, we have 
\[
F_{\Hat{\cT}^+,\frs, J_s(T)}(\ve{x}\times \theta^{\ws}, \ys\times \theta^{\ws})=V\cdot F_{\cT,\frs,J_s}(\ve{x},\ve{y})\otimes \theta^{+}
\]
and
\[
F_{\Hat{\cT}^+,\frs, J_s(T)}(\ve{x}\times \theta^{\ws}, \ys\times \xi^{\ws})= F_{\cT,\frs,J_s}(\ve{x},\ve{y})\otimes \xi^{\ws}+G'(\ve{x},\ve{y})\otimes \theta^-,
\]
for some map
\[G'\colon  \cCFL^-(\Sigma,\as',\as)\otimes \cCFL^-(\Sigma,\as,\bs)\to \cCFL^-(\Sigma,\as,\bs),\] which is not necessarily independent of $J_s$ or $T$.
\end{lem}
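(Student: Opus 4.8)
The plan is to adapt the argument of Lemma~\ref{lem:trianglemapforalphabandmap1} essentially verbatim, since the only structural difference is that here the distinguished $\ve{\alpha}$-curve $\alpha_0$ and its translate $\alpha_0'$ are configured to \emph{split off} a doubly based unknot rather than \emph{merge} one onto $\bL$; combinatorially this amounts to relabeling which intersection points play the role of $\theta^\pm$ and which bigons contain $w$ versus $z$. First I would fix an almost complex structure $J_s$ on $\Sigma\times[0,1]\times\R$ which is split near the connected sum point, and let $J_s(T)$ be the stretched almost complex structure on the stabilized surface. For a homology class of triangles $\psi\#\psi_0\in\pi_2(\ve{x}\times\theta^{\ws},\ys\times y,\cdot)$ on $\Hat{\cT}^+$, I would record the Maslov index formula, which by the same computation as in Equation~\eqref{eq:maslovindexneckstretch} takes the form $\mu(\psi\#\psi_0)=\mu(\psi)+2n_w(\psi_0)+\gr_{\ws}(\text{in},\text{out})$ for the appropriate grading drop in the connected sum region. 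Neck-stretching then forces both $\psi$ and $\psi_0$ to have broken holomorphic representatives, so $\mu(\psi)\ge 0$ and $n_w(\psi_0)\ge 0$ by transversality, and the index-zero condition forces all three summands to vanish; this pins down which pairs $(y,z)$ can contribute.

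The next step is the count of disks in the cut-down pieces. When $\gr_{\ws}$ of the relevant intersection points drops by one (the case producing the ``error term'' $G'$), the index constraint shows the corresponding coefficient is $0$, or rather is absorbed into a $J_s,T$-dependent map $G'$ landing in the $\theta^-$ component; when the grading drop is zero, I would invoke the gluing result \cite{ZemGraphTQFT}*{Theorem~6.17} (itself based on \cite{LipshitzCylindrical}*{Proposition~A.2}) to identify $\#\cM(\psi\#\psi_0)\equiv\#\cM(\psi)\pmod 2$ after summing over the small classes $\psi_0\in\pi_2(\cdot,\cdot,\cdot)$ on $(S^2,\alpha_0',\alpha_0,\beta_0)$ with $n_w(\psi_0)=0$. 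The only remaining bookkeeping is the multiplicity relation in the stabilized region: for the classes feeding into $F_{\Hat{\cT}^+,\frs,J_s(T)}(\ve{x}\times\theta^{\ws},\ys\times\theta^{\ws})$ one checks $m(\psi_0)+n_w(\psi_0)+1=n_z(\psi_0)+n_{z'}(\psi_0)$, and for those feeding into the $\ys\times\xi^{\ws}$ input one checks $m(\psi_0)+n_w(\psi_0)=n_z(\psi_0)+n_{z'}(\psi_0)$ — the shift of the ``$+1$'' between the two cases being exactly what swaps the $V$ factor from the $\theta^{\ws}$-input formula of Lemma~\ref{lem:trianglemapforalphabandmap1} to the $\theta^{\ws}$-output formula here. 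Each identity is verified on one explicit small triangle (the shaded regions in Figure~\ref{fig::37}) and then shown to be preserved under splicing in a class of disks in any $\pi_2(t,t)$, since any two homology classes with fixed endpoints differ by such splices on this genus-zero subdiagram. Combining these coefficient computations, and identifying $V_z$ with $V_{z'}=V$ as required for the $\zs$-band map to be defined, yields the stated formulas.

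I expect the main obstacle to be purely organizational rather than analytic: getting the labeling conventions consistent between the ``merge'' picture of Figure~\ref{fig::11} and the ``split'' picture of Figure~\ref{fig::37}, so that the roles of $\theta^+$ versus $\theta^-$ in $\alpha_0\cap\beta_0$ and of $\theta^{\ws}$ versus $\xi^{\ws}$ in $\alpha_0'\cap\alpha_0$ and $\alpha_0'\cap\beta_0$ are assigned correctly, and in particular verifying that the single ``$+1$'' discrepancy in the multiplicity relations lands the $V$ on the correct term. Once that combinatorial dictionary is set up carefully, the analytic input is identical to the proof of Lemma~\ref{lem:trianglemapforalphabandmap1}, so I would write the proof as ``the argument is a direct adaptation of the proof of Lemma~\ref{lem:trianglemapforalphabandmap1}'' and then spell out only the two multiplicity relations and the final assembly of Equations analogous to \eqref{eq:coefficienttheta^w}, \eqref{eq:coefficienttheta^w2}, \eqref{eq:coefficientxi^w3}.
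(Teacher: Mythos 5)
Your proposal matches the paper's proof, which is precisely a pointer to the argument of Lemma~\ref{lem:trianglemapforalphabandmap1} together with the two replacement multiplicity relations $n_w(\psi_0)+m(\psi_0)+1=n_z(\psi_0)+n_{z'}(\psi_0)$ for $\psi_0\in\pi_2(\theta^{\ws},\theta^{\ws},\theta^+)$ and $n_w(\psi_0)+m(\psi_0)=n_z(\psi_0)+n_{z'}(\psi_0)$ for $\psi_0\in\pi_2(\theta^{\ws},\xi^{\ws},\theta^-)$, which you have stated correctly and verified in the same way (checking one model triangle, then observing invariance under splicing periodic classes). Your observation about the ``$+1$'' shifting between the two cases is a correct reading of why the factor of $V$ moves from the second formula in Lemma~\ref{lem:trianglemapforalphabandmap1} to the first formula here.
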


\begin{proof}The proof is a small adaptation of Lemma~\ref{lem:trianglemapforalphabandmap1}. The only difference is in the formulas which keep track of the total multiplicity over $z$ and $z'$, since Equations~\eqref{eq:relationsofmultiplicities1} and \eqref{eq:relationsofmultiplicities2} no longer hold. Instead, if $\psi_0\in \pi_2(\theta^{\ws}, \theta^{\ws},\theta^+)$, then adapting the argument from Lemma~\ref{lem:trianglemapforalphabandmap1}, we see that
\[
n_w(\psi_0)+m(\psi_0)+1=n_z(\psi_0)+n_{z'}(\psi_0).
\]
Similarly if $\psi_0\in \pi_2(\theta^{\ws},\xi^{\ws},\theta^-)$, then
\[
n_w(\psi_0)+m(\psi_0)=n_{z}(\psi_0)+n_{z'}(\psi_0).
\]  With these two formulas, the proof of Lemma~\ref{lem:trianglemapforalphabandmap1} applies with minimal change.
\end{proof}

\begin{proof}[Proof of Proposition~\ref{prop:bandsanddiskstab}] We first prove that $F_{B}^{\ve{z}} \cB_{\bU,D}^+\simeq S_{w,z}^+\phi_*$ in the case that $B$ is an $\alpha$-band. This will follow from the model triangle map computation in Lemma~\ref{lem:trianglemapforalphabandmap1}, as we now describe.

Let $(\Sigma,\as,\bs,\ws,\zs)$ be a diagram for $(Y,\bL)$. We let $\as'$ be small Hamiltonian isotopies of the $\as$ curves, and let $\cT^+$ be the stabilized triple shown in Figure~\ref{fig::11}. The birth map $\cB^+_{\bU,D}$ can be computed with using the subdiagram diagram $(\Sigma,\as\cup \{\alpha_0\}, \bs\cup \{\beta_0\})$ of the Heegaard triple $\cT^+$. In terms of this diagram, the map $\cB^+_{\bU,D}$ takes the form
\[
\cB^+_{\bU,D}(\ve{y})=\ve{y}\times \theta^+.
\]
The triple $(\Sigma, \ve{\alpha}'\cup \{\alpha_0'\},\ve{\alpha}\cup \{\alpha_0\},\ve{\beta}\cup \{\beta_0\},\ws\cup \{w\},\zs\cup \{z\})$ is a triple subordinate to the band $B$. By Lemma~\ref{lem:trianglemapforalphabandmap1}, we have
\[F_{B}^{\ve{z}}(\ve{y}\times \theta^+)=\Phi_{\as\to \as'}^{\bs}(\ys)\otimes \theta^{\ws},\] for an almost complex structure $J$ which has been stretched sufficiently along the dashed curve in Figure~\ref{fig::11}.

To compute $S_{w,z}^+$, we need to stretch the almost complex structure an a curve which encircles the $\alpha_0'$-curve, and intersects $\beta_0$ at two points. Using the change of almost complex structure computation from Lemma~\ref{lem:mapsforI+agree}, we know that we can ensure that this map takes the form 
\[\Phi_{J\to J'}(\ve{y}\times \theta^{\ws})=\ve{y}\times \theta^{\ws}.\] Hence we have that
\[\Phi_{J\to J'}F^{\ve{z}}_{B}\cB_{\bU,D}^+(\ys)=\Phi_{\as\to \as'}^{\bs}(\ys)\otimes \theta^{\ws},\] which is tautologically just $S_{w,z}^+\phi_*\Phi_{\as\to \as'}^{\bs}.$

To show $\cD_{\bU,D}^-F_{\bar{B}}^{\ve{z}}\simeq S_{w,z}^-\phi_*$, one follows similar reasoning as above, using the triangle map computation from Lemma~\ref{lem:trianglemapforalphabandmap2}. The statements about $\beta$-bands follow from an easy modification. The statements about $\ve{w}$-bands follow from symmetrical arguments.
\end{proof}

\section{Further relations involving the band maps}
\label{sec:furtherrelations}
In this section, we prove some important properties of the band maps. 

\subsection{Band maps and basepoint actions}

In this section, we describe when the maps $\Phi_w$, and $\Psi_z$ commute with the band maps $F_B^{\zs}$ and $F_{B}^{\ws}$:

\begin{lem}\label{lem:PhiPsicommutatorFB}Suppose $B$ is a band for $\bL=(L,\ws,\zs)$, of type-$\alpha$ or -$\beta$, and $w\in \ws$  and $z\in \zs$ are basepoints.  Then
\begin{enumerate} 
\item $F_B^{\zs}\Phi_w+\Phi_wF_{B}^{\zs}\simeq 0,$ regardless of the position of $w$,
\item $ F^{\zs}_B \Psi_z+\Psi_z F_B^{\zs}\simeq 0,$ if $z$ is not adjacent to an end of $B$.
\end{enumerate}  
\end{lem}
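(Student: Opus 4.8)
The plan is to deduce Lemma~\ref{lem:PhiPsicommutatorFB} from the fact that $F_B^{\zs}$ and $F_B^{\ws}$ are chain maps on the uncolored complexes, using the same differentiation trick employed in Lemma~\ref{lem:PhiPsicommutewithhandles} and Lemma~\ref{lem:repacakgedSw,zPsicommutator}. Recall that $F_B^{\zs}$ is defined by counting holomorphic triangles $F_{\as',\as,\bs,\frs}(\Theta^{\ws}\otimes-)$ on a triple subordinate to the band $B$ (or $F_{\as,\bs,\bs',\frs}(-\otimes\Theta^{\ws})$ in the $\beta$-band case). On the uncolored complexes $\cCFL^-(\cH,\frs)$, the element $\Theta^{\ws}$ is not a cycle, so $F_B^{\zs}$ is not quite a chain map there; one has $\d F_B^{\zs} + F_B^{\zs}\d = $ (triangle map applied to $\d\Theta^{\ws}$). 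By Lemma~\ref{lem:topdegreeunlink2}, on the uncolored complex $\d(\Theta^{\ws}) = (V_{z_1}+V_{z_2})\cdot\Theta^{\zs}$ (where $z_1$, $z_2$ are the $\zs$-basepoints adjacent to the ends of $B$), so we obtain a matrix-type relation
\[
\d F_B^{\zs} + F_B^{\zs}\d + (V_{z_1}+V_{z_2})\cdot G = 0
\]
for the auxiliary map $G(-) := F_{\as',\as,\bs,\frs}(\Theta^{\zs}\otimes-)$ (and the analogous relation in the $\beta$ case). I view this as a matrix equation over the intersection points with coefficients in $\bF_2[U_{\ws},V_{\zs}]$.

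For part (1), I apply $\tfrac{d}{dU_w}$ to this relation. Since $\tfrac{d}{dU_w}(\d)$ on the uncolored complex computes $\Phi_w$ up to the Leibniz correction (as in the remark after Lemma~\ref{lem:PhiPsicommutators}, $\Phi_w = \tfrac{d}{dU_w}\circ\d + \d\circ\tfrac{d}{dU_w}$), and since $F_B^{\zs}$, $G$ and the coefficient $V_{z_1}+V_{z_2}$ involve no $U_w$ variables (so $\tfrac{d}{dU_w}$ annihilates them), the Leibniz rule gives $\Phi_w F_B^{\zs} + F_B^{\zs}\Phi_w = 0$ on the uncolored complexes, which persists to the colored complexes by functoriality of the tensor product. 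The key point here, just as in Lemma~\ref{lem:repacakgedSw,zPsicommutator}, is that $\tfrac{d}{dU_w}(F_B^{\zs})=0$: the triangle map $F_B^{\zs}$ counts rigid triangles weighted by $U_{\ws}^{n_{\ws}}V_{\zs}^{n_{\zs}}\cdot\ys$, and $\Phi_w$ accounts precisely for differentiating these weights, but one must check carefully that $\Theta^{\ws}$ itself can be chosen with a $U_w$-independent representative — this is true since $w$ is not a basepoint on the unlink $(\Sigma,\as',\as,\ws,\zs)$ in a way that forces $U_w$ into the definition of $\Theta^{\ws}$, and the $U_w$-multiplicity contributions are all tracked by the triangle weights.

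For part (2), I apply instead the derivation $D := \sum_{s\in A\cap\zs}\tfrac{d}{dV_s}$, where $A$ is a subarc of $L$ between the two $\ws$-basepoints adjacent to $z$, so that $D(\d)$ is identified with $\Psi_z = \Psi_A$ on both the original and band-surgered complexes (using $\tfrac{d}{dV_z}(\d)=0$ as in Lemma~\ref{lem:repacakgedSw,zPsicommutator}). The hypothesis that $z$ is not adjacent to an end of $B$ is exactly what guarantees that $D(V_{z_1}+V_{z_2}) = 0$ (since neither $z_1$ nor $z_2$ equals $z$, and the arc $A$ is chosen to avoid the band region, so $z_1,z_2\notin A$ — or if both happen to lie in $A$ that is also fine since then $D(V_{z_1}+V_{z_2})=0$ too; the delicate case where exactly one of $z_1,z_2$ lies in $A$ is precisely excluded by the non-adjacency hypothesis). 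Since $D(F_B^{\zs})=0$ and $D(G)=0$ by the same inspection as before, applying $D$ to the matrix relation and using the Leibniz rule yields $\Psi_z F_B^{\zs} + F_B^{\zs}\Psi_z = 0$, which again persists to the colored complexes. The $\beta$-band case is handled identically with $\Theta^{\ws}$ appearing on the right of the triangle map and $\d(\Theta^{\ws}) = (V_{z_1}+V_{z_2})\Theta^{\zs}$ on the appropriate triple.

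The main obstacle I anticipate is verifying that $\tfrac{d}{dU_w}$ and $D$ genuinely annihilate $F_B^{\zs}$ and the auxiliary map $G$ — i.e., that the only dependence of the band map on the variables $U_w$ (respectively the $V_s$ for $s\in A$) is through the explicit $U_{\ws}^{n_{\ws}}V_{\zs}^{n_{\zs}}$ weights on triangles, with no hidden dependence coming from the choice of representative for the homology class $\Theta^{\ws}$. This requires a careful choice of cycle representative for $\Theta^{\ws}$ (e.g., taking $\Theta^{\ws}$ to be the explicit intersection point of top $\gr_{\ws}$-grading on the model diagram of Lemma~\ref{lem:topdegreeunlink2}, which visibly involves no variables), together with the observation that the associativity relations ensure changing the representative alters $F_B^{\zs}$ only by a filtered equivariant chain homotopy, which is harmless for the stated relations. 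Once this bookkeeping is set up, the differentiation argument is formal.
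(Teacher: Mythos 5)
Your overall differentiation strategy is the right one, but there is a genuine error at the heart of the argument: you assert that $\tfrac{d}{dU_w}(F_B^{\zs})=0$ and that $\tfrac{d}{dU_w}$ likewise annihilates the auxiliary map $G$, because the ``band map involves no $U_w$ variables.'' This is false. The matrix entries of the triangle map $F_{\as',\as,\bs,\frs}(\Theta^{\ws}\otimes-)$ are genuine polynomials in $U_{\ws}$ and $V_{\zs}$, coming from the weights $U_{\ws}^{n_{\ws}(\psi)}V_{\zs}^{n_{\zs}(\psi)}$ on triangles, and there is no choice of representative for $\Theta^{\ws}$ that makes these weights disappear. The analogy you draw with Lemma~\ref{lem:repacakgedSw,zPsicommutator} is precisely where the argument breaks: there, $D_A(S_{w,z}^+)=0$ holds because $S_{w,z}^+$ literally sends $\xs$ to $\xs\otimes\theta^{\ws}$, so its matrix has entries in $\{0,1\}$; the triangle map $F_B^{\zs}$ has no such constant form. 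If your claim were correct, you would obtain a literal equality $\Phi_wF_B^{\zs}+F_B^{\zs}\Phi_w=0$, not merely a chain homotopy, which is stronger than the lemma states and not true.

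The repair is to embrace, rather than suppress, the nonvanishing of $\tfrac{d}{dU_w}(F_B^{\zs})$: differentiating the chain-map identity $\d_{\as',\bs}F_B^{\zs}=F_B^{\zs}\d_{\as,\bs}$ with the Leibniz rule gives
\[
\Phi_wF_B^{\zs}+F_B^{\zs}\Phi_w=\d_{\as',\bs}\,H+H\,\d_{\as,\bs},\qquad H:=\tfrac{d}{dU_w}(F_B^{\zs}),
\]
so the derivative of the triangle-map matrix is exactly the chain homotopy witnessing the relation. You should also note that it is cleaner to work, as the paper does, on the complex colored by $\sigma_0=(\ws\cup\zs)/(z_1\sim z_2)$: there $\Theta^{\ws}$ is already a cycle, $F_B^{\zs}$ is a chain map, the auxiliary $G$-term disappears, and the hypothesis $z\notin\{z_1,z_2\}$ in part (2) arises simply because on that complex $V_{z_1}=V_{z_2}$, so $\tfrac{d}{dV_z}(\d)$ computes $\Psi_z$ only when $z$ is a variable distinct from the identified pair. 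Your version on the fully uncolored complex with the correction term $(V_{z_1}+V_{z_2})G$ can also be made to work, but you would again need to track the derivatives of $G$ as homotopy terms rather than declaring them zero.
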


\begin{proof} Let us assume for concreteness that $B$ is an $\alpha$-band. Write $z_1$ and $z_2$ for the two $\zs$-basepoints adjacent to the ends of $B$. Write $\bmP_0=(\ws\cup \zs)/(z_1\sim z_2)$ and let $\sigma_0\colon \ws\cup \zs\to \bmP_0$ be the natural map. Let $(\Sigma,\as',\as,\bs,\ws,\zs)$ be a triple subordinate to $B$. By definition, the band map
\[F_B^{\zs}\colon  \cCFL^-(\Sigma,\as,\bs,\ws,\zs,\sigma_0,\frs)\to \cCFL^-(\Sigma,\as',\bs,\ws,\zs,\sigma_0,\frs)
\]
is defined by the formula
\[
F_B^{\zs}(-):=F_{\as',\as,\bs,\frs}(\Theta^{\ws},-).
\]
Gromov compactness combined with the fact that $\Theta^{\ws}$ is a cycle on the complex which is colored by $\sigma_0$ implies that
\begin{equation}
F_{B}^{\zs}\d_{\as,\bs}=\d_{\as',\bs}F_B^{\zs}.\label{eq:FBzchainmap}
\end{equation}
If $\d$ denotes the differential on either $\cCFL^-(\Sigma,\as',\bs,\sigma_0)$ or $\cCFL^-(\Sigma,\as,\bs,\sigma_0)$, we have 
\[\frac{d}{d U_w} (\d)=\Phi_w\qquad  \text{and}\qquad \frac{d}{d V_z} (\d) =\Psi_z,\] as long as $z\not \in \{z_1,z_2\}$. Hence, differentiating Equation~\eqref{eq:FBzchainmap} and using the Leibniz rule yields
\[F_B^{\zs} \Phi_w\simeq \Phi_w F_B^{\zs}\qquad \text{and}\qquad F_B^{\zs} \Psi_z\simeq \Psi_z F_B^{\zs},\] as long as $z\not\in \{z_1,z_2\}$.

Finally we note that these relations persist for an arbitrary coloring $\sigma\colon \ws\cup \zs\to \bmP$ such that $\sigma(z_1)=\sigma(z_2)$, by tensoring.
\end{proof}

Symmetrically, we have the following:
\begin{lem}Suppose $B$ is a band for $\bL=(L,\ws,\zs)$, of type-$\alpha$ or -$\beta$, and $w\in \ws$  and $z\in \zs$ are basepoints.  Then
\begin{enumerate} 
\item $F_B^{\ws}\Psi_z+\Psi_zF_{B}^{\ws}\simeq 0,$ regardless of the position of $z$,
\item $ F^{\ws}_B \Phi_w+\Phi_w F_B^{\ws}\simeq 0,$ if $w$ is not adjacent to an end of $B$.
\end{enumerate}  
\end{lem}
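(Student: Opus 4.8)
The plan is to prove this statement by the exact same differentiation-of-the-differential argument used in the proof of Lemma~\ref{lem:PhiPsicommutatorFB}, but with the roles of the $\ws$- and $\zs$-basepoints reversed. Concretely, I would fix a band $B$ for $\bL=(L,\ws,\zs)$, and to keep notation concrete assume first that $B$ is an $\alpha$-band; let $w_1$ and $w_2$ denote the two $\ws$-basepoints adjacent to the ends of $B$. (The $\beta$-band case is identical up to swapping $\as$ and $\bs$ in the subordinate triple, and one could alternatively reduce to the $\alpha$-band case via Proposition~\ref{prop:alphabandmapsarebetabandmapstypew}.)

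First I would set up the colored complex on which $F_B^{\ws}$ is defined as a chain map: let $\bmP_0':=(\ws\cup\zs)/(w_1\sim w_2)$ with $\sigma_0'\colon\ws\cup\zs\to\bmP_0'$ the natural quotient map, and take a Heegaard triple $(\Sigma,\as',\as,\bs,\ws,\zs)$ subordinate to $B$. By Lemmas~\ref{lem:bandtriplehas4pointedunknot}~and~\ref{lem:topdegreeunlink2}, there is a distinguished cycle $\Theta^{\zs}\in\cHFL^-(\Sigma,\as',\as,\ws,\zs,\sigma_0',\frs_0)$ (this is the $\gr_{\zs}$-maximal generator, which is a cycle precisely because $\sigma_0'$ identifies $U_{w_1}$ with $U_{w_2}$), and $F_B^{\ws}(-)=F_{\as',\as,\bs,\frs}(\Theta^{\zs},-)$. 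Gromov compactness together with $\d\Theta^{\zs}=0$ gives that $F_B^{\ws}$ is a genuine chain map on the $\sigma_0'$-colored complexes: $F_B^{\ws}\d_{\as,\bs}=\d_{\as',\bs}F_B^{\ws}$.

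Next I would differentiate this chain-map identity. On the $\sigma_0'$-colored complexes $\cCFL^-(\Sigma,\as,\bs,\sigma_0')$ and $\cCFL^-(\Sigma,\as',\bs,\sigma_0')$ one has $\tfrac{d}{dV_z}(\d)=\Psi_z$ for \emph{every} $z\in\zs$ (since none of the $\zs$-variables is identified by $\sigma_0'$), and $\tfrac{d}{dU_w}(\d)=\Phi_w$ as long as $w\notin\{w_1,w_2\}$. Applying $\tfrac{d}{dV_z}$ to $F_B^{\ws}\d+\d F_B^{\ws}=0$ and using the Leibniz rule (noting $\tfrac{d}{dV_z}(F_B^{\ws})=0$, since the triangle-counting formula for $F_B^{\ws}$ carries only nonnegative powers of variables and adding a $V_z$-exact correction to $\Theta^{\zs}$ only changes $F_B^{\ws}$ by an equivariant, filtered chain homotopy) yields $F_B^{\ws}\Psi_z+\Psi_z F_B^{\ws}\simeq 0$; applying $\tfrac{d}{dU_w}$ similarly yields $F_B^{\ws}\Phi_w+\Phi_w F_B^{\ws}\simeq 0$ when $w\notin\{w_1,w_2\}$. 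Finally these homotopies persist after tensoring up to any coloring $\sigma\colon\ws\cup\zs\to\bmP$ with $\sigma(w_1)=\sigma(w_2)$, which is exactly the hypothesis under which $F_B^{\ws}$ is defined. The only mild subtlety — the ``main obstacle,'' though it is routine given the earlier lemmas — is bookkeeping the distinction between $\Phi_w=(d/dU_w)(\d)$ and $(d/dU_w)\circ\d$ (cf.\ the remark after Lemma~\ref{lem:addtrivialstrandII}) and verifying that $d/dU_w$ and $d/dV_z$ genuinely preserve the relevant subcomplexes so that the Leibniz computation is valid on $\cCFL^-$; this is handled exactly as in the proof of Lemma~\ref{lem:PhiPsicommutatorFB} and Lemma~\ref{lem:Phi^2Psi^2simeq0}.
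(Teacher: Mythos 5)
Your proof is correct and is exactly the symmetric argument the paper intends: the paper states this lemma without a separate proof, prefaced only by ``Symmetrically, we have the following,'' so your write-out of the differentiation argument with $\ws$ and $\zs$ swapped (and $\Theta^{\zs}$ in place of $\Theta^{\ws}$) is precisely the intended route. One small quibble: the parenthetical claiming $\tfrac{d}{dV_z}(F_B^{\ws})=0$ is neither literally true nor needed --- the Leibniz rule applied to $F_B^{\ws}\d_{\as,\bs}+\d_{\as',\bs}F_B^{\ws}=0$ already produces $\tfrac{d}{dV_z}(F_B^{\ws})$ as the required chain homotopy, so $F_B^{\ws}\Psi_z+\Psi_z F_B^{\ws}\simeq 0$ follows without needing that derivative to vanish.
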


The previous two lemmas do not always tell us what the commutator of a basepoint map with a band map is when the basepoint is immediately adjacent to the end of the band. The following result will be useful for our purposes:

\begin{prop}\label{prop:FBzFBwrelatedbyPsiPhi}Suppose that $B$ is a band (either $\alpha$- or $\beta$-) for the link $\bL=(L,\ws,\zs)$ in $Y$, and write $w_1,$ $w_2$, $z_1$ and $z_2$ for the basepoints adjacent to the ends of $B$. If $\sigma\colon \ws\cup \zs\to \bmP$ is a coloring such that
\[\sigma(w_1)=\sigma(w_2)\qquad \text{and} \qquad \sigma(z_1)=\sigma(z_2),\] then
\[F_{B}^{\ve{w}}\simeq \Psi_{z_1} F_{B}^{\ve{z}}+F_B^{\ve{z}} \Psi_{z_1}\simeq \Psi_{z_2} F_B^{\ve{z}} +F_B^{\ve{z}} \Psi_{z_2} \] and
\[F_B^{\ve{z}}\simeq \Phi_{w_1} F_{B}^{\ve{w}}+F_B^{\ve{w}} \Phi_{w_1}\simeq \Phi_{w_2} F_B^{\ve{w}} +F_B^{\ve{w}} \Phi_{w_2}.\]
\end{prop}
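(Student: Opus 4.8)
The plan is to prove Proposition~\ref{prop:FBzFBwrelatedbyPsiPhi} by working on a fixed Heegaard triple subordinate to $B$ and differentiating a triangle-map identity, in the same spirit as the proof of Lemma~\ref{lem:PhiPsicommutatorFB}. For concreteness assume $B$ is an $\alpha$-band; the $\beta$-case is symmetric. Fix a triple $(\Sigma,\as',\as,\bs,\ws,\zs)$ subordinate to $B$. The key point is that $F_B^{\ve{z}}(-)=F_{\as',\as,\bs,\frs}(\Theta^{\ve{w}},-)$ and $F_B^{\ve{w}}(-)=F_{\as',\as,\bs,\frs}(\Theta^{\ve{z}},-)$, where $\Theta^{\ve{w}},\Theta^{\ve{z}}\in\cHFL^-(\Sigma,\as',\as,\ws,\zs,\sigma_0,\frs_0)$ are the distinguished top-graded generators from Lemma~\ref{lem:topdegreeunlink2}. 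From the proof of that lemma (see Figure~\ref{fig::26} and the discussion of the model diagram with a $4$-pointed component), on the \emph{uncolored} complex $\cCFL^-(\Sigma,\as',\as)$ one has $\d(\Theta^{\ve{w}})=(V_{z_1}+V_{z_2})\cdot\Theta^{\ve{z}}$, which I expect to be the crucial algebraic input. Differentiating this with respect to $V_{z_1}$ (via $d/dV_{z_1}$) and using $\Psi_{z_1}=(d/dV_{z_1})(\d)$ gives $\Psi_{z_1}(\Theta^{\ve{w}})=\Theta^{\ve{z}}$ on the uncolored complex, up to a $\d$-boundary; more precisely $\Psi_{z_1}\Theta^{\ve{w}}=\Theta^{\ve{z}}+\d(\text{something})$.

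Next I would combine this with the behaviour of the basepoint actions under the triangle maps. The map $\Psi_{z_1}$ is defined on each of the three complexes $\cCFL^-(\Sigma,\as',\as)$, $\cCFL^-(\Sigma,\as,\bs)$, $\cCFL^-(\Sigma,\as',\bs)$, and differentiating the associativity/chain-map identity for $F_{\as',\as,\bs,\frs}$ with respect to $V_{z_1}$ (using the Leibniz rule, exactly as in Lemma~\ref{lem:PhiPsicommutewithhandles} and Lemma~\ref{lem:PhiPsicommutatorFB}) yields, on uncolored complexes,
\[
\Psi_{z_1}F_{\as',\as,\bs,\frs}(a,b)+F_{\as',\as,\bs,\frs}(\Psi_{z_1}a,b)+F_{\as',\as,\bs,\frs}(a,\Psi_{z_1}b)\simeq 0.
\]
Applying this with $a=\Theta^{\ve{w}}$ and $b=\ve{x}$, and substituting $\Psi_{z_1}\Theta^{\ve{w}}\simeq\Theta^{\ve{z}}$, gives
\[
\Psi_{z_1}F_B^{\ve{z}}(\ve{x})+F_{\as',\as,\bs,\frs}(\Theta^{\ve{z}},\ve{x})+F_B^{\ve{z}}(\Psi_{z_1}\ve{x})\simeq 0,
\]
i.e. $\Psi_{z_1}F_B^{\ve{z}}+F_B^{\ve{z}}\Psi_{z_1}\simeq F_B^{\ve{w}}$. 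The analogous computation differentiating with respect to $U_{w_1}$, using $\d(\Theta^{\ve{z}})=(U_{w_1}+U_{w_2})\cdot\Theta^{\ve{w}}$ (the symmetric statement from the $\Max_{\gr_{\ve{z}}}$ half of Lemma~\ref{lem:topdegreeunlink2}) and hence $\Phi_{w_1}\Theta^{\ve{z}}\simeq\Theta^{\ve{w}}$, gives $\Phi_{w_1}F_B^{\ve{w}}+F_B^{\ve{w}}\Phi_{w_1}\simeq F_B^{\ve{z}}$. The statements with $z_2$ (resp. $w_2$) follow either by the same argument differentiating in $V_{z_2}$ (resp. $U_{w_2}$) — note $d/dV_{z_1}$ and $d/dV_{z_2}$ both pick out the single coefficient of $V_{z_1}+V_{z_2}$ — or, more cleanly, by observing that $\Psi_{z_1}+\Psi_{z_2}$ is the map $\Psi_A$ for the arc $A$ running through $B$, and $\Psi_A$ commutes with $F_B^{\ve{z}}$ up to homotopy (Lemma~\ref{lem:PhiPsicommutatorFB} applied to the two $\ve{w}$-basepoints bounding $A$, together with the fact that $F_B^{\ve z}$ is built from a triple where that arc is unobstructed), so $\Psi_{z_1}F_B^{\ve z}+F_B^{\ve z}\Psi_{z_1}\simeq\Psi_{z_2}F_B^{\ve z}+F_B^{\ve z}\Psi_{z_2}$.

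Finally I would note that all the identities above were derived on the uncolored complexes, where $\Psi_{z_i}$ and $\Phi_{w_i}$ are only chain homotopies rather than chain maps; the hypothesis $\sigma(w_1)=\sigma(w_2)$, $\sigma(z_1)=\sigma(z_2)$ is exactly what is needed so that, after tensoring with $\cR_{\bmP}^-$ (and noting $\Theta^{\ve w},\Theta^{\ve z}$ become genuine cycles and $F_B^{\ve w},F_B^{\ve z}$ become genuine chain maps), the homotopies descend to filtered, equivariant chain homotopies on the colored complexes. The main obstacle I anticipate is the bookkeeping around the fact that $\Theta^{\ve w}$, $\Theta^{\ve z}$ are well-defined only as homology classes on $\cCFL^-(\Sigma,\as',\as,\ws,\zs,\sigma_0,\frs_0)$, so each occurrence of "$\Psi_{z_1}\Theta^{\ve w}=\Theta^{\ve z}$" and each application of the differentiated associativity relation must be tracked up to adding $\d$-boundaries, and one must check (as in the remark following Proposition~\ref{prop:2-handlemapwelldefined}) that these boundary ambiguities only change the composite maps by equivariant, filtered chain homotopies — exactly the associativity-of-triangle-maps argument used repeatedly in Section~\ref{sec:bandmaps}.
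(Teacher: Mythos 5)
Your proposal takes essentially the same route as the paper's proof: differentiate the Gromov-compactness chain-map relation for $F_{\as',\as,\bs,\frs}$ with respect to $V_{z_i}$ (resp.\ $U_{w_i}$), apply the result to $\Theta^{\ve{w}}\otimes \ve{x}$ (resp.\ $\Theta^{\ve{z}}\otimes \ve{x}$), and use the identity $\Psi_{z_i}(\Theta^{\ve{w}})=\Theta^{\ve{z}}$ (resp.\ $\Phi_{w_i}(\Theta^{\ve{z}})=\Theta^{\ve{w}}$). The one step that does not hold as stated is your claimed chain-level formula $\d(\Theta^{\ve{w}})=(V_{z_1}+V_{z_2})\Theta^{\ve{z}}$ on $\cCFL^-(\Sigma,\as',\as)$ for an arbitrary triple subordinate to $B$: this is a feature of the model diagram from the proof of Lemma~\ref{lem:topdegreeunlink2}, but for a general band triple $\alpha_n$ and $\alpha_n'$ are projections of the two curves $A$, $A'$ from Definition~\ref{def:triplesubbetaband} and can intersect in many more than two points, so the differential on $\cCFL^-(\Sigma,\as',\as)$ need not have that form. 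What is actually needed, and what the paper isolates as Lemma~\ref{lem:PsiPhirelationbetweenThetaw,z}, is the \emph{homology-level} identity $\Psi_{z_i}(\Theta^{\ve{w}})=\Theta^{\ve{z}}$ in $\cHFL^-(\Sigma,\as',\as,\ws,\zs,\sigma,\frs_0)$; the paper obtains it by a direct check on the $4$-pointed unknot in $S^3$ and then propagates it to the general unlink via $0$- and $1$-handle cobordism maps, which commute with $\Psi_{z_i}$ and $\Phi_{w_i}$ by Lemma~\ref{lem:PhiPsicommutewithhandles}. You do flag exactly this bookkeeping issue at the end, and substituting that homology-level lemma for the claimed $\d$-formula closes the gap. (Your secondary suggestion for the $z_2$ case, via $\Psi_A=\Psi_{z_1}+\Psi_{z_2}$ commuting with $F_B^{\ve z}$, is shakier: Lemma~\ref{lem:PhiPsicommutatorFB} only gives commutation of $\Psi_z$ with $F_B^{\ve z}$ when $z$ is \emph{not} adjacent to an end of $B$, and both $z_1,z_2$ are; the primary route of differentiating in $V_{z_2}$ is the one that works and is what the paper does.)
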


Before we prove Proposition~\ref{prop:FBzFBwrelatedbyPsiPhi}, we prove the following useful lemma:

\begin{lem}\label{lem:PsiPhirelationbetweenThetaw,z} Suppose that $ \bU=(U,\ws,\zs)$ is an unlink in $(S^1\times S^2)^{\# k}$ such that all components of $\bU$ have 2 basepoints, except one which has 4. Write $w_1$, $w_2$, $z_1$ and $z_2$ for the basepoints on the component with four basepoints. If $\sigma(w_1)=\sigma(w_2)$ and $\sigma(z_1)=\sigma(z_2)$ then
\[\Phi_{w_i}(\Theta^{\zs})=\Theta^{\ws}\qquad \text{and} \qquad \Psi_{z_i}(\Theta^{\ws})=\Theta^{\zs},\] for $i\in \{1,2\}$.
\end{lem}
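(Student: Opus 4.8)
\textbf{Proof strategy for Lemma~\ref{lem:PsiPhirelationbetweenThetaw,z}.}
The plan is to verify the relations $\Phi_{w_i}(\Theta^{\zs})=\Theta^{\ws}$ and $\Psi_{z_i}(\Theta^{\ws})=\Theta^{\zs}$ directly on the convenient Heegaard diagram $\cH$ built in the proof of Lemma~\ref{lem:topdegreeunlink2}, and then to argue that the computation is independent of the choices made. Recall that on $\cH$ the component of $\bU$ with four basepoints is carried by a genus-zero piece $(S^2,\alpha_0,\beta_0,w_1,w_2,z_1,z_2)$ glued to a higher-genus piece on which the $\as$-curves are small Hamiltonian translates of the $\bs$-curves, and the remaining basepoints come in immediately adjacent $\ws$--$\zs$ pairs. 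On that piece there are four bigon regions of $S^2\setminus(\alpha_0\cup\beta_0)$, each containing exactly one of $w_1,w_2,z_1,z_2$. By the computation recalled in the proof of Lemma~\ref{lem:topdegreeunlink2}, $\d(\Theta^{\ws})=(V_{z_1}+V_{z_2})\cdot\Theta^{\zs}$ on the uncolored complex, the two contributing disks being the two bigons through $z_1$ and $z_2$ respectively.

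First I would read off $\Psi_{z_i}$ and $\Phi_{w_i}$ from this same count. Since $\Psi_{z_i}=V_{z_i}^{-1}\sum n_{z_i}(\phi)\#\Hat{\cM}(\phi)U^{n_{\ws}}V^{n_{\zs}}\cdot(-)$, only the single bigon through $z_i$ (which has $n_{z_i}=1$, and zero multiplicity on every other basepoint) contributes to $\Psi_{z_i}(\Theta^{\ws})$, giving $\Psi_{z_i}(\Theta^{\ws})=\Theta^{\zs}$ exactly, before and after coloring; no hypothesis on $\sigma$ is even needed for this half once we are on the model diagram, although we keep $\sigma(z_1)=\sigma(z_2)$ so that $\Theta^{\ws}$ is a cycle and $\Theta^{\zs}$ is defined with the appropriate coloring. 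For $\Phi_{w_i}(\Theta^{\zs})$ I would similarly identify the relevant Maslov-index-$1$ disks out of $\Theta^{\zs}$: by the relative $\gr_{\ws}$-grading argument used throughout (as in Lemma~\ref{lem:topdegreeunlink1} and its reuse in Lemma~\ref{lem:topdegreeunlink2}), any disk contributing to $\Phi_{w_i}$ must have $n_{w_i}=1$, and on the model diagram the only such disk is the bigon through $w_i$, whose image is $\Theta^{\ws}$; hence $\Phi_{w_i}(\Theta^{\zs})=\Theta^{\ws}$. Here the hypothesis $\sigma(w_1)=\sigma(w_2)$ is used so that $\Phi_{w_i}$ is a chain map on the colored complex (Lemma~\ref{lem:PhiPsichainhomotopies}) and $\Theta^{\ws}$ makes sense with the coloring that identifies $w_1$ and $w_2$.

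The remaining point is naturality: $\Theta^{\ws}$ and $\Theta^{\zs}$ are only well-defined as homology classes, and the maps $\Phi_{w_i},\Psi_{z_i}$ are only well-defined up to chain homotopy on the transitive chain homotopy type invariant. So I would phrase the conclusion at the level of homology: both sides are maximally graded generators of the relevant $\Max_{\gr_{\ve{o}}}(\cHFL^-)$ groups, which by Lemma~\ref{lem:topdegreeunlink2} are free of rank one over $\bF_2[V_{\zs}]/(V_{z_1}-V_{z_2})$ and $\bF_2[U_{\ws}]/(U_{w_1}-U_{w_2})$ respectively, so it suffices that the image be a generator of the top-graded part, which the model-diagram computation shows. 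I expect the main obstacle to be bookkeeping rather than anything deep: one must check that $\Phi_{w_i}$ and $\Psi_{z_i}$, computed on the model diagram, indeed pick up \emph{only} the one bigon and no stray higher-area disk through $w_i$ or $z_i$, which is exactly the kind of relative-grading/positivity argument already carried out in the proofs of Lemmas~\ref{lem:topdegreeunlink1} and~\ref{lem:topdegreeunlink2}, so it adapts with only notational changes; and that the grading conventions make $\Theta^{\ws}\mapsto\Theta^{\zs}$ land in the correct graded summand so the identification as a generator is unambiguous.
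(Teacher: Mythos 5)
Your argument is correct, but it takes a genuinely different route from the paper. You propose to verify the disk counts directly on the full Heegaard diagram $\cH$ for $((S^1\times S^2)^{\#k},\bU)$ built in the proof of Lemma~\ref{lem:topdegreeunlink2}, using the $\gr_{\ws}$- and $\gr_{\zs}$-grading constraints to show that only the single bigon through $w_i$ (resp. $z_i$) can contribute to $\Phi_{w_i}(\Theta^{\zs})$ (resp. $\Psi_{z_i}(\Theta^{\ws})$). The paper instead first checks the identities on the much simpler genus-zero diagram $(S^2,\alpha_0,\beta_0,w_1,w_2,z_1,z_2)$ for the unknot in $S^3$, where the only holomorphic disks are the four bigons and there is nothing else to rule out, and then uses the fact (Lemma~\ref{lem:PhiPsicommutewithhandles}) that $\Phi_{w_i}$ and $\Psi_{z_i}$ commute with the 0-handle and 1-handle maps to push the computation through the cobordism from $(S^3,\bU_0)$ to $((S^1\times S^2)^{\#k},\bU)$, which by inspection carries $\Theta_0^{\ws},\Theta_0^{\zs}$ to $\Theta^{\ws},\Theta^{\zs}$. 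Your direct approach works, but it silently requires more bookkeeping: on $\cH$ there are periodic domains and potential disks crossing the connected sum tube, and although the grading computation pins down $\ys$ and forces $\sum_w n_w(\phi)=1$, $\sum_z n_z(\phi)=0$, and strong admissibility then uniquely determines the homology class, you should say all of that explicitly rather than gesture at ``the relative-grading argument used throughout.'' In return, your route avoids invoking Lemma~\ref{lem:PhiPsicommutewithhandles} and keeps the whole argument local to a single diagram; the paper's route trades a small amount of external machinery for a computation on which there is manifestly nothing to check.
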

\begin{proof} We first check the claim  for an unknot $\bU_0$ with four basepoints in $S^3$. In that case, we can take a diagram $(S^2,\alpha_0,\beta_0,w_1,w_2,z_1,z_2)$ where $\alpha_0$ and $\beta_0$ intersect at two points, and $S^2\setminus (\alpha_0\cup \beta_0)$ consists of four bigons, each containing a single basepoint. In this case, the claim is an easy computation, since $\Phi_{w_i}$ counts only the bigon going over $w_i$, and similarly $\Psi_{z_i}$ counts the bigon going over $z_i$. Let us write $\Theta^{\ws}_0$ and $\Theta^{\zs}_0$ for the two distinguished elements of $\cHFL^-(S^3,\bU_0^\sigma,\frs_0)$.

We now verify the claim for an arbitrary unlink $\bU$ in $(S^1\times S^2)^{\# k}$, all of whose link components have exactly 2 basepoints, except for one component, which has 4. We note that there is a link cobordism from $(S^3,\bU_0)$ to $((S^1\times S^2)^{\# k},\bU)$ consisting of $|\bU|-1$ 0-handles (which each add a doubly based unknot) and $k$ 1-handles. Write $F$ for the composition of the corresponding 0-handle and 1-handle maps. By inspection we have
\begin{equation}
F(\Theta_0^{\ws})=\Theta^{\ws} \qquad \text{and} \qquad F(\Theta_0^{\zs})=\Theta^{\zs} \label{eq:FpreservesThetaw,z}
\end{equation}
Note that the 0-handle maps obviously commute with the maps $\Phi_{w_i}$ and $\Psi_{z_i}$. By Lemma~\ref{lem:PhiPsicommutewithhandles}, the 1-handle maps also commute with $\Phi_{w_i}$ and $\Psi_{z_i}$.  Hence, combining these observations we have
\[
\Phi_{w_i} (\Theta^{\zs})=\Phi_{w_i}(F(\Theta_0^{\zs}))=F(\Phi_{w_i}(\Theta_0^{\zs}))=F(\Theta_0^{\ws})=\Theta_0^{\ws}.
\] A similar argument shows that $\Psi_{z_i} (\Theta^{\ws})=\Theta^{\zs}$.
\end{proof}

We proceed with the proof of Proposition~\ref{prop:FBzFBwrelatedbyPsiPhi}:

\begin{proof}[Proof of Proposition~\ref{prop:FBzFBwrelatedbyPsiPhi}]Let us consider the case that $B$ is an $\alpha$-band; the case that $B$ is a $\beta$-band is analogous. We will show the first equivalence. The second follows from symmetry. Let $(\Sigma, \ve{\alpha}',\ve{\alpha},\ve{\beta},\ve{w},\ve{z})$ be a triple subordinate to the band $B$.

Note that by Lemma~\ref{lem:PsiPhirelationbetweenThetaw,z} we have that
\[\Psi_{z_i}(\Theta^{\ws})=\Theta^{\zs}\qquad \text{and} \qquad \Phi_{w_i}(\Theta^{\zs})=\Theta^{\ws},\] as elements of $\cHFL^-(\Sigma,\as',\as,\ws,\zs,\sigma,\frs_0)$. 

 Gromov compactness yields that
\[F_{\as',\as,\bs}(\d_{\as',\as}\otimes \id+\id\otimes \d_{\as,\bs})+\d_{\as',\bs} F_{\as',\as,\bs}=0,\] on the uncolored complexes. We view this as a matrix equation, where $F_{\as',\as,\bs}$ is a matrix with columns corresponding to the elements of $(\bT_{\as'}\cap \bT_{\as})\times (\bT_{\as}\cap \bT_{\bs} )$ and rows corresponding to the elements of $\bT_{\as'}\cap \bT_{\bs}$. Differentiating with respect to the variable $V_{z_i}$, for either $i=1$ or $i=2$, we obtain the relation
\begin{equation}
F_{\as',\as,\bs}(\Psi_{z_i}\otimes \id+\id\otimes \Psi_{z_i})+\Psi_{z_i} F_{\as',\as,\bs}+H(\d_{\as',\as}\otimes \id+\id\otimes \d_{\as,\bs})+\d_{\as',\bs}H=0,\label{eq:gromovcompactnessrelbetweenpsi}
\end{equation} where $H$ is obtained by differentiating the matrix for $F_{\as',\as,\bs}$ with respect to $V_{z_i}$. More concretely, $H$ can be thought of as the map which counts holomorphic triangles with the additional factor of $n_{z_i}(\psi)$ for each triangle, and an additional overall factor of $V_{z_i}^{-1}$.

Note that the relation from Equation~\eqref{eq:gromovcompactnessrelbetweenpsi} persists once we consider the complexes which are colored by $\sigma$. We now apply the relation from Equation~\eqref{eq:gromovcompactnessrelbetweenpsi} to an element of the form $\Theta^{\ve{w}}\otimes \ve{x}$. Noting that $\Psi_{z_i}(\Theta^{\ws})=\Theta^{\zs}$, we obtain
\[
F_{\as',\as,\bs}(\Theta^{\zs}\otimes \ve{x})+F_{\as',\as,\bs}(\Theta^{\ws}\otimes \Psi_{z_i}(\ve{x}))+\Psi_{z_i} F_{\as',\as,\bs}(\Theta^{\zs}\otimes \ve{x})=H'(\d_{\as,\bs}(\ve{x}))+\d_{\as',\bs}(H'(\ve{x})),
\]
 where $H':=H(\Theta^{\ws}\otimes \id)$. Using the definitions of the band maps, we obtain the relation
\[
F_B^{\ws}\simeq \Psi_{z_i}F_B^{\zs}+F_B^{\zs}\Psi_{z_i}.
\]

The relations $F_B^{\zs}\simeq \Phi_{w_i}F_B^{\ws}+F_B^{\ws} \Phi_{w_i}$ are proven analogously.
\end{proof}

\begin{figure}[ht!]
\centering
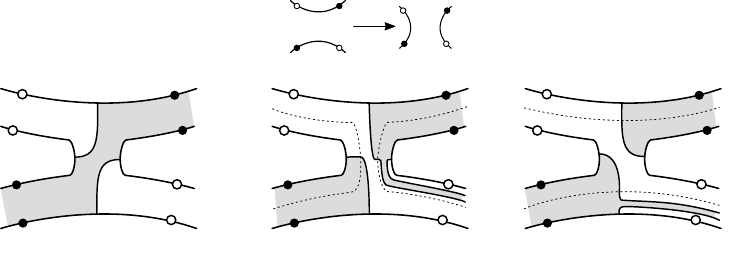
\caption{\textbf{Dividing sets for the relation from Proposition~\ref{prop:FBzFBwrelatedbyPsiPhi}.} The underlying link cobordism is a saddle. The dashed lines correspond to decomposing level sets of the surface, which induces the composition of maps shown. The three dividing sets form a bypass triple.\label{fig::105}}
\end{figure}

\subsection{Band maps and quasi-stabilization} We now consider the  commutator between a band map and a quasi-stabilization map. One approach would be to simply try to perform a holomorphic degeneration argument, as in Proposition~\ref{prop:singlealphaquasistabtriangle}. Unfortunately the result stated therein is not sufficient to show that the band maps commute with the quasi-stabilization maps in all of the cases that we are interested in. It is possible to compute the commutator of a quasi-stabilization and a band map by adapting \cite{ZemQuasi}*{Theorem~6.5}, however we will instead give a simpler, though somewhat less direct argument.

\begin{prop}\label{prop:zbandsandquasistab}Suppose that $B$ is a band for the link $\bL=(L,\ve{w},\ve{z})$, and $w$ and $z$ are two new basepoints which are contained in the same component of $L\setminus (\ve{w}\cup \ve{z}\cup B)$. Then, regardless of which component of $L\setminus (\ws\cup \zs\cup B)$ contains $w$ and $z$, one has 
\[F_B^{\ve{z}} S_{w,z}^{\circ}\simeq S_{w,z}^{\circ} F_B^{\ve{z}} \qquad \text{if $w$ follows $z$, and }\]
\[F_B^{\ve{z}} S_{z,w}^{\circ}\simeq S_{z,w}^{\circ} F_B^{\ve{z}}\qquad \text{if $z$ follows $w$},\] for $\circ\in \{+,-\}$.

If $w$ and $z$ do not separate an end of $B$ from either of the two $\ve{z}$-basepoints adjacent to $B$, then
\[F_B^{\ve{z}} T_{w,z}^{\circ}\simeq T_{w,z}^{\circ} F_B^{\ve{z}},\qquad \text{if $w$ follows $z$, and}\] 
\[F_B^{\ve{z}} T_{z,w}^{\circ} \simeq T_{z,w}^{\circ}F_B^{\ve{z}},\qquad \text{if $z$ follows $w$.}\]
\end{prop}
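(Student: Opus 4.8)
The plan is to reduce the statement about commuting band maps with quasi-stabilization maps to the already-established relations, in particular Proposition~\ref{prop:bandsanddiskstab}, Lemma~\ref{lem:1-handlesquasistabcommute}, Lemma~\ref{lem:bandmapsandsurgerymapscommute}, the commutators between $\Phi_w$, $\Psi_z$ and the quasi-stabilization maps (Lemmas~\ref{lem:Spm-Psicommutatornotadjacent}, \ref{lem:repacakgedSw,zPsicommutator}, \ref{lem:PhiPsiandS^pmcommutator}), and the fact (Lemma~\ref{lem:S^+S^-=Phi}, Lemma~\ref{lem:T^+T^-=Psi}) that $\Phi_w$ and $\Psi_z$ are themselves compositions of quasi-stabilizations. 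A direct neck-stretching argument in the style of Proposition~\ref{prop:singlealphaquasistabtriangle} is available in principle but, as the authors note, does not cover all the relevant adjacency configurations; so I would instead argue algebraically.

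First I would handle the type-$S$ commutation. Since the type-$S$ quasi-stabilization maps $S_{w,z}^{+}$ and $S_{w,z}^{-}$ are defined by tensoring intersection points with the fixed generators $\theta^{\ws},\xi^{\ws}$ of a stabilized region, and since the band map $F_B^{\zs}$ is a triangle count involving $\Theta^{\ws}$, I would choose a Heegaard triple $(\Sigma,\as',\as,\bs,\ws,\zs)$ subordinate to $B$ together with a quasi-stabilization region placed in a component of $\Sigma\setminus \as$ that is fixed by all three of $\as',\as,\bs$ (possible since $w,z$ lie in a component of $L\setminus(\ws\cup\zs\cup B)$, hence in a region disjoint from the band region), and appeal to the triangle-count result of the form of Proposition~\ref{prop:singlealphaquasistabtriangle} — adapted to the $(\as',\as)$-pair just as in the proof of Lemma~\ref{lem:trianglemapforalphabandmap1} — which shows that the quasi-stabilized triangle map is the old triangle map tensored with $\theta^{\ws}$. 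This gives $F_B^{\zs} S_{w,z}^{\circ}\simeq S_{w,z}^{\circ}F_B^{\zs}$ directly, with the analogous argument for $S_{z,w}^{\circ}$ (using a $\bs$-region instead of an $\as$-region). The $\beta$-band case is symmetric, and by Proposition~\ref{prop:alphabandmapsarebetabandmaps} it suffices to treat one type.

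For the type-$T$ commutation I would use the factorization $T_{w,z}^{+}\simeq \Psi_z S_{w,z}^{+}$ and $T_{w,z}^{-}\simeq S_{w,z}^{-}\Psi_z$ from Lemma~\ref{lem:olddefsforTw,z} (valid once the coloring identifies the appropriate variables, which is where the hypothesis on the position of $w$ and $z$ enters). Then
\[
F_B^{\zs}T_{w,z}^{+}\simeq F_B^{\zs}\Psi_z S_{w,z}^{+}\simeq \Psi_z F_B^{\zs} S_{w,z}^{+}\simeq \Psi_z S_{w,z}^{+} F_B^{\zs}\simeq T_{w,z}^{+} F_B^{\zs},
\]
where the middle equivalence is Lemma~\ref{lem:PhiPsicommutatorFB} (part (2), $\Psi_z$ commutes with $F_B^{\zs}$ when $z$ is not adjacent to an end of $B$), and the next is the type-$S$ case just proved; the hypothesis ``$w,z$ do not separate an end of $B$ from either $\zs$-basepoint adjacent to $B$'' is exactly what is needed to guarantee both that $\Psi_z$ commutes with $F_B^{\zs}$ on the quasi-stabilized link and that the coloring compatibility for Lemma~\ref{lem:olddefsforTw,z} holds after adding $(w,z)$. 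The $T_{w,z}^{-}$ case is the mirror computation using $T_{w,z}^{-}\simeq S_{w,z}^{-}\Psi_z$, and $T_{z,w}^{\circ}$ follows by the same bookkeeping with the roles of $\as$ and $\bs$ swapped.

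The main obstacle I anticipate is verifying, carefully, that the separation hypothesis in the type-$T$ statement really does rule out precisely the configurations in which $\Psi_z$ fails to commute with $F_B^{\zs}$: when the quasi-stabilization pair $(w,z)$ sits so that $z$ becomes adjacent (in the combinatorial sense used in Lemma~\ref{lem:PhiPsichainhomotopies}) to an end of $B$ on the stabilized link, the commutator acquires an extra term (compare the failure of commutation in Lemma~\ref{lem:bypasstriplefromquasistabilization} and the adjacency cases in Lemma~\ref{lem:Spm-Psicommutatornotadjacent}). I would therefore spend the bulk of the argument tracking which $\zs$-basepoints are adjacent to the ends of $B$ before and after the quasi-stabilization, and confirming that under the stated hypothesis the set of such basepoints is unchanged, so that Lemma~\ref{lem:PhiPsicommutatorFB}(2) applies verbatim on both the stabilized and unstabilized complexes. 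Once that adjacency bookkeeping is pinned down, the rest is a formal chain of the cited relations.
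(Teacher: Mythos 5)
Your type-$T$ factorization through $\Psi_z$ is a clean algebraic alternative, dual to the paper's use of $\Phi_w$ in its last sub-case, and your anticipation that the adjacency bookkeeping for $\Psi_z$ against the ends of $B$ is the delicate part is exactly right. The genuine gap is in the type-$S$ step, and the type-$T$ chain then inherits it. The claim that because $(w,z)$ lies in a component of $L\setminus(\ws\cup\zs\cup B)$ the quasi-stabilization can be placed ``in a region disjoint from the band region,'' so that a single component of $\Sigma\setminus\as$ is simultaneously a component of $\Sigma\setminus\as'$, is false in general: a component of $L\setminus(\ws\cup\zs\cup B)$ can abut a foot of $B$, and then the component $A$ of $\Sigma\setminus\as$ containing the basepoints of $\bL$ adjacent to $(w,z)$ has the band curve $\alpha_n$ on its boundary. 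Replacing $\alpha_n$ by $\alpha_n'$ changes $A$, since the saddle re-pairs which $\zs$-basepoint shares a region with $w'$. A single $\alpha_s$ inserted in $A$ is then not a quasi-stabilizing curve for $\bL(B)$ as well, and Proposition~\ref{prop:singlealphaquasistabtriangle} does not apply. This is exactly the limitation flagged in the remark after Proposition~\ref{prop:singlealphaquasistabtriangle}: that triangle count controls quasi-stabilizations against moves of the $\bs$ and $\gs$ curves, but an $\alpha$-band map is a move of the $\as$ curves.

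The paper's workaround, which is also the needed repair here, is to arrange that the triangle count is invoked only when the quasi-stabilization occurs on the side of the triple that the band does not move. The paper fixes the $\beta$-band model $(\Sigma,\as,\bs,\bs')$, in which $\as$ is unchanged, so that every $\alpha$-subarc quasi-stabilization, adjacent to a foot or not, is handled directly by Proposition~\ref{prop:singlealphaquasistabtriangle}, for both $S_{w,z}^{\circ}$ and $T_{w,z}^{\circ}$. For a $\beta$-subarc quasi-stabilization it does not invoke an unstated $\alpha\leftrightarrow\beta$ analogue of the triangle count; instead it reduces to the $\alpha$-subarc case via the basepoint-moving relations of Lemmas~\ref{lem:mapsforI+agree} and~\ref{lem:mapsforII+agree}, and it handles the single remaining configuration of $(w,z)$ adjacent to a foot with the $\Phi_w$ factorization, which is the dual of your $\Psi_z$ trick. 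Your statement that ``by Proposition~\ref{prop:alphabandmapsarebetabandmaps} it suffices to treat one type'' is therefore not quite the right move: the choice of band model has to depend on where $(w,z)$ sits, and even then you need either the basepoint-moving reduction or an explicitly stated $\beta$-side triangle count to close the remaining case.
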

\begin{proof}We will only consider the case that $B$ is a $\beta$-band, and $\circ=+$, for notational simplicity. Let us call a component $C$ of $L\setminus (\ws\cup \zs)$ an \emph{$\alpha$-subarc} if $C\subset U_{\as}$ (or equivalently if the $\zs$ basepoint in $\d C$ follows the $\ws$ basepoint, with respect to the links orientation). Similarly, we will call a component $C\subset L\setminus (\ws\cup \zs)$ a \emph{$\beta$-subarc} if $C\subset U_{\bs}$.

There are two cases we need to consider:
\begin{enumerate}[leftmargin=25mm, label = \text{Case (\arabic*):}, ref=\arabic*]
\item\label{case:bandquasi2} $w$ and $z$ are in a component $C$ of $L\setminus (\ws\cup \zs)$ that is disjoint from $B$.
\item\label{case:bandquasi1} $w$ and $z$ are in a component $C$ of $L\setminus (\ws\cup \zs)$ which also contains an end of $B$.
\end{enumerate}
 There are two further subcases of Case~\eqref{case:bandquasi2}:
\begin{enumerate}[leftmargin=31.5mm, label = \text{Subcase (1\alph*):} , ref=1\alph*]
\item\label{case:alphasubarc} $C$ is an $\alpha$-subarc of $\bL$,
\item\label{case:betasubarc} $C$ is a $\beta$-subarc of $\bL$.
\end{enumerate}

Let us first consider Subcase~\eqref{case:alphasubarc}. Note that in this case, $w$ immediately follows $z$. On the unstabilized complexes, the map $F_{B}^{\zs}$ is defined by picking a Heegaard triple $\cT=(\Sigma,\as,\bs,\bs',\ws,\zs)$ subordinate to the $\beta$-band $B$. We note that a Heegaard triple $\cT^+$ subordinate to $B$ for the stabilized link can be constructed by quasi-stabilizing $\cT$ along a single curve $\alpha_s\subset \Sigma\setminus \as$, and then adding a curve $\beta_0$ to $\bs$ and a curve $\beta_0'$ to $\bs'$, such that $\beta_0$ and $\beta_0'$ are contained in a small ball on $\Sigma$, intersect each other twice, and intersect none of the other $\as$ curves. This is the configuration shown in Figure~\ref{fig::16quasi}. By stretching the almost complex structure in a circle bounding $\beta_0$ and $\beta_0'$, we can compute the triangle map $F_{\cT^+}$ in terms of the triangle map $F_{\cT}$, using Proposition~\ref{prop:singlealphaquasistabtriangle}. The holomorphic triangle counts from Proposition~\ref{prop:singlealphaquasistabtriangle} immediately imply that
\[
F_B^{\zs}S_{w,z}^+\simeq S_{w,z}^+F_{B}^{\zs}\qquad \text{and} \qquad F_B^{\zs} T_{w,z}^+\simeq T_{w,z}^+F_B^{\zs},
\] 
establishing the result in Subcase~\eqref{case:alphasubarc}.

We now analyze Subcase~\eqref{case:betasubarc}, corresponding to when the pair $(z,w)$ lies in a $\beta$-arc $C\subset L\setminus (\ws\cup \zs)$ and $z$ follows $w$. Using the basepoint moving relations from Lemmas~\ref{lem:mapsforI+agree}~and~\ref{lem:mapsforII+agree}, we will reduce the argument to Subcase~\eqref{case:alphasubarc}. Since the ends of $B$ are in $\beta$-subarcs of $\bL$, it follows that the $\alpha$-subarcs $C_1$ and $C_2$ of $\bL$ adjacent to $C$ are disjoint from $B$ (note that $C_1$ may be equal to $C_2$). Let $w'$ and $z'$ be the end points of $C$. Write $\zs_0$ for $\zs\setminus \{z'\}$ and let
 \[
 \tau^{z\from z'}\colon (L,\ws,\zs_0\cup \{z'\})\to (L,\ws,\zs_0\cup \{z\})
 \] 
 be the diffeomorphism which moves $z'$ to $z$. By Lemma~\ref{lem:mapsforI+agree} we know that
\[S_{z,w}^+\simeq S_{w,z'}^+\tau^{z\from z'}_*.\] Hence
\begin{equation}
\begin{alignedat}{3}
F_{B}^{\zs} S_{z,w}^+&\simeq F_B^{\zs} S_{w,z'}^+\tau^{ z\from z'}_*&&\qquad(\text{Lemma~\ref{lem:mapsforI+agree}})\\
&\simeq S_{w,z'}^+F_{B}^{\zs} \tau^{z\from z'}_*&&\qquad (\text{Subcase~\eqref{case:alphasubarc}})\\
&\simeq S_{w,z'}^+ \tau^{z\from z'}_*F_B^{\zs}&&\qquad(\text{Diffeomorphism invariance of $F_B^{\zs}$})\\
&\simeq S_{z,w}^+F_B^{\zs}&&\qquad(\text{Lemma~\ref{lem:mapsforI+agree}}).
\end{alignedat}
\label{eq:Fb,Scommutation}
\end{equation}
An entirely analogous argument, using Lemma~\ref{lem:mapsforII+agree}, establishes the relation $T_{z,w}^+F_B^{\zs}\simeq F_B^{\zs} T_{z,w}^+$.

It remains to consider Case~\eqref{case:bandquasi1}. The strategy is again to apply Lemmas~\ref{lem:mapsforI+agree}~and~\ref{lem:mapsforII+agree} to reduce to Case~\eqref{case:bandquasi2}, when possible. Write $w_1,$ $w_2,$ $z_1,$ and $z_2$ for the four basepoints adjacent to the ends of $B$. Note that in this case the pair $(z,w)$ is in a $\beta$-subarc of $\bL$, and $z$ follows $w$.

As a first step towards proving the claim in Case~\eqref{case:bandquasi1}, we will show that if $(z,w)$ are between an end of $B$ and either $z_1$ or $z_2$, then
\[F_B^{\zs} S_{z,w}^+\simeq S_{z,w}^+ F_{B}^{\zs}.\] As before, we use Lemma~\ref{lem:mapsforI+agree} to write $S_{z,w}^+\simeq S_{w,z_i}^+\tau^{z\from z_i}_*$. Analogously to Equation~\eqref{eq:Fb,Scommutation}, since $F_B^{\zs}$ commutes with both $S_{z,w}^+$ and $\tau^{ z\from z_i}_*$, we conclude that $F_{B}^{\zs}S_{z,w}^+\simeq S_{z,w}^+F_B^{\zs}$.

Noting that we have not yet used the fact that $F_B^{\zs}$ is the $\zs$-band map, it follows by a symmetrical argument that if $(z,w)$ are between an end of $B$ and either $w_1$ or $w_2$, then 
\begin{equation}F_B^{\zs}T_{z,w}^+\simeq T_{z,w}^+ F_{B}^{\zs}\label{eq:TF_B^zcommutation}\end{equation}

There is still one configuration we have left to analyze, which is when $(z,w)$ are between an end of $B$ and $w_1$ or $w_2$, and we wish to commute $F_B^{\zs}$ and $S_{z,w}^{+}$. Note that by Equation~\eqref{eq:TF_B^zcommutation}, in this configuration, we have that $F_B^{\zs}T_{z,w}^+\simeq T_{z,w}^+F_B^{\zs},$ where we are giving $w$ and $w_i$ the same color, and $z$, $z_1$ and $z_2$ the same color. From Lemma~\ref{lem:olddefsforTw,z}, we know $S_{z,w}^+\simeq \Phi_w T_{z,w}^+$. By Lemma~\ref{lem:PhiPsicommutatorFB}, we have $F_B^{\zs} \Phi_w\simeq \Phi_w F_B^{\zs}$. Combining these relations, we have
\[
F_B^{\zs}S_{z,w}^+\simeq F_B^{\zs} \Phi_w T_{z,w}^+\simeq \Phi_w F_B^{\zs} T_{z,w}^+\simeq \Phi_w T_{z,w}^+ F_B^{\zs} \simeq S_{z,w}^+ F_B^{\zs},\]
completing the proof.
\end{proof}

By a symmetrical argument, we also have the following:

\begin{prop}\label{prop:wbandsandquasistab}Suppose that $B$ is a band for the link $\bL=(L,\ve{w},\ve{z})$, and $w$ and $z$ are two new basepoints which are contained in the same component of $L\setminus (\ve{w}\cup \ve{z}\cup B)$.  Then, regardless of which component of $L\setminus (\ws\cup \zs\cup B)$ contains $w$ and $z$, one has
\[F_B^{\ve{w}} T_{w,z}^{\circ}\simeq T_{w,z}^{\circ} F_B^{\ve{w}} \qquad \text{if $w$ follows $z$, and }\]
\[F_B^{\ve{w}} T_{z,w}^{\circ}\simeq T_{z,w}^{\circ} F_B^{\ve{w}}\qquad \text{if $z$ follows $w$}\] for $\circ\in \{+,-\}$. If $w$ and $z$ do not separate an end of $B$ from either of the two $\ve{w}$-basepoints adjacent to $B$, then
\[F_B^{\ve{w}} S_{w,z}^{\circ}\simeq S_{w,z}^{\circ} F_B^{\ve{w}}\qquad \text{if $w$ follows $z$, and}\] 
\[F_B^{\ve{w}} S_{z,w}^{\circ} \simeq S_{z,w}^{\circ}F_B^{\ve{w}}\qquad \text{if $z$ follows $w$.}\]
\end{prop}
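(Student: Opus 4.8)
The plan is to reduce Proposition~\ref{prop:wbandsandquasistab} to the analogous statements already proven for $\zs$-band maps in Proposition~\ref{prop:zbandsandquasistab}, by exploiting the symmetry between the $\ws$- and $\zs$-basepoints. Concretely, the statement about $T_{w,z}^{\circ}$ (respectively $T_{z,w}^{\circ}$) commuting with $F_B^{\ws}$ regardless of position is, after swapping the roles of $\ws$ and $\zs$-basepoints and the roles of $S$ and $T$-quasi-stabilizations, exactly the statement about $S_{w,z}^{\circ}$ (respectively $S_{z,w}^{\circ}$) commuting with $F_B^{\zs}$ regardless of position; and similarly the conditional statement about $S_{w,z}^{\circ}$ commuting with $F_B^{\ws}$ under the hypothesis that $w$ and $z$ do not separate an end of $B$ from either adjacent $\ws$-basepoint corresponds to the conditional statement about $T_{w,z}^{\circ}$ commuting with $F_B^{\zs}$. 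So the skeleton of the argument is: fix a band $B$, note that all the constructions (Heegaard triples subordinate to $B$, the distinguished elements $\Theta^{\ws}$, $\Theta^{\zs}$ from Lemma~\ref{lem:topdegreeunlink2}, the quasi-stabilization maps, and the basepoint actions $\Phi_w$, $\Psi_z$) are interchanged under the involution that reverses the roles of $\ws$ and $\zs$, and then invoke Proposition~\ref{prop:zbandsandquasistab} verbatim.

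To make this precise I would first recall that the construction of $F_B^{\ws}$ is literally the mirror of the construction of $F_B^{\zs}$: one uses the same triples subordinate to $B$, but pairs the triangle map against $\Theta^{\zs}$ instead of $\Theta^{\ws}$, requiring $\sigma(w_1)=\sigma(w_2)$ rather than $\sigma(z_1)=\sigma(z_2)$. Likewise, from the definitions in Section~\ref{sec:quasi-stabilization}, the roles of $\theta^{\ws},\xi^{\ws}$ and $\theta^{\zs},\xi^{\zs}$ are interchanged when passing between $S_{w,z}^{\circ}$ and $T_{w,z}^{\circ}$ (recall $\theta^{\ws}=\xi^{\zs}$ and $\theta^{\zs}=\xi^{\ws}$), and the notion of ``$w$ and $z$ separating an end of $B$ from an adjacent $\ws$-basepoint'' is the mirror of ``$w$ and $z$ separating an end of $B$ from an adjacent $\zs$-basepoint.'' With these observations in hand, every step of the proof of Proposition~\ref{prop:zbandsandquasistab} transports: the holomorphic triangle count from Proposition~\ref{prop:singlealphaquasistabtriangle} is symmetric in $\ws$ and $\zs$ at the level of the statement, the reduction via the basepoint moving maps uses Lemmas~\ref{lem:mapsforI+agree} and~\ref{lem:mapsforII+agree} which are also symmetric (the former for $S$-type, the latter for $T$-type), and the final argument handling the last configuration uses Lemma~\ref{lem:olddefsforTw,z} (which relates $S$ and $T$ via $\Phi_w$ and $\Psi_z$) together with Lemma~\ref{lem:PhiPsicommutatorFB} and its symmetric counterpart stated just after it.

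The main obstacle, such as it is, is purely bookkeeping: one must check that the asymmetry in the construction of the band maps (the triangle counts always impose $\frs_{\ws}(\psi)=\frs$, as emphasized in the remark following Proposition~\ref{prop:2-handlemapwelldefined}) does not break the symmetry we are invoking. This is not actually a problem, because the commutation relations we want are statements about chain homotopy type, and the $\Spin^c$-bookkeeping is identical for $F_B^{\ws}$ and $F_B^{\zs}$ — both are built from the same triples subordinate to $B$ with the same $\Spin^c$ convention — so the involution on the combinatorial data (reversing which basepoints are called $\ws$ versus $\zs$, which does \emph{not} touch the $\frs_{\ws}$ convention since that is intrinsic to the triple) carries $F_B^{\zs}$ to $F_B^{\ws}$ and the $S$-relations to the $T$-relations. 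Thus the proof is:

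\begin{proof}[Proof of Proposition~\ref{prop:wbandsandquasistab}] One switches the roles of the $\ws$- and $\zs$-basepoints, and the roles of the type-$S$ and type-$T$ quasi-stabilization maps, in the proof of Proposition~\ref{prop:zbandsandquasistab}. Under this exchange, $F_B^{\zs}$ becomes $F_B^{\ws}$, the distinguished element $\Theta^{\ws}$ of Lemma~\ref{lem:topdegreeunlink2} becomes $\Theta^{\zs}$, the maps $S_{w,z}^{\circ}$ and $S_{z,w}^{\circ}$ become $T_{w,z}^{\circ}$ and $T_{z,w}^{\circ}$, and $\Phi_w$ and $\Psi_z$ are interchanged. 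The triangle counts of Proposition~\ref{prop:singlealphaquasistabtriangle}, the basepoint moving relations of Lemmas~\ref{lem:mapsforI+agree} and~\ref{lem:mapsforII+agree}, the relation $T_{w,z}^+\simeq \Psi_z S_{w,z}^+$ and its mirror $S_{w,z}^+\simeq \Phi_w T_{w,z}^+$ from Lemma~\ref{lem:olddefsforTw,z}, and the commutation relation $F_B^{\ws}\Phi_w\simeq \Phi_w F_B^{\ws}$ from the symmetric counterpart of Lemma~\ref{lem:PhiPsicommutatorFB}, all transport under this exchange, so the argument goes through without further change.
\end{proof}
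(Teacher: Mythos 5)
Your proposal is correct and takes exactly the same approach as the paper: the paper's entire proof of this proposition is the single sentence ``By a symmetrical argument, we also have the following'' preceding the statement, and you have simply spelled out the details of that $\ws\leftrightarrow\zs$, $S\leftrightarrow T$, $\Phi\leftrightarrow\Psi$ symmetry. The only minor imprecision is a bookkeeping slip in the stated correspondence of subscripts: since exchanging $\ws$ and $\zs$ also exchanges the $\as$- and $\bs$-sides of the Heegaard diagram, $S_{w,z}^{\circ}$ (with $w$ following $z$) transports more naturally to $T_{z,w}^{\circ}$ (with $z$ following $w$) after relabeling, rather than to $T_{w,z}^{\circ}$, but this is just a change of variable names and does not affect the validity of the argument.
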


Proposition~\ref{prop:zbandsandquasistab} does not allow us to compute the commutator of $F_{B}^{\zs}$ and $T^\circ_{w,z}$ when $(w,z)$ is a new pair of basepoints which lie between an end of $B$ and one of the adjacent $\zs$-basepoints. In the following lemma, we give an example where $F_B^{\zs}$ and $T_{w,z}^+$ do not commute, and instead form a bypass triple. We will not use this result elsewhere in the paper.

\begin{lem}\label{lem:F_B^zandTcommutator} Suppose $B$ is an $\alpha$-band and $(w,z)$ is an adjacent pair of basepoints between an end of  $B$ and a $\ve{z}$-basepoint. Then
\[
F_B^{\ve{z}} T_{w,z}^++T_{w,z}^+ F_{B}^{\ve{z}}+S_{w,z}^+ F_{B}^{\ve{w}}\simeq 0.
\]
\end{lem}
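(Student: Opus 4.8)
The plan is to derive this bypass relation from the relations already established for the band maps and quasi-stabilization maps, in the same style as the proof of Lemma~\ref{lem:bypasstriplefromquasistabilization}. The key idea is to use Lemma~\ref{lem:olddefsforTw,z}, which expresses $T_{w,z}^+$ in terms of $S_{w,z}^+$ and $\Psi_z$, together with Proposition~\ref{prop:FBzFBwrelatedbyPsiPhi}, which relates $F_B^{\ve{w}}$ to the commutator of $\Psi$ with $F_B^{\ve{z}}$. The subtle point is that $(w,z)$ lies between an end of $B$ and an adjacent $\ve{z}$-basepoint, so $F_B^{\ve{z}}$ and $T_{w,z}^+$ do \emph{not} commute, but $F_B^{\ve{z}}$ and $S_{w,z}^+$ \emph{do} commute by the first part of Proposition~\ref{prop:zbandsandquasistab} (since $w$ follows $z$ in an $\alpha$-subarc adjacent to the band).

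First I would set up notation: let $z'$ be the $\ve{z}$-basepoint adjacent to the end of $B$ near $(w,z)$, so that $(w,z,z')$ are consecutive (and $w$ is adjacent to the end of $B$ and to $z$, $z$ is adjacent to $z'$). The coloring identifies the two $\ve{z}$-basepoints adjacent to $B$, and we give $z$ the color of $z'$. By Lemma~\ref{lem:olddefsforTw,z}, $T_{w,z}^+\simeq \Psi_z S_{w,z}^+$. Therefore
\[
F_B^{\ve{z}} T_{w,z}^+ + T_{w,z}^+ F_B^{\ve{z}}\simeq F_B^{\ve{z}}\Psi_z S_{w,z}^+ + \Psi_z S_{w,z}^+ F_B^{\ve{z}}.
\]
Now I would commute $S_{w,z}^+$ past $F_B^{\ve{z}}$ in the second term using Proposition~\ref{prop:zbandsandquasistab}, getting $\Psi_z F_B^{\ve{z}} S_{w,z}^+$. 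So the sum becomes $\bigl(F_B^{\ve{z}}\Psi_z + \Psi_z F_B^{\ve{z}}\bigr) S_{w,z}^+$. The next step is to identify $F_B^{\ve{z}}\Psi_z + \Psi_z F_B^{\ve{z}}$. Here I must be careful: $z$ is not one of the original basepoints adjacent to $B$, but after quasi-stabilizing, $z$ \emph{is} adjacent to the end of $B$. The relevant computation is essentially that of Proposition~\ref{prop:FBzFBwrelatedbyPsiPhi} applied on the quasi-stabilized link, where $z$ plays the role of one of the two $\ve{z}$-basepoints adjacent to the band: this should give $F_B^{\ve{z}}\Psi_z + \Psi_z F_B^{\ve{z}}\simeq F_B^{\ve{w}}$ (with the band now having $z$ as an adjacent basepoint). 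Composing with $S_{w,z}^+$ and using Proposition~\ref{prop:wbandsandquasistab} or a direct commutation to move $S_{w,z}^+$ appropriately, together with the fact that $F_B^{\ve{w}} S_{w,z}^+$ relates to the other side, one arrives at $S_{w,z}^+ F_B^{\ve{w}}$ up to the needed chain homotopy. Rearranging gives $F_B^{\ve{z}} T_{w,z}^+ + T_{w,z}^+ F_B^{\ve{z}} + S_{w,z}^+ F_B^{\ve{w}}\simeq 0$ (working mod $2$).

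The main obstacle I anticipate is bookkeeping the colorings and the precise adjacency relations after quasi-stabilization: applying Proposition~\ref{prop:FBzFBwrelatedbyPsiPhi} requires that $z$ be one of the basepoints adjacent to an end of the band on the quasi-stabilized link, and one must check that the hypotheses of Propositions~\ref{prop:zbandsandquasistab}, \ref{prop:wbandsandquasistab}, and \ref{prop:FBzFBwrelatedbyPsiPhi} are all genuinely satisfied in this configuration — in particular that $(w,z)$ does not interfere with the other end of $B$ or the $\ve{w}$-basepoints adjacent to $B$. A cleaner alternative, which I would pursue if the direct manipulation gets tangled, is to mimic the proof of Lemma~\ref{lem:bypasstriplefromquasistabilization} verbatim: start from a known bypass relation among $\Phi_w$, $\Psi_z$ and $\id$ (Lemma~\ref{lem:PhiPsicommutators} or the relation $T^+_{w,z}S_{w,z}^- + S_{w,z}^+T_{w,z}^- + \id\simeq 0$ from Lemma~\ref{lem:addtrivialstrandII}), multiply on an appropriate side by $F_B^{\ve{z}}$ or a quasi-stabilization, and then push the band map through using Propositions~\ref{prop:zbandsandquasistab} and \ref{prop:FBzFBwrelatedbyPsiPhi} step by step in an aligned display, exactly as in the chain of equivalences at the end of the proof of Lemma~\ref{lem:bypasstriplefromquasistabilization}.
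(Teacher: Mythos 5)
Your approach is correct and works, but it takes a genuinely different route from the paper's proof; since both proofs are short it's worth recording the contrast.

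The paper expands $F_B^{\ve{z}}$ via Proposition~\ref{prop:FBzFBwrelatedbyPsiPhi} as $\Phi_{w'}F_B^{\ve{w}}+F_B^{\ve{w}}\Phi_{w'}$, where $w'$ is the $\ve{w}$-basepoint at the \emph{other} end of the band. It then commutes $T_{w,z}^+$ past $F_B^{\ve{w}}$ (Proposition~\ref{prop:wbandsandquasistab}) and uses the observation, via Lemma~\ref{lem:PhiPsiandS^pmcommutator}, that $\Phi_{w'}$ and $T_{w,z}^+$ commute on the unsurgered link but pick up a $\Phi_w T_{w,z}^+\simeq S_{w,z}^+$ correction after the band is attached. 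Your plan instead expands $T_{w,z}^+\simeq \Psi_z S_{w,z}^+$ (Lemma~\ref{lem:olddefsforTw,z}), commutes $S_{w,z}^+$ through $F_B^{\ve{z}}$ (Proposition~\ref{prop:zbandsandquasistab}) so the difference becomes $(F_B^{\ve{z}}\Psi_z+\Psi_z F_B^{\ve{z}})S_{w,z}^+$, and then uses Proposition~\ref{prop:FBzFBwrelatedbyPsiPhi} at the basepoint $z$ — which is adjacent to an end of $B$ only on the quasi-stabilized link — to collapse the parenthesized term to $F_B^{\ve{w}}$, finishing with Proposition~\ref{prop:wbandsandquasistab}. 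The two proofs are dual: both exploit an ``asymmetry created by the band'' — the paper via the $\Phi_{w'}$-commutator changing across the band, you via $z$ becoming a band-adjacent basepoint after quasi-stabilization — and both draw on the same four results. If anything your chain is slightly shorter since it avoids invoking Lemma~\ref{lem:PhiPsiandS^pmcommutator} twice.

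One minor correction: your opening description of the configuration is inconsistent. For $T_{w,z}^+$ the basepoint $w$ follows $z$; since the $\alpha$-subarc containing the end of $B$ is oriented from a $\ve{w}$-basepoint to a $\ve{z}$-basepoint, the ordering along the link must be $(w_1, p_1, z, w, z_1)$ — so it is $z$ (not $w$, as your setup says) that is adjacent to the end $p_1$ of $B$ after quasi-stabilizing, and $w$ that is adjacent to the $\ve{z}$-basepoint $z_1=z'$. Your description has this reversed, which would contradict the later (and correct) step where you invoke Proposition~\ref{prop:FBzFBwrelatedbyPsiPhi} on the quasi-stabilized link using $z$ as a band-adjacent basepoint. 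With the ordering corrected, every step of your chain checks out, including the hypotheses of Propositions~\ref{prop:zbandsandquasistab} and \ref{prop:wbandsandquasistab} ($(w,z)$ does not separate an end of $B$ from the $\ve{w}$-basepoints adjacent to $B$) and the coloring compatibilities $\sigma(w)=\sigma(w_1)=\sigma(w_2)$, $\sigma(z)=\sigma(z_1)=\sigma(z_2)$ that are forced on the statement by the presence of all four maps.
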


\begin{proof}Let $w'$ be the $\ve{w}$-basepoint adjacent to an end of $B$, which is not adjacent to $z$ until after the band $B$ is added. We  compute
\begin{equation}
\begin{alignedat}{2}F_B^{\ve{z}} T_{w,z}^+ +T_{w,z}^+ F_B^{\ve{z}}&\simeq (\Phi_{w'} F_{B}^{\ve{w}}+F_{B}^{\ve{w}} \Phi_{w'}) T_{w,z}^++T_{w,z}^+ (\Phi_{w'} F_{B}^{\ve{w}}+F_{B}^{\ve{w}} \Phi_{w'})&& (\text{Proposition~\ref{prop:FBzFBwrelatedbyPsiPhi}})\\
&\simeq F_B^{\ve{w}}(\Phi_{w'} T_{w,z}^++T_{w,z}^+ \Phi_{w'})+(\Phi_{w'} T_{w,z}^++T_{w,z}^+ \Phi_{w'})F_B^{\ve{w}}\quad&& (\text{Proposition~\ref{prop:wbandsandquasistab}}).
\end{alignedat}
\label{eq:bypass1}
\end{equation}

Since $w'$ is not adjacent to $z$ until we add $B$, it follows that
\begin{equation}
 F_B^{\ve{w}}(\Phi_{w'} T_{w,z}^++T_{w,z}^+ \Phi_{w'})\simeq 0
 \label{eq:bypass2}
 \end{equation} by Lemma~\ref{lem:PhiPsiandS^pmcommutator}. However once we add the band $B$, the basepoints $z$ and $w'$ become adjacent, so by Lemma~\ref{lem:PhiPsiandS^pmcommutator}
\begin{equation}
(\Phi_{w'}T_{w,z}^++T_{w,z}^+ \Phi_{w'})F_B^{\ws}\simeq \Phi_w T_{w,z}^+F_B^{\ws}.
\label{eq:bypass3}
\end{equation} Noting that $\Phi_wT_{w,z}^+\simeq S_{w,z}^+$ by Lemma~\ref{lem:olddefsforTw,z}, we conclude that $\Phi_w T_{w,z}^+F_B^{\ws}\simeq S_{w,z}^+F_{B}^{\zs}$. Combining this with Equations~\eqref{eq:bypass1},~\eqref{eq:bypass2}~and~\eqref{eq:bypass3} yields the stated formula.
\end{proof}

  \begin{figure}[ht!]
\centering
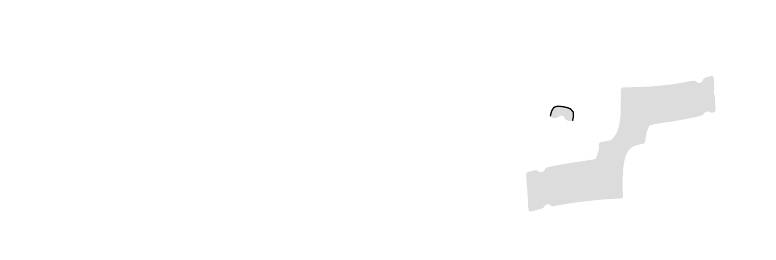
\caption{\textbf{Dividing sets for the maps appearing in Lemma~\ref{lem:F_B^zandTcommutator}}. The dashed lines correspond to decomposing level sets of the surface, which correspond to the composition of maps shown below each figure. The three dividing sets form a bypass triple.\label{fig::106}}
\end{figure}

\subsection{Commutation of band maps I}

In this subsection, we prove that band maps can always be commuted, as long as one is an $\alpha$-band, and the other is a $\beta$-band. We will later consider commuting two band maps when both bands are $\alpha$-bands or both are $\beta$-bands.

\begin{prop}\label{prop:commutealphaandbetabands}Suppose that $B_1^\alpha$ is an $\alpha$-band and $B^\beta_2$ is a $\beta$-band such that $\bL(B^\alpha_1),$ $\bL(B^\beta_2)$ and $\bL(B^{\alpha}_1,B^\beta_2)$ all have at least two basepoints on each component. Then, for any choice of $\ve{o}_1,\ve{o}_2\in \{\ve{w},\ve{z}\}$, we have
\[F_{B^\alpha_1}^{\ve{o}_1} F_{B^\beta_2}^{\ve{o}_2}\simeq F^{\ve{o}_2}_{B^\beta_2} F_{B^\alpha_1}^{\ve{o}_1}.\]
\end{prop}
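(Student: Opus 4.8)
The strategy is the standard ``commuting triangle counts via a Heegaard quadruple'' argument, modeled on the proof of Lemma~\ref{lem:bandmapsandsurgerymapscommute} (and ultimately on \cite{OSTriangles}*{Lemma~5.2}), combined with the relation $F^{\ve{o}}_B\simeq \Psi F^{\ve{o}'}_B + F^{\ve{o}'}_B\Psi$ (or the $\Phi$-version) from Proposition~\ref{prop:FBzFBwrelatedbyPsiPhi} to handle the discrepancy between the $\ve{w}$- and $\ve{z}$-type maps. The key point to exploit is that an $\alpha$-band is recorded by a triple $(\Sigma,\as',\as,\bs)$ (moving $\ve{\alpha}$-curves), whereas a $\beta$-band is recorded by a triple $(\Sigma,\as,\bs,\bs')$ (moving $\ve{\beta}$-curves), so the two decompositions genuinely live in ``orthogonal'' directions and a single Heegaard quadruple can carry both simultaneously.

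\textbf{Step 1: Reduce to the type-$\ve{z}$ maps.} First I would show it suffices to prove $F_{B_1^\alpha}^{\ve{z}}F_{B_2^\beta}^{\ve{z}}\simeq F_{B_2^\beta}^{\ve{z}}F_{B_1^\alpha}^{\ve{z}}$. Indeed, by Proposition~\ref{prop:FBzFBwrelatedbyPsiPhi}, each $F_{B_i}^{\ve{w}}$ is chain homotopic to a commutator $\Psi_{z_i}F_{B_i}^{\ve{z}}+F_{B_i}^{\ve{z}}\Psi_{z_i}$, where $z_i$ is a $\ve{z}$-basepoint adjacent to an end of $B_i$ (we may pick $z_i$ to be disjoint from $B_j$ for $j\neq i$, since the bands are disjoint; if necessary, use the freedom in Proposition~\ref{prop:FBzFBwrelatedbyPsiPhi} to choose which adjacent basepoint). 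Then, using Lemma~\ref{lem:PhiPsicommutatorFB} and its $\ve{w}$-analogue to slide the $\Psi_{z_i}$'s past the opposite band map (the basepoint $z_i$ is not adjacent to an end of $B_j$, so these genuinely commute up to homotopy), together with $\Psi_{z_1}\Psi_{z_2}+\Psi_{z_2}\Psi_{z_1}\simeq 0$ from Lemma~\ref{lem:PhiPsicommutators}, one reduces each of the four cases $(\ve{o}_1,\ve{o}_2)$ to the single case $(\ve{z},\ve{z})$. I expect this step to be a short but slightly fiddly bookkeeping exercise with the Leibniz-type relations.

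\textbf{Step 2: Build the Heegaard quadruple and apply associativity.} For the $(\ve{z},\ve{z})$ case, I would choose a Heegaard quadruple $(\Sigma,\as',\as,\bs,\bs',\ve{w},\ve{z})$ such that: $(\Sigma,\as,\bs,\ve{w},\ve{z})$ is a diagram for $(Y,\bL)$; $(\Sigma,\as',\as,\bs,\ve{w},\ve{z})$ is subordinate to $B_1^\alpha$; $(\Sigma,\as,\bs,\bs',\ve{w},\ve{z})$ is subordinate to $B_2^\beta$; and the two ``perturbed'' triples $(\Sigma,\as',\as,\bs',\ve{w},\ve{z})$ and $(\Sigma,\as',\bs,\bs',\ve{w},\ve{z})$ are subordinate to $B_1^\alpha$ in $(Y,\bL(B_2^\beta))$ and to $B_2^\beta$ in $(Y,\bL(B_1^\alpha))$ respectively. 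Existence of such a quadruple follows from the sutured-diagram uniqueness statements used to define the band maps (Lemma~\ref{lem:movesbetweenalphabandtriples} and its $\beta$-analogue, together with Remark~\ref{rem:canwind}); concretely one starts from a diagram adapted to $Y\setminus N(L\cup B_1^\alpha\cup B_2^\beta)$ and winds/Hamiltonian-translates in the two independent directions. Let $\Theta_{\as',\as}^{\ve{w}}\in\cHFL^-(\Sigma,\as',\as,\sigma)$ and $\Theta_{\bs,\bs'}^{\ve{w}}\in\cHFL^-(\Sigma,\bs,\bs',\sigma)$ be the distinguished cycles from Lemma~\ref{lem:topdegreeunlink2} (appropriately colored so the relevant $\ve{z}$-variables agree). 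The associativity relation for the holomorphic triangle maps over the quadruple $(\Sigma,\as',\as,\bs,\bs')$ gives
\[
F_{\as',\as,\bs',\frs}\bigl(\Theta_{\as',\as}^{\ve{w}}, F_{\as,\bs,\bs',\frs}(-,\Theta_{\bs,\bs'}^{\ve{w}})\bigr)\simeq F_{\as',\bs,\bs',\frs}\bigl(F_{\as',\as,\bs,\frs}(\Theta_{\as',\as}^{\ve{w}},-),\Theta_{\bs,\bs'}^{\ve{w}}\bigr),
\]
(using that $\Theta_{\as',\as}^{\ve{w}}$ and $\Theta_{\bs,\bs'}^{\ve{w}}$ are cycles after coloring, so the higher associativity terms involving their differentials vanish, exactly as in the well-definedness arguments for the 2-handle maps). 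Unwinding the definitions, the left side is $F_{B_1^\alpha}^{\ve{z}}F_{B_2^\beta}^{\ve{z}}$ and the right side is $F_{B_2^\beta}^{\ve{z}}F_{B_1^\alpha}^{\ve{z}}$, both computed with compatible data, which is what we want.

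\textbf{Step 3: Naturality and $\Spin^c$ bookkeeping.} Finally I would check that both compositions are being computed on the same transitive chain homotopy type, so the homotopy in Step~2 descends to the statement; this is routine given the already-established well-definedness of each individual band map. I would also verify the $\Spin^c$ inputs match: each triangle map in the quadruple is restricted to count triangles with $\frs_{\ve{w}}=\frs$, and since $X_{\as',\as,\bs}$, $X_{\as,\bs,\bs'}$, etc., are all identified with $[0,1]\times Y$ minus a $1$-complex, there is a unique relevant $\Spin^c$ structure in each factor, so the associativity relation is $\Spin^c$-compatible without a sum. \textbf{The main obstacle} I anticipate is Step~2: arranging a single quadruple that is simultaneously subordinate (possibly after handleslides/isotopies/winding, per Remark~\ref{rem:canwind}) to all four required bands, and confirming that the distinguished elements $\Theta^{\ve{w}}$ for the two auxiliary triples $(\Sigma,\as',\bs,\bs')$ and $(\Sigma,\as',\as,\bs')$ are the images of the ones for the original triples under the natural maps --- i.e., that the ``unlink with one four-pointed component'' picture of Lemma~\ref{lem:topdegreeunlink2} is preserved when we pass from $\bL$ to $\bL(B_2^\beta)$. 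This is geometrically clear but requires care with the region $H$ and the curves $A,A'$ from Definition~\ref{def:triplesubbetaband}.
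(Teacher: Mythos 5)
Your Step 2 is essentially the paper's proof: a single Heegaard quadruple $(\Sigma,\as',\as,\bs,\bs',\ws,\zs)$ with the $\as'$-side encoding $B_1^\alpha$ and the $\bs'$-side encoding $B_2^\beta$, and a direct application of triangle-map associativity with the distinguished cycles $\Theta_{\as',\as}$, $\Theta_{\bs,\bs'}$ plugged in. The one substantive difference is your Step 1, which is both unnecessary and not quite watertight. It is unnecessary because the associativity argument in Step 2 works word-for-word for any pair $(\ve{o}_1,\ve{o}_2)$: one simply plugs $\Theta^{\ve{o}_1}_{\as',\as}$ and $\Theta^{\ve{o}_2}_{\bs,\bs'}$ into the quadruple, and both sides of the associativity relation unwind directly to $F^{\ve{o}_1}_{B_1^\alpha}F^{\ve{o}_2}_{B_2^\beta}$ and $F^{\ve{o}_2}_{B_2^\beta}F^{\ve{o}_1}_{B_1^\alpha}$. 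This is exactly what the paper does, so there is no reduction to the $(\ve{z},\ve{z})$ case.

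The reason Step 1 is also not watertight is the claim that you can always choose the basepoint $z_i$ from Proposition~\ref{prop:FBzFBwrelatedbyPsiPhi} to be non-adjacent to the other band $B_j$. A $\ve{z}$-basepoint is the common boundary of one $\alpha$-subarc and one $\beta$-subarc of $L\setminus(\ws\cup\zs)$; since $B_1^\alpha$ has feet in $\alpha$-subarcs and $B_2^\beta$ has feet in $\beta$-subarcs, a $\ve{z}$-basepoint adjacent to an end of $B_1^\alpha$ can perfectly well also be adjacent to an end of $B_2^\beta$, and it can happen that both of the two available choices have this problem. In that case Lemma~\ref{lem:PhiPsicommutatorFB} does not directly give the commutation of $\Psi_{z_i}$ past $F_{B_j}$, and your bookkeeping would need an extra argument. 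Since Step 1 can simply be deleted, this does not affect the validity of your overall approach, but it is a point where the shortcut you proposed would quietly fail in general.
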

\begin{proof}This is a simple application of the associativity relations for the holomorphic triangle maps. Let $(\Sigma,\as',\as,\bs,\bs',\ws,\zs)$ be a Heegaard quadruple such that the following hold:
\begin{enumerate}
\item $(\Sigma,\as,\bs,\ws,\zs)$ is a diagram for $(Y,\bL)$.
\item $(\Sigma,\as,\bs,\bs',\ws,\zs)$ and $(\Sigma,\as',\bs,\bs',\ws,\zs)$ are subordinate to the band $B^{\beta}_2$, attached to $\bL$ or $\bL(B^\alpha_1)$, respectively
\item $(\Sigma,\as',\as,\bs,\ws,\zs)$ and $(\Sigma,\as',\as,\bs',\ws,\zs)$  are subordinate to the band $B^{\alpha}_1$, attached to $\bL$ or $\bL(B^{\beta}_2)$, respectively.
\end{enumerate}
Let 
\[\Theta_{\as',\as}^{\ve{o}_1}\in \cHFL^-(\Sigma,\as',\as,\ws,\zs,\sigma,\frs_0)\qquad  \text{and} \qquad \Theta_{\bs',\bs}^{\ve{o}_1}\in \cHFL^-(\Sigma,\bs,\bs',\ws,\zs,\sigma,\frs_0)
\]
be the distinguished elements from Lemma~\ref{lem:topdegreeunlink2}. Using the associativity relations of the holomorphic triangle maps, we obtain that
\[F_{\as',\bs,\bs'}(F_{\as',\as,\bs}(\Theta_{\as',\as}^{\ve{o}_1},\ve{x}),\Theta_{\bs,\bs'}^{\ve{o}_2})+F_{\as',\as,\bs'}(\Theta_{\as',\as}^{\ve{o}_1}, F_{\as,\bs,\bs'}(\ve{x},\Theta_{\bs,\bs'}^{\ve{o}_2}))\]\[=\d_{\as',\bs'} (H_{\as',\as,\bs,\bs'}(\Theta_{\as',\as}^{\ve{o}_1},\ve{x},\Theta_{\bs,\bs'}^{\ve{o}_2}))+H_{\as',\as,\bs,\bs'}(\Theta_{\as',\as}^{\ve{o}_1},\d_{\as,\bs}(\ve{x}),\Theta_{\bs,\bs'}^{\ve{o}_2}).\] Using the definition of the band maps, we obtain the relation
\[F_{B^\alpha_1}^{\ve{o}_1} F_{B^\beta_2}^{\ve{o}_2}\simeq F_{B^\beta_2}^{\ve{o}_2} F_{B_1^\alpha}^{\ve{o}_1},\] concluding the proof.
\end{proof}

\subsection{Band maps and basepoint moving maps}

\label{sec:bandmapsandbasepointmovingmaps}

We now consider the relation between $\alpha$-bands and $\beta$-bands. Recall that we call a band an $\alpha$-band if the two ends of the band are in components of $L\setminus (\ws\cup \zs)$ which go from $\ws$ to $\zs$. Similarly, we say a band is a $\beta$-band if the two ends of the band lie in components of $L\setminus (\ws\cup \zs)$ which go from $\zs$ to $\ws$.

We claimed earlier that the distinction between $\alpha$- and $\beta$-bands was not especially important for the TQFT structure, in terms of which decorated link cobordism the maps represented. Instead, we claimed that the more important distinction is between the $\ws$- and $\zs$-versions of the band maps. In this section, we make that claim precise.

We  first consider the  type-$\zs$ band maps. Suppose that $B^{\alpha}$ is an $\alpha$-band. Let $z_1$ and $z_2$ be the $\zs$-basepoints adjacent to the ends of $B^{\alpha}$. Let $B^\beta$ be the $\beta$-band formed by isotoping the ends of $B^{\alpha}$ across $z_1$ and $z_2$. There is a diffeomorphism 
\[
\phi\colon (Y,\bL(B^\beta))\to (Y,\bL(B^\alpha))
\]
 which is fixed outside of a neighborhood of $B^{\alpha}\cup C_1\cup C_2$, where $C_i$ denotes the subarc of $L$ which goes between an end of $B^{\alpha}$ and $z_i$ and intersects none of the other basepoints. In particular, $\phi$ fixes all of the basepoints except $z_1$ and $z_2$, and switches $z_1$ and $z_2$.  The diffeomorphism $\phi$ is illustrated in Figure~\ref{fig::10}.

\begin{figure}[ht!]
\centering
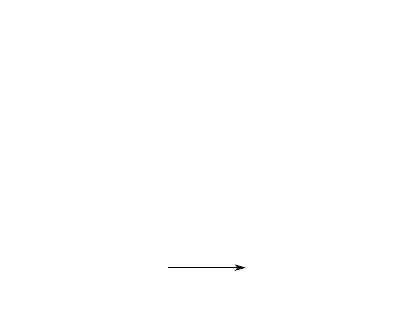
\caption{\textbf{The diffeomorphism $\phi$ which we use to related the type-$\zs$ band maps for $B^\alpha$ and $B^{\beta}.$} The diffeomorphism $\phi\colon (Y,\bL(B^\beta))\to (Y, \bL(B^\alpha))$ maps $z_1$ to $z_2$ and maps $z_2$ to $z_1$.\label{fig::10}}
\end{figure}

The diffeomorphism $\phi$ is in fact determined up to isotopy. To see this, and to simplify some of the arguments in this section, we prove the following elementary fact:
\begin{lem}\label{lem:MCGtrivialtangle}Let $T$ be a tangle in a 3-ball $\cB$ consisting of two strands, $S_1$ and $S_2$, such that there are two disjoint 2-cells $D_1, D_2\subset \cB$ such that $\d (D_i)=S_i\cup a_i$, where $a_i$ is an embedded arc in $\d \cB$. If $\Diff((\cB,T), \d \cB)$ denotes the set of self-diffeomorphisms of $(\cB,T)$ which are the identity on $\d \cB$, then
\[
\pi_0( \Diff((\cB,T), \d \cB))=\{1\}.
\]
\end{lem}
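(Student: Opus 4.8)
The plan is to show that the space of such "trivial two-strand tangles" in a ball is connected in a strong sense, by reducing to standard facts about diffeomorphism groups of balls and disks. First I would set up a model: isotope $T$ (rel $\partial\mathcal{B}$) to a standard pair of unknotted, unlinked, boundary-parallel arcs. This uses the spanning disks $D_1, D_2$: the existence of a pair of disjoint 2-cells $D_i$ with $\partial D_i = S_i \cup a_i$ is precisely the condition that $T$ is a trivial tangle, and the light-bulb-type argument (or simply an innermost-disk/irreducibility argument in the ball) shows any two such disk systems are ambient isotopic rel boundary. So after an initial isotopy we may assume $(\mathcal{B}, T)$ is the standard model and the $D_i$ are standard half-disks.

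Next I would analyze $\mathrm{Diff}((\mathcal{B},T),\partial\mathcal{B})$ directly. Given $\psi$ in this group, first use the fact that any self-diffeomorphism of $(\mathcal{B}, T)$ fixing $\partial\mathcal{B}$ must send the disk system $\{D_1,D_2\}$ to another disk system with the same boundary; by the isotopy-uniqueness of disk systems just invoked, after an isotopy of $\psi$ (supported away from $\partial\mathcal{B}$) we may assume $\psi(D_i) = D_i$. Then $\psi$ restricts to a diffeomorphism of each $D_i$ fixing $\partial D_i$, and since $\mathrm{Diff}(D^2, \partial D^2)$ is connected (indeed contractible, by Smale), we may isotope $\psi$ so that $\psi|_{D_1 \cup D_2} = \mathrm{id}$. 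Cutting $\mathcal{B}$ along $D_1 \cup D_2$ yields a ball (or a disjoint union of balls) on which $\psi$ is the identity on the relevant boundary faces; then $\pi_0(\mathrm{Diff}(D^3, \partial D^3)) = \{1\}$ (Cerf) finishes the job.

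The key steps in order: (1) characterize the disk systems and prove uniqueness up to ambient isotopy rel boundary; (2) normalize $\psi$ to fix the disk system setwise; (3) normalize $\psi$ to fix the disk system pointwise using connectedness of $\mathrm{Diff}(D^2,\partial)$; (4) apply $\pi_0(\mathrm{Diff}(D^3,\partial D^3)) = 1$ to the complementary ball(s). I expect the main obstacle to be step (1): carefully justifying that two systems of disjoint spanning disks for $T$, with prescribed boundary, are isotopic rel $\partial\mathcal{B}$ — this is where one must use irreducibility of the ball and an innermost-circle argument to remove intersections between the two disk systems, and it requires a little care because the disks are properly embedded with boundary partly on $\partial\mathcal{B}$ and partly on $T$. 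Everything else is an invocation of classical results (Smale, Cerf, Hatcher) on diffeomorphism groups of low-dimensional balls and disks, so I would not grind through those.

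\begin{proof}[Proof sketch]
We first claim that the system of spanning disks is unique up to isotopy rel $\partial\mathcal{B}$: given two pairs of disjoint properly embedded disks $\{D_1,D_2\}$ and $\{D_1',D_2'\}$ with $\partial D_i = \partial D_i' = S_i \cup a_i$, an innermost-circle and innermost-arc argument (using that $\mathcal{B}$ is irreducible and $\partial$-irreducible) removes all intersections, after which the two systems are isotopic rel boundary. Now let $\psi \in \mathrm{Diff}((\mathcal{B},T),\partial\mathcal{B})$. Applying this uniqueness to $\{D_1,D_2\}$ and $\{\psi(D_1),\psi(D_2)\}$, we may isotope $\psi$ (by an isotopy supported in the interior of $\mathcal{B}$) so that $\psi(D_i) = D_i$. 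Then $\psi|_{D_i}$ is a self-diffeomorphism of a disk fixing its boundary, and since $\mathrm{Diff}(D^2,\partial D^2)$ is connected, a further isotopy arranges $\psi|_{D_1 \cup D_2} = \mathrm{id}$. Cutting $\mathcal{B}$ along $D_1 \cup D_2$ produces a $3$-ball on whose boundary $\psi$ is the identity, so by $\pi_0(\mathrm{Diff}(D^3,\partial D^3)) = \{1\}$ we conclude $\psi$ is isotopic to the identity rel $\partial\mathcal{B}$.
\end{proof}
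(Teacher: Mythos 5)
Your proposal is correct in outline but takes a genuinely different route from the paper. You normalize $\psi$ to fix the disk system $\{D_1,D_2\}$ pointwise (via uniqueness of spanning disks, then connectivity of $\Diff(D^2,\d D^2)$) and then cut along the disks and apply Cerf's theorem. The paper instead passes to the tangle complement $h_2 = \cB \setminus \Int N(T)$, a genus-$2$ handlebody, and shows that $\phi$ preserves the isotopy classes rel endpoints of the arcs $a_i' = D_i' \cap A_i$ in the boundary annuli $A_i$ via a short homology argument: the cycle $a_i' - \phi(a_i')$ bounds in $h_2$ (being a difference of boundaries of disks), and $H_1(A_i;\Z) \to H_1(h_2;\Z)$ is injective, so it already bounds in $A_i$. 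It then invokes $\MCG(h_g,\d h_g) = \{1\}$ as a black box. The paper's homology computation is slicker precisely because it sidesteps what you correctly identified as your main obstacle, the uniqueness-of-disk-systems lemma; it trades that geometric argument for a one-line algebra computation plus a well-known fact about handlebody mapping class groups. Your approach is more self-contained if one is willing to carry out the innermost-disk argument carefully, and in fact it essentially reproves the handlebody mapping-class fact in this special case. Two small issues worth flagging in your sketch: the $3$-ball is \emph{not} $\d$-irreducible (its boundary $S^2$ is compressible), but you don't actually need $\d$-irreducibility — irreducibility alone suffices to remove circles of intersection between two disks sharing a boundary; and the $D_i$ are not properly embedded in $\cB$ (their boundaries run partly along $T$ in the interior), so the cutting-along-disks step is most cleanly phrased in the tangle complement $h_2$ where $D_i' = D_i \cap h_2$ are genuinely properly embedded, which is the setting the paper works in.
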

\begin{proof}Suppose that $\phi\in \Diff((\cB,T), \d \cB)$. Let $N(T)$ denote a regular neighborhood of $T$. Note that $\phi$ can be isotoped so that it restricts to a self-diffeomorphism of $N(T)$. Let $h_2$ denote the genus 2 handlebody $\cB\setminus \Int N(T)$. Let $A_1$ and $A_2$ denote the two annuli $\cl(\d N(T_i)\setminus\d \cB)$. Define $D_i':=D_i\cap h_2$, which we can assume are also disks. Furthermore, we assume that the disk $D_i'$ intersects the annulus $A_i$ along a single arc, $a_i'$. If we can show that $\phi$ preserves the arcs $a_1'$ and $a_2'$, up to isotopy relative to their endpoints, we will be done, since $\phi$ can then be isotoped to be the identity on $\d h_2$, allowing us to use the well known fact that $\MCG(h_g, \d h_g)=\{1\}$ for a genus $g$ handlebody $h_g$. Define the closed 2-chain  $c_1=a_1'-\phi(a_1')$ in $A_1$. Now $[c_1]=0\in H_1(A_1;\Z)$ if and only if $a_1'$ and $\phi(a_1')$ are isotopic in $A_1$ relative to their endpoints. Let $\iota: A_1\to h_2$ denote the inclusion map. Note that $\iota_*([c_1])=[\d D_1']-[\d \phi(D_1')]=0\in H_1(h_2;\Z)$. However it is easily verified that the map $\iota_*: H_1(A_1;\Z)\to H_1(h_2;\Z)$ is an injection, so we conclude that $[c_1]=0\in H_1(A_1;\Z)$.  It follows that $a_1'$ and $\phi(a_1)'$ are isotopic in $A_1$, relative to their endpoints. Similarly, $a_2'$ and $\phi(a_2')$ are isotopic in $A_2$, relative to their endpoints, completing the proof by our previous argument.
\end{proof}

\begin{prop}\label{prop:alphabandmapsarebetabandmaps}If $B^\alpha$ and  $B^\beta$ are the two bands described above, and $\phi\colon (Y,\bL(B^\beta))\to (Y,\bL(B^\alpha))$ is the diffeomorphism described above, then
\[ F_{B^\alpha}^{\ve{z}}\simeq \phi_* F_{B^\beta}^{\ve{z}}.\]
\end{prop}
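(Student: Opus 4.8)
The plan is to realize the passage from the $\alpha$-band $B^\alpha$ to the $\beta$-band $B^\beta$ as a finite sequence of elementary moves on the band, and to compute the effect of each move on the $\zs$-band map using the relations established in Sections~\ref{sec:mapswhichappear},~\ref{sec:relationsI} and~\ref{sec:furtherrelations}. By construction, $B^\beta$ is isotopic to $B^\alpha$ through an isotopy supported in a neighborhood of $B^\alpha\cup C_1\cup C_2$, which sweeps each of the two feet of the band across one of the basepoints $z_1,z_2$ and across no other basepoint of $\ws\cup\zs$. I would decompose this isotopy into three kinds of pieces: (i) isotopies of the band within the complement of $\ws\cup\zs$; (ii) handleslides and isotopies of a subordinate Heegaard triple; and (iii) the single move of pushing one foot of the band across one of $z_1,z_2$. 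For moves of types (i) and (ii) the map $F_B^{\zs}$ is unchanged, by diffeomorphism invariance of the band maps and by the independence of $F_B^{\zs}$ from the choice of triple subordinate to $B$ (the moves of Lemma~\ref{lem:movesbetweenalphabandtriples}).

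The core of the argument is the analysis of a type-(iii) move. First I would use move (i) to arrange that, near the foot being pushed, the picture is the standard local model of Figure~\ref{fig::117}; then I would build a Heegaard diagram adapted simultaneously to the band before and after the push, in which the two subordinate triples differ by relocating the puncture $z_i$ from one side of the band curves $\alpha_n,\alpha_n'$ to the other. Comparing the triangle counts for this triple and for its image under the diffeomorphism $\tau$ of $Y$ that drags $z_i$ across the band region — via the associativity relations for the holomorphic triangle maps, together with the quasi-stabilized triangle count of Proposition~\ref{prop:singlealphaquasistabtriangle} and the change-of-almost-complex-structure computation underlying Lemma~\ref{lem:mapsforI+agree} (which is exactly the mechanism converting between the two orderings of an adjacent $(w,z)$ pair) — I expect to obtain that pushing a foot across a $\zs$-basepoint changes $F_B^{\zs}$ by post-composition with $\tau_*$. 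Composing the two such moves, one at each foot, yields $F_{B^\alpha}^{\zs}\simeq\psi_*F_{B^\beta}^{\zs}$, where $\psi$ is the product of these two local pieces; since $\psi$ and $\phi$ agree outside a $3$-ball containing a trivial $2$-strand tangle and both restrict to the identity on its boundary, Lemma~\ref{lem:MCGtrivialtangle} forces $\psi\simeq\phi$ rel boundary, so $\psi_*\simeq\phi_*$ and the claim follows.

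The main obstacle I anticipate is the bookkeeping of the distinguished generator $\Theta^{\ws}\in\cHFL^-(\Sigma,\as',\as,\ws,\zs,\sigma,\frs_0)$ — well defined only as a homology class, not a cycle — through the chain of triples and the diffeomorphism, and the verification that at each stage it is carried to the distinguished generator of the new triple; this is what upgrades the triangle-map associativity arguments to honest filtered, equivariant chain homotopies rather than homology-level statements, and it is the step that relies on Remark~\ref{rem:canwind} so that the intermediate triples need only be related to genuinely subordinate ones by handleslides and isotopies. A secondary point to check is that pushing a foot across a $\ws$-basepoint rather than a $\zs$-basepoint leaves $F_B^{\zs}$ unchanged, in agreement with Proposition~\ref{prop:zbandsandquasistab}, so that only the crossings of $z_1$ and $z_2$ contribute and the resulting diffeomorphism is indeed the $z_1\leftrightarrow z_2$ swap $\phi$.
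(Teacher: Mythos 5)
Your approach is genuinely different from the paper's, and it has a gap at the crucial step, the ``type-(iii) move.'' Pushing a single foot of the band across a $\zs$-basepoint produces an intermediate band with one foot in an $\alpha$-subarc and one foot in a $\beta$-subarc; such a band is neither an $\alpha$-band nor a $\beta$-band in the sense of Definition~\ref{def:alpha/betaband}, so $F_B^{\zs}$ is not even defined for it. If instead you push both feet at once, the band changes from $\alpha$-type to $\beta$-type, but then the subordinate Heegaard triples on the two sides of the move are of fundamentally different shapes: an $\alpha$-subordinate triple has the form $(\Sigma,\as',\as,\bs,\ws,\zs)$ with the band curves among the $\alpha$'s, while a $\beta$-subordinate triple has the form $(\Sigma,\as,\bs,\bs',\ws,\zs)$ with the band curves among the $\beta$'s. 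Your claim that the two subordinate triples ``differ by relocating the puncture $z_i$ from one side of the band curves $\alpha_n,\alpha_n'$ to the other'' is not accurate --- relocating a basepoint changes the represented link but does not convert an $\as'\as\bs$-triple into an $\as\bs\bs'$-triple --- and consequently the proposed change-of-almost-complex-structure / neck-stretching comparison is between two objects that do not live on the same Heegaard diagram. The ``$\Theta^{\ws}$ is only a homology class'' issue you flag is real but is not the obstruction; the band-type change is.

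The paper's actual argument sidesteps this entirely by never directly comparing an $\as'\as\bs$-triple against an $\as\bs\bs'$-triple. Instead it inserts an auxiliary birth cobordism $\cB_{\bU,D}^+$ together with an $\alpha$-band $B_0^\alpha$ and a $\beta$-band $B_0^\beta$ whose union is topologically $B^\alpha$, then uses the fact that the birth can be canceled by either band (Proposition~\ref{prop:bandsanddiskstab}), and that an $\alpha$-band map and a $\beta$-band map always commute via the associativity relations for a single quadruple diagram (Proposition~\ref{prop:commutealphaandbetabands}). This is explicitly modeled on Lemma~\ref{lem:alphasurgerymaps=betasurgerymaps}, where a 0-framed unknot canceled by a 3-handle replaces the birth canceled by a band. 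If you want a ``local'' argument in the spirit of what you propose, the right framework is that quadruple-diagram cancellation scheme, not a neck-stretching deformation; the latter has no chance of interpolating between the two triple shapes. Your final appeal to Lemma~\ref{lem:MCGtrivialtangle} to identify the resulting diffeomorphism with $\phi$ is sound and is the same step the paper makes.
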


\begin{proof} The overall idea of the proof is to consider a composition of three cobordisms: a birth cobordism, an $\alpha$-band attachment, and a $\beta$-band attachment. The birth cobordism can be used to cancel either the $\alpha$-band, or the $\beta$-band, leaving just one band attachment, of either type.

 Consider the configuration shown on the top of Figure~\ref{fig::108}. We start with a birth cobordism, which adds the doubly based unknot $(U,w,z)$, where $U=\d D$. There is also an $\alpha$-band $B_0^\alpha$ and a $\beta$-band $B_0^\beta$.  We assume that topologically the union of $B_0^\alpha$, $D$ and $B_0^{\beta}$ is isotopic to the band $B^\alpha$ (though not via an isotopy fixing the basepoints).
  
  Two diffeomorphisms $\phi^1$ and $\phi^2$ are also shown in Figure~\ref{fig::108}, which are the identity outside a neighborhood of the arcs and bands shown.  

\begin{figure}[ht!]
\centering
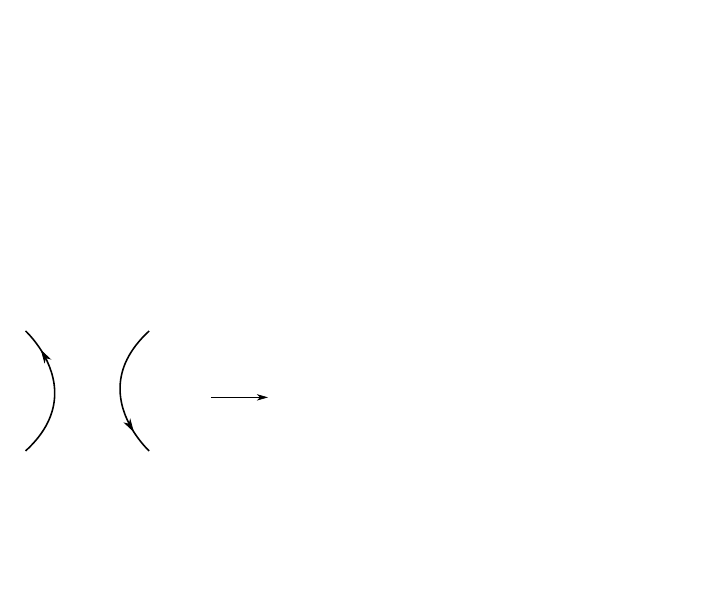
\caption{\textbf{Decomposing a band into a birth cobordism, followed by an $\alpha$-band $B_0^\alpha$ and a $\beta$-band $B_0^\beta$.} The bottom three rows show diffeomorphisms $\phi^1$ and $\phi^2$, which fix all of the basepoints but change the link, as well as basepoint moving maps $\tau^{z_1\to z},$ $\tau^{z\from z_2}$ and $\tau^{z_1\from z_2}$ which fix the link, setwise, but move the basepoints. The orientation of the link is shown.\label{fig::108}}
\end{figure}

First note that 
\begin{equation}
F_{B_0^\beta}^{\zs}\cB^+_{(U,w,z), D}\simeq S_{z,w}^+(\phi_2)_*,\label{eq:basepointmovingmapband12}
\end{equation} by Proposition~\ref{prop:bandsanddiskstab}. We post-compose Equation~\eqref{eq:basepointmovingmapband12} with $T_{w,z_2}^-$ to obtain the relation
\begin{equation}
T_{w,z_2}^-F_{B_0^\beta}^{\zs}\cB^+_{(U,w,z), D}\simeq T_{w,z_2}^-S_{z,w}^+\phi^2_*.
\label{eq:basepointmovingmapband10}
\end{equation}
 By Lemma~\ref{lem:movingbasepointsstep1}, $T_{w,z_2}^-S_{z,w}^+$ is chain homotopic to the basepoint moving map $\tau^{z\from z_2}_*$.  Hence Equation~\eqref{eq:basepointmovingmapband10} becomes
\begin{equation}
T_{w,z_2}^-F_{B_0^\beta}^{\zs}\cB^+_{(U,w,z), D}\simeq \tau^{z\from z_2}_*\phi^2_*. \label{eq:basepointmovingmapband11}
\end{equation}
We post-compose Equation~\eqref{eq:basepointmovingmapband11} with $F_{B_0^\alpha}^{\zs}$ to obtain the relation
\begin{equation}
F_{B_0^\alpha}^{\zs}T_{w,z_2}^-F_{B_0^\beta}^{\zs}\cB^+_{(U,w,z), D}\simeq F_{B_0^\alpha}^{\zs} \tau^{z\from z_2}_*\phi^2_*.
\label{eq:basepointmovingmapband1}
\end{equation}
We now perform the following manipulation:
\begin{equation}
\begin{alignedat}{3}
F_{B_0^\alpha}^{\zs}T_{w,z_2}^-F_{B_0^\beta}^{\zs}\cB^+_{(U,w,z), D}
&\simeq T_{w,z_2}^-F_{B_0^\alpha}^{\zs}F_{B_0^\beta}^{\zs}\cB^+_{(U,w,z), D}&&\qquad\text{(Proposition~\ref{prop:zbandsandquasistab})}\\
&\simeq T_{w,z_2}^-F_{B_0^\beta}^{\zs}F_{B_0^\alpha}^{\zs}\cB^+_{(U,w,z), D} &&  \qquad \text{(Proposition~\ref{prop:commutealphaandbetabands})}\\
&\simeq T_{w,z_2}^-F_{B_0^\beta}^{\zs}S_{w,z}^+ \phi^1_*  &&\qquad \text{(Proposition~\ref{prop:bandsanddiskstab})}\\
&\simeq T_{w,z_2}^-F_{B_0^\beta}^{\zs}S_{z_1,w}^+\tau^{z_1\to z}_* \phi^1_* &&\qquad \text{(Lemma~\ref{lem:mapsforI+agree})}\\
&\simeq T_{w,z_2}^-S_{z_1,w}^+ F_{B_0^\beta}^{\zs}\tau^{z_1\to z}_* \phi^1_* &&\qquad (\text{Proposition~\ref{prop:zbandsandquasistab}})\\
&\simeq \tau^{z_1\from z_2}_* F_{B_0^\beta}^{\zs}\tau^{z_1\to z}_* \phi^1_* && \qquad \text{(Lemma~\ref{lem:movingbasepointsstep1})}
\end{alignedat}
\label{eq:basepointmovingmapband2}
\end{equation}

Combining Equations \eqref{eq:basepointmovingmapband1} and \eqref{eq:basepointmovingmapband2}, we obtain the relation
\begin{equation}
F^{\zs}_{B_0^\alpha}\tau^{z\from z_2}_*\phi^2_*\simeq \tau_*^{z_1\from z_2} F_{B_0^\beta}^{\zs} \tau_*^{z_1\to z} \phi^1_*.
\label{eq:basepointmovingmapband3}
\end{equation}
Finally, we construct a diffeomorphism $\psi$ of $Y$ which moves the link $(L\cup U)(B_0^\alpha,B_0^{\beta})$ into the position of $L(B^\alpha)$, fixes $z_1$ and moves $z$ to the position of $z_2$. This is shown schematically in Figure~\ref{fig::109}. Post-composing Equation~\eqref{eq:basepointmovingmapband3} with $\psi_*$, we have
\begin{equation}
\psi_*F^{\zs}_{B_0^\alpha}\tau^{z\from z_2}_*\phi^2_*\simeq \psi_*\tau_*^{z_1\from z_2} F_{B_0^\beta}^{\zs} \tau_*^{z_1\to z} \phi^1_*.
\end{equation}
Using diffeomorphism invariance of the band maps, it is straightforward to rearrange the diffeomorphisms (Lemma~\ref{lem:MCGtrivialtangle} can be used to simplify things) to see that 
\[
\psi_*F^{\zs}_{B_0^\alpha}\tau^{z\from z_2}_*\phi^2_*\simeq F_{B^\alpha}^{\zs}\qquad \text{and} \qquad \psi_*\tau_*^{z_1\from z_2} F_{B_0^\beta}^{\zs} \tau_*^{z_1\to z} \phi^1_*\simeq \phi_* F_{B^\beta}^{\zs},
\] completing the proof.
\end{proof}

\begin{figure}[ht!]
\centering
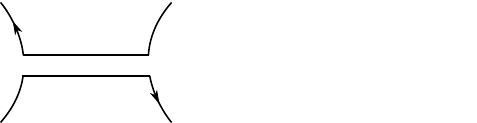
\caption{\textbf{A schematic of the diffeomorphism $\psi$.} The map $\psi$ sends $(L\cup U)(B_0^\alpha,B_0^\beta)$ to $L(B^\alpha)$, fixes $z_1$ and maps $z$ to $z_2$.\label{fig::109}}
\end{figure}

\begin{rem} Proposition~\ref{prop:alphabandmapsarebetabandmaps} can be interpreted similarly to \cite{OSTriangles}*{Lemma~5.2}, which says that one can compute maps induced by surgery on framed links using either left subordinate or right subordinate Heegaard triples. We sketched a proof of this result for link cobordism maps in Lemma~\ref{lem:alphasurgerymaps=betasurgerymaps}.  The argument uses the fact that the 3-handle maps cancel both the left subordinate, and right subordinate maps, for 0-surgery on an unlink. Our proof of Proposition~\ref{prop:alphabandmapsarebetabandmaps} follows from the analogous observation that the birth cobordism map $\cB_{(U,w,z),D}^+$ can be canceled by either an $\alpha$-band map, or a $\beta$-band map.
\end{rem}

The analogous statement also holds for the $\ve{w}$-band maps. Suppose that $B^\alpha$ is an $\alpha$-band and $B^\beta$ is a $\beta$-band resulting from sliding the ends of $B^\alpha$ across the two adjacent $\ve{w}$-basepoints,  $w_1$ and $w_2$. There is a well-defined diffeomorphism $\phi'\colon (Y,\bL(B^\beta))\to (Y,\bL(B^\alpha))$ which moves $w_1$ to $w_2$ along the link (similar to Figure~\ref{fig::10}, but with basepoints $w_1$ and $w_2$ replacing the $z_1$ and $z_2$ which appear in that picture.) We have the following:

\begin{prop}\label{prop:alphabandmapsarebetabandmapstypew}If $B^\alpha$ and $B^\beta$ are the two bands described above, and $\phi'\colon (Y,\bL(B^\beta))\to (Y,\bL(B^\alpha))$ is the diffeomorphism described above, then
\[
F_{B^\alpha}^{\ve{w}}=\phi_*'F_{B^\beta}^{\ve{w}}.
\]
\end{prop}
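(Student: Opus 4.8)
The plan is to reproduce the proof of Proposition~\ref{prop:alphabandmapsarebetabandmaps} line by line, interchanging the roles of the $\ve{w}$- and $\ve{z}$-basepoints. As there, I would factor a band through three elementary pieces: a birth cobordism adding a doubly based unknot $\bU=(U,w,z)$ with a Seifert disk $D$ disjoint from $\bL$, then an $\alpha$-band $B_0^\alpha$ with one foot on $U$ and one foot on $L$, then a $\beta$-band $B_0^\beta$ with one foot on $U$, chosen so that $B_0^\alpha\cup D\cup B_0^\beta$ is topologically isotopic to $B^\alpha$ (not through an isotopy fixing basepoints). The underlying mechanism is unchanged: the birth map $\cB^+_{\bU,D}$ can be cancelled either by an $\alpha$-band map or by a $\beta$-band map, and throughout we use only the type-$\ve{w}$ band maps $F^{\ve{w}}_{B_0^\alpha}$ and $F^{\ve{w}}_{B_0^\beta}$.

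First I would record the two applications of Proposition~\ref{prop:bandsanddiskstab} available here: $F^{\ve{w}}_{B_0^\alpha}\cB^+_{\bU,D}\simeq T_{w,z}^+\phi^1_*$ and $F^{\ve{w}}_{B_0^\beta}\cB^+_{\bU,D}\simeq T_{z,w}^+\phi^2_*$ for diffeomorphisms $\phi^1,\phi^2$ fixing all basepoints. Next I would post-compose the $\beta$-band relation with an appropriate quasi-destabilization $S^-$ and use Lemma~\ref{lem:movingbasepointsstep2} to identify the composite $S^-T_{z,w}^+$ with a basepoint-moving diffeomorphism map $\tau_*$ moving one of the $\ve{w}$-basepoints adjacent to an end of $B^\alpha$. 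Post-composing with $F^{\ve{w}}_{B_0^\alpha}$, I would then run the chain of manipulations that parallels Equation~\eqref{eq:basepointmovingmapband2}: commute $F^{\ve{w}}_{B_0^\alpha}$ past the quasi-destabilization using Proposition~\ref{prop:wbandsandquasistab}, commute the two band maps using Proposition~\ref{prop:commutealphaandbetabands}, apply Proposition~\ref{prop:bandsanddiskstab} again to replace $F^{\ve{w}}_{B_0^\alpha}\cB^+_{\bU,D}$ by $T_{w,z}^+\phi^1_*$, rewrite this $T^+$ using the basepoint-moving relation of Lemma~\ref{lem:mapsforII+agree}, commute $F^{\ve{w}}_{B_0^\beta}$ past the resulting quasi-stabilization using Proposition~\ref{prop:wbandsandquasistab}, and finish with one more application of Lemma~\ref{lem:movingbasepointsstep2}. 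This should yield a chain homotopy of the shape $F^{\ve{w}}_{B_0^\alpha}\,\tau_*\,\phi^2_*\simeq \tau'_*\,F^{\ve{w}}_{B_0^\beta}\,\tau''_*\,\phi^1_*$ with explicit basepoint-moving maps.

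To finish, I would build a self-diffeomorphism $\psi$ of $Y$ carrying $(L\cup U)(B_0^\alpha,B_0^\beta)$ onto $L(B^\alpha)$, fixing $w_1$ and carrying the auxiliary basepoint $w$ to the position of $w_2$ (the exact analog of Figure~\ref{fig::109}), post-compose the previous relation with $\psi_*$, and then shuffle the accumulated diffeomorphisms using diffeomorphism-invariance of the band maps, with Lemma~\ref{lem:MCGtrivialtangle} supplying the uniqueness up to isotopy rel boundary of the relevant diffeomorphisms of a trivial two-strand tangle; the two sides then collapse to $F^{\ve{w}}_{B^\alpha}$ and $\phi'_*F^{\ve{w}}_{B^\beta}$. (As in Proposition~\ref{prop:alphabandmapsarebetabandmaps}, every identity here holds up to filtered, equivariant chain homotopy, so the conclusion is really $F^{\ve{w}}_{B^\alpha}\simeq \phi'_*F^{\ve{w}}_{B^\beta}$.)

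The hard part will be purely bookkeeping. Under the $\ve{w}\leftrightarrow\ve{z}$ interchange, every type-$S$ quasi-stabilization becomes a type-$T$ one and conversely, the maps $\Phi_w$ and $\Psi_z$ trade places, and one must honor the right-to-left ordering convention of Section~\ref{subsec:convention} when deciding which of the two flavors of Lemma~\ref{lem:movingbasepointsstep1}/Lemma~\ref{lem:movingbasepointsstep2}, of Lemma~\ref{lem:mapsforI+agree}/Lemma~\ref{lem:mapsforII+agree}, and of Proposition~\ref{prop:zbandsandquasistab}/Proposition~\ref{prop:wbandsandquasistab} applies at each step. Since all of these auxiliary results already come in both flavors, no new holomorphic-curve analysis is needed; the only genuine verification is that the non-adjacency hypotheses of Propositions~\ref{prop:zbandsandquasistab} and~\ref{prop:wbandsandquasistab} (that the quasi-stabilization pair does not separate an end of the relevant band from an adjacent $\ve{w}$-basepoint) hold at each stage, exactly as they did in the type-$\ve{z}$ argument.
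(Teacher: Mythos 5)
Your proposal is correct and matches the paper's own proof exactly: the paper proves this proposition by the one-line remark that one should "rewrite the proof of Proposition~\ref{prop:alphabandmapsarebetabandmaps}, switching the $S$'s and $T$'s and switching the $\ve{w}$'s and $\ve{z}$'s," and your write-up is precisely a careful unpacking of that sentence, correctly identifying the $\ve{w}\leftrightarrow\ve{z}$ mirror images of each auxiliary result (Proposition~\ref{prop:bandsanddiskstab}, Lemma~\ref{lem:movingbasepointsstep2} in place of Lemma~\ref{lem:movingbasepointsstep1}, Lemma~\ref{lem:mapsforII+agree} in place of Lemma~\ref{lem:mapsforI+agree}, Proposition~\ref{prop:wbandsandquasistab} in place of Proposition~\ref{prop:zbandsandquasistab}) while noting that the symmetric ingredients (Proposition~\ref{prop:commutealphaandbetabands}, Lemma~\ref{lem:MCGtrivialtangle}, the diffeomorphism $\psi$) carry over unchanged.
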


Proposition~\ref{prop:alphabandmapsarebetabandmapstypew} can be proven by rewriting the proof of Proposition~\ref{prop:alphabandmapsarebetabandmaps}, switching the $S$'s and $T$'s and switching the $\ve{w}$'s and $\ve{z}$'s.

\subsection{Commutation of band maps II}

We now revisit the question of commuting band maps. Our previous result, Proposition~\ref{prop:commutealphaandbetabands}, only addressed the case that one band was an $\alpha$-band, and the other was a $\beta$-band. Equipped with Proposition~\ref{prop:alphabandmapsarebetabandmaps}, we can now address the case that both bands are $\alpha$-bands, or both bands are $\beta$-bands:

\begin{prop}\label{prop:alphabandscommute}Suppose that $B_1$ and $B_2$ are two $\alpha$-bands attached to the link $\bL=(L,\ve{w},\ve{z})$ such that $\bL(B_1),$ $ \bL(B_2)$ and $\bL(B_1,B_2)$ all have at least two basepoints on each link component. Then 
\[F_{B_1}^{\ve{z}}F_{B_2}^{\ve{z}}\simeq F_{B_2}^{\ve{z}} F_{B_1}^{\ve{z}} \qquad \text{and} \qquad F_{B_1}^{\ve{w}}F_{B_2}^{\ve{w}}\simeq F_{B_2}^{\ve{w}} F_{B_1}^{\ve{w}}.\] Suppose further that there exist points $\ve{d}\subset L\setminus(\ve{w}\cup \ve{z})$ such that each component of $L\setminus (\ws\cup \zs)$ contains exactly one point of $\ve{d}$,  $B_2$ has its ends in regions of $L\setminus \ve{d}$ containing $\ve{w}$-basepoints, and $B_1$ has its ends in regions of $L\setminus \ve{d}$ containing $\ve{z}$-basepoints. Then
\[F^{\ve{z}}_{B_1} F_{B_2}^{\ve{w}}\simeq F_{B_2}^{\ve{w}} F_{B_1}^{\ve{z}}.\]
\end{prop}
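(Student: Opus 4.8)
The plan is to reduce all three commutation statements to the associativity relations for holomorphic triangle maps, exactly as in the proof of Proposition~\ref{prop:commutealphaandbetabands}, supplemented by Proposition~\ref{prop:alphabandmapsarebetabandmaps} to convert $\alpha$-bands into $\beta$-bands (or vice versa) whenever the two bands in question are of the same type. The key point is that once one band is an $\alpha$-band and the other is a $\beta$-band, the two band maps are defined using holomorphic triangle counts on Heegaard diagrams whose attaching curves can be arranged to be compatible, so that a single Heegaard quadruple computes both composite maps and the associativity relation gives the commutation up to filtered, equivariant chain homotopy.

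First I would handle the statement $F_{B_1}^{\ve{z}} F_{B_2}^{\ve{z}}\simeq F_{B_2}^{\ve{z}} F_{B_1}^{\ve{z}}$ for two $\alpha$-bands. Pick a $\beta$-band $B_2'$ obtained from $B_2$ by sliding its ends across the two $\ve{z}$-basepoints adjacent to $B_2$, with the associated diffeomorphism $\phi\colon (Y,\bL(B_2'))\to (Y,\bL(B_2))$ from Proposition~\ref{prop:alphabandmapsarebetabandmaps}, so that $F_{B_2}^{\ve{z}}\simeq \phi_* F_{B_2'}^{\ve{z}}$. Since $B_1$ and $B_2$ have disjoint ends, one can choose $B_2'$ so that $\phi$ is supported away from $B_1$; then $F_{B_1}^{\ve{z}}\phi_*\simeq \phi_* F_{B_1'}^{\ve{z}}$ where $B_1'=\phi^{-1}(B_1)$ is $\alpha$-isotopic to $B_1$, and in fact $\phi$ can be taken to fix $B_1$ (using Lemma~\ref{lem:MCGtrivialtangle} to pin down the relevant diffeomorphisms up to isotopy) so $F_{B_1}^{\ve{z}}\phi_*\simeq \phi_* F_{B_1}^{\ve{z}}$. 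Now $B_1$ is an $\alpha$-band and $B_2'$ is a $\beta$-band, so Proposition~\ref{prop:commutealphaandbetabands} gives $F_{B_1}^{\ve{z}} F_{B_2'}^{\ve{z}}\simeq F_{B_2'}^{\ve{z}} F_{B_1}^{\ve{z}}$; conjugating back by $\phi_*$ and using that $\phi_*$ commutes with $F_{B_1}^{\ve{z}}$ yields the claim. The identical argument with $\ve{w}$ replacing $\ve{z}$ and Proposition~\ref{prop:alphabandmapsarebetabandmapstypew} replacing Proposition~\ref{prop:alphabandmapsarebetabandmaps} gives $F_{B_1}^{\ve{w}} F_{B_2}^{\ve{w}}\simeq F_{B_2}^{\ve{w}} F_{B_1}^{\ve{w}}$.

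For the mixed statement $F_{B_1}^{\ve{z}} F_{B_2}^{\ve{w}}\simeq F_{B_2}^{\ve{w}} F_{B_1}^{\ve{z}}$, the hypotheses on $\ve{d}$ are exactly what allow $B_1$ and $B_2$ to be realized simultaneously as bands whose ends lie in disjoint subarcs of $L\setminus(\ve{w}\cup\ve{z})$, with $B_1$ in $\ve{z}$-containing arcs and $B_2$ in $\ve{w}$-containing arcs. The approach is again to convert $B_1$ (say) into a $\beta$-band $B_1'$ using Proposition~\ref{prop:alphabandmapsarebetabandmaps} with a diffeomorphism $\phi$ supported away from $B_2$, so that $F_{B_1}^{\ve{z}}\simeq \phi_* F_{B_1'}^{\ve{z}}$ and $\phi_*$ commutes with $F_{B_2}^{\ve{w}}$; then apply Proposition~\ref{prop:commutealphaandbetabands} to the $\beta$-band $B_1'$ and the $\alpha$-band $B_2$ (with arbitrary choices $\ve{o}_1=\ve{z}$, $\ve{o}_2=\ve{w}$) to get $F_{B_1'}^{\ve{z}} F_{B_2}^{\ve{w}}\simeq F_{B_2}^{\ve{w}} F_{B_1'}^{\ve{z}}$, and conjugate back. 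The main obstacle, and the step requiring the most care, is verifying that the band-slide diffeomorphism $\phi$ from Proposition~\ref{prop:alphabandmapsarebetabandmaps} can genuinely be chosen to be supported in a neighborhood disjoint from the other band and to fix that band up to the relevant isotopy; this is where the disjointness-of-ends hypotheses and Lemma~\ref{lem:MCGtrivialtangle} enter, and where one must be careful that $\bL(B_1'),\bL(B_2),\bL(B_1',B_2)$ all still have at least two basepoints per component so that all the intermediate band maps are defined. Once that bookkeeping is in place, the rest is a formal chain of the already-established relations.
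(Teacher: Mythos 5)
Your overall strategy is the same as the paper's: convert one of the two bands to the opposite type (via Proposition~\ref{prop:alphabandmapsarebetabandmaps} or \ref{prop:alphabandmapsarebetabandmapstypew}), apply Proposition~\ref{prop:commutealphaandbetabands} (which rests on associativity), and conjugate back. Your handling of the mixed case is fine: the hypotheses on $\ve{d}$ do exactly what you say.

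There is, however, a genuine gap in the same-type case (both $\ve{z}$ or both $\ve{w}$). You assert that ``since $B_1$ and $B_2$ have disjoint ends, one can choose $B_2'$ so that $\phi$ is supported away from $B_1$.'' This does not follow from disjointness of the ends. The diffeomorphism $\phi$ from Proposition~\ref{prop:alphabandmapsarebetabandmaps} is supported in a neighborhood of the arcs running from the ends of $B_2$ to the two adjacent $\ve{z}$-basepoints, and these arcs can very well contain ends of $B_1$ even though the ends of $B_1$ and $B_2$ are themselves disjoint. When that happens, $\phi$ cannot be taken to fix $B_1$, and the conjugation step breaks down. Moreover, by symmetry one cannot simply switch to converting $B_1$ instead: one can build configurations where \emph{neither} band's $\ve{z}$-adjacent arcs are disjoint from the other band. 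What saves the argument is that such configurations are ruled out by the hypothesis that $\bL(B_1,B_2)$ has at least two basepoints on each component (together with orientability of the bands); with that hypothesis one can show that, after possibly relabeling $B_1$ and $B_2$, the two components of $L\setminus(\ws\cup\zs\cup B_1)$ which meet $B_1$ and contain a $\ve{z}$-basepoint are disjoint from $B_2$. The paper's proof fixes which band to slide using exactly this observation. You need to include some version of this verification --- the fact that a convertible band exists is a nontrivial topological input, not an automatic consequence of disjoint ends.
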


\begin{proof} We will prove the claim when both bands are designated as type-$\zs$, or when one is type-$\ws$ and one is type-$\zs$. The claim when both are type-$\ws$ is a simple modification of the case that they are both type-$\zs$, so we will omit the proof in that case.

In the case that we wish to designate $B_1$ as type-$\zs$ and $B_2$ as type-$\ws$, the existence of the points $\ve{d}\subset L\setminus (\ws\cup \zs)$ ensures that if we write $C_1$ and $C_2$ for the two components of $L\setminus (\ws\cup \zs\cup B_1)$ whose closures intersect both $B_1$ and $\zs$, then $C_1$ and $C_2$ are disjoint from $B_2$.

In the case that we designate both bands as type-$\zs$, then after possibly reordering $B_1$ and $B_2$, we may assume that the two components, $C_1$ and $C_2$, of $L\setminus (\ws \cup \zs\cup B_1)$ whose boundaries contain a $\zs$-basepoint and intersect $B_1$ are disjoint from $B_2$.  The fact that one of $B_1$ and $B_2$ satisfies this condition follows from the fact that the bands are oriented and all components of $\bL(B_1,B_2)$ have at least two basepoints.

In both cases, if we write  $C_1$ and $C_2$ for the component sof $L\setminus (\ws\cup \zs\cup B_1)$ whose boundaries intersect $B_1$ and $\zs$, we have that $C_1$ and $C_2$ are disjoint from $B_2$. We will combine the proofs of both cases of interest, and show that under the assumption that such arcs $C_1$ and $C_2$ exist, we have
\[F_{B_1}^{\zs} F_{B_2}^{\os}\simeq F_{B_2}^{\os} F_{B_1}^{\zs},\] for either $\os\in \{\ws,\zs\}$, 

Let $B_1'$ be the $\beta$-band obtained by isotoping the ends of $B_1$ along the arcs $a_1$ and $a_2$ so that they cross the two $\zs$-basepoints adjacent to $B_1$. Let 
\[
\phi\colon (Y,L(B_1))\to (Y,L(B_1'))
\] be the specialization of the diffeomorphism pictured in Figure~\ref{fig::10} to our current situation. Note that $\phi$ fixes the band $B_2$, so the diffeomorphism $\phi$ induces a diffeomorphism of pairs 
\[
\phi^{B_2}\colon (Y,L(B_1,B_2))\to (Y,L(B_1',B_2)).
\] Note that as diffeomorphisms of $Y$, $\phi$ and $\phi^{B_2}$ are equal. We perform the following manipulation:
\begin{align*}F^{\ve{z}}_{B_1}F^{\ve{o}}_{B_2}&\simeq \phi_*^{B_2} F^{\ve{z}}_{B_1'} F^{\ve{o}}_{B_2}&&(\text{Proposition~\ref{prop:alphabandmapsarebetabandmaps}})\\ 
&\simeq \phi_*^{B_2}  F^{\ve{o}}_{B_2}F^{\ve{z}}_{B_1'}&&(\text{Proposition~\ref{prop:commutealphaandbetabands}})\\
&\simeq F^{\ve{o}}_{B_2}\phi_* F^{\ve{z}}_{B_1'}&&(\phi\text{ fixes } B_2)\\
&\simeq F_{B_2}^{\ve{o}} F^{\ve{z}}_{B_1}&&(\text{Proposition~\ref{prop:alphabandmapsarebetabandmaps}}).
\end{align*} The proof is complete.
\end{proof}

\section{Parametrized Kirby decompositions of link cobordisms}
\label{sec:modelcobs-parametrizedcobs}

In this section, we define a way of decomposing a link cobordism into elementary pieces, which we call a parametrized Kirby decomposition, borrowing the terminology of \cite{JCob}.

\subsection{Parametrized link cobordisms}

\begin{define} If $(W,\Sigma)\colon (Y,L)\to (Y',L')$ is an undecorated link cobordism, a \emph{parametrization} of $(W,\Sigma)$, is a collection of pairwise disjoint framed spheres $\bS$ in $Y$, together with a diffeomorphism of link cobordisms
\[
\Phi\colon \cW(Y,L,\bS)\to (W,\Sigma).
\]
such that $\Phi|_{\{0\}\times Y}=\id_Y$. Furthermore, we assume $\bS$ is either a collection of framed spheres in $Y\setminus L$, or is a single framed $0$-sphere along $L$. In the above expression, $\cW(Y,L,\bS)$ denotes the trace link cobordism obtained by attaching 4-dimensional handles along $\bS$ to $Y\setminus L$  (Section~\ref{subsec:modelhandlecobsawayfromL}), or a attaching a 4-dimensional 1-handle containing a  2-dimensional  1-handle if $\bS$ is a framed  0-sphere along $L$ (Section~\ref{sec:modelcompounthandlebandcobordisms}).
\end{define}

The following basic fact will be useful:

\begin{rem}\label{rem:canonicaldiffeomorphism} Suppose $\bS$ and $\bS'$ are disjoint collections of framed spheres in $Y$, and assume for notational convenience that $\bS$ and $\bS'$ are in the complement of $L$. Then the link cobordisms $\cW(Y,L,\bS\cup \bS')$, $\cW(Y(\bS),L,\bS')\circ \cW(Y,L,\bS),$ and $\cW(Y(\bS'),L,\bS)\circ \cW(Y,L,\bS')$ are all diffeomorphic. The diffeomorphism is canonically specified up to isotopy fixing $\{0\}\times Y$, and preserving $\Sigma$. To see this, it is sufficient to specify the diffeomorphism from the $\cW(Y,L,\bS\cup \bS')$ to $\cW(Y(\bS),L,\bS')\circ \cW(Y,L,\bS)$. Notice that we can think of the  second as being equal to the first with an extra copy of $[0,1]\times Y(\bS)$ inserted. A choice of bicollar neighborhood of $ \{0\}\times Y(\bS)$ allows us to specify such a diffeomorphism, up to isotopy. Since any two bicollar neighborhoods are isotopic, a diffeomorphism between the two cobordisms is specified uniquely up to isotopy fixing $\{0\}\times Y$ and preserving $\Sigma$.
\end{rem}

Now suppose $(W,\Sigma)\colon (Y_1,L_1)\to (Y_2,L_2)$ and $(W',\Sigma')\colon (Y_2,L_2)\to (Y_3,L_3)$ are two link cobordisms with parametrizations
\[
\Phi\colon \cW(Y_1,L_1,\bS_1)\to (W,\Sigma),\qquad \text{ and } \qquad \Phi'\colon \cW(Y_2,L_2,\bS_2)\to(W',\Sigma').
\]
 For notational simplicity, assume that no components of $\bS_1$ or $\bS_2$ are framed 0-spheres along $L_1$ or $L_2$. Let $\bS_2'\subset Y_1(\bS_1)$ denote $\Phi|_{Y_1(\bS_1)}^{-1}(\bS_2)$. The diffeomorphism $\Phi|_{Y_1(\bS_2)}\colon Y(\bS_2)\to Y_2$ induces a diffeomorphism $\tilde{\Phi}\colon W(Y_1(\bS_1), \bS_2')\to W(Y_2,\bS_2).$

 We define the \emph{concatenation} of the two parametrizations
\[
\Phi'*\Phi\colon  \cW(Y_1(\bS_1), L_1, \bS_2')\circ \cW(Y_1,L_1,\bS_1)\to (W',\Sigma')\circ (W,\Sigma),
\] 
by setting $\Phi'* \Phi$ to be $\Phi$ on $\cW(Y_1,L_1,\bS_1)$ and $\Phi'\circ \tilde{\Phi}$ on $\cW(Y_1(\bS_1), L_1, \bS_2')$. If $\bS_2'\subset Y_1\setminus \bS_1$, then in light of Remark~\ref{rem:canonicaldiffeomorphism}, the concatenated diffeomorphisms induce a parametrization
\[
\Phi'*\Phi\colon \cW(Y_1,L_1,\bS_1\cup \bS_2')\to (W',\Sigma')\circ (W,\Sigma)
\] 
which is well-defined up to isotopies (of link cobordisms) fixing $Y_1\times \{0\}$. The concatenation of the two parametrizations can also be defined in the case that $\bS_1$ or $\bS_2$ is a framed link along $L_1$ or $L_2$, with only the obvious notational changes to the domain of $\Phi'* \Phi$.

Analogous to \cite{JClassTQFT}, we need a notion of elementary parametrized cobordisms, corresponding to link cobordisms which have a Morse function with at most one critical value of $f|_{W\setminus \Sigma},f|_{\Sigma}$ or $f|_{\cA}$:
 
\begin{define}\label{def:parametrizedelementarycob}A tuple $\cW=(W,\cF^\sigma,\Phi,\bS)$ is called an \emph{elementary, parametrized link cobordism} if $(W,\cF^\sigma)\colon (Y_1,\bL_1)\to (Y_2,\bL_2)$ is a decorated link cobordism with $\d W=-Y_1\sqcup Y_2$, $\cF=(\Sigma, \cA)$ is a surface with divides, and $\bS$ is a (possibly empty) collection of framed spheres (0-, 1-, or 2-spheres away from $L$, or 0-spheres along $L$) in $Y_1$ which are pairwise disjoint. Furthermore, we assume that exactly one of the following holds:

\begin{enumerate}[leftmargin=20mm, ref= 
\textrm{\arabic*}, label = \textrm{($\mathcal{EPC}$-\arabic*)}:]
\item \label{elemencobtype1}  $\bS=\bS_{\varnothing}$ and $\Phi\colon \cW(Y_1,L_1,\bS_{\varnothing})\to (W,\Sigma)$ is a parametrization which is the identity on $Y_1\times \{0\}$ and is defined up to isotopies of link cobordisms which fix $Y_1\times \{0\}$. Furthermore, each arc of  $\cA$ goes from the incoming boundary to the outgoing boundary.

\item \label{elemencobtype3} $\bS=\bS_{\varnothing}$ and $\Phi$ is a parametrization
\[\Phi\colon \cW(Y_1,L_1,\bS_{\varnothing})\to (W,\Sigma),\] which is the identity on $Y_1\times \{0\}$, and is defined up to isotopies fixing $Y_1\times \{0\}$. Furthermore, all divides of $\cA$, except for exactly one arc $A$, go from the incoming boundary to the outgoing boundary. There are four subtypes:
\begin{enumerate}
\item[($\mathcal{EPC}$-$2_{S^+}$):] The arc $A$ has two ends on the outgoing boundary, and the arc bounds a bigon of type-$\ws$.
\item[($\mathcal{EPC}$-$2_{S^-}$):] The arc $A$ has two ends on the incoming boundary,  and the arc bounds a bigon of type-$\ws$.
\item[($\mathcal{EPC}$-$2_{T^+}$):] The arc $A$ has two ends on the outgoing boundary,  and the arc bounds a bigon of type-$\zs$.
\item[($\mathcal{EPC}$-$2_{T^-}$):] The arc $A$ has two ends on the incoming boundary,  and the arc bounds a bigon of type-$\zs$.
\end{enumerate}

\item \label{elemencobtype4}$\bS$ is a framed 0-sphere, embedded along $L_1$, and $\Phi$ is a parametrization
\[\Phi\colon \cW(Y_1,L_1,\bS)\to (W,\Sigma),\] which is the identity on $Y_1\times \{0\}$, and such that the dividing set  $\Phi^{-1}(\cA)\subset \Sigma(L_1,\bS)$ is disjoint from $B\subset \Sigma(L_1,\bS)$. The parametrization is well-defined up to isotopies which fix $Y_1\times \{0\}$ and the point $(0,0)\in B$. Furthermore, all of the arcs in $\cA$ go from the incoming boundary to the outgoing boundary.

The type ($\ve{w}$ or $\ve{z}$) of each region of $\Sigma(L_1,\bS)\setminus \Phi^{-1}(\cA)$ is well-defined. Hence there are two subtypes:

\begin{enumerate}
\item[($\mathcal{EPC}$-$3_{\ws}$):] The region containing the band is a component of $\Sigma_{\ws}$.
\item[($\mathcal{EPC}$-$3_{\zs}$):\hspace{.8mm}] The region containing the band is a component of $\Sigma_{\zs}$.
\end{enumerate}

\item \label{elemencobtype2} $\bS\neq \bS_{\varnothing}$ and $\bS$ is a framed 0-sphere, 1-dimensional link or 2-sphere in $Y_1\setminus L_1$ and $\Phi$ is a parametrization
\[\Phi\colon \cW(Y_1,L_1,\bS)\to (W,\Sigma)\] which is the identity on $Y_1\times \{0\}$, and which is defined up to isotopies of link cobordisms which fix $Y_1\times \{0\}$. Furthermore, each arc of the divides $\cA$ goes from the incoming boundary to the outgoing boundary.
\end{enumerate} 

\end{define}

The dividing sets for the 4-subtypes of ($\mathcal{EPC}$-3) cobordisms are illustrated in Figure~\ref{fig::62}.

\begin{rem}\label{rem:typewelldefined}In Definition~\eqref{def:parametrizedelementarycob}, the diffeomorphism $\Phi$  is of undecorated link cobordisms, and does not necessarily respect the divides beyond what is stated in ($\mathcal{EPC}$-\ref{elemencobtype4}) link cobordisms. Furthermore,  for elementary parametrized cobordisms of type  ($\mathcal{EPC}$-\ref{elemencobtype1}),  ($\mathcal{EPC}$-\ref{elemencobtype3}) and  ($\mathcal{EPC}$-\ref{elemencobtype2}), the parametrization $\Phi$ is only defined up to isotopies which fix $Y_1\times \{0\}$. For elementary parametrized cobordisms of type  ($\mathcal{EPC}$-\ref{elemencobtype4}), it is defined up to isotopies which fix $Y_1\times \{0\}$ and the point $(0,0)\in B$.
\end{rem}

We make the following definition, borrowing the terminology from \cite{JCob} and \cite{JClassTQFT}:

\begin{define}\label{def:parametrizedKirbydecomp}If $(W,\cF)\colon (Y,\bL)\to (Y',\bL')$ is a decorated link cobordism, we define a \emph{parametrized Kirby decomposition} $\cK$ of $(W,\cF)$  to be the following collection of data:

\begin{enumerate}
\item A decomposition of $(W,\cF)$  as 
\[(W,\cF)= (W_n,\cF_n) \cup_{Y_n} \cdots   \cup_{Y_1} (W_0,\cF_0)\] where $(W_i,\cF_i)$ is a decorated link cobordism with $\d W_i=-Y_i\sqcup Y_{i+1}$ and $\d \cF_i=-L_i\sqcup L_{i+1}$, for 3-manifolds with links $(Y_i,L_i)$ and surface with divides $\cF_i$ (all embedded in $W$), such that $Y_0=Y$ and $Y_{n+1}=Y'$. Furthermore $\cA$ intersects each component of each $L_i$ non-trivially and transversally.
\item A choice of basepoints $\ve{w}_i$ and $\ve{z}_i$ on each $L_i$, with one basepoint per component of $L_i\setminus \cA$, of the same type as the subsurface of $\Sigma\setminus \cA$ which contains it.
\item A framed sphere or link, $\bS_i\subset Y_i$ and a parameterizing diffeomorphism  $\Phi_i$, making each $(W_i,\cF_i,\Phi_i,\bS_i)$ an elementary, parametrized link cobordism.

\end{enumerate}
We assume further that there is only one term of $\cK$ of type ($\mathcal{EPC}$-\ref{elemencobtype2}) which contains a 1-dimensional framed link. Additionally, for convenience, we assume that all terms of the decomposition of type ($\mathcal{EPC}$-\ref{elemencobtype2}) occur after all of the terms of type ($\mathcal{EPC}$-\ref{elemencobtype3}) or ($\mathcal{EPC}$-\ref{elemencobtype4}), and that terms of the decomposition of type ($\mathcal{EPC}$-\ref{elemencobtype2}) appear with non-decreasing index (i.e. the 0-spheres occur before the 1-dimensional links, which occur before the 2-spheres).
\end{define}

\section{Moves between parametrized Kirby decompositions}
\label{sec:morsetheoyr-weakequivalences}
 In this section, we define a notion of equivalence between two parametrized Kirby decompositions, which we call \emph{Cerf equivalence}. The main theorem of this section is Theorem~\ref{prop:allPKDSigmasweaklyequivalent}, which states that any two parametrized Kirby decompositions of a decorated link cobordism $(W,\cF)$ are Cerf equivalent. Most of this section is devoted to proving Theorem~\ref{prop:allPKDSigmasweaklyequivalent}.

\subsection{Cerf equivalences of parametrized Kirby decompositions}
\label{subsec:defweakequiv}

In this section we provide the definition of Cerf equivalence (Definition~\ref{def:weaklyequivalent}). Before we can state the definition, we describe several constructions which will feature in the definition.

We first describe a special notion of isotopy between two parametrized Kirby decompositions, which will be one of the moves. Suppose that $\cK$ is a parametrized Kirby decomposition of the decorated link cobordism $(W,\cF)\colon (Y_1,\bL_1)\to (Y_2,\bL_2)$. Write $\cF=(\Sigma,\cA)$ and suppose that  $\phi\colon (W,\Sigma)\to (W,\Sigma)$ is a diffeomorphism which is the identity on $Y_1$. We now describe a decomposition $\phi_*\cK$ of $(W,\cF)$ into parametrized cobordisms, which we note may fail to be a parametrized Kirby decomposition in the sense of Definition~\ref{def:parametrizedKirbydecomp}. If 
\[
(W,\Sigma)=(W_n,\Sigma_n)\cup_{Y_n} \cdots \cup_{Y_1}  (W_0,\Sigma_0)
\] is the decomposition of undecorated link cobordisms associated to $\cK$, then we define the decomposition associated to $\phi_* \cK$ to be
\begin{equation}
 (\phi(W_n),\phi(\Sigma_n)) \cup_{\phi(Y_n)} \cdots \cup_{\phi(Y_1)}(\phi(W_0),\phi(\Sigma_0)).\label{eq:pushforwarddecomposition}
\end{equation} The framed spheres $\bS_i$ and  parametrizations $\Phi_i$ of $\cK$ can also be pushed forward along $\phi$ to give diffeomorphisms
\[\phi_*(\Phi_i):=\phi\circ \Phi_i\circ \phi^{-1}\colon \cW(\phi(Y_i),\phi(L_i),\phi(\bS_i))\to(\phi(W_i), \phi(\Sigma_i)).\] Note that $\phi_* \cK$ may fail to be a parametrized Kirby decomposition, because in general there is no reason to expect $(\phi(W_i), \phi(\Sigma_i), \phi(\Sigma_i)\cap \cA)$ to be an elementary parametrized cobordism in the sense of  Definition~\ref{def:parametrizedelementarycob}. With this in mind, we make the following definition:

\begin{define}\label{def:strongisotopy}
Suppose that $(W,\cF)$ is a link cobordism with $\cF=(\Sigma, \cA)$. If $\cK$ and $\cK'$ are two parametrized Kirby decompositions of $(W,\cF)$, we say that $\cK$ and $\cK'$ are related by an \emph{$\cA$-adapted isotopy} if there is a 1-parameter family of diffeomorphisms $\phi_t\colon (W,\Sigma)\to (W,\Sigma)$ such that following are satisfied:
\begin{enumerate}
\item $\phi_t|_{Y_1}=\id$ for all $t$.
\item $(\phi_t)_* \cK$ is a valid parametrized Kirby decomposition for all $t$.
\item $(\phi_1)_* \cK=\cK'$.
\end{enumerate}
\end{define}

\begin{rem}\label{rem:strongisotopy}Suppose $\phi_t$ is an $\cA$-adapted isotopy of $(W,\cF)$ and $(W_i,\cF_i,\Phi_i,\bS_i^0)$ is a term of $\cK$ corresponding to a framed 0-sphere along $L_i$. Writing $p$ for $\Phi_i(0,0)$, where $(0,0)$ center of the band in the cobordism $\cW(Y_i,L_i,\bS_i^0)$, then $\phi_t(p)$ is a path on $\Sigma$. For $\phi_t$ to induce an $\cA$-adapted isotopy, this path cannot intersect an arc of $\cA$, since at a point of time when $\phi_t(p)\in \cA$, the diffeomorphism $(\phi_t)_* \Phi_i$ fails to be a valid parametrization for an elementary parametrized cobordism, as it maps the center of the band onto an arc of $\cA$.
\end{rem}

Additionally, we need the following definition related to birth-death singularities of Morse functions (compare \cite{JClassTQFT}*{Section~1.3}):

\begin{define}\label{def:handlecanceldiffeo}Suppose $\bS\subset Y$ is a framed $k$-sphere, and $\bS'\subset Y(\bS)$ is a framed $(k+1)$-sphere which intersects the belt sphere of $\bS$, transversely at a single point. We say a diffeomorphism
\[
\Psi\colon \cW(Y(\bS),L_i,\bS')\circ \cW(Y,L, \bS)\to \cW(Y, L,\bS_{\varnothing}),
\] is \emph{formed by a handle cancellation} if it is constructed by the following procedure: take two Morse functions and gradient-like vector fields on $\cW(Y(\bS),L,\bS')$ and  $\cW(Y,L, \bS)$  which each have exactly one critical point, and induce the identity parametrization of the two cobordisms. In Section~\ref{subsec:MorsedataKirbydecomp} we will describe exactly how a Morse function and gradient-like vector field induce a parametrizing diffeomorphism. Furthermore, we will see that such Morse functions and gradient-like vector fields exist in Lemma~\ref{lem:findMorsedata}. The stable manifold of the higher critical point is transverse to the unstable manifold of the lower critical point and there is a single flowline connecting the two critical points. Write $(f,v)$ for the Morse function and gradient-like vector field obtained by stacking the Morse function and gradient-like vector fields on the two cobordisms. Using the argument from \cite{JClassTQFT}*{Section 1.3}, there is a homotopically unique path of functions with gradient-like vector fields $(f_t,v_t)_{t\in [0,1]}$ with $f_0=f$, which are constant outside a neighborhood of the flowline between the two critical points, which realizes a birth-death singularity. The pair $(f_1,v_1)$ on $\cW(Y(\bS),L,\bS')\circ \cW(Y,L, \bS)$ determines a diffeomorphism $\Psi\colon \cW(Y,L,\bS_{\varnothing})\to \cW(Y(\bS),L,\bS')\circ \cW(Y,L,\bS),$ which is the identity on the incoming boundary, $\{0\}\times Y$, and is well-defined up to isotopy relative $\{0\}\times Y$, since $(f_1,v_1)$ is determined up to homotopy (through paths of Morse functions and gradient-like vector fields, with no intermediate singularities).
\end{define}

We can now give our definition of equivalence between two parametrized Kirby decompositions.

\begin{define}\label{def:weaklyequivalent}We say two parametrized Kirby decompositions  are \emph{Cerf equivalent} if they are related by a sequence of the following set of moves and their inverses:
\begin{enumerate}
\item \label{def:weakequiv:strongisotopy}($\cA$-adapted isotopy): See Definition~\ref{def:strongisotopy}.
\item \label{def:weakequiv:isotopyawayfromL}(Isotopy of an attaching sphere $\bS_i$ away from $L_i$): Suppose $\cW_i=(W_i,{\cF}_i,\Phi_i,\bS_i)$ is a term in $\cK$ and suppose $\bS_i^t$ is a 1-parameter family of framed $k$-spheres in $Y_i\setminus L_i$ starting at $\bS_i$ and ending at a framed sphere $\bS_i'$. Let $d_i^t$ denote an extension of this isotopy to $Y_i$ which fixes $L_i$ pointwise. Define
\[
D\colon \cW(Y_i,L_i,\bS_i)\to \cW(Y_i,L_i,\bS_i')
\] 
via the formula
\[
D(t,x)=(t,d_i^t(x))
\] 
for $(t,x)\in [0,1]\times Y$. We define $D$ to map the handle regions on the two trace cobordisms diffeomorphically onto each other, under their identifications as $D^{k+1}\times D^{3-k}$. There is an induced diffeomorphism $d_i^{\bS_i}$ from $Y_i(\bS_i)$ to $Y_i(\bS_i')$, which $D$ restricts to. The elementary parametrized cobordism $\cW_i$ is replaced with an elementary cobordism $\cW_i'=(W_i, {\cF}_i, \Phi_i',\bS_i')$ where $\Phi_i'\circ D=\Phi_i$.

\item \label{def:weakequiv:isotopyalongL}(Isotopy of a framed 0-sphere $\bS_i$ along $L_i$): Suppose $\cW_i=(W_i,{\cF}_i,\Phi_i,\bS_i)$ is a term of $\cK$ and that $\bS_i^t$ is a 1-parameter family of framed 0-spheres along $L_i$ from $\bS_i$ to $\bS_i'$. The diffeomorphism sending $\bS_i$ to $\bS_i^t$ extends to an isotopy $d_i^t$ of $Y_i$ which preserves $L_i$. The isotopy $d_i^t$ defines a diffeomorphism 
\[
D\colon \cW(Y_i,L_i,\bS_i)\to \cW(Y_i,L_i,\bS_i')
\] 
via the formula
\[
D(t,x)=\begin{cases}(t,d_i^t(x)) &\text{if } (t,x)\in  [0,1]\times Y\\
 (t,d_i^1(x))& \text{if } (t,x)\in [1,2]\times  Y(\bS),
 \end{cases}
 \]
 and $D(z)=z$ if $z\in D^1\times D^3$ (the 1-handle).
 
  Note that since $d_i^t$ preserves $L_i$  setwise, the map $D$ sends $\Sigma(L_i,\bS)$ to $\Sigma(L_i,\bS')$, diffeomorphically, and maps the bands to each other as the identity, under the identification of both as $[-1,1]\times [-1,1]$.

The parametrized cobordism $(W_i,{\cF}_i,\Phi_i,\bS_i)$ is replaced with $\cW_i'=(W_i,{\cF}_i, \Phi_i',\bS_i')$, where $ \Phi_i'\circ D=\Phi_i$.
 
\item \label{def:weakequiv:addtrivialcylinder}(Addition of a trivial cylinder): If $\cW_i=(W_i,{\cF}_i,\Phi_i,\bS_i)$ and $\cW_{i+1}=(W_{i+1},{\cF}_{i+1},\Phi_{i+1},\bS_{i+1})$ are adjacent terms of $\cK$, with at least one of $\cW_i$ and $\cW_{i+1}$ being of type ($\mathcal{EPC}$-\ref{elemencobtype1}), then the pair $\cW_i$ and $\cW_{i+1}$ is replaced with a single $\cW_i'=(W_{i+1}\cup W_{i},{\cF}_{i+1}\cup {\cF}_i,\Phi_i',\bS_i')$. Under the canonical identification of 
\[
\cW(Y_{i+1},L_{i+1}, \bS_{i+1})\cup_{\Phi_i} \cW(Y_i, L_i, \bS_i) \iso \cW(Y_i, L_i, \bS_i\cup \pi_{Y_i}(\Phi_i^{-1}(\bS_{i+1}))),
\] where 
\[
\pi_{Y_i}\colon [0,1]\times Y_i\to Y_i
\] 
denotes projection.  The maps $\Phi_i'$ and the concatenation $\Phi_{i+1}*\Phi_i$  are isotopic through diffeomorphisms fixing $\{0\}\times Y_i$ (or $(\{0\}\times Y_i)\cup \{(0,0)\}$, where $(0,0)$ denotes the center of the band).

\item \label{def:weakequiv:changeorientationofS} (Reversal of the orientation of a framed sphere): Suppose $\cW_i=(W_i,{\cF}_i,\Phi_i,\bS_i)$ is an elementary parametrized cobordism with $\bS_i$ a non-empty framed $0$-sphere, 2-sphere, or 1-dimensional link, in $Y_i\setminus L_i$. We view $\bS_i$ as a map from a $S^{k}\times D^{3-k}$ to $Y_i$, and give $S^k\times D^{3-k}\subset D^{k+1}\times D^{3-k}\subset \R^4$ the coordinates $(x,y,w,z)$ induced by $\R^4$. We replace the map $\bS_i$ with the map $\bar{\bS}_i$ formed by pre-composing $\bS_i$ with the map
\[
\sigma(x,y,w,z)= (-x,y,w,-z).
\]

If $\bS_i$ is a framed 0-sphere along $L_i$, we can replace $\bS_i$ with the map $\bar{\bS}_i$ induced by pre-composing $\bS_i$ with the map
\[
\sigma(x,y,w,z)= (-x,-y,w,z).
\]

The parameterizing diffeomorphism $\Phi$ is also changed by precomposition with $\sigma$ on the handle region, $D^{k+1}\times D^{3-k}$.

\item \label{def:weakequiv:index12birthdeath}(Index 1/2 birth-death singularity away from $\Sigma$): Suppose $\cW_{i}=(W_i,{\cF}_i,\Phi_i,\bS_i)$ and $\cW_{i+1}=(W_{i+1},{\cF}_{i+1},\Phi_{i+1},\bS_{i+1})$ are adjacent terms of the decomposition, both of type ($\mathcal{EPC}$-\ref{elemencobtype2}), such that $\cW_i$ has framed 0-sphere $\bS_i$ away from $L_i$ and parametrization 
\[
\Phi_i\colon \cW(Y_i,L_i,\bS_i)\to (W_i,\Sigma_i),
\]
 and $\cW_{i+1}$ has framed 1-dimensional link $\bS_{i+1}$, and parametrization 
 \[
 \Phi_{i+1}\colon \cW(Y_{i+1},L_{i+1},\bS_{i+1})\to (W_{i+1},\Sigma_{i+1}).
 \]
  Suppose the image of the belt sphere of $\bS_{i}$ in $Y_{i+1}$ (under the parametrization) intersects a single component, $\bK$, of $\bS_{i+1}$ exactly once. The move is a composition of following two moves:
\begin{itemize}
\item We split the component $\bK$ of $\bS_{i+1}$ off from the rest of the components of $\bS_{i+1}$ by replacing $\cW_{i+1}\circ \cW_i$ with $\cW_{i+1}'\circ \cW_{i+1}''\circ \cW_i$ where 
 \[
 \cW_{i+1}'=(W_{i+1}', {\cF}_{i+1}',\Phi_{i+1}', \bS_{i+1}') \quad \text{and} \quad \cW_{i+1}''=(W_{i+1}'', {\cF}_{i+1}'',\Phi_{i+1}'', \bK)
 \] 
 and $\pi_{Y_{i+1}}((\Phi_{i+1}'')^{-1}(\bS_{i+1}'))=\bS_{i+1}\setminus \bK.$ 
\item We cancel $\bK$ and $\bS_i$ by replacing $\cW_{i+1}'\circ \cW_{i+1}''\circ \cW_i$ with $\cW_{i+1}'\circ \cW_{i}'$ where $\cW_i'=(W_i', {\cF}_i', \Phi_i', \bS_{\varnothing})$ is an elementary parametrized cobordism of type ($\mathcal{EPC}$- \ref{elemencobtype1}). Furthermore,
\[
\Phi_i'\circ \Psi\simeq \Phi_{i+1}''* \Phi_i,
\] 
and
\[
\Psi\colon \cW(Y_i(\bS_i),L_i,\Phi_i^{-1}(\bK))\circ \cW(Y_i,L_i,\bS_i)\to \cW(Y_i,L_i, \bS_{\varnothing})
\] 
is a diffeomorphism obtained by handle cancellation (Definition~\ref{def:handlecanceldiffeo}).
\end{itemize}

\item \label{def:weakequiv:index23birthdeath}(Index 2/3 birth-death singularity away from $\Sigma$): This move is defined analogously to Move \eqref{def:weakequiv:index12birthdeath}, but instead we start with a pair of adjacent terms $\cW_i=(W_{i},{\cF}_i,\Phi_i,\bS_i)$ and $\cW_{i+1}=(W_{i+1},{\cF}_{i+1},\Phi_{i+1},\bS_{i+1})$ where $\bS_i$ is a framed 1-dimensional link, and $\bS_{i+1}$ is a framed 2-sphere, which cancels a component $\bK$ of $\bS_i$. The pair is replaced with two elementary parametrized cobordisms, $\cW_{i+1}'$ and $\cW_i'$, with $\cW_{i+1}'$ being a trivial cobordism and $\cW_{i}'$ corresponding to surgery on $\bS_i\setminus \bK$.
\item \label{def:weakequiv:criticalpointswitchawayfromSigma} (Critical value switch between framed 0- or 2-spheres, away from $\Sigma$): If $\cW_i=(W_i,{\cF}_i,\Phi_i,\bS_i)$ and $\cW_{i+1}=(W_{i+1},{\cF}_{i+1},\Phi_{i+1},\bS_{i+1})$ are both of type ($\mathcal{EPC}$-\ref{elemencobtype2}), for 0- or 2-spheres $\bS_i\subset Y_{i}$ and $\bS_{i+1}\subset Y_{i+1}$ such that under the images of $\bS_i$ and $\pi_{Y_{i}}(\Phi_i^{-1}(\bS_{i+1}))$ in $Y_{i}$ are disjoint, then we replace the two composition $\cW_{i+1}\circ \cW_i$ with a composition $\cW_{i+1}'\circ \cW_i'$ where 
\[
\cW_{i}'=(W_i',{\cF}_i',\Phi_{i}', \bS_i')\qquad \text{ and }\qquad \cW_{i+1}'=(W_{i+1}',{\cF}_{i+1}',\Phi_{i+1}', \bS_{i+1}')
\]
 are both  of type ($\mathcal{EPC}$-\ref{elemencobtype2}), and satisfy $ W_{i+1}\cup W_i=W_{i+1}'\cup W_i'$ and $\cF_{i+1}\cup \cF_{i} ={\cF}_{i+1}'\cup  {\cF}_{i}'$. Furthermore
\[
\Phi_i(\bS_i')=\bS_{i+1}\qquad \text{and} \qquad \Phi_i'(\bS_i)=\bS_{i+1}'.
\] Furthermore, under the canonical identifications
\[
\cW(Y_i(\bS_i), L_i, \bS_{i}')\circ \cW(Y_i, L_i, \bS_i)\iso \cW(Y_{i}(\bS_i'), L_i, \bS_i)\circ \cW(Y_i, L_i, \bS_{i}'),
\]
 we have that the concatenations
$\Phi_{i+1}* \Phi_i$ and $\Phi_{i+1}'* \Phi_i'$ are isotopic through diffeomorphisms which fix $Y_i\times \{0\}$.

\item \label{def:weakequiv:handleslide}(Handleslide amongst the framed 1-dimensional spheres): If $\cW_i=(W_i,{\cF}_i,\Phi_i,\bS_i)$ is of type ($\mathcal{EPC}$-\ref{elemencobtype2}), and $\bS_i$ is a framed 1-dimensional link, then handlesliding a component of $\bS_i$ across another component yields a framed link $\bS_i'$, and a diffeomorphism $D\colon \cW(Y_i,L_i,\bS'_i)\to \cW(Y_i,L_i,\bS_i)$ fixing $Y_1\times \{0\}$, well-defined up to isotopy\footnote{An isotopy of an attaching map yields a diffeomorphism of the spaces obtained by attaching a handle, well-defined up to isotopy. Handlesliding a handle across another corresponds to an isotopy of the attaching sphere of a handle on the boundary of the 4-manifold obtained by attaching another handle. As such, the diffeomorphism $D$ resulting from handlesliding is well-defined up to isotopy.}. The parametrized cobordism $\cW_i$ is replaced by
\[
\cW_i'=(W_i,{\cF}_i, \Phi_i\circ D, \bS_i').
\]

\item \label{def:weakequiv:birthdeathalongA} (Birth-death singularity along $\cA$): If $\cW_i=(W_i,{\cF}_i,\Phi_i,\bS_{\varnothing})$ and $\cW_{i+1}=(W_{i+1},{\cF}_{i+1},\Phi_{i+1},\bS_{\varnothing})$ are elementary parametrized link cobordisms, both of type ($\mathcal{EPC}$-\ref{elemencobtype3}), such that the two critical arcs of $\cF_{i}$ and $\cF_{i+1}$  intersect at exactly one point on the common boundary $W_i\cap W_{i+1}$, then we replace the pair $\cW_i$ and $\cW_{i+1}$ with 
\[
\cW_i'=(W_{i+1}\cup W_i,\cF_{i+1}\cup \cF_i,\Phi_{i+1}* \Phi_i,\bS_{\varnothing}),
\] 
an elementary, parametrized link cobordism of type ($\mathcal{EPC}$-\ref{elemencobtype1}).

\item \label{def:weakequiv:switchalongA} (Critical point switch along $\cA$): If  $\cW_i=(W_i,{\cF}_i,\Phi_i,\bS_{\varnothing})$ and $\cW_{i+1}=(W_{i+1},{\cF}_{i+1},\Phi_{i+1},\bS_{\varnothing})$  are both of type~($\mathcal{EPC}$-\ref{elemencobtype3}), such that the critical arcs of $\bF_i$ and $\cF_{i+1}$ are disjoint, then $\cW_{i+1}\circ \cW_{i}$ is replaced by $\cW_{i+1}'\circ \cW_{i}'$ where 
\[
\cW_i'=(W_i',{\cF}_i',\Phi_i',\bS_{\varnothing})\qquad  \text{and} \qquad \cW_{i+1}'=(W_{i+1}', {\cF}_{i+1}',\Phi_{i+1}',\bS_{\varnothing})
\] are both of type~($\mathcal{EPC}$-\ref{elemencobtype3}), and $W_{i+1}\cup W_{i}=W_{i+1}'\cup W_{i}'$ and $\cF_{i+1}\cup \cF_i=\cF_{i+1}'\cup \cF_{i}'$. If $a_i$ and $a_{i+1}$ are the dividing arcs of $\cF_{i+1}\cup \cF_i$ which contain the critical arcs of $\cF_i$ and $\cF_{i+1}$, respectively, then $a_i\neq   a_{i+1}$. Furthermore, $a_i$ contains the critical arc of $\cF_{i+1}'$ and $a_{i+1}$ contains the critical arc of $\cF_{i}'$. Lastly, $\Phi_{i+1}* \Phi_i$ and $\Phi_{i+1}'* \Phi_i'$ are isotopic through diffeomorphisms which fix $\{0\}\times Y_i$.

\item \label{def:weakequiv:switchalongSigma} (Critical point switch along $\Sigma$): If $\cW_i=(W_i,{\cF}_i,\Phi_i,\bS_i)$ and $\cW_{i+1}=(W_{i+1},{\cF}_{i+1},\Phi_{i+1},\bS_{i+1})$ are both of type~($\mathcal{EPC}$-\ref{elemencobtype4}), then the pair is replaced by a pair
\[
\cW_i'=(W_i',{\cF}_{i}',\Phi_i',\bS_i')\qquad \text{and}\qquad \cW_{i+1}'=(W_{i+1}',{\cF}_{i+1}',\Phi_{i+1}',\bS_{i+1}'),
\]both of type ($\mathcal{EPC}$-\ref{elemencobtype4}), with $W_{i+1}\cup W_i=W_{i+1}'\cup W_i'$ and ${\cF}_{i+1}\cup {\cF}_{i}={\cF}_{i+1}'\cup {\cF}_{i}'$, and
\[
\pi_{Y_i}(\Phi_i^{-1}(\bS_{i+1}))=\bS_i'\qquad \text{and} \qquad \pi_{Y_i}((\Phi_i')^{-1}(\bS_{i+1}')=\bS_i.
\]
Furthermore, under the canonical identifications
\[
\cW(Y_i(\bS_i), L_i(\bS_i), \bS_{i}')\circ \cW(Y_i, L_i, \bS_i)\iso \cW(Y_{i}(\bS_i'), L_i(\bS_i'), \bS_i)\circ \cW(Y_i, L_i, \bS_{i}'),
\] 
the maps
$\Phi_{i+1}* \Phi_i$ and $\Phi_{i+1}'* \Phi_i'$ are isotopic through diffeomorphisms of link cobordisms which fix $\{0\}\times Y_i,$ as well as the center points of the two bands.

\item \label{def:weakequiv:switchbetweenAandSigma}(Critical point switch between critical points of $\cA$ and $\Sigma$): If $\cW_i=(W_i,{\cF}_i,\Phi_i,\bS_{\varnothing})$ is of type ($\mathcal{EPC}$-\ref{elemencobtype3}) and $\cW_{i+1}=(W_{i+1},{\cF}_{i+1},\Phi_{i+1},\bS_{i+1})$ is of type ($\mathcal{EPC}$-\ref{elemencobtype4}), then we replace the expression $\cW_{i+1}\circ \cW_i$  with $\cW_{i+1}'\circ \cW_{i}'$ where
\[
\cW_i'=(W_i',{\cF}_{i}',\Phi_i',\bS_i'),\qquad \text{and}\qquad \cW_{i+1}'=(W_{i+1}',{\cF}_{i+1}',\Phi_{i+1}',\bS_{\varnothing}).
\] 
Furthermore $\cW_{i}'$ is of type ($\mathcal{EPC}$-\ref{elemencobtype4}) and $\cW_{i+1}'$ is of type ($\mathcal{EPC}$-\ref{elemencobtype3}). Furthermore, $W_{i+1}\cup W_{i}=W_{i+1}'\cup W_{i}'$ and ${\cF}_{i+1}\cup {\cF}_{i}={\cF}_{i+1}'\cup {\cF}_{i}'$, and $\Phi_i(\{1\}\times \bS_{i}')=\bS_{i+1}$. Furthermore, under the canonical identification (well-defined up to isotopy)
\[
\cW(Y_i, L_i,\bS_i')\circ \cW(Y_i, L_i, \bS_{\varnothing})\iso \cW(Y_{i}(\bS_i'), L_i, \bS_{\varnothing})\circ \cW(Y_i, L_i, \bS_{i}'),
\]
 we have that
$\Phi_{i+1}* \Phi_i$ and $\Phi_{i+1}'* \Phi_i'$ are isotopic through diffeomorphisms which fix $(\{0\}\times Y_i)\cup \{(0,0)\}$.

\item \label{def:weakequiv:critcrossesA}(A critical point of $\Sigma$ crossing $\cA$): If $\cW_i=(W_i,{\cF}_i,\Phi_i,\bS_{\varnothing})$ is of type ($\mathcal{EPC}$-$\ref{elemencobtype3}_{S^+}$) and $\cW_{i+1}=(W_{i+1},{\cF}_{i+1},\Phi_{i+1},\bS_{i+1})$  is of type ($\mathcal{EPC}$-$\ref{elemencobtype4}_{\ve{w}}$), then $\cW_{i+1}\circ \cW_{i}$ is replaced by $\cW_{i+1}'\circ \cW_{i}'$, where 
\[
\cW_i'=(W_i',{\cF}_{i}',\Phi_i',\bS_{\varnothing})\qquad \text{and}\qquad \cW_{i+1}'=(W_{i+1}',{\cF}_{i+1}',\Phi_{i+1}',\bS'_{i+1}),
\]
 and $\cW_i'$ is of type ($\mathcal{EPC}$-$\ref{elemencobtype3}_{T^+}$) and $\cW_{i+1}'$ is of type ($\mathcal{EPC}$-$\ref{elemencobtype4}_{\ve{z}}$).  Furthermore $W_{i+1}\cup W_{i}=W_{i+1}'\cup W_{i}'$ and ${\cF}_{i+1}\cup {\cF}_{i}={\cF}_{i+1}'\cup {\cF}_{i}'$. Additionally, $\pi_{Y_i}(\Phi_i^{-1}(\bS_{i+1}))$ and $\pi_{Y_i}((\Phi_i')^{-1}(\bS_{i+1}'))$ are equal. The maps $\Phi_{i+1}* \Phi_i$ and $\Phi_{i+1}'* \Phi_i'$ are isotopic diffeomorphisms of link cobordisms, relative to $\{0\}\times Y_i$, however the isotopy between $\Phi_{i+1}* \Phi_i$ and $\Phi_{i+1}'* \Phi_i'$ does not fix the point $(0,0)$ of the band. Instead the image of $(0,0)\in \Sigma(L_i,\bS)$ (where $\bS= \pi_{Y_i}(\Phi_i^{-1}(\bS_{i+1}))$) crosses  a single arc of $\cA$, transversely, at a single point. See Figure~\ref{fig::57}.

\end{enumerate}
\end{define}

The remainder of this section is devoted to proving the following theorem, which will be used to prove invariance of the link cobordism maps:

\begin{thm}\label{prop:allPKDSigmasweaklyequivalent}If $(W,\cF)\colon (Y_1,\bL_1)\to (Y_2,\bL_2)$ is a decorated link cobordism  and each component of $W$ and each component of $\cF$ intersect $Y_1$ and $Y_2$ non-trivially, then any two parametrized Kirby decompositions of $(W,\cF)$ are Cerf equivalent.
\end{thm}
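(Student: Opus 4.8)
The plan is to reduce the statement to a Cerf-theoretic statement about Morse functions with gradient-like vector fields on the pair $(W, \cF)$, and then invoke the standard machinery of parametrized Morse theory (in the style of \cite{JClassTQFT} and \cite{JTNaturality}), keeping careful track of the divides $\cA$. First I would make precise the correspondence between parametrized Kirby decompositions and Morse data: given a parametrized Kirby decomposition $\cK$ of $(W,\cF)$, I would explain how to build a triple $(f,v,\cA)$ where $f\colon W\to [0,1]$ is a Morse function with $f^{-1}(i) = Y_i$, $v$ is a gradient-like vector field, $f|_\Sigma$ and $f|_{\cA}$ are Morse (with the appropriate genericity), and each critical point of $f$, $f|_\Sigma$, or $f|_\cA$ lies at a distinct level and corresponds to exactly one elementary parametrized cobordism in the list ($\mathcal{EPC}$-1) through ($\mathcal{EPC}$-4). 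The ($\mathcal{EPC}$-2) pieces correspond to interior critical points of $f$ disjoint from $\Sigma$; the ($\mathcal{EPC}$-3) pieces to critical points of $f|_\Sigma$ away from $\cA$; the ($\mathcal{EPC}$-4) pieces to critical points of $f|_\cA$ (equivalently, index-0 critical points of $f|_\Sigma$ where the surface splits off a band across $\cA$); and ($\mathcal{EPC}$-1) to trivial intervals. I would cite Lemma~\ref{lem:findMorsedata} and the surrounding discussion in Section~\ref{subsec:MorsedataKirbydecomp} to know such Morse data exists, and to recover a parametrized Kirby decomposition from any such Morse data.

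Second, I would observe that the moves in Definition~\ref{def:weaklyequivalent} are exactly the elementary moves of parametrized Morse theory for the stratified space $(W;\Sigma;\cA)$, translated into the language of framed spheres and parametrizations. $\cA$-adapted isotopy (Move~\ref{def:weakequiv:strongisotopy}) corresponds to ambient isotopy of the Morse data preserving the stratification; Moves~\ref{def:weakequiv:isotopyawayfromL}, \ref{def:weakequiv:isotopyalongL}, \ref{def:weakequiv:handleslide} correspond to isotopies/handleslides of attaching spheres; Moves~\ref{def:weakequiv:index12birthdeath}, \ref{def:weakequiv:index23birthdeath}, \ref{def:weakequiv:birthdeathalongA} correspond to birth-death singularities of $f$, of $f$ along $\Sigma$, and of $f|_\Sigma$ along $\cA$; Moves~\ref{def:weakequiv:criticalpointswitchawayfromSigma}, \ref{def:weakequiv:switchalongA}, \ref{def:weakequiv:switchalongSigma}, \ref{def:weakequiv:switchbetweenAandSigma} correspond to crossing changes of critical values within the various strata and between strata; and Move~\ref{def:weakequiv:critcrossesA} corresponds to a critical point of $f|_\Sigma$ crossing $\cA$ (which forces the $S^\pm \leftrightarrow T^\pm$ and $\ws\leftrightarrow\zs$ relabeling, as in Figure~\ref{fig::57}). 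Moves~\ref{def:weakequiv:addtrivialcylinder}, \ref{def:weakequiv:changeorientationofS} are the bookkeeping moves (inserting trivial levels, reorienting a sphere). The core of the proof is then: given two sets of Morse data $(f_0,v_0,\cA)$ and $(f_1,v_1,\cA)$ inducing the two given parametrized Kirby decompositions, connect them by a generic path $(f_t,v_t)$ of Morse data on $(W;\Sigma;\cA)$; such a path exists because the space of Morse data on a stratified space is connected (this uses that each component of $W$ and of $\cF$ meets both $Y_1$ and $Y_2$, so no closed components appear, exactly the hypothesis of the theorem). Along a generic path the only degenerations are birth-deaths and critical-value crossings within and between the strata, each of which realizes one of the moves above; an additional argument is needed to normalize the order of the ($\mathcal{EPC}$-2) pieces (0-spheres before 1-links before 2-spheres, only one 1-link piece) and to put all ($\mathcal{EPC}$-3)/($\mathcal{EPC}$-4) pieces before the ($\mathcal{EPC}$-2) pieces — this is a standard rearrangement using the critical-value-switch moves plus cancellation of complementary handles, analogous to \cite{JCob}*{Section 5} and \cite{JClassTQFT}.

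The steps, in order, would be: (1) set up the Morse-data $\leftrightarrow$ parametrized-Kirby-decomposition dictionary and verify it is surjective up to the moves; (2) prove the space of admissible Morse data on $(W;\Sigma;\cA)$ — with prescribed behavior near $Y_1\sqcup Y_2$ — is nonempty and connected, using the no-closed-component hypothesis; (3) analyze a generic 1-parameter family, classifying the codimension-1 degenerations stratum by stratum (interior of $W$; along $\Sigma$ away from $\cA$; along $\cA$; and the "transfer" degeneration where a critical point of $f|_\Sigma$ meets $\cA$), matching each to a move; (4) handle the ordering/normalization of handle types so that the resulting decompositions are honest parametrized Kirby decompositions at every stage, absorbing the needed handle cancellations and critical-value switches; (5) assemble these into a finite sequence of moves connecting $\cK_0$ to $\cK_1$. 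The main obstacle I expect is step~(3)–(4) for the "transfer" degeneration and the interaction of the divide-adapted condition with birth-deaths: one must check that when a surface critical point passes through $\cA$, or when a band's center point $(0,0)$ is dragged across an arc of $\cA$ (Remark~\ref{rem:strongisotopy}), the resulting change is genuinely Move~\ref{def:weakequiv:critcrossesA} and not something outside the list, and that the $S/T$ and $\ws/\zs$ relabelings are forced in the way Figure~\ref{fig::57} indicates. Making the local models of these degenerations precise, and checking that each intermediate triple is a valid parametrized Kirby decomposition (in particular that divides always meet each link component transversally and nontrivially), is the delicate part; everything else is a faithful but lengthy transcription of classical Cerf theory for manifolds with a stratifying surface and divides.
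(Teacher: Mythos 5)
Your proposal reproduces the paper's own strategy: translate parametrized Kirby decompositions into nice Morse data on the stratified space $(W,\Sigma,\cA)$ via Lemma~\ref{lem:findMorsedata}, connect the two data sets by a generic path whose codimension-one degenerations are controlled (this is the content of the chain of lemmas culminating in Lemma~\ref{lem:sufficientlynicepath}, which also bakes in the ordering constraints you isolate as a separate normalization step by building them into the definition of $\ve{M}^{\nice}_{\cA}(W,\Sigma)$), and then match each degeneration to one of Moves~\eqref{def:weakequiv:strongisotopy}--\eqref{def:weakequiv:critcrossesA} using Lemma~\ref{lem:nicepath->isotopy} to convert degeneracy-free subintervals of the path into $\cA$-adapted isotopies. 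One transcription slip worth correcting: in the paper's numbering ($\mathcal{EPC}$-\ref{elemencobtype3}) corresponds to critical points of $f|_{\cA}$ (quasi-stabilizations), ($\mathcal{EPC}$-\ref{elemencobtype4}) to index-1 critical points of $f|_\Sigma$ (bands), and ($\mathcal{EPC}$-\ref{elemencobtype2}) to critical points of $f$ away from $\Sigma$ (4-dimensional handles), so you have swapped two of these roles; and quasi-stabilization pieces are not ``index-0 critical points of $f|_\Sigma$'' as your parenthetical says but rather have a single critical point of $f|_\cA$ and none at all of $f$ or $f|_\Sigma$ --- neither slip affects the soundness of the approach.
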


\subsection{Morse functions and adapted coordinates}
\label{subsec:spacesofadaptedcoords}

Suppose that $X$ is a smooth manifold with a Morse function $f\colon X\to \R$ and $p\in X$ is an index $k$ critical point of $f$. If $U\subset X$ is a precompact set, let $\Coor_p(U)$ denote the set of oriented local coordinates $\vec{x}=(x_1,\dots, x_n)$ defined on $U$, centered at $p$, which extend smoothly to a neighborhood of $\cl(U)\subset X$. We will write $\Coor_p(U,f)$ for the subset of $\Coor_p(U)$ of coordinates $\vec{x}=(x_1,\dots, x_n)$ where
\[f=f(p)-(x_1^2+\cdots +x_k^2)+(x_{k+1}^2+\cdots+ x_n^2).\]  The sets $\Coor_p(U)$ and $\Coor_p(U,f)$ can be given the $C^\infty$ topology.

If $X$ and $Y$ are open subsets of $\R^n$, both containing $\vec{0}$, and $X$ is precompact, we will write $\Diff'((X,\vec{0}), (Y,\vec{0}))$ for the space of maps from $X$ to $Y$ which fix $\vec{0}$, are diffeomorphisms onto their image, and which extend smoothly to a neighborhood of $\cl (X)$ in $\R^n$.

We will write $F_{k,n-k}^y:\R^n\to \R$ for the function
\[
F_{k,n-k}^{y}(\vec{u}):=y-(u_1^2+\cdots u_k^2)+(u_{k+1}^2+\cdots +u_{n}^2),
\]
where $\vec{u}=(u_1,\dots, u_n)$ denotes a point in $\R^n$. Note that if $\vec{x}\in \Coor_p(U)$, then $F_{k,n-k}^{f(p)}\circ \vec{x}=f$ if and only if $\vec{x}\in \Coor_p(U,f)$.

The map $F_{k,n-k}^y$ satisfies the equation 
\[
F_{k,n-k}^y(\vec{u})=y+\vec{u}^{\top}\cdot I_{k,n-k}\cdot \vec{u},
\]
 where $I_{k,n-k}$ is the diagonal matrix whose first $k$ entries are $-1$ and the last $n-k$ are $+1$. The indefinite orthogonal group $O(k,n-k)$ is defined as the set of $n\times n$ matrices $M$ over $\R$ which satisfy $M^\top\cdot  I_{k,n-k}\cdot  M=I_{k,n-k}$. If $0<k<n$, then $O(k,n-k)$ has 4 components, which can be related by reversing the orientation of one or both of the subspaces $\R^k\times \{\vec{0}\}$, $\{\vec{0}\}\times \R^{n-k}\subset \R^k\times \R^{n-k}$. If $0<k<n$, then $\SO(k,n-k)$, the set of matrices in $O(k,n-k)$ with determinant $+1$ (equivalently the matrices in $O(k,n-k)$ whose action preserves the orientation of $\R^n$), has two components. The two components are related by reversing the orientations of $\R^k\times \{\vec{0}\}$ and $\{\vec{0}\}\times \R^{n-k}$ simultaneously.

 The following fact is essentially well known (cf. \cite{CerfTheory}*{pg. 168}), and will be useful to us.

\begin{lem}\label{lem:linearization}Suppose that there are coordinates $\vec{x}_0\in \Coor_p(U,f)$ such that $\vec{x}_0(U)$ is a convex subset of $\R^n$. Then the set $\Coor_p(U,f)$ is homotopy equivalent to $\SO(k,n-k)$.
\end{lem}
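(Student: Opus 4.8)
The plan is to show that $\Coor_p(U,f)$ deformation retracts onto the subspace of \emph{linear} admissible coordinate changes, which can then be identified with a subset of $\SO(k,n-k)$, and finally to verify that this subset is in fact all of $\SO(k,n-k)$ and that the inclusion is a homotopy equivalence. Fix the reference coordinates $\vec{x}_0$, which we may use to identify $U$ with the convex open set $V:=\vec{x}_0(U)\subset\R^n$ and $f$ with $F^{f(p)}_{k,n-k}$. Under this identification, an element of $\Coor_p(U,f)$ is precisely a map $\phi\in\Diff'((V,\vec 0),(\R^n,\vec 0))$ with $F^{f(p)}_{k,n-k}\circ\phi=F^{f(p)}_{k,n-k}$, i.e. a coordinate change preserving the standard quadratic form $Q(\vec u)=\vec u^{\top}I_{k,n-k}\vec u$, and which also preserves the orientation of $\R^n$ (since elements of $\Coor_p(U)$ are oriented coordinates).

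First I would set up the standard retraction $\phi\mapsto\phi_t$ for $t\in(0,1]$, $\phi_t(\vec u):=\tfrac1t\phi(t\vec u)$, extended by $\phi_0:=D\phi_{\vec 0}$ at $t=0$; this is the classical trick (as in Cerf) for pushing a germ of a diffeomorphism to its linearization. Because $V$ is convex and contains $\vec 0$, $t\vec u\in V$ whenever $\vec u\in V$ and $t\in[0,1]$, so $\phi_t$ is well-defined on $V$, depends smoothly on $t$ including at $t=0$ (write $\phi(\vec v)=D\phi_{\vec 0}\vec v+\int_0^1(1-s)D^2\phi_{s\vec v}(\vec v,\vec v)\,ds$ and substitute $\vec v=t\vec u$ to see the $t\to 0$ limit is smooth), and each $\phi_t$ is a diffeomorphism onto its image fixing $\vec 0$. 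The key point is that the constraint is preserved: $Q(\phi(t\vec u))=Q(t\vec u)=t^2Q(\vec u)$, and since $Q$ is homogeneous of degree $2$, $Q(\phi_t(\vec u))=\tfrac1{t^2}Q(\phi(t\vec u))=Q(\vec u)$; at $t=0$ this says $Q(D\phi_{\vec 0}\vec u)=Q(\vec u)$, i.e. $D\phi_{\vec 0}\in O(k,n-k)$. Orientation-preservation is preserved along the homotopy as well, so $D\phi_{\vec 0}\in\SO(k,n-k)$. Thus $\phi\mapsto\phi_t$ is a deformation retraction of $\Coor_p(U,f)$ onto the subspace $\cL$ of linear maps, which is exactly the subgroup of $\SO(k,n-k)$ consisting of matrices $M$ with $M(V)\supset$ nothing in particular — but actually every $M\in\SO(k,n-k)$ restricts to an element of $\Diff'((V,\vec 0),(\R^n,\vec 0))$ preserving $Q$, so $\cL=\SO(k,n-k)$ as a topological space.

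The remaining issue, and the one I expect to be the main obstacle, is the bookkeeping at the endpoint $t=0$: one must check that the evaluation map $\phi\mapsto D\phi_{\vec 0}$ is continuous in the $C^\infty$ topology and that the homotopy $(t,\phi)\mapsto\phi_t$ is jointly continuous up to and including $t=0$, which requires the Taylor-with-remainder estimate above to be controlled uniformly on $\cl(V)$ — this is where the hypothesis that $\vec x_0$ extends smoothly past $\cl(U)$ and that $V$ is precompact and convex gets used. A secondary point to handle carefully is that $\phi_t$ for small $t$ need only be defined on $V$ (not all of $\R^n$), so the image may shrink; but since we only need $\phi_t$ to be a diffeomorphism onto its image fixing $\vec 0$ with the quadratic form constraint, and $\phi_t\to\phi_0$ which is a genuine linear isomorphism, this causes no trouble (for $t$ bounded away from $0$ we have honest coordinates, and the family extends continuously to the linear limit). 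Once continuity of the retraction is established, we conclude $\Coor_p(U,f)\simeq\SO(k,n-k)$ as claimed. Finally I would remark that the hypothesis $0<k<n$ is implicit in the statement "$p$ is a critical point of index $k$" of a Morse function on a manifold appearing as an interior critical point of a cobordism (so that $\SO(k,n-k)$ genuinely has two components), and that in the edge cases $k=0$ or $k=n$ one would instead get $\SO(n)$, but those cases are not needed here.
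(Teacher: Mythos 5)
Your proposal is correct and follows essentially the same route as the paper: identify $\Coor_p(U,f)$ with a space of diffeomorphism germs via $\vec{x}_0$, deformation retract via the rescaling $\phi_t(\vec u)=\tfrac1t\phi(t\vec u)$ (using convexity of $\vec{x}_0(U)$), and identify the limiting linear subspace with $\SO(k,n-k)$. The only cosmetic difference is that the paper checks $C^\infty$-continuity of the retraction via the identity $\bigl(\tfrac{\partial^I}{\partial\vec u^I}H(t,\phi)\bigr)(\vec u)=t^{|I|-1}\bigl(\tfrac{\partial^I}{\partial\vec u^I}\phi\bigr)(t\vec u)$, whereas you invoke a Taylor-remainder estimate; both handle the $t\to 0$ endpoint equally well.
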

\begin{proof}Fixing a base set of coordinates $\vec{x}_0$, we can define a homeomorphism
\[
I_{\vec{x}_0}\colon \Coor_p(U)\to \Diff'((\vec{x}_0(U),\vec{0}), (\R^n,\vec{0}))
\] via the formula
\[
I_{\vec{x}_0}(\vec{x}):=\vec{x}\circ \vec{x}_0^{-1}.
\]
Let us write $\Diff'((\vec{x}_0(U),\vec{0}), (\R^n, \vec{0}); F_{k,n-k}^{f(p)})$ for the subset of $\Diff'((\vec{x}_0(U),\vec{0}), (\R^n, \vec{0}))$ consisting of maps which preserve $F_{k,n-k}^{f(p)}$. Note that $I_{\vec{x}_0}$ restricts to a homeomorphism from $\Coor_p(U,f)$ to $\Diff'((\vec{x}_0(U),\vec{0}), (\R^n, \vec{0}); F_{k,n-k}^y)$.
We define the map
\[
H\colon [0,1]\times \Diff'((\vec{x}_0(U),\vec{0}), (\R^n,\vec{0}))\to \Diff'((\vec{x}_0(U),\vec{0}), (\R^n,\vec{0}))
\]
via the formula
\[
H(t,\phi)(\vec{u})=\begin{cases} \tfrac{1}{t} \phi(t\cdot \vec{u}), & t\neq 0\\
d_{\vec{0}}(\phi)(\vec{u}),& t=0,
\end{cases}
\]
where $\vec{u}$ denotes a point in $\R^n$. Note we are using the fact that $\vec{x}_0(U)$ is convex for $H$ to be well-defined.

 To see that the map $H$ is continuous in the $C^\infty$ topology, we note that if $I$ is a multi-index, then
\[\left(\frac{\d^I}{\d \vec{u}^I}H(t,\phi)\right)(\vec{u})=t^{|I|-1} \left(\frac{\d^I}{\d \vec{u}^I} \phi\right)(t\cdot \vec{u}),\] from which continuity of $H$ in the $C^\infty$ topology is straightforward.

We claim that $H$ maps $[0,1]\times \Diff'((\vec{x}_0(U),\vec{0}), (\R^n, \vec{0}); F_{k,n-k}^{f(p)})$ into $\Diff'((\vec{x}_0(U),\vec{0}), (\R^n, \vec{0}); F_{k,n-k}^{f(p)})$. To see this, if $\phi$ preserves $F_{k,n-k}^{f(p)}$ we compute that
\[
F_{k,n-k}^{f(p)}(\tfrac{1}{t} \phi(t\cdot \vec{u}))=f(p)+F_{k,n-k}^{0}( \tfrac{1}{t} \phi(t\cdot \vec{u}))=f(p)+\tfrac{1}{t^2} F_{k,n-k}^{0}(\phi(t\cdot \vec{u}))
\]
\[
=f(p)+\tfrac{1}{t^2} F_{k,n-k}^0(t\cdot \vec{u})=F_{k,n-k}^{f(p)}(\vec{u}).
\] 
Furthermore, $H(t,\phi)=\phi$ for any linear diffeomorphism of $\R^n$. Hence $H$ determines a deformation retraction of $\Diff'((\vec{x}_0(U),\vec{0}), (\R^n, \vec{0}); F_{k,n-k}^{f(p)})$ onto the subspace  consisting of linear diffeomorphisms which are oriented and preserve $F_{k,n-k}^{f(p)}$, which is clearly homeomorphic to $\SO(k,n-k)$.

\end{proof}

We now prove several refinements of Lemma~\ref{lem:linearization}, concerning spaces of coordinates adapted to Morse functions and surfaces in a 4-manifold. Suppose $\Sigma^2$ is an oriented, embedded surface in a smooth manifold $X^4$. Suppose $U\subset X$ is a precompact open subset, $f\colon X\to \R$ is a Morse function such that $f|_{\Sigma}$ is Morse, and $p\in \Sigma$ is a critical point of $f$ and $f|_{\Sigma}$, which is index 1 for both. Write $\Coor_p(U,\Sigma,f)$ to be the set of coordinates $\vec{x}=(x,y,w,z)$ for $X$, which extend smoothly to a neighborhood of $\cl(U)$, such that $\Sigma\cap U=\im(\vec{x})\cap \{(x,y,0,0)\}$ and 
\[f=f(p)-x^2+y^2+w^2+z^2\] on $U$.

\begin{lem}\label{lem:linearization3}Suppose that $\Sigma$ is an oriented, embedded surface in $X$, an oriented 4-manifold, and that $f$ is a Morse function on $X$, as above. If there is an $\vec{x}_0\in \Coor_p(U,\Sigma,f)$ such that $\vec{x}_0(U)$ is a convex subset of $\R^4$, then
$\Coor_p(U,\Sigma,f)$ is homotopy equivalent to $S(O(1,1)\times O(2))$, which has 4 components. The subset of these coordinates which preserves a given orientation of $\Sigma$ is homotopy equivalent to $\SO(1,1)\times \SO(2)$, which has two components. If we write $v_0$ for the vector field \[
v_0:=\left(-x\frac{\d}{\d x}+y\frac{\d}{\d y}+w\frac{\d}{\d w}+z\frac{\d}{\d z}\right)
\] defined on $\R^4$, then the set of vector fields
 \[\cV:=\{\vec{x}^*(v_0): \vec{x}\in \Coor_p(U,\Sigma,f)\}\]
is connected.
\end{lem}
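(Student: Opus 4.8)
The plan is to reduce the statement to Lemma~\ref{lem:linearization}, exactly as was done for the simpler Lemma~\ref{lem:linearization3}, by identifying the relevant constraint group. First I would fix a base coordinate system $\vec{x}_0\in \Coor_p(U,\Sigma,f)$ with $\vec{x}_0(U)$ convex, and use the homeomorphism $I_{\vec{x}_0}(\vec{x})=\vec{x}\circ \vec{x}_0^{-1}$ to identify $\Coor_p(U,\Sigma,f)$ with the space of diffeomorphisms $\phi\colon (\vec{x}_0(U),\vec 0)\to (\R^4,\vec 0)$ which preserve both the function $F_{1,3}^{f(p)}$ and the plane $\{(x,y,0,0)\}$ (with its orientation, or without it, according to the version being proved). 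Then I would apply the scaling deformation retraction $H(t,\phi)(\vec u)=\tfrac1t\phi(t\cdot\vec u)$ (for $t\neq 0$, and $H(0,\phi)=d_{\vec 0}\phi$) from the proof of Lemma~\ref{lem:linearization}. The key observations are that $H$ preserves the subspace of maps fixing $F_{1,3}^{f(p)}$ (this is the computation already carried out), that $H$ also preserves the subspace of maps sending the linear subspace $\R^2\times\{\vec 0\}$ to itself — because the differential of such a map at $\vec 0$ sends $\R^2\times\{\vec 0\}$ to $\R^2\times\{\vec 0\}$, as does each $H(t,\phi)$ — and that $H$ fixes every linear map. Hence $H$ deformation retracts the whole space onto the subspace of oriented linear automorphisms of $\R^4$ preserving $F_{1,3}^{0}$ and the splitting $\R^4=\R^2\oplus\R^2$.

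The next step is to identify this linear subspace. A linear map preserving $F_{1,3}^0=-x^2+y^2+w^2+z^2$ lies in $O(1,3)$; if it additionally preserves the block decomposition $\R^2_{(x,y)}\oplus\R^2_{(w,z)}$, then it is block diagonal, and the quadratic form restricts to $-x^2+y^2$ on the first block and $w^2+z^2$ on the second, so the map lies in $O(1,1)\times O(2)$. Imposing that the total map be orientation-preserving on $\R^4$ gives $S(O(1,1)\times O(2))$, which has $4$ components ($O(1,1)$ has $4$ components, $O(2)$ has $2$, and the determinant condition halves the product of component groups from $8$ to $4$). Requiring in addition that the orientation of $\Sigma$, i.e. of the first block $\R^2_{(x,y)}$, be preserved forces the first-block determinant to be $+1$, so the map lies in $\SO(1,1)\times\SO(2)$ (here there is no remaining overall determinant ambiguity, since $\det\SO(1,1)=\det\SO(2)=+1$ already forces total determinant $+1$); $\SO(1,1)$ has two components and $\SO(2)$ is connected, giving two components. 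This establishes both homotopy-equivalence claims.

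For the statement about the vector fields $\cV$, I would argue as follows. Each $\vec x\in \Coor_p(U,\Sigma,f)$ pulls back $v_0$ to a vector field $\vec x^*(v_0)$ on $U$, and the assignment $\vec x\mapsto \vec x^*(v_0)$ is continuous from $\Coor_p(U,\Sigma,f)$ (with the $C^\infty$ topology) to the space of vector fields on $U$ (also $C^\infty$). Since the continuous image of a space with $\le 2$ path-components can still be disconnected only if the two components have disjoint images, the point is to check that the two components of $\Coor_p(U,\Sigma,f)$ that preserve the orientation of $\Sigma$ — which are the two components of the $\SO(1,1)\times\SO(2)$ piece — map to the \emph{same} vector field class, i.e. that $\cV$ is really the image of the whole orientation-preserving part and this image is connected. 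Concretely, $\SO(1,1)$ is connected as a subgroup only after we note its two components are related by $(x,y,w,z)\mapsto(-x,-y,w,z)$; but this linear map is precisely $-\mathrm{Id}$ on the $(x,y)$-block, which commutes with $v_0$ (the vector field $v_0$ is invariant under $\vec u\mapsto -\vec u$ on each block separately since $v_0$ is linear and odd is impossible — rather, $v_0$ is equivariant: if $A$ is linear and preserves $F_{1,3}^0$ then $A^*v_0$ is the gradient-like field of $F_{1,3}^0$ again, hence equals $v_0$). So pulling back $v_0$ by coordinates differing by such an $A$ gives the same field, and likewise rotations in the $(w,z)$-block fix $v_0$; therefore the map $\vec x\mapsto \vec x^*(v_0)$ is constant on each path-component of the full orientation-class subspace, and since $v_0$ is itself the gradient-like vector field canonically attached to $F_{1,3}^{f(p)}$, all these pullbacks agree. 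It follows that $\cV$ is a single point, in particular connected. I expect the main obstacle to be the bookkeeping in this last paragraph: carefully checking that $A^*v_0=v_0$ for every $A$ in the relevant linear group (not just $\SO$ but enough of $O(1,1)\times O(2)$ to connect the two $\SO(1,1)$-components), and being precise about whether $\cV$ is literally a point or merely connected — the safe route is to prove $A^*v_0=v_0$ for all $A\in O(1,1)\times O(2)$ preserving $F_{1,3}^0$, which is the standard fact that the negative-gradient field of a nondegenerate quadratic form in suitable coordinates is intrinsic, and then connectedness of $\cV$ is immediate.
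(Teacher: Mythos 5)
Your identification of the linear constraint group is correct and follows the paper's route: the convex-cone retraction $H$ respects the plane $\{(x,y,0,0)\}$, so $\Coor_p(U,\Sigma,f)$ retracts onto the oriented linear maps preserving $F_{1,3}^{f(p)}$ and that plane, which is $S(O(1,1)\times O(2))$, with the orientation-preserving-on-$\Sigma$ part being $\SO(1,1)\times\SO(2)$. The component counts are right.

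The argument for connectedness of $\cV$, however, has a real error. You assert that $A^{*}v_{0}=v_{0}$ for every $A\in O(1,1)\times O(2)$ preserving $F_{1,3}^{0}$, on the grounds that ``the negative-gradient field of a nondegenerate quadratic form in suitable coordinates is intrinsic.'' This is false: gradient-like vector fields for a fixed Morse function form a contractible but far-from-singleton family, and for a \emph{linear} $A$ one computes $A^{*}v_{0}=A^{-1}I_{1,3}A$ (viewing $v_{0}$ as the linear vector field $\vec u\mapsto I_{1,3}\vec u$), which equals $v_{0}=I_{1,3}$ if and only if $A$ commutes with $I_{1,3}$. Within $O(1,1)\times O(2)$ the maps commuting with $I_{1,3}$ on the $(x,y)$-block are only the diagonal sign matrices; a boost $\bigl(\begin{smallmatrix}\cosh t&\sinh t\\ \sinh t&\cosh t\end{smallmatrix}\bigr)$ with $t\neq 0$ pulls $v_{0}$ back to the genuinely different field $\bigl(\begin{smallmatrix}-\cosh 2t&-\sinh 2t\\ \sinh 2t&\cosh 2t\end{smallmatrix}\bigr)\oplus I_{2}$. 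So $\cV$ is not a single point, and the map $\vec x\mapsto\vec x^{*}(v_{0})$ is not constant on any component. (You also slip into treating $\Coor_p(U,\Sigma,f)$ as having $\le 2$ components in this paragraph, though you correctly counted $4$ earlier; $\cV$ is the image of all four.)

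The fix is softer and is what the paper does. Since $\vec x\mapsto\vec x^{*}(v_{0})$ is continuous, $\cV$ is the union of the four connected images of the four components of $\Coor_p(U,\Sigma,f)$, and connectedness of $\cV$ only requires these images to share a common point. Each of the four components contains (after the linearizing retraction) a diagonal sign matrix in $S(O(1,1)\times O(2))$ — e.g.\ $\mathrm{diag}(1,1,1,1)$, $\mathrm{diag}(-1,-1,1,1)$, $\mathrm{diag}(1,-1,1,-1)$, $\mathrm{diag}(-1,1,1,-1)$ — and each of these commutes with $I_{1,3}$, hence fixes $v_{0}$. Thus every component's image contains $\vec x_{0}^{*}(v_{0})$, and $\cV$ is connected. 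What you actually used, namely that $(x,y)\mapsto(-x,-y)$ fixes $v_{0}$, is correct and is exactly one of these sign matrices; you just over-extended it to all of $O(1,1)\times O(2)$.
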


\begin{proof}The deformation retraction from Lemma~\ref{lem:linearization} is easily seen to respect $\Sigma$, since $\vec{x}_0(\Sigma\cap U)$ is a linear subspace of $\R^4$. Hence $\Coor_p(U, \Sigma,f)$ is homotopy equivalent to the subset of orientation preserving, linear diffeomorphisms of $\R^4$ which  preserve the function $F_{1,3}^{f(p)}$ and also preserve the subspace $\{(x,y,0,0)\}\subset \R^4$. Let $\cL$ denote the set of linear transformations of $\R^4$, which preserve $f$ and preserve $\{(x,y,0,0)\}$ setwise. If $M\in \cL$, we can view $M$ as a $2\times 2$ block matrix, with the first block corresponding to $(x,y)$, and the second block corresponding to $(w,z)$. The requirement that $M$ preserves $\{(x,y,0,0)\}$ is equivalent to $M$ taking the form
\[
M=\begin{pmatrix} A& B\\
0& D
\end{pmatrix}.
\]
The requirement that $M$ preserves the function $F_{1,3}^{f(p)}$ is equivalent to the requirement that the block matrix for $M$ satisfy
\[
A\in O(1,1),\qquad B=0 \qquad \text{and} \qquad D\in O(2)
.\]

Finally, for the claim about $\cV$, we note that the two components of $\SO(1,1)\times \SO(2)$ are related by replacing $(x,y)$ with $(-x,-y)$, which does not change the vector field $v_0$.
\end{proof}

We prove a final result  concerning Morse functions and coordinate systems. Suppose that $\Sigma^2$ is an embedded submanifold of $X^4$ and both are oriented. Suppose that $h$ is a Morse function on $\Sigma$, and $p\in \Sigma$ is an index 1 critical point of $h$. If $U\subset X$ is a precompact open set containing $p$, let $\Coor_p(U,\Sigma, h)$ denote the set of coordinates $\vec{x}\colon U\to \R^4$, centered at $p$, which extend smoothly to a neighborhood of $\cl(U)$, such that $\vec{x}(U\cap \Sigma)$ is embedded in $\{(w,z,0,0)\}$ in an orientation preserving manner, and such that
\[
h=h(p)-x^2+y^2
\]
 on $\Sigma\cap U$.

\begin{lem}\label{lem:linearization4} Let $\Sigma^2$ be an oriented submanifold of $X^4$, and let $h$ be a Morse function on $\Sigma$ with a critical point $p$ of index 1. If $\Coor_p(U,\Sigma,h)$ contains an element $\vec{x}_0$ such that $\vec{x}_0(U)\subset \R^4$ is convex, then the space $\Coor_p(U,\Sigma,h)$ has 2 components, which are related by replacing $(x,y)$ with $(-x,-y)$.
\end{lem}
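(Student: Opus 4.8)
The plan is to mimic the proofs of Lemmas~\ref{lem:linearization} and~\ref{lem:linearization3} almost verbatim; the only new feature is that here there is no Morse function on the ambient $X$ — only the Morse function $h$ on $\Sigma$ — so the ``normal'' directions will be constrained only through the global orientation. First I would fix the base coordinates $\vec{x}_0\in \Coor_p(U,\Sigma,h)$ with $\vec{x}_0(U)$ convex, and use the homeomorphism $I_{\vec{x}_0}(\vec{x})=\vec{x}\circ \vec{x}_0^{-1}$ to identify $\Coor_p(U,\Sigma,h)$ with the subspace $\cD\subset \Diff'((\vec{x}_0(U),\vec{0}),(\R^4,\vec{0}))$ consisting of those $\phi$ which (i) are orientation preserving on $\R^4$, (ii) carry $\vec{x}_0(\Sigma\cap U)$ into the $2$-plane $P$ spanned by the first two coordinate directions (the plane containing $\vec{x}_0(\Sigma\cap U)$), (iii) restrict to an orientation preserving map of the image of $\Sigma$, and (iv) preserve the quadratic form $q(x,y)=-x^2+y^2$ on $P$. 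Since $\vec{x}_0(\Sigma\cap U)$ is a neighborhood of $\vec{0}$ in $P$, condition (ii) forces $\phi(P)=P$.

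Next I would invoke the deformation retraction $H$ constructed in the proof of Lemma~\ref{lem:linearization}, which (using convexity of $\vec{x}_0(U)$) retracts $\Diff'((\vec{x}_0(U),\vec{0}),(\R^4,\vec{0}))$ onto its subspace of linear isomorphisms via $H(t,\phi)(\vec{u})=\tfrac1t\phi(t\vec{u})$, $H(0,\phi)=d_{\vec{0}}\phi$. One checks, exactly as in Lemma~\ref{lem:linearization3}, that $H$ preserves $\cD$: since $P$ is a linear subspace, $H(t,\phi)(P)\subseteq P$; homogeneity of degree two of $q$ gives $q(\tfrac1t\phi(t\vec{u}))=\tfrac1{t^2}q(\phi(t\vec{u}))=\tfrac1{t^2}q(t\vec{u})=q(\vec{u})$ on $P$; and the orientations on $\R^4$ and on $P$ are preserved throughout, being detected by the nonvanishing, continuously varying Jacobian determinants at the origin. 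Hence $\cD$, and so $\Coor_p(U,\Sigma,h)$, deformation retracts onto the space $\cL$ of linear isomorphisms of $\R^4$ satisfying (i)--(iv).

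Finally I would compute $\pi_0(\cL)$. Writing an element of $\cL$ in block form $\left(\begin{smallmatrix}A&B\\0&D\end{smallmatrix}\right)$ relative to $P$ and a complement, conditions (iv) and (iii) say $A\in O(1,1)$ with $\det A>0$, i.e.\ $A\in \SO(1,1)$; $B$ is an arbitrary $2\times 2$ matrix; and (i) says $\det A\cdot\det D>0$, i.e.\ (as $\det A=1$) $D\in\GL^+(2,\R)$. Thus $\cL$ is homeomorphic to $\SO(1,1)\times M_{2\times 2}(\R)\times \GL^+(2,\R)$; the last two factors are contractible and connected respectively, so $\cL$ has exactly the two components of $\SO(1,1)$. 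The non-identity component of $\SO(1,1)$ contains $-I_2$, which as a linear map of $\R^4$ is $\diag(-1,-1,1,1)$ — the map replacing $(x,y)$ by $(-x,-y)$ — and this map is orientation preserving on $\R^4$, hence lies in $\cL$. This gives the two components of $\Coor_p(U,\Sigma,h)$ and the stated relation between them. The argument is routine; the only points needing care are the verification that $H$ respects conditions (ii) and (iv) (as in Lemma~\ref{lem:linearization3}), and the bookkeeping showing that, despite there being no constraint relating $D$ to $A$ beyond the determinant, one still obtains only two components rather than four.
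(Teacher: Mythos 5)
Your argument is correct and follows essentially the same route as the paper: both linearize via the convexity retraction from Lemma~\ref{lem:linearization}, reduce to block upper-triangular linear maps $\left(\begin{smallmatrix}A&B\\0&D\end{smallmatrix}\right)$ with $A\in\SO(1,1)$, $D\in\GL^+(2,\R)$, and $B$ unconstrained, and read off the two components from $\SO(1,1)$. The only cosmetic difference is that the paper first retracts the block $B$ to zero before taking $\pi_0$, whereas you observe directly that the $B$-factor is a contractible product factor, which is equivalent.
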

\begin{proof}The technique from Lemma~\ref{lem:linearization}  shows that if $\Coor_p(U,\Sigma,h)$ contains such an $\vec{x}_0$, then $\Coor_p(U,\Sigma,h)$ is homotopy equivalent to the space of linear transformations of $\R^4$ which preserve $\{(x,y,0,0)\}$ setwise,  preserve the orientations of $\R^4$ and $\{(x,y,0,0)\}$, and preserve the function $h(p)-x^2+y^2$ on $\Sigma$. Such a linear transformation can be written in block matrix form as
\[M=\begin{pmatrix}A& B\\
0& D
\end{pmatrix}\] where $A\in O(1,1),$ $B\in \Mat_{2\times 2}(\R)$ and $D\in GL(2, \R)$. The set of such matrices deformation retracts onto the space with $B=0$. Furthermore, requiring that $M$ preserve the orientations of $X$ and $\Sigma$ implies that $A$ and $D$ have positive determinant, so we see that $\Coor_p(U,\Sigma,h)$ is homotopy equivalent to $\SO(1,1)\times \GL^+(2,\R)$, which has two components, which are related by replacing $(x,y)$ with $(-x,-y)$. 
\end{proof}

\begin{define}\label{def:adaptedMorsegradient}If $(W,\Sigma)$ is an undecorated link cobordism, we define $\ve{M}(W,\Sigma)$ to be the space of Morse functions $f\colon W\to [0,2]$ such that the following hold:
\begin{enumerate}
\item $\Crit(f|_\Sigma)=\Crit(f)\cap \Sigma$.
\item For each $p\in \Crit(f)\cap \Sigma$ there  are coordinates $\vec{x}=(x,y,w,z)$, defined on a neighborhood $U$ of $p$, which are oriented for both $W$ and $\Sigma$,  such that $f=f(p)-x^2+y^2+w^2+z^2$, and $\vec{x}(\Sigma\cap U)=\vec{x}(U)\cap \{(x,y,0,0)\}$.
\end{enumerate}
If $h$ is a fixed Morse function on $\Sigma$ which has only index 1 critical points, we will write $\ve{M}(W,\Sigma;h)$ for the set of Morse functions in $\ve{M}(W,\Sigma)$ which also restrict to $h$ on $\Sigma$.
\end{define}
\begin{rem}If $f\in \ve{M}(W,\Sigma)$, then by property (2) of Definition~\ref{def:adaptedMorsegradient}, it follows that $f|_\Sigma$ has only index 1 critical points.
\end{rem}

\begin{define}If $(W,\Sigma)$ is a link cobordism, we say that a smooth function $f\colon W\to [0,2]$ is a \emph{nice} Morse function  if the following are satisfied:
\begin{enumerate}
\item $f\in \ve{M}(W,\Sigma)$.
\item All critical values of $f|_{\Sigma}$ are in $(0,1)$, and all critical values of $f|_{W\setminus \Sigma}$ are in $(1,2)$.
\item $f|_{W\setminus \Sigma}$ has only index 1, 2 or 3 critical points.
\item All index 1 critical points of $f|_{W\setminus \Sigma}$ occur before the index 2 critical points, which occur before the index 3 critical points.
\item All critical points of $f$ have distinct critical values.
\end{enumerate}
We write ${\ve{M}}^{\nice}(W,\Sigma)$ for the set of nice Morse functions. If $h$ is a fixed Morse function on $\Sigma$, we write $\ve{M}^{\nice}(W,\Sigma;h)$ for the subset of $\ve{M}^{\nice}(W,\Sigma)$ of Morse functions which restrict to $h$ on $\Sigma$.
\end{define}

\subsection{Connecting Morse functions which agree on $\Sigma$}
\label{subsec:changeMorsefunction-fixedonSigma}

In this subsection, we construct appropriately nice paths between Morse functions which agree on $\Sigma$.

\begin{lem}\label{lem:genericpath}Suppose that $h$ is a Morse function on $\Sigma$ with only index 1 critical points. If $f_0, f_1\in \ve{M}(W,\Sigma,h)$, then there is a 1-parameter family $(f_t)_{t\in [0,1]}$ of smooth functions such that  $f_t\in {\ve{M}}(W,\Sigma,h)$ for all but finitely many $t$, and at the finitely many points of time when $f_t$ fails to be in ${\ve{M}}(W,\Sigma,h),$ exactly one of the following occurs:
\begin{enumerate}
\item A critical point birth-death occurs in $W\setminus \Sigma$.
\item Two critical points exchange relative value.
\end{enumerate} Furthermore the path $f_t$ can be chosen so that for each $p\in \Crit(h)$, there is a neighborhood $U$ of $p$ in $W$ and a 1-parameter family of coordinates $\vec{x}_t=(x_t,y_t,w_t,z_t)\colon U\to \R^4$ such that $\vec{x}_t(U\cap\Sigma)\subset \{(x,y,0,0)\}\subset \R^4$, and
 \[f_t=h(p)-x_t^2+y_t^2+w_t^2+z_t^2.\]
\end{lem}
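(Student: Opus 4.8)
The plan is to run a standard Cerf-theory argument in the space $\ve{M}(W,\Sigma;h)$, being careful that all the homotopies involved keep the restriction to $\Sigma$ equal to $h$ and keep the adapted product coordinates near $\Crit(h)$. First I would fix, for each $p\in \Crit(h)$, a small precompact neighborhood $U_p$ of $p$ in $W$ together with adapted coordinates realizing $f_0$ and $f_1$ in the form $h(p)-x^2+y^2+w^2+z^2$ (these exist by the definition of $\ve{M}(W,\Sigma;h)$, after shrinking $U_p$ so that its coordinate image is convex). Using Lemma~\ref{lem:linearization3} and the connectedness of the associated vector-field space $\cV$, I can connect the two sets of adapted coordinates at $p$ by a path $\vec{x}_t$ of coordinates lying in $\Coor_p(U_p,\Sigma,f_0|_{U_p})=\Coor_p(U_p,\Sigma,f_1|_{U_p})$; spreading this path out in $t$ and using a bump function supported in $U_p$, I can arrange a preliminary isotopy so that $f_0$ and $f_1$ agree on $\bigcup_p U_p$ and, for each $t$, $f_t$ is genuinely a Morse function on $W$ with the right adapted form near each $p\in\Crit(h)$ and with $f_t|_\Sigma=h$ throughout. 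The point of Lemma~\ref{lem:linearization3} here is precisely that the orientation-preserving adapted coordinate spaces at the index-$1$ critical points of $h$ are path-connected, so no extra critical births of $f|_\Sigma$ are forced.

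Having arranged agreement near $\Crit(h)$, I would next work on the complement $W':=W\setminus \bigcup_p \Int(U_p')$ of slightly smaller neighborhoods, where neither $f_0$ nor $f_1$ has any critical point on $\Sigma$ (indeed $\Sigma$ contributes no critical points away from $\Crit(h)$, since $f_i\in\ve{M}(W,\Sigma;h)$ forces $\Crit(f_i)\cap\Sigma=\Crit(f_i|_\Sigma)=\Crit(h)$). On $W'$ the constraint coming from $\Sigma$ is open: for a function that has no critical point on $\Sigma\cap W'$ and restricts to $h$ on $\Sigma$, nearby functions automatically still have no critical point on $\Sigma\cap W'$ and, after a correction supported away from $\Sigma$, can be taken to restrict to $h$ on $\Sigma$. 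Thus, fixing the behavior near $\Sigma$ and near $\partial W$, I would apply the classical Cerf theorem (one-parameter families of functions, as in \cite{CerfTheory}; compare \cite{JClassTQFT}*{Section~1.3}) to the space of functions on $W$ rel $\Sigma$: a generic path $(f_t)$ between $f_0$ and $f_1$ has, at all but finitely many $t$, only nondegenerate critical points, and at the exceptional values exhibits either a birth-death of a cancelling pair of critical points or a crossing of two critical values, and both exceptional events can be chosen to happen in $\Int(W)\setminus\Sigma$ since $\Sigma$ is a codimension-two submanifold avoided by the critical locus along the whole path.

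Finally I would assemble the two pieces: the preliminary isotopy near $\Crit(h)$ (which introduces no new critical points and no relative value crossings, by construction) followed by the generic Cerf path on $W'$ rel $\Sigma$, reparametrizing in $t$ so the two stages are concatenated. Throughout, the stated local normal form near each $p\in\Crit(h)$ is maintained because that is exactly what the adapted coordinate path $\vec{x}_t$ provides, and $f_t|_\Sigma=h$ holds at every $t$. The main obstacle — and the only genuinely delicate point — is verifying that the $\Sigma$-constraint can be made open away from $\Crit(h)$ and that a generic path rel $\Sigma$ still enjoys the Cerf conclusions; this amounts to checking that the evaluation map sending a function to its $1$-jet along $\Sigma$ and near $\Crit(h)$ is a submersion onto the relevant jet target, so that transversality (hence the generic local models) survives the constraint. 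Once that is in hand, the finiteness of exceptional times and the classification of exceptional events into birth-deaths and value crossings is the standard Cerf statement.
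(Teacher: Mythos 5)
Your high-level strategy (arrange agreement near $\Crit(h)$ by a coordinate homotopy, then interpolate on the rest) is the same as the paper's, but there is a genuine error in the first step, and the second step is more elaborate than necessary.

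The error in step one: you write that both $\vec{x}_0$ and $\vec{x}_1$ lie in $\Coor_p(U_p,\Sigma,f_0|_{U_p})=\Coor_p(U_p,\Sigma,f_1|_{U_p})$ and invoke Lemma~\ref{lem:linearization3} to connect them. This equality of coordinate spaces is false unless $f_0$ and $f_1$ already agree on $U_p$ — which is exactly what you are trying to arrange. The space $\Coor_p(U,\Sigma,f)$ consists of coordinates putting the specific four-dimensional function $f$ into the normal form $F_{1,3}^{f(p)}$; the chart $\vec{x}_1$ puts $f_1$ in normal form, not $f_0$, and so is not an element of $\Coor_p(U,\Sigma,f_0)$ in general. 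Moreover, even if you could produce a path $\vec{x}_t$ entirely inside $\Coor_p(U,\Sigma,f_0)$, all such charts put $f_0$ in normal form, so precomposing $f_0$ by the resulting family of diffeomorphisms would leave $f_0$ unchanged near $p$ and would not bring it into agreement with $f_1$. The paper sidesteps this by working instead in $\Coor_p(U,\Sigma,h)$ — the space of charts that map $\Sigma$ into $\{(x,y,0,0)\}$ and put the two-dimensional function $h$ in normal form, with no constraint on the full function $f$. Both $\vec{x}_0$ and $\vec{x}_1$ do lie in this larger space, and the relevant connectedness statement is Lemma~\ref{lem:linearization4} (which identifies $\Coor_p(U,\Sigma,h)$ with $\SO(1,1)\times\GL^+(2,\R)$ up to homotopy, two components related by the sign flip $(x,y)\mapsto(-x,-y)$), not Lemma~\ref{lem:linearization3}. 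The path of charts is then extended to an ambient isotopy $\Phi_t$ of $(W,\Sigma)$ preserving $h$ on $\Sigma$, and $\hat f_t:=f_0\circ\Phi_t$ achieves agreement with $f_1$ near $\Crit(h)$ while staying in $\ve{M}(W,\Sigma;h)$.

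The second step is not wrong but is heavier machinery than the paper uses. Because $f_0|_\Sigma = f_1|_\Sigma = h$, the straight-line interpolation $f_t = t f_1 + (1-t) f_0$ automatically restricts to $h$ on $\Sigma$ for all $t$, and introduces no critical points along $\Sigma$ beyond $\Crit(h)$ once the endpoints agree in a neighborhood of $\Crit(h)$. One then simply perturbs this family generically in a neighborhood of the critical points in $W\setminus\Sigma$. There is no need to set up Cerf theory rel $\Sigma$ or to verify submersivity of a jet evaluation map along $\Sigma$: the convexity observation does all the work. If you do want to go the route of a transversality theorem rel a constraint, you would still need to supply the argument you flag as ``the only genuinely delicate point''; the paper's elementary route avoids that entirely.
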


\begin{proof} First, consider the case that $f_0$ and $f_1$ agree in an open set $N$ containing $\Crit(h)$. In this case, we start with the 1-parameter family $f_t=t\cdot f_1+(1-t)\cdot f_0$. Notice that $f_t$ has no critical points along $\Sigma$ except for those in $\Crit(h)$ since it is constant in $t$ on $\Sigma$. A generic perturbation of $f_t$ near the set of critical points in $W\setminus \Sigma$ will be Morse at all but finitely many $t\in [0,1]$, where a birth-death singularity, or a critical value switch occurs. Since $f_t$ is fixed on $N$, one does not need to perturb near $\Sigma$.

Using the previous observation, it is sufficient to show that if $f_0,f_1\in \ve{M}(W,\Sigma;h)$ are arbitrary Morse functions, then there is a 1-parameter family $(\hat{f}_t)_{t\in [0,1]}$ such that  $\hat{f}_t\in \ve{M}(W, \Sigma;h)$ for all $t$, and such that  $\hat{f}_0=f_0$ and $\hat{f}_1$ agrees with $f_1$ in a neighborhood of $\Crit(h)$.

For notational simplicity, let's consider only the case that $\Crit(h)$ contains a single point $p$. By the assumption that $f_0,f_1\in \ve{M}(W,\Sigma;h)$, there is an open neighborhood $U$, with coordinates maps $\vec{x}_0=(x_0,y_0,w_0,z_0)$ and $\vec{x}_1=(x_1,y_1,w_1,z_1)$, centered at $p$, such that $f_i=F_{1,3}^{h(p)}\circ \vec{x}_i$. We can assume, without loss of generality, that $\vec{x}_0(U)\subset \R^4$ is convex. Lemma~\ref{lem:linearization4} implies that after possibly replacing $(x_0,y_0)$ with $(-x_0,-y_0)$, there is a 1-parameter family of coordinates $\vec{x}_t=(x_t,y_t,w_t,z_t)$, starting at $\vec{x}_0$ and ending at $\vec{x}_1$, such that $h=h(p)-x_t^2+y_t^2$ for all $t$, and such that $\vec{x}_t$ maps $\Sigma$ to $\{(x,y,0,0)\}\subset \R^4$ for all $t$.

Note that $\phi_t:=\vec{x}_t^{-1}\circ \vec{x}_0$ determines a 1-parameter family of embeddings of $U$ into $W$, which fix $p$ and preserve $\Sigma$, setwise. Furthermore $h\circ (\phi_t)|_{\Sigma}\circ \iota_\Sigma=h$, where $\iota_\Sigma\colon \Sigma\to W$ denotes inclusion. 

We now extend $\phi_t$ to an isotopy of all of $W$, as follows. Pick a compactly supported bump function $\rho$ which is supported in $U$ and is $1$ in a neighborhood of $p$. Define the time dependent vector field \[V_t(x)=\rho(x)\cdot \frac{d}{ds}\bigg|_{s=0} \phi_{t+s}(\phi_t^{-1}(x)).\] We integrate $V_t(x)$ to define a 1-parameter family of diffeomorphisms $(\Phi_t)_{t\in [0,1]}$, i.e., we define  $\Phi_t$ to be a family of diffeomorphisms which satisfies
\[V_t(\Phi_t(x))=\frac{d}{d s}\bigg|_{s=0} \Phi_{s+t}(x).\] It is straightforward to check the following properties:
\begin{enumerate}
\item $\Phi_t(p)=p$ and $\Phi_t(\Sigma)=\Sigma$.
\item $h\circ \Phi_t\circ \iota_\Sigma=h$.
\item $\Phi_t=\phi_t$ in a neighborhood of $p$.
\end{enumerate}
We define $\hat{f}_t:=f_0\circ \Phi_t$, and note that $\hat{f}_0=f_0$, $\hat{f}_t\in \ve{M}(W,\Sigma;h)$ for all $t$, and $\hat{f}_1$ and $f_1$ agree in a neighborhood of $p$. Using our previous argument, the proof is complete.
\end{proof}

\begin{lem}\label{lem:existsnicepath} Suppose $h\colon \Sigma\to [0,2]$ is a Morse function, all of whose critical points have index 1  and all of whose critical values are in $(0,1)$. If $f_0,f_1\in {\ve{M}}^{\nice}(W,\Sigma;h)$, then there is a smooth path $f_t$ from $f_0$ to $f_1$ such that $f_t$ is a nice Morse function except at finitely many $t$, where one of the following occurs:
\begin{enumerate}
\item Two critical points in $W\setminus \Sigma$ exchange relative value.
\item An index 1/2 or index 2/3 birth-death singularity occurs in $W\setminus \Sigma$.
\end{enumerate}
\end{lem}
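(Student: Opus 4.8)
The plan is to combine Lemma~\ref{lem:genericpath} with the standard Cerf-theoretic rearrangement arguments for Morse functions on cobordisms. First, I would apply Lemma~\ref{lem:genericpath} to produce a smooth path $(f_t)_{t\in[0,1]}$ from $f_0$ to $f_1$ which lies in $\ve{M}(W,\Sigma;h)$ for all but finitely many $t$, where the exceptional events are either birth-death singularities in $W\setminus \Sigma$ or critical value exchanges. Moreover, by the last assertion of Lemma~\ref{lem:genericpath}, near each critical point of $h$ the path has a $1$-parameter family of adapted coordinates, so no bad behavior occurs along $\Sigma$; in particular $f_t|_\Sigma = h$ throughout, so every $f_t$ automatically satisfies the requirement that critical values of $f_t|_\Sigma$ lie in $(0,1)$ and that $f_t|_{\Sigma}$ has only index $1$ critical points. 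The path $(f_t)$ therefore already satisfies conditions (1), (2), and (4) of being a nice Morse function at all but finitely many times (and the exceptional times are of the two allowed types). What remains is to arrange conditions (3) — that the critical points of $f_t|_{W\setminus\Sigma}$ all have index $1$, $2$, or $3$ — and the ordering condition, that all index $1$ critical points precede the index $2$ critical points, which precede the index $3$ critical points.

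For condition (3): since $W$ is a cobordism each of whose components meets both $Y_1$ and $Y_2$ non-trivially (this is the hypothesis of Theorem~\ref{prop:allPKDSigmasweaklyequivalent}, which is the setting here), one can cancel any index $0$ or index $4$ critical point of $f_t|_{W\setminus\Sigma}$ against an adjacent index $1$ or index $3$ critical point respectively, using the usual Morse-theoretic cancellation lemma applied in $W\setminus\Sigma$. Since $f_0,f_1$ are already nice, they have no index $0$ or $4$ critical points, so this cancellation only needs to be performed in the interior of the path; each such cancellation is realized by inserting a birth-death singularity (in the reverse direction to cancel, i.e. a death), which is one of the two allowed exceptional events. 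Doing this finitely many times produces a path all of whose members have only index $1,2,3$ critical points away from $\Sigma$.

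For the ordering condition: this is the classical ``rearrangement'' argument from Cerf theory / Milnor's h-cobordism book. Whenever an index $j$ critical point lies below an index $i$ critical point with $i\le j$, one can interchange their critical values by a path that passes through a single critical value exchange, provided the descending sphere of the upper one is disjoint from the ascending sphere of the lower one, which can always be arranged (after an isotopy, which can be absorbed into the path) when $i \le j$ since the relevant spheres are of complementary-or-less dimension in a level set of dimension $3$. Applying this finitely many times to a path between $f_0$ and $f_1$ — both of which already have the correct ordering — we may assume the ordering is violated only at isolated times, each time corrected by a single critical value exchange, which is the first allowed exceptional event. I would also use, at the finitely many bad times coming from Lemma~\ref{lem:genericpath} and from the cancellations above, a small perturbation to ensure distinct critical values and genericity of the path (no two exceptional events coincide), which does not introduce new types of degeneration.

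The main obstacle I expect is bookkeeping rather than a deep new idea: one must check that all of the moves above (the coordinate-adapted isotopies near $\Crit(h)$, the cancellations of $0$- and $4$-handles, and the rearrangements) can be performed \emph{while keeping $f_t|_\Sigma$ equal to $h$} and keeping the adapted-coordinate structure of Definition~\ref{def:adaptedMorsegradient} intact along $\Sigma$ — i.e. that all the surgery on the handle structure takes place in $W\setminus\Sigma$. This is true because the critical points of $f_t|_{W\setminus\Sigma}$ are disjoint from $\Sigma$ by construction, so the ascending and descending spheres used in cancellation and rearrangement can be taken disjoint from $\Sigma$ as well (their dimensions are too small to force intersection with a surface in a $3$-dimensional level set after a generic perturbation, and $\Sigma$ meets each level set in a $1$-manifold which can be avoided). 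Packaging this carefully gives the desired path $f_t$ with exactly the two stated types of exceptional events.
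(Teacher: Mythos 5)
Your overall plan — start from the path provided by Lemma~\ref{lem:genericpath} and then improve it using Cerf-theoretic rearrangement to eliminate index $0$/$4$ critical points and enforce the ordering — is essentially the paper's strategy, and most of the steps you sketch are right. However, there is one genuine gap: you assert that ``the path $(f_t)$ therefore already satisfies conditions (1), (2), and (4) of being a nice Morse function at all but finitely many times,'' but condition (2) has two parts, and only one is automatic. Since $f_t|_\Sigma = h$ for all $t$, the critical values of $f_t|_\Sigma$ indeed stay in $(0,1)$. But condition (2) \emph{also} requires the critical values of $f_t|_{W\setminus\Sigma}$ to stay in $(1,2)$, and a generic interpolation will not respect this: a critical value of an interior critical point can sweep down below $1$ and back up, or even cross the critical values on $\Sigma$, without ever triggering a birth-death or an index-$0$/$4$ birth. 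You never argue that these interior critical values can be pushed above the critical values on $\Sigma$, and your rearrangement paragraph only concerns reordering interior critical points relative to each other.

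The paper addresses exactly this point by choosing a path of gradient-like vector fields $v_t$ tangent to $\Sigma$, and observing that the descending manifolds of the index-$1$ critical points on $\Sigma$ are entirely \emph{contained} in $\Sigma$ (the downward-pointing eigendirections at such a point lie in $T\Sigma$, and a $\Sigma$-tangent flow preserves $\Sigma$), while the ascending manifolds of critical points in $W\setminus\Sigma$ cannot meet $\Sigma$ (else a backward flowline from $\Sigma$ would exit $\Sigma$, contradicting invariance of $\Sigma$ under the flow). These two families of trajectories are therefore disjoint for \emph{structural}, not merely generic, reasons, and one may invoke the independent trajectories principle to push every interior critical value into $(1,2)$ for all $t$ without introducing any new degenerations. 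Your transversality count for interchanging critical values is aimed only at interior pairs and does not address this mixed case, so as written your proof does not establish the separation required by condition (2). Once you add this step — noting the descending/ascending manifold disjointness and applying the independent trajectories principle — the rest of your argument closes the proof as in the paper.
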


\begin{proof}Using Lemma~\ref{lem:genericpath}, we can find a generic path $f_t$ from $f_0$ to $f_1$, which is in ${\ve{M}}(W,\Sigma;h)$ for all but finitely many $t$, where a critical value switch occurs, or a birth-death singularity occurs in $W\setminus \Sigma$. We pick a path of gradient-like vector fields $v_t$ for $f_t$, such that $v_t|_{\Sigma}$ is tangent to $\Sigma$ for all $t$ (it is straightforward to show that such vector fields exist).  We note that by assumption, the descending manifolds of $\Crit(f|_{\Sigma})$ are contained in $\Sigma$, and hence are disjoint from the ascending manifolds of $\Crit(f_t|_{W\setminus \Sigma})$. Hence we can modify the path $f_t$ so that all critical points of any $f_t|_{W\setminus \Sigma}$ occur in the interval $(1,2)$, using the ``independent trajectories principle'' (\cite{KirbyCalculus}*{pg. 40}).

Now one can use the techniques of Cerf graphics (see \cite{KirbyCalculus}*{Section~3}, \cite{CerfTheory}, \cite{HatcherWagoner}) to modify the path $f_t$ so that all index 1 critical points occur before the index 2 critical points, which occur before the index 3 critical points, and so that there are also no index 0 or 4 critical points. Generically, the path $f_t$ will be Morse and have distinct critical values for all but finitely many $t\in [0,1]$, where a critical point birth-death may occur, or a critical value switch may occur.
\end{proof}

\subsection{Connecting Morse functions which disagree on $\Sigma$}
\label{subsec:changingtheMorsefunctiononSigma}

In this section, we describe how to connect two Morse functions which may not agree on $\Sigma$. As a first step, we prove a version of \cite{GWW}*{Lemma 3.1} for link cobordisms:

\begin{lem}\label{lem:nicepath->isotopy} Suppose that $f_t\colon W\to [0,2]$ is a path of Morse functions on $(W,\Sigma)$ such that $f_t\in {\ve{M}}^{\nice}(W,\Sigma;f_t|_\Sigma)$ for all $t$ (in particular all critical values are distinct, and there are no birth-death singularities of $f_t$). Then there is a smooth isotopy $\psi_t\colon (W,\Sigma)\to (W,\Sigma)$ such that $\psi_t|_{\d W}=\id$ for all $t$, and an isotopy $\phi_t\colon \R\to \R$ such that
\[f_t=\phi_{t}\circ f_0 \circ \psi_t^{-1}.\]
\end{lem}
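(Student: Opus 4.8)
The plan is to mimic the classical Morse-theoretic argument (as in \cite{GWW}*{Lemma 3.1}) that a path of Morse functions without birth-death singularities and with always-distinct critical values is realized by an ambient isotopy, but carried out in a way compatible with $\Sigma$. The key observation is that because $f_t \in \ve{M}^{\nice}(W,\Sigma;f_t|_\Sigma)$ for every $t$, the number and indices of the critical points of $f_t$ (and of $f_t|_\Sigma$) are constant in $t$, and near each critical point lying on $\Sigma$ we have a $t$-dependent family of adapted coordinates in which $f_t$ has the standard quadratic form and $\Sigma$ is the standard linear subspace $\{(x,y,0,0)\}$. First I would choose, smoothly in $t$, a gradient-like vector field $v_t$ for $f_t$ such that $v_t$ is tangent to $\Sigma$ along $\Sigma$; such a choice exists because the adapted-coordinate condition lets us build $v_t$ locally near critical points on $\Sigma$ (using the vector field $v_0$ from Lemma~\ref{lem:linearization3}) and then patch with a partition of unity, keeping tangency to $\Sigma$ since tangency is a convex condition.

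Next I would produce the rescaling isotopy $\phi_t \colon \R \to \R$: since the critical values $c_1(t) < c_2(t) < \dots$ of $f_t$ vary continuously (and smoothly, after a small perturbation of the path that does not leave $\ve{M}^{\nice}$) and never collide, there is a smooth family of orientation-preserving diffeomorphisms $\phi_t$ of $[0,2]$, equal to the identity near $\{0,2\}$, with $\phi_t(c_i(0)) = c_i(t)$ for all $i$. Replacing $f_t$ by $\phi_t^{-1}\circ f_t$ we may assume all critical values are constant in $t$. Then I would define a time-dependent vector field $X_t$ on $W$ by the standard recipe: away from the critical points, $X_t := -\frac{\partial f_t/\partial t}{v_t(f_t)}\, v_t$, which is the unique multiple of $v_t$ along which $f_t$ is infinitesimally constant; near each critical point one uses the adapted coordinate families $\vec{x}_t$ to write down an explicit $X_t$ (the generator of the coordinate change $\vec{x}_t \circ \vec{x}_0^{-1}$) that vanishes at the critical point and is smooth there, and one checks the two formulas patch smoothly with a partition of unity subordinate to this cover. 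Crucially, since both $v_t$ is tangent to $\Sigma$ and the coordinate-change generators preserve $\{(x,y,0,0)\}$, the resulting $X_t$ is everywhere tangent to $\Sigma$; and since $f_t$ is fixed near $\partial W$ (values in $[0,2]$ with boundary values $0$ and $2$, and one arranges the path constant near $\partial W$), $X_t$ vanishes near $\partial W$.

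Integrating $X_t$ gives an ambient isotopy $\psi_t \colon W \to W$ with $\psi_0 = \id$, $\psi_t|_{\partial W} = \id$, $\psi_t(\Sigma) = \Sigma$ (because $X_t$ is tangent to $\Sigma$, a compact submanifold, so its flow preserves $\Sigma$), and by construction $f_t \circ \psi_t = f_0$, i.e. $\phi_t^{-1}\circ f_t = f_0 \circ \psi_t^{-1}$ before undoing the rescaling, so $f_t = \phi_t \circ f_0 \circ \psi_t^{-1}$ after reincorporating $\phi_t$. The main obstacle, and the only place real care is needed, is the construction of $X_t$ near the critical points that lie on $\Sigma$: one must verify that the adapted coordinate families $\vec{x}_t$ can be chosen to depend smoothly on $t$ (this follows from the contractibility/local-triviality statements in Lemmas~\ref{lem:linearization3} and~\ref{lem:linearization4} applied fiberwise over the interval, together with the hypothesis that each $f_t$ admits such coordinates), and that the local generator of $t \mapsto \vec{x}_t \circ \vec{x}_0^{-1}$ is smooth up to and including the critical point and simultaneously preserves $\Sigma$ and kills $f_t$ infinitesimally — both of which are immediate from the fact that in these coordinates $f_t$ and $\Sigma$ are literally $t$-independent standard models, so the coordinate change is an $f$-preserving, $\Sigma$-preserving diffeomorphism and its generating vector field automatically has the required properties. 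Everything else is the routine partition-of-unity patching and ODE integration.
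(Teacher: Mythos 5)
Your proposal is correct and follows essentially the same approach as the paper's proof: rescale the critical values by $\phi_t$ so they become $t$-independent, take $t$-dependent gradient-like vector fields tangent to $\Sigma$, build the time-dependent vector field $\hat V_t$ from the ratio $-\tfrac{\partial_t g_t}{v_t(g_t)} v_t$ away from critical points, replace it near critical points by the infinitesimal generator of a smooth family of adapted coordinate embeddings, patch with a partition of unity, integrate, and verify tangency to $\Sigma$ throughout. The one place you add slightly more than the paper is in spelling out why a smooth family of adapted coordinates near $\Sigma$-critical points exists (invoking the connectedness results of Lemmas~\ref{lem:linearization3} and~\ref{lem:linearization4}); the paper takes this as given, but your justification is the right one.
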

\begin{proof}If we do not require $\psi_t$ to map $\Sigma$ to $\Sigma$, then \cite{GWW}*{Lemma 3.1} gives us the result immediately. We briefly sketch the modification of their proof, to the setting of link cobordisms.  First pick an isotopy $\phi_t\colon \R\to \R$ such that $ f_t$ and $\phi_t\circ f_0$ have the same critical values. Such a $\phi_t$ exists because by assumption all critical values of $f_t$ are distinct, and there are no critical point birth-death singularities. Write $g_t$ for $\phi_t^{-1}\circ f_t$. Pick a path of gradient-like vector fields $V_t$ for $g_t$ such that $V_t|_{\Sigma}$ is tangent to $\Sigma$. Away from $\Sigma$ and the critical points of $g_t$, we define a vector field
\[\hat{V}_t=-\frac{\d_t g_t}{V_t(g_t)}\cdot V_t.\] Note that
\[dg_t(\hat{V}_t)+\d_t g_t=0.\] Extend $\hat{V}_t$ over a neighborhood of a critical point $p_t$ of $g_t$ by picking a smooth path of embeddings $\theta_t\colon B^4\to W$, centered at $p_t$, such that if $p_t\in \Sigma$, then
\[
(g_t\circ \theta_t)(x,y,w,z)=c-x^2+y^2+w^2+z^2
\] (with $\Sigma$ locally given by $\{(x,y,0,0)\}$),  and  defining
\[\hat{V}_t(y)=\frac{d}{d s}\bigg|_{s=0} \theta_{s+t}(\theta_t^{-1}(y)).\] We use a similar formula near critical points which are not on $\Sigma$. We patch together the various definition of $\hat{V}_t$ using partitions of unity, and note that they satisfy
\begin{equation}d g_t(\hat{V}_t)+\d_t g_t=0.\label{eq:diffeq}\end{equation} We define $\psi_t$ to be the flow of the time dependent vector field $\hat{V}_t$, and note that Equation~\eqref{eq:diffeq} implies $g_t\circ \psi_t$ is constant, and hence equal to $f_0$. Hence $f_t=\phi_t\circ f_0\circ \psi_t^{-1}$. Finally, we note that $\psi_t$ preserves $\Sigma$, since $\hat{V}_t|_{\Sigma}$ is tangent to $\Sigma$.
\end{proof} 

\begin{lem}\label{lem:changeMfonSigmanocriticalvalueswaps}Suppose that $f\in \ve{M}^{\nice}(W,\Sigma)$ and $h_t:\Sigma\to [0,2]$ is a smooth path of Morse functions on $\Sigma$ such that $h_0=f|_\Sigma$ and each $h_t$ has distinct critical values in the interval $(0,1)$. Then there is a path $f_t$ such that each $f_t\in \ve{M}^{\nice}(W, \Sigma;h_t)$ and $f_0=f$.
\end{lem}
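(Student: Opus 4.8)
The plan is to realize the given path $h_t$ on $\Sigma$ by an ambient isotopy of the pair $(W,\Sigma)$ together with a reparametrization of the target interval, and then transport the fixed nice Morse function $f$ along this data. Since each $h_t$ is Morse with distinct critical values, the path $(h_t)$ has no birth-death singularities and no critical-value switches, so it should be realizable by an isotopy of $\Sigma$ in essentially the same way that Lemma~\ref{lem:nicepath->isotopy} realizes a crossing-free path of Morse functions on $W$ by an ambient isotopy of $(W,\Sigma)$.

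Concretely, I would first run the argument in the proof of Lemma~\ref{lem:nicepath->isotopy}, with the surface $\Sigma$ (and empty "distinguished submanifold") in place of $(W,\Sigma)$, to produce a smooth isotopy $\psi_t\colon \Sigma\to\Sigma$ with $\psi_0=\id$, fixing $\d\Sigma$ and a collar of it, through orientation-preserving diffeomorphisms, together with a smooth path of diffeomorphisms $\phi_t\colon [0,2]\to[0,2]$ with $\phi_0=\id$, such that
\[
h_t=\phi_t\circ h_0\circ \psi_t^{-1}.
\]
The one point of care here is the choice of $\phi_t$: it need only carry the critical values of $h_0$ to the corresponding critical values of $h_t$, so I would arrange that on a fixed small interval about each critical value $c$ of $h_0$ the map $\phi_t$ is the translation $s\mapsto s+(c_t-c)$, and that $\phi_t=\id$ on $[1-\delta/2,\,2]$, where $\delta>0$ is chosen (using compactness of $[0,1]$ and continuity of critical values) so that all critical values of all $h_t$ lie in $(0,1-\delta)$. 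Next, by the isotopy extension theorem applied to the embedding $\iota_\Sigma\circ\psi_t\colon\Sigma\hookrightarrow W$, I would extend $\psi_t$ to an ambient isotopy $\Psi_t\colon W\to W$ with $\Psi_0=\id$, $\Psi_t|_\Sigma=\psi_t$ (hence $\Psi_t(\Sigma)=\Sigma$), $\Psi_t|_{\d W}=\id$, each $\Psi_t$ orientation preserving. Then I would set $f_t:=\phi_t\circ f\circ\Psi_t^{-1}$.

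It then remains to verify that $f_t\in\ve{M}^{\nice}(W,\Sigma;h_t)$ with $f_0=f$. Clearly $f_0=f$, $f_t\colon W\to[0,2]$ is Morse, $\Crit(f_t)=\Psi_t(\Crit(f))$, and $f_t|_\Sigma=\phi_t\circ(f|_\Sigma)\circ\psi_t^{-1}=h_t$. Since $\Psi_t(\Sigma)=\Sigma$ we get $\Crit(f_t|_\Sigma)=\Psi_t(\Crit(f)\cap\Sigma)=\Crit(f_t)\cap\Sigma$, which is condition (1) of Definition~\ref{def:adaptedMorsegradient}. For condition (2), given $p\in\Crit(f)\cap\Sigma$ with adapted oriented coordinates $\vec{x}=(x,y,w,z)$ on $U$, the coordinates $\vec{x}\circ\Psi_t^{-1}$ on $\Psi_t(U)$ are again oriented for both $W$ and $\Sigma$, carry $\Sigma$ into $\{(x,y,0,0)\}$, and — because $\phi_t$ is a translation near the critical value $f(p)$ of $h_0$ — put $f_t$ into the required normal form $f_t(\Psi_t(p))-x^2+y^2+w^2+z^2$. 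Finally $\phi_t$ is an increasing diffeomorphism, so it preserves Morse indices and the ordering of critical values; hence $f_t$ has distinct critical values, $f_t|_{W\setminus\Sigma}$ has only index $1,2,3$ critical points appearing in that order, the critical values of $f_t|_\Sigma$ (those of $h_t$) lie in $(0,1)$, and the critical values of $f_t|_{W\setminus\Sigma}$ are the original $f(q)\in(1,2)$ because $\phi_t=\id$ on $[1-\delta/2,2]$.

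The main obstacle is the first step: making the surface-isotopy argument produce $\phi_t$ of the special form (a translation near each critical value of $h_0$, and the identity on the top subinterval) while still satisfying $h_t=\phi_t\circ h_0\circ\psi_t^{-1}$. This is exactly what makes condition (2) of Definition~\ref{def:adaptedMorsegradient} survive transport by $\Psi_t$ and keeps the critical-value "bands" $(0,1)$ and $(1,2)$ disjoint; everything after that — the ambient extension, and the observation that pre- and post-composing a nice Morse function by structure-preserving diffeomorphisms again yields a nice Morse function — is routine.
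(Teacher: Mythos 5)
Your proposal is correct and follows essentially the same route as the paper: realize $h_t=\phi_t\circ h_0\circ\psi_t^{-1}$ by the techniques of Lemma~\ref{lem:nicepath->isotopy} (applied to $\Sigma$), extend $\psi_t$ to an ambient isotopy $\Psi_t$ of $W$ fixing $\d W$, and set $f_t=\phi_t\circ f\circ\Psi_t^{-1}$. You usefully make explicit the constraint that $\phi_t$ be a translation on a neighborhood of each critical value of $h_0$ (and the identity on a top subinterval containing $[1,2]$), which is exactly what lets the transported coordinates $\vec{x}\circ\Psi_t^{-1}$ directly verify condition (2) of Definition~\ref{def:adaptedMorsegradient}; the paper leaves this verification implicit (it can also be done without the translation normalization by rescaling coordinates by $\sqrt{\phi_t'(c)}$, but your arrangement is cleaner) and in addition arranges $\Psi_t$ to fix the critical points of $f|_{W\setminus\Sigma}$, which you correctly observe is not needed since $\phi_t=\id$ near their critical values already keeps them in $(1,2)$.
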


\begin{proof}The proof is similar to the proof of Lemma~\ref{lem:nicepath->isotopy}. Using the techniques of that lemma, we can find smooth isotopies $\phi_t\colon\R\to \R$ and $\psi_t\colon\Sigma\to \Sigma$, such that $\psi_t$ is fixed on $\d \Sigma$ such that $h_t=\phi_t\circ h_0\circ \psi_t^{-1}$. Furthermore, since $h_t$ has no critical values in $[1,2]$, we can take $\phi_t$ to be the identity on $[1,2]$. One simply extends $\psi_t$ to a smooth isotopy  $\Psi_t\colon W\to W$, in such a way that $\Psi_t$ is fixed on $\d W$, and is the identity on all of the critical points of $f|_{W\setminus \Sigma}$. We define the path $f_t:=\phi_t\circ f_0\circ \Psi_t^{-1}$.
\end{proof}

\begin{lem}\label{lem:changeMorsefunctionsalongSigma}Suppose that $(W,\Sigma)$ is a link cobordism, $(h_t)_{t\in [0,1]}\colon\Sigma\to \R$ is a smooth path of Morse functions which have only index 1 critical points, and $f\in \ve{M}^{\nice}(W,\Sigma; h_0)$ is a nice Morse function. After modifying the path $h_t$ slightly near each critical value switch, there is a smooth family of Morse functions $f_t$ such that $f_t\in \ve{M}^{\nice}(W,\Sigma;h_t),$ except at finitely many $t$, where a critical value switch between two critical points in $\Crit(h_t)$ occurs.
\end{lem}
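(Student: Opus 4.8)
The strategy is to reduce to Lemma~\ref{lem:changeMfonSigmanocriticalvalueswaps} away from the critical value switches and to treat each switch with an explicit local model. As a first step I would note that, since each $h_t$ is a Morse function, the path $(h_t)$ fails to have distinct critical values only at finitely many times $0 < t_1 < \cdots < t_k < 1$; after an arbitrarily small perturbation of the path through Morse functions, fixing the endpoints, I may assume that at each $t_i$ exactly two critical points coincide in value and that the coincidence is transverse, i.e.\ writing $p_t, q_t$ for the nondegenerate continuations of the two offending critical points of $h_{t_i}$, the function $t \mapsto h_t(p_t) - h_t(q_t)$ has a simple zero at $t_i$. Shrinking the perturbation further, I would also arrange, by a $t$-dependent isotopy of $\Sigma$ supported near $p_t$ and $q_t$, that on a small interval $[t_i - \epsilon, t_i + \epsilon]$ the path $h_t$ is in the following normal form: there are disjoint coordinate disks $U_p \ni p_t$, $U_q \ni q_t$ with $t$-varying charts $(x,y)$ in which $h_t = h_t(p_t) - x^2 + y^2$ resp.\ $h_t = h_t(q_t) - x^2 + y^2$; outside $U_p \cup U_q$ the function $h_t$ does not depend on $t$; and $h_t(p_t), h_t(q_t)$ are affine in $t$ with $h_t(p_t) - h_t(q_t)$ of nonzero slope, staying in $(0,1)$ and meeting no other critical value of $h_t$ for $t \in [t_i - \epsilon, t_i + \epsilon]$. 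This normal form is the ``slight modification of $h_t$ near each critical value switch'' in the statement; the isotopy of $\Sigma$ used to produce it is absorbed into the lift exactly as in the proof of Lemma~\ref{lem:changeMfonSigmanocriticalvalueswaps} (extend it to $W$ fixing the off-$\Sigma$ critical points and push $f$ forward).

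On each closed subinterval between consecutive switches the critical values of $h_t$ are distinct, so starting from the Morse function already produced at the left endpoint I would invoke Lemma~\ref{lem:changeMfonSigmanocriticalvalueswaps} to obtain a smooth family $f_t \in \ve{M}^{\nice}(W,\Sigma;h_t)$ there. It remains to build $f_t$ on each $[t_i - \epsilon, t_i + \epsilon]$, starting from the $f_{t_i - \epsilon}$ already in hand, which after the normalization above I may assume restricts on $\Sigma$ to the normal form. Over a neighborhood $\tilde U_p \subset W$ of $p_t$ extending the chart on $U_p$, with $\Sigma \cap \tilde U_p = \{w = z = 0\}$, I set $f_t = h_t(p_t) - x^2 + y^2 + w^2 + z^2$, and similarly near $q_t$; these are the index-$1$ local models required by the definition of $\ve{M}(W,\Sigma)$. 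On the complement of $\tilde U_p \cup \tilde U_q$, where $h_t$ is constant in $t$, I would reconcile these with $f_{t_i-\epsilon}$ by the parametrized Moser argument of Lemma~\ref{lem:nicepath->isotopy}: choosing a gradient-like vector field $V_t$ for $f_t$ tangent to $\Sigma$ and agreeing with the standard one in the two charts, the normalized field $-(\partial_t f_t / df_t(V_t))\,V_t$ is defined and smooth off the critical set (it is here that $\partial_t f_t \neq 0$ is harmless, since $df_t \neq 0$), is tangent to $\Sigma$, and its flow transports $f_{t_i-\epsilon}$ through the family. Because on $\Sigma$ only the two critical values $h_t(p_t), h_t(q_t)$ move, and they move linearly and cross transversally at $t_i$ while avoiding all other critical values, the family $f_t$ so obtained is a nice Morse function for every $t \in [t_i - \epsilon, t_i + \epsilon]$ with $t \neq t_i$, and at $t = t_i$ it is Morse with the single non-nice feature being that the two index-$1$ critical points $p_{t_i}, q_{t_i} \in \Crit(h_{t_i})$ share a critical value --- precisely a critical value switch as allowed by the statement. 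Concatenating the local families over all of the $[t_i - \epsilon, t_i + \epsilon]$ with the families from Lemma~\ref{lem:changeMfonSigmanocriticalvalueswaps} on the intervening intervals, and smoothing the (critically trivial) joins, produces the required path.

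The main obstacle I anticipate is the normal-form step together with the patching: producing the $t$-dependent isotopy of $\Sigma$ that standardizes $h_t$ near a switch, bookkeeping its effect on the already-constructed $f$, and then checking that the parametrized Moser flow on the complement of $\tilde U_p \cup \tilde U_q$ glues smoothly to the explicit local models while keeping all $W\setminus\Sigma$-critical values distinct and in $(1,2)$ and all $\Sigma$-critical values in $(0,1)$. Once the normal form is in place these are routine Cerf-theoretic manipulations, and the fact that a transverse crossing of two same-index critical values keeps the function Morse is what makes the exceptional times exactly critical value switches rather than genuine degenerations.
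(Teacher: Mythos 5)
Your overall strategy matches the paper's: reduce to Lemma~\ref{lem:changeMfonSigmanocriticalvalueswaps} on intervals without switches, and put $h_t$ into a standard form on a small interval around each switch before extending to $W$. The difference lies entirely in the normal form and how the extension across the switch is made. The paper chooses a compactly supported bump function $\hat\omega$ on $\Sigma$ (equal to $+1$ near $p$ and $-1$ near $p'$) and reshapes the path so that, on the subinterval actually containing the crossing, one has the purely additive model $h_t = h_0 - t\epsilon\hat\omega$. This makes the extension to $W$ immediate: extend $\hat\omega$ to a bump on $W$ supported near $p$ and $p'$ and set $f_t = f_0 - t\epsilon\hat\omega$; the critical points and critical values off $\{p,p'\}$ are literally unchanged, so there is nothing to check. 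You instead put $h_t$ into a coordinate normal form near $p_t, q_t$ and propose to glue the explicit index-1 local models for $f_t$ to $f_{t_i-\epsilon}$ on the complement via the Moser flow of Lemma~\ref{lem:nicepath->isotopy}. That can be made to work, but as written the Moser step is circular: you define the transport field as $-(\partial_t f_t / df_t(V_t))\,V_t$, yet $f_t$ (off the two charts) is exactly what you are trying to construct, and $V_t$ is described as a gradient-like field ``for $f_t$.'' To make this rigorous you would have to first assemble a candidate $f_t$ by a cutoff in the transition annulus and only then worry about keeping the off-$\Sigma$ critical values fixed --- which is precisely the difficulty the paper's additive trick eliminates. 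So the proposal is correct in outline and uses the same decomposition and lemmas, but your normal form forces a patching argument that the paper's choice of $h_0 - t\epsilon\hat\omega$ avoids entirely; if you adopt the additive bump-function model, the Moser step disappears and the proof becomes one line at the crossing.
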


\begin{proof} Since the path $h_t$ is Morse for all $t$, and has only index 1 critical points, we can perturb $h_t$ slightly so that there are only finitely many times $t$ when the critical points of $h_t$ do not have distinct critical values. By modifying $h_t$ slightly, we can subdivide $[0,1]$ by picking finitely many points of time $0=t_0<t_1<\dots < t_n=1$ such that on each interval $[t_i,t_{i+1}]$ there is at most one critical value switch. Note that Lemma~\ref{lem:changeMfonSigmanocriticalvalueswaps} handled the case that there are no critical value switches of $h_t$ on $[t_i,t_{i+1}]$. Hence it is sufficient to prove the lemma statement for each subinterval $[t_i,t_{i+1}]$ individually. 

We now consider the claim in the case that $[t_i,t_{i+1}]$ contains a single critical value switch. On the interval $[t_i,t_{i+1}]$, we will describe how to modify the path $h_t$ so that it satisfies a simple standard model. Let us renormalize the interval $[t_i,t_{i-1}]$ to be $[-1,1]$. Cerf \cite{CerfTheory}*{pg. 41} describes standard models for critical value switches which he calls \emph{chemin descendant standard} and \emph{chemin ascendant standard}. We describe a slight modification of Cerf's construction. Suppose $h_t$ is a path of Morse functions on $\Sigma$ which has distinct critical values except at $t=0$, where two critical points, $p$ and $p'$, are involved in a critical value switch. Assume that  $h_{-1}(p)>h_{-1}(p')$ while $h_{1}(p)<h_1(p')$. Let us pick a bump function $\hat{\omega}$ which is supported in a small neighborhood of $p$ and $p'$ such that $\hat{\omega}\equiv+1$ near $p$ and $\hat{\omega}\equiv-1$ near $p'$.

We now describe how to modify the path $h_t$ so that for $t$ in a neighborhood of $0$, the path $h_t$ has a very simple local form. Fix a small $\epsilon>0$, and replace $(h_t)_{t\in [-1,1]}$ with the concatenation of the following  five paths of Morse functions:
\begin{enumerate}
\item The path $(h_{-1} +t \hat{\omega})_{t\in [0,\epsilon]}$ from $h_{-1}$ to $h_{-1}+\epsilon\hat{\omega}$.
\item The path $(h_{t}+\epsilon\hat{\omega})_{t\in [-1,0]}$ from $h_{-1}+\epsilon\hat{\omega}$ to $h_0+\epsilon\hat{\omega}$.
\item The path $(h_0-t\epsilon \hat{\omega})_{t\in [-1,1]}$ from $h_0+\epsilon\hat{\omega}$ to $h_0-\epsilon\hat{\omega}$.
\item The path $(h_t-\epsilon\hat{\omega})_{t\in [0,1]}$ from $h_0-\epsilon\hat{\omega}$ to $h_{-1}-\epsilon\hat{\omega}$.
\item The path $(h_1+t\hat{\omega})_{t\in [-\epsilon,0]}$ from $h_1-\epsilon\hat{\omega}$ to $h_1$.
\end{enumerate}

Note that paths (1), (2), (4) and (5) have distinct critical values, and hence Lemma~\ref{lem:changeMfonSigmanocriticalvalueswaps} applies to those paths. It remains to prove the claim for path (3). To extend the path $(h_0-t\epsilon \hat{\omega})_{t\in [-1,1]}$ to all of $W$, given an extension $f_{-1}$ for $t=-1$ we simply extend the map $\hat{\omega}$ to a small neighborhood of $p$ and $p'$ in $W$, and then use the formula $f_{t}=f_{-1}-(t+1)\epsilon \hat{\omega}.$ We note that except for $p$ and $p'$, the critical points of $f_{-1}$ and their critical values are unchanged by this procedure. Hence if $f_{-1}\in \ve{M}^{\nice}(W,\Sigma; h_{-1})$ then $f_t\in \ve{M}^{\nice}(W,\Sigma;h_t)$, except at $t=0$, where a critical value switch occurs.
\end{proof}

\subsection{Morse functions which respect the dividing set}

Since we cannot define link Floer homology when a link component has no basepoints, we need to consider Morse functions which satisfy some extra compatibility requirements requirements with respect to the dividing set $\cA$.

\begin{define}Suppose that $(W,\cF)$ is a decorated link cobordism, with $\cF=(\Sigma,\cA)$. If $f\colon W\to [0,2]$ is a Morse function in $\ve{M}^{\nice}(W,\Sigma)$, we say $f$ is \emph{$\cA$-compatible} if the following hold:
\begin{enumerate}
\item Each component of $f|_{\Sigma}^{-1}(t)$ intersects $\cA$ non-trivially for each $t$.
\item $f|_{\cA}$ is Morse.
\item All critical points of $f$ and $f|_{\cA}$ have distinct values.
\item All of the critical values of $f|_{\cA}$ are contained in the interval $[0,1]$.
\end{enumerate}
We  write $\ve{M}^{\nice}_{\cA}(W,\Sigma)$ for the set of $\cA$-compatible, nice Morse functions. If $h$ is a fixed Morse function on $\Sigma$ with only index 1 critical points, we write $\ve{M}^{\nice}_{\cA}(W,\Sigma;h)$ for the elements of $\ve{M}^{\nice}_{\cA}(W,\Sigma)$ which restrict to $h$ on $\Sigma$.
\end{define}

\begin{lem}\label{lem:sufficientlynicepath}Suppose that $f_0,f_1\in \ve{M}^{\nice}_{\cA}(W,\Sigma)$. Then there is a path $f_t\colon W\to [0,2]$ of smooth functions which are in $\ve{M}^{\nice}_{\cA}(W,\Sigma)$ except for finitely many $t$, where one of the following occurs:
\begin{enumerate}
\item An index 1/2 or 2/3 birth-death singularity occurs between two critical points in $W\setminus \Sigma$, with critical values contained in the interval $(1,2)$.
\item A critical value switch occurs between two critical points of $f|_{W\setminus \Sigma}$ of the same index, or between two critical points of $f|_{\Sigma}$.
\item An index 0/1 birth-death singularity of $f_t|_{\cA}$ occurs along $\cA$.
\item A critical value switch occurs between two critical points of $f_t|_{\cA}$.
\item A critical value switch occurs between a critical point of $f_t|_{\cA}$ and a critical point of $f_t|_{\Sigma}$.
\item A critical point of $f_t|_{\Sigma}$ crosses an arc of $\cA$, transversely.
\end{enumerate}
\end{lem}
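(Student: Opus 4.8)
The plan is to combine the Morse-theoretic moves established in the preceding subsections (Lemmas~\ref{lem:existsnicepath}, \ref{lem:changeMorsefunctionsalongSigma}, \ref{lem:changeMfonSigmanocriticalvalueswaps}) with a separate analysis of the behavior of the restriction $f_t|_{\cA}$ and the way a critical point of $f_t|_\Sigma$ can move relative to $\cA$. The key point is that the two data — the Morse function on $(W,\Sigma)$ and the position of the dividing set $\cA$ inside $\Sigma$ — can be handled in two stages: first connect the restrictions $f_0|_\Sigma$ and $f_1|_\Sigma$ through a generic path of Morse functions on $\Sigma$ together with a path moving $\cA$, and only afterwards worry about extending over $W\setminus \Sigma$.

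First I would set $h_i = f_i|_\Sigma$ for $i=0,1$. Both are Morse functions on $\Sigma$ with only index~1 critical points (Definition~\ref{def:adaptedMorsegradient} and the remark following it), and both are $\cA$-compatible in the sense that the level sets meet $\cA$ non-trivially and $h_i|_{\cA}$ is Morse. Since the space of Morse functions on the closed surface $\Sigma$ is connected and generic paths cross only birth-death and critical-value-switch singularities, I would choose a generic path $(h_t)_{t\in[0,1]}$ of functions on $\Sigma$ from $h_0$ to $h_1$, all of whose critical points have index~1 except at isolated times; but the right thing is to realize the ambient isotopy interpretation: a path of index-1 Morse functions on $\Sigma$ that stays index~1 throughout is, by Lemma~\ref{lem:nicepath->isotopy} applied on $\Sigma$, realized by an isotopy $\psi_t$ of $\Sigma$ together with a reparametrization of the target. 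So I may assume $h_t = \phi_t \circ h_0 \circ \psi_t^{-1}$ for isotopies $\psi_t$ of $\Sigma$ and $\phi_t$ of $[0,2]$; the isotopy $\psi_t$ drags $\cA$ to $\psi_t(\cA)$. Now $\cA_0 := \cA$ and $\cA_1 := \psi_1^{-1}(\cA)$ are two collections of dividing arcs, and I want a path of dividing sets $\cA_t$ from $\cA_0$ to $\psi_1^{-1}(\cA)$ which, at each time, stays transverse to the level sets $h_0^{-1}(t')$ except at finitely many times; generically such a path exists, and the failures are exactly: (3) an index 0/1 birth-death of $(h_0)|_{\cA_t}$, (4) a critical value switch between two critical points of $(h_0)|_{\cA_t}$, (5) a critical value switch between a critical point of $(h_0)|_{\cA_t}$ and a critical point of $h_0|_\Sigma$, and (6) a critical point of $h_0|_\Sigma$ passing through $\cA_t$. (These are precisely items (3)–(6) in the statement, pulled back through $\psi_t$.) The condition that each level set of $h_t$ meets $\cA_t$ non-trivially can be arranged since it is an open condition away from the exceptional times and the exceptional moves are local.

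With the motion of $(h_t,\cA_t)$ in hand, I would then extend to $W$. Using Lemma~\ref{lem:changeMorsefunctionsalongSigma}, starting from $f_0 \in \ve{M}^{\nice}(W,\Sigma;h_0)$ I get a family $f_t \in \ve{M}^{\nice}(W,\Sigma;h_t)$ which is nice except at finitely many critical-value switches among critical points of $h_t$ (absorbed into case (2)), and which along $\cA_t$ realizes exactly the transitions (3)–(6) above, since those are entirely determined by the pair $(h_t,\cA_t)$. This gives a path from $f_0$ to some $f_1' \in \ve{M}^{\nice}_{\cA}(W,\Sigma;h_1)$ with $f_1'|_\Sigma = h_1 = f_1|_\Sigma$. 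Finally, $f_1'$ and $f_1$ agree on $\Sigma$, so Lemma~\ref{lem:existsnicepath} provides a path between them through nice Morse functions, with failures only of types (1) and (2) (birth-deaths and critical-value switches in $W\setminus\Sigma$), and since this path is constant on $\Sigma$ it does not disturb $\cA$-compatibility. Concatenating the three families and perturbing so that no two of the finitely many exceptional events coincide finishes the argument.

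The main obstacle I expect is step two: making precise the claim that a generic path of dividing sets $\cA_t$ inside $\Sigma$, moving relative to the fixed foliation by level sets of $h_0$, degenerates only through the six listed local models, and in particular controlling the ``each level set meets $\cA_t$'' condition across the moves. This requires a transversality/jet-space argument for the one-parameter family of pairs (level-set foliation, properly embedded 1-manifold $\cA_t$) — essentially a relative Cerf theory for $\cA \subset \Sigma$ — together with checking that the non-emptiness-of-intersection condition, which could in principle fail at a birth of an innermost arc of $\cA_t$, can always be repaired by a further small isotopy of $\cA_t$ supported near the offending region. Once this relative Cerf statement is set up, the rest is bookkeeping of the kind already carried out in Lemmas~\ref{lem:genericpath}–\ref{lem:changeMorsefunctionsalongSigma}.
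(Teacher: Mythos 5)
Your overall decomposition is the same as the paper's: reduce to a path $h_t$ on $\Sigma$, extend over $W$ via Lemma~\ref{lem:changeMorsefunctionsalongSigma}, and then use Lemma~\ref{lem:genericpath}/\ref{lem:existsnicepath} to close the final gap through a path fixed on $\Sigma$. However, your intermediate appeal to Lemma~\ref{lem:nicepath->isotopy} on $\Sigma$ to realize the whole path $h_t$ as $\phi_t\circ h_0\circ\psi_t^{-1}$ is not available: that lemma requires that the critical values remain distinct for all $t$, and a generic path of Morse functions from $h_0$ to $h_1$ will cross critical-value switches (this is precisely why case (2) is in the statement). You could only apply the isotopy realization on the subintervals between exceptional times, so the ``move $\cA$ relative to a fixed $h_0$'' picture is legitimate only piecewise; as written, your reduction from a moving $h_t$ to a fixed $h_0$ and a moving $\cA_t$ is not valid globally, and the paper avoids this by working directly with a moving $h_t$ and a fixed $\cA$.

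The more substantial gap is exactly the one you flag yourself: ensuring that every component of every level set $h_t^{-1}(s)$ meets $\cA_t$ across the whole path. You defer this to an unspecified ``relative Cerf theory for $\cA\subset\Sigma$'' and to ``a further small isotopy supported near the offending region,'' but this is the step where the proof actually needs an idea; it is not clear that a local perturbation can always repair a failure. The paper resolves it with a concrete global argument: since every $h_t$ has only index-1 critical points, each level set component is homologically nontrivial in $\Sigma$; one can therefore choose a fixed finite collection of properly embedded arcs $\ve{E}$ meeting every level set component of every $h_t$, arrange (after a small isotopy of endpoints) that each arc of $\ve{E}$ hits the interior of an arc of $\cA$, and then conjugate the whole path by a compactly supported isotopy $\phi_t$ of $\Sigma$ that rotates neighborhoods of the truncated arcs $\ve{E}_0$ so that $\phi_1(\cA)$ meets every level set of every $h_t$; the desired path is then the concatenation $(h_0\circ\phi_t)*(h_t\circ\phi_1)*(h_1\circ\phi_{1-t})$. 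You should supply an argument of this kind rather than relying on a local perturbation claim.
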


\begin{proof}First we claim that is sufficient to construct a path of Morse functions $h_t\colon\Sigma\to [0,2]$ between $f_0|_{\Sigma}$ and $f_1|_{\Sigma}$, with critical values in $[0,1]$, such that $h_t$ and $h_t|_{\cA}$ have distinct critical values for all but finitely many $t$, where one of the singularities (3), (4), (5) and (6) can occur, and such that each component of $h_t^{-1}(s)$ intersects $\cA$ non-trivially. If we have such a path, Lemma~\ref{lem:changeMfonSigmanocriticalvalueswaps} allows us to extend this to a path $\hat{f}_t$ from $f_0$ to some other Morse function $\hat{f}_1\in \ve{M}^{\nice}(W,\Sigma; f_1|_{\Sigma})$. Lemma~\ref{lem:genericpath}, then allows us to connect $\hat{f}_1$ and $f_1$ via a path of functions which is fixed on $\Sigma$, and which are nice at all but finitely $t$, where one of singularities (1) and (2) may occur.

Hence, it is sufficient to construct a suitable path $h_t\colon \Sigma\to [0,2]$. We can construct a path $h_t$ which potentially has index 0/1 or index 1/2 birth-death singularities, however standard Morse theory techniques allow us to reduce to the case that there are no index 0 or index 2 critical points throughout (note that by assumption $f_0|_\Sigma$ and $f_1|_{\Sigma}$ have none). It follows that we can construct a path $h_t$ which is Morse for all $t$. Via a generic perturbation, we can assume that all of the critical values of $h_t$ and $h_t|_{\cA}$ are all distinct, except at finitely many $t$ where one of the singularities (3), (4), (5) or (6) occurs. However, we still need to reason that we can pick a path $h_t$ such that  each connected component of $h_t^{-1}(s)$ intersects $\cA$ non-trivially, for all $t$ and $s$.

To demonstrate that we can construct $h_t$ so that each component of $h_t^{-1}(s)$ intersects $\cA$ non-trivially, we argue as follows. First, pick any path $h_t$, which has only singularities of type (3), (4), (5) and (6). Since each $h_t$ has only index 1 critical points, each component of each $h_t^{-1}(s)$ represents a non-zero class in $H_1(\Sigma;\Z)$. Hence we can pick a collection of arcs $\ve{E}$ in $\Sigma$, with boundary on $\d \Sigma$, such that each component of $h_t^{-1}(s)$ intersects an arc in $\ve{E}$ non-trivially. We isotope the boundaries of the curves in $\ve{E}$ so that each arc in $\ve{E}$ intersects the interior of an arc in $\cA$. Let $\ve{E}_0$ denote the collection of arcs in $\Int(\Sigma)$ obtained by removing a small portion of the boundary of each arc in $\ve{E}$. Let $\phi_t$ be an isotopy of the surface $\Sigma$, which is fixed on $\d \Sigma$ and is supported in a neighborhood of $\ve{E}_0$. By picking $\phi_t$ appropriately (e.g. viewing a neighborhood of each component of $\ve{E}_0$ as a disk, and having $\phi_t$ rotate these disks), we can arrange that $\phi_t(\cA)$ intersects each component of each level set of $h_0$ and $h_1$, and also that $\phi_1(\cA)$ intersects each component of each level set of each $h_t$. We then consider the path $h_t'$ defined by concatenating the three paths $(h_0\circ \phi_t)_{t\in [0,1]}$, $(h_t\circ \phi_1)_{t\in [0,1]}$ and $(h_1\circ \phi_{1-t})_{t\in [0,1]}$. 
\end{proof}

\subsection{Morse functions and parametrized Kirby decompositions}
\label{subsec:MorsedataKirbydecomp} We now describe how a Morse function on a decorated link cobordism gives a parametrized Kirby decomposition, well-defined up to Cerf equivalence.

\begin{define}\label{def:niceMorsedata} Suppose that $(W,\cF)$ is a decorated link cobordism with $\cF=(\Sigma,\cA)$ and  $f\colon W\to [0,2]$ is a Morse function with a collection of regular values $\ve{b}\subset [0,2]$. Let us write $\ve{b}=\{b_0,\dots, b_{n+1}\}$ with $0=b_0<b_1<\cdots< b_n<b_{n+1}=2$. We define 
\[
 W_i:=f^{-1}([b_i,b_{i+1}])\qquad \text{and}\qquad  \Sigma_i:=\Sigma\cap W_i.
 \]
  We say that $(f,\ve{b})$ is \emph{Kirby-type Morse data} if $f\in \ve{M}^{\nice}_{\cA}(W,\Sigma)$, and on each $(W_i,\Sigma_i)$, exactly one of the following holds:
\begin{enumerate}
\item $f$ and $f|_{\cA}$ have no critical points.
\item  $f$ has a single critical point, which is of index 1 or 3 and occurs in $W_i\setminus \Sigma_i$. Furthermore $f|_{\cA}$ has no critical points.
\item $f$ has (arbitrarily many) critical points, all of which are index 2 and are in $W_i\setminus \Sigma_i$. Furthermore $f|_{\cA}$ has no critical points.
\item $f$ has a single critical point, which is of index 1 and occurs in $\Sigma_i$. Furthermore $f|_{\cA}$ has no critical points.
\item $f$ has no critical points. Furthermore $f|_{\cA}$ has a single critical point. 
\end{enumerate} 
Furthermore, we assume only one $W_i$ contains any index 2 critical points.
\end{define}

If $(W,\cF)$ is a decorated link cobordism and $(f,\ve{b})$ is a collection of Kirby-type Morse data we now describe how to construct a parametrized Kirby decomposition
\[\cK(f,\ve{b}).\] In Lemma~\ref{lem:differentchoicesvectorfield=>weakequiv}, below, we show that $\cK(f,\ve{b})$ is well-defined up to Cerf equivalence. Let us write
\[
Y_i=f^{-1}(b_i),\qquad L_i=Y_i\cap \Sigma, \qquad \cA_i=\Sigma_i\cap \cA\qquad \text{and} \qquad {\cF}_i=(\Sigma_i,\cA_i).\]

  We note that, except for the lack of parametrization, each $(W_i,\cF_i)$ satisfies the definition of an elementary link cobordism  (Definition~\ref{def:parametrizedelementarycob}). Furthermore, each $(W_i,\cF_i)$ is equipped with a Morse function $f_i:=f|_{W_i}$, whose restriction to $\Sigma_i$ and $\cA_i$ is also Morse. To construct a parametrized Kirby decomposition, we need to construct framed spheres $\bS_i\subset Y_i$ as well as parametrizing diffeomorphisms 
\[
\Phi_i\colon \cW(Y_i,L_i,\bS_i)\to (W_i,\Sigma_i).
\]

Suppose first that the Morse function $f_i$ on $(W_i,\Sigma_i)$ has no critical points. We set $\bS_i=\bS_{\varnothing}$,  we pick a vector field $v_i$ on $W_i$ such that $v_i(f_i)>0$ and $v_i|_{\Sigma}$ is tangent to $\Sigma$.  The flow of $v_i$ (suitably normalized) gives a parameterizing diffeomorphism 
\[
\Phi_i\colon ([0,1]\times Y_i, [0,1]\times L_i)\to (W_i, \Sigma_i).
\]
 Also note that the space of such vector fields is connected, so any two parametrizations $\Phi_i$, made in this way, are isotopic relative to $\{0\}\times Y_i$. The parametrized cobordism $(W_i, {\cF}_i, \Phi_i,\bS_{\varnothing} )$ is then an elementary parametrized cobordism of type ($\mathcal{EPC}$-\ref{elemencobtype1}) or ($\mathcal{EPC}$-\ref{elemencobtype3}), depending on whether $f_i|_{\cA}$ has a critical point or not.

Now suppose that $f_i|_{W_i\setminus \Sigma_i}$ has a single index 1 or index 3 critical point, or arbitrarily many index 2 critical points. We will construct a framed sphere or link $\bS_i\subset Y_i$ and a parameterizing diffeomorphism $\Phi_i$  which turns $(W_i,\cF_i)$  into an elementary parametrized cobordism of type ($\mathcal{EPC}$-\ref{elemencobtype2}). Suppose first that $f_i$ has a single critical point of index 1, 2 or 3. A gradient-like vector field $v_i$ such that $v_i|_{\Sigma}$ is tangent to $\Sigma$, as well as a choice of local coordinates in a neighborhood of the critical point give a well-defined parametrization, as we now describe. Our description is based on \cite{Palais}*{Section~11} and \cite{JClassTQFT}*{Section~2.3.1}.

Write $p$ for the critical point of $f_i$, and write for $k\in \{1,2,3\}$ for the index of $p$. We assume $p\in W_i\setminus \Sigma_i$. We first consider the case that $p$ is the only critical point of $f_i$. For notational convenience, we will describe the handle attachment procedure for arbitrary ambient dimension $n$ (though we only need the case that $n=2$ or $n=4$). Pick coordinates $\vec{x}=(x_1,\dots, x_n)$ near $p$ such that $f=f(p) -(x_1^2+\cdots+ x_k^2)+(x_{k+1}^2+\cdots+ x_n^2)$. We assume that the gradient-like vector field $v_i$ satisfies
\[
v_i=2\left(-x_1\frac{\d}{\d x_1}-\cdots -x_k\frac{\d}{\d x_{k}}+x_{k+1}\frac{\d}{\d x_{k+1}}+\cdots +x_n\frac{\d}{\d x_n}\right)
\] 
near $p$. The descending manifold is equal to $\{(x_1,\dots x_k,0,\dots,0)\}$ and the ascending manifold is $\{(0,\dots, 0,x_{k+1},\dots,x_n)\}$. Fix a small $\epsilon>0$, such that the ball of radius $2\epsilon$ is contained in the image of the coordinates $\vec{x}$. 

We pick a smooth function $\lambda\colon \R\to \R$ which has $\lambda'(t)\le 0$ and satisfies $\lambda(t)=1$ for $t\le \tfrac{1}{2}$, $\lambda(t)>0$ if $t<1$ and $\lambda(t)=0$ if $t\ge 1$. Notice that the space of such functions $\lambda$ is convex. Consider the function $g\colon W_i\to [b_i,b_{i+1}]$ defined by
\[
g:=f_i-\frac{3\epsilon}{2} \lambda\left(\frac{x_{k+1}^2+\cdots +x_n^2}{\epsilon}\right).
\] Note that 
\[
f_i^{-1}([b_i,c-\epsilon])\subset g^{-1}([b_i, c-\epsilon]).
\]
 Define the subset
\[
H:=g^{-1}([b_i, c-\epsilon])\setminus f_i^{-1}([b_i,c-\epsilon])\subset W_i.
\] 
According to \cite{Palais}*{pg. 316}, $H$ is homeomorphic to $D^k\times D^{n-k}$, via a homeomorphism which can be canonically specified in terms of $\lambda,$ $\epsilon$ and $\vec{x}$ (we omit the explicit formula for the homeomorphism, since it is rather complicated and not important for our purposes). Furthermore this homeomorphism restricts to a diffeomorphism on $(\Int D^k)\times D^{n-k}$, and also restricts to a smooth embedding of $S^{k-1}\times D^{n-k}$ into $f_i^{-1}(c-\epsilon)$, which is a framed $k$-sphere in $f_i^{-1}(c-\epsilon)$. The normalized gradient-like vector field, $v_i/v_i(f_i)$, induces a well-defined diffeomorphism between $[0,1]\times Y_i$ and $f_i^{-1}([b_i,c-\epsilon])$. The framed $k$-sphere in $f_i^{-1}([b_i,c-\epsilon])$ thus naturally induces a framed $k$-sphere $\bS_i$ in $\{0\}\times Y_i $, by using this identification.  The homeomorphism of $H$ with $D^k\times D^{n-k}$ described above thus yields a diffeomorphism of $([0,1]\times Y_i)\cup_{\bS_i} (D^k\times D^{n-k})$ with $g^{-1}([b_i, c-\epsilon])$, where the smooth structure on the former space is obtained by attaching the handle and then smoothing corners.  By \cite{Palais}*{pg. 319}, the gradient-like vector field $v_i$ is transverse to the level sets of $g$, except at the single critical point $p$ of $g$ (which is also the critical point of $f_i$). Hence by flowing along $v_i$, the subset $f_i^{-1}([b_i,b_{i+1}])\setminus g^{-1}([b_i, c-\epsilon])$ is diffeomorphic to $[0,1]\times g^{-1}(c-\epsilon) $, via a diffeomorphism which is uniquely specified up to isotopy, relative to $g^{-1}(c-\epsilon)$. By picking a collar neighborhood of $g^{-1}(c-\epsilon)$ in $g^{-1}([b_i, c-\epsilon])$ induced by the flow of $v_i$, we obtain a diffeomorphism $\Phi_i$ between  $([0,1]\times Y_i )\cup_{\bS_i} (D^k\times D^{n-k})$ and $W_i$ which is determined up to isotopies fixing $\{0\}\times Y_i $. Since $v_i$ is tangent to $\Sigma_i$, the map $\Phi_i$ sends $[0,1]\times L_i$ to $\Sigma_i$, and is hence a parametrization of link cobordisms.

Note that even for a fixed gradient-like vector field and coordinates near $p$, the construction still relies on a choice of $\epsilon$ and $\lambda$, though the space of such $\lambda$ is convex, so any two choices yield framed spheres which are isotopic away from $L_i$, which are related by Move \eqref{def:weakequiv:isotopyawayfromL}.

In the case that $(W_i,{\cF}_i)$ is an elementary cobordism with multiple  index 2 critical points, we can still use the above strategy to construct a parametrization $\Phi_i\colon \cW(Y_i,L_i,\bS_i)\to (W_i,\Sigma_i)$. The space of gradient-like vector fields for $f_i$ is still connected. A generic gradient-like vector field will have ascending and descending manifolds of the critical points which are transverse, and hence generically the intersection of the ascending and descending manifolds of two index 2 critical points will be empty, allowing us to perform the above construction whenever the choice of gradient-like vector field is generic (paths of gradient-like vector fields may of course result in handleslides, as we discuss in the following lemma).

We lastly consider cobordisms of type ($\mathcal{EPC}$-\ref{elemencobtype4}), where the critical point $p$ is of index 1, and $p\in \Sigma_i$. In this case, we pick a gradient-like vector field $v_i$ and oriented coordinates $\vec{x}=(x,y,w,z)$, defined in a neighborhood $U$ containing the critical point $p$, such that 
\[
f_i=f_i(p)-x^2+y^2+w^2+z^2,\quad \vec{x}(U\cap \Sigma)\subset \{(x,y,0,0)\}\quad \text{and} \quad v_i=2\left(-x\tfrac{\d}{\d x}+y\tfrac{\d}{\d y}+w\tfrac{\d}{\d w}+z\tfrac{\d}{\d z}\right).
\]
 The space of such coordinates, which are oriented for both $W$ and $\Sigma$, has two connected components by Lemma~\ref{lem:linearization3}. Using one of these two sets of coordinates, we can now adapt the construction defined above for critical points in $W_i\setminus \Sigma_i$. In this case, using the function $g$ constructed above, we obtain a diffeomorphism $\Phi_i$ between $W_i$ and $([0,1]\times Y_i)\cup H\cup ( [1,2]\times g^{-1}(c+\epsilon))$. Furthermore $H$ is canonically identified as $D^1\times D^3$ by our choice of local coordinates centered at $p$. Notice that the local coordinates also identify $\Sigma_i$ with a subset of $\{(x,y,0,0)\}\subset \R^4$. It is straightforward to check that using this identification, the band region $B\subset D^1\times D^3$ defined by $B=\{(x,y,0,0):-1\le x\le 1, -1\le y\le 1\}$ is mapped into $\Sigma_i$, and the identification also sends $(0,0)\in B$ to the critical point of $f$. Hence the map $\Phi_i$ is in fact a parametrization of the link cobordism $(W_i,\Sigma_i)$. 

The above construction of $\cK(f,\ve{b})$ several choices of auxiliary data, such as a choice of local coordinates and gradient-like vector field. Nevertheless, we have the following:

\begin{lem}\label{lem:differentchoicesvectorfield=>weakequiv}If $(W,\cF)$ is a decorated link cobordism and $(f,\ve{b})$ is a collection of Kirby-type Morse data, then any two choices of gradient-like vector fields and local coordinates yield Cerf equivalent parametrized Kirby decompositions of $(W,\cF)$.
\end{lem}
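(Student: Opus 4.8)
\textbf{Proof plan for Lemma~\ref{lem:differentchoicesvectorfield=>weakequiv}.}

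The plan is to analyze each type of choice separately, observe that the relevant space of choices is either connected or has finitely many components, and show that a path in the space of choices induces a Cerf equivalence. The choices entering the construction of $\cK(f,\ve{b})$ are: (i) a gradient-like vector field $v_i$ on each $W_i$ which is tangent to $\Sigma$ (and, for type~($\mathcal{EPC}$-\ref{elemencobtype1}) and ($\mathcal{EPC}$-\ref{elemencobtype3}) pieces, simply a positive vector field tangent to $\Sigma$); (ii) for each critical point in $W_i\setminus \Sigma_i$, a choice of local coordinates putting $f$ and $v_i$ into standard form, together with auxiliary choices of $\epsilon>0$ and of the bump function $\lambda$; (iii) for each critical point on $\Sigma_i$ of type~($\mathcal{EPC}$-\ref{elemencobtype4}), a choice of local coordinates which are oriented for both $W$ and $\Sigma$ and put $f$ and $v_i$ into standard form, again with auxiliary $\epsilon$ and $\lambda$.

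First I would dispose of the $\epsilon$ and $\lambda$ choices: the set of admissible $\lambda$ is convex and $\epsilon$ ranges in an interval, so any two choices are joined by a path, and along such a path the framed sphere $\bS_i$ varies by an ambient isotopy disjoint from $L_i$ (it stays inside a coordinate ball around the critical point, which is disjoint from $\Sigma$ for critical points in $W_i\setminus\Sigma_i$, and meets $\Sigma_i$ only in the band region in the ($\mathcal{EPC}$-\ref{elemencobtype4}) case, where the band is preserved). This is exactly Move~\eqref{def:weakequiv:isotopyawayfromL} (respectively Move~\eqref{def:weakequiv:isotopyalongL}). Next, for a fixed $f$, the space of gradient-like vector fields tangent to $\Sigma$ is contractible (it is a convex subset of an affine space of vector fields, cut out by the open conditions $v(f)>0$ and the linear condition of tangency to $\Sigma$, which is nonempty as noted in the construction), and similarly the space of positive vector fields tangent to $\Sigma$ on a ($\mathcal{EPC}$-\ref{elemencobtype1})/($\mathcal{EPC}$-\ref{elemencobtype3}) piece is contractible. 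A path of gradient-like vector fields induces a path of parametrizations; for the ($\mathcal{EPC}$-\ref{elemencobtype1}), ($\mathcal{EPC}$-\ref{elemencobtype3}) and index $1$/$3$ cases this just says the parametrization is well-defined up to isotopy fixing $\{0\}\times Y_i$, and the same for the center of the band in the ($\mathcal{EPC}$-\ref{elemencobtype4}) case since the vector field is tangent to $\Sigma$ throughout. In the index $2$ case, a generic path of gradient-like vector fields will have isolated times at which the ascending manifold of one index $2$ critical point meets the descending manifold of another; crossing such a time changes the framed link $\bS_i$ by a handleslide, which is Move~\eqref{def:weakequiv:handleslide}.

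It remains to handle the choice of local coordinates once $f$ and $v_i$ are fixed. For a critical point in $W_i\setminus\Sigma_i$, the relevant space is $\Coor_p(U,f)$ intersected with the condition that $v_i$ take standard form; by Lemma~\ref{lem:linearization} this is homotopy equivalent to $\SO(k,n-k)$ (with $n=4$), which is connected for $k=1,2,3$, and the further condition on $v_i$ cuts out the linear-coordinate part, leaving again a connected space. Hence any two choices are joined by a path, giving a path of framed spheres isotopic away from $L_i$, i.e.\ Move~\eqref{def:weakequiv:isotopyawayfromL}. For a critical point on $\Sigma_i$ of type~($\mathcal{EPC}$-\ref{elemencobtype4}), Lemma~\ref{lem:linearization3} shows the space of coordinates oriented for both $W$ and $\Sigma$ has two components related by $(x,y)\mapsto(-x,-y)$, and the associated set $\cV$ of vector fields is connected; so for a fixed $v_i$ of standard form the two components of the coordinate space yield parametrizations differing by the orientation reversal of the band coordinate, which is precisely the content of Move~\eqref{def:weakequiv:changeorientationofS} (reversal of the orientation of a framed $0$-sphere along $L$, using the map $\sigma(x,y,w,z)=(-x,-y,w,z)$), while within a component any two choices are joined by a path inducing Move~\eqref{def:weakequiv:isotopyalongL}. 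Assembling these observations across all pieces $W_i$ of the decomposition gives the claim. The main obstacle is the index $2$ case: one must argue carefully that a generic path of gradient-like vector fields meets the ``bad'' locus (where ascending and descending manifolds of distinct index $2$ critical points intersect) transversely in isolated points, and that crossing each such point effects exactly one handleslide of $\bS_i$ in the sense of Move~\eqref{def:weakequiv:handleslide} — this is the standard Cerf-theoretic transversality argument (cf.\ \cite{KirbyCalculus}*{Section~3}), which I would invoke rather than redo in detail.
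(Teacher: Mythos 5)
Your plan follows essentially the same structure as the paper's proof: reduce to a single elementary piece, dispose of $\epsilon$ and $\lambda$ by convexity, vary the gradient-like vector field (handling index-$2$ handleslides via Move~\eqref{def:weakequiv:handleslide}), and vary the local coordinates using the homotopy-type computations of Lemmas~\ref{lem:linearization} and~\ref{lem:linearization3}. But there is a concrete factual error that creates a gap exactly where you claim there isn't one.

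You assert that for a critical point in $W_i\setminus\Sigma_i$, the space $\Coor_p(U,f)\simeq\SO(k,n-k)$ (with $n=4$, $k\in\{1,2,3\}$) ``is connected.'' It is not. As the paper records in Section~\ref{subsec:spacesofadaptedcoords}, for $0<k<n$ the group $O(k,n-k)$ has four components and $\SO(k,n-k)$ has two, the two being related by simultaneously reversing the orientations of $\R^k\times\{\vec 0\}$ and $\{\vec 0\}\times\R^{n-k}$. (For instance $\SO(1,1)$ and $\SO(1,3)$ each have two components.) Consequently, two choices of local coordinates at a critical point away from $\Sigma$ need \emph{not} be joined by a path in $\Coor_p(U,f)$, and cannot all be accounted for by Move~\eqref{def:weakequiv:isotopyawayfromL}. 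The paper's proof deals with this by observing that passing between the two components precisely changes the framed sphere by the map $\sigma(x,y,w,z)=(-x,y,w,-z)$ of Move~\eqref{def:weakequiv:changeorientationofS}, and then handles each component separately by the path argument. You correctly invoke Move~\eqref{def:weakequiv:changeorientationofS} in the ($\mathcal{EPC}$-\ref{elemencobtype4}) case via Lemma~\ref{lem:linearization3}, but you need the same move in the ($\mathcal{EPC}$-\ref{elemencobtype2}) case as well, and your claim of connectivity hides that. With that correction --- splitting into the two components, invoking Move~\eqref{def:weakequiv:changeorientationofS} to pass between them, and using a path of coordinates (accompanied, as in the paper, by a compatible path of gradient-like vector fields $v_t\in\cV(W,\Sigma,\vec x_t,f)$ rather than fixing $v_i$ first) within each component --- the rest of your argument matches the paper's.
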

\begin{proof}It is sufficient to show the claim when $|\ve{b}|=2$, and $(W,\cF)$ is an elementary cobordism, except for the fact that it lacks a parametrization. We first consider the case that $(W,\cF)$ is of type ($\mathcal{EPC}$-\ref{elemencobtype1}) or ($\mathcal{EPC}$-\ref{elemencobtype2}). We further restrict to the case that the Morse function $f$ has at most one critical point. Recall that the construction of $\cK(f,\ve{b})$ depended on a choice of $\epsilon>0$, bump function $\lambda$, local coordinates $\vec{x}$ and gradient-like vector field $v$.

First, note that changing  $\epsilon$ or the bump function $\lambda$ only affects the parametrization up to an $\cA$-adapted isotopy or an isotopy of $\bS_i$ away from $L_i$ (Moves~\eqref{def:weakequiv:strongisotopy} and \eqref{def:weakequiv:isotopyawayfromL}). 

 If $\vec{x}=(x_1,x_2,x_3,x_4)$ is a choice of coordinates in $\Coor_{p}(U,f)$, defined in a neighborhood $U$ of $p$, let $\cV(W,\Sigma,\vec{x}, f)$ denote the space of vector fields $v$ which have the prescribed form in a neighborhood of $p$, with respect to $\vec{x}$, such that $v|_{\Sigma}$ is tangent to $\Sigma$, and $v(f)>0$ away from $p$. The space $\cV(W,\Sigma, \vec{x},f)$ is convex, and hence connected. It is straightforward  to also see that $\cV(W,\Sigma,\vec{x},f)$ is non-empty. If $v_1,v_2\in \cV(W,\Sigma,\vec{x},f)$, we can connect them by a path in $\cV(W,\Sigma,\vec{x},f)$, which results in changing the parametrization by Moves~\eqref{def:weakequiv:strongisotopy} and \eqref{def:weakequiv:isotopyawayfromL}.

Next, suppose $\vec{x}_0$ and $\vec{x}_1$ are two choices of coordinates. By examining the construction, we see that restricting the domain of a set of coordinates has no effect, provided $\epsilon$ is chosen sufficiently small. Hence, we may assume that $\vec{x}_0$ and $\vec{x}_1$ are defined on the same neighborhood of $p$. By Lemma~\ref{lem:linearization}, the space of local coordinates which put $f$ into standard form has two components, which can be related by simultaneously reversing the signs of the first and last coordinates. Switching signs of these coordinates results in Move~\eqref{def:weakequiv:changeorientationofS}. If $\vec{x}_0$ and $\vec{x}_1$ are homotopic via a path $(\vec{x}_t)_{t\in [0,1]}$ in $\Coor_p(U,f)$, we first pick a $v_0\in \cV(W,\Sigma, \vec{x}_0,f)$. It is straightforward to extend $v_0$ to a path $(v_t)_{t\in [0,1]}$ of vector fields such that $v_t\in \cV(W,\Sigma, \vec{x}_t, f)$ for all $t\in [0,1]$. Hence, changing from $\vec{x}_0$ to $\vec{x}_1$ results in an $\cA$-admissible isotopy and an isotopy of the framed sphere away from $L_i$ (Moves~\eqref{def:weakequiv:strongisotopy} and \eqref{def:weakequiv:isotopyawayfromL}).

We now consider the case that $(W,\cF)$ is of type~($\mathcal{EPC}$-\ref{elemencobtype2}) but $f$ has multiple index 2 critical points. In this case, if $\vec{x}$ is a choice of coordinates on a neighborhood of all of the critical points, then the descending and ascending manifolds of a generic $v\in \cV(W,\Sigma,\vec{x},f)$ will be transverse, and hence disjoint. For a generic path of vector fields in $\cV(W,\Sigma, \vec{x},f)$, there will be finitely many points of time when there is a single flow line between two critical points of $f$. At such a point of time, a handleslide of one of the framed 1-spheres occurs, which results in Move \eqref{def:weakequiv:handleslide}. Over the course of time when the path of gradient-like vector fields have no flow lines amongst each other, the associated parametrized Kirby decompositions are related to each other by a sequence of Moves~\eqref{def:weakequiv:strongisotopy} and \eqref{def:weakequiv:isotopyawayfromL}.

Finally, we note that the case when $(W,\cF)$ is of type~($\mathcal{EPC}$-\ref{elemencobtype4}) (a framed $0$-spheres along $L$) can be handled similarly to the case of framed $0$-spheres away from $L$, except we use Lemma~\ref{lem:linearization3} to connect two coordinate charts defined in a neighborhood of  $p$.
\end{proof}

\begin{lem}\label{lem:findMorsedata}Given a parametrized Kirby decomposition $\cK$ of $(W,{\cF})$, one can find Kirby-type Morse data $(f,\ve{b})$ for $(W,{\cF})$ such that
\[\cK=\cK(f,\ve{b}).\] 
\end{lem}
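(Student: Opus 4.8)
The plan is to reverse-engineer the construction $\cK \mapsto \cK(f,\ve{b})$ of Section~\ref{subsec:MorsedataKirbydecomp}: given the parametrized Kirby decomposition $\cK$ with elementary pieces $(W_i,\cF_i,\Phi_i,\bS_i)$, I would build a Kirby-type Morse function $f$ one piece at a time, so that on each $W_i$ the function $f|_{W_i}$ has exactly the critical-point structure dictated by the type of the elementary cobordism in Definition~\ref{def:niceMorsedata}, and so that the parametrization induced by $(f|_{W_i}, v_i)$ (for a suitable gradient-like vector field $v_i$) agrees with $\Phi_i$. Then I would stack these pieces and take $\ve{b}$ to be the set of the cutting values $b_i$; by construction $\cK(f,\ve{b}) = \cK$ up to the choices of auxiliary data, which by Lemma~\ref{lem:differentchoicesvectorfield=>weakequiv} only changes $\cK(f,\ve{b})$ within its Cerf equivalence class — but since we are only asked for \emph{equality} of parametrized Kirby decompositions, I would instead be careful to make the auxiliary choices match those implicit in $\cK$.

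The core of the argument is the piecewise construction. For a piece of type ($\mathcal{EPC}$-\ref{elemencobtype1}), I take $f|_{W_i}$ to be a function with no critical points whose level sets are the images under $\Phi_i$ of $\{t\}\times Y_i$, and with $\cA_i$ having no critical points; explicitly, $f|_{W_i} := \pi_{[0,1]}\circ \Phi_i^{-1}$, rescaled to $[b_i,b_{i+1}]$. For type ($\mathcal{EPC}$-\ref{elemencobtype3}) I do the same but arrange, via an isotopy of $\Phi_i$ supported near $\cA_i$, that $f|_{\cA_i}$ has a single index-$0$ or index-$1$ critical point corresponding to the unique non-through arc $A$ (the sign determined by whether $A$ opens toward the incoming or outgoing boundary). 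For type ($\mathcal{EPC}$-\ref{elemencobtype2}), $\Phi_i$ is a parametrization of a trace cobordism $\cW(Y_i,L_i,\bS_i)$ built by attaching handles along framed spheres; the standard Morse-theoretic model (Milnor, or \cite{Palais}*{Section~11}, exactly as used in Section~\ref{subsec:MorsedataKirbydecomp}) produces, from the attaching data $\bS_i$ and a collar, a Morse function on the trace cobordism with one critical point per component of $\bS_i$ (index $1$, $2$, or $3$) and no critical points on $\Sigma$ or $\cA$; pushing this forward by $\Phi_i$ gives $f|_{W_i}$. For type ($\mathcal{EPC}$-\ref{elemencobtype4}), $\bS_i$ is a framed $0$-sphere along $L_i$ and $\Phi_i$ is a parametrization of $\cW(Y_i,L_i,\bS_i^0)$, a $4$-dimensional $1$-handle containing a $2$-dimensional $1$-handle; here I use the model function from the last paragraph of Section~\ref{subsec:MorsedataKirbydecomp}, which places a single index-$1$ critical point $p$ on the band $B\subset \Sigma(L_i,\bS_i)$ with the local coordinates $(x,y,w,z)$ adapted to both $W$ and $\Sigma$, so that $f=f(p)-x^2+y^2+w^2+z^2$ and $\vec x(\Sigma\cap U)\subset\{(x,y,0,0)\}$; the fact that the parametrization in an ($\mathcal{EPC}$-\ref{elemencobtype4}) piece is defined up to isotopy fixing $(0,0)\in B$ is precisely what lets the local model be matched to $\Phi_i$.

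After assembling $f$ on $W = W_n\cup\cdots\cup W_0$, I must check it is genuinely Kirby-type Morse data: it should lie in $\ve{M}^{\nice}_{\cA}(W,\Sigma)$, meaning $f|_\Sigma$ and $f|_{\cA}$ are Morse with distinct critical values, every level set component of $f|_\Sigma$ meets $\cA$, index-$1$/$2$/$3$ handles of $f|_{W\setminus\Sigma}$ occur in that order, all $\cA$-critical values lie in $(0,1)$, all $\Sigma$-critical values lie in $(0,1)$, all handle-critical values lie in $(1,2)$, and only one $W_i$ carries index-$2$ critical points. Several of these follow directly from the corresponding clauses in Definition~\ref{def:parametrizedKirbydecomp} (which already orders the ($\mathcal{EPC}$-\ref{elemencobtype2}) pieces by index, puts all of them after the ($\mathcal{EPC}$-\ref{elemencobtype3})/($\mathcal{EPC}$-\ref{elemencobtype4}) pieces, and allows only one piece with a $1$-dimensional framed link); the remaining ones — distinctness of critical values and the placement of critical values into the correct subintervals $(0,1)$ versus $(1,2)$ — are arranged by rescaling $f|_{W_i}$ on each slab so that the critical values land where needed, which is possible because within a single slab there is at most one critical point of $f$ or of $f|_{\cA}$ except in the ($\mathcal{EPC}$-\ref{elemencobtype2}) slab, where all critical points are index $2$ and can be perturbed to have distinct values by a small generic modification of the chosen gradient-like vector field. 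The condition that each component of $f|_\Sigma^{-1}(t)$ meets $\cA$ is inherited from the requirement in Definition~\ref{def:parametrizedKirbydecomp}(1) that $\cA$ meets every component of every $L_i$ non-trivially, together with the fact that on each slab the Morse data is a product near $\Sigma$ (for ($\mathcal{EPC}$-\ref{elemencobtype1}), ($\mathcal{EPC}$-\ref{elemencobtype2})) or changes $\Sigma$ only by a single quasi-stabilization/saddle band whose effect on level sets is local.

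The main obstacle I anticipate is the bookkeeping at the interfaces $Y_i$: the parametrizations $\Phi_i$ produced from the handle models come with collar neighborhoods of the outgoing boundary, and when I stack slabs I must ensure the smooth structure and the gradient-like vector field glue to a genuinely smooth Morse function with the incoming/outgoing collars matching $\Phi_i$ and $\Phi_{i+1}$ respectively. This is the standard "straightening the collar" issue in Morse/Cerf theory and is handled exactly as in \cite{Palais} and \cite{JClassTQFT}*{Section~2.3.1}, but it is where the care must go; a secondary nuisance is that for ($\mathcal{EPC}$-\ref{elemencobtype4}) pieces the local coordinates adapted to both $W$ and $\Sigma$ (whose space of choices, by Lemma~\ref{lem:linearization3}, has two components related by $(x,y)\mapsto(-x,-y)$, which does not change the associated vector field) must be chosen to be compatible with the band parametrization — but again this is precisely the data the parametrization records, so no genuine obstruction arises. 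Modulo these standard smoothing arguments, the construction gives Kirby-type Morse data $(f,\ve{b})$ with $\cK(f,\ve{b}) = \cK$, completing the proof.
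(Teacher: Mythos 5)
Your proposal is correct and takes essentially the same approach as the paper: build the Morse function piece-by-piece on each elementary slab $W_i$ so that, with a suitable gradient-like vector field, it induces the given parametrization $\Phi_i$ (following the Palais/Juh\'{a}sz handle-model construction already used in Section~\ref{subsec:MorsedataKirbydecomp}), and then smooth the pieces together along the collar neighborhoods of the $Y_i$. The paper's proof is a two-sentence sketch citing \cite{JClassTQFT}*{Lemma~2.14}; your write-up fills in exactly the details that citation is standing in for, including the correct observation that the residual ambiguity in each $\Phi_i$ (isotopy rel the incoming boundary, plus the marked band point for ($\mathcal{EPC}$-\ref{elemencobtype4}) pieces) is precisely what allows the local Morse models to be matched exactly rather than merely up to Cerf equivalence.
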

\begin{proof} One can modify the construction from \cite{JClassTQFT}*{Lemma 2.14} to get a Morse function on each elementary cobordism which induces the correct parametrization for a choice of gradient-like vector field. By modifying the Morse functions near the boundary, one can ensure that they glue together to form a smooth function on all of $W$.
\end{proof}

\subsection{Proof of Theorem~\ref{prop:allPKDSigmasweaklyequivalent}}
\label{subsec:ProofofallPKDsequivalent}
We now have the tools to prove that all parametrized Kirby decompositions of a decorated link cobordism $(W,{\cF})$ are Cerf equivalent:
\begin{proof}[Proof of Theorem~\ref{prop:allPKDSigmasweaklyequivalent}] Suppose $\cK_0$ and $\cK_1$ are two parametrized Kirby decompositions of $(W,\cF)$. We use Lemma~\ref{lem:findMorsedata} to find Morse functions $f_0, f_1\in \ve{M}^{\nice}_{\cA}(W,\Sigma)$, together with collections of regular values, $\ve{b}_0, \ve{b}_1\subset [0,2]$,  choices of local coordinates centered at the critical points and gradient-like vector fields so that
\[
\cK_0=\cK(f_0,\ve{b}_0), \qquad \text{ and } \qquad \cK_1=\cK(f_1,\ve{b}_1).
\] 
Using Lemma~\ref{lem:sufficientlynicepath}, we can find a path $(f_t)_{t\in [0,1]}$ between $f_0$ and $f_1$ such that $f_t\in \ve{M}^{\nice}_{\cA}(W,\Sigma)$ for all but finitely many $t$, where one of the six singularities listed in Lemma~\ref{lem:sufficientlynicepath} occurs. Let us write $0<t_1<\cdots< t_n<1$ for the points in $[0,1]$ where a listed singularity occurs.

 For all $t\in [0,1]\setminus \{t_1,\dots, t_n\}$, the function $f_t$ is in $\ve{M}_{\cA}^{\nice}(W,\Sigma)$, and hence on any interval $[a,b]$ not containing a point in $\{t_1,\dots, t_n\}$ we can construct a set of regular values $\ve{b}_t$ so that $(f_t,\ve{b}_t)$ is Kirby-type Morse data. Furthermore, by picking local coordinates and gradient-like vector fields, we can construct parametrized Kirby decomposition $\cK(f_t,\ve{b}_t)$. Outside of small neighborhoods of the $t_i$, we can assume that the values $\ve{b}_t$ vary continuously in $t$.
 
Suppose first that $[a,b]$ is an interval such that $f_t\in \ve{M}_{\cA}^{\nice}(W,\Sigma)$ for all $t\in [a,b]$. Since in particular $f_t$ has no 4-dimensional critical point birth-deaths or critical point value switches,  then by Lemma~\ref{lem:nicepath->isotopy}, we can find  isotopies $\phi_t\colon [0,2]\to [0,2]$ and $\psi_t\colon (W,\Sigma)\to (W,\Sigma)$, both ranging over $t\in [a,b]$, with  $\phi_a=\id$, $\psi_a=\id$ and $\psi|_{\d W}=\id_{\d W}$ for all $t$, such that 
 \begin{equation}
 f_t=\phi_t\circ f_a \circ \psi_t^{-1}.\label{eq:isotopyofMorsefunction}
\end{equation} 
  In particular, given a choice of $\ve{b}_a$, we can take the parametrized Kirby decomposition $\cK(f_a,\ve{b}_a)$, and push it forward along the diffeomorphism $\psi_t$ to get a parametrized decomposition (of undecorated link cobordisms) $(\psi_b)_*\cK(f_a,\ve{b}_a)$, which is the same as $\cK(f_b,\phi_t(\ve{b}_a))$ by Equation~\eqref{eq:isotopyofMorsefunction}. Since $f_t\in \ve{M}^{\nice}_{\cA}(W,\Sigma)$ for $t\in [a,b]$, it follows that all of the critical values of $f_t|_{\cA}$ and $f_t$ are distinct for each $t\in [a,b]$, and hence each term in the decomposition $(\psi_t)_* (\cK(f_a, \ve{b}_a))=\cK(f_a\circ \psi_t^{-1}, \ve{b}_a)$ will be an elementary parametrized cobordism, and hence $\cK(f_a, \ve{b}_a)$ and $\cK(\phi_b^{-1}\circ f_b, \ve{b}_a)$ differ by an $\cA$-adapted isotopy (Move~\eqref{def:weakequiv:strongisotopy}). Note that $\cK(\phi_b^{-1}\circ f_b,\ve{b}_a)$ is the same as $\cK(f_b, \phi_b(\ve{b}_a))$. The regular values $\phi_b(\ve{b}_a)$ can be changed to $\ve{b}_b$ by a sequence of applications  of Move \eqref{def:weakequiv:addtrivialcylinder}, to add and remove trivial cylinders. Hence $\cK(f_a,\ve{b}_a)$ and $\cK(f_b,\ve{b}_b)$ are Cerf equivalent.

Next, we suppose that over $[a,b]$ the 4-dimensional critical points of $f_t$ all have distinct critical values, but that there is a single point of time $t_i\in (a,b)$ where one of the following occurs: a birth-death singularity of $f_t|_{\cA}$ occurs; a critical value switch occurs between two critical points of $f_t|_{\cA}$; a critical value switch occurs between a critical point of $f_t|_{\cA}$ and a critical point of $f_t|_{\Sigma}$; or a critical point of $f_t|_{\Sigma}$ crosses an arc of $\cA$, transversely. In all three cases, we can still use Lemma~\ref{lem:nicepath->isotopy} to find isotopies $\phi_t$ and $\psi_t$ of $\R$ and $(W,\Sigma)$ (resp.) such that $f_t=\phi_t\circ f_a\circ \psi_t^{-1}$. Using this fact, it is straightforward to see that (after perhaps a small perturbation of the path $f_t$) the parametrized Kirby decompositions $\cK(f_{a}, \ve{b}_a)$ and $\cK(f_b, \ve{b}_b)$ differ by Move~\eqref{def:weakequiv:birthdeathalongA}, Move~\eqref{def:weakequiv:switchalongA}, Move~\eqref{def:weakequiv:switchbetweenAandSigma} or Move~\eqref{def:weakequiv:critcrossesA}.

Finally, we consider the case that $[a,b]$ is an interval of time where $f_t\in \ve{M}^{\nice}_{\cA}(W,\Sigma)$ except for a single point of time, where an index 1/2 or 2/3 birth-death singularity occurs in $W\setminus \Sigma$, or a critical value switch occurs between two critical points of $f_t$ of the same index. Consider first the case of a critical point birth-death singularity.  Near the point of time $t_i$ when the singularity occurs, by perturbing the path of functions slightly, we can assume that for a small subinterval of time around the singularity, the path of functions $f_t$ is supported in a small ball containing the critical points, and takes on a simple normal form. The existence of such normal forms is established in \cite{CerfTheory} (compare \cite{JClassTQFT}*{pg. 17}). As such the parametrized Kirby decompositions immediately preceding and following the singularity may be related by Move \eqref{def:weakequiv:index12birthdeath} or \eqref{def:weakequiv:index23birthdeath}. Next we consider the case of a critical value switch of critical points of the same index. Note that since the $f_t$ are nice Morse functions, if a critical value switch occurs, it either involves critical points which are both in $W\setminus \Sigma$, or critical points which are both on $\Sigma$. As with critical point cancellations, by perturbing the path $f_t$ slightly, we can assume that in a small interval of time around the singularity that $f_t$ is constant outside of a neighborhood of the two critical points, and inside of the neighborhood, it takes on a simple normal form (see the proof of Lemma~\ref{lem:changeMorsefunctionsalongSigma}). If the two critical points are in $W\setminus \Sigma$, then $\cK(f_a,\ve{b}_a)$ and $\cK(f_b, \ve{b}_b)$ can be taken to be equal if the two critical points have index 2, and are otherwise related by Move~\eqref{def:weakequiv:criticalpointswitchawayfromSigma} if the two critical points are of index 1 or 3. Similarly if the two critical points involved in the switch are contained in $\Sigma$, then they are both of index 1, and $\cK(f_a,\ve{b}_a)$ and $\cK(f_b, \ve{b}_b)$ are related by Move~\eqref{def:weakequiv:criticalpointswitchawayfromSigma}.
\end{proof}

We make an additional remark about Move~\eqref{def:weakequiv:critcrossesA}, where a critical point of $\Sigma$ crosses $\cA$. It is easier to visualize the move in terms of the dividing set changing, with a fixed Morse function on $(W,\Sigma)$ (these two viewpoints are related by Lemma~\ref{lem:nicepath->isotopy}).  In this case, we can view a subarc of $\cA$ (containing a critical point of $f_t|_{\cA}$) as being pushed through the critical point of $f_t|_{\Sigma}$. We demonstrate this pictorially in Figure~\ref{fig::57}. See also Figure~\ref{fig::59}, where we prove invariance of the move.

 \begin{figure}[ht!]
\centering
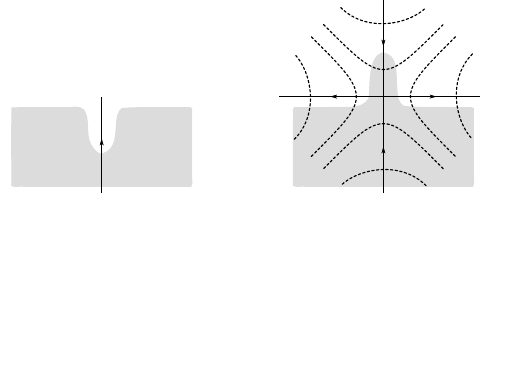
\caption{\textbf{A local model for Move \eqref{def:weakequiv:critcrossesA}.} In the picture, we view the Morse function as being fixed, while the dividing set is changed by an isotopy. The thick solid lines indicate $\cA$, while the dashed lines indicate the level sets of $f_t$, and the thin solid lines with arrows are upward gradient flowlines of $f_t$. On the bottom, we indicate the change in associated parametrized Kirby decomposition described in the definition of the move. The dotted lines correspond to level sets of the Morse function.\label{fig::57}}
\end{figure}

\section{Construction and invariance of the cobordism maps}
\label{sec:construction-and-invariance}

In this section, we provide a precise construction of the link cobordism maps and prove Theorem~\ref{thm:A}, invariance.

In Sections~\ref{sec:defmapscob1-2}--\ref{subsec:mapsfortype4} we define the maps for elementary parametrized link cobordisms. In Section~\ref{subsec:invariance} we define the maps in the case that each component of $W$ and $\cF$ intersects a component of $Y_1$ and $Y_2$ non-trivially, and prove invariance for the maps associated to such cobordisms. In Section~\ref{sec:removeballsforgeneralmaps}, we describe how to puncture a link cobordism to introduce additional boundary components in order to define the cobordism maps more generally. Finally, we show invariance of the link cobordism maps under this puncturing procedure, completing the proof of Theorem~\ref{thm:A}.

\subsection{Maps induced by elementary parametrized link cobordisms of type ($\mathcal{EPC}$-\ref{elemencobtype1}) and ($\mathcal{EPC}$-\ref{elemencobtype2})}
\label{sec:defmapscob1-2}

We now consider an elementary parametrized link cobordism $\cW=(W,\cF^\sigma,\Phi,\bS)\colon (Y_1,\bL_1)\to (Y_2,\bL_2)$ where $\bS=\bS_{\varnothing}$ (in which case $\cW$ is of type ($\mathcal{EPC}$-\ref{elemencobtype1})), or where $\bS$ is a framed 0-sphere,  1-dimensional link or 2-sphere which does not intersect $L_1$  (in which case $\cW$ is of type ($\mathcal{EPC}$-\ref{elemencobtype2})). In both cases, the dividing set is a collection of arcs from $L_1$ to $L_2$ and the underlying surface of $\cF$ is a disjoint union of annuli.

Define a dividing set $\cA_0\subset [0,1]\times L_1$ by the formula
\[
\cA_0:=\Phi^{-1}(\cA).
\] 
Note that $\cA_0$ is isotopic relative to $\{0,1\}\times L_1 $ to a dividing set, $\cA_0'$, which is transverse to $\{t\}\times L_1$, for each $t\in [0,1]$. The dividing set $\cA_0'$ on $[0,1]\times L_1$ induces an isotopy $\psi_t^0\colon [0,1]\times L_1\to L_1$ such that $\{t\}\times \psi_t^0(\ws_1\cup \zs_1)$ is disjoint from $\cA_0'$ for all $t$, and $\psi_1^0(\ws_1\cup \zs_1)=\Phi^{-1}(\ws_2\cup \zs_2)$. Extend $\psi_t^0$ to an isotopy $\psi_t$ of the pair $(Y_1,L_1)$ and define $\psi:=\psi_1$. We define
\[
F_{W,\cF^\sigma,\Phi,\bS,\frs}:=(\Phi|_{Y_1(\bS)})_*F_{Y_1,\bL_1,\bS,\Phi^{-1}(\frs)}\psi_*.
\]
We need to show the above expression is invariant under the choice of isotopic $\cA_0'$, as well as invariant under isotopies of $\Phi$, relative to $\{0\}\times Y_1$.  The following definition will be useful:

\begin{define}\label{def:admissibleisotopy1}Suppose that $\cA$ is a dividing set on $[0,1]\times L$. We say that $\cA$ is \emph{admissible} if each arc of $\cA$ which goes from $\{0\}\times L$ to $\{1\}\times L$ is transverse to $\{s\}\times L$ for each $s\in [0,1]$. If $\cA_t$ is a 1-parameter family of dividing sets on $[0,1]\times L$, which is fixed pointwise on $\{0,1\}\times L$, we say that $\cA_t$ is an \emph{admissible isotopy} if each $\cA_t$ is admissible.
\end{define}

We state the following simple lemma about admissible isotopies, which will also be helpful later in Section~\ref{subsec:cylindercobordismmaps}.

 \begin{lem}\label{lem:admissibleisotopiesofcylinders} Suppose $\cA_1$ and $\cA_2$ are two admissible dividing sets on $[0,1]\times L$, which are isotopic relative to $\{0,1\}\times L$. Then $\cA_1$ and $\cA_2$ are admissibly isotopic relative to $\{0,1\}\times L$.
 \end{lem}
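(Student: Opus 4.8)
The plan is to reduce the statement to a one-dimensional fact about families of embedded arcs in the square $[0,1]^2$. An admissible dividing set on $[0,1]\times L$ has three kinds of arc components: arcs with both endpoints on $\{0\}\times L$, arcs with both endpoints on $\{1\}\times L$, and ``through-arcs'' from $\{0\}\times L$ to $\{1\}\times L$, the last of which are required to be transverse to each slice $\{s\}\times L$. Since $L$ is a disjoint union of circles, it suffices to work one component at a time, so I would fix a circle component $K\subset L$ and analyze $[0,1]\times K$, an annulus. The key point is that admissibility is exactly the condition that each through-arc is a \emph{graph} over the $[0,1]$-direction, i.e. can be written as $\{(s,\gamma(s)):s\in[0,1]\}$ for a smooth function $\gamma$ (after lifting to the universal cover $[0,1]\times\R$ of the annulus). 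The ``cap'' arcs (both feet on the same side) are unconstrained except for being embedded and disjoint from the through-arcs and each other.

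First I would establish that, given two admissible dividing sets $\cA_1,\cA_2$ which are isotopic rel $\{0,1\}\times L$, their through-arcs are isotopic through admissible configurations. For this I would note that the cyclic (or linear, after cutting) order of the feet of the through-arcs along $\{0\}\times L$ and along $\{1\}\times L$ is determined by the isotopy class rel boundary, and since the feet are fixed pointwise, $\cA_1$ and $\cA_2$ have the \emph{same} through-arc feet. Lifting to $[0,1]\times\R$, the through-arcs of $\cA_i$ are graphs of functions $\gamma_1^{(i)}<\gamma_2^{(i)}<\cdots<\gamma_m^{(i)}$ (ordered, since they are disjoint graphs) with $\gamma_j^{(1)}$ and $\gamma_j^{(2)}$ agreeing at $s=0,1$. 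The straight-line homotopy $\gamma_j^{(t)}:=(1-t)\gamma_j^{(1)}+t\gamma_j^{(2)}$ keeps them ordered and hence disjoint, fixes the endpoints, and each $\gamma_j^{(t)}$ is still a graph, hence transverse to every slice. This gives an admissible isotopy of the through-arc part; I would then promote it to an ambient isotopy of $[0,1]\times K$ supported away from $\{0,1\}\times K$ and carry the cap arcs along, so that without loss of generality $\cA_1$ and $\cA_2$ have identical through-arcs.

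Next, with the through-arcs now equal, I would handle the cap arcs. Cutting $[0,1]\times K$ along the common through-arcs (if any) produces a disjoint union of disks (or one annulus if there are no through-arcs, but then one can cut along an auxiliary admissible through-arc that both dividing sets can be isotoped to share, or argue directly on the annulus using the fact that all cap arcs are null-homotopic). The cap arcs of $\cA_1$ and of $\cA_2$ lie in these pieces with the \emph{same} endpoint data on the sides $\{0\}\times K$ and $\{1\}\times K$. An embedded collection of arcs in a disk with prescribed boundary points is determined up to isotopy rel boundary by the pairing of the boundary points together with a non-crossing (planar) matching condition, and since $\cA_1\simeq\cA_2$ rel $\{0,1\}\times L$ these matchings coincide; a standard innermost-arc / bigon-removal argument then produces an isotopy rel boundary from the cap arcs of $\cA_1$ to those of $\cA_2$ inside each piece. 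Because all of this motion is supported in a neighborhood of $\{0\}\times K$ or $\{1\}\times K$ (the interiors of the cut-open disks retract onto the boundary caps), it does not disturb the through-arcs and in particular every intermediate dividing set remains admissible. Concatenating the two isotopies gives the desired admissible isotopy rel $\{0,1\}\times L$.

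The main obstacle is the bookkeeping in the second paragraph: making precise that ``same isotopy class rel boundary'' forces the feet of the through-arcs to coincide and to appear in the same order, and then checking that the linear homotopy of the graphs genuinely stays embedded and transverse to all slices at every time. This is where I would be most careful, since an apparently innocuous reparametrization could break transversality; working in the universal cover and insisting the through-arcs be honest graphs of functions (which is precisely the content of admissibility) is what makes the convexity argument go through cleanly. The cap-arc step is then routine planar isotopy theory (innermost bigons), and the component-by-component reduction is immediate since $\cA$ and the isotopy preserve the decomposition $[0,1]\times L=\bigsqcup_K[0,1]\times K$.
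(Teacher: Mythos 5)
Your proof is correct, though it traverses the two halves of the argument in the opposite order from the paper's, and treats the through-arcs by a different device. The paper first isotopes the cap arcs (those with both feet on the same boundary circle) into a collar, noting that the bigons they cut off with $\{0\}\times L$ or $\{1\}\times L$ can contain no through-arc, and then observes that admissible isotopy classes of through-arc configurations are classified by the algebraic intersection number of each arc with a section $[0,1]\times\{p\}$, a quantity preserved by arbitrary isotopies rel $\{0,1\}\times L$. You instead handle the through-arcs first, by lifting to the universal cover $[0,1]\times\R$ and noting that admissibility is precisely the condition that each through-arc be the graph of a function, so that a straight-line interpolation of the two ordered families of graphs (whose lifted endpoints agree, since isotopy rel boundary forces homotopy rel endpoints) does the job; this is a concrete realization of the same winding-number invariance the paper invokes and calls ``straightforward to see.'' You then dispatch the cap arcs by cutting along the now-shared through-arcs and using innermost-bigon planar topology. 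Both routes are sound; yours is somewhat more explicit at the through-arc step. One small cleanup: your parenthetical that the cap-arc isotopy is ``supported in a neighborhood of $\{0\}\times K$ or $\{1\}\times K$ (the interiors of the cut-open disks retract onto the boundary caps)'' is unnecessary and slightly off, since a cap arc may reach far into the annulus; what actually matters is simply that admissibility imposes no constraint on cap arcs, so any isotopy of them that keeps them disjoint from the fixed through-arcs is automatically admissible.
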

 \begin{proof} For notational simplicity, assume that $L$ has only one component. Note that if an arc $A$ of $\cA_1$ goes from $\{0\}\times L$ to $\{0\}\times L$, then it bounds a bigon, which can only contain other arcs of $\cA_1$ which go from $\{0\}\times L$ to $\{0\}\times L$. Hence such arcs may be isotoped very close to $\{0\}\times L$ without moving any arcs which go from $\{0\}\times L$ to $\{1\}\times L$. Hence we may reduce to the case that no arcs go from $\{0\}\times L$ to $\{0\}\times L$. We can similarly reduce to the case that there are also no arcs which go from $\{1\}\times L$ to $\{1\}\times L$.
 
In the case that $\cA_1$ and $\cA_2$ have the same endpoints, and both only have arcs going from $\{0\}\times L$ to $\{1\}\times L$, it is straightforward to see that $\cA_1$ and $\cA_2$ are admissibly isotopic if and only if whenever $A_1$ and $A_2$ are arcs in $\cA_1$ and $\cA_2$ (resp.) which have the same endpoints, we have
 \[ \#(A_1\cap ([0,1]\times \{p\}))= \# (A_2\cap ([0,1]\times \{p\} ))\] for any $p\in L$ which is not the endpoint of an arc on either end. The algebraic intersection number of an arc with $ [0,1]\times \{p\}$ is unchanged by arbitrary isotopies which fix $\{0,1\}\times L$, and hence two dividing sets are isotopic relative to $\{0,1\} \times L $ if and only if they are admissibly isotopic relative to $\{0,1\}\times L$.
 \end{proof}

\begin{lem}\label{lem:invariantunderisotopiestype1}Isotopies of the parameterizing diffeomorphism $\Phi$ through diffeomorphisms fixing $\{0\}\times Y_1$ do not affect $F_{W,\cF^\sigma,\Phi,\bS,\frs}$ for type ($\mathcal{EPC}$-\ref{elemencobtype1}) and ($\mathcal{EPC}$-\ref{elemencobtype2}) elementary cobordisms.
\end{lem}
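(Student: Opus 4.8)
The statement asserts that the map $F_{W,\cF^\sigma,\Phi,\bS,\frs}$ defined for elementary parametrized cobordisms of type ($\mathcal{EPC}$-\ref{elemencobtype1}) and ($\mathcal{EPC}$-\ref{elemencobtype2}) is unchanged, up to filtered equivariant chain homotopy, when the parametrization $\Phi$ is isotoped through diffeomorphisms fixing $\{0\}\times Y_1$. My plan is to unwind the definition and reduce the whole question to three independent pieces: invariance of $F_{Y_1,\bL_1,\bS,\frs}$ under isotopies of $\bS$ away from $L_1$ (which follows from the well-definedness results of Section~\ref{sec:handleattachmentmapsawayfromL}, namely Propositions~\ref{prop:1/3handlemapwelldefined}, \ref{prop:2-handlemapwelldefined}, together with Proposition~\ref{prop:1/3handlemapwelldefined} and Lemma~\ref{lem:alphasurgerymaps=betasurgerymaps}); naturality of the change-of-diagram maps; and the fact that the basepoint-moving diffeomorphism map $\psi_*$ depends only on the isotopy class of $\psi$ rel $\bL_1$.

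First I would set up notation: let $\Phi_t$ be a one-parameter family of parametrizations $\cW(Y_1,L_1,\bS)\to (W,\Sigma)$, all fixing $\{0\}\times Y_1$, with $\Phi_0=\Phi$ and $\Phi_1=\Phi'$. Applying $\Phi_t$ to the dividing set $\cA$ produces a family $\cA_0^t:=\Phi_t^{-1}(\cA)$ of dividing sets on $[0,1]\times L_1$, and applying $\Phi_t$ to the framed sphere $\bS$ (that is, taking $\Phi_t^{-1}$ of the image handle-core data) produces a family $\bS^t$ of framed spheres in $Y_1\setminus L_1$ — the sphere does move, but only within $Y_1\setminus L_1$. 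The isotopy $\psi^t$ used in the construction is extracted from an admissible representative of $\cA_0^t$; by Lemma~\ref{lem:admissibleisotopiesofcylinders} any two admissible representatives of isotopic (rel $\{0,1\}\times L_1$) dividing sets are admissibly isotopic, so the isotopy class of $\psi^t$ rel $\bL_1$ is well-defined and varies continuously. Hence $\psi^0_*\simeq\psi^1_*$ by invariance of the basepoint-moving maps (the relations of Section~\ref{sec:quasi-stab-and-identifications} together with the usual naturality of diffeomorphism maps on link Floer homology).

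Next I would handle the middle factor $F_{Y_1,\bL_1,\bS^t,\Phi_t^{-1}(\frs)}$. The key point is that $\bS^t$ is a one-parameter family of framed spheres (or a framed $1$-dimensional link) in $Y_1\setminus L_1$, starting at $\bS^0$ and ending at $\bS^1$; by Proposition~\ref{prop:1/3handlemapwelldefined} (for $0$- and $2$-handles), Lemma~\ref{lem:alphasurgerymaps=betasurgerymaps} and Proposition~\ref{prop:2-handlemapwelldefined} (for $2$-handles / $1$-dimensional links) the handle maps are invariant under such isotopies, giving a commuting square up to chain homotopy relating $F_{Y_1,\bL_1,\bS^0,-}$ and $F_{Y_1,\bL_1,\bS^1,-}$ via the change-of-diagram maps induced by the isotopy of $\bS$. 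Finally, the outermost factor $(\Phi_t|_{Y_1(\bS^t)})_*$ is the diffeomorphism map associated to the parametrization restricted to the outgoing end; since $\Phi_0$ and $\Phi_1$ are isotopic as diffeomorphisms $\cW(Y_1,L_1,\bS)\to(W,\Sigma)$ fixing $\{0\}\times Y_1$, their restrictions to the outgoing boundary are isotopic diffeomorphisms, and the associated diffeomorphism maps on $\cCFL^-$ agree up to filtered equivariant chain homotopy by naturality. Assembling the three chain homotopies — one for each factor in the composition $(\Phi|_{Y_1(\bS)})_*\circ F_{Y_1,\bL_1,\bS,\Phi^{-1}(\frs)}\circ \psi_*$ — and using that composition respects chain homotopy, yields $F_{W,\cF^\sigma,\Phi_0,\bS,\frs}\simeq F_{W,\cF^\sigma,\Phi_1,\bS,\frs}$.

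The main obstacle I anticipate is bookkeeping rather than anything deep: one must be careful that the three pieces ($\psi_*$, the handle map, the outgoing diffeomorphism map) are being compared along \emph{compatible} isotopies, i.e. that the continuously-varying admissible representative $\psi^t$ and the continuously-varying framed sphere $\bS^t$ both genuinely arise from the single ambient isotopy $\Phi_t$, so that the intermediate change-of-diagram maps cancel telescopically. This amounts to checking that one can choose the admissible representatives $\cA_0'^{\,t}$ smoothly in $t$ (possible by Lemma~\ref{lem:admissibleisotopiesofcylinders}, applied parametrically) and that the extension of $\psi^t_0$ to an ambient isotopy of $(Y_1,L_1)$ can also be made to vary continuously; both are routine transversality/parametrized-isotopy-extension arguments. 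A secondary point to verify is that $\Phi_t^{-1}(\frs)$ is a constant $\Spin^c$ structure on $Y_1(\bS)$ up to the identifications induced by the isotopy, which is immediate since isotopic embeddings induce the same map on $\Spin^c$ structures.
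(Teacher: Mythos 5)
Two things go wrong here, one cosmetic and one substantive.

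Cosmetic: the framed sphere $\bS$ is part of the \emph{fixed} data of the elementary parametrized cobordism $(W,\cF^\sigma,\Phi,\bS)$; an isotopy $\Phi_t\colon\cW(Y_1,L_1,\bS)\to(W,\Sigma)$ never changes $\bS$, since all the $\Phi_t$ share the same source manifold. So the middle factor $F_{Y_1,\bL_1,\bS,\Phi_t^{-1}(\frs)}$ is literally constant in $t$ (only the pulled-back $\Spin^c$ structure could vary, and it does not, as you note). Your family ``$\bS^t$'' and the appeal to Propositions~\ref{prop:1/3handlemapwelldefined}, \ref{prop:2-handlemapwelldefined} and Lemma~\ref{lem:alphasurgerymaps=betasurgerymaps} misreads the setup; those are the ingredients for Move~\eqref{def:weakequiv:isotopyawayfromL}, which isotopes $\bS$, not $\Phi$.

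Substantive: you treat $\psi_*$ and $(\Phi_t|_{Y_1(\bS)})_*$ as \emph{independently} invariant along the isotopy and plan to assemble two separate chain homotopies. This fails whenever $\Phi_t$ moves the preimage $\Phi_t^{-1}(\ws_2\cup\zs_2)\subset\{1\}\times L_1$. Then $\psi$, which is constructed so as to carry $\ws_1\cup\zs_1$ onto $\Phi_t^{-1}(\ws_2\cup\zs_2)$, changes its target basepoint set; $\psi^0_*$ and $\psi^1_*$ do not even share a codomain, so ``$\psi^0_*\simeq\psi^1_*$'' is not a well-formed assertion and ``invariance of the basepoint-moving maps'' does not apply. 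The same problem afflicts $(\Phi_0|_{Y_1(\bS)})_*$ versus $(\Phi_1|_{Y_1(\bS)})_*$, which then have different domains. The paper's proof handles exactly this case: it factors out the part of the isotopy acting as a twist $\tau$ near $\{1\}\times L_1$, observes that precomposing $\Phi$ by $\tau$ changes $(\Phi|_{Y_1(\bS)})_*$ to $(\Phi|_{Y_1(\bS)})_*\tau_*$ and \emph{simultaneously} changes $\psi_*$ to $(\tau^{-1})_*\psi_*$, and cancels the two --- which requires the additional observation that $\tau_*$ commutes with $F_{Y_1,\bL_1,\bS,\Phi^{-1}(\frs)}$, true because $\tau$ is supported near $L_1$ and hence away from $\bS$. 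You relegate this cancellation to a vague ``telescoping'' bookkeeping remark, but it is the heart of the lemma; without it the argument does not close. The only part of your argument that works as written is for isotopies fixing $\{1\}\times L_1$, where Lemma~\ref{lem:admissibleisotopiesofcylinders} carries the day as you describe.
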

\begin{proof}Isotopies of $\Phi$ which fix $\Sigma$ pointwise have no effect on the cobordism map. Similarly isotopies of $\Phi$ which are supported only in a neighborhood of $\Sigma$, but which fix $\{1\}\times L_1$ only affect the composition by isotoping the pullback of the dividing set, relative to $\{0,1\}\times L_1$, and hence possibly changing the map $\psi_*$. By Lemma~\ref{lem:admissibleisotopiesofcylinders}, such an isotopy of the dividing set can be taken to be an admissible isotopy, which clearly has no effect (since the resulting $\psi$ is unchanged). It remains to just check the effect of isotopies supported in a neighborhood of $\{1\}\times L_1$. If $\tau$ denotes a diffeomorphism which twists a neighborhood of $\{1\}\times L_1$, then precomposing $\Phi$ with $\tau$ changes $(\Phi|_{\{1\}\times Y_1 })_*$ to $(\Phi|_{\{1\}\times Y_1})_*(\tau_*)$ but also changes $\psi_*$ to $(\tau^{-1})_* (\psi_*)$ (since the pullback under $\Phi$ of the dividing set changes). Since $(\tau^{-1})_*$ commutes with $F_{Y_1,\bL_1,\bS,\Phi^{-1}(\frs)}$, as $\tau$ does not move the framed sphere $\bS$, the factors of $(\tau)_*$ and $(\tau^{-1})_*$ cancel and the composition is unchanged.
\end{proof}

\subsection{Maps induced by elementary parametrized link cobordisms of type ($\mathcal{EPC}$-\ref{elemencobtype3})}
\label{subsec:cylindercobordismmaps}

Suppose $(W,\cF^\sigma,\Phi,\bS)\colon(Y_1,\bL_1)\to (Y_2,\bL_2)$ is a parametrized link cobordism of type ($\mathcal{EPC}$-\ref{elemencobtype3}). As before, we write 
\[
\cA_0=\Phi^{-1}(\cA).
\]

Recall that there are four subtypes of elementary cobordisms of type ($\mathcal{EPC}$-\ref{elemencobtype3}), which we denote by $S^+$, $S^-,$ $T^+,$ and $T^-$. These are shown in Figure~\ref{fig::62}.

\begin{figure}[ht!]
\centering
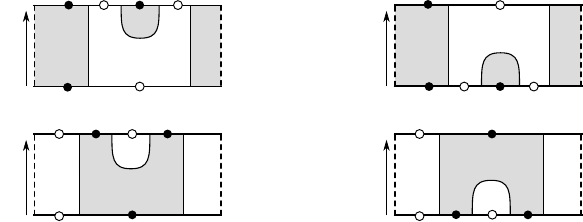
\caption{\textbf{The four subtypes of elementary cobordisms of type~($\mathcal{EPC}$-\ref{elemencobtype3})}.\label{fig::62}}
\end{figure}

\subsubsection{Type $S^+$ elementary cobordisms}
 Suppose $(W,\cF^\sigma,\Phi,\bS_{\varnothing})\colon (Y_1,\bL_1)\to (Y_2,\bL_2)$ is an elementary parametrized cobordism of type ($\mathcal{EPC}$-$\ref{elemencobtype3}_{S^+}$). 
 
There is a single $\ve{z}$-region of $([0,1]\times L_1)\setminus \cA_0$ which has three basepoints on its boundary. In this region, let $z_0\in \{0\}\times L_1$ be the basepoint on the incoming boundary, and let $z,z'\in \{1\}\times L_1$ be the two basepoints on the outgoing boundary. Let $w\in \{1\}\times L_1$ be the basepoint between $z$ and $z'$.  Suppose the basepoints are ordered $(z',w,z)$, read right to left.  By a slight abuse of notation, let us also write $z'$, $w$ and $z$ for images of $z'$, $w$ and $z$ under the projection map 
\[\pi\colon [0,1]\times Y_1\to Y_1.\]
 
We perform an isotopy of the dividing set $\cA_0$, relative to $\{0,1\}\times L_1$ to obtain an admissible dividing set $\cA_0'$ on $[0,1]\times L_1$ (i.e. all arcs going from $\{0\}\times L_1$ to $\{1\}\times L_1$ are transverse to the sets $\{t\}\times L_1$). There are two self-diffeomorphisms $\psi$ and $\psi'$ of the pair $(Y_1,L_1)$ obtained by twisting the link $L_1$ according to the dividing set $\cA_0'$, and requiring $\ws_1\cup \zs_1$ be mapped to a subset of $\pi(\Phi^{-1}(\ws_2\cup \zs_2))$.

 We pick $\psi$ and $\psi'$ to satisfy
\[\psi(z_0)=z'\qquad \text{and}\qquad \psi'(z_0)=z.\] Using Lemma~\ref{lem:admissibleisotopiesofcylinders}, we see that the diffeomorphisms $\psi$ and $\psi'$ are each independent (up to post-composition by an isotopy fixing the basepoints) of the choice of admissible $\cA_0'$ which is isotopic to $\cA_0$.
 
 We now define two maps:
\[F_{W,\cF^\sigma,\Phi,\bS_{\varnothing},\frs}:=(\Phi|_{\{1\}\times Y_1 })_* S_{w,z}^+ \psi_*\qquad \text{and} \qquad F'_{W,\cF^\sigma,\Phi,\bS_{\varnothing}, \frs}:=(\Phi|_{\{1\}\times Y_1 })_*S_{z',w}^+  \psi_*'.\] Note that there  is no canonical reason to choose one map over the other, since we have to choose which of $z$ and $z'$ we want to identify $z_0$ with. However an application of  Lemma~\ref{lem:mapsforI+agree} shows that the two maps are chain homotopic, so we denote the common map by $F_{W,\cF^\sigma,\Phi,\bS_{\varnothing},\frs}$.

\subsubsection{Type $S^-$ elementary cobordisms}
Suppose $(W,\cF^\sigma,\Phi,\bS_{\varnothing})$ is of type ($\mathcal{EPC}$-$\ref{elemencobtype3}_{S^-})$. There is a single region with three $\zs$-basepoints, two of which are on the incoming end, and one of which is on the outgoing end.  Suppose that $z$ and $z'$ are the basepoints on $(L_1,\ve{w}_1,\ve{z}_1)$ which are in the same region of $([0,1]\times L_1)\setminus \cA_0$, and let $w$ be the basepoint between them. There is another basepoint, $z_0\in \Phi^{-1}(\ve{w}_2\cup \ve{z}_2)$, which is in the same region of $([0,1]\times L_1)\setminus \cA_0$ as $z$ and $z'$. Assume that the ordering of these basepoints is $(z',w,z)$, read right to left.  Let us write $z_0$ also for the image of $z_0$ under $\pi$.

 As with elementary cylindrical cobordisms of type ($\mathcal{EPC}$-$\ref{elemencobtype3}_{S^+}$), the dividing set $\cA_0=\Phi^{-1}(\cA)$ determines two well-defined diffeomorphisms $\psi$ and $\psi'$ of $(Y_1,L_1)$, each coming from twisting $L_1$ according to the dividing set, and requiring that either $(\ws_1\cup \zs_1)\setminus \{w,z\}$ or $(\ws_1\cup\zs_1)\setminus \{z',w\}$ be mapped to $\ws_2\cup \zs_2$. We pick $\psi$ and $\psi'$ so that
\[
\psi(z')=z_0 \qquad \text{and} \qquad\psi'(z)=z_0.
\] 
We then define two maps:
\[
F_{W,\cF^\sigma,\Phi,\bS_{\varnothing},\frs}=(\Phi|_{ \{1\}\times Y_1})_* \psi_*S_{w,z}^- , \qquad \text{and}\qquad F'_{W,\cF^\sigma,\Phi,\bS_{\varnothing},\frs}=(\Phi|_{\{1\}\times Y_1})_* \psi'_*S_{z',w}^- .
\] By Lemma~\ref{lem:mapsforI+agree}, these two maps are filtered chain homotopic, so we denote the common map by $F_{W,\cF^\sigma,\Phi,\bS_{\varnothing},\frs}$.

\subsubsection{Type $T^+$ elementary cobordisms}
Suppose $(W,\cF^\sigma,\Phi,\bS_{\varnothing})$ is of type ($\mathcal{EPC}$-$\ref{elemencobtype3}_{T^+}$).  In the region of $([0,1]\times L_1)\setminus \cA_0$ which has three $\ve{w}$-basepoints, let $w_0$ be the basepoint on $\{0\}\times L_1 $, and let $w$ and $w'$ be the basepoints on $\{1\}\times L_1$. Let $z$ denote the basepoint between $w$ and $w'$. Assume that on $\{0\}\times L_1$, the basepoints are ordered $(w,z,w')$, read right to left. By abuse of notation, we also write $w$, $z$ and $w'$ for the images of $w$, $z$ and $w'$ under the map $\pi\colon [0,1]\times Y_1\to Y_1$.

Using Lemma~\ref{lem:admissibleisotopiesofcylinders}, the dividing set $\cA_0$ determines two diffeomorphisms of $(Y_1,L_1)$, $\psi$ and $\psi'$, corresponding to twisting a link component as prescribed by the divides, and matching up the basepoints. Assume that 
\[\psi(w_0)=w'\qquad \text{and}\qquad  \psi'(w_0)=w.\] We define two maps:
\[F_{W,\cF^\sigma,\Phi,\bS_{\varnothing},\frs}:=(\Phi|_{\{1\}\times Y_1})_*T_{w,z}^+ \psi_*\qquad  \text{and}\qquad 
F'_{W,\cF^\sigma,\Phi,\bS_{\varnothing},\frs}:=(\Phi|_{\{1\}\times Y_1})_*T_{z,w'}^+ \psi'_*.\]  By Lemma~\ref{lem:mapsforII+agree}, these two maps are filtered chain homotopic, so we denote the common map by $F_{W,\cF^\sigma,\Phi,\bS_{\varnothing},\frs}$.

\subsubsection{Type $T^-$ elementary cobordisms}
Finally, suppose $(W,\cF^\sigma,\Phi,\bS_{\varnothing})$ is of type ($\mathcal{EPC}$-$\ref{elemencobtype3}_{T^-}$).  In the region of $([0,1]\times L_1)\setminus \cA_0$ which has three $\ve{w}$-basepoints, let $w_0$ be the basepoint on $\{1\}\times L_1$, and let $w$ and $w'$ be the basepoints on $ \{0\}\times L_1$, in this region. Let $z$ denote the basepoint between $w$ and $w'$, and suppose that the basepoints are ordered $(w,z,w')$, read right to left. Using Lemma~\ref{lem:admissibleisotopiesofcylinders}, the divides $\cA_0$ determine two diffeomorphisms, $\psi$ and $\psi'$, of $(Y_1,L_1)$ corresponding to twisting $L_1$, as prescribed by the divides. Assume that \[\psi(w')=w_0\qquad \text{and}\qquad  \psi'(w)=w_0.\] We define two maps:
\[
F_{W,\cF^\sigma,\Phi,\bS_{\varnothing},\frs}:=(\Phi|_{\{1\}\times Y_1})_* \psi_* T_{w,z}^- \qquad \text{and}\qquad F'_{W,\cF^\sigma,\Phi,\bS_{\varnothing},\frs}:=(\Phi|_{\{1\}\times Y_1})_* \psi'_*T_{z,w'}^- .
\]
By Lemma~\ref{lem:mapsforII+agree}, these two maps are filtered chain homotopic, so we denote the common map by $F_{W,\cF^\sigma,\Phi,\bS_{\varnothing},\frs}$.

Adapting Lemma~\ref{lem:invariantunderisotopiestype1} immediately yields the following:

\begin{lem}\label{lem:invariantunderisotopiestype3}Isotopies of the parameterizing diffeomorphism $\Phi$ through diffeomorphisms fixing $\{0\}\times Y_1$,  do not affect the maps $F_{W,\cF^\sigma,\Phi,\bS_{\varnothing}, \frs}$ for elementary parametrized cobordisms of type ($\mathcal{EPC}$-\ref{elemencobtype3}).
\end{lem}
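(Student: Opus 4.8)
The plan is to imitate the proof of Lemma~\ref{lem:invariantunderisotopiestype1} verbatim, replacing the handle map $F_{Y_1,\bL_1,\bS,\Phi^{-1}(\frs)}$ with the appropriate quasi-stabilization map $S_{w,z}^{\circ}$ or $T_{w,z}^{\circ}$ (depending on the subtype), and using the fact that each of these maps commutes with the relevant basepoint-moving diffeomorphism maps. Concretely, an isotopy $\Phi_t$ of the parametrizing diffeomorphism through diffeomorphisms fixing $\{0\}\times Y_1$ decomposes into three kinds of isotopies: those supported away from $\Sigma$ (no effect, since the cobordism map only depends on $\Phi$ through its restriction to $\Sigma$ and to $\{1\}\times Y_1$); those supported in a neighborhood of $\Sigma$ but fixing $\{1\}\times L_1$ pointwise; and those supported near $\{1\}\times Y_1$.

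First I would handle the second kind. Such an isotopy only changes $\cA_0 = \Phi^{-1}(\cA)$ by an isotopy relative to $\{0,1\}\times L_1$, and by Lemma~\ref{lem:admissibleisotopiesofcylinders} we may take the new admissible representative $\cA_0'$ to be admissibly isotopic to the old one. An admissible isotopy does not change the induced diffeomorphism $\psi$ (or $\psi'$) up to isotopy fixing the basepoints, so the map $F_{W,\cF^\sigma,\Phi,\bS_{\varnothing},\frs}$ is unchanged. Second I would handle the third kind. If $\tau$ is a diffeomorphism twisting a neighborhood of $\{1\}\times L_1$, then precomposing $\Phi$ with $\tau$ replaces $(\Phi|_{\{1\}\times Y_1})_*$ by $(\Phi|_{\{1\}\times Y_1})_*\,\tau_*$, but simultaneously changes the pullback dividing set, hence replaces $\psi_*$ by $(\tau^{-1})_*\psi_*$ (for the $S^-,T^-$ subtypes the analogous cancellation is on the other side). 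Since $\tau$ is supported away from the new pair of basepoints $(w,z)$, it does not disturb the quasi-stabilization region, so $\tau_*$ commutes with $S_{w,z}^{\circ}$ or $T_{w,z}^{\circ}$ up to chain homotopy --- this follows either directly from the definition of the quasi-stabilization maps on intersection points, or from the naturality statement Theorem~\ref{thm:quasisarenatural}. Thus the factors $\tau_*$ and $(\tau^{-1})_*$ cancel, and the composition is unchanged up to chain homotopy.

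The only point requiring slightly more care than in Lemma~\ref{lem:invariantunderisotopiestype1} is that the definition of $F_{W,\cF^\sigma,\Phi,\bS_{\varnothing},\frs}$ already involves a choice (which of the two adjacent basepoints $z,z'$ to identify with the boundary basepoint $z_0$), and we showed the two choices agree via Lemma~\ref{lem:mapsforI+agree} or Lemma~\ref{lem:mapsforII+agree}; one should check that the isotopy argument above respects this and does not secretly move us between the two definitions in an uncontrolled way. This is immediate, since each of the two defining expressions is independently invariant under the three kinds of isotopies by the argument just given.

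The main obstacle, such as it is, is bookkeeping: one must verify that in the twist-near-$\{1\}\times Y_1$ case the relevant diffeomorphism $\tau$ genuinely commutes (up to filtered equivariant chain homotopy) with the quasi-stabilization map in question, i.e.\ that $\tau$ can be isotoped to be supported in the complement of the new pair of basepoints and of an arc realizing the quasi-stabilization. This is where one invokes that the quasi-stabilization maps are natural (Theorem~\ref{thm:quasisarenatural}) together with the observation that a twist supported near $\{1\}\times L_1$ which fixes the new pair $(w,z)$ acts only on the "already present" basepoints; the verification is routine but is the place where the argument has genuine content beyond the proof of Lemma~\ref{lem:invariantunderisotopiestype1}.
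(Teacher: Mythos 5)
Your proposal is correct and follows exactly the route the paper intends. The paper's proof of this lemma is a one-sentence remark ("This follows similarly to Lemma~\ref{lem:invariantunderisotopiestype1}, using Lemma~\ref{lem:admissibleisotopiesofcylinders}"), and your three-case decomposition of the isotopy (supported away from $\Sigma$; supported near $\Sigma$ but fixing $\{1\}\times L_1$; supported near $\{1\}\times L_1$), with the admissible-isotopy lemma handling the middle case and the twist-cancellation handling the last, is exactly what that remark is asking the reader to carry out. Your closing observation that each of the two defining expressions ($S^+_{w,z}\psi_*$ vs.\ $S^+_{z',w}\psi'_*$, etc.) is separately invariant, so the Lemma~\ref{lem:mapsforI+agree}/\ref{lem:mapsforII+agree} identification causes no trouble, is a correct and genuinely helpful point not made explicit in the paper. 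The one place you should be a touch more careful is the twist case: precomposing $\Phi$ with a twist $\tau$ near $\{1\}\times L_1$ also moves the positions of the new basepoints $(w,z)$, so what one actually needs is the conjugation identity $\tau_*\,S^+_{\tau^{-1}(w),\tau^{-1}(z)}\,(\tau^{-1})_*\simeq S^+_{w,z}$ (diffeomorphism naturality of quasi-stabilization), rather than a literal commutation of $\tau_*$ with a fixed $S^+_{w,z}$; your phrasing "supported away from the new pair of basepoints" is not quite right, but you do correctly flag that Theorem~\ref{thm:quasisarenatural}/naturality is the tool, so the argument is sound.
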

\begin{proof}This follows similarly to Lemma~\ref{lem:invariantunderisotopiestype1}, using Lemma~\ref{lem:admissibleisotopiesofcylinders}.
\end{proof}

\subsection{Maps induced by elementary parametrized link cobordisms of type ($\mathcal{EPC}$-\ref{elemencobtype4})}\label{subsec:mapsfortype4}

Suppose that $(W,\cF^\sigma, \Phi, \bS^0)$ is an elementary cobordism of type ($\mathcal{EPC}$-\ref{elemencobtype4}) from $(Y_1,\bL_1)$ to $(Y_2,\bL_2)$. Here $\bS^0$ is a framed 0-sphere in $Y_1$, which intersects $L_1$ along two arcs.

Recall that we constructed a surface
\[
\Sigma(L_1,\bS^0)=([0,1]\times L_1)\cup B\cup ([1,2]\times L_1(\bS^0))\subset W(Y_1,\bS^0)
,\]  which is diffeomorphic to a disjoint union of a pair-of-pants together with some cylinders. The subset $B$ is identified with $[-1,1]\times [-1,1]$. Define the embedded 1-complex 
\[
\cL=(\{1\}\times L)\cap (\{1\}\times L(\bS^0))\subset \Sigma(L_1,\bS^0),
\] 
which can think of as a subset the critical level set of a Morse function on $\Sigma(L_1,\bS^0)$, with a single index 1 critical point on the band.

 The parameterizing diffeomorphism
\[\Phi\colon \cW(Y_1,L_1,\bS^0) \to (W,\cF^\sigma)\] is assumed to have the property that $\Phi^{-1}(\cA)$ does not intersect the band region $B\subset \Sigma(L_1,\bS^0)$.  Furthermore, the dividing set $\cA$ consists entirely of arcs going from the incoming boundary to the outgoing boundary. We note that the dividing set $\Phi^{-1}(\cA)$ could  wind  many times around the annular subsets of $\Sigma(L_1,\bS^0)$ identified with $[0,1]\times L_1$ or $[1,2]\times L_1(\bS^0)$.

In analogy with Definition~\ref{def:admissibleisotopy1}, we make the following definition:
\begin{define}Suppose $\cA$ is a dividing set on $\Sigma(L,\bS^0)$, such that all arcs go from the incoming boundary to the outgoing boundary. We say $\cA$ is \emph{admissible} if no arcs intersect $B$, and all arcs intersect $\cL$ transversely at a single point. If $\cA_t$ is a 1-parameter family of dividing sets on $\Sigma(L_1,\bS^0)$ which are fixed on the $\{0\}\times L$ and $\{2\}\times L(\bS^0)$, then we say $\cA_t$ is an \emph{admissible isotopy} if $\cA_t$ is admissible for all $t$.
\end{define}

\begin{lem}\label{lem:allisotopiesrespectL}Suppose that $\cA_1$ and $\cA_2$ are admissible dividing sets on $\Sigma(L_1,\bS^0)$. If $\cA_1$ and $\cA_2$ are isotopic through dividing sets fixed on $\{0\}\times L_1$ and $\{2\}\times L_1(\bS^0)$ and not intersecting the point $(0,0)\in B$, then $\cA_1$ and $\cA_2$ are admissibly isotopic.
\end{lem}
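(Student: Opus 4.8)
The plan is to reduce the statement to the cylinder case already settled in Lemma~\ref{lem:admissibleisotopiesofcylinders}, by cutting $\Sigma(L_1,\bS^0)$ along the level set $\cL$. First I would decompose $\Sigma(L_1,\bS^0)$ into connected components. All but one component is a cylinder $[0,2]\times K$ (built from $[0,1]\times K\subset [0,1]\times L_1$ and $[1,2]\times K\subset [1,2]\times L_1(\bS^0)$ for a component $K$ of $L_1$ disjoint from $\im(\bS^0)$), meeting $B$ trivially and meeting $\cL$ in the mid-circle $\{1\}\times K$. The one remaining component is the pair-of-pants $P$ containing the band $B$ (the saddle cobordism between one circle and two circles); here $\cL\cap P$ is a disjoint union of two properly embedded arcs $\ell_1,\ell_2$ with endpoints on $\d B$, and $\ell_1\cup \ell_2\cup B$ is the critical level set, with its unique index $1$ critical point at $c:=(0,0)\in B$. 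In both cases $\cL$ separates the component into pieces which are cylinders, each having one of the incoming or outgoing boundary circles as part of its boundary. So it suffices to produce, on each component, an admissible isotopy from (the restriction of) $\cA_1$ to (the restriction of) $\cA_2$.

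The first step, needed only on $P$, is to upgrade the given isotopy, which is only assumed to avoid the single point $c$, to one disjoint from all of $B$. Since $B$ is a disk with $c$ in its interior and with all four of its sides in the interior of $\Sigma(L_1,\bS^0)$, radial push-out from $c$ exhibits $P\setminus \Int B$ as a deformation retract of $P\setminus \{c\}$; together with the standard fact that isotopy classes of properly embedded $1$-manifolds in a surface, relative to fixed endpoints, are detected by homotopy in the complement, this lets us isotope $\cA_1$ and the entire path $\cA_t$ (rel the fixed endpoints on $\{0\}\times L_1$ and $\{2\}\times L_1(\bS^0)$) so as to be disjoint from $B$ throughout. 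Since $\cA_1$ and $\cA_2$ already avoid $B$, we may from now on assume the whole isotopy takes place in $P':=P\setminus \Int B$.

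The second step is to arrange that $\cA_1$ and $\cA_2$ cross $\cL$ in the same points with the same matching of arc-ends. Each arc of an admissible dividing set crosses $\cL=\ell_1\sqcup \ell_2$ transversely in exactly one point, and the arcs $\ell_1,\ell_2$ separate $P'$ (and the cylinder components) into the cylinder pieces described above; hence which piece an admissible arc enters and exits — so which of $\ell_1,\ell_2$ it crosses, and which incoming arc is joined to which outgoing arc — is invariant under isotopy rel endpoints. Using the hypothesis that $\cA_1$ and $\cA_2$ are isotopic rel endpoints, I would slide the $\cL$-crossing points of $\cA_1$ along $\ell_1$ and $\ell_2$ — dragging each intersection point along $\cL$ and adjusting the arc in a neighborhood, which is manifestly an admissible isotopy — until they coincide with those of $\cA_2$; there is room to do this since the crossing points of $\cA_2$ are distinct. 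Cutting $P'$ (and each cylinder component) along $\cL$ now turns $\cA_1$ and $\cA_2$ into admissible dividing sets on a disjoint union of cylinders, all of whose arc-endpoints are fixed (on the original incoming/outgoing ends, on $\d B$, and on $\cL$), and still isotopic rel endpoints. Lemma~\ref{lem:admissibleisotopiesofcylinders} then produces an admissible isotopy on each cylinder piece, and these glue along $\cL$ — where all endpoints were made to agree — into an admissible isotopy from $\cA_1$ to $\cA_2$. Finally, closed components of the dividing set (if the definition of dividing set permits them here) cause no trouble: those bounding disks in $P$ can be deleted, and the rest are boundary-parallel and can be pushed into collars of $\d P$.

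I expect the main obstacle to be the first step, promoting ``disjoint from $c$'' to ``disjoint from $B$'' at every time of the isotopy, and, closely related, the bookkeeping in the second step showing the $\cL$-crossings can be brought into agreement through configurations that remain genuine admissible dividing sets (disjoint embedded arcs, each transverse to $\cL$ exactly once, disjoint from $B$). Both points are of the same planar-surface flavour as, and should follow from the same elementary considerations as, the intersection-number argument in the proof of Lemma~\ref{lem:admissibleisotopiesofcylinders}; the care required is purely in verifying that the intermediate configurations stay admissible.
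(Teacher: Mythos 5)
Your approach is a genuinely different route from the paper's. The paper identifies explicit invariants — the order in which the arcs meet $\cL$, together with algebraic intersection numbers with four auxiliary curves $C_\pm$, $C_\pm'$ that run through the band to the point $(0,0)$ — that both classify admissible dividing sets up to admissible isotopy and are visibly preserved by any isotopy fixing $\{0\}\times L_1\cup\{2\}\times L_1(\bS^0)$ and avoiding $(0,0)$. You instead propose to cut along $\cL$ and bootstrap from Lemma~\ref{lem:admissibleisotopiesofcylinders}. Both strategies are reasonable, but the substance of the paper's proof is concealed in the step you wave off as ``manifestly admissible.''

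The gap is in sliding the $\cL$-crossings of $\cA_1$ into coincidence with those of $\cA_2$. For that you need not merely ``room'' but that the crossing points appear in the same linear order on each $\ell_i$, and this is not captured by the combinatorial data you cite (which $\ell_i$ each arc crosses, and which incoming arc matches which outgoing arc). Note that admissibility on $\Sigma(L_1,\bS^0)$ is weaker than on a cylinder: it asks only for a single transverse crossing of $\cL$ and disjointness from $B$, and does not forbid an arc from winding around the annular pieces cut out by $\cL$. Two admissible dividing sets with identical matching combinatorics but different windings meet $\cL$ in orders that no admissible slide can reconcile; you must rule this out, and this is precisely what the paper's intersection numbers with $C_\pm$, $C_\pm'$ measure. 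To repair your argument one would say: after your first step (pushing the isotopy off $B$), corresponding arcs of $\cA_1$ and $\cA_2$ are homotopic rel endpoints in $P\setminus \Int B$, hence have equal windings in the annular pieces, hence — being pairwise disjoint with fixed endpoints on $\d \Sigma(L_1,\bS^0)$ — meet $\cL$ in the same order; but this is a repackaging of the paper's invariant, not something you get for free. Your first step is also nontrivial (a deformation retraction of the ambient surface only gives a homotopy of the multicurve, not an isotopy), but as you say this upgrade is standard surface topology; it is the second step that needs the extra idea.
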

\begin{proof}On any component of $\Sigma(L_1,\bS^0)$ which does not intersect $B$, the claim is obvious, so suppose without loss of generality that $L_1$ is a one (resp. two) component link which is separated (resp. joined) by the band $B$. Suppose for definiteness, that $L_1$ is a two component link which is joined by $B$. The case that $L_1$ is a one component link which is separated by the band follows by turning the picture upside down.

Let $p_-,p_+\in  L_1$ denote the two center points of the framed sphere $\bS^0$. Let $C_-$ and $C_+$ be curves on $([0,1]\times L_1)\cup B$, each consisting of a path on $[0,1]\times L_1$ from a point on $(\{0\}\times L_1)\setminus \cA_i$, to $(p_-,1)$ or $(p_+,1)$, respectively, concatenated with one of the arcs  $[-1,0]\times \{0\}$ or $[0,1]\times \{0\}$, respectively, on $B$. These are shown in Figure~\ref{fig::69}. 

 \begin{figure}[ht!]
\centering
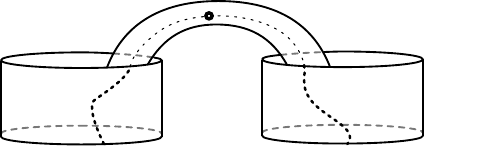
\caption{\textbf{The region $([0,1]\times L_1)\cup B\subset \Sigma(L_1,\bS^0)$.} The arc $C_-$ is the concatenation of the arc (boldly dotted) on the left component of $[0,1]\times L_1$, together with the arc $[-1,0]\times \{0\}$ (lightly dotted) on $B$. Similarly $C_+$ is the concatenation of the arc (boldly dotted) on the right component of $[0,1]\times L_1$, together with the arc $[0,1]\times \{0\}$ (lightly dotted) on $B$. The point $(0,0)\in B$, is marked as well. \label{fig::69}}
\end{figure}

Similarly let $C_+'$ and $C_-'$ be two similarly defined curves on $B \cup ([1,2]\times L_1(\bS^0))$ from $(0,0)$ to two points on $(\{2\}\times L_1(\bS^0))\setminus \cA_i$.  

The admissible isotopy class of the divides $\cA_1$ and $\cA_2$ relative $\{0\}\times L_1\cup \{2\}\times L_1(\bS^0)$ is determined by the ordering that the arcs appear on $\cL$, and the intersection numbers of arcs with $C_-,$ $C_+$, $C_-',$ and $C_+'$.

The configuration of endpoints of arcs on $\{1\}\times L_1$ is determined by the region of $\Sigma\setminus \cA$ that the band is in and the order that the arcs appear on $\{0\}\times L_1 $. This is unchanged by isotopies of $\Sigma(L_1,\bS^0)$ which may not be admissible, but which do not map any arcs to $(0,0)$. Similarly the algebraic intersection numbers above are not changed by isotopies of the arcs which fix the ends of the arcs on $\{0\}\times L_1$ and $\{2\}\times L_1(\bS^0)$, and do not map any arcs to $(0,0)$. The claim follows.
\end{proof}

We now pick  basepoints $\ve{w}_{1.5}\cup \ve{z}_{1.5}$ on $\{1\}\times L_1$ which make $(\{1\}\times L_1,\ve{w}_{1.5},\ve{z}_{1.5})$ a multi-based link, and which make the band $B$ either an $\alpha$-band or a $\beta$-band. Note that the type of the band, either type-$\ve{w}$ or type-$\ve{z}$, is determined by which connected component of $\Sigma(L_1,\bS^1)\setminus \cA_0$ contains four basepoints. For a fixed choice of $\ve{w}_{1.5}$ and $\ve{z}_{1.5}$ there is no ambiguity in whether $B$ is an $\alpha$-band or a $\beta$-band, but there are two valid choices of basepoints $\ve{w}_{1.5}$ and $\ve{z}_{1.5}$ which respect the dividing set, one of which makes $B$ an $\alpha$-band, and one of which makes $B$ a $\beta$-band. For a choice of $\xi\in \{\alpha,\beta\}$, write $\ve{w}_{1.5}^\xi$ and $\ve{z}_{1.5}^\xi$ for these basepoints.

Adapting Lemma~\ref{lem:admissibleisotopiesofcylinders}, the dividing set $\cA_0$ restricted to $[0,1]\times L_1$ induces  an isotopy $\psi_1^\xi$ of $(Y_1,L_1)$ which sends $(\ve{w}_1,\ve{z}_1)$ to $(\ve{w}_{1.5}^\xi,\ve{z}_{1.5}^\xi)$. Similarly the dividing set $\cA_0$ restricted to $[1,2]\times L_1(\bS^0)$ induces an  isotopy $\psi_2^\xi$ of $(Y_1(\bS^0),L_1(\bS^0))$ which sends $(\ve{w}_{1.5}^\xi,\ve{z}_{1.5}^\xi)$ to $(\Phi^{-1}(\ve{w}_2),\Phi^{-1}(\ve{z}_2))$. We now define
\[F_{W,\cF^\sigma,\Phi,\bS^0,\frs}^\xi:=(\Phi|_{\{2\}\times Y_1(\bS^0)})_* (\psi_2^\xi)_* F_{Y_1,\bL_1,\bS^0,\Phi^{-1}(\frs)}^{\ve{o}} (\psi_1^\xi)_*.\] There is no ambiguity in the choice of $\ve{o}\in \{\ve{w},\ve{z}\}$, though $\xi$ can be either $\alpha$ or $\beta$, as determined by the choice of basepoints $\ve{w}_{1.5}^\xi$ and $\ve{z}_{1.5}^\xi$. By Propositions~\ref{prop:alphabandmapsarebetabandmaps} and~\ref{prop:alphabandmapsarebetabandmapstypew} we know that the $\alpha$- and $\beta$-band maps are related by the diffeomorphism resulting from moving basepoints, which is already encoded into $\psi_1^\xi$ and $\psi_2^\xi$. From these considerations we conclude that
\[F_{W,\cF^\sigma,\Phi,\bS^0,\frs}^\alpha\simeq F_{W,\cF^\sigma,\Phi,\bS^0,\frs}^\beta,\] and we denote the common map
\[F_{W,\cF^\sigma,\Phi,\bS^0,\frs}.\]

We have the following:

\begin{lem}\label{lem:invariantunderisotopiestype4} For an elementary parametrized cobordism $(W,\cF^\sigma,\Phi,\bS^0)$ of type ($\mathcal{EPC}$-\ref{elemencobtype4}), isotopies of the parametrization $\Phi$ which fix $\{0\}\times Y_1$ and do not map the point $(0,0)\in B$ to $\cA$ do not affect the map $F_{W,\cF^\sigma,\Phi,\bS^0,\frs}$.
\end{lem}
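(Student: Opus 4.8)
The plan is to mirror the arguments of Lemmas~\ref{lem:invariantunderisotopiestype1} and~\ref{lem:invariantunderisotopiestype3}, decomposing the isotopy into localized pieces and showing each has no effect on $F_{W,\cF^\sigma,\Phi,\bS^0,\frs}$. Let $(\Phi_t)_{t\in[0,1]}$ be an isotopy through parametrizations $\Phi_t\colon \cW(Y_1,L_1,\bS^0)\to (W,\cF^\sigma)$ with $\Phi_t|_{\{0\}\times Y_1}=\id$ and with $\Phi_t^{-1}(\cA)$ disjoint from the point $(0,0)\in B$ for all $t$. Since $\Phi_0$ and $\Phi_1$ both carry $\Sigma(L_1,\bS^0)$ onto $\Sigma$, the map $\rho_t:=\Phi_0^{-1}\circ\Phi_t$ is an isotopy of $(\cW(Y_1,L_1,\bS^0),\Sigma(L_1,\bS^0))$ starting at the identity and fixing $\{0\}\times Y_1$, and it suffices to show that replacing $\Phi_0$ by $\Phi_0\circ\rho_1$ leaves the map unchanged. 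Isotopies which fix $\Sigma(L_1,\bS^0)$ pointwise have no effect on the formula for $F^\xi$ (the framed sphere $\bS^0$, the band $B$, the band map $F^{\ve{o}}_{Y_1,\bL_1,\bS^0,\Phi^{-1}(\frs)}$, and all the twist diffeomorphisms $\psi_1^\xi,\psi_2^\xi$ are unaffected), so it is enough to treat two kinds of pieces: an isotopy supported near $\Sigma(L_1,\bS^0)$ and fixing $\{2\}\times L_1(\bS^0)$, and an isotopy supported in a collar of $\{2\}\times Y_1(\bS^0)$.

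For a piece supported near $\Sigma(L_1,\bS^0)$ fixing $\{2\}\times L_1(\bS^0)$: such an isotopy changes only the pullback dividing set $\cA_0=\Phi_0^{-1}(\cA)$, by an isotopy which fixes $\{0\}\times L_1$ and $\{2\}\times L_1(\bS^0)$ and, by hypothesis, never carries an arc of $\cA_0$ across $(0,0)\in B$. By Lemma~\ref{lem:allisotopiesrespectL}, this isotopy of dividing sets may be taken to be admissible. Restricting an admissible isotopy of $\cA_0$ to the two cylindrical ends $[0,1]\times L_1$ and $[1,2]\times L_1(\bS^0)$ of $\Sigma(L_1,\bS^0)$ and arguing as in the proof of Lemma~\ref{lem:admissibleisotopiesofcylinders}, the induced diffeomorphisms $\psi_1^\xi$ and $\psi_2^\xi$ are unchanged up to post-composition by isotopies fixing the basepoints, and the combinatorial position of the band relative to $\cA_0$ near $\{1\}\times L_1$ — hence the choice of $\ve{w}_{1.5}^\xi,\ve{z}_{1.5}^\xi$ and the type of the band — is preserved; if the winding of the dividing set nonetheless forces us to change between an $\alpha$-band and a $\beta$-band presentation, we pass between $F^\alpha_{W,\cF^\sigma,\Phi,\bS^0,\frs}$ and $F^\beta_{W,\cF^\sigma,\Phi,\bS^0,\frs}$, which are chain homotopic by Propositions~\ref{prop:alphabandmapsarebetabandmaps} and~\ref{prop:alphabandmapsarebetabandmapstypew}. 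Thus this piece has no effect. This is the step I expect to be the main obstacle, precisely because invoking Lemma~\ref{lem:allisotopiesrespectL} requires the extra hypothesis about $(0,0)$ (absent in the type~($\mathcal{EPC}$-\ref{elemencobtype1}) and ($\mathcal{EPC}$-\ref{elemencobtype3}) cases), and because one must verify that the band type does not silently change the map.

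For a piece $\tau$ supported in a collar of $\{2\}\times Y_1(\bS^0)$, write $\bar\tau$ for the induced self-diffeomorphism of $(Y_1(\bS^0),L_1(\bS^0))$. Replacing $\Phi_0$ by $\Phi_0\circ\tau$ changes $(\Phi_0|_{\{2\}\times Y_1(\bS^0)})_*$ to $(\Phi_0|_{\{2\}\times Y_1(\bS^0)})_*\circ\bar\tau_*$, and, since it alters $\Phi_0^{-1}(\ve{w}_2\cup\ve{z}_2)$, it changes the diffeomorphism $\psi_2^\xi$ to $\bar\tau^{-1}\circ\psi_2^\xi$. As $\tau$ is supported away from the band $B$ and from $\bS^0$, the band map $F^{\ve{o}}_{Y_1,\bL_1,\bS^0,\Phi^{-1}(\frs)}$ and the map $(\psi_1^\xi)_*$ are untouched, and the two new factors appear consecutively on the target side and cancel by functoriality of the diffeomorphism maps: $(\Phi_0|_{\{2\}\times Y_1(\bS^0)})_*\circ\bar\tau_*\circ(\bar\tau^{-1})_*\circ(\psi_2^\xi)_*=(\Phi_0|_{\{2\}\times Y_1(\bS^0)})_*\circ(\psi_2^\xi)_*$. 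Hence this piece also has no effect, which completes the argument.
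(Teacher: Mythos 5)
The overall plan—decompose the isotopy of $\Phi$ into a piece fixing $\Sigma(L_1,\bS^0)$ pointwise, a piece supported near $\Sigma(L_1,\bS^0)$ with the ends fixed, and a piece supported near $\{2\}\times Y_1(\bS^0)$—is the right one and matches the paper, as does the treatment of the first and last pieces. The gap is in the middle piece, and it is exactly the place you yourself flag as the ``main obstacle.''

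You assert that after making the family $\cA_t$ admissible via Lemma~\ref{lem:allisotopiesrespectL}, the diffeomorphisms $\psi_1^\xi,\psi_2^\xi$ are ``unchanged up to post-composition by isotopies fixing the basepoints.'' This is not true, and the appeal to Lemma~\ref{lem:admissibleisotopiesofcylinders} does not apply: that lemma concerns admissible dividing sets on a cylinder that are isotopic \emph{rel both boundary circles}, whereas in the present situation the restrictions of $\cA_t$ to $[0,1]\times L_1$ and $[1,2]\times L_1(\bS^0)$ are only fixed at $\{0\}\times L_1$ and $\{2\}\times L_1(\bS^0)$; the arcs can and do slide along $\cL=\{1\}\times L_1$ over the course of the admissible isotopy. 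That sliding moves the intermediate basepoints $\ve{w}_{1.5}^\xi\cup\ve{z}_{1.5}^\xi$, so $\psi_1^\xi$ and $\psi_2^\xi$ genuinely change: if $\tau_t$ is the induced isotopy of $\cL$ (fixed on $\partial\cL$) and $Z$ an extension of $\tau_1$ to $(Y_1,L_1)$ fixing $\bS^0$, with induced $Z^{\bS^0}$ on $(Y_1(\bS^0),L_1(\bS^0))$, then $\psi_1$ is replaced by $Z\circ\psi_1$ and $\psi_2$ by $\psi_2\circ(Z^{\bS^0})^{-1}$. These factors are not inner, and they are not isotopies fixing the basepoints. The missing step is the observation that, precisely because $Z$ is supported away from $\im(\bS^0)$, the maps $Z_*$ and $(Z^{\bS^0})_*$ commute with the compound 1-handle/band map $F^{\ve{o}}_{Y_1,\bL_1,\bS^0,\hat{\frs}}$, which lets the two factors cancel and the composition be unchanged. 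Without that commutation your argument does not close.

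Secondarily, your remark that ``if the winding of the dividing set nonetheless forces us to change between an $\alpha$-band and a $\beta$-band presentation'' is a red herring: the choice between an $\alpha$- or $\beta$-band presentation is a free choice made for a fixed admissible dividing set (the two presentations already give chain homotopic maps by construction), not something the isotopy can ``force.'' The real content is the cancellation described above, not the band-type ambiguity.
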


\begin{proof} Isotopies which are fixed pointwise on $\Sigma(L_1,\bS^0)$ clearly have no effect, so we can just consider isotopies supported in a neighborhood of $\Sigma(L_1,\bS^0)$. By the same argument as in Lemma~\ref{lem:invariantunderisotopiestype1}, the maps are independent of isotopies of $\Phi$ which are supported in a neighborhood of $\{2\}\times L_1(\bS^0)$. Hence it remains to show that the maps are invariant under isotopies which are supported in a neighborhood of $\Sigma(L_1,\bS^0)$ and are fixed on $\{0\}\times Y_1$ and $\{2\}\times Y_1(\bS^0)$. Suppose $\Phi_t$ is such an isotopy. Since $\Phi_t$ is fixed on $\{2\}\times Y_1(\bS^0)$, the map $F_{W,\cF,\Phi_t,\bS^0,\frs}$ only depends on $\Phi_t$ through the dividing set $\cA_t:=\Phi_t^{-1}(\cA)$ on $\Sigma(L_1,\bS^0)$. By Lemma~\ref{lem:allisotopiesrespectL}, we can assume that $\cA_0$ and $\cA_1$ are admissibly isotopic, i.e.,  the dividing sets $\cA_t$ never pass through the band $B$, and the curves $\cA_t$ remain transverse to $\cL$ throughout.

Such  a 1-parameter family $\cA_t$ induces an isotopy $\tau_t\colon [0,1]\times \cL\to \cL$ which is fixed on $\d \cL$. We can extend this to a diffeomorphism of  both $(Y_1,L_1)$ and $(Y_1(\bS^0),L_1(\bS^0))$ which fixes $\bS^0\subset Y_1$ and $D^1\times S^2\subset Y_1(\bS^0)$, respectively. We let $Z$ denote an extension of $\tau_1$ to all of $(Y_1,L_1)$, and let $Z^{\bS^0}$ denote the induced diffeomorphism of $(Y_1(\bS^0),L_1(\bS^0))$. The effect of the isotopy $\Phi_t$ is to replace $\psi_1$ with $\psi_1'=Z\circ \psi_1$ and to replace $\psi_2$ with $\psi_2'=\psi_2\circ(Z^{\bS^0})^{-1}$.  Since $Z$ fixes $\im(\bS^0)\subset Y_1$, it follows that $Z^{\bS^0}$ and $Z$ commute with the compound 1-handle/band map $F_{Y_1,\bL_1,\bS^0,\frs}^{\ve{o}}$. Hence we have
\[\Phi|_{\{2\}\times Y_1(\bS^0)} (\psi_2')_* F_{Y_1,\bL_1,\bS^0,\frs}^{\ve{o}} (\psi_1')_*\simeq \Phi|_{\{2\}\times Y_1(\bS)} (\psi_2)_*(Z^{\bS_i})^{-1}_* F_{Y_1,\bL_1,\bS^0,\frs}^{\ve{o}}  Z_*(\psi_1)_*\]
\[\simeq \Phi|_{ \{2\}\times Y_1(\bS)} (\psi_2)_* F_{Y_1,\bL_1,\bS^0,\frs}^{\ve{o}}  (Z^{-1})_* Z_*(\psi_1)_*\simeq \Phi|_{ \{2\}\times Y_1(\bS^0)} (\psi_2)_* F_{Y_1,\bL_1,\bS^0,\frs}^{\ve{o}}  (\psi_1)_*,\] showing that the maps are unchanged.
\end{proof}

\subsection{Invariance of the cobordism maps from a parametrized Kirby decomposition}
\label{subsec:invariance}
If $(W,\cF^\sigma)$ is a link cobordism such that each component of $W$ has a nonempty incoming and outgoing end, and each component of $\Sigma$ has a nonempty incoming and outgoing end, we take a parametrized Kirby decomposition,  $\cK$, with elementary parametrized cobordisms $\cW_i=(W_i,\cF_i^{\sigma_i},\Phi_i,\bS_i)$, and define the maps
\[F_{W,\cF^\sigma,\frs,\cK}= \prod_{i} F_{W_i,\cF_i^{\sigma_i},\Phi_i,\bS_i, \frs|_{W_i}},\] where the maps $F_{W_i,\cF_i^{\sigma_i},\Phi_i,\bS_i, \frs|_{W_i}}$ are the maps defined in the previous subsections for elementary parametrized link cobordisms.

We now prove a slightly weaker version of Theorem~\ref{thm:A}, where each component of $W$ and $\cF$ intersects a component of $Y_1$ and $Y_2$ non-trivially. In Section~\ref{sec:removeballsforgeneralmaps}, where we may have to puncture the cobordism to introduce new ends, finishing the proof of Theorem~\ref{thm:A}.

\begin{customthm}{A$'$}\label{thm:A'}Suppose that $(W,\cF^\sigma)\colon (Y_1,\bL_1)\to (Y_2,\bL_2)$ is a decorated link cobordism and each component of $W$ and $\cF$ intersects a component of both $Y_1$ and $Y_2$ non-trivially. The maps $F_{W,\cF^\sigma,\frs,\cK}$  are independent of the parametrized Kirby decomposition, $\cK$, up to filtered, equivariant chain homotopy.
\end{customthm}
\begin{proof} By Theorem~\ref{prop:allPKDSigmasweaklyequivalent}, it is sufficient to show that the maps $F_{W,\cF^\sigma,\frs,\cK}$ are equal for Cerf equivalent parametrized Kirby decompositions. Hence it is sufficient to check invariance from Moves~\eqref{def:weakequiv:strongisotopy}--\eqref{def:weakequiv:critcrossesA}  from Definition~\ref{def:weaklyequivalent}. 

\textbf{Move~\eqref{def:weakequiv:strongisotopy}:} We consider $\cA$-adapted isotopies (Definition~\ref{def:strongisotopy}). First note that if 
\[
\phi\colon (W,\Sigma)\to (W,\Sigma),
\] is a diffeomorphism such that $\phi|_{Y_1}=\id$, then we can push forward $\cK$ under $\phi$ using Equation~\eqref{eq:pushforwarddecomposition} to get a parametrized Kirby decomposition $\phi_*(\cK)$, which is tautologically a parametrized Kirby decomposition of $(W,\phi(\cF^\sigma))$. Tautologically, we have the relation
\begin{equation}
 F_{W,(\Sigma,\cA)^\sigma, \frs,\cK}\simeq (\phi|_{Y_2}^{-1})_* F_{W,(\Sigma,\phi(\cA))^\sigma,\phi_*(\frs),\phi_*(\cK)}.\label{eq:strongisotopiestautological}
\end{equation}
We now consider the case that $\phi=\phi_1$ for an $\cA$-adapted isotopy $\phi_t:(W,\Sigma)\to (W,\Sigma)$. Suppose first that $\phi_t$ is supported in a neighborhood of $L_2$ (i.e. $\phi_t$ just twists $\Sigma$ near $L_2$). Using the definitions of the maps for elementary parametrized cobordisms from Sections~\ref{sec:defmapscob1-2}--\ref{subsec:mapsfortype4}, one easily verifies the relation
\[
F_{W,(\Sigma,\phi(\cA))^\sigma,\frs,\phi_*(\cK)}\simeq (\phi|_{Y_2})_* F_{W,(\Sigma,\cA)^\sigma,\frs,\cK}.
\]
Combining with Equation~\eqref{eq:strongisotopiestautological}, we obtain invariance under isotopies $\phi_t$ which are supported in a neighborhood of $L_2$. Since isotopies of $(Y_2,L_2)$ which are fixed on $L_2$ induce the identity map, by naturality, we can reduce to the case that $\phi_1|_{Y_2}=\id$.

Assuming now that $\phi_1|_{Y_2}=\id$, Equation~\eqref{eq:strongisotopiestautological} reads
\[
F_{W,(\Sigma,\cA)^\sigma,\frs,\cK}\simeq F_{W,(\Sigma,\phi(\cA))^\sigma,\frs,\phi_*(\cK)}.
\]
Recall that our goal is to show that the left side of the above equation is $F_{W,(\Sigma,\cA)^\sigma,\frs,\phi_*(\cK)}$. We note that by assumption, the 1-parameter family of dividing sets $\cA_t:=\phi_{1-t}(\cA)$ connects $\phi(\cA)$ and $\cA$. Furthermore, since $\phi_t$ is $\cA$-adapted, each of the dividing sets $\cA_t$ makes $\phi_*(\cK)$ into a valid parametrized Kirby decomposition (i.e. each level of $\cK$ is an elementary parametrized cobordism with the dividing set induced by $\cA_t$). Hence it is sufficient to show invariance of the cobordism maps $F_{W,(\Sigma,\cA)^{\sigma},\frs,\cK}$ under changing $\cA$ through a 1-parameter family which is fixed on $\d \Sigma$ and has the property that $\cK$ is a parametrized Kirby decomposition of $(W,(\Sigma,\cA_t)^\sigma)$ for all $t$. To establish this fact, note that we can modify the isotopy $\cA_t$ so that it decomposes into a sequence of isotopies, each of which satisfies one of the following:
\begin{enumerate}
\item $\cA_t$ is supported in a neighborhood of $L= \d \Sigma_i\cap \d \Sigma_{i+1}$, where $\Sigma_i$ denotes the subset of $\Sigma$ inside of the level $\cW_i$ of $\cK$.
\item $\cA_t$ is fixed on $\d \Sigma_i$, for all $i$.
\end{enumerate}
Isotopies of the first type are easily seen to have no effect, since they change the map associated to $\cW_i$ by post-composition by a diffeomorphism map $\tau_*$ for twisting along $L$, and they change the map for $\cW_{i+1}$ by pre-composition by $(\tau^{-1})_*$. These two factors clearly cancel and the overall map is unchanged. Similarly isotopies of the second kind have no effect, by Lemmas~\ref{lem:invariantunderisotopiestype1},~\ref{lem:invariantunderisotopiestype3}, and~\ref{lem:invariantunderisotopiestype4}.

\textbf{Move~\eqref{def:weakequiv:isotopyawayfromL}:} We now consider replacing an   elementary cobordism $\cW_i=(Y_i,L_i,\bS_i)$ of  type ($\mathcal{EPC}$-\ref{elemencobtype3}) in $\cK$ with an elementary parametrized cobordism $\cW_i'=(Y_i,L_i,\bS_i')$, also of  type ($\mathcal{EPC}$-\ref{elemencobtype3}), such that $\bS_i'$ is related to $\bS_i$ by an isotopy in $Y_i\setminus L_i$.  We will omit the subscript $i$, and the $\Spin^c$ structures, for notational simplicity.

Suppose that $\bS^t$ is an isotopy of framed sphere in $Y\setminus L$, starting at $\bS$ and ending at $\bS'$. We let $d^t:(Y,L)\to (Y,L)$ denote an extension of the isotopy $\bS^t$, which is fixed on $L$, and  denote $d:=d^1\colon (Y,L)\to (Y,L)$. Let $d^{\bS}\colon Y(\bS)\to Y(\bS')$ denote the induced map. We pull back the dividing set under $\Phi$ to the trace link cobordism $\cW(Y,L,\bS)$, and define $\psi:(Y,L)\to (Y,L)$ to be the diffeomorphism induced by twisting along $L$, according to the dividing set. Recall that $\Phi=\Phi'\circ D$, where $D\colon \cW(Y,L,\bS)\to \cW(Y,L,\bS')$ is the diffeomorphism defined in Move~\eqref{def:weakequiv:isotopyawayfromL}. Notice that $\Phi$ and $\Phi'$ can be taken to be equal along $[0,1]\times L \subset \cW(Y,L,\bS)$, since the isotopy of framed spheres occurs away from $L$. By definition we have that
\[F_{W,\bL,\bS,\Phi}:=(\Phi|_{Y(\bS)})_*F_{Y,\bL,\bS}\psi_*\qquad \text{and}\qquad 
F_{W,\bL,\bS',\Phi'}:=(\Phi'|_{Y(\bS')})_*F_{Y,\bL,\bS'}\psi_*.\] Since $\Phi=\Phi'\circ D$, by definition, we have that
\[(\Phi'|_{Y(\bS')})_*d^{\bS}_*\simeq (\Phi|_{Y(\bS)})_*.\] By naturality and the well-definedness of the maps for framed spheres, we have
\[F_{Y,\bL,\bS'}d_*\simeq d^{\bS}_* F_{Y,\bL,\bS}.\] We note that $\psi_*$ and $d_*$ commute, since they are induced by isotopies supported on disjoint subsets of $Y$. As a consequence we have that
\begin{align*}F_{W,\bL,\bS',\Phi'} d_*&\simeq (\Phi'|_{Y(\bS')})_*F_{Y,\bL,\bS'}\psi_*d_*
\\&\simeq (\Phi'|_{Y(\bS')})_*F_{Y,\bL,\bS'}d_*\psi_*\\
&\simeq (\Phi'|_{Y(\bS')})_*d_*^{\bS}F_{Y,\bL,\bS}\psi_*\\
&\simeq (\Phi|_{Y(\bS)})_*F_{Y,\bL,\bS}\psi_*\\
&\simeq F_{W,\bL,\bS,\Phi}.\end{align*}

Noting that $d$ is a diffeomorphism of $Y$ which is isotopic to the identity relative $L$, and hence we conclude that $d_*\simeq \id$. Hence
\[F_{W,\bL,\bS',\Phi'}\simeq F_{W,\bL,\bS,\Phi},\] showing the maps to be invariant under this move.

\textbf{Move~\eqref{def:weakequiv:isotopyalongL}:} We now prove invariance of isotopies of a framed 0-sphere along a link $L_i$. This follows similarly to Move \eqref{def:weakequiv:isotopyawayfromL}, and is essentially a tautology. Suppose that $\cW=(W,\cF^\sigma,\bS,\Phi)$ is an elementary parametrized link cobordism of type ($\mathcal{EPC}$-\ref{elemencobtype4}). Note that the subtype (either $\ws$ or $\zs$) is unchanged by this move.  Let $d_t$ be an isotopy of $\bS$ along $L$, define $d=d_1$  and let $\bS'$ be the image of $\bS$ and let $\Phi'$ be the parametrization described in Move \eqref{def:weakequiv:isotopyalongL}. Let 
\[\psi_1\colon (Y,L)\to (Y,L)\qquad \text{and} \qquad \psi_2\colon (Y(\bS),L(\bS))\to (Y(\bS),L(\bS))\]  be the two twisting diffeomorphisms associated to $\Phi$ (cf. Subsection~\ref{subsec:mapsfortype4}), and let $\psi_1'$ and $\psi_2'$ be two twisting diffeomorphisms associated to $\Phi'$. By definition
\[F_{W,\cF^\sigma,\bS,\Phi}\simeq (\Phi|_{Y(\bS)\times \{2\}})_*(\psi_2)_* F^{\ve{o}}_{Y, \psi_1(\bL),\bS} (\psi_1)_*\] and
\[F_{W,\cF^\sigma,\bS',\Phi'}\simeq (\Phi'|_{Y(\bS)\times \{2\}})_*(\psi_2')_* F^{\ve{o}}_{Y, \psi_1'(\bL),\bS'} (\psi_1')_*.\] It is easy to see that by construction we have that \[(\psi_1')_*=d_*(\psi_1)_*.\] Tautologically we have
\[(d^{\bS})^{-1}_* F_{Y,\psi'_1(\bL),\bS'} d_*\simeq F_{Y,\psi_1(\bL),\bS}.\] Now also by construction, we have
\[\psi_2'\simeq (d^{\bS})_* (\psi_2)_*(d^{\bS})^{-1}_*.\] Finally we note that, by construction
\[(\Phi|_{Y(\bS)\times \{2\}})_*\simeq (\Phi'|_{Y(\bS')\times \{2\}})_* d^{\bS}_*.\] As a consequence, we see that 
\[F_{W,\cF^\sigma,\bS',\Phi'}\simeq (\Phi'|_{Y(\bS)\times \{2\}})_*(\psi_2')_* F^{\ve{o}}_{Y, \psi_1'(\bL),\bS'} (\psi_1')_*\simeq (\Phi'|_{Y(\bS)\times \{2\}})_*(d^{\bS})_* (\psi_2)_*(d^{\bS})^{-1}_* F^{\ve{o}}_{Y, \psi_1'(\bL),\bS'} d_*(\psi_1)_*\]
\[\simeq (\Phi|_{Y(\bS)\times \{2\}})_*(\psi_2)_* F^{\ve{o}}_{Y, \psi_1(\bL),\bS} (\psi_1)_*\simeq F_{W,\cF^\sigma,\bS,\Phi},\] completing the proof of invariance under Move \eqref{def:weakequiv:isotopyalongL}.

\textbf{Move~\eqref{def:weakequiv:addtrivialcylinder}:} Invariance under adding or removing a trivial cylinder is essentially a tautology. The map induced by a trivial cylinder is a diffeomorphism map, and all of the other maps involve pre- or post-composition with an analogous diffeomorphism. It is easy to check that combining a trivial cylinder with another elementary cobordism (from the left or the right) does not affect the maps. 

\textbf{Move~\eqref{def:weakequiv:changeorientationofS}:} We now consider invariance under changing the orientation of the framed sphere $\bS$. We argue in the case of a framed sphere $\bS$, attached away from $L$. The case that $\bS$ is attached along $L$ follows from similar considerations. View $\bS$ as an embedding of $S^k\times D^{3-k}$ into $Y$. Let $\sigma\colon \R^4\to \R^4$ denote the map $\sigma(x,y,w,z)=(-x,y,w,-z)$. Viewing $S^k\times D^{3-k}$ as a subset of $\R^{k+1}\times \R^{3-k}$, we define $\bar{\bS}:=\bS\circ \sigma$. Note that $\sigma$ induces a map $\sigma^Y\colon Y(\bS)\to Y(\bar{\bS})$, as well as $\sigma^W\colon \cW(Y,L,\bS)\to \cW(Y,L,\bar{\bS})$. We have, 
\[\sigma^Y_* F_{Y,\bL,\bS,\frs}=F_{Y,\bL,\bar{\bS},\sigma_*^W\frs},\] which follows from the fact that the maps do not depend on a choice of orientation of the $S^k$ factor of the framed spheres $\bS$ if $ k\in \{0,1,2\}$. Since the parametrizations $\Phi$ and $\Phi'$ also differ by composition with $\sigma^W$, the composed maps are unchanged.

\textbf{Move~\eqref{def:weakequiv:index12birthdeath}:} Invariance under index 1/2 critical point birth-deaths of $f|_{W\setminus \Sigma}$ follows from the same triangle map computation used to prove the analogous result about the closed 3-manifold invariants \cite{ZemGraphTQFT}*{Theorem~9.7}, which is a slight adaptation of the original argument by Ozsv\'{a}th and Szab\'{o} in \cite{OSTriangles}*{Lemma~4.16}, to handle our slightly different definition of the 1-and 3-handle maps.

\textbf{Move~\eqref{def:weakequiv:index23birthdeath}:} Invariance under index 2/3 critical point birth-deaths of $f|_{W\setminus \Sigma}$ follows from \cite{ZemGraphTQFT}*{Theorem~9.11}. Note that although generically a framed 2-sphere in $Y$ could intersect the link at a collection of points, by our construction, any framed 2-sphere appearing in a parametrized Kirby decomposition does not intersect the link $L$, which allows us to use the same model computation as in the case of \cite{ZemGraphTQFT}*{Theorem~9.11}. Again note that an analogous count of holomorphic triangles was proven by Ozsv\'{a}th and Szab\'{o} in \cite{OSTriangles}*{Lemma 4.17}.

\textbf{Move~\eqref{def:weakequiv:criticalpointswitchawayfromSigma}:} Invariance under critical value switches between two index 1 or two index 3 critical points of $f|_{W\setminus \Sigma}$ follows from Lemma~\ref{lem:1-3-handlemapscommute}, where we showed that pairs of 1-handle maps or pairs of 3-handle maps commute with each other.

\textbf{Move~\eqref{def:weakequiv:handleslide}:} Invariance from handleslides amongst the components of a framed 1-dimensional link follows identically to \cite{OSTriangles}*{Lemma 4.14}. If $(\Sigma, \as',\as,\bs)$ is subordinate to an $\alpha$-bouquet for $\bS^1$, and $(\bS^1)'$ is obtained from $\bS^1$ via a handleslide, then a triple subordinate to a bouquet for $(\bS^1)'$ can be obtained by handlesliding the associated curves in $\as'$ over each other. Invariance from such handleslide follows using the associativity relations, and a model computation (performed in \cite{OSTriangles}).

\textbf{Move~\eqref{def:weakequiv:birthdeathalongA}:} Invariance from a birth-death singularity of $f|_\cA$ follows from the relations 
\[S_{w,z}^-T_{w,z}^+\simeq  T_{w,z}^-S_{w,z}^+\simeq \id\] proven in Lemma~\ref{lem:addtrivialstrandII}. This is demonstrated in Figure~\ref{fig::61}.
 \begin{figure}[ht!]
\centering
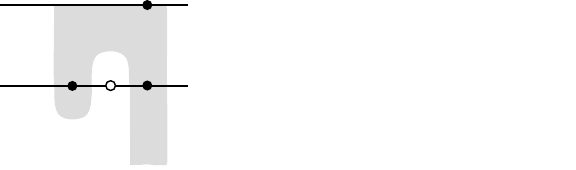
\caption{\textbf{Move~\eqref{def:weakequiv:birthdeathalongA}.} Invariance from this move can be proven using the relation $T_{w,z}^-S_{w,z}^+\simeq \id$, if we view the cobordism as going from bottom to top. If we view the cobordism as going from top to bottom, we use the relation $S_{w,z}^-T_{w,z}^+\simeq \id$. The map induced by the dividing set on the right is the identity map (in either direction). \label{fig::61}}
\end{figure}

\textbf{Move~\eqref{def:weakequiv:switchalongA}:} For a critical value switch of critical points of $f|_{\cA}$, there are several possible configurations, depending on whether the two type ($\mathcal{EPC}$-\ref{elemencobtype3}) elementary parametrized cobordisms are of subtype $S^+,$ $S^-,$ $T^+,$ or $T^-$. The cases where both are of type $S^+$ or $S^-$ are handled by Proposition~\ref{thm:quasistabcommute}. The cases where both are of type $T^+$ or $T^-$ are covered by Proposition~\ref{lem:z-quasistabcommute}, and the cases that one is of type $S^{+}$ or $S^-$ and the other is of type $T^{+}$ or $T^-$ is covered by Proposition~\ref{prop:TScommute}.

It is important to note that in this last case, one does not expect $T_{w,z}^{\circ}$ and $S_{w',z'}^{\circ'}$ to always commute. Indeed we gave an example in Lemma~\ref{lem:bypasstriplefromquasistabilization} where they did not commute. Proposition~\ref{prop:TScommute} proved that they commute under some additional hypotheses on the configuration of the basepoints $w$, $z$, $w'$ and $z'$. It is easy to check that the requirements on the configuration of basepoints is always satisfied when the basepoints arise in critical value switch of critical points of $f|_{\cA}$ where one induces a type $T^+$ or $T^-$ map, and the other induces a type $S^{+}$ or $S^-$ map.

\textbf{Move~\eqref{def:weakequiv:switchalongSigma}:} We now consider the effect of switching the order of two index 1 critical points on $\Sigma$. The maps associated to critical points of $f$ along $\Sigma$ are the compound 1-handle/band maps from Section~\ref{subsec:constructioncompound1-handlemaps}. The triangle map computation from Proposition~\ref{prop:1-handletrianglecount} implies that the 1-handle maps commute with the band maps. By Lemma~\ref{lem:1-3-handlemapscommute} the 1-handle maps commute with each other.  Thus it remains just to show that the band maps can be commuted amongst themselves. This now follows from Proposition~\ref{prop:alphabandscommute}. Note that if both band maps are $\ve{z}$-band maps or both are $\ve{w}$-band maps, then the maps always commute, though if one band map is $\ws$-band while the other
is a $\ve{z}$-band, we must be careful about the configurations of the ends of the of the bands, because Proposition~\ref{prop:alphabandscommute} has some non-trivial requirements on configuration of the ends of the bands. We note that the hypotheses of Proposition~\ref{prop:alphabandscommute} are  always satisfied when the two bands are induced from a surface with divides with a Morse function $f$ such that $f$ has two index one critical points, and $f|_{\cA}$ has no critical points.

\textbf{Move~\eqref{def:weakequiv:switchbetweenAandSigma}:} We now consider a critical value switch between critical points of $f|_{\Sigma}$ and $f|_{\cA}$. A critical point of $f|_{\Sigma}$ induces compound 1-handle/band map, and a critical point of $f|_{\cA}$ induces a quasi-stabilization map. The 1-handle maps commute with quasi-stabilization maps by Lemma~\ref{lem:1-handlesquasistabcommute} and the band maps by the triangle map computation of Proposition~\ref{prop:1-handletrianglecount}. Thus it is sufficient to analyze when the band maps commute with quasi-stabilization maps.

Commutation of the quasi-stabilization maps $S_{w,z}^{\circ}$ and $T_{w,z}^{\circ}$ with the band maps $F_{B}^{\ve{w}}$ and $F_{B}^{\ve{z}}$ now follows from Propositions~\ref{prop:zbandsandquasistab} and~\ref{prop:wbandsandquasistab}. We note that the hypotheses of those two propositions regarding the configuration of basepoints involved are always satisfied when the configuration of basepoints and bands are induced by a surface with divides with a Morse function $f$ which has a single index one critical point, and such that $f|_{\cA}$ also has a single index 1 critical point.

\textbf{Move~\eqref{def:weakequiv:critcrossesA}:} We now consider the move corresponding to a critical point of $\Sigma$ crossing $\cA$. The 1-handle maps commute with other 1-handle maps (Lemma~\ref{lem:1-3-handlemapscommute}) and the band maps (using the triangle counts from Proposition~\ref{prop:1-handletrianglecount}). Hence we can bring the 1-handle maps beneath the quasi-stabilization maps and the band maps. It follows that it is sufficient to consider the relation between the quasi-stabilization maps and the band maps. In this move, we have arranged for the crossing to locally be as in Figure~\ref{fig::57}. With basepoints, we can redraw this as in Figure~\ref{fig::59}.

 \begin{figure}[ht!]
\centering
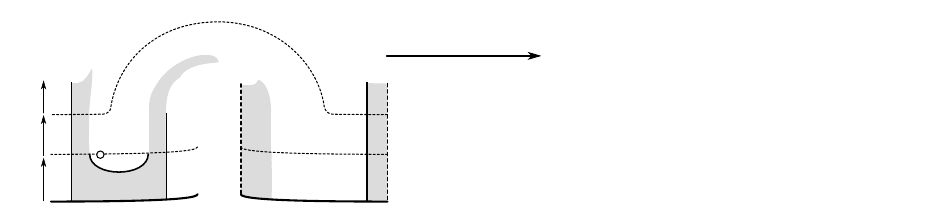
\caption{\textbf{A schematic of Move~\eqref{def:weakequiv:critcrossesA}}. \label{fig::59}}
\end{figure}

 The decomposition on the right side of Figure~\ref{fig::59} induces the composition
\[ 
\tau_*^{(w,z)\from (w',z')}F^{\ve{w}}_B S_{w',z'}^+,
\] 
where $\tau^{(w,z)\from (w',z')}$ is the diffeomorphism which fixes the link $L(B)$ setwise, but moves $(w',z')$ to $(w,z)$ while fixing all the other basepoints. The decomposition on the left side of Figure~\ref{fig::59} induces the composition
\[
F_B^{\ve{z}} T_{w,z}^+.
\] 
The diffeomorphism map $\tau_*^{ (w,z)\from (w',z')}$ is chain homotopic to  $S^-_{w',z'}T_{w,z}^+$ by Lemma~\ref{lem:newversionbasepointmovingmaps}. Thus we wish to show
\[
S^-_{w',z'}T_{w,z}^+ F^{\ve{w}}_B S_{w',z'}^+\simeq F_B^{\ve{z}} T_{w,z}^+.
\]
 This will be an algebraic computation. We first note that
\begin{equation}
\Psi_z T_{w,z}^+\simeq 0, \label{eq:PsiT=0}
\end{equation}
which follows from Lemmas~\ref{lem:T^+T^-=Psi}~and~\ref{lem:addtrivialstrandII}. We thus compute:
\begin{align*}S^-_{w',z'}T_{w,z}^+ F^{\ve{w}}_B S_{w',z'}^+&\simeq S^-_{w',z'} F^{\ve{w}}_B T_{w,z}^+ S_{w',z'}^+&&\text{(Proposition~\ref{prop:wbandsandquasistab})}\\
&\simeq S^-_{w',z'} (F_B^{\ve{z}}\Psi_z+\Psi_z F_B^{\ve{z}}) T_{w,z}^+ S_{w',z'}^+&&\text{(Proposition~\ref{prop:FBzFBwrelatedbyPsiPhi})}\\
&\simeq S^-_{w',z'} \Psi_z F_B^{\ve{z}} T_{w,z}^+ S_{w',z'}^+&& \text{(Equation \eqref{eq:PsiT=0})}\\
&\simeq S^-_{w',z'} \Psi_z F_B^{\ve{z}}S_{w',z'}^+ T_{w,z}^+&& \text{(Proposition~\ref{prop:TScommute})}\\
&\simeq S^-_{w',z'} \Psi_z S_{w',z'}^+ F_B^{\ve{z}} T_{w,z}^+&&\text{(Proposition~\ref{prop:zbandsandquasistab})}\\
&\simeq  F_B^{\ve{z}} T_{w,z}^+,&& \text{(Lemma~\ref{lem:oldaddtrivialstrand})}
\end{align*}
completing the proof.
\end{proof}

\subsection{Constructing the cobordism maps in the presence of empty ends}
\label{sec:removeballsforgeneralmaps}

In this section we construct cobordism maps when a link cobordism does not have enough incoming or outgoing ends. To define the maps for such link cobordisms, we construct the cobordism maps by puncturing the link cobordism (i.e. removing a 4-ball) at a collection of points along the dividing set, and then pre- and post-composing with the 0-handle and 4-handle maps from Section~\ref{sec:0/4-handlemaps}.

\begin{define}\label{def:standard4-ball} Suppose that $(W,\cF^\sigma)$ is a decorated link cobordism, with $\cF=(\Sigma,\cA)$. Suppose $B^4\subset \Int W$ is a closed 4-ball.  We say $B$ \emph{intersects $\cF$ standardly}  if the following hold:
\begin{enumerate}
\item $B\cap \Sigma$ is a 2-disk.
\item $B\cap \cA$ is a connected arc.
\item There are smooth coordinates $(x,y,w,z)$ on a neighborhood of $B\subset \Int W$ such that $B=\{(x,y,w,z): x^2+y^2+w^2+z^2\le 1\}$ and in these coordinates $\Sigma\cap B=\{(x,y,0,0)\}\cap B$.
\end{enumerate}
\end{define}

Now suppose that $(W,\cF^\sigma)\colon (Y_1,\bL_1)\to (Y_2,\bL_2)$ is a decorated link cobordism, possibly without enough incoming or outgoing ends. To define the cobordism map $F_{W,\cF^\sigma,\frs}$, we pick two collections of 4-balls, $B_1,\dots, B_n$, and  $B_1',\dots, B_{m}'$, which intersect $\cF$ standardly. We assume that all $n+m$ 4-balls are pairwise disjoint. Write $(W_0,\cF_0^{\sigma})$ for the link cobordism obtained by removing the interiors of the 4-balls $B_1,\dots, B_n,$  $B_1',\dots, B_m$. We view $(W_0,\cF_0^\sigma)$ as a cobordism from $(Y_1,\bL_1)\sqcup \bigsqcup_{i=1}^n (S^3, \bU)$ to $(Y_2,\bL_2)\sqcup \bigsqcup_{i=1}^m (S^3,\bU)$, where $\bU$ denotes a double based unknot in $S^3$. We can now define $F_{W,\cF^\sigma,\frs}$ as the composition
\begin{equation}
F_{W,\cF^\sigma,\frs}:=F_4F_{W_0,\cF_0^\sigma,\frs|_{W_0}}F_0,\label{eq:definitionpuncturedcobordismmap}
\end{equation}
where $F_4$ denotes the composition of all the 4-handle maps associated to the 4-balls $B_1',\dots, B_m'$, and $F_1$ denotes the composition of all the 0-handle maps associated to the 4-balls $B_1,\dots, B_n$.

We now prove that the above construction does not depend on which 4-balls $B_1,\dots, B_n$ and $B_1',\dots, B_m'$.

\begin{lem}\label{lem:canperformadditionalpunctures}Suppose $(W,\cF^\sigma)\colon (Y_1,\bL_1)\to (Y_2,\bL_2)$ is a decorated link cobordism, such that each component of $W$ and $\cF$ intersects a nonempty incoming end and outgoing end of $W$, and $B\subset \Int W$ is a 4-ball which intersects $\cF$ standardly. Write $(W_0,\cF_0)$ for the decorated link cobordism obtained by removing the interior of $B$ from $W_0$ and $\cF$, and declaring the new boundary copy of $(S^3,\bU)$ to be an incoming end. Write $(W_0',\cF_0')$ for the decorated link cobordism obtained by viewing the new copy of $(S^3,\bU)$ in the boundary as an outgoing end.  Then
\[
F_{W_0,{\cF}_0^\sigma,\frs|_{W_0}} F_{Y_1,\bL_1, \bS^{-1}}\simeq F_{W,\cF,\frs}\simeq  F_{Y_2\sqcup S^3, \bL_2\sqcup \bU, S^3} F_{W_0',({\cF_0}')^\sigma,\frs|_{W_0'}}.
\] 
\end{lem}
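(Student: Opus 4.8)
The plan is to realize $(W,\cF)$ as the composition $(W_0,\cF_0)\circ\cW_{0h}$ of $(W_0,\cF_0)$ with the elementary $0$-handle cobordism $\cW_{0h}$ of Section~\ref{sec:0/4-handlemaps}, and then to replace the ``$0$-handle plus (a parametrized Kirby decomposition of $(W_0,\cF_0)$)'' picture of $(W,\cF)$ by an honest parametrized Kirby decomposition computing the same map, after which Theorem~\ref{thm:A'} finishes the argument.

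First I would check that $(W,\cF)\cong(W_0,\cF_0)\circ\cW_{0h}$ as decorated link cobordisms. Because $B$ intersects $\cF$ standardly (Definition~\ref{def:standard4-ball}), the triple $(B,\,B\cap\Sigma,\,B\cap\cA)$ is a standard $(D^4,D^2,\text{arc})$, which is exactly the pair attached by $\cW_{0h}\colon(Y_1,\bL_1)\to(Y_1,\bL_1)\sqcup(S^3,\bU)$ together with a collar $[0,1]\times Y_1$; gluing $\cW_{0h}$ back onto $(W_0,\cF_0)$ along $(Y_1,\bL_1)\sqcup(S^3,\bU)$ simply fills $B$ back in. Since $B$ and $B\cap\Sigma$ lie in the interiors of $W$ and $\Sigma$, removing and re-gluing disconnects nothing, so every component of $W_0$ and of $\cF_0$ still meets a nonempty incoming and outgoing end (the new $(S^3,\bU)$ end is extra), and Theorem~\ref{thm:A'} applies to $(W_0,\cF_0)$ as well as to $(W,\cF)$.

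Next, choose a parametrized Kirby decomposition $\cK_0$ of $(W_0,\cF_0)$ whose first two elementary pieces are: a type $(\mathcal{EPC}$-\ref{elemencobtype2}) cobordism $\cW_1$ given by a framed $0$-sphere with one foot on the new $S^3$ and one on $Y_1$, away from the links, that tubes $(S^3,\bU)$ onto $(Y_1,\bL_1)$; and a band cobordism $\cW_2$ attaching the resulting unknot onto $L_1$ along the band coming from the disk $B\cap\Sigma$. (Geometrically these two moves undo the puncture, after which the rest of $(W_0,\cF_0)$ is a collared copy of $(W,\cF)$ outside $B$; existence of such a $\cK_0$, and the matching up to the ordering conventions of Definition~\ref{def:parametrizedKirbydecomp}, is routine given the standard-ball hypothesis, using the Cerf moves of Definition~\ref{def:weaklyequivalent} or the more flexible Kirby-type Morse data.) Since the $0$-handle map is by definition $F_{Y_1,\bL_1,\bS^{-1}}$,
\[
F_{W_0,\cF_0,\frs|_{W_0}}\circ F_{Y_1,\bL_1,\bS^{-1}}\simeq F_{\cW_n}\circ\cdots\circ F_{\cW_3}\circ\big(F_{\cW_2}\circ F_{\cW_1}\circ F_{Y_1,\bL_1,\bS^{-1}}\big).
\]
Now $F_{\cW_1}\circ F_{Y_1,\bL_1,\bS^{-1}}$ is precisely the birth cobordism map $\cB^+_{\bU',D'}$ of Section~\ref{subsec:diskstabilization} (which, as recalled there, is by definition a $0$-handle map followed by a $1$-handle map), and by Proposition~\ref{prop:bandsanddiskstab}, $F_{\cW_2}\circ\cB^+_{\bU',D'}\simeq S^+_{w,z}\circ\phi_*$ (a $+$-type quasi-stabilization $S^+_{w,z}$ or $T^+_{w,z}$, according to whether $\cW_2$ is an $\alpha$- or $\beta$-band, and $\phi$ the isotopically canonical diffeomorphism of the endpoints). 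Correspondingly the underlying composite $\cW_2\circ\cW_1\circ\cW_{0h}$ is, via $\phi$, diffeomorphic to a quasi-stabilization cylinder; viewing $\phi_*$ as the map of a type $(\mathcal{EPC}$-\ref{elemencobtype1}) piece (a trivial cylinder), the chain $\cW_n\circ\cdots\circ\cW_3\circ(\text{quasi-stab cylinder})\circ(\phi\text{-cylinder})$ is an honest parametrized Kirby decomposition $\cK$ of $(W,\cF)$ whose associated map is the right-hand side above. Theorem~\ref{thm:A'} then gives $F_{W,\cF,\frs}=F_{W,\cF,\cK,\frs}\simeq F_{W_0,\cF_0,\frs|_{W_0}}\circ F_{Y_1,\bL_1,\bS^{-1}}$, as desired. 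The dual identity is proved symmetrically: take a convenient $\cK_0'$ for $(W_0',\cF_0')$ and append a $3$-handle and then a $4$-handle at the top, use that the $4$-handle map $F_{Y_2\sqcup S^3,\bL_2\sqcup\bU,S^3}$ composed with the $3$-handle map is the death cobordism map $\cD^-_{\bU,D}$, and apply the relation $\cD^-_{\bU,D}\circ F_{\bar B}^{\ve o}\simeq S^-_{w,z}\circ\phi_*$ (or its $T^-$ analogue) of Proposition~\ref{prop:bandsanddiskstab}.

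The main obstacle is precisely this middle step: $0$- and $4$-handles are not elementary parametrized cobordisms in the sense of Definition~\ref{def:parametrizedelementarycob}, so one must genuinely argue that the non-elementary chunk at the bottom (the $0$-handle together with the tube-and-band absorbing the new $S^3$) can be replaced, with no change in the induced map, by honest elementary pieces; this is exactly what the identity $\cB^+=(0\text{-handle})\circ(1\text{-handle})$ from Section~\ref{subsec:diskstabilization} together with Proposition~\ref{prop:bandsanddiskstab} provide. Minor additional points — that $\cK_0$ may be chosen with the tube and band consecutive, and that the $\Spin^c$ decorations match under the composition (there is no $\Spin^c$ ambiguity, since the extra handles and bands carry unique $\Spin^c$ extensions) — are routine given the invariance results already in hand.
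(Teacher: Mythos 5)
Your proposal is essentially correct and uses the same key ingredients as the paper's proof — Proposition~\ref{prop:bandsanddiskstab}, the factorization of $\cB^+_{\bU,D}$ into a $0$-handle followed by a $1$-handle, commuting the $0$-handle past earlier pieces, and Theorem~\ref{thm:A'} — just run in the opposite direction: the paper starts from an arbitrary parametrized Kirby decomposition of $(W,\cF)$, inserts a quasi-stabilization piece of type $(\mathcal{EPC}$-$\ref{elemencobtype3}_{S^+})$ via Move~\eqref{def:weakequiv:birthdeathalongA}, rewrites its map using Proposition~\ref{prop:bandsanddiskstab}, and extracts the $0$-handle factor so the remainder is a decomposition $\cK_0$ of $(W_0,\cF_0)$; you instead start from a chosen $\cK_0$ and absorb the $0$-handle to build a decomposition of $(W,\cF)$.

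One technical imprecision: you describe $\cK_0$ as containing two separate elementary pieces, a $1$-handle $\cW_1$ off the link of type $(\mathcal{EPC}$-$\ref{elemencobtype2})$ and then ``a band cobordism $\cW_2$.'' A stand-alone band attachment inside $[0,1]\times Y$ is not any of the four elementary types of Definition~\ref{def:parametrizedelementarycob} — bands only enter the framework packaged inside a compound $1$-handle/band piece of type $(\mathcal{EPC}$-$\ref{elemencobtype4})$, whose defining data is a framed $0$-sphere \emph{along} $L$ and whose Morse function has a single index-$1$ critical point lying on $\Sigma$. As written, your $\cK_0$ is therefore not a parametrized Kirby decomposition and Theorem~\ref{thm:A'} cannot be invoked for it. The fix, which is what the paper does, is to combine the $1$-handle and band into a single type $(\mathcal{EPC}$-$\ref{elemencobtype4})$ elementary piece (its associated map is exactly the band map composed with a $1$-handle map and a basepoint-moving diffeomorphism, per Section~\ref{subsec:constructioncompound1-handlemaps}), and to put the remaining diffeomorphism $\phi_*$ from Proposition~\ref{prop:bandsanddiskstab} into a separate type $(\mathcal{EPC}$-$\ref{elemencobtype1})$ cylinder. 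With that repackaging your argument closes up as intended.
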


\begin{proof} Let us consider the first relation, 
\[F_{W_0,{\cF}_0^\sigma,\frs|_{W_0}} F_{Y_1,\bL_1, \bS^{-1}}\simeq F_{W,\cF,\frs}.
\]  Pick a parametrized Kirby decomposition $\cK$ for $(W_0,\cF_0)$. By performing Move \eqref{def:weakequiv:birthdeathalongA} (replacing an identity elementary cobordism with a pair of quasi-stabilizations), we can assume that $\cK$ contains an elementary link cobordism $\cC_i$ of type ($\mathcal{EPC}$-$\ref{elemencobtype3}_{S^+}$). By definition, the induced map is $S_{w,z}^+$, for two new basepoints $w$ and $z$. We now use Proposition~\ref{prop:bandsanddiskstab}, which shows that $S_{w,z}^+\simeq F_B^{\zs}\cB^+_{\bU,D} \phi_*$, for a diffeomorphism $\phi$ which is supported in a neighborhood of a subarc of the link between $w$ and $z$. We note that by definition $\cB^+_{\bU,D}$ is a 0-handle map followed by a 1-handle map, so 
\begin{equation} S_{w,z}^+\simeq F_{B}^{\zs}F_{Y\sqcup S^3,\bL\cup \bU,\bS^0} F_{Y,\bL,\bS^{-1}}\phi_*,\label{eq:birthdeathtocobordismmap}\end{equation} for a framed 0-sphere $\bS^0$ in $Y\sqcup S^3$. Hence, we create a parametrized Kirby decomposition $\cK_0$ of $(W_0,\cF_0)$ by replacing the elementary cobordism $\cC_i$ (corresponding to the quasi-stabilization map $S_{w,z}^+$), with a composition of two elementary cobordisms $\cC_{2}\circ \cC_1$, where $\cC_1$ corresponds to the diffeomorphism $\phi$ (i.e. $\cC_1$ is an elementary cobordism of type ($\mathcal{EPC}$-\ref{elemencobtype1})) and $\cC_2$ corresponds to the composition of the band map $F_{B}^{\zs}$ and the 1-handle map $F_{Y\sqcup S^3,\bL\cup \bU,\bS^0}$ (i.e. $\cC_2$ is an elementary cobordism of type ($\mathcal{EPC}$-\ref{elemencobtype4})).

Since we can commute the 0-handle map $F_{Y,\bL,\bS^{-1}}$ past all of the maps for elementary cobordisms $\cC_j$ with $j<i$, we see that
\[F_{W,\cF,\frs,\cK}\simeq F_{W_0,\cF_0,\frs|_{W_0}, \cK_0}F_{Y_1,\bL_1,\bS^{-1}}. \]
The second relation, involving $(W_0',\cF_0')$ is proven analogously.
\end{proof}

Using the above Lemma, we can finish the proof of Theorem~\ref{thm:A}, invariance of the cobordism maps:

\begin{proof}[Proof of Theorem~\ref{thm:A}] Theorem~\ref{thm:A'}, shows that the link cobordism maps are well-defined for a link cobordism $(W,\cF)$ whenever each component of $W$ and $\cF$ intersect the incoming and outgoing boundaries non-trivially. For a general cobordism we define the cobordism map by puncturing $(W,\cF)$ (removing 4-balls which intersect $\cF$ standardly) and then using Equation~\eqref{eq:definitionpuncturedcobordismmap}. To see that this is independent of the choice of 4-balls, we use Lemma~\ref{lem:canperformadditionalpunctures}, which implies that the map defined in Equation~\eqref{eq:definitionpuncturedcobordismmap} cobordism maps are invariant under removing additional 4-balls which intersect $\cF$ standardly. Given any two sets of 4-balls intersecting $\cF$ standardly, we can show that the induced maps are equal by moving between the two collections of 4-balls by sequentially removing additional 4-balls and filling in boundary spheres, using Lemma~\ref{lem:canperformadditionalpunctures} at each step.
\end{proof}

\section{The composition law}
\label{sec:compositionlaw}
In this Section we prove the composition law:

\begin{customthm}{B}Suppose that $(W,\cF^\sigma)\colon (Y_1,\bL_1)\to (Y_2,\bL_2)$ is decorated link cobordism which decomposes as the composition
\[(W,\cF^\sigma)=(W_2,\cF_2^{\sigma_2})\circ (W_1,\cF_1^{\sigma_1}).\]  If $\frs_1$ and $\frs_2$ are $\Spin^c$ structures on $W_1$ and $W_2$ respectively, then
\[F_{W_2,\cF_2^{\sigma_2},\frs_2}F_{W_1,\cF_1^{\sigma_1},\frs_1}\simeq \sum_{\substack{\frs\in \Spin^c(W)\\ \frs|_{W_i}=\frs_i}} F_{W,\cF^\sigma,\frs}.\]
\end{customthm}

\begin{proof} Suppose first that each component of $W_i$ and $\cF_i$ intersects an incoming and outgoing end of the cobordism $(W_i,\cF_i^\sigma)$ non-trivially. Take parametrized Kirby decompositions $\cK_1=\cK(f_1,\ve{b}_1)$ of $(W_1,\cF_1)$ and $\cK_2=\cK(f_2,\ve{b}_2)$ of $(W_2,\cF_2)$, induced by very nice Morse functions $f_1$ and $f_2$ with collections of regular values $\ve{b}_1$ and $\ve{b}_2$. Also pick gradient-like vector fields $v_1$ and $v_2$, inducing the decompositions. We can assume that $f_1$ and $f_2$ patch together, and $v_1$ and $v_2$ patch together to give Morse functions and gradient-like vector fields on $(W,\cF)$. The collections of regular values $\ve{b}_1$ and $\ve{b}_2$ induce a collection $\ve{b}$ for $f$. This yields a decomposition, $\cK(f,\ve{b})$, of $(W,\cF)$ into elementary parametrized cobordisms, which is the one obtained by just stacking $\cK_1$ and $\cK_2$. We note that $\cK(f,\ve{b})$ may not be a parametrized Kirby decomposition since the terms are in the wrong order, and there may be two 2-handle cobordisms. Nonetheless, the composition of the cobordism maps for each elementary parametrized cobordism in the decomposition is $F_{W_2,\cF_2^{\sigma_2},\frs_2}F_{W_1,\cF_1^{\sigma_1},\frs_1}$.

 Notice that the index 1, 2 and 3 dimensional critical points of $f_1$ which are away from $\Sigma_1$ have unstable manifolds which do not intersect $\Sigma$, and similarly the critical points of $f_2$ away from $\Sigma$ have stable manifolds which do not intersect $\Sigma$. Hence, assuming that $(f,v)$ is generic, we can modify the Morse functions to pull all of the critical points of $f_1|_{W_1\setminus \Sigma_1}$ above any critical point of $f_2|_{\Sigma_2}$ along $\Sigma$, and then pull the index 3 critical points of $f_1$ above the index 1 and 2 critical points of $f_2$, and pull the index 2 critical points of $f_1$ above the index 1 critical points of $f_1$.

The composition of the map for each piece of the stacked decomposition is invariant under pulling the 3-handles of $f_1$ above the 1-handles of $f_2$ by Lemma~\ref{lem:1-3-handlemapscommute}. The composition is also invariant of  pulling the 3-handles of $f_1$ past the 2-handles of $f_2$ by the triangle map computation of Proposition~\ref{prop:1-handletrianglecount}.

Hence it remains to show that the composition is unchanged when we pull the 1-,2- and 3-handles of $f_1|_{W_1\setminus \Sigma_1}$ past the elementary cobordisms associated to critical points of $f_2|_{\Sigma}$ and $f_2|_{\cA}$. The critical points of $f_2|_{\cA}$ induce maps which are a composition of the basepoint moving maps along the link components, and the quasi-stabilization maps. The maps for surgery on framed spheres away from the link commute with the basepoint moving maps by diffeomorphism invariance of the maps. The 1- and 3-handles of $f_1$ can be pulled past the critical points of $f_2|_{\cA}$ without changing the composition of the maps by Lemma~\ref{lem:1-handlesquasistabcommute} (showing that the 1-handle and 3-handle maps commute with quasi-stabilization). The 1- and 3-handle maps can be commuted past critical points of $f_2|_{\Sigma}$ using Lemma~\ref{lem:1-3-handlemapscommute} (to commute a 1- or 3-handle past the 1-handle map from the critical point along $\Sigma_2$) as well as the triangle map computation from Proposition~\ref{prop:1-handletrianglecount} (to commute a 1- or 3-handle past the band map for the critical point along $\Sigma_2$). The 2-handles of $f_1$ can be commuted past the critical points of $f_2|_{\cA}$ by Lemma~\ref{lem:2-handlemapsquasistabcommute}. Similarly the 2-handles of $f_1$ can be commuted past the critical points of $f_2|_{\Sigma}$ by Lemma~\ref{lem:bandmapsandsurgerymapscommute} (to commute the 2-handle maps past the band maps), and Proposition~\ref{prop:1-handletrianglecount} (to commute the 2-handle maps past the 1-handle map).

Hence $F_{W_2,\cF_2^{\sigma_2},\frs_2}F_{W_1,\cF_1^{\sigma_1},\frs_1}$ is equal to the composition of maps induced by a decomposition of $(W,\cF)$ into parametrized elementary cobordisms, which has elementary cobordisms satisfying the requirements of a parametrized Kirby decomposition, except that there are exactly two adjacent terms corresponding to surgery on framed 1-dimensional links. As $W$ is obtained from the union of the two 2-handle pieces by adding 1-handles and 3-handles,  a $\Spin^c$ structure defined on the union of the two 2-handle cobordisms extends uniquely to a $\Spin^c$ structure on all of $W$. Hence  Lemma~\ref{lem:compositionlawforlinks} (the composition law for the 2-handle maps) the theorem in this case.

Finally, we need to consider the case not all of the components of $W_i$ or $\cF_i$ intersect both an incoming and outgoing ends of $W_i$. The cobordism maps, in this case, are defined using Equation~\eqref{eq:definitionpuncturedcobordismmap} by removing a collection of 4-balls which intersect $\cF_i$ standardly. We note that the 4-handle maps for $(W_1,\cF_1)$ can always be commuted past each of the elementary cobordism maps in the composition for $(W_2,\cF_2)$. Similarly all of the 0-handle maps can be commuted past all of the elementary cobordism maps in the composition for $(W_1,\cF_1)$. Using the version of the composition law for link cobordisms $(W_i,\cF_i)$ where each component of $W_i$ and $\cF_i$ intersects both an incoming and outgoing boundary component of $W_i$ non-trivially, we obtain the general version of Theorem~\ref{thm:B}.
\end{proof}

\section{Algebraic reduction to the graph TQFT}
\label{sec:graphTQFTs}

 In \cite{ZemGraphTQFT}, the author constructed a graph TQFT for Heegaard Floer homology. In this section, we relate the reductions of the link cobordism maps from this paper to the graph TQFT maps.

Throughout this section, we  restrict to surfaces with divides which are colored using exactly two colors, one for the $\ws$-basepoints and regions, and one for the $\zs$-basepoints and regions. We will write $U$ for the variable corresponding to the $\ws$-basepoints and regions, and we will write $V$ for the variable corresponding to the $\zs$-basepoints and regions. It is straightforward to generalize the results of this section to more general colorings, as long as no $\ws$-basepoint or region shares the same color as a $\zs$-basepoint or region.

If $(Y,\bL)$ is a 3-manifold with a multi-based link, we note that
\[\cCFL^-(Y,\bL,\frs)\otimes_{\bF_2[U,V]} \bF_2[U,V]/(V-1)\iso \CF^-(Y,\ve{w},\frs)\] and 
\[\cCFL^-(Y,\bL,\frs)\otimes_{\bF_2[U,V]} \bF_2[U,V]/(U-1)\iso \CF^-(Y,\ve{z},\frs-\PD[L]).\] The change in $\Spin^c$ structure of the $U=1$ reduction is a consequence of Lemma~\ref{lem:changeSpincstructure}.

The link cobordism maps $F_{W,{\cF},\frs}$ thus naturally induce two maps on $\CF^-$, which we call the \textit{algebraic reductions} of $F_{W,{\cF},\frs}$. We will write $F_{W,{\cF},\frs}|_{U=1}$ or $F_{W,{\cF},\frs}|_{V=1}$, for these reductions.

We now state the main results of this section (note that the precise definitions of ribbon-equivalence and ribbon 1-skeletons will be provided in Section~\ref{subsec:graphdefinitionsandtopologicalprelims}):

\begin{customthm}{C}\label{thm:C'} Suppose that $(W,{\cF})\colon (Y_1,\bL_1)\to (Y_2,\bL_2)$ is a decorated link cobordism with ${\cF}=(\Sigma,\cA)$. If $\Gamma(\Sigma_{\ve{w}})$ and $\Gamma(\Sigma_{\ve{z}})$ are choices of ribbon 1-skeletons  of $\Sigma_{\ve{w}}$ and $\Sigma_{\ve{z}}$, then
\[F_{W,{\cF},\frs}|_{V=1}\simeq F_{W,\Gamma(\Sigma_{\ve{w}}),\frs}^B\qquad \text{and} \qquad F_{W,{\cF},\frs}|_{U=1}\simeq F_{W,\Gamma(\Sigma_{\ve{z}}),\frs-\PD[\Sigma]}^A.\]
\end{customthm}

See Definition~\ref{def:ribbongraphcore} for the precise definition of a ribbon 1-skeleton.

The following two results are immediate corollaries of the above theorem:

\begin{customcor}{D} If $(W,\Gamma)$ and $(W,\Gamma')$ are ribbon-equivalent ribbon graph cobordisms (Definition~\ref{def:ribbonequivalent}), then
\[F_{W,\Gamma,\frs}^A\simeq F_{W,\Gamma',\frs}^A \qquad \text{and} \qquad F_{W,\Gamma,\frs}^B \simeq F_{W,\Gamma',\frs}^B.\]
\end{customcor}

\begin{customcor}{E}
 The $V=1$ reduction of $F_{W,\cF,\frs}$,
\[
F_{W,\cF,\frs}|_{V=1}\colon \CF^-(Y_1,\ve{w}_1,\frs|_{Y_1})\to\CF^-(Y_2,\ve{w}_2,\frs|_{Y_2}),
\] depends only on the ribbon surface $\Sigma_{\ve{w}}$ and the $\Spin^c$ structure $\frs$. The $U=1$ reduction of $F_{W,\cF,\frs}$, 
\[
F_{W,\cF,\frs}|_{U=1}\colon \CF^-(Y_1,\ve{z}_1,\frs-\PD[L_1])\to\CF^-(Y_2,\ve{z}_2,\frs-\PD[L_2]),
\]
 depends only on the ribbon surface $\Sigma_{\ve{z}}$, and the $\Spin^c$ structure $\frs-\PD[\Sigma]$.
\end{customcor}

\subsection{Definitions and topological preliminaries}

In this section, we provide the topological definitions and some preliminary results used in the statement of Theorem~\ref{thm:C'} and the subsequent corollaries.

\label{subsec:graphdefinitionsandtopologicalprelims}
\begin{define}A \emph{ribbon graph cobordism}     between two multi-based 3-manifolds $(Y_1,\ve{w}_1)$ and  $(Y_2,\ve{w}_2)$ is a pair $(W,\Gamma)$ such that
\begin{enumerate}
\item $W$ is a cobordism from $Y_1$ to $Y_2$.
\item $\Gamma$ is an embedded graph in $W$ such that $\Gamma\cap \d W=\ve{w}_1\cup \ve{w}_2$ and each basepoint in $\ve{w}_1\cup\ve{w}_2$ has valence 1 in $\Gamma$.
\item Each vertex of $\Gamma$ has valance at least 1.
\item $\Gamma$ is decorated with  a choice of cyclic ordering of the edges adjacent to each vertex.
\end{enumerate}
\end{define}

The above definition naturally inspires the following definition:

\begin{define}\label{def:ribbonsurface} A \emph{ribbon surface cobordism}  between two multi-based 3-manifolds $(Y_1,\ve{w}_1)$ and $(Y_2,\ve{w}_2)$ is a pair $(W,R)$ such that $W$ is a cobordism from $Y_1$ to $Y_2$, and $R\subset W$ is an oriented surface with boundary and corners such that the following holds:
\begin{enumerate}
\item $R\cap Y_i$ is a finite collection of closed intervals, each containing exactly one of the basepoints in $\ve{w}_i$. Furthermore, each basepoint in $\ve{w}_i$ is contained in a component of $R\cap Y_i$.
\item The corners of $R$ correspond exactly to the boundaries of the closed intervals forming $R\cap Y_i$.
\item $R$ contains no closed components.
\end{enumerate}
\end{define}

\begin{lem}\label{lem:ribbongraphtoribbonsurface}Given a ribbon graph cobordism $(W,\Gamma)\colon (Y_1,\ve{w}_1)\to (Y_2,\ve{w}_2)$, there is an induced ribbon surface $R_\Gamma\subset W$, which is well-defined up to 1-parameter families of ribbon surfaces $(R_\Gamma)_t$ in $W$, each making $(W,(R_{\Gamma})_t)$ into a ribbon surface cobordism.
\end{lem}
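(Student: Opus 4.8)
The plan is to build $R_\Gamma$ by ``fattening'' the ribbon graph $\Gamma$ inside a regular neighborhood, using the cyclic orderings to glue the resulting pieces into an oriented surface, and then to deduce the uniqueness statement from the essential uniqueness of regular neighborhoods.

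First I would fix a regular neighborhood $N(\Gamma)\subset W$ of $\Gamma$, which, since $\dim W = 4$, is a $4$-dimensional handlebody: a $0$-handle $\cong D^4$ around each vertex $v$ (including the valence-$1$ vertices lying on $\ve{w}_1\cup \ve{w}_2$) and a $1$-handle $\cong D^1\times D^3$ along each edge $e$. Inside this handlebody I would construct $R_\Gamma$ handle-by-handle, modeled on the standard fattening of a ribbon graph: in the $0$-handle at a vertex $v$ of valence $k$ place an embedded disk $D_v$ whose boundary circle is subdivided into $2k$ arcs alternating between $k$ ``attaching arcs'' and $k$ ``free arcs'', with the attaching arcs met, in cyclic order around $\partial D_v$, by the edges adjacent to $v$ in the prescribed cyclic order; in the $1$-handle along an edge $e$ from $v$ to $v'$ place a band $B_e \cong [0,1]\times[-1,1]$ joining an attaching arc of $D_v$ to an attaching arc of $D_{v'}$. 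Because $W$ is $4$-dimensional and the normal bundle of an edge is trivial of rank $3$, there is always enough room to realize the band with no superfluous twisting and to realize any prescribed cyclic arrangement of attaching arcs around $\partial D_v$; for a valence-$1$ vertex sitting at a basepoint $w\in \ve{w}_i$ I would instead require the corresponding attaching arc of the adjacent band to lie in $Y_i$ as an interval $I_w$ containing $w$, which produces exactly the corners demanded by Definition~\ref{def:ribbonsurface}. The union $R_\Gamma := \bigcup_v D_v \cup \bigcup_e B_e$ is then a compact surface with boundary and corners satisfying conditions (1) and (2) of Definition~\ref{def:ribbonsurface}, and it has no closed components, since a fattening of a finite graph always has nonempty boundary (each boundary cycle alternates between band sides and vertex free arcs).

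Next I would check orientability, which is precisely where the ribbon structure enters: the cyclic ordering at each vertex lets one orient each disk $D_v$ so that all disks and bands glue compatibly, giving $R_\Gamma$ a consistent orientation. This is the standard equivalence between a ribbon structure on $\Gamma$ and an orientation of its fat-graph thickening; the two compatible orientations correspond to the cyclic orderings and their reverses, matching the relation $F^B_{W,\Gamma}\simeq F^A_{W,\bar\Gamma}$ recalled in the introduction. Fixing one of these orientations makes $(W,R_\Gamma)$ a ribbon surface cobordism.

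Finally, for the well-definedness statement I would argue that any two implementations of this construction are connected through ribbon surface cobordisms by a $1$-parameter family. The choices involved are the regular neighborhood $N(\Gamma)$, its handle decomposition, the disks $D_v$ with their subdivided boundaries, and the bands $B_e$. Uniqueness of regular neighborhoods up to ambient isotopy rel $\Gamma$ (and rel $\ve{w}_1\cup \ve{w}_2$) reduces us to comparing two fattenings inside a single fixed $N(\Gamma)$; there, the space of embedded disks in $D^4$ with prescribed cyclically ordered boundary subdivision is connected, as is the space of untwisted embedded bands in $D^1\times D^3$ joining two fixed attaching arcs, so the two surfaces are isotopic through ribbon surfaces, and by the isotopy extension theorem this isotopy is ambient. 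I expect the main obstacle to be making this last paragraph precise---carefully setting up the relevant spaces of embeddings, verifying their connectedness, and keeping track of the corner structure along $Y_1$ and $Y_2$ throughout the isotopy---rather than the construction itself, which is essentially combinatorial once the regular neighborhood is chosen.
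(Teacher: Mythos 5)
Your overall strategy---fattening the graph to a surface and then establishing uniqueness via connectedness of choice spaces---is the same as the paper's, but the two constructions differ in how they parametrize the fattening, and this difference matters precisely at the point you flag as the hard part.

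The paper's construction avoids regular neighborhoods entirely. Instead it chooses, at each interior vertex, an oriented $2$-plane in $T_vW$ and an honest embedded disk tangent to that plane; then isotopes $\Gamma$ so the adjacent edges lie in that plane in the prescribed cyclic order; then chooses a tangent vector field $w_e$ and a normal vector field $v_e$ along each edge subject to boundary conditions at the vertex disks; and finally sweeps out the bands from $(v_e,w_e)$. All of this data ranges over spaces whose connectedness is elementary: the space of oriented $2$-planes in $\R^4$ is $\tilde{\mathrm{Gr}}(2,4)\cong S^2\times S^2$ (connected); the space of admissible normal fields $v_e$ is homotopy equivalent to $\mathrm{Map}\bigl((e,\partial e),(S^2,\{\mathrm{pt}\})\bigr)$ (connected, since $\pi_1(S^2)=0$); and two candidate graphs $\tilde{\Gamma},\tilde{\Gamma}'$ meeting the plane fields are isotopic because in a $4$-manifold a homotopy of $1$-complexes may be perturbed to an isotopy. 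This gives a complete, self-contained uniqueness argument.

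Your version leans on uniqueness of regular neighborhoods plus a claim that the space of embedded disks in $D^4$ with a prescribed boundary subdivision, and the space of untwisted bands in $D^1\times D^3$, are connected. That second claim is exactly where the gap lies, and it is not just a matter of ``setting up the right spaces of embeddings.'' In the smooth category, connectedness of spaces of properly embedded disks in $D^4$ with fixed (even unknotted) boundary is genuinely subtle---this is the kind of question where exotic phenomena can appear---so you cannot simply invoke it. What saves you is that your disks are required to pass through the vertex $v$ and live in a small neighborhood, so they can be linearized at $v$; but then you are effectively reducing to the connectedness of a Grassmannian, which is the paper's starting point. In other words, your approach secretly needs the paper's linearization step to close the gap, and it is cleaner to make that step the primary construction rather than a repair. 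The band-space claim is more innocuous once one phrases it as a choice of normal vector field (i.e., as a map to $S^2$ with fixed endpoints), which again is the paper's language. So the construction in your first two paragraphs is fine, but your uniqueness paragraph as written rests on an unjustified connectedness assertion that the paper's vector-field formulation is specifically designed to avoid.
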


\begin{proof}We first show how to construct the surface $R_\Gamma$. Pick an orientation of each edge of $\Gamma$, and also pick an oriented 2-plane in $W$ at each interior vertex of $\Gamma$. Isotope the edges of $\Gamma$ inside of $W$, while fixing the vertices,  so that the edges meet in the chosen 2-plane, and such that the ordering of the edges in the chosen oriented 2-plane field agrees with the cyclic ordering from the ribbon structure on $\Gamma$. Call this isotoped graph $\tilde{\Gamma}$. To each vertex in $\tilde{\Gamma}\cap \Int W$, we fix an oriented disk centered at the vertex, whose oriented tangent space agrees with the chosen oriented 2-plane field. Along each edge, we pick a nonzero vector field $w_e$ which is tangent to $e$, and positively oriented with respect to the orientation of  $e$. We also pick a non-vanishing vector field $v_e$ along $e$, which is orthogonal to $e$. We require that the pair $(v_e,w_e)$ restricts to an oriented basis on each of the disks which we centered at the vertices. At the vertices of $\Gamma\cap \d W$, we have no requirement on $w_e$. 

We can then build the surface $R_\Gamma$ by attaching strips to the disks, along the edges of $\tilde{\Gamma}$, such that $(v_e,w_e)$ gives an oriented basis of the tangent space along each edge.

To show uniqueness up to isotopy of the surface $R_\Gamma$, we note that the construction involved several steps: choosing an orientation of each edge, choosing an oriented 2-plane field at each vertex, choosing an isotopic $\tilde{\Gamma}$ whose edges meet in the chosen 2-plane fields, and choosing the vector fields $w_e$. Firstly, the construction is clearly independent of the orientations of the edges of $\Gamma$. Having fixed $v_e$, the space of non-vanishing vector fields $w_e$ satisfying the stated assumptions is homotopy equivalent to the space of maps from $(e,\d e)$ to $(S^2,\{pt\})$, and is hence connected. Lastly, we show independence of the choice of isotoped graph $\tilde{\Gamma}$. Suppose $\tilde{\Gamma}$ and $\tilde{\Gamma}'$ are two graphs, constructed by isotoping $\Gamma$ so that the edges meet in the chosen 2-plane fields at the vertices. Clearly the corresponding edges are homotopic relative to the vertices. Since $W$ is a 4-manifold, a homotopy of a 1-manifold can be taken to be an isotopy, so $\tilde{\Gamma}$ and $\tilde{\Gamma}'$ are isotopic through  graphs which meet in the chosen 2-plane field with the correct cyclic ordering at the vertices of $\Gamma$.
\end{proof}

\begin{define}\label{def:ribbonequivalent}We say that two graph cobordisms $(W,\Gamma_1),(W,\Gamma_2)\colon (Y_1,\ve{w}_1)\to (Y_2,\ve{w}_2)$ are \emph{ribbon-equivalent} if the two ribbon surfaces $R_{\Gamma_1}$ and $R_{\Gamma_2}$ are isotopic in $W$ through ribbon surfaces.
\end{define}

Given a ribbon surface cobordism $(W,R)$, we can go in the other direction, and construct a ribbon graph cobordism $(W,\Gamma_R)$:

\begin{define}\label{def:ribbongraphcore}If $(W,R)$ is a ribbon surface cobordism from $(Y_1,\ve{w}_1)$ to $(Y_2,\ve{w}_2)$, we say that a ribbon graph $\Gamma\subset R$ is a \emph{ribbon 1-skeleton} of $R$ if the cyclic orders of $\Gamma$ correspond to the orientation of $R$, and if the surface $R_\Gamma$ (constructed in Lemma~\ref{lem:ribbongraphtoribbonsurface}) can be constructed to lie in $R$, and such that the inclusion $R_\Gamma \hookrightarrow R$ is isotopic to a diffeomorphism between $R_\Gamma$ and $R$.
\end{define}

Note that there are many non-isotopic ribbon 1-skeletons for a fixed ribbon surface (in fact there are many which are not even isomorphic as graphs). We will not endeavor to create a set of moves between two ribbon 1-skeletons of a ribbon surface, though presumably a small set of moves could be established. In Figure~\ref{fig::79} we show a few examples of ribbon 1-skeletons of surfaces.

 \begin{figure}[ht!]
 \centering
 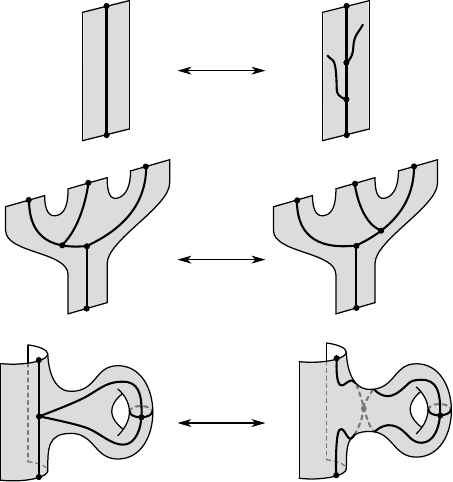
 \caption{\textbf{Examples of different ribbon 1-skeletons for ribbon surfaces.} The ribbon 1-skeletons are, by definition, ribbon-equivalent. The pairs of ribbon graphs are not isomorphic as ribbon graphs. The top pair of ribbon 1-skeletons are related by adding trivial strands. The middle pair are related by sliding an edge across a vertex, in a way which is compatible with the cyclic orderings. The bottom pair can be related by a sequence of such moves. Note that the bottom graphs are isomorphic as graphs, but not as ribbon graphs.\label{fig::79}}
 \end{figure}

\subsection{Construction and basic relations of the graph TQFT}
\label{subsec:graphTQFTmapsandrelations}

In this section, we recall the construction of the graph cobordism maps from \cite{ZemGraphTQFT}, and prove some basic results, which will be helpful for our purposes.

Analogous to the link cobordism maps defined in this paper, the graph cobordism maps are constructed by decomposing a graph cobordism into elementary pieces, and defining a map for each piece. There are 1-handle, 2-handle and 3-handle maps, modeled on the maps defined by Ozsv\'{a}th and Szab\'{o} \cite{OSTriangles}, and similar to the ones we defined for link cobordisms earlier in this paper. There are also 0-handle and 4-handle maps, corresponding to adding or removing a copy of $(S^3,w)$, similar to the maps defined in Section~\ref{sec:0/4-handlemaps}.  A key component of the construction of the graph cobordism maps is a maps for graphs embedded in a fixed 3-manifold. We make the following definition:

\begin{define}A \emph{ribbon flow-graph} $\cG=(\Gamma,V_1, V_2)$ in $Y^3$ is a tuple consisting of a ribbon graph $\Gamma\subset Y$, with two disjoint subset of vertices $V_1$ and $V_2$ in $V(\Gamma)$ such that the following hold:
\begin{enumerate}
\item $V_1$ and $V_2$ both intersect each component of $Y$ non-trivially.
\item Each vertex in $V_1$ and $V_2$ has valence 1 in $\Gamma$.
\item Each vertex in $\Gamma$ has valence at least 1.
\end{enumerate}
\end{define}
If $\cG=(\Gamma, V_1,V_2)$ and $\cG'=(\Gamma',V_2,V_3)$ are two flow-graphs with $\Gamma\cap \Gamma'=V_2$, then the composition of $\cG'$ and $\cG$ is defined by concatenation: $\cG'\circ \cG=(\Gamma'\cup \Gamma, V_1, V_3)$.

In \cite{ZemGraphTQFT}, a \textit{graph action map} was defined defined. If $\cG=(\Gamma,V_1,V_2)$ is a ribbon flow-graph in a fixed 3-manifold $Y$, the graph action map is a chain map
\[
A_{\cG,\frs}\colon \CF^-(Y,V_1,\frs)\to \CF^-(Y,V_2,\frs).
\]
 There is a natural variation, $B_{\cG,\frs}$, with the same domain and range, which will discuss later.

If $\cG=(\Gamma,V_1,V_2)$ is a flow-graph in $Y$, then we can construct a graph cobordism $([0,1]\times Y,\hat{\Gamma})\colon ( Y, V_1)\to (Y,V_2)$, such that the projection of $\hat{\Gamma}$ onto $Y$ is the graph $\Gamma$. The graph cobordism maps $F_{[0,1]\times  Y, \hat{\Gamma},\frs}^A$ and $F_{[0,1]\times Y,\hat{\Gamma},\frs}^B$ are equal to the graph action maps $A_{\cG,\frs}$ and $B_{\cG,\frs}$, respectively.

There are two key constituent maps which feature in the construction of the graph action map: the \emph{relative homology maps}, and the \emph{free-stabilization maps}. 

We now describe the relative homology maps. Given a path $\lambda$ between two basepoints $w_1$ and $w_2$ in $Y$, there is a $-1$ graded endomorphism
\[A_\lambda\colon \CF^-(Y,\ve{w},\frs)\to \CF^-(Y,\ve{w},\frs),\] which satisfies
\[\d  A_\lambda+A_\lambda \d=U_{w_1}+U_{w_2}.\] On the level of Heegaard diagrams, the map $A_\lambda$ is defined by homotoping the path $\lambda$ so that it lies in the Heegaard surface, and then defining
\[
A_\lambda(\ve{x}):=\sum_{\substack{\phi\in \pi_2(\ve{x},\ve{y})\\ \mu(\phi)=1}} a(\lambda,\phi) \# \Hat{\cM}(\phi) U_{\ve{w}}^{n_{\ve{w}}(\phi)}\cdot \ve{y}.
\]
 The quantity $a(\lambda,\phi)\in \bF_2$ is defined to be the sum of changes of the multiplicities of the class $\phi$ over the $\ve{\alpha}$ curves, as one traverses $\lambda$.
 
 Naturally, there is a map $B_\lambda$ which is constructed similarly, but instead weighting by the quantity $b(\lambda,\phi)\in \bF_2$ obtained by counting the total sum of changes of $\phi$ across the $\ve{\beta}$ curves as one travels along $\lambda$. The sum $A_\lambda+B_{\lambda}$ counts disks with an extra factor equal to the sum of changes across both the $\as$ and $\bs$ curves, which is just the difference between the multiplicities at $w_1$ and $w_2$. Hence
\[
A_{\lambda}+B_{\lambda}=U_{w_1}\Phi_{w_1}+U_{w_2} \Phi_{w_2},
\] 
where
\[
\Phi_{w_i}(\ve{x}):=U_{w_i}^{-1} \sum_{\substack{\phi\in \pi_2(\xs,\ys)\\ \mu(\phi)=1}} n_{w_i}(\phi) \# \Hat{\cM}(\phi) U_{\ws}^{n_{\ws}(\phi)} \cdot \ys.
\]

The following relations are obtained in \cite{ZemGraphTQFT}*{Lemma~5.10, 5.12} by counting the ends of index 2 moduli spaces of holomorphic curves:

\begin{equation}
A_\lambda^2\simeq U_{w_1}\simeq U_{w_2},\label{eq:graphTQFT1}
\end{equation}
\begin{equation}
A_{\lambda_1}  A_{\lambda_2}+A_{\lambda_2}A_{\lambda_1}\simeq \sum_{w\in (\d \lambda_1)\cap (\d \lambda_2)} U_{w}.\label{eq:graphTQFT2}\end{equation}

We now describe the second type of map featuring in the construction of the graph action maps: the free-stabilization maps. The free-stabilization maps have domain and range equal to two different Heegaard Floer complexes. If $(Y,\ve{w})$ is a multi-based 3-manifold and $w\not\in \ve{w}$, in \cite{ZemGraphTQFT} a map
\[
S_{w}^+\colon  \CF^-(Y,\ve{w},\frs)\to \CF^-(Y,\ve{w}\cup \{w\},\frs)
\]
 is defined. There is also a map, $S_{w}^-$, defined in the opposite direction. If $\cH=(\Sigma,\as,\bs,\ws)$ is a diagram for $(Y,\ws)$ such that $w\in \Sigma\setminus (\as\cup \bs)$, then a diagram for $(Y,\ws\cup \{w\})$ can be constructed by adding two new curves, $\alpha_0$ and $\beta_0$, which are contained in a small ball bounding $w\in \Sigma$. Furthermore, we pick $\alpha_0$ and $\beta_0$ so that $\alpha_0\cap \beta_0$ consists of two points, which are distinguished by the relative grading. The free-stabilization maps are defined by the equations
\[
S_{w}^+(\xs)=\xs\otimes \theta^+,
\]
\[
S_{w}^-(\xs\otimes \theta^+)=0\qquad \text{and} \qquad S_{w}^-(\xs\otimes \theta^-)=\xs.
\]
An alternate description of the free-stabilization map $S_w^+$ is the composition of a 0-handle map, which adds a new copy of $(S^3,w_0)$, and then a 1-handle map for a 1-handle with a foot near $w\in Y$ and a foot near $w_0\in S^3$. As such the formula for $S_w^+$ is similar to the 1-handle map defined in Section~\ref{sec:1--handlemaps}. 

In \cite{ZemGraphTQFT}*{Proposition~6.23}, it is shown that
\begin{equation}
S_{w_1}^{\circ} S_{w_2}^{\circ'}\simeq S_{w_2}^{\circ'} S_{w_1}^{\circ}\label{eq:graphTQFT3}
\end{equation} 
for any distinct $w_1$ and $w_2$ and $\circ,\circ'\in \{+,-\}$ (compare Lemma~\ref{lem:1-3-handlemapscommute}). If $\{w_1,\dots, w_n\}$ is a collection of distinct basepoints, we will write $S_{w_n,\dots, w_1}^\circ$ for the composition
\[
S_{w_n,\dots ,w_1}^{\circ}:=S_{w_n}^\circ\cdots S_{w_1}^\circ,\] noting that the composition is independent of the ordering of $w_1,\dots, w_n$ by Equation~\eqref{eq:graphTQFT3}.

 Also, 
\begin{equation}
S_{w}^{\circ} A_\lambda\simeq A_{\lambda} S_{w}^{\circ}\label{eq:graphTQFT4}
\end{equation} 
for $\circ\in \{+,-\}$,  as long as $w\not\in \d \lambda$ \cite{ZemGraphTQFT}*{Lemma~6.6}. Additionally, according to \cite{ZemGraphTQFT}*{Lemma~14.16}, one has
\begin{equation}
S_w^+ S_w^-\simeq \Phi_w\label{eq:graphTQFT5}.
\end{equation} 
By examining the formulas for the free-stabilization maps, one immediately has
 \begin{equation}
 S_w^- S_w^+\simeq 0.\label{eq:graphTQFT6}
 \end{equation} 
 If $\lambda$ is a path from $w_1$ to $w_2$, and $w_1\in \ve{w}$ but $w_2\not \in \ve{w}$, then 
 \begin{equation}
 S_{w_2}^- A_\lambda S_{w_2}^+\simeq \id_{\CF^-(Y,\ws,\frs)}.
 \label{eq:graphTQFT7}
 \end{equation}
 We refer to Equation~\eqref{eq:graphTQFT7} as the \textit{trivial strand relation}, since it corresponds to invariance from the manipulation of graphs shown on the top of Figure~\ref{fig::79}.
 
 Finally, there is the \textit{basepoint moving relation}. If $\lambda$ is a path from $w_1$ to $w_2$ and $w_1,w_2\not \in \ve{w}$, then the diffeomorphism $\phi_\lambda$ which moves $w_1$ to $w_2$ along $\lambda$ induces a chain homotopy equivalence
 \[
 (\phi_\lambda)_*\colon \CF^-(Y,\ve{w}\cup \{w_1\},\frs)\to \CF^-(Y,\ve{w}\cup \{w_2\},\frs).
 \]
  The basepoint moving relation is that
 \begin{equation}(\phi_\lambda)_*\simeq S_{w_1}^- A_\lambda S_{w_2}^+.\label{eq:graphTQFT8}\end{equation}
 This is proven in \cite{ZemGraphTQFT}*{Section~14}. It is instructive to compare Equation~\eqref{eq:graphTQFT8} to the basepoint moving relations from Section~\ref{sec:quasi-stab-and-identifications}, using the reduction techniques from later in this section (in particular Lemma~\ref{lem:Alambda=Psi+UPhi}).

\subsection{Constructing the graph action map}
To define the graph action map $A_{\cG,\frs}$, one subdivides the graph $\Gamma$ by adding extra vertices, and writes $\cG$ (now subdivided) as a composition of \textit{elementary flow-graphs}. Such a decomposition is termed a \textit{Cerf decomposition} of the graph. By definition, there are three types of elementary flow-graphs $\cG=(\Gamma,V_1,V_2)$:

\begin{enumerate}[leftmargin=20mm, ref= 
\textrm{\arabic*}, label = \textrm{($\mathcal{E\!FG}$-\arabic*)}:]
\item \label{def:elemengraphtype1} $V(\Gamma)=V_1\sqcup V_2$ and all edges go from $V_1$ to $V_2$;
\item \label{def:elemengraphtype2} $V(\Gamma)=V_1\sqcup V_2\sqcup \{v_0\}$ and all edges either connect a vertex in $V_1$ to $V_2$ or connect a vertex in $V_1\cup V_2$ to $\{v_0\}$;
\item \label{def:elemengraphtype3} $V(\Gamma)=V_1\sqcup V_2$ and there is exactly one edge which connects two vertices in $V_1$ or two vertices in $V_2$.
\end{enumerate}

Examples of the three subtypes of elementary ribbon flow-graphs are shown in Figure~\ref{fig::84}. Note than any flow-graph can be subdivided so that it is a concatenation of elementary flow-graphs. In \cite{ZemGraphTQFT}*{Theorem~7.6} it is shown that the graph action map is invariant under the choice of decomposition into elementary flow-graphs. Furthermore, adapting the proof of \cite{ZemGraphTQFT}*{Theorem~7.6} shows that the graph action map is also invariant under subdivision.

 \begin{figure}[ht!]
 \centering
 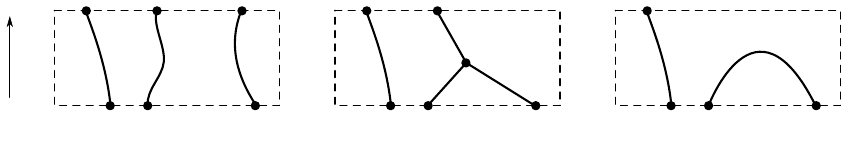
 \caption{\textbf{Examples of the three types elementary ribbon flow-graphs.}\label{fig::84}}
 \end{figure}

If $\cG=(\Gamma,V_1,V_2)$ is an elementary flow-graph satisfying ~($\mathcal{E\!FG}$-\ref{def:elemengraphtype1}), the graph action map is defined to be
\[
A_{\cG,\frs}:=\bigg(\prod_{v\in V_{1}} S_v^-\bigg) \bigg(\prod_{e\in E(\Gamma)} A_{e}\bigg) \bigg( \prod_{v\in V_2} S_{v}^+\bigg).\tag{$\mathcal{E\!FG}$\text{-\ref{def:elemengraphtype1}}}
\] 
The map does not depend on the ordering of the terms in the three factors, by Equations \eqref{eq:graphTQFT3} and \eqref{eq:graphTQFT2}.

If $\cG$ is an elementary flow-graph satisfying ~($\mathcal{E\!FG}$-\ref{def:elemengraphtype2}), the graph action map is defined to be
\[
A_{\cG,\frs}:=\bigg(\prod_{v\in V_1\cup \{v_0\}}S_v^-\bigg)\bigg(\prod_{\substack{e\in E(\Gamma)\\ v_0\not \in \d e}} A_e\bigg)(A_{e_n} \cdots  A_{e_1})\bigg(\prod_{v\in V_2\cup \{v_0\}}S_v^+\bigg),\tag{$\mathcal{E\!FG}$\text{-\ref{def:elemengraphtype2}}}
\] 
where $e_1,\dots, e_n$ are the edges adjacent to $v_0$, in the order that they appear according to the cyclic ordering. In \cite{ZemGraphTQFT}*{Lemma~6.12} it is shown that this expression is invariant under cyclic permutation of the edges $e_1,\dots, e_n$.

Finally, if $\cG=(\Gamma,V_1,V_2)$ is an elementary flow-graph satisfying~($\mathcal{E\!FG}$-\ref{def:elemengraphtype3}), the graph action map is defined to be
\[
A_{\cG,\frs}:=\bigg(\prod_{v\in V_{1}} S_v^-\bigg) \bigg(\prod_{e\in E(\Gamma)} A_{e}\bigg) \bigg( \prod_{v\in V_2} S_{v}^+\bigg).\tag{$\mathcal{E\!FG}$\text{-\ref{def:elemengraphtype3}}}
\]

The map $B_{\cG,\frs}$ is defined similarly, by simply replacing each instance of $A_{\lambda}$ with $B_{\lambda}$, in the above formulas. The maps $A_{\cG,\frs}$ and $B_{\cG,\frs}$ are related by the following result:

\begin{prop}If $\cG=(\Gamma,V_1,V_2)$ is a ribbon flow-graph, and $\bar{\cG}=(\bar{\Gamma},V_1,V_2)$ denotes the ribbon flow-graph with all cyclic orders reversed, then
\[B_{\bar{\cG},\frs}=A_{\cG,\frs}.\]
\end{prop}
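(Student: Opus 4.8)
The statement compares $B_{\bar{\cG},\frs}$ with $A_{\cG,\frs}$, so the natural strategy is to compare the two maps term by term through a common Cerf decomposition. First I would fix a decomposition of $\cG$ (after possible subdivision) into elementary flow-graphs $\cG_1,\dots,\cG_m$; reversing all cyclic orders turns this into a decomposition of $\bar{\cG}$ into $\bar{\cG_1},\dots,\bar{\cG_m}$, with the same underlying graphs and the same vertex partitions. Since both $A_{\cG,\frs}$ and $B_{\bar{\cG},\frs}$ are defined as the composition of the corresponding maps for the elementary pieces (using invariance of the graph action maps under the choice of Cerf decomposition and under subdivision, as in \cite{ZemGraphTQFT}*{Theorem~7.6}), it suffices to prove $B_{\bar{\cG}_i,\frs} = A_{\cG_i,\frs}$ for each elementary flow-graph $\cG_i$.

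For an elementary flow-graph $\cG=(\Gamma,V_1,V_2)$, I would then unwind the definitions. By construction, $A_{\cG,\frs}$ is a composition of free-stabilization maps $S_v^{\pm}$ and relative homology maps $A_e$ (one factor $A_e$ per edge $e$, with the special ordered product $A_{e_n}\cdots A_{e_1}$ along the cyclic order at the central vertex $v_0$ in the type~($\mathcal{E\!FG}$-\ref{def:elemengraphtype2}) case), while $B_{\bar{\cG},\frs}$ is the same composition but with each $A_e$ replaced by $B_e$ and with the cyclic order at $v_0$ reversed. The key observation is that the definition of $B_\lambda$ is literally the definition of $A_\lambda$ with the roles of the $\as$ and $\bs$ curves exchanged — the weight $a(\lambda,\phi)$ is replaced by $b(\lambda,\phi)$ — and this exchange is exactly the symmetry realized by turning a Heegaard diagram $(\Sigma,\as,\bs)$ upside down (equivalently, reversing the orientation of $\Sigma$, which swaps $U_{\as}$ and $U_{\bs}$). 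So both sides are computed by the same formula, just on diagrams related by this orientation reversal, and the free-stabilization maps $S_v^\pm$ are symmetric under it.

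The one place where the two formulas are not syntactically identical is the ordered product at the central vertex in the ($\mathcal{E\!FG}$-\ref{def:elemengraphtype2}) case: $A_{\cG,\frs}$ uses $A_{e_n}\cdots A_{e_1}$ along the cyclic order of $\Gamma$, whereas $B_{\bar{\cG},\frs}$ uses $B_{e_1}\cdots B_{e_n}$ along the cyclic order of $\bar{\Gamma}$, which is the reverse. I would check that reversing the order of the product and simultaneously passing from $A$ to $B$ (i.e. applying the $\as$–$\bs$ swap) produces chain-homotopic maps; this uses the commutator relation \eqref{eq:graphTQFT2} to reorder factors up to terms $\sum_{w} U_w$, together with invariance of the graph action map under cyclic permutation of $e_1,\dots,e_n$ (\cite{ZemGraphTQFT}*{Lemma~6.12}) applied on the upside-down diagram. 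The remaining elementary types, ($\mathcal{E\!FG}$-\ref{def:elemengraphtype1}) and ($\mathcal{E\!FG}$-\ref{def:elemengraphtype3}), carry no ordering data, so for those the identification is immediate from the $\as$–$\bs$ symmetry once one checks that the products of $A_e$'s (resp. $S_v^\pm$'s) are order-independent up to the same commutators.

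I expect the main obstacle to be making the ``turn the Heegaard diagram upside down'' argument genuinely rigorous at the chain level: one must verify that the orientation-reversal of $\Sigma$ induces, on the nose or up to the appropriate chain homotopy, an identification under which $A_\lambda \leftrightarrow B_\lambda$ and $S_v^{\pm}$ is preserved, and that this identification is compatible with the naturality (change-of-diagram) maps so that the term-by-term comparison descends to the transitive chain homotopy type. This is essentially bookkeeping of the kind already carried out in \cite{ZemGraphTQFT}, so I would cite the relevant lemmas there rather than redo the holomorphic-curve counts, but care is needed to ensure the cyclic-ordering subtlety at type~($\mathcal{E\!FG}$-\ref{def:elemengraphtype2}) vertices is handled correctly.
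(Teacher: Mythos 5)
Your reduction to elementary flow-graphs via a common Cerf decomposition is fine, and your identification of the cyclic-ordering subtlety at the central vertex is the right thing to worry about. But the mechanism you propose for comparing $A_{\cG_i,\frs}$ and $B_{\bar{\cG}_i,\frs}$ on an elementary piece --- reversing the orientation of $\Sigma$ so that $\as$ and $\bs$ exchange roles --- does not give a self-symmetry of $\CF^-(Y,\ve{w},\frs)$. Replacing $(\Sigma,\as,\bs,\ve{w})$ by $(-\Sigma,\bs,\as,\ve{w})$ is precisely the conjugation symmetry: it is again a diagram for $Y$, but the intersection point that represented $\frs$ now represents $\bar{\frs}$, so the induced identification is $\CF^-(Y,\ve{w},\frs)\iso \CF^-(Y,\ve{w},\bar{\frs})$. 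Chasing your argument through therefore yields a relation between $A_{\cG,\frs}$ and $B_{\bar{\cG},\bar{\frs}}$ (intertwined by conjugation isomorphisms at both ends), not the claimed equality $B_{\bar{\cG},\frs}=A_{\cG,\frs}$ at fixed $\frs$. There is no ``upside-down diagram'' move that swaps $A_\lambda\leftrightarrow B_\lambda$, preserves $S_v^{\pm}$, \emph{and} acts as the identity on $\Spin^c$ structures, so the core step of your plan is not available.

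The paper sidesteps this entirely: it does not prove the proposition by a diagram symmetry, but reduces to trivalent graphs via Lemma~\ref{lem:replacewithtrivalent} and then invokes \cite{HMZConnectedSums}*{Lemma~5.9} for the trivalent case. The trivalent case is amenable to a direct algebraic computation at fixed $\frs$, using the identity $A_\lambda+B_\lambda=U_{w_1}\Phi_{w_1}+U_{w_2}\Phi_{w_2}$ together with Equations~\eqref{eq:graphTQFT1}, \eqref{eq:graphTQFT2}, and the free-stabilization relations, rather than any Heegaard-diagram symmetry. If you want to complete your argument along your original lines, you would need to replace the orientation-reversal step with this kind of algebraic comparison of the ordered products $A_{e_n}\cdots A_{e_1}$ and $B_{e_1}\cdots B_{e_n}$ at a trivalent vertex; alternatively, adopt the paper's reduction-to-trivalent route.
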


We will not have a need for the above relation. A proof for trivalent graphs can be found in \cite{HMZConnectedSums}*{Lemma~5.9}. For general graphs, one can use Lemma~\ref{lem:replacewithtrivalent}, below, to reduce to the case of trivalent graphs.

\subsection{Relating the graph action map to link Floer homology}

In this section, we prove several useful computations of graph action maps, and compute the algebraic reductions of the basepoint actions on the link Floer complexes. In Section~\ref{sec:proofofThmC}, we will use these results to prove Theorem~\ref{thm:C}.

A first relation, which will be useful for us, is related to subdivision invariance of the graph action map:

\begin{lem}\label{lem:invariantundersubdivision} Suppose that $e_1$ and $e_2$ are two paths which share a single endpoint $w$.  Then, writing $e_1*e_2$ for the concatenation, one has
\[A_{e_2* e_1}\simeq S_{w}^- A_{e_2}A_{e_1} S_w^+.\]
\end{lem}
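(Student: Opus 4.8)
The plan is to prove this purely formally from the structural relations for the relative homology maps $A_\lambda$ and the free‑stabilization maps $S_w^{\pm}$ recorded in Equations~\eqref{eq:graphTQFT1}--\eqref{eq:graphTQFT8}, with no new holomorphic input. Orient $e_1$ so that it ends at the common vertex $w$ and $e_2$ so that it begins at $w$, and call the remaining endpoints $w_1$ (the start of $e_1$) and $w_2$ (the end of $e_2$), so that $e_2\ast e_1$ is the concatenated path from $w_1$ to $w_2$ passing through $w$. Since $w$ is a newly introduced subdivision point, it is not one of the basepoints of the ambient multi‑based manifold, so $S_w^{\pm}$ genuinely add and remove $w$; I would work throughout on the free‑stabilized complex $\CF^-(Y,\ve{w}\cup\{w\},\frs)$, which is the target of $S_w^+$.

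The first step is to use that the weight $a(\lambda,\phi)$ defining $A_\lambda$ is additive under concatenation of paths, so on $\CF^-(Y,\ve{w}\cup\{w\},\frs)$ one has $A_{e_1}\simeq A_{e_2\ast e_1}+A_{e_2}$ (all coefficients lie in $\bF_2$). Substituting this and applying Equation~\eqref{eq:graphTQFT1} to $e_2$ gives
\[
A_{e_2}A_{e_1}\simeq A_{e_2}A_{e_2\ast e_1}+A_{e_2}^{2}\simeq A_{e_2}A_{e_2\ast e_1}+U_w .
\]
Conjugating by $S_w^{\pm}$, the $U_w$‑term drops out, since $S_w^-$ annihilates $U_w\cdot\CF^-(Y,\ve{w}\cup\{w\},\frs)$ (equivalently, after replacing $U_w$ by the variable at the other endpoint of $e_2$ via the other half of Equation~\eqref{eq:graphTQFT1} and using $S_w^-S_w^+\simeq 0$ from Equation~\eqref{eq:graphTQFT6}). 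Thus $S_w^-A_{e_2}A_{e_1}S_w^+\simeq S_w^-A_{e_2}A_{e_2\ast e_1}S_w^+$. For the surviving term, the endpoints $w_1,w_2$ of $e_2\ast e_1$ are both different from $w$, so $e_2\ast e_1$ may be re‑routed away from a small neighborhood of $w$, and Equation~\eqref{eq:graphTQFT4} lets me slide $A_{e_2\ast e_1}$ past $S_w^+$, giving $S_w^-A_{e_2}A_{e_2\ast e_1}S_w^+\simeq S_w^-A_{e_2}S_w^+A_{e_2\ast e_1}$. Finally, $e_2$ runs from the non‑basepoint $w$ to $w_2$, so the trivial strand relation (Equation~\eqref{eq:graphTQFT7}) gives $S_w^-A_{e_2}S_w^+\simeq\id$, and assembling the chain of equivalences yields $S_w^-A_{e_2}A_{e_1}S_w^+\simeq A_{e_2\ast e_1}$, which is the claim.

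The step I expect to require the most care is the homotopy bookkeeping: the maps $A_{e_1}$, $A_{e_2}$, $A_{e_2\ast e_1}$ are homotopy differentials rather than chain maps (they satisfy relations of the form $\d A_{e_i}+A_{e_i}\d=U_w+U_{w_i}$), so the substitutions and cancellations above must be carried out in the morphism complex, using the $\bF_2[U_{\ve{w}}]$‑module conventions for the free‑stabilized complex from \cite{ZemGraphTQFT}; in particular one must verify that $S_w^-\circ(U_w\cdot-)\circ S_w^+\simeq 0$ on the nose, and that the trivial strand relation is genuinely a local statement near $w$ (so that it applies whether or not $w_2$ is itself a basepoint). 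Each of these is either a direct consequence of, or a routine adaptation of, results already in \cite{ZemGraphTQFT}, so the argument should close without difficulty; I would also remark that this lemma is exactly the statement that the graph action map is unchanged when an edge is subdivided by a valence‑two vertex, which is the form in which it is used.
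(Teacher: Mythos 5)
Your proposal is correct and essentially mirrors the paper's argument: the paper substitutes $A_{e_2}\simeq A_{e_2\ast e_1}+A_{e_1}$ and commutes $A_{e_2\ast e_1}$ to the left past $S_w^-$, while you substitute for $A_{e_1}$ and commute to the right past $S_w^+$, with the $U_w$-term killed by $S_w^-S_w^+\simeq 0$ in either case. One small caution: the clause ``$S_w^-$ annihilates $U_w\cdot\CF^-(Y,\ve{w}\cup\{w\},\frs)$'' is not literally true (for instance $S_w^-(U_w\cdot\xs\times\theta^-)=U_w\cdot\xs\neq 0$); the correct justification is the one you give in the parenthetical, namely $U$-equivariance of $S_w^{\pm}$ together with $S_w^-S_w^+\simeq 0$.
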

\begin{proof}On a diagram which has $w$ as a basepoint, additivity of the quantity $a(\lambda,\phi)$ under concatenation of arcs implies that $A_{e_2*e_1}\simeq A_{e_2}+A_{e_1}$. Hence $A_{e_2}\simeq A_{e_2*e_1}+A_{e_1}$. Hence
\[
S_{w}^- A_{e_2}A_{e_1} S_w^+\simeq S_{w}^- (A_{e_2*e_1}+A_{e_1})A_{e_1} S_w^+\simeq A_{e_2*e_1} S_w^- A_{e_1} S_{w}^+ +U S_w^- S_w^+\simeq A_{e_2* e_1},
\]
 as we wanted.
\end{proof}

Any ribbon flow-graph in $Y$ is ribbon-equivalent to a trivalent ribbon graph. At any vertex of valence greater than three, one can split off a consecutive pair of edges, reducing the number of edges adjacent to the vertex, while adding a new trivalent vertex to the graph. We show that the graph action map is invariant under this move:

\begin{lem}\label{lem:replacewithtrivalent} If $\cG=(\Gamma,V_1,V_2)$ is a ribbon flow-graph in $Y$, and $\cG'=(\Gamma',V_1,V_2)$ is related to $\cG$ by splitting off a consecutive pair of edges at a given vertex of $\Gamma$, and forming a new trivalent vertex, then
\[A_{\cG,\frs}\simeq A_{\cG',\frs}.\]
\end{lem}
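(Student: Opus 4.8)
<br>

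The plan is to reduce the statement to the already-established invariance of the graph action map under the choice of Cerf decomposition \cite{ZemGraphTQFT}*{Theorem~7.6} together with the cyclic-permutation invariance at a vertex \cite{ZemGraphTQFT}*{Lemma~6.12}. The key observation is that the move of splitting off a consecutive pair of edges at a vertex $v$ of valence $n\geq 4$ can be realized as a subdivision move: if $e_1,\dots,e_n$ are the edges adjacent to $v$ in cyclic order, and we split off the consecutive pair $e_1, e_2$, we may first subdivide by inserting a new bivalent vertex $v_0$ on a short initial segment of the would-be new edge, and then the graph $\Gamma'$ is obtained from $\Gamma$ by an operation which is purely local near $v$. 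Since the graph action map is defined by choosing a Cerf decomposition, and both $\cG$ and $\cG'$ can be arranged to share a Cerf decomposition away from a small ball $B^3$ containing $v$ and the new trivalent vertex, it suffices to check the claim for the elementary flow-graphs supported in $B^3$, where $v$ is the single special vertex.

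The main computation is therefore the following: let $\cG$ be an elementary flow-graph of type ($\mathcal{E\!FG}$-\ref{def:elemengraphtype2}) with special vertex $v$ of valence $n$, adjacent to edges $e_1,\dots,e_n$ (in cyclic order) whose other endpoints lie in $V_1\cup V_2$, and let $\cG'$ be obtained by introducing a trivalent vertex $v'$ so that $e_1,e_2$ now run to $v'$, a single new edge $e_0$ runs from $v'$ to $v$, and $v$ now has valence $n-1$ with adjacent edges $e_0, e_3,\dots, e_n$ in cyclic order. I would write $\cG'$ as a concatenation of two elementary flow-graphs: one containing $v'$ (with edges $e_1,e_2,e_0$) and one containing $v$ (with edges $e_0,e_3,\dots,e_n$), each of type ($\mathcal{E\!FG}$-\ref{def:elemengraphtype2}). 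Expanding the definitions, $A_{\cG',\frs}$ becomes a composition involving $S_{w_0}^+$, then $A_{e_2}A_{e_1}$ (the $v'$-ordering), then $A_{e_0}$, then $S_{w_0}^-$, then $A_{e_n}\cdots A_{e_3}A_{e_0}$ (the $v$-ordering), interleaved with the free-stabilizations at $V_1$ and $V_2$. Here $w_0$ denotes the basepoint at $v'$ and also the one created at $v$'s side when $e_0$ is split; but since $e_0$ is a single edge from $v'$ to $v$, I would apply Lemma~\ref{lem:invariantundersubdivision} (or rather Lemma~\ref{lem:invariantundersubdivision}) in reverse to collapse $S_{w_0}^- A_{e_0}$ and the relevant $S_{w_0}^+$ so that the two copies of $A_{e_0}$ recombine. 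The effect is that $A_{e_0}$ disappears (it cancels via Equation~\eqref{eq:graphTQFT1} and Equation~\eqref{eq:graphTQFT6}, i.e. $S_w^- S_w^+\simeq 0$ kills cross terms while $A_{e_0}^2\simeq U$ does not contribute after the free-stabilization sandwich), leaving precisely $A_{e_n}\cdots A_{e_3}A_{e_2}A_{e_1}$ sandwiched between the free-stabilizations — which is the defining expression for $A_{\cG,\frs}$.

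The step I expect to be the main obstacle is the careful bookkeeping of the internal edge $e_0$ and its two associated free-stabilization maps, making sure that the cyclic orderings used in the two halves of $\cG'$ glue to give exactly the cyclic ordering $e_1,\dots,e_n$ at $v$ in $\cG$, and that no spurious $U$-power or sign is introduced. Concretely, I will need to verify that $S_{w_0}^- A_{e_0}A_{e_1'}\cdots S_{w_0}^+$ telescopes correctly, using the commutation relations Equation~\eqref{eq:graphTQFT3} and Equation~\eqref{eq:graphTQFT4} to move the free-stabilizations at $V_1$ and $V_2$ past everything, and then Lemma~\ref{lem:invariantundersubdivision} together with the trivial strand relation Equation~\eqref{eq:graphTQFT7} to eliminate $e_0$. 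Once this local identity $A_{\cG,\frs}\simeq A_{\cG',\frs}$ is established, the general statement follows by invariance of the graph action map under the choice of Cerf decomposition: both $\cG$ and $\cG'$ admit Cerf decompositions agreeing outside $B^3$, so the two graph action maps, being compositions of the same elementary pieces outside $B^3$ and chain-homotopic pieces inside, agree up to chain homotopy.
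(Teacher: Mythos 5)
Your high-level plan matches the paper's: reduce to a local computation on an elementary ($\mathcal{E\!FG}$-\ref{def:elemengraphtype2}) piece, use Lemma~\ref{lem:invariantundersubdivision} and the free-stabilization/relative-homology relations to absorb the new internal edge $e_0'$, and then reassemble a Cerf decomposition of $\cG'$. The ingredients you cite are the right ones. However, the central algebraic identity --- what the paper records as Equation~\eqref{eq:graphTQFTslideedge}, namely $S_{v_0'}^- A_{e_0'}A_{e_2'}A_{e_1'}S_{v_0',v_0}^+\simeq A_{e_2}A_{e_1}S_{v_0}^+$ --- is not actually verified by your sketch, and the cancellation mechanism you describe is not quite right.

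You claim the internal edge "disappears" because "$S_w^-S_w^+\simeq 0$ kills cross terms while $A_{e_0}^2\simeq U$ does not contribute after the free-stabilization sandwich." That is not what happens. The paper proves the identity by interpolating through the middle expression $S_{v_0'}^- A_{e_0'}(A_{e_2'}+A_{e_0'})(A_{e_1'}+A_{e_0'})S_{v_0',v_0}^+$, using concatenation additivity $A_{e_i'*e_0'}=A_{e_i'}+A_{e_0'}$. When that is expanded, one needs the commutator relation (Equation~\eqref{eq:graphTQFT2}) to rewrite $A_{e_0'}A_{e_2'}A_{e_0'}$ as $A_{e_0'}^2A_{e_2'}+UA_{e_0'}$, then $A_{e_0'}^2\simeq U$ (Equation~\eqref{eq:graphTQFT1}), and crucially the trivial strand relation $S_{v_0'}^-A_{e_i'}S_{v_0'}^+\simeq\id$ (Equation~\eqref{eq:graphTQFT7}), not Equation~\eqref{eq:graphTQFT6}. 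The resulting extra $U$-terms do not simply "not contribute"; two identical copies of $US_{v_0}^+$ survive and cancel only because we work over $\bF_2$. Without this chain of manipulations --- in particular without noticing the usefulness of the $(A_{e_i'}+A_{e_0'})$ interpolation --- the telescoping you describe does not obviously close up, so as written there is a genuine gap in the key step.
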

\begin{proof}Since the graph action map is constructed by decomposing a subdivision of the graph into a composition of elementary flow-graphs, it is sufficient to prove the claim for elementary flow-graphs satisfying~($\mathcal{E\!FG}$-\ref{def:elemengraphtype2}), the only type of elementary ribbon flow-graph which can have a vertex of valence greater than three. Suppose that $v_0$ is the middle vertex and suppose that $e_1,\dots, e_n$  are the edges adjacent to $v_0$, indexed according to the cyclic ordering assigned to $v_0$. Let $e_0',$ $e_1',$ and $e_2'$, denote three new edges adjacent to a new vertex $v_0'$, as in Figure~\ref{fig::81}. By definition
\[A_{\cG,\frs}=\bigg(\prod_{v\in V_1\cup \{v_0\}}S_v^-\bigg)\bigg(\prod_{\substack{e\in E(\Gamma)\\ v_0\not \in \d e}} A_e\bigg)(A_{e_n} \cdots  A_{e_1})\bigg(\prod_{v\in V_2\cup \{v_0\}}S_v^+\bigg).\]

By commuting the various $S_{v}^+$ amongst themselves, we can isolate a factor of $A_{e_2}A_{e_1} S_{v_0}^+$ before the other relative homology maps but after all of the other positive free-stabilization maps. We now claim that
\begin{equation}
S_{v_0'}^- A_{e_0'} A_{e_2'} A_{e_1'} S_{v_0',v_0}^+\simeq A_{e_2}A_{e_1} S_{v_0}^+.
\label{eq:graphTQFTslideedge}\end{equation} 
To this end, the our strategy will be to identify both expressions with a third expression, namely 
\begin{equation}
S_{v_0'}^- A_{e_0'} (A_{e_2'}+A_{e_0'})(A_{e_1'}+A_{e_0'}) S_{v_0',v_0}^+.\label{eq:graphTQFTslideedge2}
 \end{equation} 
 To show the left side of Equation \eqref{eq:graphTQFTslideedge} is equal to the Equation \eqref{eq:graphTQFTslideedge2}, we compute by multiplying out Equation~\eqref{eq:graphTQFTslideedge2}, and rearranging terms:
\begin{equation*}
\begin{split}&S_{v_0'}^- A_{e_0'} (A_{e_2'}+A_{e_0'})(A_{e_1'}+A_{e_0'}) S_{v_0',v_0}^+\\
\simeq &S_{v_0'}^-(A_{e_0'}A_{e_2'}A_{e_1'}+A_{e_0'}^2A_{e_1'}+A_{e_0'}A_{e_2'}A_{e_0'}+ A_{e_0'}^3) S_{v_0',v_0}^+\\
\simeq  &S_{v_0'}^-(A_{e_0'}A_{e_2'}A_{e_1'}+A_{e_0'}^2A_{e_1'}+A_{e_0'}^2A_{e_2'}+U A_{e_0'}+ A_{e_0'}^3) S_{v_0',v_0}^+\\
\simeq& S_{v_0'}^-(A_{e_0'}A_{e_2'}A_{e_1'}+UA_{e_1'}+UA_{e_2'}+U A_{e_0'}+ UA_{e_0'}) S_{v_0',v_0}^+\\
\simeq &S_{v_0'}^-A_{e_0'}A_{e_2'}A_{e_1'}S_{v_0',v_0}^++US_{v_0'}^-(A_{e_1'}+A_{e_2'}) S_{v_0',v_0}^+\\
\simeq &S_{v_0'}^-A_{e_0'}A_{e_2'}A_{e_1'}S_{v_0',v_0}^++U(S_{v_0'}^-A_{e_1'} S_{v_0'}^++S_{v_0'}^-A_{e_2'} S_{v_0'}^+)S_{v_0}^+\\
\simeq &S_{v_0'}^-A_{e_0'}A_{e_2'}A_{e_1'}S_{v_0',v_0}^++2 \cdot U S_{v_0}^+\\
 \simeq &S_{v_0'}^-A_{e_0'}A_{e_2'}A_{e_1'}S_{v_0',v_0}^+.
\end{split}
\end{equation*}

To see that the right side of Equation \eqref{eq:graphTQFTslideedge} is equal to the expression from Equation \eqref{eq:graphTQFTslideedge2}, we compute that 
\[S_{v_0'}^- A_{e_0'} (A_{e_2'}+A_{e_0'})(A_{e_1'}+A_{e_0'}) S_{v_0',v_0}^+\simeq S_{v_0'}^- A_{e_0'} (A_{e_2'*e_0'})(A_{e_1'*e_0'}) S_{v_0',v_0}^+\]
\[\simeq S_{v_0'}^- A_{e_0'}S_{v_0'}^+ (A_{e_2'*e_0'})(A_{e_1'*e_0'}) S_{v_0}^+\simeq A_{e_2}A_{e_1} S_{v_0}^+.\] Hence we conclude that
\[A_{\cG,\frs}\simeq \bigg(\prod_{v\in V_1\cup \{v_0\}}S_v^-\bigg)\bigg(\prod_{\substack{e\in E(\Gamma)\\ v_0\not \in \d e}} A_e\bigg)(A_{e_n} \cdots A_{e_3} ) (S_{v_0'}^-A_{e_0'}A_{e_2'}A_{e_1'}S_{v_0',v_0}^+)\bigg(\prod_{v\in V_2}S_v^+\bigg).\] This expression is almost, but not quite, the expression for a Cerf decomposition of $\cG'$. To obtain the map induced by a genuine Cerf decomposition of $\cG'$, we use Lemma~\ref{lem:invariantundersubdivision} to subdivide the edge $e_0'$, and also all the other edges which are adjacent to a vertex in $V_1$. We leave this last step to the reader, since the manipulation is straightforward. The process is shown schematically in Figure~\ref{fig::81}.
\end{proof}

 \begin{figure}[ht!]
 \centering
 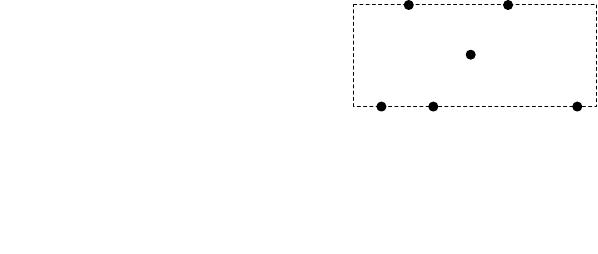
 \caption{\textbf{Replacing a pair of consecutive edges at a higher valence vertex with three edges and a trivalent vertex.} On the top, we show the manipulation performed from Lemma~\ref{lem:replacewithtrivalent}. In order to turn this into a Cerf decomposition, one must additionally also subdivide several edges of the graph, as shown on the bottom. Cyclic orders are counterclockwise with respect to the page.\label{fig::81}}
 \end{figure}

\begin{rem} Lemma~\ref{lem:replacewithtrivalent} can be viewed as a first step toward proving invariance of the graph cobordism maps under ribbon-equivalence, Corollary~\ref{cor:D}. For example, it implies invariance from equivalences like the one shown in the second row of Figure~\ref{fig::79}.
\end{rem}

For the purposes of comparing the graph TQFT to the reductions of the link Floer TQFT, we will need to compute the graph cobordism map for $W=[0,1]\times Y$ containing the $H$-shaped graph shown in Figure~\ref{fig::81}.

 \begin{figure}[ht!]
 \centering
 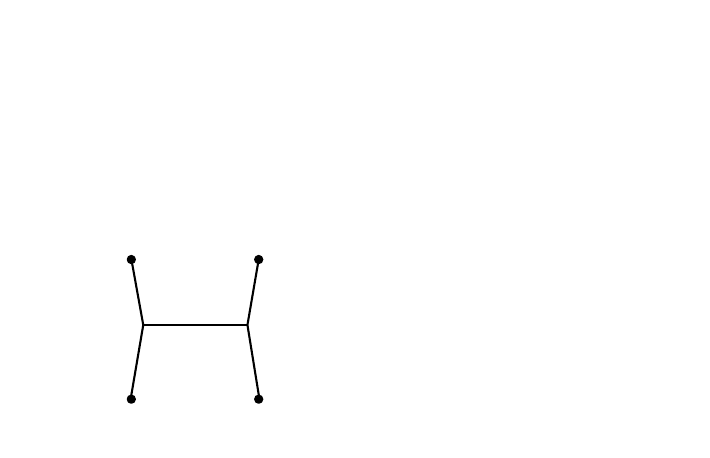
 \caption{\textbf{The graph cobordism $([0,1]\times Y,\Gamma_H)$ and the ribbon flow-graph $\cG_H=(H,\{w_1,w_2\},\{w_1',w_2'\})$, obtained by pushing $\Gamma_H$ into $\{0\}\times Y$.} In Lemma~\ref{lem:Hgraphaction}, we compute cobordism map for the graph cobordism $([0,1]\times Y,\Gamma_H)$, by computing the graph action map for $\cG_H$. A Cerf decomposition of the flow-graph is shown on the bottom right. \label{fig::80}}
 \end{figure}

\begin{lem}\label{lem:Hgraphaction}Suppose $\lambda$ is a path in $Y$ from $w_1$ to $w_2$. Let $\Gamma_H$ denote the graph $([0,1]\times \{w_1,w_2\} )\cup \{\tfrac{1}{2}\}\times \lambda$, with cyclic orders as shown in Figure~\ref{fig::80}. Then 
\[F_{[0,1]\times Y,\Gamma_H,\frs}^A\simeq  A_\lambda+U\Phi_{w_1},\qquad \text{and} \qquad F_{[0,1]\times Y,\Gamma_H,\frs}^B\simeq B_\lambda+U\Phi_{w_1}.\] The same formula holds in the case that $Y$ has additional basepoints $\ve{w}$, and we add $[0,1]\times \ve{w}$ to $\Gamma_H$.
\end{lem}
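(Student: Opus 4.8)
The plan is to compute the graph action map $A_{\cG_H,\frs}$ directly from a Cerf decomposition of the flow-graph $\cG_H=(H,\{w_1,w_2\},\{w_1',w_2'\})$ obtained by pushing $\Gamma_H$ into $\{0\}\times Y$, as indicated in Figure~\ref{fig::80}. The graph $H$ is trivalent, so by subdividing and choosing the decomposition shown on the bottom right of Figure~\ref{fig::80} (splitting off the two trivalent vertices $v_4$ and $v_5$ and placing the free vertices $v_1, v_2, v_3$ as in the picture), the map $A_{\cG_H,\frs}$ is a composition of free-stabilization maps and relative homology maps. The key idea is that the algebraic combination $A_\lambda + U\Phi_{w_1}$ arises because $A_\lambda + B_\lambda = U_{w_1}\Phi_{w_1}+U_{w_2}\Phi_{w_2}$ (the relation recalled from \cite{ZemGraphTQFT} just before Equation~\eqref{eq:graphTQFT2}), and because the $H$-graph's cyclic orders are arranged so that at one trivalent vertex the two middle edges combine via $A$ while at the other they combine in a way that introduces the ``$\Phi$'' term; unwinding the free-stabilization sandwich will collapse everything to $A_\lambda + U\Phi_{w_1}$.

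First I would write out $A_{\cG_H,\frs}$ explicitly as the composition of the maps for the three elementary flow-graphs in the chosen decomposition, using the defining formulas ($\mathcal{E\!FG}$-\ref{def:elemengraphtype1}), ($\mathcal{E\!FG}$-\ref{def:elemengraphtype2}), ($\mathcal{E\!FG}$-\ref{def:elemengraphtype3}). This produces an expression of the form $S_{w_2}^-S_{w_1}^- (\text{composition of } A_e\text{'s and }S_{v_i}^\pm) S_{w_2'}^+ S_{w_1'}^+$. Then I would repeatedly simplify using the relations from Section~\ref{subsec:graphTQFTmapsandrelations}: the commutation relations \eqref{eq:graphTQFT3} and \eqref{eq:graphTQFT4} to move free-stabilizations past things they commute with; the subdivision/additivity relation (Lemma~\ref{lem:invariantundersubdivision}, and additivity of $a(\lambda,\phi)$ under concatenation) to merge the subdivided pieces of $\lambda$ back into a single $A_\lambda$; the trivial strand relation \eqref{eq:graphTQFT7} to cancel the auxiliary free vertices; and $S_w^+S_w^-\simeq \Phi_w$ (Equation~\eqref{eq:graphTQFT5}) together with $S_w^-S_w^+\simeq 0$ (Equation~\eqref{eq:graphTQFT6}) to produce the surviving $\Phi_{w_1}$ term with its coefficient $U$. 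The relation $A_\lambda+B_\lambda = U_{w_1}\Phi_{w_1}+U_{w_2}\Phi_{w_2}$ is what lets me rewrite the $B$-type contribution (coming from the ``other side'' of the $H$) in terms of $A_\lambda$ and $\Phi$; note $U_{w_1}=U_{w_2}=U$ after the $V=1$ or $U=1$ reduction context, or more precisely $\Phi_{w_1}\simeq \Phi_{w_2}$ as endomorphisms by the argument in \cite{ZemGraphTQFT}, so the $U\Phi_{w_1}$ versus $U\Phi_{w_2}$ ambiguity is immaterial. The $B$-version statement follows by the identical computation with every $A_e$ replaced by $B_e$, using that $B_{\cG,\frs}$ is defined by that substitution.

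For the final sentence of the lemma — the claim that the formula is unaffected by the presence of extra basepoints $\ve{w}$ and extra trivial strands $[0,1]\times\ve{w}$ in $\Gamma_H$ — I would observe that those trivial strands contribute only factors $S_{w}^- A_{[0,1]\times w} S_w^+$ which are $\simeq \id$ by the trivial strand relation \eqref{eq:graphTQFT7} (here the edge has both ends free so this is exactly the basepoint-moving relation \eqref{eq:graphTQFT8} specialized to the constant path, or directly \eqref{eq:graphTQFT7}), so they drop out, and the relative homology maps $A_e$ for $e$ in $H$ commute with $S_w^\pm$ for $w\in\ve{w}$ by \eqref{eq:graphTQFT4} since $w\notin\partial e$. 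The main obstacle I anticipate is purely bookkeeping: keeping track of the exact Cerf decomposition of the $H$-graph and which pair of edges at each trivalent vertex is ``consecutive'' in the cyclic order, so that the $A$ versus ``$B$-then-rewrite'' split comes out with the correct signs (over $\bF_2$, the correct terms) and the coefficient of $\Phi_{w_1}$ is exactly $U$ and not $0$ or $U^2$. It will help to draw the decomposition carefully, as in Figure~\ref{fig::80}, and to note that only one of the two middle edges of the $H$ genuinely contributes a $\Phi$ after all the free-stabilizations are collapsed; the other is absorbed into $A_\lambda$.
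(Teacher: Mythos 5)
Your overall plan — compute $A_{\cG_H,\frs}$ from the Cerf decomposition in Figure~\ref{fig::80} and simplify using the relations of Section~\ref{subsec:graphTQFTmapsandrelations} — is the paper's plan. But you have misidentified the algebraic mechanism that produces the $U\Phi_{w_1}$ term, and this is not cosmetic.

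You write that the $U\Phi_{w_1}$ arises because $A_\lambda+B_\lambda= U_{w_1}\Phi_{w_1}+U_{w_2}\Phi_{w_2}$ and because ``at one trivalent vertex the two middle edges combine via $A$ while at the other they combine in a way that introduces the $\Phi$ term.'' This picture is wrong: the graph action map $A_{\cG,\frs}$ uses exclusively $A$-type relative homology maps for \emph{every} edge, at \emph{both} trivalent vertices; there is no $B$-type contribution and no $A$/$B$ split to rewrite. The $A_\lambda+B_\lambda$ relation does not appear anywhere in the paper's computation. In fact the $U$ and the $\Phi$ arise from two separate $A$-type identities applied in sequence: the $U$ comes from the commutator relation $A_{\lambda_1}A_{\lambda_2}+A_{\lambda_2}A_{\lambda_1}\simeq\sum_{w\in\d\lambda_1\cap\d\lambda_2}U_w$ (Equation~\eqref{eq:graphTQFT2}) when the horizontal edge $e_6*e_3$ is commuted past $A_{e_2}$, and the $\Phi_{v_1}$ then comes from $S_{v_1}^+S_{v_1}^-\simeq\Phi_{v_1}$ (Equation~\eqref{eq:graphTQFT5}) after inserting a copy of $\id\simeq S_{v_1}^-A_{e_2*e_8}S_{v_1}^+$ via the trivial strand relation \eqref{eq:graphTQFT7} and applying the basepoint moving relation \eqref{eq:graphTQFT8}. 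So although you correctly list \eqref{eq:graphTQFT5} among the tools, you have not put it in the role it actually plays.

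A secondary consequence of the misconception: your aside asserting $\Phi_{w_1}\simeq\Phi_{w_2}$ is unsupported and is only needed because of your incorrect $A/B$-split plan. The paper's argument never needs that homotopy; it produces a single $\Phi_{v_1}$ and then identifies $v_1$ with $w_1$ via the basepoint moving diffeomorphisms $\phi$ and $\phi'$ that wrap the computation. The $B$-version is then obtained by replacing every $A_e$ with $B_e$ throughout the \emph{same} computation, not by exploiting $A_\lambda+B_\lambda$. Your treatment of the extra trivial strands $[0,1]\times\ve{w}$ is fine.
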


\begin{proof}Let $\cG_{H}$ denote the ribbon flow-graph in $Y$ which is shown in Figure~\ref{fig::80}. In Figure~\ref{fig::80}, we show a Cerf decomposition for $\cG_H$ into two elementary ribbon flow-graphs, both satisfying ~($\mathcal{E\!FG}$-\ref{def:elemengraphtype2}). The flow-graph $\cG_H=(H,\{w_1,w_2\},\{w_1',w_2'\})$ is obtained by pushing $\Gamma_H$ into $\{0\}\times Y$.  The graph cobordism map $F_{[0,1]\times Y,\Gamma_H,\frs}^A$ is equal to the graph action map $A_{\cG_H,\frs}$, composed with basepoint moving maps to move $w_1'$ to $w_1$ and $w_2'$ to $w_2$.

We will use  the relations from Section~\ref{subsec:graphTQFTmapsandrelations} to compute the maps $A_{\cG_H}$ and $B_{\cG_H}$. If $e$ is a path between two basepoints in $Y$, we will write $\phi_e$ for the diffeomorphism of $Y$ obtained by pushing a basepoint along $e$. Using the Cerf decomposition shown in Figure~\ref{fig::80}, we see that the graph action map takes the form
\begin{align*}A_{\cH,\frs}&=S_{w_1w_2v_1v_2v_3v_5}^-  A_{e_8}A_{e_7}A_{e_6}A_{e_5}A_{e_4}A_{e_3}A_{e_2}A_{e_1}S^+_{v_1v_2v_3v_4v_5w_1'w_2'}\\
&\simeq (S_{v_5}^- A_{e_7} S_{w_2'}^+) S_{v_1v_2v_4}^-  A_{e_8}(S_{v_3}^-A_{e_6}A_{e_3}S_{v_3}^+)A_{e_2} S^+_{v_2w_1'}(S_{w_1}^- A_{e_1} S_{v_1}^+)(S_{w_2}^-(S_{v_4}^-A_{e_5}A_{e_4}S_{v_4}^+)S_{v_5}^+)\\
&\simeq(\phi_{e_7})_* S_{v_1v_2}^- A_{e_8} A_{e_6*e_3}  A_{e_2}S^+_{v_2w_1'} (\phi_{e_1})_*(\phi_{e_4*e_5})_*\\
&\simeq U(\phi_{e_7})_* S_{v_1v_2}^- A_{e_8} S^+_{v_2w_1'} (\phi_{e_1})_*(\phi_{e_4*e_5})_*+ (\phi_{e_7})_* S_{v_1v_2}^- A_{e_8}A_{e_2} A_{e_6*e_3}  S^+_{v_2w_1'} (\phi_{e_1})_*(\phi_{e_4*e_5})_*\\
&\simeq U(\phi_{e_7})_* S_{v_1}^- S^+_{w_1'} (\phi_{e_1})_*(\phi_{e_4*e_5})_*+(\phi_{e_7})_* S_{v_1}^-S_{v_2}^- A_{e_8}A_{e_2}S^+_{v_2}S_{w_1'}^+ A_{e_6*e_3}   (\phi_{e_1})_*(\phi_{e_4*e_5})_*\\
&\simeq U(\phi_{e_7})_* (S_{v_1}^- A_{e_2* e_8} S_{v_1}^+)S_{v_1}^- S^+_{w_1'} (\phi_{e_1})_*(\phi_{e_4*e_5})_*+(\phi_{e_7})_* (\phi_{e_8*e_2})_* A_{e_6*e_3}   (\phi_{e_1})_*(\phi_{e_4*e_5})_*
\\&\simeq U(\phi_{e_7})_* S_{v_1}^- A_{e_2* e_8} \Phi_{v_1} S^+_{w_1'} (\phi_{e_1})_*(\phi_{e_4*e_5})_*+(\phi_{e_7})_* (\phi_{e_8*e_2})_* A_{e_6*e_3}   (\phi_{e_1})_*(\phi_{e_4*e_5})_*
\\ &\simeq U(\phi_{e_7})_* S_{v_1}^- A_{e_2* e_8}S^+_{w_1'} \Phi_{v_1}  (\phi_{e_1})_*(\phi_{e_4*e_5})_*+(\phi_{e_7})_* (\phi_{e_8*e_2})_* A_{e_6*e_3}   (\phi_{e_1})_*(\phi_{e_4*e_5})_*\\
&\simeq U(\phi_{e_7})_* (\phi_{e_8*e_2})_* \Phi_{v_1}  (\phi_{e_1})_*(\phi_{e_4*e_5})_*+(\phi_{e_7})_* (\phi_{e_8*e_2})_* A_{e_6*e_3}   (\phi_{e_1})_*(\phi_{e_4*e_5})_*\\
&=\phi_*'(U \Phi_{v_1}+ A_{\lambda'})\phi_*,\end{align*} where $\phi':=(\phi_{e_7})_*(\phi_{e_8* e_2})_*$ and $\phi:=(\phi_{e_1})_*(\phi_{e_4*e_5})_*$ are the basepoint moving maps for moving along the ``vertical'' edges of $\cG_H$, and $\lambda':=e_6*e_3$ is the ``horizontal'' edge of $\cG_H$. Noting that the diffeomorphism $\phi$ moves $w_1$ to $v_1$,  the formula 
\[F_{[0,1]\times Y,\Gamma_H,\frs}^A\simeq A_\lambda+U \Phi_{w_1}\] follows. The formula for $F_{[0,1]\times Y ,\Gamma_H,\frs}^B$ follows by replacing the $A$'s with $B$'s in the above argument.
\end{proof}

The graph in the previous lemma resembles a ribbon 1-skeleton of the $\Sigma_{\ve{w}}$ subsurface of the decorated surface for $\Psi_z$. This is no accident:

\begin{lem}\label{lem:Alambda=Psi+UPhi}Suppose that $w_1,z,$ and $w_2$ are consecutive basepoints on a link $\bL$ (in that order, though $w_1$ and $w_2$ need not be distinct), and $\lambda$ is the subarc of $L$ from $w_1$ to $w_2$. Then the reduction of $\Psi_z$ satisfies
 \[
 \Psi_{z}|_{V=1}\simeq A_\lambda +U_{w_2}\Phi_{w_2}\simeq B_\lambda+U_{w_1}\Phi_{w_1}.
 \]
 \end{lem}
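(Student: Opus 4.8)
The plan is to show that the algebraic reduction $\Psi_z|_{V=1}$ has the same formula on the level of Heegaard diagrams as the relative homology map $A_\lambda$ plus the correction term $U_{w_2}\Phi_{w_2}$. The starting point is the definition of $\Psi_z$ as the ``formal $V_z$-derivative of the differential'': on the uncolored complex,
\[
\Psi_z(\ve{x})=V_z^{-1}\sum_{\ve{y}}\sum_{\substack{\phi\in \pi_2(\ve{x},\ve{y})\\ \mu(\phi)=1}} n_z(\phi)\, \#\Hat{\cM}(\phi)\, U_{\ve{w}}^{n_{\ve{w}}(\phi)} V_{\ve{z}}^{n_{\ve{z}}(\phi)}\cdot \ve{y}.
\]
When we set $V=1$ (i.e. tensor with $\bF_2[U,V]/(V-1)$, identifying all $V$-variables with $1$), the link Floer complex becomes $\CF^-(Y,\ve{w},\frs)$, and the differential becomes the ordinary $\ve{w}$-differential counting $U_{\ve{w}}^{n_{\ve{w}}(\phi)}$. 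So after reduction $\Psi_z$ counts $\mu=1$ disks weighted by $n_z(\phi)\cdot U_{\ve{w}}^{n_{\ve{w}}(\phi)}$.

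The key geometric observation is that the basepoint $z$ sits on the subarc $\lambda$ of $L$ from $w_1$ to $w_2$. I would place $z$ on the Heegaard surface so that $\lambda$, pushed into $\Sigma$, passes through $z$ and runs between the $\ve{w}$-basepoints $w_1$ and $w_2$, staying close to the link. The quantity $a(\lambda,\phi)$ appearing in the definition of $A_\lambda$ is the total change of the multiplicity of $\phi$ across the $\ve{\alpha}$-curves traversed along $\lambda$; the analogous $b(\lambda,\phi)$ counts changes across $\ve{\beta}$-curves, and by the identity recalled in the excerpt,
\[
A_\lambda + B_\lambda = U_{w_1}\Phi_{w_1}+U_{w_2}\Phi_{w_2}.
\]
The main point is then that $n_z(\phi) \equiv a(\lambda,\phi) + (\text{boundary multiplicity contribution at } w_1)$ modulo $2$, or more precisely that the local multiplicity $n_z(\phi)$ equals $a(\lambda,\phi)+n_{w_1}(\phi)$ (or with $w_2$, depending on which way one orients $\lambda$), since traversing $\lambda$ from $w_1$ to $z$ and counting $\ve{\alpha}$-crossings computes $n_z(\phi)-n_{w_1}(\phi)$ as a signed (hence mod-2) sum. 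Plugging this in: $\Psi_z|_{V=1}$ counts disks weighted by $(a(\lambda,\phi)+n_{w_1}(\phi))U_{\ve{w}}^{n_{\ve{w}}(\phi)}$, which is exactly $A_\lambda + U_{w_1}\Phi_{w_1}$; symmetrically, orienting $\lambda$ the other way gives $A_\lambda + U_{w_2}\Phi_{w_2}$... but wait, I should double-check the precise bookkeeping, since the claimed formula is $A_\lambda + U_{w_2}\Phi_{w_2}$, which forces $U_{w_1}\Phi_{w_1}\simeq U_{w_2}\Phi_{w_2}$ on the reduced complex — this should follow from Lemma~\ref{lem:PhiPsichainhomotopies} (which gives $\d\Psi_z+\Psi_z\d=U_{w_1}+U_{w_2}$, so $U_{w_1}\simeq U_{w_2}$ after reduction) together with the chain-homotopy invariance of $\Phi_w$, or directly from $A_\lambda^2\simeq U_{w_1}\simeq U_{w_2}$ (Equation~\eqref{eq:graphTQFT1}) and the relation $A_\lambda+B_\lambda = U_{w_1}\Phi_{w_1}+U_{w_2}\Phi_{w_2}$. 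Once I have $\Psi_z|_{V=1}\simeq A_\lambda+U_{w_1}\Phi_{w_1}$, the second equivalence $\simeq B_\lambda + U_{w_1}\Phi_{w_1}$... no: I want $A_\lambda+U_{w_2}\Phi_{w_2}\simeq B_\lambda + U_{w_1}\Phi_{w_1}$, which is immediate from $A_\lambda+B_\lambda=U_{w_1}\Phi_{w_1}+U_{w_2}\Phi_{w_2}$ by rearranging.

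The step I expect to be the main obstacle is pinning down the precise identity $n_z(\phi) = a(\lambda,\phi) + n_{w_1}(\phi)$ (mod $2$) with the correct endpoint, and making sure the weighting in the reduced $\Psi_z$ matches the weighting in $A_\lambda$ including the $U_{\ve{w}}^{n_{\ve{w}}(\phi)}$ factors and the overall $V_z^{-1}$ (which disappears after reduction but must be handled carefully, e.g. by arranging $n_z(\phi)$ and $n_{z'}(\phi)$ for the adjacent $z$-basepoints so that the derivative interpretation is valid). A subtlety: the map $d/dV_z$ is not equivariant, so I need to work on the uncolored complex, verify the disk counts there, and then pass to the reduction — this is exactly the caveat flagged in the remark after Lemma~\ref{lem:PhiPsicommutators}, so I would model the argument on the proof of Lemma~\ref{lem:repacakgedSw,zPsicommutator}. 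Once the weighted disk-count identity is established, comparing with the definitions of $A_\lambda$, $B_\lambda$ and $\Phi_{w}$ is routine, and the two displayed equivalences follow formally from $A_\lambda+B_\lambda = U_{w_1}\Phi_{w_1}+U_{w_2}\Phi_{w_2}$ together with $U_{w_1}\simeq U_{w_2}$ on the reduced complex.
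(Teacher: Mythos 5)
There is a genuine error in the key intermediate identity. You write that ``traversing $\lambda$ from $w_1$ to $z$ and counting $\ve{\alpha}$-crossings computes $n_z(\phi)-n_{w_1}(\phi)$,'' leading you to $n_z(\phi)\equiv a(\lambda,\phi)+n_{w_1}(\phi)$ and hence $\Psi_z|_{V=1}\simeq A_\lambda+U_{w_1}\Phi_{w_1}$. This is wrong. The crucial geometric input, which you gesture at but do not use, is Condition~\eqref{HD:cond7} of Definition~\ref{def:Heegaarddiagramlink}: after splitting $\lambda$ at $z$ into $\lambda_1$ (from $w_1$ to $z$) and $\lambda_2$ (from $z$ to $w_2$), the arc $\lambda_1$ lies in $U_{\as}$ and can be pushed into $\Sigma\setminus\as$, while $\lambda_2$ lies in $U_{\bs}$ and can be pushed into $\Sigma\setminus\bs$. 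Consequently, the $\ve{\alpha}$-crossing count along $\lambda_1$ is zero, and the $\ve{\alpha}$-crossing count along $\lambda_2$ equals the total multiplicity change along $\lambda_2$, so that
\[
a(\lambda,\phi)=n_{w_2}(\phi)-n_z(\phi),\qquad b(\lambda,\phi)=n_z(\phi)-n_{w_1}(\phi).
\]
Plugging the first identity into the definition of $A_\lambda$ gives $A_\lambda\simeq U_{w_2}\Phi_{w_2}+\Psi_z|_{V=1}$ directly, and the second gives $B_\lambda\simeq U_{w_1}\Phi_{w_1}+\Psi_z|_{V=1}$. The $w$-basepoint appearing in each formula is forced by which half of $\lambda$ is an $\as$-subarc versus a $\bs$-subarc; there is no freedom here.

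Your attempted rescue via $U_{w_1}\Phi_{w_1}\simeq U_{w_2}\Phi_{w_2}$ is also not valid. Although $U_{w_1}\simeq U_{w_2}$ as endomorphisms of the reduced complex (via the chain homotopy $\Psi_z$), this does not imply $U_{w_1}\Phi_{w_1}\simeq U_{w_2}\Phi_{w_2}$: the maps $\Phi_{w_1}$ and $\Phi_{w_2}$ are not chain homotopic in general (they correspond to distinct dividing-set decorations), and neither $A_\lambda^2\simeq U_{w_i}$ nor $A_\lambda+B_\lambda=U_{w_1}\Phi_{w_1}+U_{w_2}\Phi_{w_2}$ gives $A_\lambda\simeq B_\lambda$, which is what you would actually need. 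The asymmetry between $w_1$ and $w_2$ in the statement of the lemma is genuine, and is precisely the trace, on the reduced complex, of the $\as$/$\bs$ asymmetry in the definitions of $A_\lambda$ and $B_\lambda$; your proof proposal collapses this asymmetry by assigning $\lambda_1$ to the wrong handlebody.
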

 
 \begin{proof}Write $\lambda$ as the concatenation of two paths, $\lambda_1$ and $\lambda_2$, which meet at $z$. The configuration is shown in Figure~\ref{fig::67}.
 
 \begin{figure}[ht!]
 \centering
 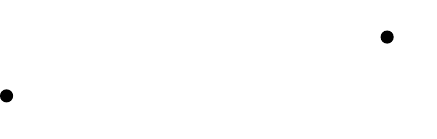
 \caption{\textbf{A portion of the Heegaard diagram containing the subarc $\lambda$ of $L$.} The arc $\lambda$ is the concatenation of $\lambda_1$ and $\lambda_2$.\label{fig::67}}
 \end{figure} 
 
 Using Condition~\eqref{HD:cond7} of Definition~\ref{def:Heegaarddiagramlink} (the definition of a Heegaard diagram for a link), we can assume that $\lambda$ is immersed in the Heegaard diagram, and furthermore, we can assume that $\lambda_1$ does not intersect any of the $\ve{\alpha}$-curves, and that $\lambda_2$ does not intersect any of the $\ve{\beta}$-curves. Hence the quantity $a(\lambda,\phi)$ is equal to the total change of the multiplicities of $\phi$ along $\lambda_2$, i.e.,
 \[
 a(\lambda,\phi)=n_{w_2}(\phi)-n_{z}(\phi).
 \]
  Multiplying $a(\lambda,\phi)$ by $\# \Hat{\cM}(\phi)U_{\ve{w}}^{n_{\ve{w}}(\phi)} \cdot \ve{y}$  and summing over all index 1 classes $\phi$, we arrive at the stated relation
 \[
 A_\lambda(\ve{x})=U_{w_2}\Phi_{w_2}(\ve{x})+\Psi_z(\ve{x})|_{V=1}.
 \] 
  Analogously, by adapting the same line of reasoning yields for the map $B_{\lambda}$, which uses the weights $b(\lambda,\phi)$ instead of $a(\lambda,\phi)$, we arrive at the relation
 \[
 B_\lambda(\ve{x})=U_{w_1}\Phi_{w_1}(\ve{x})+\Psi_z(\ve{x})|_{V=1},
 \] 
completing the proof.
 \end{proof}

Analogously, we can compute the reduction of the maps $\Phi_w$, after setting $U=1$.

\begin{lem}\label{lem:Alambda=Phi+VPsi}If $w\in \ve{w}$ is a basepoint of $\bL=(L,\ve{w},\ve{z})$, let $\lambda$ denote the component of $L\setminus \ve{z}$ which contains $w$. Suppose that $\lambda$, oriented as a subset of $L$, goes from $z_1$ to $z_2$. Then
\[\Phi_w|_{U=1}\simeq A_\lambda+ V_{z_1} \Psi_{z_1}\simeq B_\lambda +V_{z_2} \Psi_{z_2}.\]
\end{lem}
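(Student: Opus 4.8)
\textbf{Proof plan for Lemma~\ref{lem:Alambda=Phi+VPsi}.} The plan is to mirror the argument of Lemma~\ref{lem:Alambda=Psi+UPhi}, exchanging the roles of the $\ws$- and $\zs$-basepoints, and the roles of the $\as$- and $\bs$-curves. First I would recall that, by Condition~\eqref{HD:cond7} of Definition~\ref{def:Heegaarddiagramlink}, the subarc $\lambda\subset L$ between $z_1$ and $z_2$ containing $w$ can be isotoped so that it is immersed in the Heegaard surface $\Sigma$, with the portion of $\lambda$ from $z_1$ to $w$ lying in $U_{\as}$ (hence projecting into $\Sigma\setminus \as$, so it avoids all $\as$-curves) and the portion from $w$ to $z_2$ lying in $U_{\bs}$ (projecting into $\Sigma\setminus \bs$, so it avoids all $\bs$-curves). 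This requires a little care about which half of $\lambda$ avoids which set of curves: since $\lambda$ goes from a $\zs$-basepoint to a $\zs$-basepoint through a single $\ws$-basepoint $w$, and since the link crosses $\Sigma$ positively at $\zs$-basepoints and negatively at $\ws$-basepoints, one should check that the picture is the mirror image of the one in Figure~\ref{fig::67}.

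Given this, the weight $a(\lambda,\phi)$ — the total change of multiplicities of $\phi$ across the $\as$-curves as one traverses $\lambda$ — localizes to the sub-path of $\lambda$ that avoids no $\as$-curves, i.e. the half from $w$ to $z_2$, giving $a(\lambda,\phi) = n_{z_2}(\phi) - n_w(\phi)$. Multiplying by $\#\Hat{\cM}(\phi)\,U_{\ws}^{n_{\ws}(\phi)}V_{\zs}^{n_{\zs}(\phi)}\cdot \ys$, summing over index-$1$ classes $\phi$, and then setting $U=1$, the term $-n_w(\phi)$ contributes $\Phi_w|_{U=1}$ and the term $n_{z_2}(\phi)$ contributes $V_{z_2}\Psi_{z_2}$ after the reduction (the sign is irrelevant over $\bF_2$, and recall $A_\lambda$ carries an implicit overall normalization factor just as $\Psi_z$ and $\Phi_w$ do). This yields $A_\lambda \simeq \Phi_w|_{U=1} + V_{z_2}\Psi_{z_2}$, and hence $\Phi_w|_{U=1} \simeq A_\lambda + V_{z_2}\Psi_{z_2}$. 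Repeating the computation with $B_\lambda$, whose weight $b(\lambda,\phi)$ counts changes across the $\bs$-curves and hence localizes to the half of $\lambda$ from $z_1$ to $w$, gives $b(\lambda,\phi) = n_w(\phi) - n_{z_1}(\phi)$, so $\Phi_w|_{U=1} \simeq B_\lambda + V_{z_1}\Psi_{z_1}$.

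I should double-check the orientation bookkeeping: the statement asserts $\Phi_w|_{U=1}\simeq A_\lambda + V_{z_1}\Psi_{z_1} \simeq B_\lambda + V_{z_2}\Psi_{z_2}$, whereas the naive transcription of Lemma~\ref{lem:Alambda=Psi+UPhi} would pair $A_\lambda$ with the endpoint reached last along $\lambda$ (here $z_2$) and $B_\lambda$ with the endpoint reached first (here $z_1$). The discrepancy is resolved by noting that $\lambda$ in the present lemma is oriented \emph{as a subset of $L$}, and in a link Heegaard diagram the arcs of $L\cap U_{\as}$ run from $\ws$ to $\zs$ while arcs of $L\cap U_{\bs}$ run from $\zs$ to $\ws$; so traversing $\lambda$ from $z_1$ to $z_2$ one first passes through $U_{\bs}$ (the $\bs$-avoiding part, governing $b$) and then through $U_{\as}$ (the $\as$-avoiding part, governing $a$). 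This is exactly opposite to the situation in Lemma~\ref{lem:Alambda=Psi+UPhi}, and it produces the pairing stated. The only genuinely delicate step is this orientation/side-of-the-diagram analysis; everything else is the formal $\bF_2$-linear-algebra manipulation already carried out in the proof of Lemma~\ref{lem:Alambda=Psi+UPhi}, so I would keep the write-up short and refer back to that proof for the details of the sum over holomorphic classes.
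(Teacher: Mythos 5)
Your overall strategy — mirror the proof of Lemma~\ref{lem:Alambda=Psi+UPhi} by swapping the roles of $\ws$ and $\zs$ and tracking which half of $\lambda$ avoids which collection of attaching curves — is exactly what the paper intends (the paper's own proof is a one-line reference to the analogy). But the orientation bookkeeping in your write-up is internally inconsistent, and the computation you actually carry out lands on the \emph{wrong} pairing.

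The key fact, which follows from Definition~\ref{def:Heegaarddiagramlink}(3) and the outward-normal-first convention, is that $L$ crosses $\Sigma=\d U_{\as}$ positively at the $\zs$-basepoints, so $L$ passes from $U_{\as}$ into $U_{\bs}$ there; consequently the arcs of $L\cap U_{\as}$ run from $\ws$ to $\zs$ and those of $L\cap U_{\bs}$ run from $\zs$ to $\ws$. You state this correctly in your third paragraph, but your first paragraph asserts the opposite, putting the $z_1$-to-$w$ half of $\lambda$ in $U_{\as}$ and the $w$-to-$z_2$ half in $U_{\bs}$. In fact the $z_1$-to-$w$ half goes from $\zs$ to $\ws$, so it is a $\beta$-arc in $U_{\bs}$ (projecting to $\Sigma\setminus\bs$, hence avoiding $\bs$), and the $w$-to-$z_2$ half is an $\alpha$-arc in $U_{\as}$ (avoiding $\as$). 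Since the $z_1$-to-$w$ half avoids $\bs$, all the multiplicity change along it is an $\as$-change, so $a(\lambda,\phi)$ localizes to that half, giving $a(\lambda,\phi)=n_w(\phi)-n_{z_1}(\phi)$, not $n_{z_2}(\phi)-n_w(\phi)$ as in your second paragraph. Dually $b(\lambda,\phi)=n_{z_2}(\phi)-n_w(\phi)$. These yield $\Phi_w|_{U=1}\simeq A_\lambda+V_{z_1}\Psi_{z_1}\simeq B_\lambda+V_{z_2}\Psi_{z_2}$, as stated. Your second paragraph, working from the backwards handlebody assignment, derives the opposite pairing; and the parenthetical in your third paragraph — ``the $\bs$-avoiding part, governing $b$'' — is again backwards: the $\bs$-avoiding half is precisely where $a$, not $b$, is picked up. So while you correctly flag that the pairing flips relative to Lemma~\ref{lem:Alambda=Psi+UPhi}, the argument as written never actually produces it; the fix is to carry out the second paragraph's computation starting from the correct handlebody assignment above.
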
 
\begin{proof}The proof is analogous to the proof of Lemma~\ref{lem:Alambda=Psi+UPhi}.
\end{proof}
\subsection{Proof of Theorem~C}
\label{sec:proofofThmC}
We can now prove our stated theorem about the algebraic reductions of $F_{W,{\cF},\frs}$.

\begin{proof}[Proof of Theorem~\ref{thm:C'}] 
 Let $(W,{\cF})$ be a decorated link cobordism with ${\cF}=(\Sigma,\cA)$. We first consider the $V=1$ reduction of the link cobordism maps. First note that the composition law holds for both the graph cobordism maps, and the link cobordism maps. Noting also that the reduction formula is obviously true for the 0-handle and 4-handle cobordism maps, we can assume that each component of $\Sigma$ intersects a component of the incoming boundary and the outgoing boundary non-trivially.

Given a ribbon 1-skeleton $\Gamma(\Sigma_{\ve{w}})$ of $\Sigma_{\ve{w}}$, we note that by using Lemma~\ref{lem:replacewithtrivalent}, we can reduce to the case that $\Gamma(\Sigma_{\ve{w}})$ has no vertices of valence more than three. Using our assumption that each component of $\Sigma$  intersects a component of the incoming and outgoing boundaries of $W$, we can decompose $(W,{\cF})$ into a composition $(W_n,{\cF}_n)\circ \cdots \circ (W_1,{\cF}_1)$ where each $(W_i,{\cF}_i)$ satisfies one of the following:

\begin{enumerate}[leftmargin=20mm, ref= 
\textrm{\arabic*}, label = \textrm{($\cR\cG$-\arabic*)}:]
\item\label{def:surfacegraphdecomp1} $(W_i,{\cF}_i)$ is diffeomorphic to the trace of an isotopy $\phi_t$ of a link in a  3-manifold, and $\Gamma(\Sigma_{\ve{w}})\cap W_i$ consists of a collection of paths from the incoming boundary to the outgoing boundary.
\item\label{def:surfacegraphdecomp2} $(W_i,{\cF}_i)$ is a diffeomorphic to the handle attachment cobordism formed by attaching a 4-dimensional handle or collection of handles along a framed $0$-sphere, 2-sphere or collection of framed 1-spheres, in the complement of the link. Furthermore, the graph $\Gamma(\Sigma_{\ve{w}})\cap {\cF}_i$ consists of a collection of paths from the incoming boundary to the outgoing boundary.
\item\label{def:surfacegraphdecomp3} $(W_i,{\cF}_i)$ is diffeomorphic to a cobordism obtained by attaching a type-$\ve{z}$ band to the link. Furthermore $\Gamma(\Sigma_{\ve{w}})$ consists of a collection of paths from the incoming boundary to the outgoing boundary.
\item \label{def:surfacegraphdecomp4} $(W_i,\Sigma_i)$ is diffeomorphic (as an undecorated link cobordism) to a cylindrical link cobordism. The graph $\Gamma(\Sigma_{\ve{w}})\cap W_i$ is an elementary flow-graph, satisfying one of~($\mathcal{E\!FG}$-\ref{def:elemengraphtype1}), ($\mathcal{E\!FG}$-\ref{def:elemengraphtype2}) or~($\mathcal{E\!FG}$-\ref{def:elemengraphtype3}). The subsurface $\Sigma_{\ve{w}}\cap W_i$ is a regular neighborhood of the graph $\Gamma(\Sigma_{\ve{w}})\cap W_i$.
\end{enumerate}

We now show the claim for a link cobordism satisfying condition ($\cR\cG$-\ref{def:surfacegraphdecomp1}). In this case, the induced link cobordism is the diffeomorphism map induced by the isotopy $\phi_1$. It is clear that the reduction of the diffeomorphism map on link Floer homology is equal to the map induced by the same diffeomorphism on Heegaard Floer homology.

We now consider the claim for link cobordisms satisfying condition ($\cR\cG$-\ref{def:surfacegraphdecomp2}). In this case, the link cobordism maps are defined by using the 4-dimensional handle attachment maps. The 1-handle and 3-handle maps on link Floer homology obviously reduce to the 1-handle and 3-handle maps used to construct the graph cobordism maps. The 2-handle maps on link Floer homology count holomorphic triangles representing homology classes of triangles with $\frs_{\ve{w}}(\psi)=\frs$. The graph cobordism maps for $\Gamma(\Sigma_{\ve{w}})$ count the same triangles. Hence the $V=1$ reduction of the link cobordism map $F_{W_i,{\cF}_i,\frs|_{W_i}}$ coincides with $F_{W_i,\Gamma(\Sigma_{\ve{w}})\cap W_i, \frs|_{W_i}}^B$, if $(W_i,{\cF}_i)$ satisfies condition ($\cR\cG$-\ref{def:surfacegraphdecomp2}). Note that the $U=1$ reduction of the  2-handle maps is not covered by the above argument (as we discuss below).

We now consider the claim for link cobordisms satisfying condition ($\cR\cG$-\ref{def:surfacegraphdecomp3}). In this case, the link cobordism map is a type-$\ve{z}$ band map. The band map is defined by counting holomorphic triangles via the formula 
\[
F_{B}^{\ve{z}}(\ve{x})=F_{\as',\as,\bs}(\Theta^{\ve{w}}_{\as',\as},\ve{x}),
\]
 where $\Theta^{\ve{w}}_{\as',\as}$ denotes the top $\gr_{\ve{w}}$-degree generator. After setting $V=1$, the generator $\Theta^{\ws}_{\as',\as}$ reduces to the top degree generator $\Theta^+_{\as',\as}\in \HF^-(\Sigma,\as',\as,\ws,\frs_0)$, and hence $F_{B}^{\zs}$ reduces to the change of diagrams map associated to moving the $\ve{\alpha}$ curves, which induces the identity morphism on the level of transitive systems of chain complexes. On the other hand, the graph cobordism $([0,1]\times Y,\Gamma(\Sigma_{\ve{w}}))$ is the identity graph cobordism, so the induced graph cobordism map is also the identity map.  

We now consider link cobordisms satisfying condition ($\cR\cG$-\ref{def:surfacegraphdecomp4}). Given a cobordism $(W_i,{\cF}_i)$ satisfying condition ($\cR\cG$-\ref{def:surfacegraphdecomp4}),  we can subdivide the cobordism and add trivial strands to the graph, and replace $(W_i,{\cF}_i)$ with a sequence of cobordisms satisfying the following:
\begin{enumerate}[leftmargin=15mm, ref= 
\textrm{\arabic*}, label = \textrm{($\cR\cG$-\arabic*)}:] \setcounter{enumi}{4}
\item \label{def:surfacegraphdecomp5} $(W_i,{\cF}_i)$ is equivalent (as an undecorated link cobordism) to a cylindrical link cobordism. The subsurface $\Sigma_{\ve{w}}\cap W_i$ is a regular neighborhood of $\Gamma(\Sigma_{\ve{w}})\cap W_i$. The graph $\Gamma(\Sigma_{\ve{w}})\cap W_i$ satisfies one of the following:
\begin{enumerate}[leftmargin=15mm, ref= 
5\textrm{\alph*}, label = \textrm{($\cR\cG$-5\alph*)}:]
\item  \label{def:elementgraph5a} $\Gamma(\Sigma_{\ve{w}})$ is an elementary flow-graph, of type~($\mathcal{E\!FG}$-\ref{def:elemengraphtype2}), and the vertex $v_0$ in the definition of such an elementary flow-graph, has valence 1.
\item \label{def:elementgraph5b} $\Gamma(\Sigma_{\ve{w}})$ is the $H$-shaped graph from Figure~\ref{fig::80}.
\end{enumerate}

\end{enumerate}

The process of manipulating and decomposing link cobordisms $(W_i,{\cF}_i)$ satisfying condition {($\cR\cG$-\ref{def:surfacegraphdecomp4})} into a sequence of cobordisms satisfying condition ($\cR\cG$-\ref{def:surfacegraphdecomp5}) is illustrated in Figure~\ref{fig::83}.

 \begin{figure}[ht!]
 \centering
 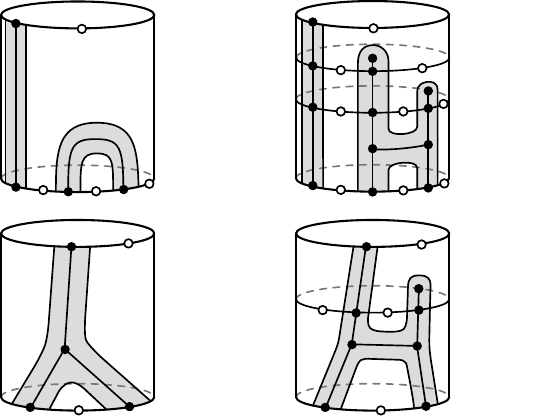
 \caption{\textbf{Decomposing a link cobordism and ribbon 1-skeleton which satisfies ($\cR\cG$-\ref{def:surfacegraphdecomp4}) into a sequence of link cobordisms with ribbon 1-skeletons which satisfy ($\cR\cG$-\ref{def:surfacegraphdecomp5}).} Dividing sets on surfaces of the form $ [0,1]\times L$ are shown, as well as the graph $\Gamma(\Sigma_{\ve{w}})\subset \Sigma_{\ws}$. These are inside of the link cobordism $([0,1]\times Y)$ (not shown).\label{fig::83}}
 \end{figure} 

Thus, to show that the $V=1$ reduction of the link cobordism maps agree with the graph cobordism maps for a ribbon 1-skeleton of $\Sigma_{\ve{w}}$, it is sufficient instead to show the claim for link cobordisms with embedded ribbon 1-skeletons satisfying one of conditions ($\cR\cG$-\ref{def:elementgraph5a}) or ($\cR\cG$-\ref{def:elementgraph5b}).

If $(W_i,{\cF}_i)$ satisfies condition ($\cR\cG$-\ref{def:elementgraph5a}), then the link cobordism map is equal to $S_{w,z}^+$ or $S_{w,z}^-$, where $w$ is the basepoint on the single component of $(\Sigma_{\ve{w}})\cap W_i$ which is a bigon. Similarly, the graph cobordism map is easily seen to be $S_w^+$ or $S_w^-$. From the definition of the quasi-stabilization maps, it is clear that
\[S_{w,z}^{\circ}|_{V=1}\simeq S_w^{\circ},\] proving the reduction formula if $(W_i,{\cF}_i)$ satisfies condition ($\cR\cG$-\ref{def:elementgraph5a}).

If $(W_i,{\cF}_i)$ satisfies condition ($\cR\cG$-\ref{def:elementgraph5b}), then the graph $\Gamma(\Sigma_{\ve{w}})$ is the $H$ graph from Figure~\ref{fig::80} and $\Sigma_{\ve{w}}$ is a regular neighborhood of $\Gamma(\Sigma_{\ve{w}})$. The link cobordism $(W_i,{\cF}_i)$ and the graph $\Gamma(\Sigma_{\ve{w}})\cap {\cF}_i$ is shown with orientations and cyclic orders in Figure~\ref{fig::85}.

 \begin{figure}[ht!]
 \centering
 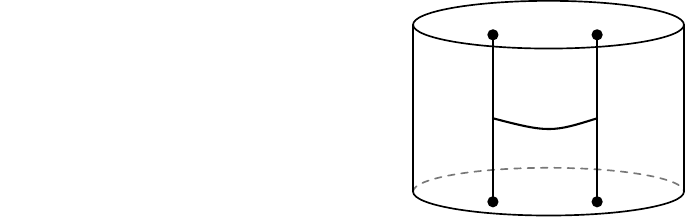
 \caption{\textbf{A cylindrical link cobordism with $H$-shaped subsurface $\Sigma_{\ve{w}}$ and the corresponding graph cobordism.}  Orientations and cyclic orders are shown.\label{fig::85}}
 \end{figure}

We can write the link cobordism maps as a composition of two quasi-stabilizations:
\[
F_{W,{\cF},\frs}\simeq T_{w_2,z}^+T_{w_2,z}^- \simeq \Psi_z,
\]
 where $z$ is the basepoint corresponding to either of the two bigons of type $\Sigma_{\ve{z}}$. By Lemma~\ref{lem:Alambda=Psi+UPhi}, the $V=1$ reduction of the link cobordism map is
\[
\Psi_{z}|_{V=1}\simeq B_\lambda+U_{w_1}\Phi_{w_1}.
\] By Lemma~\ref{lem:Hgraphaction}, this is equal to the type-$B$ graph cobordism map $F_{[0,1]\times Y,\Gamma(\Sigma_{\ve{w}}),\frs}^B$, completing the proof of the claim if $(W_i,{\cF}_i)$ satisfies condition ($\cR\cG$-\ref{def:elementgraph5b}).

Having proven the claim for link cobordisms satisfying one  ($\cR\cG$-\ref{def:surfacegraphdecomp1})--($\cR\cG$-\ref{def:surfacegraphdecomp4}), it now follows from the composition law that $F_{W,{\cF},\frs}|_{V=1}\simeq F_{W,\Gamma(\Sigma_{\ve{w}}),\frs}^B$ for a general decorated link cobordism $(W,{\cF}).$

We now consider the $U=1$ reduction of the link cobordism maps. The above strategy is easily adapted to consider these maps, however there are several slight differences. Firstly, the 2-handle maps for $F_{W,{\cF},\frs}$ count holomorphic triangles with underlying homology class $\psi$ satisfying $\frs_{\ve{w}}(\psi)=\frs$, while the 2-handle maps for $F_{W,\Gamma(\Sigma_{\ve{z}}),\frs}^B$ count holomorphic triangles which satisfy $\frs_{\ve{z}}(\psi)=\frs$. However, by \cite{ZemAbsoluteGradings}*{Lemma~3.9}, one has
\[
\frs_{\ve{w}}(\psi)-\frs_{\ve{z}}(\psi)=\PD[\Sigma],
\]
 where $[\Sigma]\in H_2(W,\d W;\Z)$ denotes the homology class of the surface in the link cobordism. Hence, for a 2-handle cobordism, one has
\[
F_{W_i,{\cF}_i,\frs|_{W_i}}|_{U=1}\simeq F_{W_i,\Gamma(\Sigma_{\ve{z}})\cap W_i,(\frs-\PD[\Sigma])|_{W_i}}^A.
\]

Finally for link cobordisms satisfying ($\cR\cG$-\ref{def:elementgraph5b}) (but now with $\Sigma_{\ve{z}}$ having an $H$-shaped component, instead of $\Sigma_{\ve{w}}$), we note that the induced link cobordism map is $\Phi_w$, for a $w$ basepoint in the boundary of one of the bigon components of $\Sigma_{\ve{w}}$. By Lemma~\ref{lem:Alambda=Phi+VPsi}, we have
\[
\Phi_w|_{U=1}\simeq A_{\lambda}+V_{z_1} \Psi_{z_1},
\] which is the type-$A$ graph cobordism map $F^A_{W_i,\Gamma(\Sigma_{\ve{z}})\cap W_i,(\frs-\PD[\Sigma])|_{W_i}}$ by Lemma~\ref{lem:Hgraphaction} (note that the change in $\Spin^c$ structures, in this case, is simply due to the change in $\Spin^c$ structures of the reductions of the link Floer complexes, from Lemma~\ref{lem:changeSpincstructure}). Hence we conclude that, in general
\[
F_{W,{\cF},\frs}|_{U=1}\simeq F_{W,\Gamma(\Sigma_{\ve{z}}),\frs-\PD[\Sigma]}^A.\]
\end{proof}

It is clear from the previous argument that the $F_{W,{\cF},\frs}|_{V=1}$ reduction depends only on the graph $\Gamma(\Sigma_{\ve{w}})$ and $\frs$. Similarly it is clear that the reduction $F_{W,{\cF},\frs}|_{U=1}$ depends only on $\Gamma(\Sigma_{\ve{z}})$ and $\frs-\PD[\Sigma]$. Hence Corollary~\ref{cor:E} follows.

\subsection{Constructing the graph TQFT as the reduction of the link Floer TQFT}
\label{sec:alternateconstructionofgraphs}
In the previous section, we showed that the link cobordism maps reduce to the graph cobordism maps, which were constructed using other techniques in \cite{ZemGraphTQFT}. In this section, we show that the link Floer TQFT can be used directly to construct a TQFT for cobordisms with embedded ribbon graphs. To see this, we show that given a ribbon graph cobordism $(W,\Gamma)$ one can always associate a decorated link cobordism $\cF(\Gamma)$, which is well-defined up to a small ambiguity. We will show that the ambiguity does not effect the maps. It will then follow that we get an alternate construction and proof of invariance of the graph cobordism maps from \cite{ZemGraphTQFT}.

By Lemma~\ref{lem:ribbongraphtoribbonsurface}, to a ribbon graph cobordism $(W,\Gamma)$, we can associate a ribbon surface cobordism $(W,R_\Gamma)$, which is well-defined up to isotopy. The normal bundle to the surface $R_\Gamma$ is an oriented 2-plane bundle, over a space homotopy equivalent to a graph, so it is trivializable. We can pick a section $v$ of the normal bundle, and use it to push off a copy $R_{\Gamma}'$ of $R_{\Gamma}$ (in such a way which keeps $\d R_{\Gamma}\cap \Int W$ fixed). We define a decorated link cobordism ${\cF}(\Gamma)$ to be the union of $R_{\Gamma}$ and $R_{\Gamma}'$ (giving $R_{\Gamma}'$ the opposite orientation). By Theorem~\ref{thm:C}, we have that
\[F_{W,{\cF}(\Gamma),\frs}|_{V=1}\simeq F^A_{W,\Gamma,\frs}.\]

The surface ${\cF}(\Gamma)$ is not quite well-defined,  since the vector $v$ is not uniquely specified, even up to isotopy, since a choice of such a vector field $v$ is equivalent to a choice of map from $\Gamma$ to $S^1$. The ambiguity thus lies in twisting along the edges of the graph. However the induced  link cobordism map for different choices of the vector field $v$ can differ by the diffeomorphism map induced by twisting along doubly based unknot, but the map induced by such a twist is clearly the identity map (one simply considers a diagram where the two basepoints of the knot are right next to each other). Hence the map $F_{W,{\cF}(\Gamma),\frs}$ is independent of the choice of vector field $v$, normal to $R_\Gamma$.

\bibliographystyle{custom}
\bibliography{biblio}
\end{document}